\newtheorem{theorem}{Theorem}[section]
\newtheorem{proposition}[theorem]{Proposition}
\theoremstyle{definition}
\newtheorem{lemma}[theorem]{Lemma}
\newtheorem*{claim*}{Claim}
\newtheorem{claim}[theorem]{Claim}
\newtheorem{definition}[theorem]{Definition}
\newtheorem{corollary}[theorem]{Corollary}
\newtheorem{remark}[theorem]{Remark}
\newtheorem{question}[theorem]{Question}
\newtheorem{task}[theorem]{Task}
\newtheorem{example}[theorem]{Example}
   \def\MR#1{}
\begin{document}
\title[The definable content of homological invariants II]{The definable content of homological invariants  II:\\ 
\v{C}ech cohomology and homotopy classification}
\author{Jeffrey Bergfalk}
\address{Departament de Matem\`{a}tiques i Inform\`{a}tica \\
Universitat de Barcelona \\
Gran Via de les Corts Catalanes, 585 \\ 08007 Barcelona, Catalonia}
\email{bergfalk@ub.edu}
\author{Martino Lupini}
\address{Dipartimento di Matematica\\
Universit\`{a} di Bologna\\
Piazza di Porta S. Donato, 5\\
40126 Bologna BO\\
Italy}
\email{lupini@tutanota.com}
\urladdr{http://www.lupini.org/}
\author{Aristotelis Panagiotopoulos}
\address{Department of Mathematical Sciences\\ Carnegie Mellon University\\
 Pittsburgh, PA 15213, USA}
\email{aristotelis.panagiotopoulos@gmail.com}
\thanks{Part of this work was done during visits of J.B. and M.L. to the
California Institute of Technology, and of A.P. to Victoria University of
Wellington. The authors gratefully acknowledge the hospitality and the
financial support of these institutions. J.B. was partially supported by Austrian Science Foundation (FWF) Grant Number
Y1012-N35 and a University of Barcelona Mar\'{i}a Zambrano Grant. 
M.L. was partially supported by the
NSF Grant DMS-1600186, by a Research Establishment Grant from Victoria
University of Wellington, by a Research Establishment Grant from Victoria
University of Wellington, by the Marsden Fund Fast-Start
Grant VUW1816, by the Rutherford Discovery Fellowship VUW2002 from the Royal
Society of New Zealand, and by the Starting Grant 101077154 ``Definable
Algebraic Topology'' from the European Research Council. A.P.  was partially supported by the NSF Grant DMS-2154258.}
\subjclass[2020]{Primary 54N05, 55P15, 03E15; Secondary 18G99, 55N99, 54D45, 55P99}
\keywords{definable cohomology, \v{C}ech cohomology, definable group, group with a Polish cover,
Eilenberg--MacLane space, homotopy classification, mapping telescope, homotopy colimit,
phantom map, idealistic equivalence relation}

\begin{abstract}
This is the second installment in a series of papers applying descriptive set theoretic techniques to both analyze and enrich classical functors from homological algebra and algebraic topology.
In it, we show that the \v{C}ech cohomology functors $\check{\mathrm{H}}^n$ on the category of locally compact separable metric spaces each factor into (i) what we term their \emph{definable version}, a functor $\check{\mathrm{H}}^n_{\mathrm{def}}$ taking values in the category $\mathsf{GPC}$ of \emph{groups with a Polish cover} (a category first introduced in this work's predecessor), followed by (ii) a forgetful functor from $\mathsf{GPC}$ to the category of groups.
These definable cohomology functors powerfully refine their classical counterparts: we show that they are complete invariants, for example, of the homotopy types of mapping telescopes of $d$-spheres or $d$-tori for any $d\geq 1$, and, in contrast, that there exist uncountable families of pairwise homotopy inequivalent mapping telescopes of either sort on which the classical cohomology functors are constant.
We then apply the functors $\check{\mathrm{H}}^n_{\mathrm{def}}$ to 
show that a seminal problem in the development of algebraic topology, namely Borsuk and Eilenberg's 1936 problem of classifying, up to homotopy, the maps from a solenoid complement $S^3\backslash\Sigma$ to the $2$-sphere, is essentially hyperfinite but not smooth.

Fundamental to our analysis is the fact that the \v{C}ech cohomology functors $X\mapsto\check{\mathrm{H}}^n(X;G)$ admit two main formulations: a more combinatorial one, and a more homotopical formulation as the group $[X,P]$ of homotopy classes of maps from $X$ to a polyhedral $K(G,n)$ space $P$. We describe the Borel-definable content of each of these formulations and prove a definable version of Huber's theorem reconciling the two. In the course of this work, we record definable versions of Urysohn's Lemma and the simplicial approximation and homotopy extension theorems, along with a definable Milnor-type short exact sequence decomposition of the space $\mathrm{Map}(X,P)$ in terms of its subset of \emph{phantom maps}; relatedly, we provide a topological characterization of this set for any locally compact Polish space $X$ and polyhedron $P$.
In aggregate, this work may be more broadly construed as laying foundations for the descriptive set theoretic study of the homotopy relation on such spaces $\mathrm{Map}(X,P)$, a relation which, together with the more combinatorial incarnation of $\check{\mathrm{H}}^n$, embodies a substantial variety of classification problems arising throughout mathematics. We show in particular that if $P$ is a polyhedral $H$-group then this relation is both Borel and idealistic. In consequence, $[X,P]$ falls in the category of \emph{definable groups}, an extension of the category $\mathsf{GPC}$ introduced herein for its regularity properties, which facilitate several of the aforementioned computations.
\end{abstract}

\maketitle
\tableofcontents

\section{Introduction}

This work is the second in a series of papers enacting a simple but far-reaching recognition, which is the following: many of the classical functors $F:\mathcal{C}\to\mathcal{D}$ of homological algebra and algebraic topology factor through what we might loosely term a \emph{definable version} of the category $\mathcal{D}$, as in the diagram below.
\begin{equation}
\label{diagram:definable_lift}
\begin{tikzcd}
\mathcal{C}  \arrow[rr, dashed,"\text{definable }F"] \arrow[rrd,"F"] && \text{definable }\mathcal{D} \arrow[d,"E\text{, a forgetful functor}"] \\
&& \mathcal{D}
\end{tikzcd}
\end{equation}
Two fundamental examples are the functors $\mathrm{Ext}$ and $\mathrm{lim}^1$, each of which maps to the category $\mathcal{D}=\mathsf{Grp}$ of groups.
As this work's predecessor  \citep{BLPI} showed, the restrictions of these functors to pairs and towers of countable abelian groups, respectively, each admit a canonical lift to the category $\mathsf{GPC}$ of \emph{groups with a Polish cover}, a category first identified in \cite{BLPI} as playing, in these contexts, exactly this role of a \emph{definable $\mathcal{D}$}.
The objects of $\mathsf{GPC}$ are pairs $(G,N)$ consisting of a Polish group $G$ together with a Polishable normal subgroup $N\leq G$; its morphisms $(G,N)\to (H,M)$ are those group homomorphisms $G/N\to H/M$ which lift to a Borel function $G\to H$. More conceptually, its morphisms are those homomorphisms $G/N\to H/M$ which are definable from the Polish topologies on $G$ and $H$ via a computational or expressive power consisting in ``logic gates" of countably infinite length.
As shown in \cite{BLPI}, $\mathsf{GPC}$ is a significantly finer or more rigid category than $\mathsf{Grp}$; in particular, $\mathrm{Hom}_{\mathsf{GPC}}((G,N),(H,M))$ is often a proper --- and sometimes quite sparse --- subset of $\mathrm{Hom}_{\mathsf{Grp}}((G/N),(H/M))$, and from this it follows that the aforementioned \emph{definable lifts} of the functors $\mathrm{Ext}$ and $\mathrm{lim}^1$ provide strictly stronger invariants of pairs and towers of countable abelian groups than their classical counterparts.

The present work extends this analysis to the \v{C}ech cohomology functors from the category $\mathcal{C}=\mathsf{LC}$ of locally compact Polish spaces to the category $\mathcal{D}$ of groups.
This entails a coordinated study of the Borel content of both the combinatorial and homotopical presentations of these groups.
For our work on the latter, and on the homotopy bracket $[-,-]$ more generally, we introduce several further definable categories, $\mathsf{DSet}$ and $\mathsf{DGrp}$, each of which extends the subcategory $\mathsf{GPC}$ while retaining many of its regularity properties.
By way of this analysis, we show that, like $\mathrm{Ext}$ and $\mathrm{lim}^1$, the functors $\check{\mathrm{H}}^n:\mathsf{LC}\to\mathsf{Grp}$ each factor into a \emph{definable cohomology functor} to $\mathsf{GPC}$ followed by a forgetful functor from $\mathsf{GPC}$ to $\mathsf{Grp}$.
In consequence, much as before, definable cohomology is a significantly stronger invariant of topological spaces than its classical counterpart. This we concretely show via a comparison of the classical and definable cohomology groups of mapping telescopes of spheres and tori, objects of central importance to the field of algebraic topology (to the study of localizations, or the construction of Eilenberg-MacLane spaces, for example) \cite{milnor_axiomatic_1962, sullivan_geometric_1970, bousfield_homotopy_1972, may_more_2012, bott_differential_1982}.

We precede a more detailed discussion of this paper's contents with a few further words of context.
We would stress from the outset, for example, that none of the definable functors under discussion provide particularly esoteric, unmanageable, or hard to compute invariants for topological spaces or groups; on the contrary, \emph{these invariants amount to little more than a retention of the topologies arising naturally in classical computations}. As such, they realize a historically persistent impulse within the fields of homological algebra and algebraic topology; to take just one example, Moore's 1976 work \cite{moore_group_1976} records the following striking premonition of the category $\mathsf{GPC}$:
\smallskip
\begin{quote}
There is one complication in this theory which, as we shall see, simply cannot be avoided and this comes about as follows: We have polonais [i.e., Polish] groups $A$ and $B$ (say abelian) together with a continuous homomorphism from $A$ into $B$.
The quotient group $B/j(A)$ will be of interest and significance even though in many cases $j(A)$ is not closed in $B$.
This quotient group would be polonais if $j(A)$ were closed but in general it is some non-Hausdorff topological group, arising in some sense as the ``quotient'' of two polonais groups.
It will be somewhat useful to talk about such objects which with some trepidation one might call pseudo-polonais groups. We would define such objects as triples $C=(A,B,j)$ where $A$ and $B$ are polonais and $j$ is a continuous homomorphism of $A$ into $B$, subject to an appropriate equivalence relation which we shall not pursue at this moment.
\end{quote}
\smallskip
See also Brown's 1975 remarks on \emph{almost polonais groups} in \cite{brown_operator_1975}, and his references therein and in his 1977 article \cite{brown_extensions_1977} to projected (but never subsequently published) works on this theme.
It seems likely that what these recognitions' further development awaited was some framework for the efficient manipulation of Moore's triples $(A,B,j)$ (which are evidently equivalent in content to the objects $(G,N)$ of $\mathsf{GPC}$), and it is in the derivation of just such a framework from the apparatus of invariant descriptive set theory that our work's main contribution and novelty arguably consist.

Along these lines, affinities of the present work with another major contemporary research orientation should be noted. This is the program of ``doing algebra with topology'' motivating, for example, the \emph{condensed mathematics} and \emph{pyknotic mathematics} frameworks of Clausen and Scholze, and Barwick and Haine, respectively (\cite{scholze_condensed_2019,barwick_pyknotic_2019}; see also \cite{hoffmann_homological_2007}). Though our methods are rather different, much of the underlying impetus is the same: it is the issue of ``bad quotients'' like those which Moore describes above; more formally, it is the failure of settings like the collection of Polish abelian groups to form an abelian category.
In contrast, rather remarkably, as this work's second author has recently shown, the category $\mathsf{APC}$ of groups with an abelian Polish cover \emph{is} an abelian category, one which may moreover be regarded as the canonical abelian extension of the category of Polish abelian groups, in the precise sense that it forms that category's \emph{left heart} \cite{lupini_looking_22}.
This helps to explain why such large portions of homological algebra and algebraic topology lift to $\mathsf{APC}$ and related settings; establishing the main theorems of \cite{BLPI} and the present work, for example, entailed the development of definable versions of such a range of core results like the homotopy extension theorem, the simplicial approximation theorem, Milnor exact sequences, Urysohn's Lemma, the Snake Lemma, and so on, that it grows natural to speak of an emergent field of \emph{definable homological algebra}.
And it is in part in these terms also that our project should be understood.

At the same time, the present work extends the framework of diagram \ref{diagram:definable_lift} to \emph{homotopical} and non-abelian settings, through our study of the representation of $\check{\mathrm{H}}^n(X;G)$ as the group $[X,K(G,n)]$ of homotopy classes of maps from $X$ to an Eilenberg-MacLane space $K(G,n)$.
As noted, and as we'll describe in greater detail just below, this analysis necessitated the introduction of a larger category of \emph{definable groups} $\mathsf{DGrp}$, one which features $\mathsf{GPC}$ as a full subcategory. This is the natural category for the definable analysis of the homotopy bracket $[-,P]$ of maps to a polyhedron $P$, and hence (by Brown Representability) of generalized homology and cohomology theories more broadly, as we discuss in our conclusion. The development of definable homotopy groups is a naturally ensuing prospect as well (see Remark \ref{remark:homotopy_groups}).

This brings us to one last preliminary point. It should be clear from our account so far that much of what we herein term \emph{definable} connotes what might be more precisely rendered \emph{Borel definable}, but it can also, as in the case of definable sets and groups, mean a little more (see Section \ref{Ss:DefGrp} below).
Put differently, our uses of the term herein have been guided by considerations both of concision and of the interrelations between our operative categories, but should be everywhere read to signal a concern for structures and functions embodying only countable amounts of data, in senses such as we noted above.

We turn now to a more detailed description of our results.


\subsection{Definable \v{C}ech cohomology} As noted, this work builds on its predecessor \cite{BLPI}, which introduced definable variants of the functors $\mathrm{Ext}$, $\mathrm{Pext}$, and $\mathrm{lim}^{1}$, each taking values in the category \textsf{GPC} of groups with a Polish cover. What warrants a view of $\mathrm{Ext}(B,F)$, $\mathrm{Pext}(B,F)$, and $\mathrm{lim}^{1}(\boldsymbol{A})$ as groups with a Polish cover is the observation in \cite{BLPI} that each is naturally construed as a cohomology group  $\mathrm{H}^n(C^{\bullet})=\mathrm{ker}(\delta^{n})/\mathrm{im}(\delta^{n-1})$ of an appropriate \emph{Polish cochain complex} $C^{\bullet}$. The latter  is simply a cochain complex
\[C^{\bullet}:=( \quad   \quad \quad \quad \quad \quad \quad \quad \quad\cdots\longrightarrow \,C^{n+1}\overset{\delta^{n-1}}{\longrightarrow} C^n\overset{\delta^{n}}{\longrightarrow} C^{n+1} {\longrightarrow} \cdots  \quad \quad  \quad \quad \quad \quad \quad \quad) \quad \quad\]
in which each $\delta^n\colon C^n\to C^{n-1}$ is a \emph{continuous} homomorphism between \emph{Polish} abelian groups; see Section \ref{S:Definable cohomo}. 

The present work takes as its focus the definable enrichment of the \v{C}ech cohomology groups $\Check{\mathrm{H}}^n(X;G)$ of locally compact separable metric spaces $X$ with coefficients in a countable abelian group $G$.  
As abstract groups, these \v{C}ech cohomology groups admit several formulations.
The \emph{combinatorial} (and classical) approach to \v{C}ech cohomology associates to each open cover $\mathcal{U}\in\mathrm{Cov}(X)$ of $X$ the simplicial cohomology group $\mathrm{H}^n(\mathrm{Nv}(\mathcal{U});G)$ of its nerve, then defines $\Check{\mathrm{H}}^n(X;G)$ as the colimit, over the refinement-ordering of $\mathrm{Cov}(X)$, of these groups. In its \emph{homotopical} incarnation, on the other hand, $\Check{\mathrm{H}}^n(X;G)$ is the set $[X,K(G,n)]$ of homotopy classes of maps from $X$ to an Eilenberg-MacLane space $K(G,n)$; since $K(G,n)$ is an abelian $H$-group --- that is, a space equipped with multiplication and inverse operations which satisfy the abelian group axioms up to homotopy --- $[X,K(G,n)]$ has the structure of an abelian group. Mediating between these two approaches, and between homological and homotopical perspectives more generally, is Huber's 1961 theorem \cite{huber_homotopical_1961} which states that the groups $[X,K(G,n)]$ and $\Check{\mathrm{H}}^n(X;G)$ are naturally isomorphic. 

In order to definably enrich these \v{C}ech cohomology groups, we isolate and endow the combinatorially-defined $\Check{\mathrm{H}}^n(X;G)$ and $[X,K(G,n)]$ each with a natural Borel structure. At a first pass, the associated \emph{definable cohomology groups} $\Check{\mathrm{H}}^n_{\mathrm{def}}(X;G)$ and  $[X,K(G,n)]_{\mathrm{def}}$ differ in several interesting and complementary ways.
For example, much like the aforementioned definable invariants introduced in \cite{BLPI}, $\Check{\mathrm{H}}^n_{\mathrm{def}}(X;G)$  admits realization as a group with a Polish cover $\Check{\mathrm{H}}^n_{\mathrm{def}}(X;G):=\mathrm{Z}^n(\boldsymbol{\mathcal{U}};G)/\mathrm{B}^n(\boldsymbol{\mathcal{U}};G)$, where $\mathrm{Z}^n(\boldsymbol{\mathcal{U}};G)$ and $\mathrm{B}^n(\boldsymbol{\mathcal{U}};G)$ are $n$-dimensional cocycle and coboundary groups deriving from a Polish cochain complex associated to $X$ and $G$. A drawback of this realization is its reliance on choices of \emph{covering systems} $\boldsymbol{\mathcal{U}}$ for each $X$, rendering the more global coordination or, more precisely, \emph{functoriality} of the associated assignments $X\mapsto \Check{\mathrm{H}}^n_{\mathrm{def}}(X;G)$ somewhat obscure.   In contrast, the group $[X,K(G,n)]_{\mathrm{def}}$ does not in general manifest as a group with a Polish cover; some care, in fact, is required in handling its Borel structure. Within the category of \emph{definable groups}, however --- a category which, as noted, shares many regularity properties with the category of groups with a Polish cover --- the functoriality of the assignment $X\mapsto [X,K(G,n)]_{\mathrm{def}}$ is clear.

The following \emph{definable version} of Huber's theorem says that, up to a natural definable isomorphism, these two approaches are equivalent. In particular, the assignment $X\mapsto \Check{\mathrm{H}}^n_{\mathrm{def}}(X;G)$ is functorial and, in particular, independent of our choices of covering systems $\boldsymbol{\mathcal{U}}$. Furthermore, the definable group $[X,K(G,n)]_{\mathrm{def}}$ is essentially a group with a Polish cover.
   
\begin{theorem}\label{T1:intro}
The functors determined by the assignments $X\mapsto \Check{\mathrm{H}}^n_{\mathrm{def}}(X;G)$ and $X\mapsto [X,K(G,n)]_{\mathrm{def}}$ are naturally  isomorphic in the category of definable groups.
\end{theorem}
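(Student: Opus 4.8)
The plan is to establish a natural isomorphism by producing, for each space $X$, a definable isomorphism $\check{\mathrm{H}}^n_{\mathrm{def}}(X;G)\cong[X,K(G,n)]_{\mathrm{def}}$ that refines Huber's classical isomorphism, and then verifying naturality in $X$. The guiding principle is that both sides are computed from the same underlying data---a cofinal system of open covers $\boldsymbol{\mathcal{U}}$ of $X$ and the nerves $\mathrm{Nv}(\mathcal{U})$---so the classical isomorphism should already be implemented by Borel maps at the level of cocycles, cochains, and simplicial maps into $K(G,n)$. Concretely, I would first recall Huber's construction: a map $f\colon X\to K(G,n)$ is, up to homotopy, approximated by maps that factor through nerves of covers via the canonical maps $X\to|\mathrm{Nv}(\mathcal{U})|$, and such factoring maps correspond, by the defining property of the polyhedral $K(G,n)$ (namely $[\,|\mathrm{Nv}(\mathcal{U})|,K(G,n)]\cong\mathrm{H}^n(\mathrm{Nv}(\mathcal{U});G)$ via the fundamental class), to simplicial cohomology classes. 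Passing to the colimit over $\mathrm{Cov}(X)$ yields the abstract isomorphism; the task is to show each step is definable.

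Next I would carry out the definable upgrade in three stages. First, using the definable simplicial approximation theorem recorded in the paper, I would show that every continuous map $X\to K(G,n)$ is homotopic, by a \emph{Borel-in-$f$} homotopy, to one that factors through the nerve of some cover in the chosen covering system $\boldsymbol{\mathcal{U}}$; this realizes $[X,K(G,n)]_{\mathrm{def}}$ as a definable colimit of the groups $[\,|\mathrm{Nv}(\mathcal{U})|,K(G,n)]_{\mathrm{def}}$. Second, for a single polyhedron $P=|\mathrm{Nv}(\mathcal{U})|$, I would establish that the classical isomorphism $[P,K(G,n)]\cong\mathrm{H}^n(P;G)$ is implemented by a Borel map between the relevant Polish spaces of maps and cochains---this is where the polyhedral model of $K(G,n)$ and the fundamental-class pullback must be checked to be Borel. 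Third, I would verify that the bonding maps in the two colimit systems (induced by refinement of covers) commute with these isomorphisms in a Borel fashion, so that the induced map on colimits is a morphism in the category of definable groups, with a definable inverse.

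The main obstacle, I expect, lies in the second and third stages simultaneously: reconciling the a priori different Borel structures on the two sides. The combinatorial side $\check{\mathrm{H}}^n_{\mathrm{def}}(X;G)$ is by construction a group with a Polish cover $\mathrm{Z}^n(\boldsymbol{\mathcal{U}};G)/\mathrm{B}^n(\boldsymbol{\mathcal{U}};G)$, whereas $[X,K(G,n)]_{\mathrm{def}}$ carries only the weaker definable-group structure arising from the homotopy relation on $\mathrm{Map}(X,K(G,n))$, which need not manifest as a Polish cover. Thus the isomorphism cannot be expected to respect Polish group topologies on the nose; rather, I must work in the category $\mathsf{DGrp}$, where the two structures can be compared after passing to the definable colimit, and show that the homotopy equivalence relation on $\mathrm{Map}(X,K(G,n))$ pulls back, under the simplicial-approximation retraction, to the coboundary equivalence relation on cocycles. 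Establishing that this identification is both Borel and respects the idealistic/Borel regularity of the equivalence relations---so that the resulting bijection is a genuine isomorphism of definable groups and not merely of abstract groups---is the crux.

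Finally, naturality in $X$ should follow formally once the isomorphism is shown to be induced, at each finite stage, by the same nerve-and-fundamental-class data that defines both functors; a continuous map $Y\to X$ induces compatible Borel maps on covering systems, cochain complexes, and mapping spaces, and the commutativity of the resulting square is inherited from the commutativity already present in the classical (non-definable) Huber isomorphism, now merely decorated with the verified Borel structure.
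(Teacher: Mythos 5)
Your forward direction is essentially the paper's: both arguments use the definable simplicial approximation theorem to produce, Borel-in-$f$, a nerve $N_{\alpha_f}$ and a simplicial map $p_f$ through which $f$ factors up to homotopy, and then pull back a fixed cocycle representative $\hat{u}$ of the fundamental class, yielding the Borel lift $f\mapsto \hat{u}\circ\eta\circ p_f$ of Huber's isomorphism into $\mathrm{Z}^n(\boldsymbol{\mathcal{U}};G)$. Two organizational choices diverge from the paper, however, and the second conceals a genuine gap. First, the paper never realizes $[X,K(G,n)]_{\mathrm{def}}$ as a ``definable colimit'' of the groups $[\,|\mathrm{Nv}(\mathcal{U})|,K(G,n)]_{\mathrm{def}}$. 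No such colimit is defined, and it is not clear one could be: the covers are indexed by the uncountable directed set $\mathcal{N}^*$, and even countable products in $\mathsf{DSet}$ are left as an open question in the paper's conclusion. Instead the colimit is taken once and for all at the level of cochains, as the Polish semigroup $C^n_{\mathrm{sem}}(\boldsymbol{\mathcal{U}};G)$ modulo a closed congruence, \emph{before} cohomology is formed, so that the Borel lift lands directly in the single Polish group $\mathrm{Z}^n(\boldsymbol{\mathcal{U}};G)$ and no colimit of definable groups is ever required.

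Second, and more seriously, you identify the inverse direction --- reconciling the two Borel structures and showing that the homotopy relation pulls back to the coboundary relation --- as ``the crux,'' to be handled by an explicit two-sided comparison. The paper does none of this. Once $\psi\circ J$ is exhibited as a definable bijection, its inverse is definable \emph{automatically}, by Proposition \ref{Proposition:bij-iso} (Kechris--Macdonald): a bijective definable function between definable sets is an isomorphism in $\mathsf{DSet}$. The only inputs are that the homotopy relation on $\mathsf{LC}(X,K(G,n))$ is idealistic (Theorem \ref{Theorem:homotopy-idealistic}) and Borel (Theorem \ref{Theorem:phantom-H-group}) --- facts established independently of Huber's theorem --- together with the Borel-and-idealistic character of the coset relation of a group with a Polish cover. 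You gesture at the relevance of the idealistic property but do not isolate this mechanism; without it, your ``crux'' amounts to constructing an explicit Borel lift of $J^{-1}$ (a Borel choice of canonical map $X\to|\mathrm{Nv}(\mathcal{U}_\alpha)|$ followed by a Borel realization of a cocycle as a map into $K(G,n)$), which is substantially more work and is exactly what the general theory of definable sets is built to avoid. Naturality, as you say, is then formal.
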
   
   
The proof of Theorem \ref{T1:intro}, which occupies much Section \ref{S:Huber}, involves several subsidiary results of independent interest. For example, to show that Huber's abstract isomorphism $[X,K(G,n)]\to \Check{\mathrm{H}}^n(X;G)$ admits a Borel lift, we prove a definable version of the simplicial approximation theorem. The fact that the  inverse map $ \Check{\mathrm{H}}^n(X;G)\to [X,K(G,n)]$ also admits a Borel lift, and hence that it induces an isomorphism between $\Check{\mathrm{H}}^n_{\mathrm{def}}(X;G)$ and $[X,K(G,n)]_{\mathrm{def}}$ in the category of definable groups, follows from the general theory of definable groups that we develop in Section \ref{S:DSets}.

\subsection{Definable groups}
\label{Ss:DefGrp}
At first glance, $[X,K(G,n)]_{\mathrm{def}}$ is just the quotient $\mathrm{Map}(X,K(G,n))/\!\simeq$ of a Polish space by an analytic equivalence relation with the property that the operations of multiplication and inversion on the quotient level lift to Borel maps at the level of $\mathrm{Map}(X,K(G,n))$. While Borel-definable homomorphisms between such ``quotient groups" --- which we term \emph{semidefinable groups} below ---  determine a category which strictly extends $\mathsf{GPC}$, the category of semidefinable groups lacks the regularity properties which make $\mathsf{GPC}$ robust and convenient to work with. For example, a salient feature of the category of groups with a Polish cover, and one instrumental in arguing results like Theorem \ref{T1:intro}, is the fact that an isomorphism on the level of quotient groups admits a Borel lift if and only if its inverse does. Unfortunately, such symmetries do not extend to the generality of the category of semidefinable sets. One may recover them, however, by moving to the intermediate category of \emph{definable groups}, whose objects, loosely speaking, are Borel and idealistic equivalence relations $E$ on Polish spaces $Y$ whose quotients by $E$ carry natural group structures.

\begin{figure}[h!]
\begin{tikzpicture}[x=0.75pt,y=0.75pt,yscale=-1,xscale=1]

\draw   (189.5,109.28) .. controls (189.5,98.24) and (225.32,89.28) .. (269.5,89.28) .. controls (313.68,89.28) and (349.5,98.24) .. (349.5,109.28) .. controls (349.5,120.33) and (313.68,129.28) .. (269.5,129.28) .. controls (225.32,129.28) and (189.5,120.33) .. (189.5,109.28) -- cycle ;
\draw   (146.86,95.14) .. controls (146.86,76.29) and (201.77,61) .. (269.5,61) .. controls (337.23,61) and (392.14,76.29) .. (392.14,95.14) .. controls (392.14,114) and (337.23,129.28) .. (269.5,129.28) .. controls (201.77,129.28) and (146.86,114) .. (146.86,95.14) -- cycle ;
\draw   (90,82.14) .. controls (90,56.11) and (170.36,35) .. (269.5,35) .. controls (368.64,35) and (449,56.11) .. (449,82.14) .. controls (449,108.18) and (368.64,129.28) .. (269.5,129.28) .. controls (170.36,129.28) and (90,108.18) .. (90,82.14) -- cycle ;
\draw[dotted]    (275,110) .. controls (392,90) .. (510,105) ;
\draw[dotted]     (275,80) .. controls (392,60) ..  (510,75) ;
\draw[dotted]     (275,50) .. controls (392,30) ..  (510,45)  ;
\node (p1) at  (510,50)  {\text{semidefinable groups}}; 
\node (p1) at  (510,80)  {\text{definable groups}};
\node (p1) at  (510,110)  {\text{groups with a Polish cover}};
\end{tikzpicture}
    \caption{Definable groups: an extension of $\mathsf{GPC}$ sharing many of its regularity properties.}
    \label{fig1}
\end{figure}
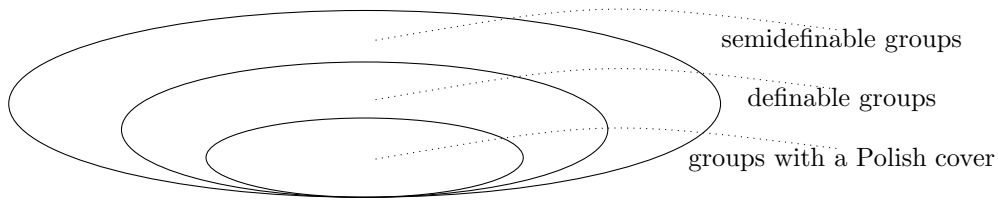

The fact that $[X,K(G,n)]_{\mathrm{def}}$ is a definable group --- a fact which,  among others, plays a role in the proof of Theorem \ref{T1:intro} --- is a consequence of the following more general theorem.


\begin{theorem}\label{T2:intro}
Let $X$ be a locally compact separable metric space and let $P$ be a polyhedral $H$-group. The relation of \emph{being homotopic} defines a Borel and idealistic equivalence relation  on the Polish space $\mathrm{Map}(X,P)$ of all continuous functions from $X$ to $P$.
\end{theorem}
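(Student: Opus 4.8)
The plan is to treat \emph{Borel} and \emph{idealistic} separately, after a common reduction exploiting the $H$-group structure. First I would record that, since $X$ is locally compact Polish and $P$ is Polish, the space $\mathrm{Map}(X,P)$ with the compact-open topology is Polish, and that by the exponential law $C(X\times[0,1],P)\cong C([0,1],\mathrm{Map}(X,P))$ the homotopy relation on $\mathrm{Map}(X,P)$ is exactly the path-component relation. The $H$-group operations $\mu,\iota$ on $P$ induce, by post-composition, continuous (hence Borel) pointwise operations $\bar\mu,\bar\iota$ on $\mathrm{Map}(X,P)$; because the quotient $[X,P]$ is a genuine group, one has $f\simeq g$ iff $\bar\mu(f,\bar\iota g)\in N$, where $N:=\{h:h\simeq e\}$ is the class of the constant map $e$. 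As $(f,g)\mapsto\bar\mu(f,\bar\iota g)$ is Borel, the relation is Borel if and only if $N$ is Borel; moreover the pointwise translations $L_g\colon f\mapsto\bar\mu(g,f)$ are Borel maps carrying homotopy classes onto homotopy classes, which will let me transport any class-by-class structure off of the single class $N$. This reduces both halves of the theorem to an analysis of $N$ together with these Borel translations.

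For Borelness, the set $N$ is immediately analytic: it is the projection to the second coordinate of the closed set $\{(H,h): H(\cdot,0)=e,\ H(\cdot,1)=h\}$ inside $\mathrm{Map}(X\times[0,1],P)\times\mathrm{Map}(X,P)$. To upgrade this to Borelness I would exhaust $X=\bigcup_n K_n$ by compacta with $K_n\subseteq\mathrm{int}(K_{n+1})$, giving $\mathrm{Map}(X,P)=\varprojlim_n\mathrm{Map}(K_n,P)$ along the restriction maps. Here I use that a polyhedron $P$ is an ANR, so that each $\mathrm{Map}(K_n,P)$ is again an ANR, hence locally path-connected: its homotopy classes are therefore clopen and (by separability) countable, and consequently $N_\infty:=\{h: h|_{K_n}\simeq e\text{ for all }n\}$ is closed, in particular Borel. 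The inclusion $N\subseteq N_\infty$ is strict precisely because of \emph{phantom} maps, and identifying $N_\infty/{\simeq}$ is the crux: via a definable Milnor-type analysis it is $\varprojlim^{1}$ of the tower of countable groups $\pi_1(\mathrm{Map}(K_n,P),e)$ (countable since $\mathrm{Map}(K_n,P)$ has CW homotopy type). Since the $\varprojlim^{1}$ of a tower of countable groups is a group with a Polish cover in the sense of \cite{BLPI}, its trivial class is Borel, and pulling this back through the Borel identification of $N_\infty/{\simeq}$ exhibits $N$ as Borel. This last step is where I expect the main obstacle to lie: for non-compact $X$ the space $\mathrm{Map}(X,P)$ need not be locally path-connected, so $N$ need not be open and the phantom correction cannot be ignored; making the passage from the combinatorial inverse-limit picture to the genuine homotopy relation \emph{Borel}-uniform is exactly what the definable simplicial approximation and definable Milnor sequence are for.

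For idealisticity, I would build an $E$-invariant Borel assignment of $\sigma$-filters of ``large'' subsets to the classes. On each compact stage the classes are clopen and contribute nothing, while the phantom part $N_\infty/{\simeq}\cong\varprojlim^{1}\pi_1(\mathrm{Map}(K_n,P),e)$ is a group with a Polish cover, whose coset relation is idealistic via the comeager filter induced by the finer Polish group topology on the relevant Polishable subgroup. Assembling these yields a $\sigma$-filter on $N$, which I then spread to every class using the Borel translations $L_g$. The required measurability of the assignment, namely that $\{g:\{f:(g,f)\in R\}\text{ is large in }[g]\}$ is Borel for every Borel $R$, follows from the Montgomery--Novikov and Kuratowski--Ulam theorems on the Borel-definability of category quantifiers, while well-definedness rests on the invariance of the comeager filter under translation by elements of $N$.
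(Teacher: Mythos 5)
Your Borel half is essentially the paper's argument in a slightly different packaging: the paper also isolates the phantom maps $N_\infty$ (closed because homotopy classes in $\mathrm{Map}(K_n,P)$ are clopen and countable -- proved there by an open-star covering argument rather than your ANR appeal, but to the same effect), identifies $N_\infty/\simeq$ with $\mathrm{lim}^1$ of the tower $[\bar{\Sigma}K_n,P]\cong\pi_1(\mathrm{Map}(K_n,P),e)$ via a Borel lift, and then uses the $H$-group translations together with a Borel selector for weak homotopy to propagate Borelness from the phantom part to the whole relation. Where you genuinely diverge is the idealistic half. The paper's primary proof of idealisticity is a direct one that needs no $H$-group structure at all: for an arbitrary pointed polyhedron it equips each class $[f]$ with the filter of sets $S$ such that comeager-many homotopies starting at $f$ end in $S$, and the labor is showing that this category quantifier, taken over the \emph{varying} closed sections $R_f$ of the homotopy relation, is Borel-uniform -- this is exactly what the $B_\ast$/$B_\Delta$ analysis of basic open sets of homotopy spaces is for, and it is not covered by the off-the-shelf Montgomery--Novikov theorem, which handles comeagerness in a fixed space only. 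Your transported-filter route sidesteps that (the comeager filter lives on the fixed Polishable group $\mathrm{B}^1$), at the cost of working only for $H$-group targets; the paper in fact also records this second route, so both arguments appear there, with the direct one giving the more general statement.

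Two steps in your idealistic half need shoring up. First, to transport the filter from $\mathrm{lim}^1$ back to the phantom classes you need a Borel lift of the \emph{inverse} of the $\mathrm{lim}^1$ identification, and at this stage you cannot get it from the principle that definable bijections between definable sets are isomorphisms -- that principle presupposes both sides are already known to be Borel and idealistic, which is what you are proving; the inverse lift has to be built by hand (the paper does this with an explicit collapsing-function construction). Second, "spreading to every class using $L_g$" is not well defined as stated: $L_g$ and $L_{g'}$ for homotopic $g,g'$ are different maps of $N$ into $[g]$ (the classes are not pointwise translates of $N$), and invariance of the comeager filter under translation by elements of $N$ does not by itself show the two pushforward filters agree. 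The fix is to use the Borel selector for weak homotopy to choose a canonical representative of each class and decompose $\mathrm{Map}(X,P)/\simeq$ as (phantom part) $\times$ (transversal); this is precisely the content of the paper's quotient lemma, and it is also why the paper works with a generalized notion of idealistic equivalence relation carrying an auxiliary Borel map $\zeta$. With those two repairs your outline goes through.
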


We note that Theorem \ref{T2:intro} is very far from being true if we reverse the roles of $X$ and $P$. Indeed, every analytic equivalence relation is Borel bireducible to the path connectedness relation between points of an appropriately chosen compact metrizable space $X$; see 
\cite[Theorem 4.1]{beckerPath}. Hence, in general, the relation of being homotopic is neither Borel nor idealistic on $\mathrm{Map}(P,X)$, even when $P=\{*\}$ consists of a single point. 

As this observation might suggest, Theorem \ref{T2:intro} is a rather subtle result; the \emph{idealistic} portion of the theorem (holding in fact for any $P$ which is the geometric realization of a locally finite, countable simplicial complex) forms the main labor of Section \ref{S:Hidealistic}. The \emph{Borel} portion of the theorem derives from close analysis of the class of \emph{phantom maps} --- that is, those maps whose restriction to any compact subset of $X$ is nullhomotopic --- from $X$ to $P$.

\subsection{Phantom maps and the descriptive set theory of homotopy relations} The  analysis of the phantom maps $(X,A)\to (P,*)$ from a locally compact Polish pair  to a pointed polyhedron forms the focus of Section \ref{S:HomotopyClassification} and is of some interest in its own right, for the following reasons:
\begin{itemize}
\item We provide, for any locally compact Polish pair $(X,A)$ and polyhedral $H$-group $(P,*)$, a definable short exact sequence decomposition of the definable group $[(X,A),(P,\ast)]$ whose kernel is the group of homotopy classes of phantom maps from $(X,A)$ to $(P,*)$; see Theorem \ref{Theorem:phantom-H-group}. This kernel also takes the form of a definable $\mathrm{lim}^1$ term within a definable Milnor-type short exact sequence (Proposition \ref{Proposition:asymptotic-cohomology}), a connection with the analyses of \cite{BLPI} which we exploit in Section \ref{Section:telescopes}.
\item More generally, we provide a \emph{topological} characterization of the set of phantom maps from $(X,A)$ to a pointed polyhedron $(P,\ast)$: it is the closure in $\mathrm{Map}((X,A),(P,\ast))$ of the set of nullhomotopic maps; see Proposition \ref{prop:topological_char_of_phantoms}, and also Proposition \ref{Proposition:subfunctor}. This is a representative benefit of approaches which, as we put it above, amount essentially to ``a retention of the topologies arising naturally in classical computations.''
\item Most broadly, the analyses of Sections \ref{S:Hidealistic} through \ref{Section:telescopes} should be understood as foundational work in the descriptive set theoretic study of the homotopy relation on maps from a locally compact Polish space $X$ to a polyhedral $P$, a framework encompassing a substantial variety of classification problems in mathematics; in addition to Theorem \ref{T1:intro} and the aforementioned phantom decompositions, Lemma \ref{Lemma:countable-homotopy} (characterizing the homotopy relation when $X$ is compact), Lemma \ref{Lemma:definable-Urysohn} (a definable version of Urysohn's Lemma), Theorem \ref{Theorem:definable-homotopy-extension} (a definable version of Borsuk's homotopy extension theorem), and Theorem \ref{theorem:definable_action_of_pi1} (definably mediating between based and unbased homotopy classes of maps) may all be viewed in these terms as well.
\end{itemize}

This last point brings us to the two main sorts of consequences of our work. The first concerns the study of the Borel complexity of classification problems. 

\subsection{Classification by (co)homological invariants} 
One of the most central programs in descriptive set theory measures the intrinsic complexity of various classification problems in mathematics by locating them within the Borel reduction hierarchy. Two of the most prominent benchmarks within this hierarchy are the \emph{smooth} and the \emph{classifiable by countable structures} classes of classification problems, largely for the reason that each of these benchmarks captures classification schemes which occur frequently in mathematical practice. The \emph{smooth} classification problems, for example, are precisely those which may be ``definably classified" by real number invariants, such as the problem of classifying all Bernoulli shifts of a given amenable group up to isomorphism; by the celebrated results of Ornstein \cite{ornstein_bernoulli_1970} and Ornstein-Weiss \cite{ornstein_entropy_1987}, these dynamical systems are completely classified by their entropy. Similarly, the \emph{classifiable by countable structures} classification problems are precisely those admitting ``definable classification'' by the isomorphism types of countable structures. A well-known example of dynamical systems which are classified up to isomorphism by such invariants is the class of all ergodic measure-preserving transformations of discrete spectrum; by an equally celebrated but much older result of Halmos and von Neumann, these are completely classified by the isomorphism type of a countable structure coding their spectrum. See \cite{NeumannZurOI}.

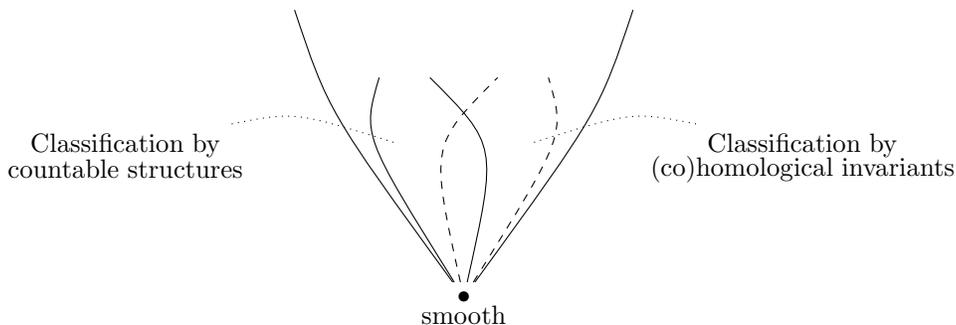
\begin{figure}[!htbp]
    \centering
\begin{tikzpicture}[scale=0.45]

\node (e) at (1.8, 6) {};
\node (e') at (10, 6) {Classification by};
\node (e'') at (10, 5.3) {(co)homological invariants};
\draw[dotted] (e) .. controls (5,7) .. (e');

\node (d) at (-1.8, 6) {};
\node (d') at (-10, 6) {Classification by};
\node (d'') at (-10, 5.3) {countable structures};
\draw[dotted] (d) .. controls (-5,7) .. (d');

\node (b) at (0, 1.5) {$\bullet$};
\node (b') at (0, 1) {smooth};

\draw (b) .. controls (-3,6.3) .. (-2.5, 8);
\draw (b) .. controls (1,6) .. (-1, 8);
\draw[dashed] (b) .. controls (3,6.3) .. (2.5, 8);
\draw[dashed] (b) .. controls (-1,6) .. (1, 8);
\draw (b) .. controls (-4,7) .. (-5,10);
\draw (b) .. controls (4,7) .. (5,10);
\end{tikzpicture}
    \caption{Classification by (co)homological invariants within the Borel reduction hierarchy.}
    \label{fig2}
\end{figure}

An upshot of our analysis here and in \cite{BLPI} is that \emph{classifiability by (co)homological invariants} forms an equally robust complexity class within the Borel reduction hierarchy, one containing a wide array of classification problems from throughout mathematics.  
This class contains all classification problems which can be ``definably classified" using as invariants the elements of a (co)homology group of some Polish (co)chain complex. More formally, let $E$ be an analytic equivalence relation on a Polish space $X$. We say that $(X,E)$ is \emph{classifiable by (co)homological invariants} if there exists a Polish abelian group $G$,  a Polishable subgroup $N$ of $G$, and a Borel map $f\colon X\to G$ so that for all $x,y\in X$ we have  $x\,E\,y$ if and only if $f(x)+N=f(y)+N$, i.e.,  if $E$ is 
Borel reducible to the coset equivalence relation $\mathcal{R}(G/N)$.

It follows from the recognition that many classical invariants from homological algebra and algebraic topology arise as the (co)homology groups of Polish (co)chain complexes that 
the aforementioned complexity class is a rich one. By 
   \citep{BLPI}, for example, it contains the problem of classifying the extensions $0\to F \to E \to B \to 0$ of any fixed pair $(B,F)$ of countable abelian groups. Similarly, it will follow from Section \ref{S:Definable cohomo} below that any classification problem admitting formulation in terms of the \v{C}ech cohomology of a locally compact Polish space falls in this class as well.
In particular, by Theorem \ref{T1:intro}, for any countable abelian group $G$ the problem of classifying maps $X\to Y$ from a locally compact Polish space $X$ to a $K(G,n)$ space $Y$ up to homotopy falls within this class; such $Y$ include, of course, infinite-dimensional real and complex projective space, any knot complement or, more generally, aspherical space, any finite wedge or product or mapping telescope of circles $S^1$, and so on.
These examples may easily be multiplied, within areas of study as varied as that of tiling spaces, group cohomology, or gerbes or fiber bundles, for example \cite{sadun_topology_2008, brown_cohomology_1992, brylinski_loop_2008}; we close this subsection with two representative instances. If $B$ is a locally compact Polish space, a careful inspection of how the assignments $p\mapsto c(p)$ and $\mathcal{A}\mapsto c(\mathcal{A})$ are defined below shows that they are induced by Borel maps at the level of cocycles. As a consequence, the problem of \emph{classifying  Hermitian line bundles over $B$ up to isomorphism} and of \emph{classifying continuous-trace separable $C^{*}$-algebras with spectrum $B$ up to Morita equivalence} are classifiable by (co)homological invariants in the above formal sense.   
   \begin{enumerate}
\item There is an assignment $p\mapsto c(p)$, from Hermitian line bundles  $p\colon E\to B$ over $B$,  to $\Check{\mathrm{H}}^2(B;\mathbb{Z})$, so that $p$ and $p'$ are isomorphic  iff $c(p)=c(p')$; see   \cite[Proposition 4.53]{Morita}.
\item There is an assignment $\mathcal{A}\mapsto c(\mathcal{A})$, 
from continuous-trace separable $C^{*}$-algebras $\mathcal{A}$  with   spectrum $B$,  to  $\Check{\mathrm{H}}^3(B;\mathbb{Z})$, so that $\mathcal{A}$ and $\mathcal{A}'$ are Morita equivalent  iff $c(\mathcal{A})=c(\mathcal{A}')$; see \cite[Proposition 5.24, Theorem 5.56]{Morita} or \cite{Blackadar}, and also \cite{BLPIII}. 
\end{enumerate} 

\subsection{Definable cohomology as a strong invariant}
A second consequence of our work is potentially of even wider significance than the first; this is the existence of definable cohomological functors $\check{\mathrm{H}}^n_{\mathrm{def}}:\mathsf{LC}\to\mathsf{GPC}$ which strictly refine their classical counterparts. In Section \ref{Section:telescopes} we record three sample applications of this technology. We have mentioned the first of these already; it is the following (see Sections \ref{subsection:mappingtelescopes} and \ref{ss:hocolims_of_tori} for precise definitions):
\begin{theorem} The definable \v{C}ech cohomology groups completely classify homotopy colimits, or, equivalently, mapping telescopes, of nontrivial inductive sequences of $d$-spheres up to homotopy equivalence, for all $d\geq 1$.
In contrast, there exist uncountable families of pairwise homotopy inequivalent mapping telescopes of sequences of spheres whose \emph{classical} \v{C}ech cohomology groups, viewed as graded abelian groups, are all isomorphic.
\end{theorem}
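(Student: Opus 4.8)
The plan is to prove the two halves---completeness of the definable invariant and insufficiency of the classical one---by reducing everything to the isomorphism type of a single rank-one torsion-free abelian group. A nontrivial inductive sequence of $d$-spheres is, up to homotopy, a sequence $S^d\xrightarrow{n_1}S^d\xrightarrow{n_2}\cdots$ with $n_i\in\mathbb{Z}$, and its mapping telescope $T$ is a CW complex whose only nonzero reduced homology is $H_d(T)\cong A:=\mathrm{colim}(\mathbb{Z}\xrightarrow{n_1}\mathbb{Z}\xrightarrow{n_2}\cdots)$, a subgroup of $\mathbb{Q}$; the ``nontrivial'' hypothesis guarantees $A$ is not finitely generated. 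For $d\geq 2$ the space $T$ is thus a Moore space $M(A,d)$, while for $d=1$ it is aspherical, a $K(A,1)$; in either case the classical uniqueness theorems give $T\simeq T'$ if and only if $A\cong A'$ as abstract groups, i.e. iff they share the same isomorphism type. I would record this homotopy classification at the outset.

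For the completeness half I would compute the definable cohomology of $T$. Exhausting $T$ by its finite subtelescopes, each homotopy equivalent to $S^d$, and applying the definable Milnor decomposition of Proposition \ref{Proposition:asymptotic-cohomology}, one identifies $\check{\mathrm{H}}^{d+1}_{\mathrm{def}}(T;\mathbb{Z})$, as a group with a Polish cover, with the definable $\mathrm{lim}^{1}$ of the tower $(\mathbb{Z},\times n_i)$---equivalently, with the definable $\mathrm{Ext}(A,\mathbb{Z})$---while all other definable cohomology vanishes apart from $\check{\mathrm{H}}^0_{\mathrm{def}}=\mathbb{Z}$. The essential input is then the classification result of \cite{BLPI}: the definable group $\mathrm{Ext}(A,\mathbb{Z})$ is a complete invariant of the isomorphism type of a rank-one $A$, i.e. $\mathrm{Ext}_{\mathrm{def}}(A,\mathbb{Z})\cong\mathrm{Ext}_{\mathrm{def}}(A',\mathbb{Z})$ in $\mathsf{GPC}$ iff $A\cong A'$. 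Chaining this with the homotopy classification and with Theorem \ref{T1:intro} (which guarantees that $T\mapsto\check{\mathrm{H}}^{\bullet}_{\mathrm{def}}(T)$ is a well-defined homotopy functor into $\mathsf{GPC}$) yields $T\simeq T'\iff A\cong A'\iff\check{\mathrm{H}}^{\bullet}_{\mathrm{def}}(T)\cong\check{\mathrm{H}}^{\bullet}_{\mathrm{def}}(T')$, the asserted complete classification.

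For the insufficiency half I would exhibit an explicit continuum-sized family. The classical cohomology of $T$, as a graded group, is just $\mathbb{Z}$ in degree $0$ and $\mathrm{Ext}(A,\mathbb{Z})$ in degree $d+1$ (with $\mathrm{Hom}(A,\mathbb{Z})=0$ killing degree $d$), so it is insensitive to everything about $A$ beyond the abstract isomorphism type of $\mathrm{Ext}(A,\mathbb{Z})$. For each infinite, co-infinite set $I$ of primes let $A_I\leq\mathbb{Q}$ be the group with $p$-height $1$ for $p\in I$ and $0$ otherwise, realized by the telescope whose degrees enumerate $I$. Distinct such $I$ differ at infinitely many primes, hence give non-isomorphic $A_I$ and pairwise homotopy inequivalent telescopes $T_I$. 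A direct computation of the divisible group $\mathrm{Ext}(A_I,\mathbb{Z})\cong E/\mathbb{Z}$, where $E=\prod_{p\in I}\mathbb{Z}/p$ and $\mathbb{Z}$ embeds diagonally, shows that its torsion-free rank is $2^{\aleph_0}$ and that every prime $q$ contributes exactly one copy of the Pr\"ufer group $\mathbb{Z}(q^\infty)$, whether or not $q\in I$; thus $\mathrm{Ext}(A_I,\mathbb{Z})\cong\mathbb{Q}^{(2^{\aleph_0})}\oplus\mathbb{Q}/\mathbb{Z}$ for every such $I$. The $2^{\aleph_0}$ telescopes $T_I$ therefore have isomorphic classical graded cohomology, completing the construction.

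The computational heart of the argument, and the step I expect to be the main obstacle, is the per-prime torsion count for the family $\{A_I\}$: one must verify that each ``missing'' prime $q\notin I$ nonetheless yields exactly one Pr\"ufer summand---via the congruence obstruction to $\mathbb{Z}$ being cofinal in $E$---and that each present prime $q\in I$ yields no more. This is precisely where a careless choice of heights would let the torsion part of $\mathrm{Ext}$ vary with $I$ and destroy the family; holding it constant is what forces the specific type $h_p\in\{0,1\}$ and rules out, say, heights valued in $\{0,\infty\}$. On the completeness side the substantive content is imported rather than reproved here: it is the \cite{BLPI} theorem that definable $\mathrm{Ext}$ (equivalently definable $\mathrm{lim}^{1}$) is a complete invariant of rank-one groups, together with the identification of the telescope's definable cohomology with that invariant through the definable Milnor sequence.
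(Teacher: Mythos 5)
Your proposal is correct in substance, and its computational core coincides with the paper's: both reduce a nontrivial telescope of $d$-spheres to a rank-one torsion-free group $A$, identify $\check{\mathrm{H}}^{d+1}_{\mathrm{def}}$ of the telescope with the definable $\mathrm{lim}^1$ of the tower $(\mathbb{Z},\times n_i)$ --- i.e., with definable $\mathrm{Ext}(A,\mathbb{Z})$ --- via the Milnor decomposition of Proposition \ref{Proposition:asymptotic-cohomology}, and then invoke the completeness theorem for definable $\mathrm{Ext}$ from \cite{BLPI}. You diverge at two sub-steps. First, for the homotopy classification $T\simeq T'\Leftrightarrow A\cong A'$ you appeal to uniqueness of Moore spaces $M(A,d)$ (resp.\ of $K(A,1)$ when $d=1$), whereas the paper uses Hopf's Theorem \ref{Theorem:Hopf} to replace an arbitrary nontrivial sequence of $d$-spheres by $S^{d-1}(\mathbf{X}_\Lambda)$ in $\mathsf{Ind}_\omega(\mathsf{Ho}(\mathsf{C}))$ and then relies on the functoriality of $\mathrm{hocolim}$ together with the equivalence of categories $\Lambda\mapsto\mathbf{X}_\Lambda$; your route is shorter but imports a nontrivial classical uniqueness theorem, while the paper's stays inside machinery it has already built. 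Second, for the insufficiency half you construct an explicit family $A_I$ of height-$\{0,1\}$ rank-one groups and compute $\mathrm{Ext}(A_I,\mathbb{Z})\cong\mathbb{Q}^{(2^{\aleph_0})}\oplus\mathbb{Q}/\mathbb{Z}$ by hand; the paper instead takes the groups $\mathbb{Z}[\frac{1}{\mathcal{P}^{\boldsymbol{m}}}]$ and cites \cite[Corollary 7.9]{BLPI} for the abstract isomorphism of their $\mathrm{Ext}$ groups. Your per-prime socle count is correct and is a legitimate self-contained substitute.

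Two slips to repair. (i) Nontriviality (nonzero degrees) does \emph{not} force $A$ to be infinitely generated: constant degree-$\pm 1$ sequences give $A\cong\mathbb{Z}$ and $T\simeq S^d$. Since \cite[Corollary 7.6]{BLPI} concerns finite-rank groups with \emph{no finitely generated summand}, you must separately dispose of the free case; this is immediate --- by Stein's theorem $\mathrm{Ext}(A,\mathbb{Z})=0$ exactly when the rank-one group $A$ is free, and alternatively $\mathrm{H}^d_{\mathrm{w}}\cong\mathrm{Hom}(A,\mathbb{Z})$ detects freeness --- but as written your chain of equivalences silently excludes it, and your claim that all definable cohomology other than $\check{\mathrm{H}}^0$ and $\check{\mathrm{H}}^{d+1}$ vanishes is false in that case. (ii) Distinct infinite co-infinite sets of primes need not differ at infinitely many primes (compare $I$ with $I\cup\{2\}$), and $A_I\cong A_J$ whenever $I\,\triangle\,J$ is finite; you need to select a continuum-sized family of such sets with pairwise infinite symmetric differences, which of course exists.
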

We also prove the variant of this theorem resulting from replacing every instance of \emph{spheres} therein with \emph{tori}. Each of these theorems draws on Ulam stability results first established in \cite{BLPI}, and this, indeed, is much of their interest: each applies fundamentally \emph{descriptive set theoretic rigidity results} to algebraic and topological settings. We retain hopes for deeper applications along these lines, a point we return to in our conclusion.

Our second application of the functors $\check{\mathrm{H}}^n_{\mathrm{def}}$ is to a generalization of a problem dating to Borsuk and Eilenberg in 1936 \cite{borsuk_uber_1936}. Writing $\Sigma_p$ for the standard unknotted realization of the $p$-adic solenoid in $S^3$, this is the problem of classifying the maps $S^3\backslash\Sigma_p\to S^2$ up to homotopy. We review the problem's pivotal role in the development of algebraic topology in Section \ref{ss:Borsuk_Eilenberg}, where we also show the following, together with its higher-dimensional generalizations; the theorem's statement is an amalgamation of Theorems \ref{Theorem:BE} and \ref{thm:smooth!}.
\begin{theorem}
\label{intro:T3}
For any prime number $p$, $[S^{3}\backslash\Sigma_{p},S^{2}]$ is a definable set, and there is a basepoint-preserving
definable bijection between it and the definable group $\mathrm{Ext}(\mathbb{Z}[1/p],\mathbb{Z})$.
In consequence, the Borsuk-Eilenberg problem of classifying maps $S^3\backslash\Sigma_p\to S^2$ up to homotopy is essentially hyperfinite but not smooth.
\end{theorem}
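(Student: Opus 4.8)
The plan is to compute the definable homotopy set $[S^{3}\backslash\Sigma_{p},S^{2}]$ outright, by realizing it as a group with a Polish cover that we can identify with $\mathrm{Ext}(\mathbb{Z}[1/p],\mathbb{Z})$, and then to read the Borel complexity of the Borsuk--Eilenberg problem directly off the associated coset equivalence relation. Set $X:=S^{3}\backslash\Sigma_{p}$, a locally compact Polish space (an open $3$-manifold). First I would fix a cofinal compact exhaustion adapted to the solenoid: writing $\Sigma_{p}=\bigcap_{n}V_{n}$ as a nested intersection of unknotted solid tori with $V_{n+1}$ winding $p$ times longitudinally inside $V_{n}$, the complements $W_{n}:=S^{3}\backslash\mathrm{int}(V_{n})$ are solid tori with $W_{n}\subseteq W_{n+1}$ and $X=\bigcup_{n}W_{n}$. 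Each $W_{n}\simeq S^{1}$, and the restriction maps $\check{\mathrm{H}}^{1}(W_{n+1};\mathbb{Z})\to\check{\mathrm{H}}^{1}(W_{n};\mathbb{Z})\cong\mathbb{Z}$ are multiplication by $p$, so that the relevant inverse system is the tower $(\mathbb{Z}\xleftarrow{p}\mathbb{Z}\xleftarrow{p}\cdots)$.

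The first main step is to replace $S^{2}$ by the polyhedral abelian $H$-group $\mathbb{CP}^{\infty}=K(\mathbb{Z},2)$. Since $X$ is an open $3$-manifold it has the homotopy type of a $2$-dimensional complex, so cellular approximation makes the inclusion $S^{2}=\mathbb{CP}^{1}\hookrightarrow\mathbb{CP}^{\infty}$ induce a bijection $[X,S^{2}]\to[X,\mathbb{CP}^{\infty}]$; I would upgrade this to a \emph{definable} bijection using the definable simplicial approximation theorem developed for Theorem \ref{T1:intro}, together with Theorem \ref{theorem:definable_action_of_pi1} to pass freely between based and unbased classes (legitimate since $\pi_{1}(S^{2})=0$). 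Because $\mathbb{CP}^{\infty}$ is a polyhedral $H$-group, Theorem \ref{T2:intro} shows the homotopy relation on $\mathrm{Map}(X,\mathbb{CP}^{\infty})$ is Borel and idealistic, so $[X,\mathbb{CP}^{\infty}]_{\mathrm{def}}$ is a definable group; transporting along the above bijection then exhibits $[X,S^{2}]$ as a definable set. By the definable Huber theorem (Theorem \ref{T1:intro}), $[X,\mathbb{CP}^{\infty}]_{\mathrm{def}}\cong\check{\mathrm{H}}^{2}_{\mathrm{def}}(X;\mathbb{Z})$ in the category of definable groups.

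It remains to identify $\check{\mathrm{H}}^{2}_{\mathrm{def}}(X;\mathbb{Z})$ definably with $\mathrm{Ext}(\mathbb{Z}[1/p],\mathbb{Z})$ and to extract the complexity. Applying the definable Milnor-type sequence of Proposition \ref{Proposition:asymptotic-cohomology} to the exhaustion $(W_{n})$, and using that $\check{\mathrm{H}}^{2}(W_{n};\mathbb{Z})=\check{\mathrm{H}}^{2}(S^{1};\mathbb{Z})=0$, the $\varprojlim$ term vanishes and \emph{every} map $X\to\mathbb{CP}^{\infty}$ is phantom; the sequence collapses to a definable isomorphism $\check{\mathrm{H}}^{2}_{\mathrm{def}}(X;\mathbb{Z})\cong\varprojlim^{1}_{\mathrm{def}}(\mathbb{Z}\xleftarrow{p}\mathbb{Z}\xleftarrow{p}\cdots)$. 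By \cite{BLPI}, the definable $\varprojlim^{1}$ of the tower $(\mathbb{Z},\times p)$ is exactly the definable group $\mathrm{Ext}(\mathbb{Z}[1/p],\mathbb{Z})$, realized as the group with a Polish cover $(\hat{\mathbb{Z}}_{p},\mathbb{Z})$ with $\mathbb{Z}\hookrightarrow\hat{\mathbb{Z}}_{p}$ the dense diagonal inclusion. Chaining these definable isomorphisms, and checking that the class of the constant map goes to $0$, yields the basepoint-preserving definable bijection of Theorem \ref{Theorem:BE}. For Theorem \ref{thm:smooth!} I then read the complexity off the coset relation $\mathcal{R}(\hat{\mathbb{Z}}_{p}/\mathbb{Z})$: two points are equivalent iff they differ by an element of $\mathbb{Z}$, so $\mathcal{R}(\hat{\mathbb{Z}}_{p}/\mathbb{Z})$ is the orbit equivalence relation of the translation action $\mathbb{Z}\curvearrowright\hat{\mathbb{Z}}_{p}$, i.e.\ the $p$-adic odometer. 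As the orbit equivalence relation of a single Borel automorphism it is hyperfinite by Weiss's theorem, and the definable bijection above is a Borel reduction of the Borsuk--Eilenberg relation to it, so the problem is essentially hyperfinite. It is not smooth because $\mathbb{Z}$ is dense in $\hat{\mathbb{Z}}_{p}$: every orbit is dense and Haar measure is invariant and ergodic, whence $\mathcal{R}(\hat{\mathbb{Z}}_{p}/\mathbb{Z})$ is in fact Borel bireducible with $E_{0}$ and admits no Borel transversal.

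The hard part, I expect, is the definable passage from $S^{2}$ to $\mathbb{CP}^{\infty}$. Unlike $\mathbb{CP}^{\infty}$, the sphere $S^{2}$ admits no $H$-group structure, so Theorem \ref{T2:intro} does not apply to $\mathrm{Map}(X,S^{2})$ directly; even showing that $[X,S^{2}]$ is a definable set forces the detour through $\mathbb{CP}^{\infty}$, and the delicate point is to make the classical cellular-approximation bijection $[X,S^{2}]\cong[X,\mathbb{CP}^{\infty}]$ simultaneously Borel in both directions and compatible with basepoints. Everything downstream --- the collapse of the Milnor sequence to a pure $\varprojlim^{1}$ term and its identification with $(\hat{\mathbb{Z}}_{p},\mathbb{Z})$ --- is then a matter of assembling the definable machinery already in place together with the $\varprojlim^{1}$ computations of \cite{BLPI}.
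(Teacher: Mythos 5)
Your proposal is correct, and its main line is essentially the paper's: identify $S^3\backslash\Sigma_p$ up to homotopy with the mapping telescope of degree-$p$ self-maps of the circle, use Hopf's theorem to trade $S^2$ for $K(\mathbb{Z},2)$, invoke the definable Huber theorem and the Milnor-type sequence of Proposition \ref{Proposition:asymptotic-cohomology} so that the $\mathrm{lim}$ term dies and everything is phantom, and land on $\mathrm{lim}^1(\mathbb{Z}\xleftarrow{p}\mathbb{Z}\xleftarrow{p}\cdots)\cong\mathrm{Ext}(\mathbb{Z}[1/p],\mathbb{Z})$ via the definable Jensen theorem of \cite{BLPI}. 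Two local differences are worth noting. First, you justify the bijection $[X,S^2]\cong[X,\mathbb{CP}^\infty]$ by the fact that an open $3$-manifold has the homotopy type of a $2$-complex and $(\mathbb{CP}^\infty,\mathbb{CP}^1)$ is $3$-connected, whereas the paper verifies the hypothesis of its Theorem \ref{Theorem:Hopf} cohomologically, via $\mathrm{H}^q(\mathrm{hocolim}\,\mathbf{X}_\Lambda)=0$ for $q>2$ (Proposition \ref{Proposition:cohomology-telescope-group}(3)); these produce the same definable bijection, and your worry about Borelness of the inverse is resolved exactly as you suggest, by the Kechris--Macdonald rigidity (Proposition \ref{Proposition:bij-iso}) once idealisticity (Theorem \ref{Theorem:homotopy-idealistic}) and Borelness (pulled back along the continuous map $f\mapsto\iota\circ f$) are in hand. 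Second, for the complexity you argue directly that $\mathcal{R}(\hat{\mathbb{Z}}_p/\mathbb{Z})$ is the orbit equivalence relation of the $p$-adic odometer, hence hyperfinite and, by density of orbits and ergodicity, non-smooth and bireducible with $E_0$; the paper instead proves a general criterion (Theorem \ref{thm:smooth!}): for torsion-free $\Lambda$ the relation is essentially hyperfinite iff $\{0\}$ is $\boldsymbol{\Sigma}^0_2$ in $\mathrm{Ext}(\Lambda,\mathbb{Z})$ iff $\Lambda$ splits as a finite-rank group plus a free one, established via the long exact $\mathrm{Hom}$--$\mathrm{Ext}$ sequences and results of Gao and Ding--Gao. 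Your odometer argument is more elementary and transparent for the specific case $\Lambda=\mathbb{Z}[1/p]$; the paper's buys the characterization for all torsion-free $\Lambda$ at once. Everything you wrote checks out.
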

The result connects to $\check{\mathrm{H}}^n_{\mathrm{def}}$ via Hopf's Theorem (see Section \ref{ss:Hopf}), and connects to homotopy colimits via the recognition that $S^{3}\backslash\Sigma_{p}$ is homotopy equivalent to a mapping telescope of $1$-spheres. Note that this is a stronger and subtler result than the recognition that $[S^{3}\backslash\Sigma_{p},S^{2}]$ is uncountable, in precisely that sense in which \emph{Borel cardinality} exhibits a richer and subtler degree structure than classical cardinality does; see \cite{hjorth_effective_notes} for a brief introduction to these matters. This brings us to the third sample application of our technology, in which we consider \emph{equivariant} versions of the aforementioned generalized Borsuk-Eilenberg problems. It follows readily from results in both \cite{BLPI} and the present work that these classification problems realize both an infinite antichain and an infinite chain of degrees within the Borel reduction heirarchy; a description of these degrees appears as Corollary \ref{Corollary:complexity}.

\subsection{Concluding preliminaries}
A few final words are in order about the organization of the paper. To begin with, it is, in conception, largely self-contained: although we will repeatedly invoke results from its predecessor \cite{BLPI}, we do not presume any close familiarity with that work. Not unrelatedly, we have drafted it with readers with a wide variety of backgrounds in mind. Its table of contents, together with our preceding remarks and section introductions below, should convey its overall plan.

A standard setting for much of the topological material we'll be considering below is the category \textsf{HCW} of spaces homotopy equivalent to a (countable) CW complex; we should therefore say a bit about our decision to mainly work with two other categories of spaces, that of locally compact separable metric spaces (\textsf{LC}) and that of spaces homotopy equivalent to the geometric realization of a countable, locally finite simplicial complex. As it happens, the latter category is identical to \textsf{HCW} \cite{milnor_spaces_1959}; a polyhedral emphasis merely facilitates several of our arguments. As for the category $\mathsf{LC}$, its virtues for our purposes are multiple:
\begin{itemize}
\item for any two objects $X,Y$ in $\mathsf{LC}$, the compact-open topology renders the set $\mathrm{Map}(X,Y)$ a Polish space;
\item Huber's theorem applies in the generality of $\mathsf{LC}$, and \v{C}ech and sheaf cohomology coincide therein as well;
\item any space in $\mathsf{LC}$ is compactly generated \cite{steenrod_convenient_1967}; relatedly, the fundamental adjunctions of algebraic topology all hold in $\mathsf{LC}$ \cite[chs. 5-6]{bradley_topology_2020};
\item any object of $\mathsf{LC}$ is a countable increasing union of compact subspaces, or, more briefly, is $\sigma$-compact.
\end{itemize}
We have focused exclusively on the group structure of \v{C}ech cohomology simply for reasons of space. We count $0$ among the natural numbers $\mathbb{N}$. We turn, after the following acknowledgement, to the main body of our paper.
\subsubsection*{Acknowledgments}
We would like to thank Alexander Kechris for many valuable discussions, particularly of idealistic equivalence relations, in the course of this paper's composition, as well as the anonymous referee for a persistently alert and valuable reading.
\section{Definable cohomology: the combinatorial approach}\label{S:Definable cohomo}

A \emph{Polish space} $X$ is a second countable topological space whose topology is induced by a complete metric. A subset $Z$ of $X$ is \emph{analytic} if it is the continuous image of a Polish space, and \emph{Borel} if it belongs to the $\sigma$-algebra generated by the open subsets of $X$. A function $f:X\to Y$ between Polish spaces is Borel if its graph is a Borel subset of $X\times Y$ or, equivalently, if $f^{-1}(U)$ is Borel for every open $U\subseteq Y$. A subset $Z$ of $X$ is \emph{meager} if it is a countable union of nowhere dense subsets of $X$, and \emph{comeager} if its complement in $X$ is meager. A \emph{Polish group} is a topological group whose topology is Polish; a \emph{Polishable} topological group is one whose Borel structure is generated by a (possibly finer) Polish group topology. Descriptive set theoretic results as well-known as the fact, for example, that a closed subgroup of a Polish group is Polish will typically be invoked below without citation; readers are referred to \cite{kechris_classical_1995} or \cite{gao_invariant_2009} for complete introductions to this material.

A {\em Polish cochain complex} $C^{\bullet}$ is an $\mathbb{Z}$-indexed sequence of continuous homomorphisms of Polish abelian groups
\[C^{\bullet}:=( \quad   \quad \quad \quad \quad \quad \quad \quad \quad\cdots\longrightarrow \,C^{n-1}\overset{\delta^{n-1}}{\longrightarrow} C^n\overset{\delta^{n}}{\longrightarrow} C^{n+1} {\longrightarrow} \cdots  \quad \quad  \quad \quad \quad \quad \quad \quad) \quad \quad\]
in which $\delta^n\delta^{n-1}=0$ for all $n\in\mathbb{Z}$. Observe that every group $\mathrm{Z}^n:=\mathrm{ker}(\delta^n)$ of {\em $n$-cocycles} arising in $C^{\bullet}$ is a closed and therefore Polish subgroup of $C^n$, and that the {\em $n$-coboundary} groups $\mathrm{B}^n:=\mathrm{im}(\delta^{n-1})$ arising in $C^{\bullet}$, being continuous homomorphic images of Polish groups, are all Polishable. We denote by $\mathrm{H}^n:=\mathrm{Z}^n/\mathrm{B}^n$ the (abstract) cohomology group of degree $n$ associated to $C^{\bullet}$.
The {\em definable cohomology group} $\mathrm{H}^n_{\mathrm{def}}$ of degree $n$ associated to $C^{\bullet}$ is $\mathrm{Z}^n/\mathrm{B}^n$ endowed with the structure of a {\em group with a Polish cover}. As above, recall from \cite{BLPI} that groups with a Polish cover $G/N$ form a category $\mathsf{GPC}$ whose objects are pairs $(G,N)$ in which $N$ is a Polishable normal subgroup of a Polish group $G$, and whose morphisms, also known as  {\em definable homomorphisms}, are those group homomorphisms $f\colon G/N\to G'/N'$ which lift to a Borel function $\hat{f}\colon G\to G'$ satisfying $f(gN)=\hat{f}(g)N'$. Within the definable setting of $\mathsf{GPC}$, the functor $\mathrm{H}^n_{\mathrm{def}}$ conserves, in general, considerably more of the data of $C^{\bullet}$ than its classical counterpart $\mathrm{H}^n$; this renders it a significantly stronger invariant than the latter.

As shown in \cite{BLPI}, several of the most prominent invariants of homological algebra arise as the cohomology groups of cochain complexes carrying natural Polish topologies, including suitable restrictions of the functors $\mathrm{Ext}$ and $\mathrm{lim}^1$. In this section we show that the \v{C}ech cohomology groups $\Check{\mathrm{H}}^n(X;G)$ of a locally compact metrizable space $X$ with coefficients in any countable abelian group $G$ may be similarly construed as groups with a Polish cover, giving rise to the \emph{definable cohomology groups} $\Check{\mathrm{H}}^n_{\mathrm{def}}(X;G)$ of $X$.

This work involves technical challenges which were mercifully absent in the cases of $\mathrm{Ext}$ or $\mathrm{lim}^1$. To better describe them, recall that most standard definitions of \v{C}ech cohomology group $\Check{\mathrm{H}}^n(X;G)$ are some variation on the following:
\begin{equation}
\Check{\mathrm{H}}^n(X;G):= \mathrm{colim}_{\mathcal{U}\in\mathrm{Cov}(\mathcal{U})}\, \mathrm{H}^n(\mathrm{Nv}(\mathcal{U});G),
\end{equation}
where $\mathcal{U}$ ranges over the collection $\mathrm{Cov}(X)$ of all locally finite open covers of $X$, ordered by refinement, and $\mathrm{H}^n(\mathrm{Nv}(\mathcal{U});G)$ denotes the $n^{\mathrm{th}}$ simplicial cohomology group of the \emph{nerve} of $\mathcal{U}$; see Section \ref{SS:DefCohSimCom} below. Implicit in this expression is the fact that the refinement relation  $\mathcal{U}\preceq\mathcal{V}$ (i.e., \emph{$\mathcal{V}$ refines $\mathcal{U}$}) induces a canonical homomorphism $\mathrm{H}^n(\mathrm{Nv}(\mathcal{U});G)\to \mathrm{H}^n(\mathrm{Nv}(\mathcal{V});G)$.
Two main issues complicate the impulse to regard this object as a group with a Polish cover. The first difficulty is that, as defined above, $\Check{\mathrm{H}}^n(X;G)$ is not the cohomology group of any single explicit cochain complex. The second  issue is that $\mathrm{Cov}(X)$ does not in general contain a countable cofinal subset; in consequence, it is less than immediately clear how to extract the information contained in $\Check{\mathrm{H}}^n(X;G)$ from any separable space of data.

We address these difficulties by introducing the notion of a \emph{covering system} $\boldsymbol{\mathcal{U}}$ for $X$; in Section \ref{SS:coveringsystems} we show that every locally compact metrizable space $X$ admits such a system. Covering systems are families of open covers which are cofinal in $\mathrm{Cov}(X)$ and \emph{continuously parametrized by $\mathbb{N}^\mathbb{N}$}.  In Section \ref{SS:DefCoho} we associate to each covering system  $\boldsymbol{\mathcal{U}}$ a Polish cochain complex $C^{\bullet}(\boldsymbol{\mathcal{U}};G)$ and we introduce the \emph{definable cohomology groups} of $X$ as the groups with a Polish cover
\[\Check{\mathrm{H}}^n_{\mathrm{def}}(X;G):= \mathrm{Z}(\boldsymbol{\mathcal{U}};G)/\mathrm{B}(\boldsymbol{\mathcal{U}};G)\]
associated to  $C^{\bullet}(\boldsymbol{\mathcal{U}};G)$. As abstract groups, $\Check{\mathrm{H}}^n_{\mathrm{def}}(X;G)$ coincide with the classical \v{C}ech cohomology groups. As the notation suggests, 
$\Check{\mathrm{H}}^n_{\mathrm{def}}(X;G)$ does not depend on the choice of $\boldsymbol{\mathcal{U}}$ up to definable isomorphism. Moreover, the assignment $X\mapsto \Check{\mathrm{H}}^n_{\mathrm{def}}(X;G)$ is functorial and invariant under homotopy equivalences. However, we defer the proofs of these claims to Section \ref{S:Huber}, since they will apply the 
definable theory of homotopy equivalence which we first develop in Sections \ref{S:DSets} and \ref{S:Hidealistic}. This brings us to a few last points: first, in the following section we recall only what we need of nerves and simplicial complexes for the combinatorial development of definable cohomology; we extend our treatment of these matters in Sections \ref{SS:polyhedra} and \ref{SS:compact pairs}. Second, similarly, for simplicity's sake we undertake this development primarily in the context of single spaces $X$, only recording the modifications needed for the cohomology of \emph{pairs} of spaces $(X,A)$ at its conclusion, in Section \ref{SS:combinatorial_cohomology_of_pairs}; there we also show that definable cohomology satisfies the definable version of the Exactness Axiom. Lastly, readers seeking, as background to this section, a more complete classical combinatorial treatment of \v{C}ech cohomology are referred to \cite[Chapter IX]{eilenberg_foundations_1952}.


\subsection{Definable cohomology for simplicial complexes}\label{SS:DefCohSimCom}

A {\em simplicial complex} $K$ is a family of finite sets that is closed
downwards, i.e., $\sigma \subseteq \tau \in K\implies \sigma \in K$. A \emph{simplex} or {\em 
face of $K$} is any element $\sigma \in K$. A {\em vertex of $K$} is any
element $v$ of $\mathrm{dom}(K):=\bigcup K$.  Let $K,L$ be two simplicial
complexes. A {\em simplicial map} $f\colon K\rightarrow L$ is then any function $
f\colon \mathrm{dom}(K)\rightarrow \mathrm{dom}(L)$ such that $
\{f(v_{0}),\ldots ,f(v_{n})\}\in L$ for all $\{v_{0},\ldots ,v_{n}\}\in K$.
The {\em dimension} $\mathrm{dim}(\sigma )$ of a face $\sigma $ of $K$ is simply the number $|\sigma|-1$. For example, $\mathrm{dim}(\emptyset )=(-1)$ and $\mathrm{dim}(\{v\})=0$
for every $v\in \mathrm{dom}(K)$. The {\em dimension} $\mathrm{dim}(K)$ of $
K$ is the supremum over $\{\mathrm{dim}(\sigma )\mid \sigma \in K\}$. A
simplicial complex is {\em finite} if it has finitely many vertices, and 
{\em countable} if it has countably many vertices. It is {\em locally
finite} if each vertex belongs to finitely many faces.
For each $n\in \mathbb{N}$, the {\em singular $n$-faces}\footnote{See \cite{camarena_turning_notes} for some justification of this terminology.} 
of $K$ comprise the set
\[K^{(n)}:=\{(v_0,\ldots,v_{n})\in \mathrm{dom}(K)^{n+1}\colon \{v_0,\ldots,v_{n}\}\in K \}.\]
\noindent We fix  an abelian Polish group $G$ and consider, for every $n\in\mathbb{N}$, the collection
\[C^{n}(K;G):= C(K^{(n)},G)\]
of all maps from the countable set $K^{(n)}$ to $G$. Endowed with  the group operation $(\zeta,\eta)\mapsto(\zeta+\eta)$ of pointwise addition $(\zeta+\eta)(\bar{v})=\zeta(\bar{v})+\eta(\bar{v})$, and the product topology of countably many copies of $G$, the collection $C^{n}(K;G)$ forms the abelian Polish group of all {\em  $G$-valued cochains of $K$}. For every $n\geq 0$, we have the
 {\em coboundary map}:
\begin{align}\label{Def:SimplicialCoboundary}
\begin{split}
\delta^{n}:C^{n}(K;G) &\rightarrow C^{n+1}(K;G), \text{ with}\\
\left( \delta^{n}( \zeta) \right) \big((v_{0},\ldots ,v_{n+1})\big)
&=\sum_{i=0}^{n}\left( -1\right) ^{i}\zeta (v_{0},\ldots ,\hat{v}_{i},\ldots
,v_{n+1}),
\end{split}
\end{align}
where $\hat{v}_{i}$ denotes the omission of $v_i$ from $\bar{v}$. Notice that $\delta^{n}$ is a continuous homomorphism. This gives rise to the {\em Polish cochain complex $C^{\bullet}(K;G)$ of  $G$-valued cochains of $K$}:
\[0\longrightarrow C^{0}(K;G)\overset{\delta^0} \longrightarrow C^{1}(K;G)\longrightarrow\cdots\longrightarrow C^{n}(K;G) \overset{\delta^{n+1}}\longrightarrow C^{n}(K;G) \longrightarrow \cdots\]

\begin{definition}\label{Def:cohomoForSimplexes}
Let $K$ be a countable simplicial complex and let $G$ be an abelian Polish group. For every $n\in\mathbb{N}$, the {\em $n$-dimensional definable cohomology group $\mathrm{H}^{n}_{\mathrm{def}}(K;G)$ of $K$ with coefficients in $G$} is  the $n$-dimensional cohomology group $\mathrm{H}^{n}(K;G)$ of the Polish cochain complex $C^{\bullet}(K;G)$, viewed as the group with a Polish cover:
\[
 0\longrightarrow \mathrm{B}^{n}(K;G)\longrightarrow \mathrm{Z}^{n}(K;G)  \longrightarrow \mathrm{Z}^{n}(K;G)/
\mathrm{B}^{n}(K;G)\longrightarrow 0\]
 where  $\mathrm{Z}^{n}(K;G)= \mathrm{ker}(\delta^{n})$ is the Polish group of  {\em $n$-dimensional  $G$-valued cocycles of $K$}  and   $\mathrm{B}^{n}(K;G)=\mathrm{im}(\delta^{n-1})$ is the Polishable group of   {\em $n$-dimensional  $G$-valued coboundaries of $K$}.
 \end{definition}
Of course, if we forget about the quotient Borel structure on   $\mathrm{H}^{n}_{\mathrm{def}}(K;G)$ and instead treat it as an abstract group then we recover the classical $n$-dimensional simplicial cohomology group of $K$ with coefficients in $G$.

\begin{remark}
Several alternative definitions of the  simplicial cohomology groups $\mathrm{H}^{n}(K;G)$ may be found in the literature. Each involves variants of the cochain complex $C^{\bullet}(K;G)$ described above. For example, one may work with the chain complex $C^{\bullet}_{\mathrm{alt}}(K;G)$ of \emph{alternating cochains} instead. An {\em alternating $n$-cochain} is any $n$-cochain $\zeta\in C^{\bullet}(K;G)$ with the property that 
\[\zeta(v_0,\ldots,v_n)=\mathrm{sgn}(\pi)\zeta(v_{\pi(0)},\ldots,v_{\pi(n)}),\]
for any permutation $\pi$ of the set $\{0,\ldots,n\}$
. The inclusion $C^{\bullet}_{\mathrm{alt}}(K;G)\hookrightarrow  C^{\bullet}(K;G)$ yields an isomorphism between $\mathrm{H}^{n}(K;G)$  and  $\mathrm{H}^{n}_{\mathrm{alt}}(K;G)$, as is well-known (see, e.g., \cite{serre_faisceaux_1955}). It is clear that  $C^{\bullet}_{\mathrm{alt}}(K;G)$ inherits a Polish structure from $C^{\bullet}(K;G)$ and that 
$C^{\bullet}_{\mathrm{alt}}(K;G)\hookrightarrow  C^{\bullet}(K;G)$ is continuous. Hence   $\mathrm{H}^{n}(K;G)$  and  $\mathrm{H}^{n}_{\mathrm{alt}}(K;G)$ are definably isomorphic as groups with a Polish cover. As this isomorphism commutes with direct limits, similar remarks will apply to the \v{C}ech cohomology groups of locally compact Polish spaces which we define below.
\end{remark}

\subsection{An indexing poset} Below we identify $n$ with the set $\{0,\ldots,n-1\}$. Let $\mathbb{N}^n$ be the space of all functions from $n$ to $\mathbb{N}$ and set $\mathbb{N}^{<\mathbb{N}}=\bigcup_{n\in\mathbb{N}}\mathbb{N}^n$. The {\em Baire space} $\mathcal{N}:=\mathbb{N}^{\mathbb{N}}$ is the space of all functions from $\mathbb{N}$ to $\mathbb{N}$, which we view as a Polish space  equipped with the product topology on discrete copies of $\mathbb{N}$. For every $\alpha\in\mathcal{N}$ and all $n\in\mathbb{N}$ we let $\alpha|n\in\mathbb{N}^n$ be the finite sequence $(\alpha(0),\ldots,\alpha(n-1))$. For every  $s=(s(0),\ldots,s(n-1))\in\mathbb{N}^{<\mathbb{N}}$, we denote by $\mathcal{N}_s$ the clopen subset $\{\alpha\in \mathcal{N}\colon \alpha|n=s \}$ of $\mathcal{N}$. The collection  $\{\mathcal{N}_s\colon s\in \mathbb{N}^{<\mathbb{N}}\}$ of all such sets forms a basis  for the topology on $\mathcal{N}$.
Let $(X_s\colon s\in \mathbb{N}^{<\mathbb{N}})$ be a family of sets parametrized by $\mathbb{N}^{<\mathbb{N}}$. To each such family we may apply {\em Suslin's $\mathcal{A}$-operation}, producing the set
\[\mathcal{A}(X_{s}):=\bigcup_{\alpha\in \mathcal{N}}\bigcap_{n\in \mathbb{N}}X_{\alpha|n}.\]
Notice that while the family $(X_s)$ is countable, the operation $\mathcal{A}$ involves an uncountable union. Historically, this operation was used to explicitly define subsets of the real line which are analytic but not Borel. Indeed, every Borel subset of a Polish space $X$ is the result of the $\mathcal{A}$-operation applied to some system $(X_s)$ of closed  subsets of $X$ --- but not all sets   derived from the $\mathcal{A}$-operation applied to a system of closed  subsets is Borel; see \cite[Theorem 25.7]{kechris_classical_1995}.
 
Here we will be interested in the closed subset $\mathcal{N}^*$ of $\mathcal{N}$ consisting of all {\em non-decreasing}  such functions, i.e., all $\alpha=(\alpha(0),\alpha(1),\ldots)\in \mathcal{N}$ such  that  $\alpha(n)\leq \alpha(n+1)$ for all $n\in\mathbb{N}$. It is easy to see that  $\mathcal{N}^*$ and $\mathcal{N}$ are homeomorphic. Similarly, we denote by $(\mathbb{N}^n)^*$ and $(\mathbb{N}^{<\mathbb{N}})^*$ the sets of all {\em non-decreasing functions} in $\mathbb{N}^n$ and  $\mathbb{N}^{<\mathbb{N}}$, respectively. We set $\mathcal{N}^*_s:=\mathcal{N}_s\cap\mathcal{N}^*$ for all $s\in(\mathbb{N}^{<\mathbb{N}})^*$.
We endow $\mathcal{N}^*$ with the {\em pointwise  partial ordering} $\leq$    and the associated {\em meet} operation $(\alpha,\beta)\mapsto\alpha\wedge \beta$  and  {\em join}  operation $(\alpha,\beta)\mapsto\alpha\vee \beta$, where
\[\alpha\leq \beta \iff  \forall n\in \mathbb{N} \; (\alpha(n)\leq \beta(n)),\]
\[(\alpha\wedge \beta)(n):= \min\{\alpha(n),\beta(n)\} \text{ and  }  (\alpha\vee \beta)(n):= \max\{\alpha(n),\beta(n)\}.\]
Occasionally we will also employ the {\em lexicographic linear ordering} $\leq_{\mathrm{lex}}$ of $\mathcal{N}^*$ with
\[\alpha\leq_{\mathrm{lex}}\beta \iff  \big( \alpha=\beta \text{ or } \exists n\in \mathbb{N}  \; (\alpha|n=\beta|n \text{ and }  \alpha(n)<\beta(n)\big). \]
Lastly, observe that to every family of sets $\big{(}X_s\colon s\in (\mathbb{N}^{<\mathbb{N}})^*\big{)}$, parametrized by elements of  $(\mathbb{N}^{<\mathbb{N}})^*$, we may still apply Suslin's $\mathcal{A}$-operation by first setting $X_s:=\emptyset$ for all $s\in\mathbb{N}^{<\mathbb{N}}\setminus(\mathbb{N}^{<\mathbb{N}})^*$.
 
\subsection{Covering systems}\label{SS:coveringsystems}
Let $X$ be a locally compact Polish space. A \emph{cofiltration} or {\em exhaustion of $X$} is an increasing  sequence  $X_0\subseteq X_1\subseteq \cdots$ of compact subsets of $X$, with $X_0=\emptyset$ and $\bigcup_{n\in\mathbb{N}}X_n=X$.
An {\em open cover} of $X$ is any family $\mathcal{U}$ of open sets so that $\bigcup\mathcal{U}=X$. An open cover $\mathcal{U}$ is {\em locally finite} if for every $x\in X$ there is an open neighborhood $O\subseteq X$ of $x$ which intersects only finitely many elements of $\mathcal{U}$. If $K\subseteq X$ then we let $\mathcal{U}\upharpoonright K:=\{U\in \mathcal{U}\colon U\cap K \neq \emptyset\}$. Notice that if $\mathcal{U}$ is locally finite and $K$ is compact, then $\mathcal{U}\upharpoonright K$ is finite. Let $\mathcal{U},\mathcal{V}$ be open covers of $X$. We write $\mathcal{U}\preceq\mathcal{V}$ if $\mathcal{V}$ {\em refines} $\mathcal{U}$, i.e., if there exists a function $r\colon \mathcal{V}\to \mathcal{U}$ so that $V\subseteq  r(V)$ for all $V\in\mathcal{V}$. We term such an $r$ a {\em refinement map}.

\begin{definition}\label{DefCovers2}
A {\em covering system for $X$} is a triple $\boldsymbol{\mathcal{U}}=\big{(}(X_n),(\mathcal{U}_{\alpha}),(r^{\beta}_{\alpha})\big{)}$ such that
\begin{itemize}
\item $(X_n\colon n\in\mathbb{N})$ is an exhaustion of $X$,
\item $(\mathcal{U}_{\alpha}\colon \alpha\in\mathcal{N}^*)$ is a family of locally finite open covers of $X$, and  
\item $r^{\beta}_{\alpha}\colon \mathcal{U}_{\beta}\to\mathcal{U}_{\alpha}$ are refinement maps indexed by the pairs $\alpha\leq \beta$ in $\mathcal{N}^*$ and satisfying $r^{\alpha}_{\alpha}=\mathrm{id}$
  and $r^{\gamma}_{\alpha}= r^{\beta}_{\alpha} \circ r^{\gamma}_{\beta}$  for all $\alpha\leq \beta\leq \gamma$,
\end{itemize}  
   which moreover satisfy the following  {\em locality} and {\em  extensionality} axioms for all $n\in\mathbb{N}$:
\begin{enumerate}
 \item[(L1)] if  $\alpha|n=\beta|n$, then   $\mathcal{U}_{\alpha}\upharpoonright X_n=\mathcal{U}_{\beta}\upharpoonright X_n$;
 \item[(L2)] if $\alpha\leq \beta$ and $\gamma\leq \delta$,  with  $\alpha|n=\gamma|n$ and $\beta|n=\delta|n$,  then   $r^{\beta}_{\alpha}\upharpoonright (\mathcal{U}_{\beta}\upharpoonright X_n)=r^{\delta}_{\gamma}\upharpoonright (\mathcal{U}_{\delta}\upharpoonright X_n)$;
 \item[(L3)] if $\alpha\leq \beta$ then $r^{\beta}_{\alpha}\upharpoonright (\mathcal{U}_{\beta}\upharpoonright X_n)$ is surjective on $\mathcal{U}_{\alpha}\upharpoonright X_n$; 
 \item[(E1)]  for every open cover $\mathcal{U}$ of $X$ and every $m\in\mathbb{N}$ with  $n<m$, if $\mathcal{U}\upharpoonright X_n\preceq \mathcal{U}_{\alpha}\upharpoonright X_n$, then there exists $\beta \in \mathcal{N}^*_{\alpha|n}$ so that  $\mathcal{U}\upharpoonright X_m\preceq \mathcal{U}_{\beta}\upharpoonright X_m$. 
\end{enumerate}
\end{definition}

Notice that if $\boldsymbol{\mathcal{U}}$ is a covering system for $X$, then for every open cover $\mathcal{U}$ of $X$ and every $m\in\mathbb{N}$ there exists $\beta\in\mathcal{N}^{*}$  so that
\[\mathcal{U}\upharpoonright X_m\preceq \mathcal{U}_{\beta}\upharpoonright X_m.\] This follows simply from (E1), since $\{X\}=\mathcal{U}\upharpoonright X_0\preceq \mathcal{U}_{\alpha}\upharpoonright X_0$ for all $\alpha\in\mathcal{N}^{*}$.

While the definition of a covering system  $\boldsymbol{\mathcal{U}}=\big{(}(X_n),(\mathcal{U}_{\alpha}),(r^{\beta}_{\alpha})\big{)}$ for $X$ involves an  uncountable family of open covers and  an uncountable family of  refinement maps, $\boldsymbol{\mathcal{U}}$ can still be fully recovered from a certain countable family of ``finitary approximations," via a procedure which resembles Suslin's operation $\mathcal{A}$. The next definition and the remark  following it make this  precise. Notice that by (L1) and (L2), the  definition of   $\mathcal{U}_s$ and $r^{t}_{s}$ below does not depend on the choice of  $\alpha$ and $\beta$.

\begin{definition}\label{Def:suslin} Let $\boldsymbol{\mathcal{U}}=\big{(}(X_n),(\mathcal{U}_{\alpha}),(r^{\beta}_{\alpha})\big{)}$  be a covering system for $X$. For every $n\in\mathbb{N}$ and every $s,t\in (\mathcal{N}^n)^*$, with $s\leq t$, we choose some $\alpha\in\mathcal{N}^*_s$, $\beta\in\mathcal{N}^*_t$  and we define:
\begin{itemize}
\item  $\mathcal{U}_s:= \mathcal{U}_{\alpha}\upharpoonright X_n$;
\item  $r^{t}_{s}:= r^{\beta}_{\alpha} \upharpoonright (\mathcal{U}_{\beta}\upharpoonright X_n)$.
\end{itemize}
We collect this data into a pair  $\boldsymbol{\mathcal{U}}_{\mathrm{fin}}=\big{(} (\mathcal{U}_{s}),(r^{t}_{s})\big{)}$,  where $s,t$ range over $(\mathbb{N}^{<\mathbb{N}})^*$ with $|s|=|t|$ and $s\leq t$. We say that 
$\boldsymbol{\mathcal{U}}_{\mathrm{fin}}$ is  an {\em approximation of $\boldsymbol{\mathcal{U}}$} and we write   $\boldsymbol{\mathcal{U}}=\mathcal{A}(\boldsymbol{\mathcal{U}}_{\mathrm{fin}})$.
\end{definition}

\begin{remark}\label{Rema1}
Notice that we can fully recover the covering system $\boldsymbol{\mathcal{U}}$ from its approximation $\boldsymbol{\mathcal{U}}_{\mathrm{fin}}$. Indeed, for each $n\in\mathbb{N}$  and  any  sequence $(s_k)$ in  $(\mathbb{N}^n)^*$ with $s_0(i)<s_1(i)<\cdots$ for all $i<n$,  we have that:
 \[X_n=\bigcap_{k\in\mathbb{N}} \bigcup_{U\in \mathcal{U}_{s_k}}\mathrm{cl}(U).\]
The above equality is easily established using (1) and (3) of Lemma \ref{L:combinatorialapproximations} below.
If for $s\in  (\mathbb{N}^n)^*$ we momentarily denote by $\widehat{\mathcal{U}}_{s}$ the set of all open covers $\mathcal{U}$ of $X$ with $\mathcal{U}\upharpoonright X_{|s|}= \mathcal{U}_s$ then 
\[\mathcal{A}\big(\widehat{\mathcal{U}}_{s}\big)=\bigcup_{\alpha\in\mathcal{N}^*}\bigcap_{n\in\mathbb{N}}\widehat{\mathcal{U}}_{\alpha|n} =\{\mathcal{U}_{\alpha}\colon \alpha\in\mathcal{N}^*\},\]
and similarly for the collection $(r^{\beta}_\alpha)_{\alpha\leq \beta}$. This motivates the notation $\boldsymbol{\mathcal{U}}=\mathcal{A}(\boldsymbol{\mathcal{U}}_{\mathrm{fin}})$ in Definition \ref{Def:suslin} for indicating the relationship between a covering system $\boldsymbol{\mathcal{U}}$  and its approximation  $\boldsymbol{\mathcal{U}}_{\mathrm{fin}}$.
\end{remark}

\begin{lemma}\label{L:combinatorialapproximations}
Let $\boldsymbol{\mathcal{U}}=\big{(}(X_n),(\mathcal{U}_{\alpha}),(r^{\beta}_{\alpha})\big{)}$ be a covering system for $X$. Then: 
\begin{enumerate}
\item
for every compact $K\subseteq X$ and every $\alpha\in \mathcal{N}$,  $\mathcal{U}_{\alpha}\upharpoonright K$ is a finite cover of $K$;
\item $\bigcup_{\alpha\in\mathcal{N}^*} \mathcal{U}_{\alpha}$ is countable;
\item for every open cover $\mathcal{U}$ of $X$, there exists an $\alpha\in\mathcal{N}$ so that $\mathcal{U}\preceq\mathcal{U}_{\alpha}$.
\item there exists an $\alpha\in\mathcal{N}$ so that for every $\beta\geq \alpha$, if $U\in \mathcal{U}_{\beta}$, then $\mathrm{cl}(U)$ is compact. 
\end{enumerate}
\end{lemma}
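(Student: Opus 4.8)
The plan is to prove the four clauses in order, since (2) will lean on (1) and (4) on (3). Throughout I work with $\alpha\in\mathcal{N}^*$, the index set of the covers (so any $\alpha$ produced in (3)--(4) indeed lies in $\mathcal{N}\supseteq\mathcal{N}^*$). For (1), note that $\mathcal{U}_\alpha$ is a locally finite open cover and $K$ is compact, so $\mathcal{U}_\alpha\upharpoonright K$ is finite by the observation recorded just after the definition of $\upharpoonright$; and it covers $K$ because every $x\in K$ lies in some member of $\mathcal{U}_\alpha$, which then meets $K$. For (2) the key remark is that every nonempty $U\in\mathcal{U}_\alpha$ is open and $\bigcup_n X_n=X$, so $U\cap X_n\neq\emptyset$ for some $n$, whence $U\in\mathcal{U}_\alpha\upharpoonright X_n$; thus $\mathcal{U}_\alpha\subseteq\{\emptyset\}\cup\bigcup_{n}(\mathcal{U}_\alpha\upharpoonright X_n)$. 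By (L1) and Definition \ref{Def:suslin}, $\mathcal{U}_\alpha\upharpoonright X_n=\mathcal{U}_{\alpha|n}=\mathcal{U}_s$ for $s=\alpha|n\in(\mathbb{N}^n)^*$, which is finite by (1). Since $(\mathbb{N}^{<\mathbb{N}})^*$ is countable, $\bigcup_{\alpha\in\mathcal{N}^*}\mathcal{U}_\alpha\subseteq\{\emptyset\}\cup\bigcup_{s\in(\mathbb{N}^{<\mathbb{N}})^*}\mathcal{U}_s$ is a countable union of finite sets, hence countable.

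The crux is (3), which I would prove by a level-by-level fusion driven by (E1). I construct non-decreasing finite sequences $s_0\subseteq s_1\subseteq\cdots$ with $s_k\in(\mathbb{N}^k)^*$ together with witnesses $\beta^{(k)}\in\mathcal{N}^*_{s_k}$ such that $\mathcal{U}\upharpoonright X_k\preceq\mathcal{U}_{\beta^{(k)}}\upharpoonright X_k=\mathcal{U}_{s_k}$. At the base, $s_0$ is the empty sequence and the condition at level $0$ holds vacuously because $X_0=\emptyset$. Given $s_k$ and $\beta^{(k)}$, I apply (E1) with $n=k$, $m=k+1$, and current index $\beta^{(k)}$: from $\mathcal{U}\upharpoonright X_k\preceq\mathcal{U}_{\beta^{(k)}}\upharpoonright X_k$ it yields some $\beta^{(k+1)}\in\mathcal{N}^*_{\beta^{(k)}|k}$ with $\mathcal{U}\upharpoonright X_{k+1}\preceq\mathcal{U}_{\beta^{(k+1)}}\upharpoonright X_{k+1}$. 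Setting $s_{k+1}=\beta^{(k+1)}|(k+1)$, the membership $\beta^{(k+1)}\in\mathcal{N}^*_{\beta^{(k)}|k}$ forces $\beta^{(k+1)}|k=\beta^{(k)}|k=s_k$, so $s_{k+1}$ extends $s_k$ and the sequences cohere. Their union $\alpha=\bigcup_k s_k$ lies in $\mathcal{N}^*$ and satisfies $\alpha|k=s_k$, so by (L1), $\mathcal{U}_\alpha\upharpoonright X_k=\mathcal{U}_{s_k}$ and therefore $\mathcal{U}\upharpoonright X_k\preceq\mathcal{U}_\alpha\upharpoonright X_k$ for every $k$.

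It remains to convert these level-wise refinements into one global refinement map. For each $U\in\mathcal{U}_\alpha$ there is some $k$ with $U\cap X_k\neq\emptyset$, so $U\in\mathcal{U}_\alpha\upharpoonright X_k$, and the level-$k$ refinement map carries $U$ to a $V\in\mathcal{U}\upharpoonright X_k\subseteq\mathcal{U}$ with $U\subseteq V$. Choosing one such $V$ for each $U$ defines a refinement map $\mathcal{U}_\alpha\to\mathcal{U}$, so $\mathcal{U}\preceq\mathcal{U}_\alpha$, proving (3). For (4), use local compactness of $X$ to fix an open cover $\mathcal{W}$ all of whose members have compact closure; by (3) there is $\alpha$ with $\mathcal{W}\preceq\mathcal{U}_\alpha$, so each $U\in\mathcal{U}_\alpha$ lies in some $W\in\mathcal{W}$ and hence has $\mathrm{cl}(U)\subseteq\mathrm{cl}(W)$ compact. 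Finally, for $\beta\geq\alpha$ the refinement map $r^{\beta}_{\alpha}\colon\mathcal{U}_\beta\to\mathcal{U}_\alpha$ satisfies $V\subseteq r^{\beta}_{\alpha}(V)$, so $\mathrm{cl}(V)$ is a closed subset of the compact set $\mathrm{cl}\bigl(r^{\beta}_{\alpha}(V)\bigr)$ and is therefore compact.

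The genuinely substantial step is the fusion in (3): it is precisely the form of (E1)---returning an index in $\mathcal{N}^*_{\alpha|n}$, agreeing with the previous index on its first $n$ coordinates---that lets the inductively chosen $s_k$ cohere into a single $\alpha\in\mathcal{N}^*$ realizing all levels simultaneously. By contrast, the local-to-global passage for the refinement map is mild, since one needs only that \emph{some} refinement map $\mathcal{U}_\alpha\to\mathcal{U}$ exists; a pointwise choice of a containing member of $\mathcal{U}$ suffices, and no compatibility among the level-wise maps is required.
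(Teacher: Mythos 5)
Your proof is correct and follows essentially the same route as the paper's: (1) from local finiteness, (2) by reducing to the countably many finite sets $\mathcal{U}_s$ via (L1), (3) by the same (E1)-driven fusion of initial segments whose limit $\alpha$ satisfies $\mathcal{U}\upharpoonright X_k\preceq\mathcal{U}_\alpha\upharpoonright X_k$ for all $k$ by (L1), and (4) as a consequence of (3) and local compactness. Your write-up is in fact slightly more explicit than the paper's at two points it leaves implicit --- the assembly of the level-wise refinement maps into a single refinement map $\mathcal{U}_\alpha\to\mathcal{U}$, and the use of the maps $r^\beta_\alpha$ to propagate compact closures to all $\beta\geq\alpha$ in (4).
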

\begin{proof}
(1) follows from the local finiteness of  $\mathcal{U}_{\alpha}$ and (4) is a direct consequence of (3), since $X$ is locally compact.   

For (2), we may assume that $\emptyset\not\in\bigcup_{\alpha\in\mathcal{N}} \mathcal{U}_{\alpha}$. The rest follows from (1), since  by (L1) we have
\[\bigcup_{\alpha\in\mathcal{N}^*} \mathcal{U}_{\alpha}=\bigcup_{\alpha\in\mathcal{N}^*}\bigcup_{n\in \mathbb{N}} (\mathcal{U}_{\alpha}\upharpoonright X_n)=\bigcup_{n\in \mathbb{N}} \bigcup_{s\in\mathbb{N}^{n}} \mathcal{U}_s,\]

For (3), we will find $\alpha$ as the limit of a sequence $(\alpha_n)$ in $\mathcal{N}$. We inductively define $\alpha_n$ as follows: let $\alpha_0$ be any element of $\mathcal{N}$; assuming that $\alpha_{n-1}$ has 
been defined, use  (E1) to get  $\alpha_{n}\in\mathcal{N}$ with $\alpha_{n}|(n-1)=\alpha_{n-1}|(n-1)$, 
so that $\mathcal{U}\upharpoonright X_{n}\preceq \mathcal{U}_{\alpha|n}\upharpoonright X_n$. Notice that $(\alpha_n)$ 
converges to the unique $\alpha\in\mathcal{N}$ with $ \alpha|n=\alpha_n|n$. By (L1), we have that  $\mathcal{U}\upharpoonright X_n\preceq \mathcal{U}_{\alpha}\upharpoonright X_n$ for all $n\in\mathbb{N}$. Since $\bigcup_n X_n=X$, it follows that $\mathcal{U}\preceq \mathcal{U}_{\alpha}$.
\end{proof}

\begin{proposition} Every locally compact Polish space $X$ admits a covering system  $\boldsymbol{\mathcal{U}}$.
\end{proposition}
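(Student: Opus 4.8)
The plan is to build the covering system from a single exhaustion together with a coherent system of \emph{localized finite shell covers}, and then to let each $\mathcal{U}_{\alpha}$ be the union of these shell covers with the ``fineness'' on the $m$-th shell dictated by the coordinate $\alpha(m)$. Concretely, fix a compatible complete metric $d$ on $X$ and, using local compactness and $\sigma$-compactness, fix an exhaustion $X_0=\emptyset\subseteq X_1\subseteq X_2\subseteq\cdots$ with each $X_n$ compact and $X_n\subseteq\mathrm{int}(X_{n+1})$. Write $K_m:=X_{m+1}\setminus\mathrm{int}(X_{m-1})$ for the $m$-th (compact) shell. The idea is that a point of $X$ lies in only boundedly many shells, so a cover assembled shell-by-shell from sets localized near a single shell will automatically be locally finite; and the dependence of $\mathcal{U}_{\alpha}\upharpoonright X_n$ on $\alpha|n$ required by (L1) will follow from the fact that shell-$m$ sets meet $X_n$ only for $m$ in a bounded range determined by $n$. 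I would calibrate the exact width of the localization so that this range matches the $\alpha|n$-dependence demanded by (L1).

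\textbf{Shell covers and refinement maps.} For each shell $K_m$ and each $k\in\mathbb{N}$, since $K_m$ is compact and $X$ metrizable I would choose a finite open cover $\mathcal{W}_{m,k}$ of a neighborhood of $K_m$ by sets of diameter $<2^{-k}$, each contained in $\mathrm{int}(X_{m+1})\setminus X_{m-2}$, and arranged so that $\mathcal{W}_{m,k+1}$ refines $\mathcal{W}_{m,k}$. To obtain coherent refinement maps, I would fix \emph{one} refinement map $\rho_{m,k}\colon\mathcal{W}_{m,k+1}\to\mathcal{W}_{m,k}$ between consecutive levels and define $\rho^{t}_{s}$ (for $s\leq t$ of equal length) as the appropriate composite of consecutive maps; associativity $\rho^{u}_{s}=\rho^{t}_{s}\circ\rho^{u}_{t}$ and $\rho^{s}_{s}=\mathrm{id}$ are then automatic. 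For $\alpha\in\mathcal{N}^{*}$ I set $\mathcal{U}_{\alpha}:=\bigcup_{m}\mathcal{W}_{m,\alpha(m)}$, tagging sets by their shell of origin so that each member of $\mathcal{U}_{\alpha}$ has a well-defined shell, and I define $r^{\beta}_{\alpha}$ shellwise by $r^{\beta}_{\alpha}(V):=\rho^{\beta(m)}_{\alpha(m)}(V)$ for $V$ of shell $m$. Local finiteness of $\mathcal{U}_{\alpha}$ follows from the localization: a neighborhood of any point meets sets from only finitely many shells.

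\textbf{Verification of (L1)--(L3) and local finiteness.} Condition (L1) holds because $\mathcal{U}_{\alpha}\upharpoonright X_n$ consists only of sets from shells $m$ within the bounded range forced by localization, and each such $\mathcal{W}_{m,\alpha(m)}$ depends only on $\alpha(m)$; after fixing the localization width this is exactly a dependence on $\alpha|n$. Conditions (L2) and (L3) follow from the shellwise definition of $r^{\beta}_{\alpha}$ together with the coherence and surjectivity of the $\rho^{t}_{s}$ (surjectivity of each consecutive $\rho_{m,k}$, hence of composites, giving (L3)); restricting to $X_n$ only discards the tail shells, so the restricted refinement map again depends only on the relevant truncations. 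These verifications are routine once the bookkeeping is in place.

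\textbf{The extension axiom (E1), and the main obstacle.} This is the crux. Given an open cover $\mathcal{U}$ of $X$ and $\alpha$ with $\mathcal{U}\upharpoonright X_n\preceq\mathcal{U}_{\alpha}\upharpoonright X_n$, I must produce $\beta$ with $\beta|n=\alpha|n$ and $\mathcal{U}\upharpoonright X_m\preceq\mathcal{U}_{\beta}\upharpoonright X_m$. Since (L1) \emph{forces} $\mathcal{U}_{\beta}\upharpoonright X_n=\mathcal{U}_{\alpha}\upharpoonright X_n$, the only freedom is to increase the coordinates $\beta(n),\beta(n+1),\ldots$; the essential difficulty is that the covers on $X_n$ are already committed, so I must ensure that the \emph{committed} boundary sets, together with the finer sets I add on shells $n{+}1,\dots,m$, each lie inside a member of $\mathcal{U}$. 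The committed sets meeting $X_n$ already refine $\mathcal{U}$ by hypothesis; for the remaining sets I would invoke a Lebesgue number for $\mathcal{U}$ on each compact shell $K_{n+1},\dots,K_m$ and choose each $\beta(j)$ so large that every set in $\mathcal{W}_{j,\beta(j)}$ has diameter below that number, hence lies in a member of $\mathcal{U}$. The delicate point is the handful of sets that straddle the boundary between $X_n$ and the next shell: these are controlled precisely by having localized each shell's sets to a single shell-width and by exploiting that the hypothesis of (E1) gives refinement on \emph{all} of $X_n$ (not merely on a shell). Threading this boundary region — reconciling the rigidity imposed by (L1)--(L3) with the fineness required by (E1) — is where the real work lies; the rest of the proof is an exercise in the combinatorial bookkeeping set up above.
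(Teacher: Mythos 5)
Your overall architecture is the same as the paper's: build the system from finite approximations indexed by $(\mathbb{N}^{<\mathbb{N}})^*$, let the $m$-th coordinate govern the fineness of the sets introduced ``at stage $m$,'' obtain coherent surjective refinement maps by composing consecutive ones, and secure (E1) via shrinking diameters and a Lebesgue number. The gap is in the step you explicitly defer, and it is not mere bookkeeping. Choosing the shell covers $\mathcal{W}_{m,k}$ \emph{independently across shells}, each covering a fixed neighborhood of the fixed compact shell $K_m$, cannot be calibrated to satisfy (L1) and (E1) simultaneously. On one hand, (L1) forces every shell-$m$ set to be disjoint from $X_m$: if some $W\in\mathcal{W}_{m,k}$ met $X_m$, then $\mathcal{U}_\alpha\upharpoonright X_m$ would genuinely depend on $\alpha(m)$. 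On the other hand, (E1) forces every shell-$m$ set to meet $X_{m+1}$: a set whose fineness is frozen by $\alpha|n$ but which fails to meet $X_n$ is untouched by the hypothesis $\mathcal{U}\upharpoonright X_n\preceq\mathcal{U}_\alpha\upharpoonright X_n$, yet it meets $X_M$ for large $M$ and must then lie inside a member of the \emph{arbitrary} cover $\mathcal{U}$, which cannot be guaranteed. Hence every shell-$m$ set must meet $X_{m+1}\setminus X_m$; and since the portion of $X_{m+1}\setminus X_m$ caught by the shell-$(m-1)$ sets shrinks as their fineness grows, the shell-$m$ family at each fineness must cover essentially all of $X_{m+1}\setminus X_m$ --- a set which is in general \emph{not compact}, and so need not admit a finite cover by $2^{-k}$-small open sets avoiding $X_m$. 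Your literal localization (sets contained in $\mathrm{int}(X_{m+1})\setminus X_{m-2}$ covering a neighborhood of $K_m$) lands on the (L1) horn of this dilemma: for fine covers, which shell-$m$ sets meet $X_n$ depends on $\alpha(n)$ and $\alpha(n+1)$, and no shift of the exhaustion removes that dependence without reintroducing the (E1) failure.

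The paper resolves exactly this tension by \emph{coupling} consecutive stages rather than choosing shells independently: the new sets $\mathcal{V}_\ell$ introduced at stage $n+1$ with fineness $\ell$ are required to cover $X_{n+1}\setminus O_\ell$, where $O_\ell$ is the union of the already-constructed stage-$n$ cover at the matching fineness $(\ell,\ldots,\ell)$. Since $O_\ell$ is open and contains $X_n$, the region still to be covered is compact and at positive distance from $X_n$, so finitely many sets of diameter $<1/(\ell+1)$ disjoint from $X_n$ suffice; a minimality condition on $\mathcal{V}_\ell$ then guarantees each new set meets $X_{n+1}$, delivering both halves of the dichotomy above at once. This coupling of each stage's new sets to the previous stage's cover at the same fineness is the missing structural ingredient your ``calibration'' would have to become.
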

\begin{proof}
Let $d$ be a metric on $X$ that is compatible with the topology and let $(X_n)$ be any exhaustion of $X$ with $X_{n}\subseteq \mathrm{int}(X_{n+1})$. We will attain the  desired covering system $\boldsymbol{\mathcal{U}}=\big{(}(X_n),(\mathcal{U}_{\alpha}),(r^{\beta}_{\alpha})\big{)}$ in the form of $\mathcal{A}(\boldsymbol{\mathcal{U}}_{\mathrm{fin}})$ where $\boldsymbol{\mathcal{U}}_{\mathrm{fin}}=((\mathcal{U}_s),(r^{t}_s))$ will be defined by a double induction.
\medskip{}

\noindent\underline{Induction on the length of $s,t$}. 

Set $U_{\emptyset}=\emptyset$ and assume that for some $n\in\mathbb{N}$ we have defined for all $s,t,q\in(\mathbb{N}^n)^*$ with $s\leq t\leq q$:
\begin{enumerate}
\item   a finite family $\mathcal{U}_s$ of open subsets of $X$  covering $X_n$, so that $\mathcal{U}_s\upharpoonright X_m= \mathcal{U}_{s|m}$ if $m<n$, and

\[\mathrm{diam}(U)<\frac{1}{\min\{s(i)\colon i<n\}+1}, \text{ for all } U\in\mathcal{U}_s.\] 
\item a surjective refinement map $r^t_s \colon \mathcal{U}_t\to \mathcal{U}_s$ with $r^{t}_t=\mathrm{id}$ and $r^t_s\circ r^q_t=r^q_s$.
\end{enumerate}
We will extend this system to a system indexed by $(\mathbb{N}^{n+1})^*$, which satisfies  properties (1), (2) above with $n+1$ in place of $n$. For that, set  $\boldsymbol{\bar{\ell}}:=(\ell,\ldots,\ell)\in \mathbb{N}^n$ and  $O_{\ell}:=\bigcup\mathcal{U}_{\boldsymbol{\bar{\ell}}}$ for every $\ell\in\mathbb{N}$. Notice that by our inductive assumption, we have a decreasing sequence of open sets 
\[O_{0}\supseteq O_{1}\supseteq\cdots\supseteq O_{\ell}\supseteq \cdots\supseteq X_n.\]

\noindent\underline{Induction on $\ell \in\mathbb{N}$}.
 
By induction on $\ell\in\mathbb{N}$ we choose a sequence $\mathcal{V}_{0}, \mathcal{V}_{1},\ldots, \mathcal{V}_{\ell},\ldots$ of finite families of open subsets of $X$ so that  $\mathcal{V}_{\ell}$ covers $X_{n+1}\setminus O_{\ell}$ with sets of diameter less than $1/(\ell+1)$, which do not intersect $X_n$. We can also make sure that  $ \mathcal{V}_{\ell}\cup \mathcal{U_{\boldsymbol{\bar{\ell}}}}\preceq \mathcal{V}_{\ell+1}$. We may now set
\[\mathcal{U}_{s^{\frown}\ell}:=\mathcal{U}_s\cup \mathcal{V}_{\ell},\]
for every   $s^{\frown}\ell\in (\mathbb{N}^{n+1})^*$ and observe that the analogue of point (1) above is satisfied by the new system. For example, notice that if  $s^{\frown}\ell\in (\mathbb{N}^{n+1})^*$, then $s\leq \boldsymbol{\bar{\ell}}$, and hence $\mathcal{U}_s\preceq \mathcal{U}_{\boldsymbol{\bar{\ell}}}$. As a consequence, $O_{\ell}\subseteq\bigcup\mathcal{U}_s$ and therefore,    $\mathcal{U}_{s^{\frown}\ell}$ covers $X_{n+1}$.

We now turn to the definition of the refining maps. Fix $\ell\in\mathbb{N}$ and assume inductively that for every pair  $s^{\frown}k, t^{\frown} m\in(\mathbb{N}^{n+1})^*$ with $s^{\frown}k\leq  t^{\frown} m$ and $m\leq \ell$ we have defined a surjective refinement  map
\[r^{ t^{\frown} m}_{s^{\frown}k}\colon \mathcal{U}_{t^{\frown} m} \to \mathcal{U}_{s^{\frown}k}, \]
which extends  $r^{ t}_{s}$, and assume that these maps altogether cohere with respect to composition as in point (2) above. 
Fix now any refinement map: 
 \[p^{\ell+1}_{\boldsymbol{\bar{\ell}}^{\frown}\ell}: \mathcal{V}_{\ell+1}\to \mathcal{U}_{\boldsymbol{\bar{\ell}}^{\frown}\ell} \]
 Notice that in the definition of $\mathcal{V}_{\ell}$ we could have arranged, by removing superfluous elements, that $\mathcal{V}_{\ell}$  is ``minimal," i.e., if we remove any $V$ from $\mathcal{V}_{\ell}$, then  the resulting family is not going to be a cover of $X_{n+1}\setminus O_{\ell}$.  By imposing this  minimality assumption on   $\mathcal{V}_{\ell}$, we have that $p^{\ell+1}_{\boldsymbol{\bar{\ell}}^{\frown}\ell}$ is surjective on $\mathcal{V}_{\ell}$.  For every $s^\frown k \in (\mathbb{N}^{n+1})^*$ with $k\leq \ell+1$ we define a refinement map
 \[p^{\ell+1}_{s^{\frown} k }: \mathcal{V}_{\ell+1}\to \mathcal{U}_{s^{\frown} k } \]
 by setting $p^{\ell+1}_{s^{\frown} k }=\mathrm{id}$, if $k=\ell+1$; if  $k<\ell+1$, then notice that  $s^{\frown} k\leq \boldsymbol{\bar{\ell}}^{\frown}\ell$, and we may define: 
  \[p^{\ell+1}_{s^{\frown} k}:= r^{\boldsymbol{\bar{\ell}}^{\frown}\ell}_{s^{\frown} k}\circ p^{\ell+1}_{\boldsymbol{\bar{\ell}}^{\frown}\ell},\]
 where the map $r^{\boldsymbol{\bar{\ell}}^{\frown}\ell}_{s^{\frown} k}$ is given by inductive assumption. Finally, for all $s^{\frown} k \leq t^{\frown} (\ell+1)$ we set
 \[r^{ t^{\frown} (\ell+1)}_{s^{\frown} k}:=\big(r^t_s\cup p^{\ell+1}_{s^{\frown} k  }\big).\]
The fact that our new system of maps which  is indexed by  pairs  $s^{\frown}k, t^{\frown} m\in(\mathbb{N}^{n+1})^*$ with $s^{\frown}k\leq  t^{\frown} m$ and $m\leq \ell+1$ coheres in the sense of point (2) above, follows from our inductive assumptions and the fact that all new maps which are not equal to $\mathrm{id}$ on $\mathcal{V}_{\ell+1}$ factor through   $\mathcal{U}_{\boldsymbol{\bar{\ell}}^{\frown}\ell}$.

This ends the induction on $\ell$ and consequently the induction on the length of $s,t$. Let now $$\boldsymbol{\mathcal{U}}=\big{(}(X_n),(\mathcal{U}_{\alpha}),(r^{\beta}_{\alpha})\big{)}:=\mathcal{A}(\boldsymbol{\mathcal{U}}_{\mathrm{fin}})$$ where $\boldsymbol{\mathcal{U}}_{\mathrm{fin}}=((\mathcal{U}_s),(r^{t}_s))$. Using points (1) and (2) above it is easy to see that $\boldsymbol{\mathcal{U}}$ is a covering system for $X$. For example,  (E1) follows from the shrinking diameters in point (1) above and Lebesgue's covering lemma; the fact that each $\mathcal{U}_{\alpha}$ is locally finite follows from the assumption $X_n\subseteq \mathrm{int}(X_{n+1})$ and the fact that each $\mathcal{U}_s$ is finite.
\end{proof}

\subsection{Definable cohomology with discrete coefficients}\label{SS:DefCoho}
Let $G$ be a countable discrete abelian group and let $X$ be a locally compact Polish space. We will fix a covering system $\boldsymbol{\mathcal{U}}=\big{(}(X_n),(\mathcal{U}_{\alpha}),(r^{\beta}_{\alpha})\big{)}$ for $X$ and use it to define a Polish cochain complex $C^{\bullet}(\boldsymbol{\mathcal{U}};G)$. 
The associated cohomology groups  $\Check{\mathrm{H}}_{\mathrm{def}}^{n}(X;G)$, viewed as groups with Polish cover, form the {\em definable cohomology of $X$ with coefficients in $G$}. If we forget the definable content from $\Check{\mathrm{H}}_{\mathrm{def}}^{n}(X;G)$ we recover the classical \v{C}ech cohomology groups for $X$.
In Section \ref{S:Huber} we will show that, up to definable isomorphism,  $\Check{\mathrm{H}}_{\mathrm{def}}^{n}(X;G)$ does not depend on the choice of $\boldsymbol{\mathcal{U}}$. One may extend these ideas to develop definable cohomology  groups with coefficients in an arbitrary Polish abelian group $G$. That project, however, falls beyond the scope of this paper; see Remark \ref{RemGeneralPolish}.

For every open cover $\mathcal{U}$ of $X$, the {\em nerve} of $\mathcal{U}$ is the simplicial complex  $\mathrm{Nv}(\mathcal{U})$ with $\sigma\in \mathrm{Nv}(\mathcal{U})$ if and only if $\sigma$ is a finite subset of $\mathcal{U}$ with $\cap_{U\in \sigma}U\neq \emptyset.$ Notice that $\mathrm{dom}(\mathrm{Nv}(\mathcal{U}))=\mathcal{U}\setminus \{\emptyset\}$. Set
\[N_{\alpha}:= \mathrm{Nv}(\mathcal{U_{\alpha}})\]
for every $\alpha\in \mathcal{N}^*$ and notice that every refinement map $r^{\beta}_{\alpha}$ induces a simplicial map
\[r^{\beta}_{\alpha}\colon N_{\beta}\to N_{\alpha}.\]
In this way $\boldsymbol{\mathcal{U}}$ determines an inverse system $((N_{\alpha}),(r^{\beta}_{\alpha}))$ of simplicial maps. We similarly define the  finite complexes $N_s:= \mathrm{Nv}(\mathcal{U}_{s})$ and consider the simplicial maps $r^{t}_s\colon N_{t}\to N_s$, for every $s\leq t$ in $(\mathbb{N}^{<\mathbb{N}})^*$.

For every $n\in\mathbb{N}$ we will define the Polish group $C^{n}(\boldsymbol{\mathcal{U}};G)$ of $n$-dimensional cocycles of $\boldsymbol{\mathcal{U}}$ as the quotient of a Polish semigroup $C_{\mathrm{sem}}^{n}(\boldsymbol{\mathcal{U}};G)$ by a certain closed semigroup congruence. Let 
\[C_{\mathrm{sem}}^{n}(\boldsymbol{\mathcal{U}};G):=\bigcup_{\alpha\in\mathcal{N}^*}C^{n}(N_{\alpha};G)=\bigcup_{\alpha\in\mathcal{N}^*}C(N^{(n)}_{\alpha};G)\]
be the collection of all functions from the set of singular $n$-faces of some $N_{\alpha}$ to $G$. 
 We endow  $C_{\mathrm{sem}}^{n}(\boldsymbol{\mathcal{U}};G)$ with a semigroup structure by setting for all  $\zeta\colon N^{(n)}_{\alpha}\to G$ and $\eta\colon N^{(n)}_{\beta}\to G$
\begin{equation}
(\zeta+\eta)\colon N^{(n)}_{\alpha\vee \beta}\to G,  \text{ with } (\zeta+\eta)(\bar{U})=\zeta(r^{\alpha\vee \beta}_{\alpha}(\bar{U}))+\eta(r^{\alpha\vee \beta}_{\beta}(\bar{U}))
\end{equation}  
We also endow  $C_{\mathrm{sem}}^{n}(\boldsymbol{\mathcal{U}};G)$  with the topology generated by basic open sets of the form $V_{s,p}$, with $s\in\mathbb{N}^{<\mathbb{N}}$ and $p\colon N^{(n)}_s\to G$,  which is given by all  $\zeta\colon N^{(n)}_{\alpha}\to G$ with  $\alpha\in \mathcal{N}_s$ and  $\zeta\upharpoonright N^{(n)}_s= p$.
\begin{proposition}\label{Prop:semi}
$C_{\mathrm{sem}}^{n}(\boldsymbol{\mathcal{U}};G)$ is an  abelian Polish semigroup.
\end{proposition}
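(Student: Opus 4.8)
The plan is to realize $C^{n}_{\mathrm{sem}}(\boldsymbol{\mathcal{U}};G)$ as the body of a countably branching tree of finite approximations, which exhibits its topology as Polish, and then to read off well-definedness, continuity, commutativity, and associativity of the operation directly from the locality and coherence axioms of a covering system.

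First I would record the finitary structure underlying the topology. By axiom (L1) the restriction $\mathcal{U}_{\alpha}\upharpoonright X_m$, and hence the finite nerve $N_{\alpha|m}=\mathrm{Nv}(\mathcal{U}_{\alpha}\upharpoonright X_m)$ together with its set of $n$-faces $N^{(n)}_{\alpha|m}$, depends only on $\alpha|m$. Since $X=\bigcup_m X_m$ and each $\mathcal{U}_\alpha$ is locally finite, one checks that $N_\alpha=\bigcup_m N_{\alpha|m}$, so $N^{(n)}_\alpha=\bigcup_m N^{(n)}_{\alpha|m}$ is an increasing union of finite sets and every $\zeta\in C(N^{(n)}_\alpha;G)$ is the coherent union of its restrictions $\zeta\upharpoonright N^{(n)}_{\alpha|m}$. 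I would then form the tree $T$ whose level-$m$ nodes are the pairs $(s,p)$ with $s\in(\mathbb{N}^m)^{*}$ and $p\colon N^{(n)}_s\to G$, ordered by end-extension of $s$ together with restriction of $p$ (legitimate since $N^{(n)}_s\subseteq N^{(n)}_{t}$ whenever $s$ is an initial segment of $t$). Because $\bigcup_\alpha\mathcal{U}_\alpha$ is countable by Lemma \ref{L:combinatorialapproximations}(2) and $G$ is countable discrete, each level of $T$ is a countable discrete set, so the body $[T]$ is a closed subspace of the Polish product of the level sets and is therefore itself Polish. The assignment sending $\zeta$ on $N^{(n)}_\alpha$ to the branch $\big((\alpha|m,\;\zeta\upharpoonright N^{(n)}_{\alpha|m})\big)_m$ identifies $C^{n}_{\mathrm{sem}}(\boldsymbol{\mathcal{U}};G)$ with $[T]$ and carries each basic clopen set of $[T]$ exactly onto a set $V_{s,p}$; hence the topology generated by the $V_{s,p}$ coincides with this Polish topology. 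This is also the point at which each element is seen to be canonically tagged by the branch $\alpha$ producing its domain, so that the operation below is unambiguous.

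The continuity of $+$ rests on the mechanism that the level-$m$ data of a sum is computed from the level-$m$ data of its summands. Given $\zeta$ on $N^{(n)}_\alpha$ and $\eta$ on $N^{(n)}_\beta$, set $\gamma=\alpha\vee\beta$; since $\gamma|m$ is determined by $\alpha|m$ and $\beta|m$, the join is continuous on $\mathcal{N}^{*}$. For a face $\bar U\in N^{(n)}_{\gamma|m}$, each of its vertices meets $X_m$, so $r^{\gamma}_{\alpha}(\bar U)\in N^{(n)}_{\alpha|m}$ and $r^{\gamma}_{\beta}(\bar U)\in N^{(n)}_{\beta|m}$, and by axiom (L2) the relevant restrictions $r^{\gamma}_{\alpha}\upharpoonright(\mathcal{U}_\gamma\upharpoonright X_m)$ and $r^{\gamma}_{\beta}\upharpoonright(\mathcal{U}_\gamma\upharpoonright X_m)$ depend only on $(\alpha|m,\gamma|m)$ and $(\beta|m,\gamma|m)$. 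Consequently $(\zeta+\eta)\upharpoonright N^{(n)}_{\gamma|m}$ is a function of $\alpha|m$, $\beta|m$, $\zeta\upharpoonright N^{(n)}_{\alpha|m}$, and $\eta\upharpoonright N^{(n)}_{\beta|m}$ alone. Translating back, the product of the basic neighborhoods $V_{\alpha|m,\,\zeta\upharpoonright N^{(n)}_{\alpha|m}}$ and $V_{\beta|m,\,\eta\upharpoonright N^{(n)}_{\beta|m}}$ is mapped by $+$ into $V_{\gamma|m,\,(\zeta+\eta)\upharpoonright N^{(n)}_{\gamma|m}}$, giving continuity.

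Commutativity is immediate from $\alpha\vee\beta=\beta\vee\alpha$ and the commutativity of $+$ in $G$. For associativity I would expand both $(\zeta+\eta)+\theta$ and $\zeta+(\eta+\theta)$ on their common domain $N^{(n)}_{\omega}$ with $\omega=\alpha\vee\beta\vee\gamma$, and use the composition law $r^{\gamma}_{\alpha}=r^{\beta}_{\alpha}\circ r^{\gamma}_{\beta}$ from the definition of a covering system to rewrite each side as $\zeta(r^{\omega}_{\alpha}\bar U)+\eta(r^{\omega}_{\beta}\bar U)+\theta(r^{\omega}_{\gamma}\bar U)$; associativity of $+$ in $G$ then concludes. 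I expect the main obstacle to lie in the first step, namely correctly matching the ad hoc topology generated by the $V_{s,p}$ with the intrinsic Polish topology of the tree body, and in particular verifying $N^{(n)}_\alpha=\bigcup_m N^{(n)}_{\alpha|m}$ and that the branch-tagging renders $+$ well defined. The continuity argument, while routine once arranged, depends essentially on the locality axioms (L1) and (L2): without them the level-$m$ output could depend on arbitrarily deep data of the inputs and continuity would fail.
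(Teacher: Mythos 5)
Your argument is correct and matches the paper's proof in substance: the paper establishes Polishness by exhibiting the explicit complete metric $\rho_{\mathrm{sem}}(\zeta,\eta)=\sum_{k}\delta_k/2^k$ measuring levelwise disagreement of restrictions to $N^{(n)}_{\alpha|k}$ and $N^{(n)}_{\beta|k}$, which is precisely the standard metric on the body of your tree $T$, and it likewise derives commutativity from $\alpha\vee\beta=\beta\vee\alpha$ and continuity of the operation from (L2). Your write-up is somewhat more explicit about associativity and about tagging each element by the index $\alpha$ of its domain, but these are presentational rather than substantive differences.
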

\begin{proof}

 Since $\alpha\vee \beta=\beta\vee\alpha$ and $G$ is abelian, so is $C_{\mathrm{sem}}^{n}(\boldsymbol{\mathcal{U}};G)$. It is also a topological semigroup since by (L2) the operation $(\zeta,\eta)\mapsto(\zeta+\eta)$ is continuous.
  The topology is clearly second countable since it has, by definition, a basis consisting of countably many open sets. Finally it is easy to check that $\rho_{\mathrm{sem}}$ below is a complete metric on  $C_{\mathrm{sem}}^{(n)}(\boldsymbol{\mathcal{U}};G)$ that is compatible with the topology:  
 for every $\zeta\in  C^{n}(N_{\alpha};G)$  and $\eta\in C^{n}(N_{\beta};G)$, set $\delta_k:=0$ if $\zeta \upharpoonright   N^{(n)}_{\alpha|k}=\eta \upharpoonright   N^{(n)}_{\beta|k}$, and set $\delta_k:=1$ otherwise. Let 
 \begin{equation}\label{Eq:rhosem}
\rho_{\mathrm{sem}}(\zeta,\eta):= \sum_{k\in\mathbb{N}}\frac{\delta_k}{2^{k}}.
\end{equation}
\end{proof}

We define a {\em congruence} $\sim$ on $C_{\mathrm{sem}}^{(n)}(\boldsymbol{\mathcal{U}};G)$,  by setting for all  $\zeta\colon N^{(n)}_{\alpha}\to G$ and $\eta\colon N^{(n)}_{\beta}\to G$  
\[ \zeta \sim \eta \iff \zeta\circ r^{\alpha\vee \beta}_{\alpha}=\eta\circ r^{\alpha\vee \beta}_{\beta}. \]
\begin{lemma}
The relation $\sim$ is a closed semigroup congruence on $C_{\mathrm{sem}}^{n}(\boldsymbol{\mathcal{U}};G)$
\end{lemma}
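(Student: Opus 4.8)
The plan is to verify in turn that $\sim$ is (i) an equivalence relation, (ii) compatible with the semigroup operation, and (iii) closed; the single essential difficulty lies in the transitivity half of (i). Throughout, the workhorse is the functoriality $r^{\gamma}_{\alpha}=r^{\beta}_{\alpha}\circ r^{\gamma}_{\beta}$ of the refinement maps and their induced nerve maps, together with the observation that, since $\alpha\leq\alpha\vee\beta$, composing a defining identity $\zeta\circ r^{\alpha\vee\beta}_{\alpha}=\eta\circ r^{\alpha\vee\beta}_{\beta}$ on the right with $r^{\delta}_{\alpha\vee\beta}$ yields $\zeta\circ r^{\delta}_{\alpha}=\eta\circ r^{\delta}_{\beta}$ for every $\delta\geq\alpha\vee\beta$. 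Reflexivity is immediate from $r^{\alpha}_{\alpha}=\mathrm{id}$, and symmetry from $\alpha\vee\beta=\beta\vee\alpha$; so the heart of the matter is transitivity.

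For transitivity, suppose $\zeta\colon N^{(n)}_{\alpha}\to G$, $\eta\colon N^{(n)}_{\beta}\to G$, and $\theta\colon N^{(n)}_{\gamma}\to G$ satisfy $\zeta\sim\eta$ and $\eta\sim\theta$, and set $\delta:=\alpha\vee\beta\vee\gamma$. Pulling the two defining identities up to $N_{\delta}$ by the remark above gives $\zeta\circ r^{\delta}_{\alpha}=\eta\circ r^{\delta}_{\beta}$ and $\eta\circ r^{\delta}_{\beta}=\theta\circ r^{\delta}_{\gamma}$, whence $\zeta\circ r^{\delta}_{\alpha}=\theta\circ r^{\delta}_{\gamma}$ on $N^{(n)}_{\delta}$. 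What must be shown is the \emph{same} identity one level down, namely $\zeta\circ r^{\alpha\vee\gamma}_{\alpha}=\theta\circ r^{\alpha\vee\gamma}_{\gamma}$ on $N^{(n)}_{\alpha\vee\gamma}$, since the definition of $\sim$ tests agreement only at the join. Writing $r^{\delta}_{\alpha}=r^{\alpha\vee\gamma}_{\alpha}\circ r^{\delta}_{\alpha\vee\gamma}$ and $r^{\delta}_{\gamma}=r^{\alpha\vee\gamma}_{\gamma}\circ r^{\delta}_{\alpha\vee\gamma}$, the identity on $N^{(n)}_{\delta}$ reads $\big(\zeta\circ r^{\alpha\vee\gamma}_{\alpha}\big)\circ r^{\delta}_{\alpha\vee\gamma}=\big(\theta\circ r^{\alpha\vee\gamma}_{\gamma}\big)\circ r^{\delta}_{\alpha\vee\gamma}$, so it would suffice to right-cancel $r^{\delta}_{\alpha\vee\gamma}$. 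This is exactly the \textbf{main obstacle}: the cancellation is legitimate precisely when the induced nerve map $r^{\delta}_{\alpha\vee\gamma}\colon N^{(n)}_{\delta}\to N^{(n)}_{\alpha\vee\gamma}$ is \emph{surjective on singular $n$-faces} (equivalently, when the pullback $C^{n}(N_{\alpha\vee\gamma};G)\to C^{n}(N_{\delta};G)$ is injective). I would isolate this as a separate lemma on covering systems whose substantive content is a \emph{star-surjectivity} property: for each $U\in\mathcal{U}_{\alpha\vee\gamma}$ and each $x\in U$ there is a preimage $V\in\mathcal{U}_{\delta}$ with $x\in V$ and $r^{\delta}_{\alpha\vee\gamma}(V)=U$. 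Granting this, a face $(W_{0},\ldots,W_{n})$ of $N_{\alpha\vee\gamma}$ with $x\in\bigcap_{i}W_{i}$ lifts to a face $(V_{0},\ldots,V_{n})$ of $N_{\delta}$ with $x\in\bigcap_{i}V_{i}$, and the cancellation goes through. Extracting star-surjectivity from the axioms (L1)--(L3) and the explicit construction of $\boldsymbol{\mathcal{U}}$ is where I expect the real work to lie.

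Compatibility with $+$ is, by contrast, a direct computation using only the easy ``pull up'' direction and no surjectivity. Suppose $\zeta_{i}\sim\eta_{i}$ for $i=1,2$, with $\zeta_{i}$ on $N_{\alpha_{i}}$ and $\eta_{i}$ on $N_{\beta_{i}}$, and put $\epsilon:=\alpha_{1}\vee\alpha_{2}\vee\beta_{1}\vee\beta_{2}$. Unwinding the definition of $+$ and applying functoriality, the pullback of $\zeta_{1}+\zeta_{2}$ to $N^{(n)}_{\epsilon}$ is $\zeta_{1}\circ r^{\epsilon}_{\alpha_{1}}+\zeta_{2}\circ r^{\epsilon}_{\alpha_{2}}$, and likewise that of $\eta_{1}+\eta_{2}$ is $\eta_{1}\circ r^{\epsilon}_{\beta_{1}}+\eta_{2}\circ r^{\epsilon}_{\beta_{2}}$. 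Pulling each hypothesis $\zeta_{i}\sim\eta_{i}$ up to $N_{\epsilon}$ gives $\zeta_{i}\circ r^{\epsilon}_{\alpha_{i}}=\eta_{i}\circ r^{\epsilon}_{\beta_{i}}$, and adding the two identities yields $(\zeta_{1}+\zeta_{2})\sim(\eta_{1}+\eta_{2})$.

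Finally, for closedness I would show that the complement of $\sim$ is open. If $\zeta\not\sim\eta$, with $\zeta$ on $N_{\alpha}$ and $\eta$ on $N_{\beta}$, then some face $\bar{W}=(W_{0},\ldots,W_{n})$ of $N_{\alpha\vee\beta}$ witnesses $\zeta\big(r^{\alpha\vee\beta}_{\alpha}\bar{W}\big)\neq\eta\big(r^{\alpha\vee\beta}_{\beta}\bar{W}\big)$. Since $\bigcap_{i}W_{i}\neq\emptyset$, this intersection meets $X_{m}$ for some $m$; by (L1) the sets $W_{i}$ lie in $\mathcal{U}_{(\alpha\vee\beta)|m}$, their images under $r^{\alpha\vee\beta}_{\alpha}$ and $r^{\alpha\vee\beta}_{\beta}$ lie in $\mathcal{U}_{\alpha|m}$ and $\mathcal{U}_{\beta|m}$, and by (L2) the pertinent values of the refinement maps depend only on $\alpha|m$ and $\beta|m$. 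Hence the basic open sets $V_{\alpha|m,\,\zeta\upharpoonright N^{(n)}_{\alpha|m}}$ and $V_{\beta|m,\,\eta\upharpoonright N^{(n)}_{\beta|m}}$ are neighborhoods of $\zeta$ and $\eta$ on which $\bar{W}$ continues to witness non-equivalence: for any $\zeta'$, $\eta'$ in them one has $(\alpha'\vee\beta')|m=(\alpha\vee\beta)|m$, so $\bar{W}$ remains a face and the two disagreeing values are unchanged. Thus $(\zeta,\eta)$ has a product neighborhood disjoint from $\sim$, and $\sim$ is closed.
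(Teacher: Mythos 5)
Your proposal follows essentially the same route as the paper's proof: the same three-part structure (equivalence relation, congruence, closedness), the same reduction of transitivity to right-cancellation of $r^{\alpha\vee\beta\vee\gamma}_{\alpha\vee\gamma}$ after pulling both hypotheses up to the triple join, the same additive computation for compatibility with $+$, and the same closedness argument via a disagreement witnessed on some finite level $N^{(n)}_{(\alpha\vee\beta)|m}$ and stabilized by (L1) and (L2). The one point of divergence is how the cancellation step is discharged. The paper does it in a single line --- ``since $r^{\alpha\vee\beta\vee\gamma}_{\alpha\vee\gamma}$ is surjective'' --- appealing to axiom (L3) of covering systems (and it reuses exactly this argument later, for the well-definedness of the metric $\rho$, ``using local surjectivity (L3) in place of surjectivity''). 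You instead isolate an unproven ``star-surjectivity'' lemma and defer its verification; that deferral is the only incomplete step in your write-up. Your underlying observation is nonetheless a fair one: what the cancellation literally requires is surjectivity of $r^{\alpha\vee\beta\vee\gamma}_{\alpha\vee\gamma}$ on \emph{singular $n$-faces}, whereas (L3) as stated gives surjectivity at the level of vertices restricted to each $X_m$, and the upgrade from vertices to faces (which, as you note, amounts to lifting a simplex through a common point of its members) is not spelled out in the paper either. So your proof is the paper's proof with one step flagged rather than asserted; to close it you should either invoke (L3) directly, as the paper does, or supply the short verification that the refinement maps of a covering system lift simplices, which is the content of your star-surjectivity condition.
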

\begin{proof}
First we prove that $\sim$ is an equivalence relation on  $C_{\mathrm{sem}}^{n}(\boldsymbol{\mathcal{U}};G)$. Symmetry follows from $\alpha\vee \beta =\beta\vee\alpha$ and reflexivity from $\alpha\vee\alpha=\alpha$. For
transitivity, let   $\zeta\colon N^{(n)}_{\alpha}\to G$, $\eta\colon N^{(n)}_{\beta}\to G$, and $\theta\colon N^{(n)}_{\gamma}\to G$, with $\zeta\sim\eta$ and $\eta\sim \gamma$. In particular, we have that 
\[\zeta\circ r^{\alpha\vee \beta}_{\alpha}\circ r^{\alpha\vee \beta\vee \gamma}_{\alpha\vee \beta}=\eta\circ r^{\alpha\vee \beta}_{\beta}\circ r^{\alpha\vee \beta\vee \gamma}_{\alpha\vee \beta}\; \text{ and } \; \eta\circ r^{\beta \vee \gamma}_{\beta}\circ r^{\alpha\vee \beta\vee \gamma}_{\beta \vee \gamma}=\theta\circ r^{\beta \vee \gamma}_{\gamma}\circ r^{\alpha\vee \beta\vee \gamma}_{\beta \vee \gamma}.\]
But since $\eta\circ r^{\alpha\vee \beta}_{\beta}\circ r^{\alpha\vee \beta\vee \gamma}_{\alpha\vee \beta}=\eta\circ r^{\alpha\vee \beta\vee \gamma}_{\beta}=\eta\circ r^{\beta \vee \gamma}_{\beta}\circ r^{\alpha\vee \beta\vee \gamma}_{\beta \vee \gamma}$, we have that  $\zeta\circ r_{\alpha}^{\alpha\vee \beta\vee \gamma}=\theta\circ r_{\gamma}^{\alpha\vee \beta\vee \gamma}$, and therefore that  $\zeta\circ  r_{\alpha}^{\alpha\vee \gamma}\circ  r_{\alpha\vee\gamma}^{\alpha\vee \beta\vee \gamma}=\theta\circ r_{\gamma}^{\alpha\vee\gamma}\circ r_{\alpha\vee\gamma}^{\alpha\vee \beta\vee \gamma}$. Since $r_{\alpha\vee\gamma}^{\alpha\vee \beta\vee \gamma}$ is surjective, we have that 
$\zeta\circ  r_{\alpha}^{\alpha\vee \gamma}=\theta\circ r_{\gamma}^{\alpha\vee\gamma}$; hence $\zeta\sim \theta$.

The relation $\sim$ is closed as a subset of $C_{\mathrm{sem}}^{n}(\boldsymbol{\mathcal{U}};G)\times C_{\mathrm{sem}}^{n}(\boldsymbol{\mathcal{U}};G)$: indeed if  $\zeta\circ r^{\alpha\vee \beta}_{\alpha}=\eta\circ r^{\alpha\vee \beta}_{\beta}$ fails, then there is an
$s=(\alpha\vee \beta)|k$, for some $k\in\mathbb{N}$, so that the values of some $\overline{U}\in N^{(n)}_{s}$  under $\zeta\circ r^{\alpha\vee \beta}_{\alpha}$ and $\eta\circ r^{\alpha\vee \beta}_{\beta}$ differ. This failure is witnessed by some open condition.

Finally, to see that $\sim$ is a semigroup congruence, let  $\zeta\colon N^{(n)}_{\alpha}\to G$ and $\eta\colon N^{(n)}_{\beta}\to G$ and  $\zeta'\colon N^{(n)}_{\alpha'}\to G$ and $\eta'\colon N_{\beta'}\to G$, with  $\zeta\sim \eta$ and $\zeta'\sim \eta'$. We will show that $\zeta+\zeta'\sim \eta+\eta'$. Indeed, since $\zeta\circ r^{\alpha\vee \beta}_{\alpha}=\eta\circ r^{\alpha\vee \beta}_{\beta}$ and $\zeta'\circ r^{\alpha'\vee \beta'}_{\alpha'}=\eta'\circ r^{\alpha'\vee \beta'}_{\beta'}$, we have:
\begin{align*}
\zeta\circ r^{\alpha\vee \beta}_{\alpha} \circ r_{\alpha\vee \beta}^{\alpha\vee \alpha'\vee\beta\vee\beta'} + \zeta'\circ r^{\alpha'\vee \beta'}_{\alpha'}\circ r_{\alpha'\vee \beta'}^{\alpha\vee \alpha'\vee\beta\vee\beta'}   &=\eta\circ r^{\alpha\vee \beta}_{\beta} \circ r_{\alpha\vee \beta}^{\alpha\vee \alpha'\vee\beta\vee\beta'}  +\eta'\circ r^{\alpha'\vee \beta'}_{\beta'}\circ r_{\alpha'\vee \beta'}^{\alpha\vee \alpha'\vee\beta\vee\beta'}\\
\zeta\circ r_{\alpha}^{\alpha\vee \alpha'\vee\beta\vee\beta'} + \zeta'\circ r_{\alpha'}^{\alpha\vee \alpha'\vee\beta\vee\beta'}   &=\eta\circ r_{\beta}^{\alpha\vee \alpha'\vee\beta\vee\beta'}  +\eta'\circ r_{\beta'}^{\alpha\vee \alpha'\vee\beta\vee\beta'} \\
(\zeta+\zeta')\circ r_{\alpha\vee\alpha'}^{\alpha\vee \alpha'\vee\beta\vee\beta'}   &= (\eta+ \eta' )\circ r_{\beta\vee\beta'}^{\alpha\vee \alpha'\vee\beta\vee\beta'} \\
(\zeta+\zeta') &\sim (\eta+\eta').\\
\end{align*}
\end{proof}

We now define  $C^{n}(\boldsymbol{\mathcal{U}};G):=C_{\mathrm{sem}}^{n}(\boldsymbol{\mathcal{U}};G)/\sim $ to be the collection of all congruence classes $[\zeta]$ of elements $\zeta$ of $C_{\mathrm{sem}}^{n}(\boldsymbol{\mathcal{U}};G)$. A priori,     $C^{n}(\boldsymbol{\mathcal{U}};G)$ is merely an abelian semigroup which is additionally endowed with the quotient topology: $V\subseteq C^{n}(\boldsymbol{\mathcal{U}};G)$ is open in $C^{n}(\boldsymbol{\mathcal{U}};G)$ if and only if its union $\bigcup V=\{\zeta \colon [\zeta]\in V\}$ is open in $C_{\mathrm{sem}}^{n}(\boldsymbol{\mathcal{U}};G)$. It turns out that  $C^{n}(\boldsymbol{\mathcal{U}};G)$ is a much richer structure.

 \begin{proposition}
 $C^{n}(\boldsymbol{\mathcal{U}};G)$ is a non-archimedean abelian Polish group.
 \end{proposition}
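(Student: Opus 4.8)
The plan is to treat $C^{n}(\boldsymbol{\mathcal{U}};G)$ as a quotient of the Polish semigroup $C^{n}_{\mathrm{sem}}(\boldsymbol{\mathcal{U}};G)$ by the closed congruence $\sim$, and to extract all of its structure from the two facts just established --- that $C^{n}_{\mathrm{sem}}(\boldsymbol{\mathcal{U}};G)$ is an abelian Polish semigroup and that $\sim$ is closed --- together with one further ingredient, namely that the quotient map $q\colon C^{n}_{\mathrm{sem}}(\boldsymbol{\mathcal{U}};G)\to C^{n}(\boldsymbol{\mathcal{U}};G)$ is \emph{open}. First I would record the abstract algebraic structure. Since $(\mathcal{N}^{*},\leq)$ is directed (any two elements lie below their join) and the pullback maps $\zeta\mapsto\zeta\circ r^{\beta}_{\alpha}$ from $C^{n}(N_{\alpha};G)$ to $C^{n}(N_{\beta};G)$ form a direct system of abelian groups indexed by it, the underlying set of $C^{n}(\boldsymbol{\mathcal{U}};G)$ is precisely the filtered colimit $\varinjlim_{\alpha}C^{n}(N_{\alpha};G)$: two cochains are identified exactly when their pullbacks to a common refinement agree, which is the relation $\sim$. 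A filtered colimit of abelian groups is an abelian group, so $C^{n}(\boldsymbol{\mathcal{U}};G)$ is abstractly abelian, with identity $[0]$ and $-[\zeta]=[-\zeta]$, and the semigroup operation on $C^{n}_{\mathrm{sem}}(\boldsymbol{\mathcal{U}};G)$ descends to this group operation.

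The heart of the matter is the openness of $q$, that is, that the $\sim$-saturation of each basic open set $V_{s,p}$ is open in $C^{n}_{\mathrm{sem}}(\boldsymbol{\mathcal{U}};G)$. The point is that membership of a cochain $\eta\colon N^{(n)}_{\beta}\to G$ in this saturation is a \emph{local} condition, witnessed by finitely much data. Writing $|s|=k$, I would show that whether $\eta$ is $\sim$-equivalent to some $\zeta\colon N^{(n)}_{\alpha}\to G$ with $\alpha\in\mathcal{N}_{s}$ and $\zeta\upharpoonright N^{(n)}_{s}=p$ depends only on the value $\beta|m$ and the restriction $\eta\upharpoonright N^{(n)}_{\beta|m}$, for a suitable finite $m\geq k$; the locality axioms (L1)--(L3) of Definition~\ref{DefCovers2} are exactly what is needed here, since they guarantee that the relevant covers and refinement maps, restricted to the $X_{m}$-local faces, are completely determined by $\beta|m$ and cohere across the system. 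Granting this, the saturation of $V_{s,p}$ is a union of basic open sets of the form $V_{\beta|m,\,\eta\upharpoonright N^{(n)}_{\beta|m}}$ and is therefore open. I expect this step --- extracting a finite local witness for $\sim$-equivalence from the locality and surjectivity axioms --- to be the main obstacle, and the place where the careful design of the covering system is indispensable.

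Everything else then follows formally. Openness of $q$ makes $q\times q$ a quotient map, so the continuous semigroup operation descends to a continuous group operation, and inversion is continuous as well; thus $C^{n}(\boldsymbol{\mathcal{U}};G)$ is a topological group. Because $\sim$ is closed and $q$ is open, the quotient is Hausdorff, and the images $q(V_{s,p})$ of the countable basis again form a countable basis, so $C^{n}(\boldsymbol{\mathcal{U}};G)$ is a second countable Hausdorff topological group, hence metrizable by Birkhoff--Kakutani. Being moreover the open continuous image of the Polish space $C^{n}_{\mathrm{sem}}(\boldsymbol{\mathcal{U}};G)$, it is completely metrizable by the classical fact that open continuous images of completely metrizable spaces are completely metrizable whenever metrizable; hence it is Polish.

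Finally, for non-archimedeanity I would verify that each $W_{s}:=q(V_{s,0})$ is an open \emph{subgroup}. Openness is immediate from openness of $q$, and closure under the operations reduces, via (L1) and (L2), to the observation that for $\alpha,\beta\in\mathcal{N}_{s}$ the join $\alpha\vee\beta$ again lies in $\mathcal{N}_{s}$ and the refinement maps $r^{\alpha\vee\beta}_{\alpha}$, $r^{\alpha\vee\beta}_{\beta}$ act as the identity on the $X_{k}$-local faces; consequently, if $\zeta$ and $\eta$ both vanish on $N^{(n)}_{s}$ then so does $\zeta+\eta$, and $-\zeta$ vanishes there trivially. Since the sets $V_{s,0}$ constitute a neighborhood basis of the zero cochains in $C^{n}_{\mathrm{sem}}(\boldsymbol{\mathcal{U}};G)$ and $q$ is open, the open subgroups $W_{s}$ form a neighborhood basis of the identity $[0]$, witnessing that $C^{n}(\boldsymbol{\mathcal{U}};G)$ is a non-archimedean abelian Polish group.
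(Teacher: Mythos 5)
Your proposal is architecturally different from the paper's proof, and the difference is instructive: you want to establish that the quotient map $q\colon C^{n}_{\mathrm{sem}}(\boldsymbol{\mathcal{U}};G)\to C^{n}(\boldsymbol{\mathcal{U}};G)$ is \emph{open} and then harvest everything (Hausdorffness from closedness of $\sim$, metrizability via Birkhoff--Kakutani, complete metrizability via the Hausdorff--Sierpi\'nski theorem on open continuous metrizable images of Polish spaces, and the open subgroups $W_s$). The paper never proves, and never needs, openness of $q$: it writes down an explicit candidate metric $\rho([\zeta],[\eta])=\inf\{\rho_{\mathrm{sem}}(\zeta',\eta')\}$, shows via the local surjectivity axiom (L3) that this infimum is computed by pulling any two representatives back to their join, and verifies compatibility and completeness directly from that formula; for non-archimedeanness it uses the sets $V_k$ of classes vanishing at level $k$, whose openness is free because ``vanishing at level $k$'' is a $\sim$-invariant condition, so the relevant sets are already saturated. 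Your soft-topology superstructure is correct as stated, and your verification that the $W_s$ are subgroups (via (L1)--(L2) forcing the refinement maps to act as the identity on the level-$k$ data when both indices restrict to $s$) is sound.

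The genuine gap is the load-bearing step you yourself flag as ``the main obstacle'': the claim that the $\sim$-saturation of a basic open set $V_{s,p}$ is open, i.e., that membership of $\eta\colon N^{(n)}_{\beta}\to G$ in the saturation depends only on $\beta|m$ and $\eta\upharpoonright N^{(n)}_{\beta|m}$ for some finite $m$. This is asserted, not proved, and it is not a routine consequence of (L1)--(L3). Those axioms control the covers and refinement maps only at the level of vertices and finite stages; to show the claim you must take an arbitrary $\eta'$ on $N_{\beta'}$ agreeing with $\eta$ only up to level $m$ and manufacture a full witness $\zeta'$ on some $N_{\alpha'}$ with $\alpha'\in\mathcal{N}^{*}_{s}$, $\zeta'\upharpoonright N^{(n)}_{s}=p$, and $\zeta'\circ r^{\alpha'\vee\beta'}_{\alpha'}=\eta'\circ r^{\alpha'\vee\beta'}_{\beta'}$. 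That requires choosing $\alpha'$ at all levels, defining $\zeta'$ on faces of $N_{\alpha'}$ that are not in the image of $r^{\alpha'\vee\beta'}_{\alpha'}$ (the refinement maps are surjective on vertices by (L3), but not on singular $n$-faces), and verifying the factorization identity at every level, none of which your sketch addresses; it is also unclear how the ``suitable $m$'' is to be chosen independently of the original witness $\alpha$. Until this is carried out, the Polishness and the openness of the $W_s$ both remain unestablished, whereas the paper's infimum-metric route closes the argument using only (L3) and the join-pullback computation. If you want to salvage your approach with less work, note that for non-archimedeanness alone you can replace your $W_s$ by the paper's saturated sets $V_k$; but the openness of $q$ would still be needed for your Hausdorffness and complete-metrizability steps.
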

 \begin{proof}
 First notice that any two maps $0_{\alpha}\colon N^{(n)}_{\alpha}\to G$ and $0_{\beta}\colon N^{(n)}_{\beta}\to G$ which are constantly equal to $0_{G}$ are congruent to each other.  Moreover, it is clear that the associated congruence class $0_{\boldsymbol{\mathcal{U}}}:=[0_{\alpha}]=[0_{\beta}]$ is the identity element of  $C^{n}(\boldsymbol{\mathcal{U}};G)$ and that for every $\zeta\colon N^{(n)}_{\alpha}\to G$ we have that $[\zeta]+[-\zeta]=[0_{\alpha}]$. It follows that   $C^{n}(\boldsymbol{\mathcal{U}};G)$ is an abelian group. Moreover, notice that if $\zeta'\sim \zeta$ for some $\zeta\colon N^{(n)}_{\alpha}\to G$ and $\zeta'\colon N^{(n)}_{\beta}\to G$ with $\zeta\upharpoonright N^{(n)}_{\alpha|k}=0_{\alpha}\upharpoonright N^{(n)}_{\alpha|k}$, then $\eta\upharpoonright N^{(n)}_{\beta|k}=0_{\beta}\upharpoonright N^{(n)}_{\beta|k}$. Hence the identity $0_{\boldsymbol{\mathcal{U}}}$ admits a neighborhood basis in $C^{n}(\boldsymbol{\mathcal{U}};G)$ consisting of the following open subgroups:
 \[V_k:=\{[\zeta]\in C^{n}(\boldsymbol{\mathcal{U}};G) \colon \exists \alpha \in \mathcal{N}^* \text{ so that } \zeta\colon N^{(n)}\to G \text{ with }  \zeta \upharpoonright N^{(n)}_{\alpha|k}=0_{\alpha}\upharpoonright N^{(n)}_{\alpha|k}\}.\]

The topology of a quotient of a separable space is separable. We now define a complete metric on $C^{n}(\boldsymbol{\mathcal{U}};G)$, that is compatible with its topology. Let  $\rho_{\mathrm{sem}}$ be the complete metric on $C_{\mathrm{sem}}^{n}(\boldsymbol{\mathcal{U}};G)$ given by (\ref{Eq:rhosem}) above. For every  $[\zeta],[\eta]\in C^{n}(\boldsymbol{\mathcal{U}};G)$ we let
\begin{equation}\label{Eq:rho}
\rho([\zeta],[\eta]):= \inf \{\rho_{\mathrm{sem}}(\zeta',\eta') : \zeta'\in[\zeta], \eta'\in [\eta] \}.
\end{equation}
It is easy to see that one can directly compute the value of $\rho([\zeta],[\eta])$ by picking any representatives $\zeta'\colon N^{(n)}_{\alpha}\to G$ and $\eta'\colon N^{(n)}_{\beta}\to G$ of $[\zeta]$ and $[\eta]$ and alternatively  setting
\begin{equation}\label{Eq:rho'}
\rho([\zeta],[\eta])= \rho_{\mathrm{sem}}(\zeta'\circ r^{\alpha\vee \beta}_{\alpha},\eta'\circ r^{\alpha\vee \beta}_{\beta}).
\end{equation}
The fact that the latter quantity does not depend on the choice of $\eta'$ and $\zeta'$ follows by the same argument we used to prove transitivity of $\sim$, using local surjectivity (L3) in place of surjectivity.  
 By (\ref{Eq:rho}) it is clear that $\rho$ is a metric, given that by (\ref{Eq:rho'}) we have  $\rho([\zeta],[\eta])>0$ if $[\zeta]\neq [\eta]$.  To see that $\rho$ is compatible with the topology of $C^{n}(\boldsymbol{\mathcal{U}};G)$, notice that since the topology is first countable, it suffices to show that a subset $F$ of  $C^{n}(\boldsymbol{\mathcal{U}};G)$ is closed if and only if for every sequence $([\zeta_n])_{n}$ in $F$ with $[\zeta_n]\to_{\rho} [\zeta]$ we have that $[\zeta]\in F$. 
 
If  $[\zeta_n]\to_{\rho} [\zeta]$ and $F$ is closed, then using (\ref{Eq:rho'}) we may choose the representatives $\zeta_n$ and $\zeta$ so that $\zeta_n\to \zeta$ in $C^{n}_{\mathrm{sem}}(\boldsymbol{\mathcal{U}};G)$. Since the quotient map $\pi\colon  C^{n}_{\mathrm{sem}}(\boldsymbol{\mathcal{U}};G)\to C^{n}(\boldsymbol{\mathcal{U}};G)$ is continuous we have that $\pi^{-1}(F)$ is closed. Hence, $\zeta\in \pi^{-1}(F)$, i.e., $[\zeta]\in F$.
 Conversely, assume that whenever  $[\zeta_n]\to_{\rho} [\zeta]$, we have $[\zeta]\in F$. We will show that $\pi^{-1}(F)$ is closed. Let $(\zeta_n),\zeta$ be elements of $C^{n}_{\mathrm{sem}}(\boldsymbol{\mathcal{U}};G)$ such that $\zeta_n\to \zeta$. Then  by (\ref{Eq:rho'}) we have that  $[\zeta_n]\to_{\rho} [\zeta]$. Hence, $[\zeta]\in F$ and therefore $\zeta\in \pi^{-1}(F)$.
Finally, the fact that $\rho$ is complete follows from completeness of $\rho_{\mathrm{sem}}$ and    (\ref{Eq:rho'}).
\end{proof}

Finally, notice that for every $\zeta\colon N^{(n)}_{\alpha}\to G$  and $\zeta'\colon N^{(n)}_{\alpha'}\to G$  with $\zeta\sim \zeta'$, we have 
$\eta\sim\eta'$, where
\[\eta \big(v_{0},\ldots ,v_{n}\big)=\sum_{i=0}^{n}\left( -1\right) ^{i}\zeta (v_{0},\ldots ,\hat{v}_{i},\ldots
,v_{n}) \text{ and } \eta' \big(v_{0},\ldots ,v_{n}\big)=\sum_{i=0}^{n}\left( -1\right) ^{i}\zeta' (v_{0},\ldots ,\hat{v}_{i},\ldots
,v_{n}).\]
As a consequence, for  every $n>0$,   we have a continuous 
 {\em coboundary map}:  
\begin{align}
\begin{split}
\delta^{n}:C^{n-1}(\boldsymbol{\mathcal{U}};G)&\rightarrow C^{n}(\boldsymbol{\mathcal{U}};G), \text{ were } \\
\delta^{n}\left( [\zeta] \right)= [\eta], \text{ with }
\eta (v_{0},\ldots ,v_{n})&=\sum_{i=0}^{n}\left( -1\right) ^{i}\zeta (v_{0},\ldots ,\hat{v}_{i},\ldots
,v_{n}).
\end{split}
\end{align} 
 
This defines the {\em Polish cochain complex $C^{\bullet}(\boldsymbol{\mathcal{U}};G)$ of $G$-valued cochains of $\boldsymbol{\mathcal{U}}$}.

\begin{definition}\label{Def:maincoh}
Let $X$ be a locally compact Polish space and let $G$ be a countable abelian Polish group. Fix a covering system $\boldsymbol{\mathcal{U}}$ for $X$ and for every $n\in\mathbb{N}$  define the {\em $n$-dimensional definable cohomology group $\Check{\mathrm{H}}_{\mathrm{def}}^{n}(X;G)$  of $X$ with coefficients in $G$} to be  the $n$-dimensional cohomology group  of the Polish cochain complex $C^{\bullet}(\boldsymbol{\mathcal{U}};G)$, viewed as the group with a Polish cover:
\[
0\longrightarrow \mathrm{B}^{n}(\boldsymbol{\mathcal{U}};G)\longrightarrow \mathrm{Z}^{n}(\boldsymbol{\mathcal{U}};G)  \longrightarrow \mathrm{Z}^{n}(\boldsymbol{\mathcal{U}};G)/
\mathrm{B}^{n}(\boldsymbol{\mathcal{U}};G)\longrightarrow 0\]
 where  $\mathrm{Z}^{n}(\boldsymbol{\mathcal{U}};G)= \mathrm{ker}(\delta^{n})$ is the Polish group of {\em $n$-dimensional $G$-valued cocycles of  $\boldsymbol{\mathcal{U}}$}  and   $\mathrm{B}^{n}(\boldsymbol{\mathcal{U}};G)=\mathrm{im}(\delta^{n-1})$ is the Polishable group of {\em $n$-dimensional $G$-valued coboundaries of $\boldsymbol{\mathcal{U}}$}.
 \end{definition}
 
Notice that while the definition of $\mathrm{H}^{n}(X;G)$ involves a choice of covering system $\boldsymbol{\mathcal{U}}$ for $X$, the latter is not explicit in the notation $\Check{\mathrm{H}}_{\mathrm{def}}^{n}(X;G)$.  This is because, as we will show in Corollary \ref{CorDefCohIndependentU},  a different choice of $\boldsymbol{\mathcal{U}}$ will induce the exact same definable cohomology groups up to  definable isomorphism.
 
Since the Polish groups in $C^{\bullet}(\boldsymbol{\mathcal{U}};G)$ are  non-archimedean, the rigidity results of \cite{BLPI} provide a powerful resource for analyzing the definable cohomology groups of any locally compact Polish space. Note that as a sequence of abstract groups, $\mathrm{H}^{\bullet}(X;G)$ coincides with the classical \v{C}ech cohomology groups of $X$ with coefficients in $G$.

\begin{theorem}\label{T:Cech=Def}
As an abstract group,  $\Check{\mathrm{H}}_{\mathrm{def}}^{n}(X;G)$ is isomorphic to the $n$-dimensional \v{C}ech cohomology group $\Check{\mathrm{H}}^{n}(X;G)$ of $X$ with coefficients in $G$.  
\end{theorem}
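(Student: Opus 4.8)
The plan is to recognize the abstract group underlying $\Check{\mathrm{H}}^n_{\mathrm{def}}(X;G)=\mathrm{Z}^n(\boldsymbol{\mathcal{U}};G)/\mathrm{B}^n(\boldsymbol{\mathcal{U}};G)$ as a filtered colimit of the simplicial cohomology groups $\mathrm{H}^n(N_\alpha;G)$, and then to invoke cofinality of the family $(\mathcal{U}_\alpha)_{\alpha\in\mathcal{N}^*}$ to match this colimit with the classical \v{C}ech group. The first step is to observe that, forgetting topology, the group $C^n(\boldsymbol{\mathcal{U}};G)$ is exactly the colimit $\mathrm{colim}_{\alpha\in\mathcal{N}^*}C^n(N_\alpha;G)$ of the directed system of cochain groups whose transition map attached to $\alpha\le\beta$ is the pullback $(r^\beta_\alpha)^*\colon\zeta\mapsto\zeta\circ r^\beta_\alpha$. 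Indeed, the elements $\zeta\colon N^{(n)}_\alpha\to G$ of $C^n_{\mathrm{sem}}(\boldsymbol{\mathcal{U}};G)=\bigcup_\alpha C^n(N_\alpha;G)$ serve as representatives of this colimit; the addition pushing $\zeta,\eta$ to the join $\alpha\vee\beta$ is the colimit group operation; and the congruence $\sim$ — declaring $\zeta\colon N^{(n)}_\alpha\to G$ and $\eta\colon N^{(n)}_\beta\to G$ equivalent when $\zeta\circ r^{\alpha\vee\beta}_\alpha=\eta\circ r^{\alpha\vee\beta}_\beta$ — is precisely the colimit identification. The only point to check is that testing agreement at the join $\alpha\vee\beta$ is equivalent to testing it at an arbitrary common upper bound $\gamma$; this follows by precomposing with $r^\gamma_{\alpha\vee\beta}$ and cancelling it using its surjectivity on singular $n$-faces, exactly as in the proof that $\sim$ is transitive. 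Since the coboundary maps $\delta^n$ are given by the same simplicial formula at every level $N_\alpha$ and commute with the simplicial pullbacks, this identification upgrades to an isomorphism of cochain complexes $C^\bullet(\boldsymbol{\mathcal{U}};G)\cong\mathrm{colim}_{\alpha\in\mathcal{N}^*}C^\bullet(N_\alpha;G)$.

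Next, because filtered colimits are exact in the category of abelian groups, they commute with the formation of kernels, images, and hence of cohomology. Applying this to the complex above yields
\[
\Check{\mathrm{H}}^n_{\mathrm{def}}(X;G)=\mathrm{H}^n\big(C^\bullet(\boldsymbol{\mathcal{U}};G)\big)\cong\mathrm{colim}_{\alpha\in\mathcal{N}^*}\mathrm{H}^n(N_\alpha;G)=\mathrm{colim}_{\alpha\in\mathcal{N}^*}\mathrm{H}^n(\mathrm{Nv}(\mathcal{U}_\alpha);G)
\]
as abstract groups.

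It remains to identify this colimit with the classical \v{C}ech group $\mathrm{colim}_{\mathcal{U}\in\mathrm{Cov}(X)}\mathrm{H}^n(\mathrm{Nv}(\mathcal{U});G)$. The order-preserving assignment $\phi\colon\alpha\mapsto\mathcal{U}_\alpha$ sends the directed poset $(\mathcal{N}^*,\le)$ into $(\mathrm{Cov}(X),\preceq)$, and by Lemma \ref{L:combinatorialapproximations}(3) its image is cofinal: every open cover $\mathcal{U}$ is refined by some $\mathcal{U}_\alpha$. Here one must recall the classical fact that any two refinement maps between nerves induce the same homomorphism on simplicial cohomology (they are contiguous, hence chain homotopic); this is exactly what makes $\mathcal{U}\mapsto\mathrm{H}^n(\mathrm{Nv}(\mathcal{U});G)$ a genuine functor on the refinement poset whose colimit is $\Check{\mathrm{H}}^n(X;G)$, and it guarantees that the maps $(r^\beta_\alpha)^*$ of our directed system agree with the canonical \v{C}ech transition maps. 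Consequently $\alpha\mapsto\mathrm{H}^n(N_\alpha;G)$ is the restriction of the \v{C}ech functor along $\phi$; since $\mathcal{N}^*$ is directed and the image of $\phi$ is cofinal (so that each comma category $\mathcal{U}\downarrow\phi$ is nonempty and connected), $\phi$ is a final functor and the two colimits coincide. Reconciling our system's fixed, coherent refinement maps with the a priori non-functorial refinement poset underlying classical \v{C}ech cohomology is the one genuinely delicate point; the remaining ingredients — exactness of filtered colimits and the cofinality supplied by Lemma \ref{L:combinatorialapproximations}(3) — are then routine.
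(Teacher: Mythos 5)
Your proposal is correct and follows essentially the same route as the paper's proof: identify $C^\bullet(\boldsymbol{\mathcal{U}};G)$ with $\mathrm{colim}_{\alpha\in\mathcal{N}^*}C^\bullet(N_\alpha;G)$ via the congruence $\sim$, commute the colimit past the formation of cohomology (the paper phrases this as ``colimits commute with quotients,'' your ``exactness of filtered colimits'' is the same fact), and then use Lemma \ref{L:combinatorialapproximations}(3) together with contiguity of refinement maps to match this colimit with the classical one over $\mathrm{Cov}(X)$. The only cosmetic difference is that you phrase the cofinality step in the language of final functors, while the paper additionally records the explicit formula for the resulting isomorphism $\psi$ because it is needed later in the proof of Theorem \ref{T:DefinableHub}.
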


\begin{proof}
Throughout this proof every group is viewed as an abstract group. The proof of Theorem \ref{T:Cech=Def}  follows by combining the two usual arguments which show: (1) without loss of generality, the colimit  in the definition of $\Check{\mathrm{H}}^{n}(X;G)$  can be  taken over any $\preceq$--cofinal collection of open covers; (2) the group  $\Check{\mathrm{H}}^{n}(X;G)$ is isomorphic to the sheaf of germs of  constant $G$-valued functions. We sketch the argument here, as the explicit abstract group isomorphism $\psi\colon \Check{\mathrm{H}}^{n}(X;G)\to \Check{\mathrm{H}}_{\mathrm{def}}^{n}(X;G)$
generated by this argument will be needed in the proof of Theorem \ref{T:DefinableHub}.

Recall that in order to define the  $n$-dimensional \v{C}ech cohomology group $\Check{\mathrm{H}}^{n}(X;G)$  of $X$ one first considers the poset $\mathrm{Cov}(X)$ of all open covers $\mathcal{U}$ of $X$ ordered by the relation of refinement $\mathcal{U}\preceq \mathcal{V}$ and observes that any two refinement maps $r^{\mathcal{V}}_{\mathcal{U}},s^{\mathcal{V}}_{\mathcal{U}}\colon \mathcal{V}\to \mathcal{U}$ induce contiguous (see  Section \ref{SS:polyhedra}) simplicial maps $r^{\mathcal{V}}_{\mathcal{U}},s^{\mathcal{V}}_{\mathcal{U}}\colon \mathrm{Nv}(\mathcal{V})\to \mathrm{Nv}(\mathcal{U})$. Hence, while the associated chain maps $(r^{\mathcal{V}}_{\mathcal{U}})^{\bullet},(s^{\mathcal{V}}_{\mathcal{U}})^{\bullet}\colon C^{\bullet}(\mathrm{Nv}(\mathcal{U}),G)\to C^{\bullet}(\mathrm{Nv}(\mathcal{V}),G)$ with $(r^{\mathcal{V}}_{\mathcal{U}})^{\bullet}(\zeta):=\zeta\circ r^{\mathcal{V}}_{\mathcal{U}}$ and $(s^{\mathcal{V}}_{\mathcal{U}})^{\bullet}(\zeta):=\zeta\circ s^{\mathcal{V}}_{\mathcal{U}}$ may differ, they induce the same homomorphism $(r^{\mathcal{V}}_{\mathcal{U}})^*=(s^{\mathcal{V}}_{\mathcal{U}})^*$ on the level of simplicial cohomology $\mathrm{H}^{n}(\mathrm{Nv}(\mathcal{U}),G)\to \mathrm{H}^{n}(\mathrm{Nv}(\mathcal{V}),G)$ for every $n\in\mathbb{N}$; see \cite[Corollary IX.2.14]{eilenberg_foundations_1952}, for example.

As a consequence we have a well-defined direct system  of group homomorphisms $\mathrm{H}^{n}(\mathrm{Nv}(\mathcal{U}),G)\to \mathrm{H}^{n}(\mathrm{Nv}(\mathcal{V}),G)$ indexed by the poset $(\mathrm{Cov}(X),\preceq)$.  By definition, see \cite[Definition IX.3.1]{eilenberg_foundations_1952}, we have that:
\begin{equation}\label{EQ:Cech}
\Check{\mathrm{H}}^{n}(X;G):=\mathrm{colim}_{\;\mathcal{U}\in\mathrm{Cov}(X)} \;\big(\mathrm{H}^n(\mathrm{Nv}(\mathcal{U});G), (r^{\mathcal{V}}_{\mathcal{U}})^* \big),
\end{equation}
where $(r^{\mathcal{V}}_{\mathcal{U}}\colon\mathcal{V}\to \mathcal{U})_{\mathcal{U}\preceq \mathcal{V}}$ is any fixed pre-chosen collection of refinement maps.

 Let now $(U_{\alpha}\colon \alpha\in \mathcal{N}^*)$ and  $(r^{\beta}_{\alpha}\colon  \alpha\leq \beta)$ be the  open covers and refinement maps appearing in  the covering system  $\boldsymbol{\mathcal{U}}$  used in the definition of  $\Check{\mathrm{H}}_{\mathrm{def}}^{n}(X;G)$, and set $N_{\alpha}:=\mathrm{Nv}(\mathcal{U}_{\alpha})$.   Since the colimit in (\ref{EQ:Cech})  is isomorphic to the colimit taken over any cofinal collection of $\mathcal{U}\in \mathrm{Cov}(X)$, by Lemma \ref{L:combinatorialapproximations}(3), we have that:
 \begin{equation} \label{EQ:Cech1}
 \Check{\mathrm{H}}^{n}(X;G) \cong \mathrm{colim}_{\;\alpha \in\mathcal{N}^*} \;\big(\mathrm{H}^n(N_{\alpha};G), (r^{\mathcal{\beta}}_{\mathcal{\alpha}})^* \big)= \mathrm{colim}_{\;\alpha \in\mathcal{N}^*} \;\big(\mathrm{Z}^{n}(N_{\alpha};G)/ \mathrm{B}^{n}(N_{\alpha};G), (r^{\mathcal{\beta}}_{\mathcal{\alpha}})^* \big).
 \end{equation}
However, the system $(r^{\beta}_{\alpha}\colon  \alpha\leq \beta)$ coheres, i.e., for all $\alpha\leq \beta\leq \gamma$ we have $r^{\gamma}_{\alpha}=r^{\beta}_{\alpha}\circ r^{\gamma}_{\beta}$; see Definition \ref{DefCovers2}. Moreover, if $\delta^{\bullet}_{\alpha}$ and $\delta^{\bullet}_{\beta}$ are the coboundary maps of the cochain complexes $\mathrm{C}^{\bullet}(N_{\alpha};G)$ and $\mathrm{C}^{\bullet}(N_{\beta};G)$ with $\alpha\leq \beta$, then for all $n\in\mathbb{N}$ we have that $(r^{\beta}_{\alpha})^{n+1}\circ\delta^{n}_{\alpha}=\delta^{n}_{\beta}\circ (r^{\beta}_{\alpha})^{n}$. 
As a consequence, we have a direct system of chain complexes $\big(\mathrm{C}^{\bullet}(N_{\alpha};G) ,(r^{\beta}_{\alpha})^{\bullet}\big)_{\alpha\in\mathcal{N}^*}$. But then, by the definition of a  colimit of a directed system of abelian groups (see e.g. \cite[Definition IX.4.1]{eilenberg_foundations_1952}),  for every $n\in\mathbb{N}$ we have that:
\begin{equation}
\mathrm{colim}_{\;\alpha \in\mathcal{N}^*} \;\big( \mathrm{C}^{n}(N_{\alpha};G) ,(r^{\beta}_{\alpha})^{n} \big)\quad  :=\quad \bigg(\bigsqcup_{\alpha \in\mathcal{N}^*} C^{n}(N_{\alpha};G) \; \bigg)\bigg/\sim \quad = \quad C^n_{\mathrm{sem}}(\boldsymbol{\mathcal{U}};G)/\sim \quad = \quad C^{n}(\boldsymbol{\mathcal{U}};G),
\end{equation}
where $\sim$ is the congruence we defined earlier in this section, and similarly have that  $\delta^n= \mathrm{colim}_{\alpha}\delta^n_{\alpha}$. Consequently:
\begin{equation}\label{EQ:colimits}
\mathrm{colim}_{\;\alpha \in\mathcal{N}^*} \;\big( \mathrm{Z}^{n}(N_{\alpha};G) ,(r^{\beta}_{\alpha})^{n} \big)  = \mathrm{Z}^{n}(\boldsymbol{\mathcal{U}};G) \quad \text{ and } \quad  \mathrm{colim}_{\;\alpha \in\mathcal{N}^*} \;\big( \mathrm{B}^{n}(N_{\alpha};G) ,(r^{\beta}_{\alpha})^{n} \big)  = \mathrm{B}^{n}(\boldsymbol{\mathcal{U}};G).
\end{equation}
But for every direct system of pairs of abelian groups $(H_i\leq G_{i},i\in I)$, colimits commute with quotients, i.e.,  $\mathrm{colim}_i(G_i)/\mathrm{colim}_i(
H_i) \cong \mathrm{colim}_i(G_i/H_i)$; see \cite[Theorem IX.6.3]{eilenberg_foundations_1952}. Hence, by combining (\ref{EQ:Cech1}) and (\ref{EQ:colimits}) we have:
\begin{equation} \label{EQ:Cech2}
\Check{\mathrm{H}}^{n}(X;G)\cong \mathrm{colim}_{\;\alpha \in\mathcal{N}^*} \;\big(\mathrm{Z}^{n}(N_{\alpha};G)/ \mathrm{B}^{n}(N_{\alpha};G), (r^{\mathcal{\beta}}_{\mathcal{\alpha}})^* \big) \cong \mathrm{Z}^{n}(\boldsymbol{\mathcal{U}};G)/\mathrm{B}^{n}(\boldsymbol{\mathcal{U}};G) = \Check{\mathrm{H}}_{\mathrm{def}}^{n}(X;G).
\end{equation}
An explicit formula for the composition of these isomorphisms, $\psi\colon \Check{\mathrm{H}}^{n}(X;G)\to \Check{\mathrm{H}}_{\mathrm{def}}^{n}(X;G)$, can be given as follows. 
First, let 
$[\zeta+  \mathrm{B}^{n}(\mathrm{Nv}(\mathcal{U});G) ]_{\Check{\mathrm{H}}}$ be any element of $\Check{\mathrm{H}}^{n}(X;G)$,  where $\zeta\in  \mathrm{Z}^{n}(\mathrm{Nv}(\mathcal{U});G)$ for some $\mathcal{U}\in\mathrm{Cov}(X)$, and $[a]_{\Check{\mathrm{H}}}$ is the image of $a\in \mathrm{H}^{n}(\mathrm{Nv}(\mathcal{U});G)$ under the inclusion $\mathrm{H}^{n}(\mathrm{Nv}(\mathcal{U});G)\hookrightarrow \Check{\mathrm{H}}^{n}(X;G)$. Then, 
\begin{equation} \label{EQ:Cech_psi}
\psi\big( [\zeta +  \mathrm{B}^{n}(\mathrm{Nv}(\mathcal{U});G) ]_{\Check{\mathrm{H}}}\big)=[ \zeta\circ r^{\mathcal{U}_{\alpha}}_{\mathcal{U}} ]+\mathrm{B}^{n}(\boldsymbol{\mathcal{U}};G) 
\end{equation}
where $\alpha$ can be taken to be any element of $\mathcal{N}^{*}$ with $\mathcal{U}\preceq\mathcal{U}_{\alpha}.$ Such $\alpha$ exists by Lemma \ref{L:combinatorialapproximations}(3).\end{proof}

\begin{remark}\label{RemGeneralPolish}
In this section we developed definable cohomology groups with coefficients in a countable discrete abelian group $G$. Implicit in our definitions was the passage from  classical definition of \v{C}ech cohomology to the sheaf-theoretic one. As outlined in  the proof of Theorem \ref{T:Cech=Def},  this passage was effected essentially by alternating the order between two processes, namely: taking the colimit of abelian groups, and  computing the cohomology groups of a cochain complex.  One can more generally develop definable cohomology groups $\mathrm{H}_{\mathrm{def}}^{n}(X;G')$ for $X$ with coefficients in an arbitrary Polish abelian group $G'$ by endowing the sheaf of germs of all continuous $G'$-valued functions with an appropriate Polish structure.  However, this would require significant amounts of bookkeeping. Perhaps one way of simplifying the process of endowing the aforementioned sheaf with a Polish structure would be to  first pass to a ``completion" of the inverse system $((\mathcal{U}_{\alpha}),(r^{\beta}_{\alpha}))$ by  considering locally profinite topological simplicial complexes similar to the  profinite topological simplicial complexes used in projective Fra\"iss\'e theory; see \cite{SP}. This, however, is  beyond the scope of the present paper. 
\end{remark}

\subsection{Definable cohomology of pairs of spaces}

\label{SS:combinatorial_cohomology_of_pairs}

The process of enriching the \v{C}ech cohomology groups of locally compact spaces with definable content readily extends to the \v{C}ech cohomology groups of locally compact pairs, i.e, pairs $(X,A)$ in which $X$ is a locally compact Polish space and $A$ is closed in $X$; see Section \ref{SS:homotopy}. In this section we sketch the details.
We begin by generalizing the theory from Section \ref{SS:DefCohSimCom} to pairs $(K,L)$ of simplicial complexes.

Let $K$ be a simplicial complex. A \emph{subcomplex} $L$ of $K$ is any simplicial complex with $\mathrm{dom}(L)\subseteq \mathrm{dom}(K)$ for which $\sigma\in L$ implies $\sigma \in K$. By a \emph{simplicial pair} we mean a pair $(K,L)$ of simplicial complexes in which $L$ is a subcomplex of $K$. A \emph{simplicial map (of pairs)} $f\colon (K,L)\to (K',L')$ is any simplicial map $f\colon K\to K'$ such that $\{f(v)\colon v\in \sigma\}\in L'$ for all $\sigma\in L$. For any abelian Polish group $G$ and $n\in\mathbb{N}$, let  
\[C^{n}(K,L;G):=\{\zeta \in C^{n}(K;G) \colon   \zeta(\bar{v})=0_G  \text{ for all } \bar{v}\in L^{(n)} \}.\]
$C^{n}(K,L;G)$ is a closed subgroup of $C^{n}(K;G)$ with respect to the product topology of the $K^{(n)}$-fold product of copies of $G$, hence it is a Polish group. The coboundary map $ \delta^{n}:C^{n}(K;G) \rightarrow C^{n+1}(K;G)$ clearly restricts to a map $C^{n}(K,L;G) \rightarrow C^{n+1}(K,L;G)$ which we will also denote by $\delta^{n}$.  This induces a Polish cochain complex:
\[C^{\bullet}(K,L;G):=(  \quad \quad \quad \quad \quad \quad\cdots\longrightarrow \,C^{n-1}(K,L;G)\overset{\delta^{n-1}}{\longrightarrow} C^n(K,L;G)\overset{\delta^{n}}{\longrightarrow} C^{n+1}(K,L;G) {\longrightarrow} \cdots  \quad \quad  \quad \quad \quad ). \quad \quad\]
The \emph{definable cohomology groups of $(K,L)$ with coefficients in $G$} are the groups with a Polish cover
\[\mathrm{H}_{\mathrm{def}}^n(K,L;G):=  \mathrm{Z}^{n}(K,L;G)/ \mathrm{B}^{n}(K,L;G) := \mathrm{ker}(\delta^{n})/\mathrm{im}(\delta^{n-1}). \]
Note that the natural homomorphisms determining the short exact sequences
\begin{equation}\label{EQ:SES}
0\longrightarrow   C^n(K,L;G)  \overset{i^n}{\longrightarrow} C^n(K;G)  \overset{r^n}{\longrightarrow} C^n(L;G)  \longrightarrow  0
\end{equation}
are continuous and aggregate into a short exact sequence of Polish cochain complexes
\begin{equation}\label{EQ:SEScc}
0\longrightarrow   C^{\bullet}(K,L;G)  \longrightarrow C^{\bullet}(K;G)  \longrightarrow C^{\bullet}(L;G)  \longrightarrow  0.
\end{equation}

For the rest of this section let $G$ be a countable abelian group 
and $(X,A)$ be a locally compact pair. Fix any covering system $\boldsymbol{\mathcal{U}}=\big{(}(X_n),(\mathcal{U}_{\alpha}),(r^{\beta}_{\alpha})\big{)}$ for $X$ and notice that the triple $\boldsymbol{\mathcal{V}}=\big{(}(A_n),(\mathcal{V}_{\alpha}),(s^{\beta}_{\alpha})\big{)}$, where $A_n:=A\cap X_n$,  $\mathcal{V}_{\alpha}:=\mathcal{U}_{\alpha}\upharpoonright A\subseteq \mathcal{U}_{\alpha}$ and  $s^{\beta}_{\alpha}$    denotes the restriction of $r^{\beta}_{\alpha}$  to $\mathcal{V}_{\beta}$, is a covering system for $A$. Moreover, the inclusion $\mathcal{V}_{\alpha}\subseteq \mathcal{U}_{\alpha}$ induces a simplicial pair  $(N_{\alpha},M_{\alpha}):=\big{(}\mathrm{Nv}(\mathcal{U}_{\alpha}),\mathrm{Nv}(\mathcal{V}_{\alpha})\big{)}$.
For every  $n\in\mathbb{N}$, we set
\[C^{n}( \boldsymbol{\mathcal{U}}, \boldsymbol{\mathcal{V}};G):=\{[\zeta] \in C^{n}( \boldsymbol{\mathcal{U}},;G)   \colon \;   \;\forall \alpha\in \mathcal{N}^*  \;  \;  \forall \bar{v}\in M_{\alpha}^{(n)} \; \;\forall \zeta' \in [\zeta] \cap C^n(N_{\alpha};G),   \text{ we have that }   \zeta' (\bar{v})=0_G   \}.\]
Then  $C^{n}( \boldsymbol{\mathcal{U}}, \boldsymbol{\mathcal{V}};G)$ is a Polish group, as it is a closed subgroup of $C^{n}( \boldsymbol{\mathcal{U}};G)$. Moreover, the coboundary map $ \delta^{n}:C^{n}( \boldsymbol{\mathcal{U}};G) \rightarrow C^{n+1}( \boldsymbol{\mathcal{U}};G)$ restricts to  $ \delta^{n}:C^{n}( \boldsymbol{\mathcal{U}},\boldsymbol{\mathcal{V}};G) \rightarrow C^{n+1}( \boldsymbol{\mathcal{U}},\boldsymbol{\mathcal{V}};G)$,  inducing a Polish cochain complex:
\[C^{\bullet}( \boldsymbol{\mathcal{U}}, \boldsymbol{\mathcal{V}};G):=(  \quad \quad \quad \quad \quad \quad\cdots\longrightarrow \,C^{n-1}( \boldsymbol{\mathcal{U}}, \boldsymbol{\mathcal{V}};G)\overset{\delta^{n-1}}{\longrightarrow} C^n( \boldsymbol{\mathcal{U}}, \boldsymbol{\mathcal{V}};G)\overset{\delta^{n}}{\longrightarrow} C^{n+1}( \boldsymbol{\mathcal{U}}, \boldsymbol{\mathcal{V}};G) {\longrightarrow} \cdots  \quad \quad  \quad \quad \quad ). \quad \quad\]
The \emph{definable cohomology groups of $(X,A)$ with coefficients in $G$} are the  groups with a Polish cover:
\[\Check{\mathrm{H}}_{\mathrm{def}}^n(X,A;G):=  \mathrm{Z}^{n}( \boldsymbol{\mathcal{U}}, \boldsymbol{\mathcal{V}};G)/ \mathrm{B}^{n}( \boldsymbol{\mathcal{U}}, \boldsymbol{\mathcal{V}};G) := \mathrm{ker}(\delta^{n})/\mathrm{im}(\delta^{n-1}). \]
Arguing exactly as for Theorem \ref{T:Cech=Def} shows that, as abstract groups, these are precisely the \v{C}ech cohomology groups $\Check{\mathrm{H}}^n(X,A;G)$ of the pair $(X,A)$; see \cite[Chapter IX]{eilenberg_foundations_1952} for a definition of the latter.
Moreover, these $\check{\mathrm{H}}^n_{\mathrm{def}}$ groups array into a definable long exact sequence of the usual form; put differently, by the following theorem, they satisfy the Exactness Axiom as maps to the category of \emph{groups with a Polish cover}.
\begin{theorem}\label{T:definable_LES}
For any countable discrete group $G$ and locally compact pair $(X,A)$ there exists a long exact sequence of groups with a Polish cover and definable group homomorphisms
\[
\dots\longrightarrow \Check{\mathrm{H}}_{\mathrm{def}}^{n-1}(A;G) \overset{\partial^{n-1}}{\longrightarrow} \Check{\mathrm{H}}_{\mathrm{def}}^n(X,A;G)
\overset{\iota^{n}}{\longrightarrow} \Check{\mathrm{H}}_{\mathrm{def}}^n(X;G)  \overset{\rho^{n}}{\longrightarrow} \Check{\mathrm{H}}_{\mathrm{def}}^n(A;G) \overset{\partial^{n}}{\longrightarrow} \Check{\mathrm{H}}_{\mathrm{def}}^{n+1}(X,A;G) \longrightarrow\cdots 
\]
\end{theorem}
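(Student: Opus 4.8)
The plan is to realize the stated sequence as the cohomology long exact sequence of a short exact sequence of Polish cochain complexes, taking care throughout that the induced and connecting maps are definable. First I would record that the three complexes $C^{\bullet}(\boldsymbol{\mathcal{U}},\boldsymbol{\mathcal{V}};G)$, $C^{\bullet}(\boldsymbol{\mathcal{U}};G)$, and $C^{\bullet}(\boldsymbol{\mathcal{V}};G)$ (the last computing $\Check{\mathrm{H}}^{\bullet}_{\mathrm{def}}(A;G)$, since $\boldsymbol{\mathcal{V}}$ is a covering system for $A$) fit into a short exact sequence
\[ 0 \longrightarrow C^{\bullet}(\boldsymbol{\mathcal{U}},\boldsymbol{\mathcal{V}};G) \overset{i^{\bullet}}{\longrightarrow} C^{\bullet}(\boldsymbol{\mathcal{U}};G) \overset{r^{\bullet}}{\longrightarrow} C^{\bullet}(\boldsymbol{\mathcal{V}};G) \longrightarrow 0 \]
of continuous homomorphisms, exactly parallel to the simplicial case (\ref{EQ:SES})--(\ref{EQ:SEScc}). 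Here $i^{n}$ is the inclusion of the closed (hence Polish) subgroup $C^{n}(\boldsymbol{\mathcal{U}},\boldsymbol{\mathcal{V}};G)$, and $r^{n}$ is the restriction-of-cochains map induced by the sub-nerves $M_{\alpha}\subseteq N_{\alpha}$. Exactness at the middle is the very definition of $C^{n}(\boldsymbol{\mathcal{U}},\boldsymbol{\mathcal{V}};G)$ as $\ker(r^{n})$; the one point requiring attention is surjectivity of $r^{n}$, which I would establish through the explicit extend-by-zero map $C^{n}(\boldsymbol{\mathcal{V}};G)\to C^{n}(\boldsymbol{\mathcal{U}};G)$. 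This map is well defined on congruence classes because every refinement map $r^{\beta}_{\alpha}$ carries $\mathcal{V}_{\beta}$ into $\mathcal{V}_{\alpha}$ (if $V\cap A\neq\emptyset$ and $V\subseteq r^{\beta}_{\alpha}(V)$ then $r^{\beta}_{\alpha}(V)\cap A\neq\emptyset$), so it restricts to $s^{\beta}_{\alpha}$ on the sub-nerves; it is moreover continuous and a section of $r^{n}$, exhibiting $r^{n}$ as a continuous surjective homomorphism of Polish groups.

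The induced maps $\iota^{n}$ and $\rho^{n}$ are then definable essentially for free: each is the map on cohomology induced by a continuous homomorphism of Polish cochain complexes ($i^{\bullet}$, respectively $r^{\bullet}$), hence lifts to the continuous cocycle-level map $i^{n}$, respectively $r^{n}$, and is thus a morphism in $\mathsf{GPC}$. The heart of the argument is the connecting homomorphism $\partial^{n}\colon \Check{\mathrm{H}}^{n}_{\mathrm{def}}(A;G)\to\Check{\mathrm{H}}^{n+1}_{\mathrm{def}}(X,A;G)$, which I would build from its usual snake-lemma description, made Borel. Fix a Borel section $\sigma^{n}\colon C^{n}(\boldsymbol{\mathcal{V}};G)\to C^{n}(\boldsymbol{\mathcal{U}};G)$ of $r^{n}$ — one exists since $r^{n}$ is a continuous surjection of Polish groups, indeed the extend-by-zero section above already serves — and note that $i^{n+1}$, being a continuous injection between Polish spaces, has a Borel inverse on its image by the Lusin--Souslin theorem. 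For $z\in \mathrm{Z}^{n}(\boldsymbol{\mathcal{V}};G)$ one has $r^{n+1}(\delta^{n}\sigma^{n}(z))=\delta^{n}(z)=0$, so $\delta^{n}\sigma^{n}(z)\in\ker(r^{n+1})=\mathrm{im}(i^{n+1})$, and I set
\[ \widehat{\partial}^{n}(z) := (i^{n+1})^{-1}\big(\delta^{n}\sigma^{n}(z)\big). \]
A one-line computation gives $\widehat{\partial}^{n}(z)\in \mathrm{Z}^{n+1}(\boldsymbol{\mathcal{U}},\boldsymbol{\mathcal{V}};G)$ (apply $\delta^{n+1}$ and push through the injective $i^{n+2}$), and $\widehat{\partial}^{n}$ is Borel as a composite of Borel maps. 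It therefore descends to a well-defined definable homomorphism $\partial^{n}$ on the quotients, its independence from the choice of $\sigma^{n}$ (up to coboundaries) being the standard verification.

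With all three families of maps in hand, exactness of the sequence as abstract groups is the classical zig-zag lemma, proved by the usual diagram chase; since the passage from $\mathsf{GPC}$ to $\mathsf{Grp}$ is forgetful, this abstract exactness together with the definability of $\iota^{\bullet},\rho^{\bullet},\partial^{\bullet}$ is precisely the asserted exactness at the level of groups with a Polish cover and definable homomorphisms. I expect the only genuine obstacle to be the definability of $\partial^{n}$: the snake construction involves a non-canonical lift along $r^{n}$, and the content of the theorem is that this lift can be performed by a single Borel (here even continuous) rule rather than by pointwise choices — which is exactly what the Borel section of $r^{n}$, together with the Lusin--Souslin Borel inverse of $i^{n+1}$, supplies. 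Should a definable Snake Lemma be available from \cite{BLPI}, the cleanest route is simply to feed it the short exact sequence of Polish cochain complexes above, in which case the whole argument reduces to checking that sequence's exactness and continuity, with the surjectivity-with-continuous-section of $r^{n}$ the sole nontrivial point.
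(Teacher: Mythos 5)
Your proposal is correct and takes essentially the same route as the paper: the paper proves a definable Snake Lemma (whose two ingredients are exactly the ones you name — a Borel selector for the $\ker(r^n)$-coset relation supplying a Borel section of the surjection, and the Lusin--Souslin Borel inverse of the continuous injection $i^{n+1}$), derives from it a general long exact sequence for any short exact sequence of Polish cochain complexes with continuous maps, and applies that to $0\to C^{\bullet}(\boldsymbol{\mathcal{U}},\boldsymbol{\mathcal{V}};G)\to C^{\bullet}(\boldsymbol{\mathcal{U}};G)\to C^{\bullet}(\boldsymbol{\mathcal{V}};G)\to 0$. One small caveat: extend-by-zero is \emph{not} well defined on congruence classes (a tuple $\bar{U}\notin M^{(n)}_{\beta}$ can satisfy $r^{\beta}_{\alpha}(\bar{U})\in M^{(n)}_{\alpha}$, since enlarging a set may make it meet $A$), so it does not literally "serve" as the section on the quotient groups; this is harmless, because surjectivity of $r^n$ still follows by extending a representative by zero, and the Borel section exists by your general argument for continuous surjections of Polish groups.
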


Theorem \ref{T:definable_LES}  is an instance of Theorem \ref{Th:LongExactGeneral} below which,  in turn, is
 a direct consequence of the following definable version of the Snake Lemma:

\begin{lemma}
\label{Lemma:SnakeLemma} Consider the following diagram of continuous homomorphisms of  abelian Polish groups:
\begin{center}
\begin{tikzcd}
& A \arrow[r, "e"] \arrow[d, "a"] & B \arrow[d, "b"]\arrow[r, "f"]  & C \arrow[r] \arrow[d, "c"] & 0  \\
0 \arrow[r] & A' \arrow[r, "g"] & B' \arrow[r, "h"]  & C' &   
\end{tikzcd}
\end{center}
If the rows are exact, then there exists a Borel function $\varepsilon \colon \mathrm{ker}(c)\rightarrow A^{\prime }$
so that the map $x\mapsto \big(\varepsilon(x)+\mathrm{im}(A)\big)$ links the middle terms in the following
exact sequence of groups with a Polish cover and definable group homomorphisms: 
\begin{equation*}
\mathrm{ker}(a)\overset{e}{\longrightarrow }\mathrm{ker}(b)\overset{f}{%
\longrightarrow }\mathrm{ker}(c)\longrightarrow \mathrm{coker}(a)\overset{%
g}{\longrightarrow }\mathrm{coker}(b)\overset{h}{%
\longrightarrow }\mathrm{coker}(c).
\end{equation*}
Observe that we have notationally conflated all other maps with those which they induce on subgroups and quotients.
\end{lemma}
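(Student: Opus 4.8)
The plan is to carry out the classical Snake Lemma diagram chase while tracking definability at every step, so that the only place genuine descriptive set theory enters is the construction of the Borel connecting function $\varepsilon$. First I would record how the six terms are to be read as groups with a Polish cover: each kernel $\mathrm{ker}(a),\mathrm{ker}(b),\mathrm{ker}(c)$ is a closed subgroup of a Polish group and hence itself Polish, i.e. a group with a Polish cover with trivial normal part; each cokernel $\mathrm{coker}(a)=A'/\mathrm{im}(a)$, $\mathrm{coker}(b)=B'/\mathrm{im}(b)$, $\mathrm{coker}(c)=C'/\mathrm{im}(c)$ is a group with a Polish cover, since the image of a continuous homomorphism of Polish groups is Polishable. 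The four outer maps are then visibly definable: $e$ and $f$ restrict to continuous homomorphisms on the kernels (using $be=ga$ and $cf=hb$ to see that they land in the appropriate kernels), and $g,h$ descend to the cokernels (using $g[\mathrm{im}(a)]\subseteq\mathrm{im}(b)$ and $h[\mathrm{im}(b)]\subseteq\mathrm{im}(c)$), in each case via a continuous, hence Borel, lift.

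The heart of the argument is the connecting map. Exactness of the top row at $C$ means that $f$ is a surjective continuous homomorphism of Polish groups; by the open mapping theorem it is open, and I would invoke the standard fact that such a map admits a Borel section $u\colon C\to B$ with $f\circ u=\mathrm{id}_C$. Exactness of the bottom row at $A'$ means that $g$ is injective, so by the Lusin--Souslin theorem $\mathrm{im}(g)$ is Borel in $B'$ and $g^{-1}\colon\mathrm{im}(g)\to A'$ is Borel. For $x\in\mathrm{ker}(c)$ one has $h(b(u(x)))=c(f(u(x)))=c(x)=0$, so $b(u(x))\in\mathrm{ker}(h)=\mathrm{im}(g)$ by exactness of the bottom row at $B'$; I therefore set $\varepsilon(x):=g^{-1}(b(u(x)))$. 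This is Borel as a composition of the Borel map $u$, the continuous map $b$, and the Borel map $g^{-1}$, and the resulting map $x\mapsto\varepsilon(x)+\mathrm{im}(a)$ is the desired definable homomorphism $\mathrm{ker}(c)\to\mathrm{coker}(a)$.

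It remains to verify that $x\mapsto\varepsilon(x)+\mathrm{im}(a)$ is a well-defined homomorphism and that the six-term sequence is exact. Both are the content of the classical Snake Lemma for abelian groups, applied to the underlying abstract groups: independence of the choice of lift follows from the identity $g^{-1}\circ b\circ e=a$ (a consequence of $be=ga$ and the injectivity of $g$), which shows that replacing $u(x)$ by any other $f$-preimage of $x$ alters $\varepsilon(x)$ only by an element of $\mathrm{im}(a)$, while additivity follows since $u(x_1)+u(x_2)$ is a lift of $x_1+x_2$ and $g^{-1},b$ are homomorphisms on the relevant subgroups. Exactness at each of the six positions is precisely the classical conclusion and requires no new work, since exactness in $\mathsf{GPC}$ is just exactness of the underlying quotient groups. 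The main obstacle is thus not the algebra but the single descriptive-set-theoretic input---the existence of a Borel section of the surjection $f$---together with the bookkeeping needed to confirm that every other map in the sequence carries a Borel (indeed continuous) lift; once these are in place the lemma follows by transcribing the usual diagram chase.
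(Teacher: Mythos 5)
Your proposal is correct and follows essentially the same route as the paper: a Borel section of the surjection $f$ (the paper gets this from the Borel selector theorem for coset equivalence relations, you from the open mapping theorem plus the standard section fact), composed with $b$ and the Borel inverse of the continuous injection $g$, with the remaining verifications deferred to the classical algebraic Snake Lemma. No gaps.
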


\begin{proof}
By \cite[Theorem 12.15]{kechris_classical_1995} there exists a Borel selector $\gamma$ for the $\mathrm{ker}(f)$-coset relation on $B$; put differently, there exists a
Borel function $\gamma \colon C\rightarrow B$ so that $f\circ \gamma =\mathrm{id}%
_{C}.$ Being a continuous injection, $g$ is a Borel
isomorphism between $A^{\prime }$ and $\mathrm{im}(g)$ and hence possesses a Borel inverse $\varphi \colon \mathrm{im}(g)\rightarrow
A^{\prime }$. Set $\varepsilon =\varphi \circ
b\circ (\gamma \upharpoonright \mathrm{ker}(c))$. Since the rightmost square
commutes, for every $x\in \mathrm{ker}(c)$ we have that $b\circ \gamma
(x)\in \mathrm{ker}(h)=\mathrm{im}(g)$ so $\varepsilon$ is
well-defined and Borel. The rest of the argument is essentially algebraic and standard; see \cite[Lemma 9.1]{lang_algebra_2002}.
\end{proof}

\begin{theorem}\label{Th:LongExactGeneral}
Let $0\longrightarrow   A^{\bullet}  \overset{i^{\bullet}}{\longrightarrow}  B^{\bullet} \overset{r^{\bullet}}{\longrightarrow}  C^{\bullet} \longrightarrow  0$
be a short exact sequence of Polish cochain complexes where the group homomorphisms $i^n,r^n$ are continuous for all $n\in\mathbb{Z}$. Then there is a long exact sequence of definable homomorphisms between the associated groups with a Polish cover:
\begin{equation}\label{EQ:longExact}
\dots\longrightarrow   \mathrm{H}^{n-1}(C^{\bullet})\overset{\partial^{n-1}}{\longrightarrow} \mathrm{H}^{n}(A^{\bullet})
\overset{\widehat{\iota}^{n}}{\longrightarrow} \mathrm{H}^{n}(B^{\bullet})  \overset{\widehat{\rho}^{n}}{\longrightarrow} \mathrm{H}^{n}(C^{\bullet})\overset{\partial^{n}}{\longrightarrow} \mathrm{H}^{n+1}(A^{\bullet})\longrightarrow\cdots 
\end{equation}  
\end{theorem}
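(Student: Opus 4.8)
The plan is to obtain the sequence \eqref{EQ:longExact} by splicing together, over all $n$, the six-term exact sequences produced by applying the definable Snake Lemma (Lemma \ref{Lemma:SnakeLemma}) to a suitable diagram of \emph{Polish} groups, and then descending the output from the level of cocycles to that of cohomology. The only delicate point is that the Snake Lemma requires genuinely Polish groups at its corners, whereas the cohomology groups $\mathrm{H}^n$ are quotients $\mathrm{Z}^n/\mathrm{B}^n$ by the merely Polishable subgroups $\mathrm{B}^n$, and so can appear only as \emph{cokernels}, never as kernels.

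First I would fix $n$ and consider the commutative diagram
\begin{center}
\begin{tikzcd}
A^n \arrow[r, "i^n"] \arrow[d, "\delta^n_A"] & B^n \arrow[r, "r^n"] \arrow[d, "\delta^n_B"] & C^n \arrow[r] \arrow[d, "\delta^n_C"] & 0 \\
0 \arrow[r] & \mathrm{Z}^{n+1}(A^\bullet) \arrow[r, "i^{n+1}"] & \mathrm{Z}^{n+1}(B^\bullet) \arrow[r, "r^{n+1}"] & \mathrm{Z}^{n+1}(C^\bullet)
\end{tikzcd}
\end{center}
in which each vertical map is the coboundary $\delta^n$ corestricted to the group of $(n+1)$-cocycles (a valid corestriction since $\delta^{n+1}\delta^n=0$). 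Here every corner is Polish — the cochain groups by hypothesis, and each $\mathrm{Z}^{n+1}(\cdot)=\ker(\delta^{n+1})$ as a closed subgroup of a Polish group — and every map is a continuous homomorphism. The top row is exact, being the degree-$n$ part of $0\to A^\bullet\to B^\bullet\to C^\bullet\to 0$; the bottom row is exact by a one-line chase using injectivity of $i^{n+1}$ and $i^{n+2}$. Crucially, $\ker(\delta^n_A)=\mathrm{Z}^n(A^\bullet)$ and $\mathrm{coker}(\delta^n_A)=\mathrm{Z}^{n+1}(A^\bullet)/\mathrm{B}^{n+1}(A^\bullet)=\mathrm{H}^{n+1}(A^\bullet)$, and likewise for $B^\bullet$ and $C^\bullet$. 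Lemma \ref{Lemma:SnakeLemma} therefore yields a Borel connecting map and an exact sequence of groups with a Polish cover and definable homomorphisms
\[
\mathrm{Z}^n(A^\bullet)\to \mathrm{Z}^n(B^\bullet)\to \mathrm{Z}^n(C^\bullet)\xrightarrow{\ \widetilde{\partial}^{n}\ }\mathrm{H}^{n+1}(A^\bullet)\to \mathrm{H}^{n+1}(B^\bullet)\to \mathrm{H}^{n+1}(C^\bullet).
\]

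Next I would descend the left half of this sequence from cocycles to cohomology. The standard computation of the connecting map shows that $\widetilde{\partial}^{n}$ vanishes on $\mathrm{B}^n(C^\bullet)$: a coboundary $\delta^{n-1}_C\eta$ lifts, via the surjective $r^{n-1}$, to the coboundary $\delta^{n-1}_B\tilde\eta$, whose image under $\delta^n_B$ is $0$. Hence $\widetilde{\partial}^{n}$ factors through $\mathrm{H}^n(C^\bullet)=\mathrm{Z}^n(C^\bullet)/\mathrm{B}^n(C^\bullet)$, and since the Borel lift of $\widetilde{\partial}^{n}$ furnished by Lemma \ref{Lemma:SnakeLemma} descends along the quotient map, the induced $\partial^n\colon \mathrm{H}^n(C^\bullet)\to \mathrm{H}^{n+1}(A^\bullet)$ is again definable. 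The outer cocycle maps carry coboundaries to coboundaries and so induce definable homomorphisms $\widehat{\iota}^{n},\widehat{\rho}^{n}$ on cohomology. A routine diagram chase — at the spot $\mathrm{H}^n(B^\bullet)$ it uses the surjectivity of $r^{n-1}$ to adjust a cocycle by a coboundary before invoking exactness one level down — then upgrades the six-term sequence above to the exact cohomology sequence
\[
\mathrm{H}^n(A^\bullet)\xrightarrow{\widehat{\iota}^{n}} \mathrm{H}^n(B^\bullet)\xrightarrow{\widehat{\rho}^{n}} \mathrm{H}^n(C^\bullet)\xrightarrow{\partial^n}\mathrm{H}^{n+1}(A^\bullet)\xrightarrow{\widehat{\iota}^{n+1}} \mathrm{H}^{n+1}(B^\bullet)\xrightarrow{\widehat{\rho}^{n+1}} \mathrm{H}^{n+1}(C^\bullet).
\]

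Finally, since the maps $\widehat{\iota}^{\bullet},\widehat{\rho}^{\bullet}$ in these six-term sequences for consecutive $n$ are induced by the same cochain maps $i^\bullet,r^\bullet$, the sequences overlap consistently and splice into the single long exact sequence \eqref{EQ:longExact}, with exactness at each term $\mathrm{H}^m(-)$ witnessed by whichever six-term piece contains it in a middle position. I expect the main obstacle to be not the algebra but the bookkeeping of \emph{definability} across the descent step: one must verify that vanishing on $\mathrm{B}^n(C^\bullet)$ lets the Borel connecting map of Lemma \ref{Lemma:SnakeLemma} pass to a Borel — hence definable — map on the quotient $\mathrm{H}^n(C^\bullet)$, and that the exactness-upgrading chase can be effected by Borel choices; the purely topological inputs (closedness of cocycle groups, Polishability of coboundary groups, and continuity of all cochain-level maps) are routine.
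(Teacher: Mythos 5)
Your proposal is correct and follows essentially the same route as the paper: both obtain the long exact sequence by applying the definable Snake Lemma (Lemma \ref{Lemma:SnakeLemma}) degree by degree to the square formed by the short exact sequence of cochain complexes and the coboundary maps, with definability of $\widehat{\iota}^{n},\widehat{\rho}^{n}$ coming from the continuous lifts $i^n\upharpoonright \mathrm{Z}^{n}(A^{\bullet})$ and $r^n\upharpoonright \mathrm{Z}^{n}(B^{\bullet})$ and definability of $\partial^n$ from the Borel function $\varepsilon$ supplied by that lemma. The only difference is cosmetic --- the paper takes the full cochain groups $A^{n+1},B^{n+1},C^{n+1}$ as the bottom row and leaves the passage to cohomology as standard, whereas you corestrict the bottom row to the cocycle groups so that the cokernels are literally $\mathrm{H}^{n+1}$ and then spell out the descent of the Borel lift and the exactness chase, all of which is sound.
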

\begin{proof}
On the level of abstract cochain complexes and abstract abelian groups the long exact sequence  (\ref{EQ:longExact}) can be formed by taking  $\widehat{\iota}^{n}$ and $\widehat{\rho}^{n}$ to be the maps induced by the homomorphisms  $\iota_n\upharpoonright \mathrm{Z}^{n}(A^{\bullet})$ and  $\rho_n\upharpoonright \mathrm{Z}^{n}(B^{\bullet})$, and setting $\partial^{n}$ to be the map induced by an application of the classical Snake Lemma on the diagram:
\begin{center}
\begin{tikzcd}
& A^n \arrow[r, "\iota^n"] \arrow[d, "\delta^n_A"] & B^n \arrow[d, "\delta^n_B"]\arrow[r, "\rho^n"]  & C^n \arrow[r] \arrow[d, "\delta^n_C"] & 0  \\
0 \arrow[r] & A^{n+1} \arrow[r, "\iota^{n+1}"] & B^{n+1} \arrow[r, "\rho^{n+1}"]  & C^{n+1} &   
\end{tikzcd}
\end{center}
But then, $\widehat{\iota}^{n}$ and $\widehat{\rho}^{n}$  are  both definable since they admit $\iota_n\upharpoonright \mathrm{Z}^{n}(A^{\bullet})$ and  $\rho_n\upharpoonright \mathrm{Z}^{n}(B^{\bullet})$  as continuous lifts. The homomorphism $\partial^n$ is also definable as a consequence of Lemma \ref{Lemma:SnakeLemma}.
\end{proof}

Theorem \ref{T:definable_LES} now follows by an application of Theorem \ref{Th:LongExactGeneral} to the short exact sequence (\ref{EQ:SEScc}).

\section{Definable sets and groups}\label{S:DSets}

At work in the main results of \cite{BLPI} is the fact that the category $\mathsf{GPC}$ of groups with a Polish cover is well-behaved in a number of ways.
For example, if a group isomorphism $G/H\to G'/H'$ lifts to a Borel function then so too does its inverse; see \cite[Remark 3.3]{BLPI}.
Put differently, in $\mathsf{GPC}$ every bijective morphism is an isomorphism; this is just as we would hope, of course, and sharply contrasts with with the behavior of categories of topological groups, for example.
Deeper regularities of this construction manifest as the second author's more recent result that the \emph{category of groups with an abelian Polish cover} $\mathsf{APC}$ forms an abelian category \cite{lupini_looking_22}.

Algebraic topology, however, abounds in spaces possessing a group structure \emph{only modulo a homotopy relation}; such \emph{$H$-groups}, moreover, are fundamental to the homotopical representation of cohomology theories, and of \v{C}ech cohomology in particular.
We were led in this way to consideration of a more general category of what we term \emph{definable groups}, one which contains $\mathsf{GPC}$ as a full subcategory, as well as the homotopical examples we have in mind, and we were gratified to discover how many of $\mathsf{GPC}$'s regularity properties persist in this wider setting.
In a nutshell, a definable group  $(X/E,*,\mu)$ is the quotient $X/E$ of a Polish space by an idealistic Borel equivalence relation $E$, together with a group structure given by a multiplication $\mu\colon X/E \times X/E \to X/E$ so that both $\mu$ and the group-inverse map $X/E\to X/E$ lift to Borel maps at the level of $X$. Just as in the category of groups with a Polish cover, maps in the category of definable groups are definable homomorphisms, i.e., group homomorphisms $X/E\to Y/F$ which lift to a Borel map $X\to Y$.

We will be working with a slightly more general notion of an  idealistic equivalence relation than is standard. This is introduced in Section \ref{Section:er}, where we also record some technical preliminaries for our later work. Definable groups are then introduced as group objects in the category $\mathsf{DSet}$ of \emph{definable sets}; see Sections \ref{Section:dset} and \ref{Section:dgroup}.
The prototypical example of a definable group is the group $[X, K(G,n)]$ of homotopy classes of maps from a locally compact Polish space $X$ to an Eilenberg--MacLane space $K(G,n)$; see Section \ref{S:Hidealistic}.  In Section \ref{S:Huber} we will  establish that    $[X, K(G,n)]$ is \emph{essentially a group with a Polish cover}, i.e., is definably isomorphic to a group with a Polish cover. It remains an open question whether every definable group is essentially a group with a Polish cover, as we note in our conclusion.

\subsection{Idealistic equivalence relations\label{Section:er}}

We say that an equivalence relation $E$ on a Polish space $X$ is \emph{open}, \emph{closed}, or \emph{Borel}, respectively, if it
is an open, closed, or Borel subset of $X\times X$ endowed with the product
topology.  We will denote by $[x]_E$, or simply by $[x]$ if $E$ is understood, the $E$-equivalence class of $x$ in $X$.

The following is a slight generalization of the classical notion of an \emph{idealistic}
equivalence relation; it coincides with  
\cite[Definition 5.4.9]{gao_invariant_2009} if we require that the map $\zeta :X\rightarrow X$ in Definition \ref{Definition:idealistic} is the identity map. And while our definition's allowance for a potentially wider range of functions $\zeta$ appears indispensible to the proof of Lemma \ref{Lemma:invariance} below, for example, it remains unclear whether \cite[Definition 5.4.9]{gao_invariant_2009} is in fact equivalent to Definition \ref{Definition:idealistic}.
 Recall that a \emph{$%
\sigma $-filter} of subsets of a set $C$ is a nonempty collection $\mathcal{F}
$ of nonempty subsets of $C$ which is closed under supersets and
countable intersections.

\begin{definition}
\label{Definition:idealistic}An equivalence relation $E$ on a Polish space $X$ is \emph{idealistic} if there
exist an assignment $[x]\mapsto \mathcal{F}_{[x]}$ mapping each $E$-class $[x]$ to
a $\sigma$-filter $\mathcal{F}_{[x]}$ of subsets of $[x]$ and a Borel function $\zeta :X\rightarrow X$ satisfying
\begin{itemize}
\item $x\,E\, \zeta( x) $ for
every $x\in X$, and
\item for each Borel subset $A$ of $X\times X$ the
set $A_{\mathcal{F}}\subseteq X$ defined by%
\begin{equation*}
x\in A_{\mathcal{F}}\Leftrightarrow \left\{ x^{\prime }\in \left[ x\right] :( \zeta( x) ,x^{\prime }) \in A\right\} \in 
\mathcal{F}_{\left[ x\right]}
\end{equation*}%
is Borel.
\end{itemize}
\end{definition}

A common convention facilitates statements like the second bulleted point above: if $%
\mathcal{F}$ is a filter of subsets of $C$ and $P(x) $ is a property which elements of $C$ may or may not have, write \textquotedblleft $\mathcal{F}x\:\:P( x) $\textquotedblright{}  for the assertion ``$\{x\in C:P( x) $ holds$\}\in \mathcal{F}$.'' We may then rephrase the second item of Definition \ref%
{Definition:idealistic} as the requirement that for every Borel $A\subseteq X\times X$ the set $\{x\in X:\mathcal{F}_{\left[ x\right] }x^{\prime }\:\:( \zeta
( x) ,x^{\prime }) \in A\}$ is Borel. 

\begin{example} Let $G/N$ be a group with a Polish cover and let $\mathcal{R}(G/N)$ be the coset equivalence relation on $G$, so that $x \, \mathcal{R}(G/N) \, x' \iff xN= x'N$. Then $\mathcal{R}(G/N)$ is idealistic, as witnessed by  $\zeta:=\mathrm{id}_{G}$ and the assignment $[x]\mapsto \mathcal{F}_{[x]}$, with $D\in  \mathcal{F}_{[x]}\iff$ ``$x^{-1}D$ is a comeager subset of the Polish group $N$"; see 
\cite[Proposition 5.4.10]{gao_invariant_2009}.
\end{example}

The collection of idealistic equivalence relations enjoys several desirable closure properties.
It is clear, for example, that if $E$ is an idealistic equivalence relation on $X$ and $%
X_{0}\subseteq X$ is an $E$-invariant Borel subset then $E|_{X_0}$ is
idealistic.

\begin{lemma}
\label{Lemma:product}Suppose that $E,F$ are idealistic equivalence relations
on Polish spaces $X,Y$, respectively. Define $E\times F$ to be the
equivalence relation on $X\times Y$ defined by $( x,y) (
E\times F) ( x^{\prime },y^{\prime }) $ if and only if $%
x\,E\,x^{\prime }$ and $y\,F\,y^{\prime }$. Then $E\times F$ is idealistic.
\end{lemma}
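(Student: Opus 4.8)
The plan is to build witnesses $(\zeta,\{\mathcal H_{[\cdot]}\})$ for $E\times F$ out of the given witnesses $(\zeta_E,\{\mathcal F_{[x]}\})$ for $E$ and $(\zeta_F,\{\mathcal G_{[y]}\})$ for $F$. For the Borel function I would take the product map $\zeta(x,y)=(\zeta_E(x),\zeta_F(y))$; it is Borel, and since $x\,E\,\zeta_E(x)$ and $y\,F\,\zeta_F(y)$ it satisfies $(x,y)\,(E\times F)\,\zeta(x,y)$. For the $\sigma$-filters I would use the Fubini product: on the class $[(x,y)]_{E\times F}=[x]_E\times[y]_F$ declare
\[
D\in\mathcal H_{[(x,y)]}\iff \mathcal F_{[x]}\,x'\ \bigl(\mathcal G_{[y]}\,y'\ (x',y')\in D\bigr).
\]
This assignment depends only on the class, since $\mathcal F_{[x]}$ and $\mathcal G_{[y]}$ do. The first task is to check it is a $\sigma$-filter of nonempty sets: nonemptiness and closure under supersets are immediate from the corresponding properties of $\mathcal F$ and $\mathcal G$, while closure under countable intersections rests on the observation that a filter-quantifier commutes with countable conjunctions, i.e.\ $\mathcal G_{[y]}y'\,\bigwedge_n\varphi_n\iff\bigwedge_n\mathcal G_{[y]}y'\,\varphi_n$ (and likewise for $\mathcal F_{[x]}$), which holds because a $\sigma$-filter is closed under both countable intersections and supersets.

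The substance of the proof is the Borel condition. Given a Borel $A\subseteq(X\times Y)^2$, I would verify that $A_{\mathcal H}=\{(x,y):\mathcal H_{[(x,y)]}(x',y')\ (\zeta(x,y),(x',y'))\in A\}$ is Borel by peeling off the two filter-quantifiers in turn. First, treating $(\zeta_E(x),x')$ as a parameter ranging over $X\times X$ and applying the idealistic condition for $F$, one shows that
\[
B:=\{(a,y,x')\in X\times Y\times X:\mathcal G_{[y]}\,y'\ ((a,\zeta_F(y)),(x',y'))\in A\}
\]
is Borel. One then observes that the inner predicate $(\zeta_E(x),y,x')\in B$ depends on $x$ \emph{only through} $\zeta_E(x)$, so that peeling off the outer quantifier reduces to
\[
A_{\mathcal H}(x,y)\iff \mathcal F_{[x]}\,x'\ (\zeta_E(x),x')\in B^{y},\qquad B^{y}:=\{(a,x'):(a,y,x')\in B\};
\]
applying the idealistic condition for $E$, now with $y$ as a parameter ranging over $Y$, this too is Borel. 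The bookkeeping point that makes the two reductions compose is precisely the dependence-through-$\zeta_E$ just noted, which renders each substitution a composition with a Borel map.

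The main obstacle is exactly the two quantifier-peeling steps: each invokes not the literal condition of Definition \ref{Definition:idealistic} but a \emph{parametrized} version of it, asserting that for any Polish $Z$ and Borel $C\subseteq Z\times X\times X$ the set $\{(z,x):\mathcal F_{[x]}x'\ (z,\zeta_E(x),x')\in C\}$ is Borel (and symmetrically for $F$). I would establish this uniform statement as a separate lemma, and it is the technical heart of the argument. Its delicacy is that a filter-quantifier commutes with neither countable unions nor complements, so the parametrized condition cannot be extracted from the unparametrized one by a routine monotone-class or induction-on-Borel-rank argument; some genuinely uniform control of the operation $C\mapsto C_{\mathcal F}$ across the parameter $z$ is required. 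Once that lemma is in place, the two-step computation above shows $A_{\mathcal H}$ is Borel, and together with the $\sigma$-filter verification this exhibits $\zeta$ and $\{\mathcal H_{[(x,y)]}\}$ as witnesses that $E\times F$ is idealistic.
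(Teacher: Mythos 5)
Your construction is exactly the paper's: the product map $\zeta(x,y)=(\zeta^X(x),\zeta^Y(y))$ together with the Fubini product of the two $\sigma$-filters on each class $[x]\times[y]$, with the $\sigma$-filter axioms checked just as you check them. The divergence is at the Borel condition. The paper disposes of it in one sentence, asserting that
\begin{equation*}
A_{\mathcal F}=\{(x,y):\mathcal F^X_{[x]}x'\ \ \mathcal F^Y_{[y]}y'\ \ ((\zeta^X(x),\zeta^Y(y)),(x',y'))\in A\}
\end{equation*}
is Borel ``by the assumption'' that the given data witness that $E$ and $F$ are idealistic, without unfolding the two filter-quantifiers. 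You do unfold them, and you correctly observe that each peeling applies the idealistic condition not to a Borel subset of $Y\times Y$ (resp.\ $X\times X$) but to a Borel subset carrying an extra Polish parameter (the parameter $(\zeta^X(x),x')\in X\times X$ for the inner quantifier, and $y\in Y$ for the outer one), so that what is literally needed is a parametrized strengthening of Definition \ref{Definition:idealistic}. Your diagnosis of why that strengthening is not a formal consequence of the unparametrized condition is also sound: $C\mapsto C_{\mathcal F}$ commutes with countable intersections but with neither countable unions nor complements, so no monotone-class or rank-induction argument closes the loop, and manufacturing the parameter by applying the product lemma to $\Delta_Z\times F$ is circular.

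The problem is that you stop there. You announce the parametrized statement as ``a separate lemma,'' call it the technical heart, argue that it is delicate, and then invoke it without proof. As submitted, your argument is therefore a reduction of the lemma to an unproven --- and, by your own analysis, non-routine --- claim, not a proof. To close it you would have to actually establish the parametrized condition for the witnesses at hand; this is what happens implicitly in every instance the paper uses, where the filters are principal (supplied by a Borel selector) or category filters on cosets or on fibers of a closed relation, for which the uniformity over a Polish parameter is exactly what \cite[Theorem 16.1]{kechris_classical_1995} and the paper's Lemma \ref{Lemma:transform1} provide. In short, you have located precisely the pressure point that the paper's one-line justification passes over, but you have not relieved it, so your writeup does not yet constitute a complete proof of the lemma.
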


\begin{proof}
Let the assignments $[x]\mapsto \mathcal{F}_{[x]}^{X}$ and $\zeta
^{X}:X\rightarrow X$ witness that $E$ is idealistic, and let $[y]\mapsto \mathcal{F
}_{[y]}^{Y}$ and $\zeta ^{Y}:Y\rightarrow Y$ witness that $F$ is idealistic.
Observe that an $( E\times F) $-class $[(x,y)]$ is of the form $[x]\times [y]$
where $[x]$ is an $E$-class and $[y]$ is an $F$-class. Define $\mathcal{F}_{[(x,y)]}$ by setting $S\in \mathcal{F}_{[(x,y)]}$
if and only if $[\mathcal{F}_{[x]}^{X}x'\:\:\mathcal{F}_{[y]}^{Y}y'\:\:(
x',y') \in S]$, and observe that $\mathcal{F}_{[(x,y)]}$ is indeed a $\sigma $-filter. Define
also the function $\zeta :X\times Y\rightarrow X\times Y$, $(
x,y) \mapsto ( \zeta ^{X}( x) ,\zeta ^{Y}(
y) ) $.

For any $( x,y) \in X\times Y$ and Borel $A\subseteq ( X\times Y) \times ( X\times Y) $ we have that $[\mathcal{F}_{%
\left[ ( x,y) \right] }( x^{\prime },y^{\prime }) \:\:
( \zeta ( x,y) ,( x^{\prime },y^{\prime })
) \in A]$ if and only if $[\mathcal{F}_{\left[ x\right] }^{X}x^{\prime }\:\:\mathcal{F}_{\left[ y\right] }^{Y}y^{\prime }\:\:( ( \zeta
^{X}( x^{\prime }) ,\zeta ^{Y}( y^{\prime }) )
,( x^{\prime },y^{\prime }) ) \in A]$. The set%
\begin{equation*}
A_{\mathcal{F}}=\{( x,y) \in X\times Y:\mathcal{F}_{\left[ x%
\right] }^{X}x^{\prime }\:\:\mathcal{F}_{\left[ y\right] }^{Y}y^{\prime
}\:\:( ( \zeta^{X}( x) ,\zeta^{Y}(
y) ) ,( x^{\prime },y^{\prime }) ) \in A\}
\end{equation*}%
is Borel by the assumption that the assignments $[x]\mapsto \mathcal{F}_{[x]}^{X}$
and $\zeta ^{X}$ and $%
[y]\mapsto \mathcal{F}_{[y]}^{Y}$ and $\zeta ^{Y}$, respectively, witness that $E$ and $
F$ are idealistic. Hence $[(x,y)]\mapsto \mathcal{F}_{[(x,y)]}$ and $
\zeta :X\times Y\rightarrow X\times Y$ witness that $E\times F$ is
idealistic.
\end{proof}

\begin{lemma}
\label{Lemma:cokernel}Suppose that $F$ is an idealistic equivalence relation
on a Polish space $Y$ and $Z$ is an $F$-invariant Borel subset of $Y$.
Define $F_Z$ to be the equivalence relation on $Y$ given by $%
x\,F_Z\,x^{\prime }$ if and only if $x\,F\,x^{\prime }$ or $x,x^{\prime }\in Z$.
Then $F_Z$ is idealistic.
\end{lemma}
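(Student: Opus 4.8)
The plan is to build a witness for $F_Z$ being idealistic by reusing the witness for $F$ away from $Z$, while treating the single collapsed class $Z$ with a deliberately trivial filter together with a \emph{constant} selection function. First observe the class structure of $F_Z$: since $Z$ is $F$-invariant, every $F_Z$-class is either an $F$-class $[y]_F\subseteq Y\setminus Z$ (when $y\notin Z$), or the single class $Z$ itself. If $Z=\emptyset$ then $F_Z=F$ and there is nothing to prove, so I may assume $Z\neq\emptyset$ and fix a point $z_0\in Z$. Let $[y]\mapsto\mathcal{F}_{[y]}$ and $\zeta_F\colon Y\to Y$ witness that $F$ is idealistic.

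I would then define the new selection function $\zeta\colon Y\to Y$ by $\zeta(y)=\zeta_F(y)$ for $y\notin Z$ and $\zeta(y)=z_0$ for $y\in Z$; this is Borel because $Z$ is Borel, and it satisfies $y\,F_Z\,\zeta(y)$ in both cases (for $y\notin Z$ because $\zeta_F(y)\,F\,y$, and for $y\in Z$ because $y,z_0\in Z$). For the filter assignment I would set $\mathcal{G}_{[y]_F}:=\mathcal{F}_{[y]_F}$ for each class $[y]_F$ with $y\notin Z$, and $\mathcal{G}_{Z}:=\{Z\}$, the trivial $\sigma$-filter on $Z$; the latter is a legitimate $\sigma$-filter of nonempty subsets since $Z\neq\emptyset$, and $\{Z\}$ is vacuously closed under supersets and countable intersections within $Z$.

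The only real content is verifying that $A_{\mathcal{G}}$ is Borel for every Borel $A\subseteq Y\times Y$, and this I would do by splitting $A_{\mathcal{G}}$ along the Borel partition $Y=(Y\setminus Z)\sqcup Z$. Off $Z$ nothing has changed: for $y\notin Z$ we have $[y]_{F_Z}=[y]_F$, $\zeta(y)=\zeta_F(y)$, and $\mathcal{G}_{[y]_F}=\mathcal{F}_{[y]_F}$, so $A_{\mathcal{G}}\cap(Y\setminus Z)=A_{\mathcal{F}}\cap(Y\setminus Z)$, which is Borel because $A_{\mathcal{F}}$ is Borel by the hypothesis that $F$ is idealistic. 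On $Z$, the constant choice $\zeta(y)=z_0$ is what saves us: for $y\in Z$ the defining condition reads $\{y'\in Z:(z_0,y')\in A\}\in\{Z\}$, i.e.\ $(z_0,y')\in A$ for \emph{all} $y'\in Z$, a single condition that does not depend on $y$. Hence $A_{\mathcal{G}}\cap Z$ equals $Z$ if $\{z_0\}\times Z\subseteq A$ and equals $\emptyset$ otherwise, so it is Borel in either case; thus $A_{\mathcal{G}}$ is Borel.

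I expect the main subtlety --- and the point where the generalized form of Definition \ref{Definition:idealistic} is genuinely used --- to be precisely this last step. Had we been forced to take $\zeta=\mathrm{id}_Y$, the condition for $y\in Z$ would instead become ``$(y,y')\in A$ for all $y'\in Z$'', whose truth set is in general only the complement of the projection of a Borel set, hence merely co-analytic rather than Borel. Replacing the identity by a constant on $Z$ collapses this universally quantified condition into a single fixed statement about $z_0$, which is exactly the flexibility that Definition \ref{Definition:idealistic} affords and that this argument requires.
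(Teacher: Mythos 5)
Your proof is correct, but it takes a genuinely different route from the paper's. The paper keeps the original selection function $\zeta$ unchanged and equips the collapsed class $Z$ with the $\sigma$-filter of subsets of $Z$ containing a comeager Borel set with respect to a Polish topology $\tau_Z$ compatible with the inherited Borel structure; the Borelness of $A_{\mathcal{F}'}$ on the $Z$-part then comes from the category quantifier theorem \cite[Theorem 16.1]{kechris_classical_1995}, applied to $\{y : A_y \text{ is comeager in } Z\}$. You instead take the trivial filter $\{Z\}$ together with a constant selection $\zeta\equiv z_0$ on $Z$, which collapses the membership test for $x\in Z$ to the single $x$-independent condition $\{z_0\}\times Z\subseteq A$; this is more elementary, uses no Baire-category input, and is in the spirit of the paper's own Lemma \ref{Lemma:selector-idealistic}, which likewise witnesses idealisticity via principal filters. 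Your closing diagnosis correctly identifies the one real asymmetry between the two arguments: your route genuinely needs the latitude in Definition \ref{Definition:idealistic} to take $\zeta$ non-identity on $Z$ (with $\zeta=\mathrm{id}$ the condition $\forall x'\in Z\;(x,x')\in A$ is in general only coanalytic), whereas the paper's comeager-filter route would survive even with the identity, since the category quantifier preserves Borelness. Both proofs are complete; yours trades the category machinery for a choice of point $z_0$ and the (correctly handled) trivial case $Z=\varnothing$.
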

\begin{proof}
Suppose that the assignments $[x]\mapsto \mathcal{F}_{[x]}$ and $\zeta
:Y\rightarrow Y$ witness that $F$ is idealistic. Fix a Polish topology $\tau _Z$ on $Z$ that is compatible with the Borel structure inherited from $Y$.
 Let $C$ be an $F_Z$-equivalence class. Set  $\mathcal{F}_{C}^{\prime }:=
\mathcal{F}_{C}$ if $C=[x]$ with $x\not\in Z$ and let $\mathcal{F}_Z^{\prime }=\{S\subseteq
Z:S$ contains a $\tau _Z$-comeager Borel subset of $Z\}$.
Notice that for a Borel subset $A$ of $Y\times Y$ and $x\in Y$ we have $[\mathcal{F}_{\left[ x\right]
}^{\prime }x^{\prime }\:\:( \zeta ( x) ,x^{\prime })
\in A]$ if and only if either $x\in Z$ and $A_{\zeta(x)}=\left\{ x^{\prime }\in
Y:( \zeta ( x) ,x^{\prime }) \in A\right\} $ is
comeager in $Z$ or $x\notin Z$ and $[\mathcal{F}_{\left[ x\right]
}x^{\prime }\:\:( \zeta ( x) ,x^{\prime }) \in A]$. It then follows from the reasoning, e.g., of \cite[Theorem 16.1]{kechris_classical_1995} that%
\begin{equation*}
A_{\mathcal{F}^{\prime }}=\{x\in Y:\mathcal{F}_{\left[ x\right] }^{\prime
}x^{\prime }\:\:( \zeta ( x) ,x^{\prime }) \in A\}
\end{equation*}%
is Borel. Thus the assignments $C\mapsto \mathcal{F}_{C}^{\prime }$ and $%
\zeta $ witness that $F_Z$ is idealistic.
\end{proof}

We now recall several notions of reduction between equivalence relations; see 
\cite[Definition 5.1.1]{gao_invariant_2009} and \cite[Definitions 2.1 and 2.2%
]{motto_ros_complexity_2012}. Suppose that $E$ and $F$ are 
equivalence relations on Polish spaces $X$ and $Y$, respectively, and let $%
X/ E$ and $Y/
F$ be the set of $E$ and $F$-equivalence classes on $X$ and $Y$, respectively. If $f:X/ E
\rightarrow Y/ F$ is a function then a \emph{lift }of $f$ is a
function $\hat{f}:X\rightarrow Y$ such that $f( [x]_{E}) =[\hat{f}%
( x) ]_{F}$ for every $x\in X$; if some such $\hat{f}$ is, furthermore, Borel then we say that $f$ \emph{lifts to a Borel function} $%
X\rightarrow Y$.

\begin{definition} \label{Definition:reduction} Suppose that  $E$ and $F$ are equivalence relations on Polish spaces $X$ and $Y$.

A \emph{Borel homomorphism from $E$ to $F$} is a function $X/E\rightarrow Y/ F$ which lifts to a Borel function $%
X\rightarrow Y$.

A \emph{Borel reduction from $E$ to $F$} is an injective function $%
X/ E\rightarrow Y/ F$ which lifts to a Borel
function $X\rightarrow Y$.

A \emph{classwise Borel isomorphism from $E$ to $F$} is a bijective
function $X/ E\rightarrow Y/ F$ which lifts to a
Borel function $X\rightarrow Y$ and whose inverse also lifts to a Borel function 
$Y\rightarrow X$;

A \emph{classwise Borel embedding from $E$ to $F$} is a classwise
Borel isomorphism from $E$ to $F|_{Z}$ for some $F$-invariant Borel
subset $Z$ of $Y$.
\end{definition}

By the following lemma, Borel and idealistic equivalence
relations are downwards closed with
respect to Borel reducibility and classwise Borel embedding relations, respectively.

\begin{lemma}
\label{Lemma:invariance}Suppose that $E$ and $F$ are equivalence relations on
Polish spaces $X$ and $Y$.
If $E$ classwise Borel embeds into $F$ and $F$ is idealistic then $E$ is idealistic.
\end{lemma}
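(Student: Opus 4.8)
The plan is to show that \emph{being Borel and idealistic is preserved under classwise Borel isomorphism}, and then to reduce to this case using the closure of idealistic relations under restriction to invariant Borel subsets noted above. First I would unpack the hypothesis: by definition a classwise Borel embedding of $E$ into $F$ is a classwise Borel isomorphism $\varphi\colon X/E\to Z/(F|_Z)$ for some $F$-invariant Borel set $Z\subseteq Y$. Fixing a Polish topology on $Z$ compatible with its inherited Borel structure, $G:=F|_Z$ is idealistic, so it suffices to prove that if $E$ is classwise Borel isomorphic to an idealistic equivalence relation $G$ on a Polish space $Z$, then $E$ is idealistic. Let $\hat\varphi\colon X\to Z$ be a Borel lift of $\varphi$ and $\hat\psi\colon Z\to X$ a Borel lift of $\varphi^{-1}$; then $\hat\varphi$ maps each $E$-class $[x]$ into the $G$-class $\varphi([x])$, while $\hat\psi$ maps each $G$-class $[z]$ into the $E$-class $\varphi^{-1}([z])$. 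Let $[z]\mapsto\mathcal{G}_{[z]}$ and the Borel $\eta\colon Z\to Z$ witness that $G$ is idealistic.

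Next I would define the idealistic data for $E$ by transporting these witnesses across $\varphi$. For the $\sigma$-filters, given an $E$-class $[x]$ with $[z]:=\varphi([x])=[\hat\varphi(x)]_G$, set
\[ D\in\mathcal{E}_{[x]}\ \Longleftrightarrow\ (\hat\psi\!\upharpoonright\![z])^{-1}(D)\in\mathcal{G}_{[z]}\qquad(D\subseteq[x]). \]
Since preimage preserves inclusions and countable intersections and sends $\emptyset$ to $\emptyset\notin\mathcal{G}_{[z]}$, each $\mathcal{E}_{[x]}$ is a genuine $\sigma$-filter of nonempty subsets of $[x]$ (it is nonempty because $(\hat\psi\!\upharpoonright\![z])^{-1}([x])=[z]\in\mathcal{G}_{[z]}$). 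For the base-point function I would take $\xi:=\hat\psi\circ\eta\circ\hat\varphi\colon X\to X$, which is Borel and satisfies $x\mathrel{E}\xi(x)$, since $\eta(\hat\varphi(x))\in[\hat\varphi(x)]_G$ and $\hat\psi$ carries this class into $[x]$.

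The crux is the Borelness requirement. Fix a Borel $A\subseteq X\times X$ and write $[z]=[\hat\varphi(x)]_G$. Unwinding the definition of $\mathcal{E}_{[x]}$, and using that $\hat\psi\!\upharpoonright\![z]$ lands in $[x]$, one finds that $x\in A_{\mathcal{E}}$ iff
\[ \{\,z'\in[z] : (\xi(x),\hat\psi(z'))\in A\,\}\in\mathcal{G}_{[z]}. \]
Now set $B:=\{(w,z')\in Z\times Z : (\hat\psi(w),\hat\psi(z'))\in A\}$, a Borel set as the preimage of $A$ under the Borel map $\hat\psi\times\hat\psi$. Because $\xi(x)=\hat\psi(\eta(\hat\varphi(x)))$, the displayed condition at $x$ is exactly the condition ``$\{z'\in[z] : (\eta(z),z')\in B\}\in\mathcal{G}_{[z]}$'' evaluated at $z=\hat\varphi(x)$. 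Hence $A_{\mathcal{E}}=\hat\varphi^{-1}(B_{\mathcal{G}})$, which is Borel since $B_{\mathcal{G}}$ is Borel by the idealisticity of $G$ and $\hat\varphi$ is Borel. Thus the assignments $[x]\mapsto\mathcal{E}_{[x]}$ and $\xi$ witness that $E$ is idealistic.

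The main obstacle, and the point at which the generalized definition of idealistic becomes indispensable, is the matching of base points. One cannot take $\xi=\mathrm{id}_X$: the filter quantifier on the $X$-side would then be based at $x$, which has no Borel relationship to the witnessing point $\eta(z)$ on the $Z$-side, and the pullback identity $A_{\mathcal{E}}=\hat\varphi^{-1}(B_{\mathcal{G}})$ would fail. Choosing $\xi=\hat\psi\circ\eta\circ\hat\varphi$ ports $\eta$'s base point across the isomorphism so that the two filter conditions coincide verbatim; all remaining steps (verifying the filter axioms and the Borelness of $\xi$ and $B$) are routine bookkeeping.
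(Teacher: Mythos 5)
Your proof is correct and follows essentially the same route as the paper's: reduce to the classwise Borel isomorphism case, transport the $\sigma$-filters via the preimage under the Borel lift of the inverse, take the base-point map to be the conjugate $\hat\psi\circ\eta\circ\hat\varphi$, and verify Borelness of $A_{\mathcal{E}}$ by pulling back the Borel set $B_{\mathcal{G}}$ along $\hat\varphi$. The only cosmetic difference is that you spell out the reduction to the isomorphism case (restriction to the invariant Borel set $Z$ with a compatible Polish topology) where the paper simply says ``without loss of generality.''
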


\begin{proof}
Suppose that $F$ is idealistic and $E$ classwise Borel embeds into $F$. Without loss of generality we may assume $E$ and $F$ to be
classwise Borel isomorphic. Thus there exists a bijection $f:X/ E
\rightarrow Y/ F$ such that $f$ lifts to a Borel function $\hat{f}:X\rightarrow Y$ and $f^{-1}$ lifts to a Borel function $\hat{f}^{\ast
}:X\rightarrow Y$.

Suppose that the assignment $[y]\mapsto \mathcal{F}_{[y]}$ and the map $\tau:Y\rightarrow Y$ witness that $F$ is idealistic. We define an assignment $[x]\mapsto \mathcal{E}_{[x]}$ from $E$-classes to $\sigma $-filters as follows:
for each $E$-class $[x]$ and subset $S$ of $[x]$ let $S$ be in $\mathcal{E}_{[x]}$ if and
only if $\hat{f}^{\ast -1}( S) \in \mathcal{F}_{f([x])}$, i.e., $%
\mathcal{F}_{[f(x)]}\:\:\hat{f}^{\ast }( y) \in S$. Notice that for
a Borel subset $A$ of $X\times X$ and $x\in X$ we have that $[\mathcal{E}%
_{\left[ x\right] }x^{\prime }\:\:( x,x^{\prime }) \in X]$ if and
only if $[\mathcal{F}_{f( \left[ x\right] ) }y^{\prime }\:\:(x,%
\hat{f}^{\ast }( y^{\prime }) )\in A]$. We also define the Borel
map $\zeta =\hat{f}^{\ast }\circ \tau \circ \hat{f}:X\rightarrow X$. Notice
that $\zeta(x)\,E\,x$ for every $x\in X$.

Let $A$ be a Borel subset of $X\times X$. Consider the Borel
subset $B$ of $Y\times Y$ defined by $( y,y^{\prime }) \in B$ if
and only if $(\hat{f}^{\ast }( y) ,\hat{f}^{\ast }(
y^{\prime }) )\in A$. Then by assumption we have that $\{y\in Y:%
\mathcal{F}_{\left[ y\right] }y^{\prime }\:\:( \tau ( y)
,y^{\prime }) \in B\}$ is Borel. Hence $\hat{f}^{-1}( \{y\in
Y:\mathcal{F}_{\left[ y\right] }y^{\prime }\:\:( \tau ( y)
,y^{\prime }) \in B\}) \subseteq X$ is Borel. We then have 
\begin{equation*}
x\in \hat{f}^{-1}( \{y\in Y:\mathcal{F}_{\left[ y\right] }y^{\prime
}\:\:( \tau ( y) ,y^{\prime }) \in B\})
\end{equation*}%
if and only if 
\begin{equation*}
\mathcal{F}_{f( \left[ x\right] ) }y^{\prime }\:\:((\tau \circ \hat{f%
})( x) ,y^{\prime })\in B
\end{equation*}%
if and only if 
\begin{equation*}
\mathcal{F}_{f( \left[ x\right] ) }y^{\prime }\:\:((\hat{f}%
^{\ast }\circ \tau \circ \hat{f})( x) ,\hat{f}^{\ast }(
y^{\prime }) )\in A
\end{equation*}%
if and only if 
\begin{equation*}
\mathcal{E}_{\left[ x\right] }x^{\prime }\:\:(\zeta ( x) ,x^{\prime
})\in A.
\end{equation*}%
Hence $\mathcal{E}$ and $\zeta $ witness that $E$ is idealistic.
\end{proof}

We now recall some results from \cite{gao_invariant_2009} and \cite%
{kechris_borel_2016}. We will always assume $E$ and $F$ to be analytic
equivalence relations on Polish spaces $X,Y$ respectively. The following
result is essentially \cite[Lemmas 3.7, 3.8]{kechris_borel_2016}, although a
slightly more restrictive notion of idealistic equivalence relation is
considered there; its proof readily adapts to our more generous notion.

\begin{proposition}[Kechris--Macdonald]
\label{Proposition:reduce-then-embeds}Suppose that $E$ is idealistic and $F$
is Borel. Then a Borel reduction from $E$ to $F$ is a classwise Borel
embedding.
\end{proposition}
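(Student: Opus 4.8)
The plan is to verify the two clauses in the definition of a classwise Borel embedding (Definition \ref{Definition:reduction}) by hand: produce an $F$-invariant Borel set $Z\subseteq Y$ onto whose quotient $f$ is a bijection, together with a Borel lift of the inverse $f^{-1}\colon Z/F\to X/E$. Fix a Borel lift $\hat f\colon X\to Y$ of the reduction $f$, and let $\zeta,\mathcal F$ witness that $E$ is idealistic. I would set $R=\{(x,y)\in X\times Y:\hat f(x)\,F\,y\}$, which is Borel since $F$ and $\hat f$ are. Because $f$ factors $\hat f$ through $E$, each section $R^{y}$ is $E$-invariant; because $f$ is \emph{injective}, each $R^{y}$ is either empty or a single $E$-class. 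Hence $Z:=\{y:R^{y}\neq\emptyset\}=[\hat f(X)]_{F}$ is $F$-invariant, and $f$ restricts to a bijection $X/E\to Z/F$ whose forward lift is $\hat f$ itself. What remains is to show that $Z$ is Borel and that $f^{-1}$ lifts to a Borel map $g\colon Z\to X$ with $g(y)\in R^{y}$ (any such $g$ automatically lifts $f^{-1}$, since $R^{y}=R^{y'}$ whenever $y\,F\,y'$).

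For this I would transport the $\sigma$-filter structure of $E$ to the sections of $R$, mirroring the filter transport in the proof of Lemma \ref{Lemma:invariance}: for $y\in Z$ the $E$-class $R^{y}$ already carries the $\sigma$-filter $\mathcal F_{R^{y}}$. The engine is the idealistic axiom of Definition \ref{Definition:idealistic}, namely that $\{x:\mathcal F_{[x]}x'\;(\zeta(x),x')\in A\}$ is Borel for every Borel $A\subseteq X\times X$. Feeding in sets $A$ assembled from $\hat f$ and $F$ --- exactly as in the closing chain of equivalences of Lemma \ref{Lemma:invariance} --- converts ``filter-large'' conditions on the sections $R^{y}$ into Borel conditions on $Y$. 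The upshot, which is the substance of \cite[Lemmas 3.7, 3.8]{kechris_borel_2016} adapted to the present notion, is a Borel map $g\colon Y\to X$ with $g(y)\in R^{y}$ whenever $R^{y}\neq\emptyset$; one then gets $Z=\{y:\hat f(g(y))\,F\,y\}$ for free, so $Z$ is Borel, and $g\!\upharpoonright\!Z$ is the required lift of $f^{-1}$. Here the generalized, not-necessarily-identity map $\zeta$ must be tracked, and I would check, precisely as in Lemma \ref{Lemma:invariance}, that composing $\zeta$ with $\hat f$ is harmless: since $\zeta(x)\,E\,x$ forces $\hat f(\zeta(x))\,F\,\hat f(x)$, the $F$-invariance in the second coordinate of the chosen $A$ absorbs the replacement of $x$ by $\zeta(x)$.

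With $Z$ Borel and $g$ Borel in hand, both clauses of Definition \ref{Definition:reduction} hold, so $f$ is a classwise Borel isomorphism from $E$ onto $F|_{Z}$ and hence a classwise Borel embedding, as claimed. The main obstacle is exactly the reflection step of the second paragraph: both $Z$ and the uniformizing selector are \emph{a priori} only analytic, each being defined by an existential quantifier over $X$, and the entire force of the hypothesis that $E$ is idealistic (rather than merely analytic) is spent upgrading these analytic objects to Borel ones. I expect the only genuinely new point relative to \cite{kechris_borel_2016} to be bookkeeping --- confirming their computation survives the passage from the identity to the Borel map $\zeta$ --- which the proof of Lemma \ref{Lemma:invariance} already shows to be routine.
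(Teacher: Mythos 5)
Your proposal is correct and follows essentially the same route as the paper: transport the $\sigma$-filters of $E$ to the fibres $f^{-1}([y])$, feed the resulting two-sided ($\boldsymbol{\Sigma}^1_1$ and $\boldsymbol{\Pi}^1_1$) definability into the Kechris--Macdonald large-section uniformization theorem to obtain simultaneously that $[\hat f(X)]_F$ is Borel and that a Borel lift of $f^{-1}$ exists, and check that precomposing $\hat f$ with $\zeta$ is absorbed by $F$-invariance. The only cosmetic difference is that the paper builds $\zeta$ into the relation $P=\{(y,x):y\,F\,(\hat f\circ\zeta)(x)\}$ from the start, whereas you handle it as a separate bookkeeping step.
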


\begin{proof}
Suppose that $[x]\mapsto \mathcal{F}_{[x]}$ and $\zeta :X\rightarrow X$ witness
that $E$ is idealistic. Suppose that $f:X/E \rightarrow Y/F $ is a Borel reduction from $E$ to $F$, and let $\hat{f}%
:X\rightarrow Y$ be a Borel lift of $f$. We will show that the $F$-saturation 
\begin{eqnarray*}
\lbrack \hat{f}( X) ]_{F} &=&\{y\in Y:\exists x\in X\:\,y\,F\hat{f}%
( x) \} \\
&=&\{y\in Y:\exists x\in X\:\,y\,F\,(\hat{f}\circ \zeta )( x) \}
\end{eqnarray*}%
of $\hat{f}( X) $ is a Borel subset $Z$ of $Y$, and that there exists a Borel lift $\hat{f}^{*}$ of $f^{-1}:Z/F\to X/E$.

Define the subset $P$ of $Y\times X$ by setting $( y,x) \in P$ if
and only if $y\,F\,(\hat{f}\circ \zeta )( x) $. Then $[\hat{f}(
X) ]_{F}$ is the projection of $P$ onto the first coordinate. \ By the
\textquotedblleft large sections\textquotedblright\ uniformization theorem
figuring in the proof of \cite[Lemma 3.7]{kechris_borel_2016}, it will suffice to
show that there is a map $y\mapsto \mathcal{G}_{y}$ that assigns to each $y\in \lbrack \hat{f}( X) ]_{F}$ a $\sigma $-filter $\mathcal{G}%
_{y}$ of subsets of $f^{-1}( [ y]) $ such that
for each Borel subset $R$ of $Y\times X$ there exist a $\boldsymbol{\Sigma }
_{1}^{1}$ set $S\subseteq Y$ and a $\boldsymbol{\Pi }_{1}^{1}$ set $%
T\subseteq Y$ such that, setting%
\begin{equation*}
R_{y}:=\left\{ x\in X:( y,x) \in R\right\} \text{,}
\end{equation*}%
one has 
\begin{equation*}
R_{y}\in \mathcal{G}_{y}\Leftrightarrow y\in S\Leftrightarrow y\in T
\end{equation*}%
for every $y\in \lbrack \hat{f}( X) ]_{F}$. The uniformization given by the aforementioned theorem will then be such an $\hat{f}^{*}$ as we had desired.

 To this end, define $\mathcal{G}_{y}=\mathcal{F}_{f^{-1}([y])}$ for $y\in \lbrack 
\hat{f}( X) ]_{F}$. To see that the assignment $y\mapsto 
\mathcal{F}_{f^{-1}([y])}$ satisfies the above requirement, suppose $R\subseteq Y\times X$ is Borel. Then for $y\in
\lbrack \hat{f}( X) ]_{F}$ we have that $R_{y}\in \mathcal{F}%
_{f^{-1}([y])}$ if and only if $[\mathcal{F}_{f^{-1}([y])}x^{\prime }\:\:( y,x^{\prime }) \in R]$, if and only if 
\begin{equation*}
\exists x\in X\:\big[(( f\circ \zeta ) ( x)\, F\,y)\wedge
( \mathcal{F}_{f^{-1}([y])}x^{\prime }\:\:( ( f\circ \zeta
) ( x) ,x^{\prime }) \in R)\big],
\end{equation*}%
if and only if 
\begin{equation*}
\forall x\in X\:\big[(( f\circ \zeta ) ( x)
\,F\,y\Rightarrow \mathcal{F}_{f^{-1}([y])}x^{\prime }\:\:( (
f\circ \zeta ) ( x) ,x^{\prime }) \in R)\big]\text{.}
\end{equation*}%
This concludes the proof.
\end{proof}

\begin{corollary}
\label{Corollary:reduce-then-embeds}Suppose that $E$ is idealistic and $F$
is Borel. Then a \emph{surjective} Borel reduction from $E$ to $F$ is a
classwise Borel isomorphism.
\end{corollary}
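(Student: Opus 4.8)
The plan is to derive this immediately from Proposition \ref{Proposition:reduce-then-embeds}, the only extra ingredient being the observation that surjectivity forces the witnessing $F$-invariant Borel set to coincide with all of $Y$.

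First I would unpack the hypotheses. A surjective Borel reduction $f\colon X/E\to Y/F$ is, by the definition of \emph{reduction}, an injection that lifts to a Borel function $\hat f\colon X\to Y$; together with the surjectivity hypothesis, this makes $f$ a \emph{bijection} between $X/E$ and $Y/F$ admitting a Borel lift. Since $E$ is idealistic and $F$ is Borel, Proposition \ref{Proposition:reduce-then-embeds} applies and tells us that $f$ is a classwise Borel embedding. Unwinding that definition (and inspecting the proof of the proposition, which produces the relevant set explicitly), this means that the $F$-saturation $Z:=[\hat f(X)]_F$ is a Borel subset of $Y$ and that $f$, regarded as a map $X/E\to Z/F|_Z$, is a classwise Borel isomorphism; in particular both $f$ and its inverse $f^{-1}\colon Z/F|_Z\to X/E$ lift to Borel functions.

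The only remaining point is that surjectivity of $f$ forces $Z=Y$. Indeed, the image of $f$ consists precisely of those $F$-classes that meet $Z$, so the surjectivity of $f$ onto $Y/F$ says that every $F$-class meets $Z$. But $Z$ is $F$-invariant, hence a union of $F$-classes, and a union of $F$-classes that meets every $F$-class must contain every $F$-class; thus $Z=Y$ and $F|_Z=F$. Consequently $f$ is a classwise Borel isomorphism from $E$ to $F$, as desired. I do not anticipate any genuine obstacle here: the entire content is carried by Proposition \ref{Proposition:reduce-then-embeds}, and the corollary is exactly the special case in which surjectivity collapses the target saturation $Z$ onto the whole space $Y$.
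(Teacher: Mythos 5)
Your argument is correct and matches what the paper intends: the corollary is stated without proof as an immediate consequence of Proposition \ref{Proposition:reduce-then-embeds}, and your observation that surjectivity forces the $F$-saturation $Z=[\hat f(X)]_F$ to equal all of $Y$ (so that $F|_Z=F$) is exactly the one-line justification being left implicit. Nothing further is needed.
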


The following related result is \cite[Proposition 2.3]{motto_ros_complexity_2012}.

\begin{proposition}[Motto Ros]
\label{Proposition:Cantor-Bernstein}If there is a classwise Borel embedding
from $E$ to $F$ and there is a classwise Borel embedding from $F$ to $E$
then there is a classwise Borel isomorphism from $E$ to $F$.
\end{proposition}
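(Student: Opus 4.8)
The plan is to adapt the classical Cantor--Bernstein argument to the quotient setting, the essential point being that a classwise Borel embedding is an \emph{isomorphism onto its image}, so that forward images of invariant Borel sets may be computed by pulling back along the Borel lift of the inverse map, thereby remaining inside the Borel category throughout. First I would record the data. A classwise Borel embedding from $E$ to $F$ is a classwise Borel isomorphism $\alpha\colon X/E\to Z/(F|_Z)$ for some $F$-invariant Borel $Z\subseteq Y$; regarding $\alpha$ as an injection $X/E\to Y/F$ whose image is the class set of the $F$-invariant Borel set $Z$, I obtain a Borel lift $\hat{\alpha}\colon X\to Y$ of $\alpha$ together with a Borel lift $\hat{\alpha}^{\ast}\colon Z\to X$ of $\alpha^{-1}$. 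Symmetrically, from the classwise Borel embedding of $F$ into $E$ I obtain an injection $\beta\colon Y/F\to X/E$ whose image is the class set of an $E$-invariant Borel set $W\subseteq X$, with Borel lift $\hat{\beta}\colon Y\to X$ and a Borel lift $\hat{\beta}^{\ast}\colon W\to Y$ of $\beta^{-1}$.

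The key observation is that these maps preserve invariant-Borelness under forward image. If $A\subseteq X$ is an $E$-invariant Borel set, then $\alpha(A/E)$ is the class set of the $F$-invariant Borel set $(\hat{\alpha}^{\ast})^{-1}(A)\cap Z$: indeed, for $y\in Z$ one has $[y]_F\in\alpha(A/E)$ if and only if $\alpha^{-1}([y]_F)\in A/E$, if and only if $\hat{\alpha}^{\ast}(y)\in A$ (using $E$-invariance of $A$), and this set is $F$-invariant because $\hat{\alpha}^{\ast}$ lifts $\alpha^{-1}$, so $y\,F\,y'$ forces $\hat{\alpha}^{\ast}(y)\,E\,\hat{\alpha}^{\ast}(y')$. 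The analogous statement for $\beta$ holds via $\hat{\beta}^{\ast}$.

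Then I would run the Cantor--Bernstein recursion. Set $C_0:=(X/E)\setminus(W/E)$, the class set of the $E$-invariant Borel set $X\setminus W$; put $C_{n+1}:=\beta(\alpha(C_n))$ and $C:=\bigcup_{n}C_n$. By the key observation and induction, each $C_n$, and hence $C$, is the class set of some $E$-invariant Borel $\tilde{C}\subseteq X$. I then define the bijection $\gamma\colon X/E\to Y/F$ to agree with $\alpha$ on $C$ and with $\beta^{-1}$ off $C$; this is legitimate since $C\supseteq C_0$ ensures $\beta^{-1}$ is defined off $C$, and a standard diagram chase shows $\gamma$ is bijective. To see $\gamma$ lifts to a Borel function, note that a lift is given piecewise by $\hat{\alpha}$ on $\tilde{C}$ and by $\hat{\beta}^{\ast}$ on $W\setminus\tilde{C}$ (here $X\setminus\tilde{C}\subseteq W$ since $C\supseteq C_0$), which is Borel as $\tilde{C}$ is Borel. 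For the inverse, by the key observation $\alpha(\tilde{C})$ corresponds to an $F$-invariant Borel set $V\subseteq Y$, and a Borel lift of $\gamma^{-1}$ is given by $\hat{\alpha}^{\ast}$ on $V$ and by $\hat{\beta}$ on $Y\setminus V$.

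The main obstacle is precisely keeping the Cantor--Bernstein sets Borel and invariant: this is not automatic, since direct images of Borel sets under Borel maps are in general merely analytic. The resolution---and the reason the hypothesis must be classwise Borel embedding rather than mere Borel reducibility---is that the inverse maps also admit Borel lifts, which lets every forward image be rewritten as a Borel \emph{preimage}, as exploited in the second paragraph above. Everything else is the verification, routine in this framework, that the piecewise lifts of $\gamma$ and $\gamma^{-1}$ indeed descend to $\gamma$ and $\gamma^{-1}$ on the quotient level.
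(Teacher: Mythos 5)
Your proof is correct, and it is the standard quotient-level Cantor--Bernstein argument; the paper itself does not prove this proposition but simply cites it as Proposition 2.3 of Motto Ros, where essentially this same argument appears. The one point that genuinely needs care --- that forward images of invariant Borel sets remain invariant Borel because they can be rewritten as preimages under the Borel lift of the inverse (which exists precisely because the hypothesis is classwise Borel \emph{embedding} rather than mere Borel reducibility) --- is exactly the point you isolate and handle correctly.
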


Recall that a \emph{Borel selector} for a Borel equivalence relation $E$ on
a Polish space $X$ is a function $s:X\rightarrow X$ such that $s(x)\,E\,x$ and $%
x\,E\,y\Leftrightarrow s( x) =s( y) $ for every $x,y\in X$. The principal filters that selectors determine on the classes of $E$ witness that $E$ is idealistic.

\begin{lemma}
\label{Lemma:selector-idealistic}Let $E$ be a Borel equivalence
relation on a Polish space $X$. If $E$ admits a Borel selector, then $E$ is
idealistic.
\end{lemma}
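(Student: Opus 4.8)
The plan is to verify Definition \ref{Definition:idealistic} directly, using the \emph{principal} $\sigma$-filters determined by the selector and taking the auxiliary function $\zeta$ to be the identity. Concretely, given a Borel selector $s\colon X\to X$ for $E$, I would note first that $s$ is constant on each $E$-class: if $x\,E\,y$ then $s(x)=s(y)$, so the common value $s(x)$ is a well-defined distinguished element of $[x]$. For each $E$-class $[x]$ I would then set
\[
\mathcal{F}_{[x]}:=\{S\subseteq [x]\colon s(x)\in S\},
\]
the principal filter on $[x]$ generated by the singleton $\{s(x)\}$. Each $\mathcal{F}_{[x]}$ is nonempty (it contains $[x]$), consists of nonempty sets (each contains $s(x)$), and is closed under supersets and arbitrary---in particular countable---intersections, so it is indeed a $\sigma$-filter.

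Next I would take $\zeta:=\mathrm{id}_X$, which trivially satisfies $x\,E\,\zeta(x)$ for every $x\in X$. It remains to check the Borelness condition. Fix a Borel set $A\subseteq X\times X$. Since $\zeta=\mathrm{id}_X$, the relevant section for each $x$ is $\{x'\in[x]\colon (x,x')\in A\}$, and this set lies in the principal filter $\mathcal{F}_{[x]}$ exactly when it contains $s(x)$; as $s(x)\in[x]$ automatically, this happens if and only if $(x,s(x))\in A$. Hence
\[
A_{\mathcal{F}}=\{x\in X\colon (x,s(x))\in A\}=g^{-1}(A),
\]
where $g\colon X\to X\times X$ is the map $g(x)=(x,s(x))$. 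Because $s$ is Borel, $g$ is Borel, so $A_{\mathcal{F}}=g^{-1}(A)$ is Borel. This verifies the second bullet of Definition \ref{Definition:idealistic}, and therefore the assignment $[x]\mapsto\mathcal{F}_{[x]}$ together with $\zeta=\mathrm{id}_X$ witnesses that $E$ is idealistic.

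I do not anticipate any genuine obstacle here: the crux is simply that the fibering $x\mapsto(x,s(x))$ is Borel and reduces membership in the principal filter to membership in $A$, so that the otherwise delicate definability requirement collapses to the Borelness of a single preimage. The only point meriting care is confirming that $\mathcal{F}_{[x]}$ is a legitimate $\sigma$-filter of subsets of $[x]$ (rather than of $X$), which is immediate from its definition.
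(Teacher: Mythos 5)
Your proposal is correct and follows essentially the same route as the paper: both take the principal $\sigma$-filter on each class generated by the selector's value and observe that membership of the relevant section in this filter reduces to the single condition $(x,s(x))\in A$, whence $A_{\mathcal{F}}$ is the preimage of $A$ under the Borel map $x\mapsto(x,s(x))$. The only cosmetic difference is that you make the choice $\zeta=\mathrm{id}_X$ explicit, which the paper leaves implicit.
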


\begin{proof}
Let $s$ be a Borel selector for $E$. Then $s$ determines a function $f:X/
E \rightarrow X$, $\left[ x\right] \mapsto s( x) $.
For each $E$-class $[x]$ let $ \mathcal{F}_{[x]}=\{S\subseteq [x]\mid f([x])\in S\}$. For every Borel subset $A$ of $%
X\times X$ and $x\in X$, then, [$\mathcal{F}_{\left[ x\right] }x^{\prime }\:\:
(x,x^{\prime }) \in A]$ if and only if $(x,s(x)) \in A$. Hence $A_{\mathcal{F}}=\left\{ x\in X:%
\mathcal{F}_{\left[ x\right] }x^{\prime }\:\:(x,x^{\prime }) \in
A\right\}$  is a Borel subset of $X$, as desired. 
\end{proof}

\subsection{Definable sets}\label{Section:dset}

A \emph{definable set} is a pair $\left( X,E\right) $ such that $X$\emph{\ }%
is a Polish space and $E$ is a \emph{Borel} and \emph{idealistic} equivalence relation on $X$. We will indulge below in the common abuse whereby the notation $X/E$ may stand either for this pair or for its associated quotient set, as context will indicate. A \emph{definable subset} of a definable
set $X/E$ is a definable set of the form $Y/E \subseteq X/E$ where $Y$ is an $E$-invariant Borel subset of $%
X $. We regard a Polish space $X$, in particular, as a definable set $%
X/E$, where $E$ is the relation of equality.

If $X/E$ and $Y/F$ are definable sets, then a 
\emph{definable function }$X/E \rightarrow Y/F$
is a Borel homomorphism from $E$ to $F$, i.e., a function $X/E
\rightarrow Y/F$ that lifts to a Borel function $%
X\rightarrow Y$. We let $\mathsf{DSet}$ be the category that has definable sets as objects and definable functions as morphisms. The identity morphism on $X/E$ is the identity function, and composition of morphisms is given by composition of functions.
We now observe that this category has similar properties to the paradigmatic category $\mathsf{Set}$ of sets.

In the framework of $\mathsf{DSet}$, for example, Corollary \ref%
{Corollary:reduce-then-embeds} takes the following form:

\begin{proposition}[Kechris--Macdonald]
\label{Proposition:bij-iso}Suppose that $X/E$ and $Y/F$ are definable sets, and $f:X/E \rightarrow Y/F$ is a definable function. The following assertions are equivalent:

\begin{enumerate}
\item $f$ is a bijection;

\item $f$ is an isomorphism in $\mathsf{DSet}$.
\end{enumerate}
\end{proposition}

The following also is a consequence of Proposition \ref{Proposition:reduce-then-embeds}.

\begin{proposition}[Kechris--Macdonald]
\label{Proposition:reduce-then-embeds2}Suppose that $X/E$ and $%
Y/F$ are definable sets, and $f:X/E\rightarrow
Y/F$ is an \emph{injective} definable function. Then there is
a definable subset $Z/F$ of $Y/F$ such
that $f:X/E \rightarrow Z/F $ is a
definable bijection.
\end{proposition}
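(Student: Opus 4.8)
The plan is to recognize this statement as an essentially immediate consequence of Proposition \ref{Proposition:reduce-then-embeds}, with the bulk of the work consisting in matching up definitions. First I would observe that, by Definition \ref{Definition:reduction}, an injective definable function $f\colon X/E\to Y/F$ is exactly a \emph{Borel reduction} from $E$ to $F$: it is an injection on the level of quotients that lifts to a Borel map $\hat{f}\colon X\to Y$. Since $(X,E)$ and $(Y,F)$ are definable sets, $E$ is idealistic and $F$ is Borel, so the hypotheses of Proposition \ref{Proposition:reduce-then-embeds} are satisfied, and that proposition tells us that $f$ is a classwise Borel embedding.

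Next I would extract the desired set $Z$ by unwinding the definition of classwise Borel embedding. By Definition \ref{Definition:reduction}, $f$ being a classwise Borel embedding means precisely that there is an $F$-invariant Borel subset $Z$ of $Y$ such that $f\colon X/E\to Z/F$ is a classwise Borel isomorphism; concretely, the proof of Proposition \ref{Proposition:reduce-then-embeds} produces $Z$ as the $F$-saturation $[\hat{f}(X)]_F$ of the image, shows that this set is Borel, and furnishes a Borel lift of the inverse map $f^{-1}\colon Z/F\to X/E$. To confirm that $Z/F$ is a bona fide definable subset of $Y/F$, I would note that $F|_Z$ is Borel (as both $Z$ and $F$ are) and idealistic, the latter by the observation recorded just after Definition \ref{Definition:idealistic} that the restriction of an idealistic equivalence relation to an invariant Borel subset is again idealistic.

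Finally, a classwise Borel isomorphism $f\colon X/E\to Z/F$ is in particular a bijection of the quotients that lifts to a Borel function, i.e.\ a definable bijection; equivalently, restricting the codomain of $\hat{f}$ to $Z$ (which is legitimate since $\hat{f}(X)\subseteq Z$) exhibits $f\colon X/E\to Z/F$ as a definable function which is bijective, whence an isomorphism in $\mathsf{DSet}$ by Proposition \ref{Proposition:bij-iso}. This completes the argument. I would stress that no genuinely new obstacle arises here: the one delicate point, namely the Borelness of the saturation $Z=[\hat{f}(X)]_F$ (which is a priori only analytic), has already been dispatched in the proof of Proposition \ref{Proposition:reduce-then-embeds} via the large-sections uniformization theorem, so the present proposition is essentially the $\mathsf{DSet}$-theoretic repackaging of that result.
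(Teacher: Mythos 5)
Your proposal is correct and matches the paper's treatment exactly: the paper derives this proposition as an immediate consequence of Proposition \ref{Proposition:reduce-then-embeds}, with $Z$ being the $F$-saturation of the image of a Borel lift, whose Borelness and the Borel lift of the inverse are supplied by that proposition's proof. Your additional remarks on why $Z/F$ is a genuine definable subset (restriction of an idealistic relation to an invariant Borel set is idealistic) are the correct, if routine, bookkeeping the paper leaves implicit.
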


Proposition \ref%
{Proposition:Cantor-Bernstein} above, similarly, may be rephrased as a
Cantor--Bernstein theorem for definable sets.

\begin{proposition}[Motto Ros]
\label{Proposition:Cantor-Bernstein2}Suppose that $X/E$ and $%
Y/F$ are definable sets. If there exist an injective definable
function $X/E \rightarrow Y/F $ and an injective
definable function $Y/F \rightarrow X/E$ then
there exists a bijective definable function $X/E \rightarrow
Y/F $.
\end{proposition}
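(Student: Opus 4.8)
The plan is to prove this by translating the statement into the vocabulary of equivalence relations and reductions and then invoking the two results already established above. The key observation is that, unwinding Definition \ref{Definition:reduction}, an \emph{injective} definable function $X/E\to Y/F$ is precisely a \emph{Borel reduction} from $E$ to $F$: both notions name an injection of quotients that lifts to a Borel map $X\to Y$. Thus the hypothesis of the proposition furnishes a Borel reduction from $E$ to $F$ together with a Borel reduction from $F$ to $E$.

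Next I would record that, since $(X,E)$ and $(Y,F)$ are definable sets, both $E$ and $F$ are simultaneously Borel and idealistic; this is exactly what is needed to apply Proposition \ref{Proposition:reduce-then-embeds} (Kechris--Macdonald) in each direction. Because $E$ is idealistic and $F$ is Borel, the given Borel reduction from $E$ to $F$ is in fact a classwise Borel embedding; symmetrically, because $F$ is idealistic and $E$ is Borel, the reduction from $F$ to $E$ is a classwise Borel embedding as well. With a classwise Borel embedding in each direction in hand, Proposition \ref{Proposition:Cantor-Bernstein} (Motto Ros) applies directly and yields a classwise Borel isomorphism from $E$ to $F$. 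By definition this is a bijection $X/E\to Y/F$ that lifts to a Borel map, that is, a bijective definable function, which is what we sought. One may also phrase the final step as an appeal to Proposition \ref{Proposition:bij-iso}, since the classwise Borel isomorphism is in particular a bijective definable function and hence an isomorphism in $\mathsf{DSet}$.

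Since the entire argument is a dictionary translation, I do not expect a genuine obstacle beyond confirming that the hypotheses of the cited propositions are met; the substance of the Cantor--Bernstein phenomenon is carried entirely by Propositions \ref{Proposition:reduce-then-embeds} and \ref{Proposition:Cantor-Bernstein}. The one point deserving a moment's care is the direction-matching in the double application of Proposition \ref{Proposition:reduce-then-embeds}: one must invoke the idealisticity of the \emph{source} relation and the Borelness of the \emph{target} relation each time, and it is precisely because definable sets carry relations that are both Borel and idealistic that both applications are legitimate.
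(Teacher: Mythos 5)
Your argument is correct and is exactly the route the paper intends: the paper offers no explicit proof, presenting the proposition as a ``rephrasing'' of Proposition \ref{Proposition:Cantor-Bernstein}, and the dictionary you supply (injective definable function $=$ Borel reduction, upgraded to a classwise Borel embedding via Proposition \ref{Proposition:reduce-then-embeds} using that $E$ and $F$ are both Borel and idealistic) is precisely the implicit content of that rephrasing. Your attention to the direction-matching in the two applications of Proposition \ref{Proposition:reduce-then-embeds} is the right point to flag, and it is handled correctly.
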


The category $\mathsf{DSet}$ has a number of further pleasant and $\mathsf{Set}$-like properties. Observe for example that if the definable sets $X/E$ and $Y/F$ are countable sets then there is a \emph{definable} bijection from $%
X/E$ to $Y/F$ if and only if there is a
bijection from $X/E$ to $Y/F$.

Recall that a morphism $f:x\rightarrow y$ in a category $\mathcal{C}$ is:

\begin{itemize}
\item \emph{monic }if for any two morphisms $g_{0},g_{1}:z\rightarrow x$,
if $fg_{0}=fg_{1}$ then $g_{0}=g_{1}$, and

\item \emph{epic }if for any two morphisms $g_{0},g_{1}:y\rightarrow z$, if 
$g_{0}f=g_{1}f$ then $g_{0}=g_{1}$.
\end{itemize}

It is easy to see that a definable function is monic in $\mathsf{DSet}$
if and only if it is injective, and epic in $\mathsf{DSet}$ if and only if it is surjective. The initial object is $\mathsf{DSet}$ is the empty
definable set, and a final object in $\mathsf{DSet}$ is a definable set with
only one element.

If $X/E$ and $Y/F $ are definable sets, then the 
\emph{coproduct} $X/E\amalg Y/F $ is the
definable set obtained as the quotient of the disjoint union $X\amalg Y$ of $%
X$ and $Y$ by the equivalence relation on $X\amalg Y$ that restricts to $E$
on $X$ and to $F$ on $Y$. Coproducts of
sequences $\left( X_{n}/ E_{n}\right) _{n\in \omega }$ of
definable sets are defined similarly.

The \emph{product }of $X/E$ and $Y/F$ is the
definable set obtained as the quotient of the product $X\times Y$ by the
equivalence relation $E\times F$ defined as in Lemma \ref{Lemma:product}.
In our conclusion, we record the question of whether the category $\mathsf{DSet}$ possesses arbitrary countable products.
Given two definable functions $f,g:X/ E \rightarrow Y/
F$ the \emph{equalizer }of $f$ and $g$ is the definable subset of $X/E$ given by 
\begin{equation*}
\left\{ x\in X/ E :f( x) =g( x) \right\} 
\text{.}
\end{equation*}%
The existence of products and equalizers ensures the existence of \emph{pullbacks }of
definable functions $f:X/E\to Y/F$ and $%
f^{\prime }:X^{\prime }/E^{\prime } \rightarrow Y/F$. These are the definable subsets of $X/E \times
X^{\prime }/E^{\prime }$ consisting of pairs $(
x,y) $ such that $f( x) =f^{\prime }( x^{\prime
}) $.

\subsection{Pointed definable sets}

We will also consider \emph{pointed definable sets}, which are just
definable sets $X/E$ with a distinguished point (the \emph{%
basepoint}). A basepoint-preserving definable function between pointed
definable sets is simply a function that maps the basepoint to the
basepoint. This defines a category $\mathsf{DSet}_{\ast }$, which may be
seen as the under category $\left\{ \ast \right\} \downarrow \mathsf{DSet}$
of $\mathsf{DSet}$-morphisms from $\left\{ \ast \right\} $, where $\left\{
\ast \right\} $ is a definable set with just one point.
In $\mathsf{DSet}_{\ast }$ the set $\left\{ \ast \right\} $ is a null object, i.e., it is
both initial and terminal. For any two pointed definable sets there exists a
unique basepoint-preserving constant function or \emph{zero arrow}. Thus we may
define the \emph{cokernel} of a basepoint-preserving \emph{injective} definable
function $f:X/E \rightarrow Y/F$. This is the
definable set obtained as follows. We have that $f(X/E) =Z/F$ for some definable subset $Z/F$ of $Y/F$. Define the equivalence relation $F_{Z}$
on $Y$ in terms of $Z$ as in Lemma \ref{Lemma:cokernel}. The cokernel of $f$ is then $Y/F_{Z}$. Intuitively, $%
Y/F_{Z}$ is the definable set obtained from $Y/F$
by identifying the definable subset $Z/F$ to a single
basepoint.

\subsection{Definable monoids and definable groups\label{Section:dgroup}}

One may interpret \emph{definable monoids} and \emph{definable groups} as monoids and
groups in $\mathsf{DSet}$, as a particular instance of \emph{monoids and groups in
a category} in the sense of \cite[Section III.6]{mac_lane_categories_1998}.
Explicitly, a \emph{definable monoid} is a pointed definable set $X/E$
together with a definable function $\mu :X/E\times X/E \rightarrow X/E$ that is an associative binary
operation with the basepoint as identity element. A \emph{%
definable group }is a definable monoid $%
\left( X/E,\mu \right) $ that is a also a group, and such that
the function $X/E \rightarrow X/E$ mapping each
element to its inverse is a definable function. Evidently, definable monoids
and groups form subcategories of the category of pointed definable sets. A 
\emph{definable group homomorphism }between definable groups is a definable
function that is also a group homomorphism. A \emph{definable subgroup }of $%
X/E$ is a simply a definable subset $Y/E$ that
is also a subgroup. Notice that the kernel of a definable group homomorphism
is a definable subgroup. A definable exact sequence of definable groups is a
sequence 
\begin{equation*}
X^{\prime }/ E^{\prime }\overset{f}{\rightarrow }X/E \overset{g}{\rightarrow }X^{\prime \prime }/ E^{\prime \prime
}
\end{equation*}%
of definable groups and definable group homomorphisms such that $f$ is
injective, $g$ is surjective, and the kernel of $g$ is the image of $f$.
Notice that the category of groups with a Polish cover $G/N$ is a full subcategory of the category of definable groups. We say that a definable group $X/E$ is \emph{essentially a group with a Polish cover} if it definably isomorphic to a group with a Polish cover.

\subsection{Semidefinable sets}\label{Section:semidset}

In the following, we will also consider analytic equivalence relations on
Polish spaces that we do not know or have not yet shown to be Borel or
idealistic. We will call a quotient $X/E$ of a Polish space by
an analytic equivalence relation a \emph{semidefinable set}. Such pairs form the objects of a category $\mathsf{SemiDSet}$ of semidefinable sets in which morphisms are, as above, definable functions $f:X/E\to Y/F$, i.e. functions $f$ which lift to Borel functions $X\to Y$. Notice that an
isomorphism in this category is a \emph{definable bijection whose inverse is
also definable}. By definition, the category of definable sets is a full
subcategory of the category of semidefinable sets. The notions of
semidefinable subset, semidefinable group, and semidefinable subgroup may be defined just as in the definable case.
The following is immediate from Lemma \ref{Lemma:invariance}.

\begin{lemma}
If a semidefinable set $X/E$ is isomorphic in $\mathsf{SemiDSet%
}$ to a definable set $Y/F$, then $X/E$ is a
definable set.
\end{lemma}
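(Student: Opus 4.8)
The plan is to unwind what an isomorphism in $\mathsf{SemiDSet}$ amounts to and then split the conclusion into its two constituent requirements, namely that $E$ be Borel and that $E$ be idealistic. By hypothesis there is a definable bijection $f\colon X/E\to Y/F$ whose inverse is also definable; concretely, $f$ lifts to a Borel map $\hat f\colon X\to Y$ and $f^{-1}$ lifts to a Borel map $\hat g\colon Y\to X$. This is precisely the data of a \emph{classwise Borel isomorphism} from $E$ to $F$ in the sense of Definition \ref{Definition:reduction}. Since $Y/F$ is by assumption a definable set, $F$ is both Borel and idealistic, and it is these two features that I would transport back across $f$ to $E$.

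First I would check that $E$ is Borel. Because $f$ is a well-defined bijection on equivalence classes with Borel lift $\hat f$, for any $x,x'\in X$ we have $x\,E\,x'$ if and only if $f([x]_E)=f([x']_E)$, i.e.\ if and only if $[\hat f(x)]_F=[\hat f(x')]_F$, which is to say $\hat f(x)\,F\,\hat f(x')$; the forward direction uses only that $f$ is well defined, and the reverse direction uses its injectivity. Hence
\[
E=(\hat f\times\hat f)^{-1}(F).
\]
As $\hat f$ is Borel, so is $\hat f\times\hat f$, and since $F$ is a Borel subset of $Y\times Y$, its preimage $E$ is a Borel subset of $X\times X$.

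It remains to see that $E$ is idealistic. A classwise Borel isomorphism from $E$ to $F$ is in particular a classwise Borel embedding, since $Y$ itself is a (trivially $F$-invariant) Borel subset of $Y$; thus $E$ classwise Borel embeds into $F$. As $F$ is idealistic, Lemma \ref{Lemma:invariance} applies directly to yield that $E$ is idealistic as well. Combining this with the previous paragraph, $E$ is a Borel and idealistic equivalence relation on the Polish space $X$, so $X/E$ is a definable set, as desired. There is no serious obstacle here; the content is entirely bookkeeping, and the only step not handed to us directly by Lemma \ref{Lemma:invariance} is the Borelness of $E$, which reduces to the observation that an $\mathsf{SemiDSet}$-isomorphism exhibits $E$ as the $\hat f$-pullback of the Borel relation $F$.
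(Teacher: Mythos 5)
Your proof is correct and follows essentially the same route as the paper, which simply declares the lemma ``immediate from Lemma \ref{Lemma:invariance}'': the idealistic half is exactly that lemma, and the Borel half is the pullback observation $E=(\hat f\times\hat f)^{-1}(F)$ already asserted in the paragraph preceding Lemma \ref{Lemma:invariance} (Borelness is downward closed under Borel reducibility). You have merely written out the bookkeeping the paper leaves implicit.
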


As in Section \ref{Section:er}, the existence of a Borel selector may entail that a semidefinable object is, in fact, in $\mathsf{DSet}$.

\begin{lemma}
\label{Lemma:quotient}Suppose that $X/E$ is a semidefinable
group and $Y/E$ is a semidefinable subgroup. Define the
equivalence relation $F$ on $X$ by setting $x\,F\,x^{\prime }$ if and only if $%
\left[ x\right] _{E}\cdot \left[ x^{\prime }\right] _{E}^{-1}\in Y/E$. Suppose that $F$ admits a Borel selector. Then:

\begin{enumerate}
\item if $E|_{Y}$ is Borel, then $E$ is Borel.

\item if $E|_{Y}$ is idealistic, then $E$ is idealistic.
\end{enumerate}
\end{lemma}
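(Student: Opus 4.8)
The plan is to avoid transporting idealistic filters directly (which fails, as explained at the end) and instead to exhibit $E$ as a classwise Borel embedding into the product relation $\Delta_X\times(E|_Y)$, where $\Delta_X$ denotes the equality relation on $X$, and then to invoke the closure lemmas already in hand. Fix Borel lifts $\hat{\mu}\colon X\times X\to X$ and $\hat{\iota}\colon X\to X$ of the multiplication and inversion of the semidefinable group $X/E$, so that $[\hat{\mu}(x,x')]_E=[x]_E\cdot[x']_E$ and $[\hat{\iota}(x)]_E=[x]_E^{-1}$; these exist because $X/E$ is a semidefinable group. Fix the Borel selector $s\colon X\to X$ for $F$, write $H:=Y/E$, and set $T:=s(X)=\{x\in X:s(x)=x\}$, a Borel transversal for $F$. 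First I would introduce the Borel map $\theta\colon X\to X$, $\theta(x):=\hat{\mu}(x,\hat{\iota}(s(x)))$. Since $s(x)\,F\,x$ we have $[\theta(x)]_E=[x]_E[s(x)]_E^{-1}\in H$, and because $Y$ is $E$-invariant this forces $\theta(x)\in Y$; thus $\theta$ is Borel as a map $X\to Y$.

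The crux is the biconditional, valid for all $x,x'\in X$,
\[
x\,E\,x'\iff\big(s(x)=s(x')\ \text{ and }\ \theta(x)\,(E|_Y)\,\theta(x')\big).
\]
For the forward direction, $x\,E\,x'$ gives $x\,F\,x'$ (as $E\subseteq F$), hence $s(x)=s(x')=:w$, and then $[\theta(x)]_E=[x]_E[w]_E^{-1}=[x']_E[w]_E^{-1}=[\theta(x')]_E$. Conversely, $s(x)=s(x')=:w$ together with $[\theta(x)]_E=[\theta(x')]_E$ gives $[x]_E[w]_E^{-1}=[x']_E[w]_E^{-1}$, and right-multiplying by $[w]_E$ yields $[x]_E=[x']_E$. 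This identity immediately settles (1): the map $\Theta\colon X\to T\times Y$, $\Theta(x):=(s(x),\theta(x))$, is Borel, and it witnesses $E=(\Theta\times\Theta)^{-1}(\sim)$ where $\sim:=\Delta_X\times(E|_Y)$; if $E|_Y$ is Borel then $\sim$ is a Borel subset of $(X\times Y)^2$, so $E$ is Borel as the preimage of a Borel set under a Borel map.

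For (2) I would upgrade $\Theta$ to a classwise Borel embedding. Define the Borel map $\Theta^{*}\colon X\times Y\to X$ by $\Theta^{*}(w,y):=\hat{\mu}(y,w)$. Since $[\hat{\mu}(y,w)]_E$ depends only on $[y]_E$ and $w$, $\Theta^{*}$ descends to a well-defined map on $\sim$-classes, and $[\Theta^{*}(\Theta(x))]_E=[\theta(x)]_E[s(x)]_E=[x]_E$, so $\Theta^{*}\circ\Theta=\mathrm{id}$ modulo $E$. The key computation is that on the Borel $\sim$-invariant set $T\times Y$ one has $\Theta\circ\Theta^{*}=\mathrm{id}$ modulo $\sim$: for $(w,y)\in T\times Y$ we have $s(w)=w$, whence $\Theta^{*}(w,y)=\hat{\mu}(y,w)$ satisfies $s(\Theta^{*}(w,y))=w$ and $[\theta(\Theta^{*}(w,y))]_E=[y]_E$, so $\Theta(\Theta^{*}(w,y))\sim(w,y)$. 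Together these show that $\Theta$ induces a classwise Borel isomorphism of $E$ with $\sim|_{T\times Y}$, i.e.\ a classwise Borel embedding of $E$ into $\sim$. Finally, $\Delta_X$ is idealistic by Lemma~\ref{Lemma:selector-idealistic} (selector $\mathrm{id}_X$) and $E|_Y$ is idealistic by hypothesis, so $\sim$ is idealistic by Lemma~\ref{Lemma:product}; Lemma~\ref{Lemma:invariance} then yields that $E$ is idealistic.

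The main obstacle is precisely why one cannot argue (2) directly by carrying the idealistic $\sigma$-filters on $E|_Y$-classes over to $E$-classes through $\theta$: the Borel lift $\hat{\mu}$ need not restrict to a bijection between an $E$-class and the corresponding $E|_Y$-class, so pulling filters back along $\theta$ can produce empty sets (violating that filter members be nonempty) while pushing them forward destroys closure under countable intersections. Routing through $\Delta_X\times(E|_Y)$ and, decisively, cutting the target down to the Borel $\sim$-invariant set $T\times Y$ is what repairs this: it is exactly on $T\times Y$ that the section $\Theta^{*}$ becomes a two-sided inverse modulo the two equivalence relations, thereby supplying the Borel inverse lift demanded by a classwise Borel embedding. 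Checking that $\Theta\circ\Theta^{*}=\mathrm{id}$ modulo $\sim$ holds on $T\times Y$ (and recognizing its failure off $T\times Y$, where $s(w)\neq w$) is the delicate point of the proof.
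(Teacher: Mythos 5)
Your proof is correct and follows essentially the same route as the paper's: the paper also sends $[x]_E$ to the pair $([x]_E\cdot[s(x)]_E^{-1},\,s(x))$ with inverse $([y]_E,t)\mapsto[y\cdot t]_E$, identifying $X/E$ with $Y/E\times T$ in $\mathsf{SemiDSet}$ and then invoking Lemma~\ref{Lemma:product} and Lemma~\ref{Lemma:invariance}. Your write-up merely makes the verification of the two-sided inverse and the $\sim$-invariance of $T\times Y$ explicit where the paper leaves it implicit.
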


\begin{proof}
We let $X\times X\rightarrow X$, $\left( x,y\right) \mapsto x\cdot y$ be a
Borel lift of the group operation on $X/E$. Let $s$ be a
Borel selector for $F$.
Notice that the relation $E$ is finer than $F$, hence the Borel selector $s$ for $F$ induces a definable function $%
\hat{s}:X/E\rightarrow X$, $\left[ x\right] \mapsto s( x) 
$.
Observe that if $T=\left\{ s(
x) :x\in X\right\} $ then $T\subseteq X$ is a Borel \emph{transversal 
}for $F$, i.e.\ $T$ meets each $F$-equivalence class in exactly one point.
Notice that the map%
\begin{eqnarray*}
Y/ E\times T &\rightarrow &X/ E\\
\left( \left[ y\right] _{E},t\right) &\mapsto &[y\cdot t]_{E}
\end{eqnarray*}%
is a definable bijection with definable inverse%
\begin{eqnarray*}
X/E &\rightarrow &Y/E \times T \\
\left[ x\right] _{E} &\mapsto &(\left[ x\right] _{E}\cdot \left[ s(
x) \right] _{E}^{-1},s(x) )\text{.}
\end{eqnarray*}%
This shows that the semidefinable set $X/ E$ is isomorphic in $%
\mathsf{SemiDSet}$ to $Y/E \times T$. If $Y/E$ is
Borel or idealistic, respectively, then $Y/E\times T$ is Borel
or idealistic, respectively; see Lemma \ref{Lemma:product}. The conclusion
thus follows from Lemma \ref{Lemma:invariance}.
\end{proof}

\section{Homotopy is idealistic}\label{S:Hidealistic}

Let $X$ and $Y$ be locally compact metrizable spaces and let $[X,Y]$ be the set of all homotopy classes of maps from $X$ to $Y$. Since the space $\mathrm{Map}(X,Y)$ of all maps from $X$ to $Y$ is Polish when endowed with the compact-open topology and the homotopy relation between elements of $\mathrm{Map}(X,Y)$ is clearly analytic, $[X,Y]$ is naturally viewed as an object in the category $\mathsf{SemiDSet}$ of semidefinable sets. By a theorem of Becker's, any analytic equivalence relation on the Cantor space may be realized as $[*,Y]$ for some compact $Y\subseteq\mathbb{R}^3$ (see 
\cite[Theorem 4.1]{beckerPath}); hence $[X,Y]$ is not in general a definable set. In contrast, for any locally compact Polish space $X$, if $Y=P$ is a \emph{polyhedral $H$-group} then $[X,Y]$ \emph{is} a definable set; this is the result appearing as Theorem \ref{T2:intro} in our introduction.
A main ingredient in this theorem is the more general fact that for any locally compact Polish space $X$ and countable polyhedron $Y$, the homotopy relation on $\mathrm{Map}(X,Y)$ is idealistic.
This we show in Section \ref{SS:homISidealistic} below.
This and several subsequent arguments make use of Borsuk's Homotopy Extension Theorem; we record the classical version in the present section, proving and applying its definable version in Section \ref{S:definable_HET} below.
\subsection{Homotopy}\label{SS:homotopy}

Let $\mathsf{LC}$ denote the category of locally compact Polish spaces and
continuous functions; we will often term the latter \emph{maps}, simply. For any two locally compact spaces $X$ and $Y$ we endow the hom-set $\mathsf{LC}( X,Y)$ with the compact-open
topology. This is the topology with subbasis the collection of all sets of
the form $\{f\in \mathsf{LC}(X,Y):f(K)
\subseteq U\} $ for some compact subset $K\subseteq X$ and open
subset $U\subseteq Y$. With this topology each such $\mathsf{LC}( X,Y) $ is a
Polish space, and what is more, the composition functions $\mathsf{LC}(
Y,Z) \times \mathsf{LC}( X,Y) \rightarrow \mathsf{LC}(X,Z)$ are all continuous. In this way we regard $\mathsf{LC}$ as a \emph{Polish category}, by which we mean a category enriched over the category of Polish spaces \cite{kelly_enriched_2005}. 
We denote by $\mathsf{C}$ the full subcategory of $\mathsf{LC}$
consisting of compact spaces.

Let $\mathcal{C}$ be a category with small (i.e., set-sized) hom-sets. A \emph{congruence} $\equiv$ on $\mathcal{C}$ \cite[Section I.8]{mac_lane_categories_1998} is given by an assignment to each
pair of objects $x,y$ of $\mathcal{C}$ of an equivalence relation $\equiv
_{x,y}$ on $\mathcal{C}( x,y) $ such that for objects $x,y,z$ and morphisms $f_{1},f_{2}:x\rightarrow y$ and $%
g_{1},g_{2}:y\rightarrow z$ in 
$\mathcal{C}$, if $f_{1}\equiv _{x,y}f_{2}$ and $g_{1}\equiv
_{y,z}g_{2}$ then $g_{1}f_{1} \equiv_{x,z}
g_{2}f_{2} $. One may regard the pair $\left( \mathcal{C},\equiv
\right) $ as a (strict) $2$-category by declaring that for objects $%
x,y$ and morphisms $f,g:x\rightarrow y$ in $\mathcal{C}$ there exists a
unique $2$-cell $f\Rightarrow g$ if and only if $f\equiv _{x,y}g$.  The corresponding quotient category $\mathcal{C}/\!\equiv$ is the category whose objects are those of $\mathcal{C}
$ and whose morphisms from $x$
to $y$ are the quotient set $\mathcal{C}(x,y)/\!\equiv
_{x,y}$.

Let $X$ and $Y$ be topological spaces; henceforth, we will let $I$ denote the closed unit interval $[0,1]$. A \emph{homotopy} $h:f\Rightarrow
g:X\rightarrow Y$ from the map $f:X\to Y$ to the map $g:X\to Y$ is a
function $h:X\times I\rightarrow Y$ such that $h\left( -,0\right) =f$ and $%
h\left( -,1\right) =g$. The homotopy relation $\simeq_{\mathsf{LC}}$ on $\mathsf{LC}(X,Y)$ is defined by setting $%
f\simeq_{\mathsf{LC}}g$ if and only if there exists a homotopy $%
h:f\Rightarrow g:X\rightarrow Y$; evidently, $\simeq_{\mathsf{LC}}$ defines a congruence on $\mathsf{LC}$. We let $\mathsf{Ho}( \mathsf{LC}) $ denote the
corresponding quotient category, which may be regarded as a category
enriched over the category $\mathsf{SemiDSet}$; $[X,Y] $ will denote the semidefinable set of homotopy classes of
maps $X\rightarrow Y$ or, equivalently, of the path-components of $\mathsf{LC}(X,Y)$.
A \emph{homotopy equivalence} $f\in\mathsf{LC}(X,Y)$ is a map whose image in the quotient category $\mathsf{Ho}(\mathsf{LC})$ is an isomorphism.

More generally, we consider \emph{locally
compact pairs}; these are pairs of topological spaces $(X,A)$ in which $X$ is locally
compact and Polish and $A$ is a closed subspace of $X$. These form the objects of the Polish
category $\mathsf{LCP}$ of locally compact pairs. Its morphisms $(X,A)\to (Y,B)$ are those maps $f:X\rightarrow Y$
such that $f(A) \subseteq B$. The hom-sets $\mathsf{LCP}((X,A),(Y,B)) $ then form $G_{\delta}$ subsets of $\mathsf{LC}(X,Y) $, and their subspace topologies are consequently Polish. Note also that the identification $X\mapsto (X,\varnothing)$ embeds $\mathsf{LC}$ as a full subcategory of $\mathsf{LCP}$.
We write $f:(X,A) \rightarrow(Y,B) $ to indicate that $f\in \mathsf{LCP}((
X,A),(Y,B))$. We let $\mathsf{CP}$ denote the full subcategory of $\mathsf{LCP}$ consisting of \emph{compact pairs}, i.e., of locally compact pairs $(X,A)$ in which $X$ is compact. Given maps $f,g:( X,A) \rightarrow (Y,B) $ a
homotopy --- sometimes termed \emph{pair homotopy} --- $h:f\Rightarrow g:( X,A) \rightarrow (Y,B)$ from $f$ to $g$
is a homotopy $h:f\Rightarrow
g:X\rightarrow Y$ such that $h( -,t) :(
X,A) \rightarrow (Y,B) $ for every $t\in I$. This defines
a congruence relation $\simeq _{\mathsf{LCP}}$ on $\mathsf{LCP}$. We
let \textsf{Ho}$(\mathsf{LCP}) $ denote the corresponding quotient
category; note that \textsf{Ho}$(\mathsf{LC})$ and \textsf{Ho}$(\mathsf{LCP})$ are the homotopy categories associated to the Hurewicz model category structures on $\mathsf{LC}$ and $\mathsf{LCP}$, respectively (see \cite{dwyer_spalinski_homotopy_1995}). We let $\left[(X,A),(Y,B) \right] $ be the semidefinable set of homotopy classes of maps $(X,A) \rightarrow (Y,B) $. Just as above, a \emph{homotopy equivalence} $f\in\mathsf{LCP}((X,A),(Y,B))$ is a map whose image in the quotient category $\mathsf{Ho}(\mathsf{LCP})$ is an isomorphism.

A \emph{pointed locally compact Polish space }is a locally compact pair $(X,A) $ in which $A$ is a singleton $\{*\}$; we call $*$ the \emph{basepoint} of this space. Such spaces comprise a full subcategory $\mathsf{LC}_{\ast }$ of the category $\mathsf{LCP}$ of locally compact pairs. The morphisms in $\mathsf{LC}_{\ast }$ are maps which are \emph{basepoint-preserving}, i.e., they are exactly those maps which send basepoints to basepoints. Similarly one defines
the notion of basepoint-preserving homotopy, and the corresponding
congruence relation $\simeq _{\mathsf{LC}_{\ast }}$ on $\mathsf{LC}%
_{\ast}$ (which is simply the restriction of $\simeq_{\mathsf{LCP}}$
to the full subcategory $\mathsf{LC}_{\ast}$).

A central concern in homotopy theory is the existence (or nonexistence) of extensions of maps or homotopies from a topological space $A$ to some larger space $X\supseteq A$. In the locally compact setting, a main theorem describing conditions of their existence is Borsuk's Homotopy Extension Theorem (see \cite[Chapter I.3, Theorem 9]{mardesic_shape_1982}):
\begin{theorem}
\label{Theorem:definable-extension}Suppose that $A$ is a closed subspace of a locally compact
Polish space $X$ and that $P$ is a polyhedron.
Then for every map $f:A\times I\rightarrow P$ and $h:X\times \left\{
0\right\} \rightarrow P$ such that $h|_{A\times \left\{ 0\right\}
}=f|_{A\times \left\{ 0\right\} }$, there exists a map $\tilde{h}:X\times
I\rightarrow P$ which simultaneously extends both $h$ and $f$.
\end{theorem}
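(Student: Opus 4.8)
The plan is to reduce the statement to the neighborhood extension property enjoyed by polyhedra, which are absolute neighborhood retracts (ANRs), and then to correct the resulting partial extension by a reparametrization of the $I$-coordinate. First I would assemble $f$ and $h$ into a single map. The set $W := (X \times \{0\}) \cup (A \times I)$ is closed in $X \times I$ because $A$ is closed in $X$, and the hypothesis $h|_{A \times \{0\}} = f|_{A \times \{0\}}$ guarantees that the function $g \colon W \to P$ which equals $h$ on $X \times \{0\}$ and $f$ on $A \times I$ is well-defined and continuous. Extending $g$ to all of $X \times I$ is exactly the desired conclusion.

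Since $P$ is a polyhedron, it is an ANR, and hence possesses the neighborhood extension property: the map $g$ on the closed set $W \subseteq X \times I$ extends to a continuous map $G \colon U \to P$ defined on some open neighborhood $U$ of $W$ in $X \times I$. The task now is to push all of $X \times I$ into $U$ in a way that fixes $W$ appropriately. Here the compactness of $I$ is decisive: since $A \times I \subseteq W \subseteq U$, the tube lemma produces an open set $O \subseteq X$ with $A \subseteq O$ and $O \times I \subseteq U$. As $X$ is metrizable and hence normal, Urysohn's lemma supplies a continuous function $\lambda \colon X \to I$ with $\lambda|_A \equiv 1$ and $\lambda|_{X \setminus O} \equiv 0$.

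Finally I would define $\tilde{h} \colon X \times I \to P$ by $\tilde{h}(x,t) := G(x, \lambda(x)\,t)$. This is well-defined and continuous: whenever $\lambda(x) > 0$ we have $x \in O$, so $(x, \lambda(x)\,t) \in O \times I \subseteq U$, while if $\lambda(x) = 0$ then $(x, \lambda(x)\,t) = (x,0) \in W \subseteq U$. Restricting to $X \times \{0\}$ gives $\tilde{h}(x,0) = G(x,0) = g(x,0) = h(x,0)$, and restricting to $A \times I$ gives $\tilde{h}(x,t) = G(x,t) = g(x,t) = f(x,t)$ since $\lambda \equiv 1$ on $A$; thus $\tilde{h}$ simultaneously extends $h$ and $f$. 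Note that local compactness of $X$ plays no role here, metrizability alone sufficing. I expect the one genuinely delicate point to be the passage from the neighborhood extension $G$ to a global map: the naive attempt to retract $X \times I$ directly onto $W$ fails for arbitrary closed $A$, and it is precisely the combination of the tube lemma (exploiting compactness of $I$) with the Urysohn cutoff $\lambda$ that circumvents this, by shrinking the $I$-coordinate continuously to $0$ away from $A$ while leaving it untouched over $A$.
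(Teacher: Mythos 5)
Your proof is correct, and it is the standard argument: the paper itself does not prove this classical statement (it cites Mardešić--Segal, Ch.~I.3, Theorem~9), but its proof of the definable version, Theorem~\ref{Theorem:definable-homotopy-extension}, uses exactly the same skeleton --- extend $g$ off the closed set $(X\times\{0\})\cup(A\times I)$ using the ANR property of $P$ (there via embedding $P$ in a convex subset of a Banach space and retracting a neighborhood, in your case via the neighborhood extension property directly), then precompose with $(x,t)\mapsto(x,\lambda(x)t)$ for a Urysohn function $\lambda$ supported near $A$. Your observation that local compactness is irrelevant and metrizability suffices is also accurate.
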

Beginning in Section \ref{SS:homISidealistic}, we will make repeated use of this theorem; as noted, we will also prove its definable version in Section \ref{SS:defhomotopyextension} below.
\subsection{Polyhedra}\label{SS:polyhedra}
Henceforth, all simplicial complexes will be tacitly understood to be countable and locally finite; let $K$ be such a complex. We may assume
without loss of generality that $\mathrm{dom}(K)
\subseteq \mathbb{N}$ and then associate with $K$ a locally compact Polish
space $\left\vert K\right\vert \subseteq \mathbb{R}^{\mathbb{N}}$, called its 
\emph{topological realization}, as follows. Let $(
e_{v})_{v\in \mathbb{N}}$ denote the canonical basis of the $\mathbb{R}$-vector space $\mathbb{R}^{\mathbb{N}}$. For each simplex $\sigma =\{ i_{0},\ldots ,i_{n}\} $ of $K$ define
\begin{align}\label{equation:realization}
\left\vert \sigma \right\vert =\left\{ t_{0}e_{i_{0}}+\cdots
+t_{n}e_{i_{n}}:t_{0},\ldots ,t_{n}\in \left[ 0,1\right] ,t_{0}+\cdots
+t_{n}=1\right\} \text{.}
\end{align}
The \emph{topological boundary} $\left\vert \dot{\sigma}%
\right\vert $ of $\sigma $ is the set of points $t_{0}e_{i_{0}}+\cdots
+t_{n}e_{i_{n}}$ of $\left\vert \sigma \right\vert $ for which at least one
of the coordinates $t_{0},\ldots ,t_{n}$ is zero. The  \emph{topological interior} of $%
\left\vert \sigma \right\vert $ is $\left\langle \sigma \right\rangle
:=\left\vert \sigma \right\vert \setminus \left\vert \dot{\sigma}\right\vert 
$. We then define $\left\vert K\right\vert $ to be the union of $\left\vert
\sigma \right\vert $ where $\sigma$ ranges among the simplices of $K$. 
Hence, every $x\in |K|$ can be uniquely identified with some sum 
\[x=\sum_{v\in \mathrm{dom}(K)}x_v e_v\]
where the coefficients $(x_v)_{v\in \mathrm{dom}(K)}$ satisfy $\sum_{v\in \mathrm{dom}(K)}x_v=1$. We call $(x_v)$ the \emph{barycentric coordinates} of $x$.

Observe that if $
L $ is a subcomplex of $K$ then $\left\vert L\right\vert $ forms a
closed subspace of $\left\vert K\right\vert $. A
topological space is called a \emph{polyhedron }if it is homeomorphic to the
topological realization of a simplicial complex. We let $\mathsf{P}$ be the full subcategory of the category $\mathsf{LC}$ of locally
compact Polish spaces consisting of polyhedra and let $\mathsf{Ho}(\mathsf{P})$ be its quotient by $\simeq_{\mathsf{LC}}$. Note that a polyhedron is compact
if and only if it is homeomorphic to the realization of a finite simplicial complex.

Call two simplicial maps $f,g:K\rightarrow L$ \emph{contiguous} if for every $\sigma \in K$, $f(\sigma)$ and $g(\sigma)$ are faces of a single simplex in $L$, and let the
relation of \emph{contiguous equivalence }be the transitive closure of the
contiguity relation (see \cite[Section VI.3]{eilenberg_foundations_1952} or \cite[Section 3.5]{spanier_algebraic_1995}).
This defines a congruence relation $\simeq_{\mathsf{S}}$ on $\mathsf{S}$, and an associated quotient category \textsf{Ho}$(\mathsf{S}):=\mathsf{S}/\!\simeq _{\mathsf{S}}$.
A simplicial map $f:K\rightarrow L$ between simplicial complexes induces a continuous function $\left\vert f\right\vert :\left\vert
K\right\vert \rightarrow \left\vert L\right\vert $ defined by setting%
\begin{equation*}
\left\vert f\right\vert \left( t_{0}e_{i_{0}}+\cdots +t_{n}e_{i_{n}}\right)
=t_{0}e_{f\left( i_{0}\right) }+\cdots +t_{n}e_{f\left( i_{n}\right) }
\end{equation*}%
for all $\{ i_{0},\ldots ,i_{n}\} \in K$ and $t_{0},\ldots
,t_{n}\in \left[ 0,1\right] $ such that $t_{0}+\cdots +t_{n}=1$.
Moreover, if $f,g:K\rightarrow L$ are contiguously equivalent then the
corresponding maps $\left\vert f\right\vert ,\left\vert g\right\vert
:\left\vert K\right\vert \rightarrow \left\vert L\right\vert $ are
homotopic (a fact invoked in the proof of Theorem \ref{T:Cech=Def}); put differently, the functor $K\mapsto \left\vert
K\right\vert $ from $\mathsf{S}$ to $\mathsf{P}$ induces a functor
from \textsf{Ho}$(\mathsf{S})$ to \textsf{Ho}$(\mathsf{P})$.

The \emph{barycentric subdivision }$\beta K$ of a simplicial complex $K$ is the simplicial
complex with $\mathrm{dom}( \beta K) $ equal to the set of nonempty simplices of $K$. A simplex of $\beta K$ is a set $\{ \sigma
_{0},\ldots ,\sigma _{n}\} $ of simplices of $K$ which is linearly ordered by inclusion.
A \emph{selection map for $K$} is a function $s:K\setminus\{ \varnothing
\} \rightarrow \mathrm{dom}( K) $ such that $s(\sigma ) \in \sigma $ for every $\sigma \in K\setminus\{
\varnothing \}$; equivalently, $s$ is a simplicial map $\beta K\rightarrow K$. Notice that any two selection maps are contiguous and therefore represent the same morphism in $\mathsf{Ho}(\mathsf{S})$.

Precomposing the realization construction (\ref{equation:realization}) above with a bijection $b:\mathbb{N}^{<\mathbb{N}}\to\mathbb{N}$ determines a topological realization of a subdivided complex $\beta K$, and if $s:\beta K\rightarrow K$ is a selection map then the
corresponding map $\left\vert s\right\vert :\left\vert \beta K\right\vert
\rightarrow \left\vert K\right\vert $ is homotopic to the homeomorphism $%
\left\vert \beta K\right\vert \rightarrow \left\vert K\right\vert $ defined
by the maps%
\begin{equation*}
e_{b(\{ v_{0},\ldots ,v_{n}\}) }\mapsto \frac{1}{n+1}\left(
e_{v_{0}}+\cdots +e_{v_{n}}\right)
\end{equation*}%
for each nonempty simplex $\left\{ v_{0},\ldots ,v_{n}\right\} $ of $K$, together with their linear extension.

For a vertex $v$ of $K$, we let the \emph{open star} \textrm{St}$_{K}(
v) \subseteq \left\vert K\right\vert $ of $v$ be the union of the
interiors of $\left\vert \sigma \right\vert $ where $\sigma $ ranges among
the simplices that contain $v$ as a vertex. Equivalently,%
\begin{equation*}
\mathrm{St}_K( v) =\left\{ \sum_{w\in \mathrm{\mathrm{dom}}\left(
K\right) }a_{w}e_{w}\;:\;a_{v}>0\right\}\cap\left\vert K\right\vert  \text{.}
\end{equation*}
\noindent We will sometimes omit the subscript $K$ when it is contextually clear. In what follows we will often make use of the following important well-known property of the  open cover $\{\mathrm{St}_K(v)\colon v\in\mathrm{dom}(K)\}$ of $|K|$.

\begin{lemma}\label{L:OpenStarHomotopy}
Let $K$ be a simplicial complex and let $D\subseteq |K|\times |K|$ be the union 
of all sets of the form  $\mathrm{St}_K( v)\times\mathrm{St}_K( v)$ where $v$ ranges over $\mathrm{dom}(K)$. Then there exists a continuous map $\lambda\colon D\times [0,1]\to P$  so that:
\begin{enumerate}
\item  for all $(x,y)\in  D$ we have that  $\lambda(x,y,0)=x, \quad  \lambda(x,y,1)=y$;
\item if $x\in |\sigma|$ and $y\in |\tau|$ with $\sigma,\tau\in K$, then for all $t\in[0,1]$ we have that $\lambda(x,y,t)\in |\sigma|\cup|\tau|$.
\end{enumerate}
\end{lemma}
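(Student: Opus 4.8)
The plan is to route each pair through a canonically chosen point of the face that their carriers have in common. For $x\in|K|$ write $\mathrm{supp}(x):=\{v:x_v>0\}$ for the vertex set of the carrier of $x$; this is a face of $K$, since $x\in|\sigma|$ for some $\sigma$ forces $\mathrm{supp}(x)\subseteq\sigma$, and $K$ is closed downwards. The first observation is that $x\in\mathrm{St}_K(v)$ iff $x_v>0$ iff $v\in\mathrm{supp}(x)$, so $(x,y)\in D$ precisely when $\mathrm{supp}(x)\cap\mathrm{supp}(y)\neq\varnothing$. Writing $\sigma,\tau$ for the carriers of $x,y$, this common support is itself a face $\sigma\cap\tau$ of $K$ (being a subset of $\sigma$). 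One should note that the naive straight-line homotopy $t\mapsto(1-t)x+ty$ does \emph{not} work: if $\sigma$ and $\tau$ meet only in a lower-dimensional face, the segment $[x,y]$ generally leaves $|K|$. Instead I would pass through an intermediate point lying in $|\sigma\cap\tau|$.

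The crux is the choice of this meeting point. For $(x,y)\in D$ let $w(x,y)\in\mathbb{R}^{\mathbb{N}}$ be the coordinatewise minimum, $w(x,y)_v:=\min(x_v,y_v)$, and set $m(x,y):=w(x,y)/\sum_v w(x,y)_v$. By construction $\mathrm{supp}(m(x,y))=\mathrm{supp}(x)\cap\mathrm{supp}(y)$, so $m(x,y)\in|\sigma\cap\tau|\subseteq|\sigma|\cap|\tau|\subseteq|K|$; and the denominator $\sum_v\min(x_v,y_v)$ is strictly positive on $D$ exactly because the common support is nonempty there. I would then define $\lambda$ as the broken-line homotopy through $m$: on $[0,\tfrac12]$ set $\lambda(x,y,t)=(1-2t)\,x+2t\,m(x,y)$, and on $[\tfrac12,1]$ set $\lambda(x,y,t)=(2-2t)\,m(x,y)+(2t-1)\,y$. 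The two formulas agree at $t=\tfrac12$, so $\lambda$ is well defined, and the endpoint conditions $\lambda(x,y,0)=x$ and $\lambda(x,y,1)=y$ of (1) are immediate. (Here I read the target of $\lambda$ as $|K|$ rather than $P$, since conditions (1) and (2) force the image into $|K|$.)

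For property (2), note that on $[0,\tfrac12]$ the value $\lambda(x,y,t)$ is a convex combination of $x$ and $m(x,y)$, both of which lie in $|\sigma_0|$ for $\sigma_0:=\mathrm{supp}(x)$ (using $\mathrm{supp}(m(x,y))\subseteq\mathrm{supp}(x)$); since $|\sigma_0|$ is convex, the entire first half of the homotopy stays in $|\sigma_0|$. Now any simplex $\sigma$ with $x\in|\sigma|$ satisfies $\sigma_0\subseteq\sigma$, whence $|\sigma_0|\subseteq|\sigma|\subseteq|\sigma|\cup|\tau|$; this gives (2) on $[0,\tfrac12]$, and the symmetric argument with $\tau_0:=\mathrm{supp}(y)$ handles $[\tfrac12,1]$.

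The main obstacle, and the reason for this particular meeting point, is continuity of $m$. The obvious alternatives — a least common vertex of $\mathrm{supp}(x)\cap\mathrm{supp}(y)$, or the barycentre of $\sigma\cap\tau$ — jump discontinuously as $x$ or $y$ crosses a simplex boundary and its support shrinks, whereas the normalized coordinatewise minimum varies continuously. To verify this I would fix $(x_0,y_0)\in D$ and invoke local finiteness of $K$: a neighbourhood of $(x_0,y_0)$ in $|K|\times|K|$ meets only finitely many simplices, so only finitely many coordinates are nonzero there, each map $(x,y)\mapsto\min(x_v,y_v)$ is continuous, and the finite sum $\sum_v\min(x_v,y_v)$ is continuous and, being positive at $(x_0,y_0)$, bounded away from $0$ nearby; hence the normalization is continuous. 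Continuity of $\lambda$ then follows from that of $m$ and the pasting lemma at $t=\tfrac12$, completing the proof.
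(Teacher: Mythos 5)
Your proof is correct and takes essentially the same route as the paper's: the paper likewise defines the meeting point $\mu(x,y)$ as the normalized coordinatewise minimum $\min(x_v,y_v)/\sum_w\min(x_w,y_w)$ and runs the broken-line homotopy $x\rightsquigarrow\mu(x,y)\rightsquigarrow y$ over the two halves of $[0,1]$. You supply somewhat more detail than the paper on the continuity of the normalization and on why condition (2) holds, and you are right that the codomain $P$ in the statement should read $|K|$.
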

\begin{proof}
Following the proof of Theorem 2 from  \cite{milnor_spaces_1959} we first consider the map $\mu\colon D\to |K|$ which is defined as follows: if $(x_v)$ and $(y_v)$ are the barycentric coordinates of $x,y$ where $(x,y)\in D$ then   $\mu(x,y):=(z_v\colon v\in \mathrm{dom}(K))$, where 
\[z_w:= \min(x_w,y_w) \; \;  \bigg/\sum_{v\in\mathrm{dom}(K)} \min(x_v,y_v)  \]
Notice that if $x\in |\sigma|$ and $y\in |\tau|$ with $\sigma,\tau\in K$, then $\mu(x,y)\in |\sigma|\cap |\tau|$. Then the map $\lambda$ is defined by setting:
\[\lambda(x,y,\frac{1}{2} t):=(1-t)x+t\mu(x,y) \quad\text{ and }\quad \lambda(x,y,\frac{1}{2}+\frac{1}{2} t):=(1-t)\mu(x,y)+ty \]
\end{proof}

The generalization to topological realizations $(|K|,|L|) $ of simplicial pairs $(K,L) $ is straightforward. A \emph{polyhedral pair} is a locally
compact pair that arises in this fashion from a simplicial pair $(K,L)$. The category of polyhedral
pairs is a full subcategory of $\mathsf{LCP}$; we denote it $\mathsf{PP}$, writing $\mathsf{P}_*$ for the full subcategory of pointed polyhedra.

\subsection{Maps from compact pairs to polyhedral pairs}\label{SS:compact pairs}
It will be useful to henceforth adopt slightly strengthened notions of \emph{cover} and \emph{refinement}; portions of the next few paragraphs therefore amount to an updating of definitions first appearing in Section \ref{S:Definable cohomo}.

Let $X$ be a locally compact Polish space. An \emph{(open)} \emph{cover} of $X$ is
a countable family $\mathcal{U}=\left( U_{j}\right) _{j\in J}$ of open
subsets of $X$ with compact closure such that $X$ is the union of $\left\{
U_{j}:j\in J\right\} $. The cover $\mathcal{U}$ is \emph{star-finite} if,
for every $j\in J$, the set $\left\{ i\in J:U_{i}\cap U_{j}\neq \varnothing
\right\} $ is finite. The \emph{nerve} of a star-finite cover $\mathcal{U}%
=\left( U_{j}\right) _{j\in J}$ of $X$ is the (countable, locally finite)
simplicial complex with $\mathrm{dom}(N_{\mathcal{U}})=\left\{ j\in
J:U_{j}\neq \varnothing \right\} $ and $\left\{ j_{0},\ldots ,j_{n}\right\} $
a simplex of $N_{\mathcal{U}}$ if and only if $U_{j_{0}}\cap \cdots \cap
U_{j_{n}}\neq \varnothing $. A \emph{canonical map for $\mathcal{U}$} is a
function $f:X\rightarrow \left\vert N_{\mathcal{U}}\right\vert $ such that $%
f^{-1}\left( \mathrm{St}_{N_{\mathcal{U}}}( j) \right) \subseteq
U_{j}$, where $\mathrm{St}_{N_{\mathcal{U}}}(j) $ is the open star of the vertex $j$ of $N_{\mathcal{U}}$.

A \emph{partition of unity subordinate to a star-finite cover $%
\mathcal{U}=\left( U_{j}\right) _{j\in J}$} is a family $\left( f_{j}\right)
_{j\in J}$ of continuous functions $f_{j}:X\rightarrow I$ such that
the closure of $\mathrm{supp}( f_{j}) $ is contained in $U_{j}$ for every $j\in J$, and $\sum_{j\in J}f_{j}( x) =1$ for
every $x\in X$. A partition of unity 
$( f_{j}) _{j\in J}$ of $\mathcal{U}$ gives rise to a canonical
map $f:X\rightarrow \left\vert N_{\mathcal{U}}\right\vert $ for $\mathcal{U}$
defined by $x\mapsto \sum_{j\in J}f_{j}( x) e_{j}$.

A cover $\mathcal{V}$ of $X$ \emph{refines} a cover $\mathcal{U}$
of $X$ if for every $V\in \mathcal{V}$ there exists $U\in \mathcal{U}$ such
that $\overline{V}\subseteq U$. A \emph{refinement map} from $\mathcal{V}$ to $%
\mathcal{U}$ is a simplicial map $p:N_{\mathcal{V}}\rightarrow N_{\mathcal{U}%
}$ such that $\overline{V}_{j}\subseteq U_{p(j) }$ for every $%
j\in \mathrm{dom}( N_{\mathcal{U}}) $. Note that any two
refinement maps from $\mathcal{V}$ to $\mathcal{U}$ are contiguous.

Every cover of a locally compact Polish space admits a star-finite refinement \cite{kaplan_homology_1947}, and every star-finite cover of such a space admits a subordinate partition of unity. If $( X,A) $ is a locally compact pair, then a cover of $%
( X,A) $ is a cover $\mathcal{U}=( U_{j})
_{j\in J}$ of $X$ such that if $U_{j_{\ell
}}\cap A\neq \varnothing $ for $\ell \in \left\{ 0,1,\ldots
,n\right\} $ and $U_{j_{0}}\cap \cdots \cap U_{j_{n}}\neq \varnothing $,
then $U_{j_{0}}\cap \cdots \cap U_{j_{n}}\cap A\neq \varnothing $.
This condition ensures that $\mathcal{U}^{\prime }=( U_{j}\cap A) _{j\in J}$
is a cover of $A$ such that the identity map $J\rightarrow J$
induces an inclusion $N_{\mathcal{U}^{\prime }}\rightarrow N_{\mathcal{U}}$
as a full subcomplex.

If $( P,Q) $ is a polyhedral pair and $(
K,L) $ is a simplicial pair and $(P,Q) =(|K|,|L|)$ then there exists a canonical star-finite cover of $(P,Q)$, namely  $\mathcal{U}%
_{K}^{P}:=\{\mathrm{St}_{K}(v)\mid v\in \mathrm{dom}(K)\}$, where as above $\mathrm{St}_{K}(v)$ is the open
star of $v$ in $K$. Notice that $\mathrm{St}_{K}(v)\cap |L| \neq \varnothing$ if and only if $v\in L$.

Recall that an equivalence relation $E$ on a Polish space $Y$ is \emph{open} (or \emph{closed}) if it is an open (or closed) subset of $Y\times Y$ endowed with the product topology.
Recall also that by a homotopy of maps in $\mathsf{LCP}$ we mean a \emph{pair} homotopy.

\begin{lemma}\label{lemma:homotopyequivalenceinduceshomotopyequivalence}
For any $(Z,C)$ and homotopy equivalence $f:(X,A)\to (Y,B)$ in $\mathsf{LCP}$,
\begin{enumerate}
\item the map $f^*:\mathsf{LCP}((Y,B),(Z,C))\to \mathsf{LCP}((X,A),(Z,C))$ given by $s\mapsto s\circ f$ is a homotopy equivalence, and
\item the homotopy relation on $\mathsf{LCP}((Y,B),(Z,C))$ is open if and only if it is open on $\mathsf{LCP}((X,A),(Z,C))$.
\end{enumerate}
\end{lemma}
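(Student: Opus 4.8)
The plan is to exploit the homotopy inverse of $f$ together with the continuity of composition in the compact-open topology recorded in Section \ref{SS:homotopy}. Fix a homotopy inverse $g:(Y,B)\to(X,A)$, so that $g\circ f\simeq_{\mathsf{LCP}}\mathrm{id}_{(X,A)}$ and $f\circ g\simeq_{\mathsf{LCP}}\mathrm{id}_{(Y,B)}$, each witnessed by a pair homotopy. Write $W=\mathsf{LCP}((Y,B),(Z,C))$ and $W'=\mathsf{LCP}((X,A),(Z,C))$, and let $g^{*}\colon W'\to W$ be the map $t\mapsto t\circ g$. Since composition of maps is continuous and restricts to the pair mapping spaces (which are $G_{\delta}$, hence Polish), fixing the second coordinate shows that both $f^{*}$ and $g^{*}$ are continuous.

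For (1), the claim is that $g^{*}$ is a homotopy inverse of $f^{*}$. One computes $g^{*}\circ f^{*}(s)=s\circ(f\circ g)$ and $f^{*}\circ g^{*}(t)=t\circ(g\circ f)$. Let $H\colon Y\times I\to Y$ be a pair homotopy from $f\circ g$ to $\mathrm{id}_{Y}$, so each $H(-,t)$ is a pair self-map of $(Y,B)$. Define $\tilde H\colon W\times I\to W$ by $\tilde H(s,t)=s\circ H(-,t)$; this lands in $W$ because $s$ and each $H(-,t)$ are pair maps, and it restricts at $t=0,1$ to $g^{*}\circ f^{*}$ and $\mathrm{id}_{W}$. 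It remains to verify continuity: as $Y$ is locally compact, the exponential correspondence makes $t\mapsto H(-,t)$ a continuous map $I\to\mathsf{LCP}((Y,B),(Y,B))$, and $\tilde H$ is the composite of $(s,t)\mapsto(s,H(-,t))$ with the continuous composition operation, hence continuous. Thus $g^{*}\circ f^{*}\simeq\mathrm{id}_{W}$, and the symmetric argument applied to a pair homotopy $g\circ f\simeq\mathrm{id}_{X}$ gives $f^{*}\circ g^{*}\simeq\mathrm{id}_{W'}$.

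For (2), the point is that $f^{*}$ both \emph{preserves} and \emph{reflects} the homotopy relation. Preservation is immediate: a pair homotopy $s_{0}\simeq s_{1}$ in $W$ precomposes with $f\times\mathrm{id}_{I}$ to give a pair homotopy $s_{0}\circ f\simeq s_{1}\circ f$. For reflection, suppose $f^{*}(s_{0})\simeq f^{*}(s_{1})$; applying $g^{*}$ (and preservation by $g^{*}$) gives $s_{0}\circ(f\circ g)\simeq s_{1}\circ(f\circ g)$, while postcomposing $s_{i}$ with the homotopy $f\circ g\simeq\mathrm{id}_{Y}$ yields $s_{i}\circ(f\circ g)\simeq s_{i}$, whence $s_{0}\simeq s_{1}$. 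Writing $E_{W}$ and $E_{W'}$ for the two homotopy relations, these facts say precisely that $E_{W}=(f^{*}\times f^{*})^{-1}(E_{W'})$. Since $f^{*}\times f^{*}$ is continuous, if $E_{W'}$ is open then so is $E_{W}$. The reverse implication is identical, using that $g^{*}$ likewise preserves and reflects homotopy (with $f$ now in the role of the inverse), so that $E_{W'}=(g^{*}\times g^{*})^{-1}(E_{W})$.

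The only genuinely delicate point is the continuity of the induced homotopy $\tilde H$ in (1); this is where local compactness of $Y$ enters, via the exponential correspondence guaranteeing that $t\mapsto H(-,t)$ is continuous into the compact-open topology. Everything else reduces to formal bookkeeping with pair maps and the continuity of composition.
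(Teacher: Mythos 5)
Your proof is correct and follows essentially the same route as the paper: the paper cites tom Dieck (Corollary 2.4.14) for item (1) rather than writing out the explicit homotopy $\tilde H(s,t)=s\circ H(-,t)$ as you do, and for item (2) it likewise observes that $s\simeq t$ iff $f^*(s)\simeq f^*(t)$, so that the homotopy relation on $\mathsf{LCP}((Y,B),(Z,C))$ is the continuous preimage under $f^*\times f^*$ of the one on $\mathsf{LCP}((X,A),(Z,C))$, and then repeats the argument with the homotopy inverse $g$. (One minor quibble: currying $H$ to a continuous map $I\to\mathsf{LCP}((Y,B),(Y,B))$ needs no hypothesis on $Y$; local compactness is what makes the composition map continuous.)
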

\begin{proof} Item (1) is not particularly difficult to see, and appears as Corollary 2.4.14 of \cite{tom_dieck_algebraic_2008}. 
Note in particular that the map $f^*$ is continuous, and that $s\simeq t$ if and only if $f^*(s)\simeq f^*(t)$.
Hence if the homotopy relation on $\mathsf{LCP}((X,A),(Z,C))$ is open, then so too is its continuous preimage under the map $f^*$, which is exactly the homotopy relation on $\mathsf{LCP}((Y,B),(Z,C))$. Repeating this argument for a homotopy equivalence $g:(Y,B)\to (X,A)$ in $\mathsf{LCP}$ completes the proof of item (2).
\end{proof}

\begin{lemma}
\label{Lemma:countable-homotopy}Suppose that a locally compact pair $(X,A)$ is homotopy equivalent to a compact
pair and that $(P,Q)$ is a polyhedral pair. Then the
relation of homotopy among maps $(X,A) \rightarrow
(P,Q)$ is open (and, in consequence, closed as well).
\end{lemma}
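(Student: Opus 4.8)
The plan is to reduce immediately to the case of a compact pair and then to exploit the ``straight-line within simplices'' homotopy furnished by Lemma~\ref{L:OpenStarHomotopy}. I would first record the elementary principle that, for an equivalence relation on a topological space, being an open subset of the square is equivalent to every equivalence class being open: if $E=\bigcup_{[f]}[f]\times[f]$ and each class $[f]$ is open, then $E$ is a union of open boxes and hence open, while conversely each class is a continuous section of $E$. Since by hypothesis $(X,A)$ is homotopy equivalent in $\mathsf{LCP}$ to some compact pair $(X_0,A_0)$, Lemma~\ref{lemma:homotopyequivalenceinduceshomotopyequivalence}(2) lets me replace $(X,A)$ by $(X_0,A_0)$; thus it suffices to prove that the homotopy relation on $M:=\mathsf{LCP}((X_0,A_0),(P,Q))$ is open, and by the preceding remark it suffices to show that each homotopy class in $M$ is open. (Writing $(P,Q)=(|K|,|L|)$ for a simplicial pair $(K,L)$ involves no loss of generality, as post-composition with a homeomorphism of the target is a homeomorphism of mapping spaces preserving the homotopy relation, hence preserving openness of that relation.)

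For the compact case, fix $f\in M$. Because the open stars $\{\mathrm{St}_K(v):v\in\mathrm{dom}(K)\}$ form an open cover of $|K|$, the sets $f^{-1}(\mathrm{St}_K(v))$ form an open cover of the compact space $X_0$; extracting a finite subcover and applying the shrinking lemma for finite covers of a normal space, I would obtain open sets $W_1,\dots,W_m$ with $\bigcup_i W_i=X_0$ and vertices $v_1,\dots,v_m$ such that $f(\overline{W_i})\subseteq\mathrm{St}_K(v_i)$ for each $i$. The set
\[
V:=\bigcap_{i=1}^m\{\,g\in M:g(\overline{W_i})\subseteq\mathrm{St}_K(v_i)\,\}
\]
is then a neighborhood of $f$ in $M$, open in the compact-open topology since each $\overline{W_i}$ is compact and each $\mathrm{St}_K(v_i)$ is open. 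For any $g\in V$ and any $x\in X_0$, choosing $i$ with $x\in W_i$ gives $f(x),g(x)\in\mathrm{St}_K(v_i)$, so that $(f(x),g(x))$ lies in the domain $D$ of the map $\lambda$ of Lemma~\ref{L:OpenStarHomotopy}. Thus $(f,g)$ maps $X_0$ continuously into $D$, and $h(x,t):=\lambda(f(x),g(x),t)$ is a continuous homotopy from $f$ to $g$.

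It remains to verify that $h$ is a homotopy of pairs, i.e.\ that $h(A_0\times I)\subseteq Q$. For $x\in A_0$ both $f(x)$ and $g(x)$ lie in $Q=|L|$, so I may choose simplices $\sigma,\tau\in L$ with $f(x)\in|\sigma|$ and $g(x)\in|\tau|$; property (2) of $\lambda$ then forces $\lambda(f(x),g(x),t)\in|\sigma|\cup|\tau|\subseteq|L|=Q$ for all $t$. Hence every $g\in V$ is pair-homotopic to $f$, so $V\subseteq[f]$; running the same argument at each $g\in[f]$ (and using $[g]=[f]$) shows $[f]$ is a neighborhood of each of its points, so $[f]$ is open. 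This proves the homotopy relation on $M$ is open, and since each class is then also closed (its complement being the union of the remaining open classes), the relation is closed as well. The reduction to compact pairs and the passage from open classes to an open relation are formal; the substantive content is the compact case, and within it the point requiring care is the verification, via property (2) of $\lambda$, that the straight-line homotopy respects the subspace $Q$, so that $h$ is genuinely a homotopy of pairs.
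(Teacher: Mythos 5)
Your proposal is correct and follows essentially the same route as the paper's proof: reduce to a compact pair via Lemma \ref{lemma:homotopyequivalenceinduceshomotopyequivalence}, take a finite refinement of the pulled-back open-star cover to produce a basic compact-open neighborhood $N(f)$, and connect any $g\in N(f)$ to $f$ by the homotopy $\lambda(f(x),g(x),t)$ of Lemma \ref{L:OpenStarHomotopy}. The only (immaterial) difference is in verifying the pair condition: you apply property (2) of $\lambda$ pointwise to simplices of $L$ containing $f(x)$ and $g(x)$ for $x\in A_0$, whereas the paper argues via the observation that the assigned vertices $p(V)$ lie in $\mathrm{dom}(L)$ whenever $V$ meets $A$; both are valid.
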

\begin{proof}
By Lemma \ref{lemma:homotopyequivalenceinduceshomotopyequivalence}, we may without loss of generality assume that $(X,A)$
is a compact pair.
Let $(K,L)$ be a locally finite simplicial pair
such that $(P,Q)=(|K|,|L|)$ and 
 let $\mathcal{U}=\{\mathrm{St}_K(v)\colon v\in \mathrm{dom}(K)\}$ denote the canonical star-finite cover of $(P,Q)$. Let $\mathcal{V}$ be any finite cover of $(X,A)$ which refines $\{f^{-1}(U)\colon U\in \mathcal{U}\}$; 
 there must then exist a function $p:\mathcal{V} \rightarrow \mathrm{dom}(K) $ such that $f(\overline{V}) \subseteq \mathrm{St}_K(p(V))$ for every $V\in \mathcal{V}$. 
 Consider the open neighborhood
\begin{equation*}
N(f) =\{g\in\mathsf{LC}((X,A)
,( P,Q)) :\forall \; V \in \mathcal{V} \; g(\overline{V})\subseteq\mathrm{St}(p(V))\}
\end{equation*}
of $f$ in $\mathsf{LC}((X,A),(P,Q))$. We claim that  every element $g$ of $N(f) $ is homotopic to $f$ via a homotopy of pairs.
 
Indeed, consider the open neighborhood   $D:=\bigcup_{U\in\mathcal{U}} U\times U$ of the diagonal of $P\times P$ and let  $\lambda\colon D\times [0,1]\to P$ be the map given by Lemma \ref{L:OpenStarHomotopy}.
Define $F\colon f \implies g$  by setting $F(x,t):=\lambda(f(x),g(x),t)$. By Lemma \ref{L:OpenStarHomotopy}(1),  $F$ is indeed a homotopy from $f$ to $g$.  Since $f(A)\subseteq Q$, this implies that $p(V)\in \mathrm{dom}(L)$ for all $V\in \mathcal{V}$ with $V\cap A\neq \emptyset$. From this observation and Lemma \ref{L:OpenStarHomotopy}(2), we have that $F$ is a homotopy of pairs.
\end{proof}

\begin{corollary}
\label{Corollary:countable-homotopy}Suppose that $(X,A)$ is a compact pair and $(P,Q)$ is a
polyhedral pair. Then $[(X,A),(P,Q)]$ is countable.
\end{corollary}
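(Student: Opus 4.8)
The plan is to deduce the corollary directly from the openness of the homotopy relation established in Lemma \ref{Lemma:countable-homotopy}. A compact pair is trivially homotopy equivalent to itself, so the hypotheses of that lemma are met, and the essential point is then the elementary observation that an open equivalence relation on a separable space can have only countably many classes.

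More precisely, recall that $[(X,A),(P,Q)]$ is by definition the quotient of the Polish space $Y:=\mathsf{LCP}((X,A),(P,Q))$ by the homotopy congruence $\simeq_{\mathsf{LCP}}$. First I would invoke Lemma \ref{Lemma:countable-homotopy} to conclude that $\simeq_{\mathsf{LCP}}$ is an open subset of $Y\times Y$. Next I would observe that each homotopy class is therefore open in $Y$: fixing $f\in Y$, the class $[f]$ is the section $\{g\in Y:(f,g)\in\,\simeq_{\mathsf{LCP}}\}$ of an open subset of $Y\times Y$, and such sections are open (if $(f,g)$ lies in a basic open box $U\times V\subseteq\,\simeq_{\mathsf{LCP}}$, then $V\subseteq[f]$).

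Finally, the homotopy classes form a partition of $Y$ into pairwise disjoint nonempty open sets. Since $Y$ is Polish and hence separable, it satisfies the countable chain condition: any family of pairwise disjoint nonempty open subsets is countable, as a countable dense set must meet each member of such a family in a distinct point. Consequently there are only countably many homotopy classes, which is precisely the assertion that $[(X,A),(P,Q)]$ is countable.

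I do not anticipate any genuine obstacle, as the entire substance of the result resides in the prior Lemma \ref{Lemma:countable-homotopy}; the corollary is a short topological consequence. The only points requiring minimal care are the verification that sections of an open relation are open and that separability yields the countable chain condition, both of which are routine.
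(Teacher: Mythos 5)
Your argument is correct and is precisely the intended one: the paper states the corollary without proof as an immediate consequence of Lemma \ref{Lemma:countable-homotopy}, relying on exactly the observation you spell out, namely that an open equivalence relation on a separable space partitions it into pairwise disjoint nonempty open sets, of which there can be only countably many. No gaps; the verification that sections of an open relation are open and that separability gives the countable chain condition are both routine and correctly handled.
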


Letting $A$ and $Q$ denote basepoints of $X=S^n$ and an arbitrary polyhedron $P$, respectively, we recover the following well-known fact:

\begin{corollary}
For any $n\geq 0$ and countable, locally finite polyhedron $P$, the set $\pi_n(P)$ is countable.
\end{corollary}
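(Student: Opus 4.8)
The plan is to recognize $\pi_n(P)$ as a special case of the homotopy bracket $[(X,A),(P,Q)]$ already treated in Corollary \ref{Corollary:countable-homotopy}, and then simply to verify that the two pairs in question satisfy that corollary's hypotheses. Recall that by definition $\pi_n(P)$ is the set of basepoint-preserving homotopy classes of basepoint-preserving maps $(S^n,\ast)\to(P,\ast)$; in the notation of Section \ref{SS:homotopy} this is precisely $[(S^n,\ast),(P,\ast)]$, the homotopies here being exactly the pair homotopies of $\mathsf{LCP}$ that fix the basepoint. Thus the entire content of the corollary will reduce to checking that $(S^n,\ast)$ is a compact pair and that $(P,\ast)$ is a polyhedral pair.

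First I would observe that $(S^n,\ast)$ is a compact pair in the sense of $\mathsf{CP}$: the sphere $S^n$ is compact, and the distinguished basepoint $\ast$ is a closed singleton in it, so no simplicial structure on $S^n$ is required. Next I would check that $(P,\ast)$ is a polyhedral pair. Since $P$ is a countable, locally finite polyhedron, fix a countable locally finite simplicial complex $K$ with $\left\vert K\right\vert=P$; as the basepoint of a pointed polyhedron (or after refining the triangulation), we may assume that $\ast$ is a vertex of $K$. Letting $L$ be the subcomplex of $K$ consisting of the single vertex $\ast$ (downward closed, so $\left\vert L\right\vert=\{\ast\}$), the pair $(P,\ast)=(\left\vert K\right\vert,\left\vert L\right\vert)$ is the topological realization of the simplicial pair $(K,L)$ and is therefore a polyhedral pair in $\mathsf{PP}$.

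With both hypotheses verified, the conclusion is immediate: applying Corollary \ref{Corollary:countable-homotopy} to the compact pair $(S^n,\ast)$ and the polyhedral pair $(P,\ast)$ yields that $[(S^n,\ast),(P,\ast)]=\pi_n(P)$ is countable. I do not anticipate any genuine obstacle here, as the result is a formal specialization; the only step meriting a word of care is the presentation of $(P,\ast)$ as a polyhedral pair, which amounts to arranging the basepoint to be a vertex of a triangulation of $P$, and this is entirely routine.
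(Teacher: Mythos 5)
Your proof is correct and is exactly the paper's argument: the paper derives this corollary by the one-line observation that taking $A$ and $Q$ to be basepoints of $X=S^n$ and of $P$ in Corollary \ref{Corollary:countable-homotopy} yields the statement. Your additional verifications (that $(S^n,\ast)$ is a compact pair and that $(P,\ast)$ is a polyhedral pair after arranging $\ast$ to be a vertex) are routine and correctly fill in the details the paper leaves implicit.
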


\begin{remark}
\label{remark:homotopy_groups}
As indicated, the homotopy bracket $[-,-]$ will play an increasingly prominent role in the remainder of our paper, which might even be primarily regarded as a laying of foundations for its descriptive theoretic study.
In part for reasons noted in this section's introduction, within such a program of study, some care is in order concerning the range of spaces permitted to appear in the target position.
Allowing, accordingly, spaces to vary most freely in the \emph{source} position meshes well with our present focus on cohomology, as will grow clearer below. That said, the reverse setup, the descriptive set theoretic study of the homotopy classes of maps from a fixed polyhedron $P$ to a suitable range of spaces $X$, remains of considerable interest, and would include the development of the \emph{definable homotopy groups} as a special case. Such groups do, of course, tacitly figure in the present work, and it is only considerations of space and focus which have prevented us from saying more about them.
\end{remark}

\subsection{Homotopy is idealistic}
\label{SS:homISidealistic}

In this section, we show that the relation of homotopy for maps from a locally compact pair $(X,A) $ to a pointed polyhedron $P$ is \emph{idealistic} in the sense of Definition \ref{Definition:idealistic}.
We precede this result (Theorem \ref{Theorem:homotopy-idealistic}) with a selection principle of general utility (Proposition \ref{Proposition:uniformize1}); we then follow it with a more particular, and closely related, selection principle (Corollary \ref{Corollary:select-homotopy}) which we will apply to argue Theorem \ref{Theorem:phantom1}.

Suppose that $X,Y$ are Polish spaces and $R\subseteq X\times Y$ is a closed
relation such that for every $x\in X$ the section $R_{x}=\{ y\in
Y:( x,y) \in R\} $ is nonempty. Let $\mathcal{V}$ be a
countable basis for the topology of $Y$. For any Borel set $B\subseteq R$
let
\begin{equation*}
B_{\ast }=\left\{ x\in X:B_{x}\text{ is comeager in }R_{x}\right\}
\end{equation*}%
and%
\begin{equation*}
B_{\Delta }=\left\{ x\in X:B_{x}\text{ is nonmeager in }R_{x}\right\} \text{.%
}
\end{equation*}%
Notice that%
\begin{equation*}
x\in B_{\ast }\Leftrightarrow B_{x}\text{ is comeager in }%
R_{x}\Leftrightarrow R_{x}\setminus B_{x}\text{ is meager in }%
R_{x}\Leftrightarrow x\notin (R\setminus B)_{\Delta }\text{.}
\end{equation*}%
Hence $( R\setminus B) _{\Delta }=X\setminus B_{\ast }$.
In particular, $B_{\ast }$ is Borel if and only if $(R\setminus
B)_{\Delta }$ is Borel, and $B_{\Delta }$ is Borel if and only if $(R\setminus B)_{\ast}$ is Borel.
If $\{B_{n}:n\in\mathbb{N}\}$ is
a sequence of Borel subsets of $R$ and $B=\bigcap_{n\in\mathbb{N}}B_{n}$, then%
\begin{equation*}
B_{\ast }=\bigcap_{n\in\mathbb{N}}(B_{n})_{\ast }\text{.}
\end{equation*}%
Hence $B_{\ast }$ is Borel if $(B_{n})_{\ast }$ is Borel for
every $n\in\mathbb{N}$. Similarly, if $C=\bigcup_{n\in\mathbb{N}}B_{n}$, then%
\begin{equation*}
C_{\Delta}=\bigcup_{n\in\mathbb{N}}(B_{n})_{\Delta}\text{.}
\end{equation*}%
Hence $C_{\Delta}$ is Borel if $(B_{n})_{\Delta}$ is Borel
for every $n\in\mathbb{N}$.

\begin{lemma}
\label{Lemma:transform1}Adopt the notations above. Suppose that for every $%
V\in \mathcal{V}$ 
\begin{equation*}
R^{V}:=\left\{ x\in X:V\cap R_{x}\neq \varnothing \right\}
\end{equation*}%
is Borel. Then $B_{\ast }$ and $%
B_{\Delta }$ are Borel subsets of $X$ for each Borel set $B\subseteq R$.
\end{lemma}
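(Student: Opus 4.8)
The plan is to express both $B_{\ast}$ and $B_{\Delta}$ in terms of a single \emph{localized} comeagerness operation and then to verify that the Borel sets on which this operation is well-behaved form a $\sigma$-algebra containing the relatively open subsets of $R$. For each $V\in\mathcal{V}$ and each Borel $B\subseteq R$, set
\[
B_{\ast,V}:=\{x\in X: V\cap R_{x}\neq\varnothing\text{ and }B_{x}\cap V\text{ is comeager in }V\cap R_{x}\}.
\]
Since $R$ is closed, every section $R_{x}$ is a nonempty Polish (hence Baire) space, and every section $B_{x}$ of a Borel set has the Baire property in $R_{x}$. Allowing $V=Y$ recovers $B_{\ast}=B_{\ast,Y}$, while the localization of nonmeagerness afforded by the Baire property gives $B_{\Delta}=\bigcup_{V\in\mathcal{V}}B_{\ast,V}$: a section with the Baire property is nonmeager in $R_{x}$ exactly when it is comeager in some nonempty basic piece $V\cap R_{x}$. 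It therefore suffices to show that $B_{\ast,V}$ is Borel for every Borel $B$ and every $V$.

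First I would record that $R^{W}=\{x:W\cap R_{x}\neq\varnothing\}$ is Borel for \emph{every} open $W\subseteq Y$, since $W$ is a countable union of basic sets and $R^{W}$ is the corresponding union of the sets $R^{V'}$ supplied by hypothesis. Next, for relatively open $B=R\cap\bigcup_{m}(U_{m}\times V_{m})$ one checks that $B_{\ast,V}$ is Borel: here $B_{x}\cap V$ is relatively open in $V\cap R_{x}$, so comeagerness coincides with density, and density is the countable conjunction, over basic $V'\subseteq V$, of the Borel conditions ``$V'\cap R_{x}=\varnothing$ or $x\in\bigcup_{m}(U_{m}\cap R^{V_{m}\cap V'})$''. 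Thus every relatively open $B$ lies in the class
\[
\Gamma:=\{B\subseteq R\text{ Borel}:B_{\ast,V}\text{ is Borel for all }V\}.
\]

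It remains to see that $\Gamma$ is a $\sigma$-algebra. Closure under countable intersections is immediate, since comeagerness is preserved by countable intersections and hence $B_{\ast,V}=\bigcap_{n}(B_{n})_{\ast,V}$ whenever $B=\bigcap_{n}B_{n}$. Closure under complements is where the localization does its real work: a section $B_{x}\cap V$ with the Baire property is meager in $V\cap R_{x}$ if and only if it fails to be comeager in \emph{every} basic subpiece $V'\subseteq V$ meeting $R_{x}$, which yields
\[
(R\setminus B)_{\ast,V}=R^{V}\cap\bigcap_{V'\in\mathcal{V},\,V'\subseteq V}\big(X\setminus B_{\ast,V'}\big),
\]
a Borel set whenever $B\in\Gamma$. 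Complements together with intersections give countable unions, so $\Gamma$ is a $\sigma$-algebra; containing the relatively open sets, it contains all Borel subsets of $R$. Feeding this back through $B_{\ast}=B_{\ast,Y}$ and $B_{\Delta}=\bigcup_{V}B_{\ast,V}$ proves the lemma.

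I expect the crux to be the complementation step, and more precisely the need to \emph{localize} from the outset. The comeagerness quantifier does not commute with countable unions, so a naive transfinite induction on the Borel rank of $B$ stalls exactly at the $\Sigma^{0}_{\xi}$ stage; passing to the localized operation $B_{\ast,V}$ sidesteps this by converting a statement about the comeagerness of a union into the complementary meagerness statement, which \emph{does} localize across basic pieces. A minor technical point worth stating carefully is that the sets $V'\in\mathcal{V}$ with $V'\subseteq V$ form a basis for the subspace topology of $V\cap R_{x}$, which is what licenses testing density and meagerness on these pieces alone.
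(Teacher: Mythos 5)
Your proof is correct and is essentially the paper's argument in different packaging: both localize the category quantifiers over the countable basis $\mathcal{V}$ via the Baire property (with the hypothesis on the sets $R^{V}$ feeding the base case of open rectangles) and then propagate Borelness through complements and countable unions — yours by showing the good sets form a $\sigma$-algebra closed under the localized operation $B_{\ast,V}$, the paper's by transfinite induction on Borel rank using the dual pair $B_{\ast}$, $B_{\Delta}$ and the identity $(R\setminus B)_{\Delta}=X\setminus B_{\ast}$. The only point to tidy is that $Y$ need not belong to $\mathcal{V}$, so either adjoin it (harmless, since $R^{Y}=X$) or recover $B_{\ast}$ as $\bigcap_{V\in\mathcal{V}}\bigl((X\setminus R^{V})\cup B_{\ast,V}\bigr)$ rather than as $B_{\ast,Y}$.
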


\begin{proof}
Notice that if $B\subseteq R$ is Borel and $x\in X$, then%
\begin{eqnarray}
x \in B_{\ast }& \Leftrightarrow & \forall\, V\in \mathcal{V}\,(V\cap R_{x}\neq
\varnothing \Rightarrow B_{x}\cap V=(B\cap (X\times V))_{x}\text{ is
nonmeager in }R_{x})  \notag \\
&\Leftrightarrow &\forall\, V\in \mathcal{V}\,\left( x\in R^{V}\Rightarrow x\in
\left( B\cap (X\times V)\right) _{\Delta }\right) \text{.}
\end{eqnarray}%
Since $R^{V}$ is Borel for all $V\in \mathcal{V}$, it follows that $B_{\ast }$
is Borel whenever $( B\cap (X\times V)) _{\Delta }$ is Borel for
every $V\in \mathcal{V}$. Since $\left( R\setminus B\right) _{\Delta
}=X\setminus B_{\ast }$, this implies that $(R\setminus B)_{\Delta }$ is
Borel whenever $( B\cap (X\times V)) _{\Delta }$ is Borel for
every $V\in \mathcal{V}$.
By these observations, the lemma will follow if we prove by induction on $\alpha <\omega _{1}$ that if $B\in \boldsymbol{%
\Sigma }_{\alpha }^{0}$, then $B_{\Delta }$ is a Borel subset of $X$.

For the base case, suppose that $B=U\times V$ for some open set $U\subseteq
X$ and $V\subseteq Y$ such that $V\in \mathcal{V}$. Then
\begin{equation*}
B_{\Delta}=\left\{ x\in U:V\cap R_{x}\neq \varnothing \right\} =R^{V}\cap U
\end{equation*}%
is Borel by hypothesis. It follows that $B_{\Delta }$ is Borel for every open $B\subseteq R$.

For successor steps, suppose that our induction hypothesis holds for some $\alpha<\omega_1$. Then $(
B\cap (X\times V))_{\Delta }$ is Borel for every $V\in \mathcal{V}$
and $B\in \boldsymbol{\Sigma }_{\alpha }^{0}$. Hence $B_{\Delta }$ is Borel
for every $B\in \boldsymbol{\Pi }_{\alpha }^{0}$. If $C\in \boldsymbol{%
\Sigma }_{\alpha +1}^{0}$ then $C=\bigcup_{n}B_{n}$ for $B_{n}\in 
\boldsymbol{\Sigma }_{\alpha }^{0}$. Hence $C_{\Delta }$ is Borel.

Lastly, if $C\in \boldsymbol{\Sigma }_{\beta}^{0}$ for some limit ordinal $\beta$ below which our inductive hypothesis holds, then since $C=\bigcup_{n}B_{n}$ for some $B_{n}\in \boldsymbol{\Sigma }_{\alpha _{n}}^{0}$
with $\alpha_{n}<\beta$ for all $n\in \omega$, $C_{\Delta}$ is Borel.
This concludes the limit case of our argument, and with it the proof.
\end{proof}

\begin{proposition}
\label{Proposition:uniformize1} There exists a
Borel uniformization of any $R$ as in Lemma \ref{Lemma:transform1}; more precisely, there exists a Borel function $f:X\rightarrow Y$ such that $f(x) \in R_{x}$ for every $x\in X$.
\end{proposition}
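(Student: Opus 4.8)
The plan is to build the desired selector $f$ by a recursive construction that shrinks each closed section $R_x$ down to a single point along a Borel-measurable decreasing sequence of basic open sets, using the Borelness of the operators $B\mapsto B_{\ast},\,B_{\Delta}$ established in Lemma \ref{Lemma:transform1}. Fix a complete metric on $Y$ compatible with its topology and enumerate the countable basis as $\mathcal{V}=\{V_m:m\in\mathbb{N}\}$. Since each $R_x$ is a nonempty section of the closed set $R$, it is a nonempty closed subspace of $Y$, hence a nonempty Polish space and in particular a Baire space; thus $R_x$ is nonmeager in itself. The key local observation is that whenever $W$ is a basic open set with $W\cap R_x$ nonmeager in $R_x$ and $\varepsilon>0$ is given, there exists $m$ with $\overline{V_m}\subseteq W$, $\mathrm{diam}(V_m)<\varepsilon$, and $V_m\cap R_x$ nonmeager in $R_x$. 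Indeed, $W\cap R_x$ is a nonempty relatively open, hence Baire, subspace of $R_x$, and it is covered by the countably many sets $V_m\cap R_x$ with $\overline{V_m}\subseteq W$ and $\mathrm{diam}(V_m)<\varepsilon$; by the Baire category theorem at least one such $V_m\cap R_x$ is nonmeager in $W\cap R_x$, and therefore in $R_x$.

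Next I would turn this into a Borel recursion. Writing $(R\cap(X\times V_m))_{\Delta}=\{x:V_m\cap R_x\text{ is nonmeager in }R_x\}$, Lemma \ref{Lemma:transform1} guarantees that this set is Borel for every $m$. I then define index functions $g_k:X\to\mathbb{N}$ by letting $g_0(x)$ be the least $m$ with $\mathrm{diam}(V_m)<1$ and $x\in(R\cap(X\times V_m))_{\Delta}$, and, having defined $g_k$, letting $g_{k+1}(x)$ be the least $m$ with $\overline{V_m}\subseteq V_{g_k(x)}$, $\mathrm{diam}(V_m)<2^{-(k+1)}$, and $x\in(R\cap(X\times V_m))_{\Delta}$. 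The local observation above ensures these least indices always exist. Each $g_k$ is Borel: $g_0$ is a countable minimum of Borel conditions, and for the inductive step one partitions $X$ into the countably many Borel pieces $\{x:g_k(x)=j\}$ and on each piece reads $g_{k+1}$ off as the least $m$ satisfying the (now fixed, Borel in $x$) conditions; gluing over a countable Borel partition preserves Borelness.

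By construction $\overline{V_{g_{k+1}(x)}}\subseteq V_{g_k(x)}$ and $\mathrm{diam}(V_{g_k(x)})<2^{-k}$, so $\bigcap_k\overline{V_{g_k(x)}}$ is a single point, which I take to be $f(x)$; equivalently $f(x)=\lim_k z(g_k(x))$ for any fixed choice of a point $z(m)\in V_m$, exhibiting $f$ as a pointwise limit of Borel functions and hence Borel. Finally $f(x)\in R_x$: for each $k$ choose $y_k\in V_{g_k(x)}\cap R_x$ (nonempty since nonmeager); then $y_k$ and $f(x)$ both lie in $\overline{V_{g_k(x)}}$, of diameter $<2^{-k}$, so $y_k\to f(x)$, and $R_x$ is closed.

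I expect the main obstacle to lie in the bookkeeping of the recursive step rather than in any conceptual difficulty: namely, the Baire-category existence claim carried out in the relative topology of $R_x$, together with the verification that the nesting requirement $\overline{V_m}\subseteq V_{g_k(x)}$ does not destroy the Borelness of $g_{k+1}$. The essential content is exactly the Borelness of the category operators supplied by Lemma \ref{Lemma:transform1}. One could alternatively invoke the Kuratowski--Ryll-Nardzewski selection theorem, whose hypothesis is precisely that each $R^{V}$ be Borel, but the category construction is more in keeping with the machinery developed here.
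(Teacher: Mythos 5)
Your proof is correct, but it takes a genuinely different route from the paper's. The paper's proof is essentially a citation: it observes that Lemma \ref{Lemma:transform1} makes the map $B\mapsto\{x:B_x\text{ is meager in }R_x\}$ Borel-valued on Borel sets, and then invokes the large-section uniformization theorem (\cite[Theorem 18.6]{kechris_classical_1995}, in its corrected form) as a black box. You instead give a self-contained construction: a Borel recursion choosing nested basic open sets $V_{g_k(x)}$ with $\overline{V_{g_{k+1}(x)}}\subseteq V_{g_k(x)}$, vanishing diameters, and nonmeager trace on $R_x$, so that the unique point of $\bigcap_k\overline{V_{g_k(x)}}$ lands in the closed set $R_x$. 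This amounts to reproving the large-section theorem for the meager ideal in this special case; all the steps check out (the Baire-category existence claim in the relative topology of $R_x$, the Borelness of each $g_k$ via gluing over the countable partition $\{g_k=j\}$, and the final limit argument). What your approach buys is economy of hypotheses and transparency: you only ever apply the $\Delta$-operator of Lemma \ref{Lemma:transform1} to the relatively open sets $R\cap(X\times V_m)$ --- i.e.\ essentially the base case of that lemma's induction --- whereas the paper's route needs the full conclusion for arbitrary Borel $B$ in order to satisfy the hypothesis of the uniformization theorem. What the paper's route buys is brevity and alignment with the Kechris--Macdonald framework it is imitating. Your closing remark about Kuratowski--Ryll-Nardzewski is also apt: since $R^U=\bigcup\{R^V:V\in\mathcal{V},\,V\subseteq U\}$ is Borel for every open $U$ and the sections $R_x$ are closed and nonempty, that theorem would give the selector in one line as well; either alternative is legitimate.
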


\begin{proof}
Consider the map $x\mapsto I_{x}$ assigning to $x\in X$ the $\sigma $-ideal
of meager subsets of $X$. By Lemma \ref{Lemma:transform1}, if $B\subseteq R$
is Borel, then $\{ x\in X:B\cap R_{x}\in I_{x}\} $ is Borel.
Therefore the conclusion follows from the large section uniformization theorem \cite[Theorem 18.6]%
{kechris_classical_1995}, in the form appearing as Theorem 18.6$^*$ in \cite[p. 2]{Kechris_CDST_corrections} and
featuring in \cite[p. 8]{kechris_borel_2016} as well.
\end{proof}

We turn now more directly to the argument that the homotopy relation on $\mathsf{LC}((X,A),(P,*))$ is idealistic.
We will argue this from a series of lemmas in which the closed relation $R$ figuring in the definitions above will be the following set:
\begin{equation*}
R=\left\{ \left( f,\alpha \right) \in \mathsf{LCP}\left( \left( X,A\right) ,\left( P,\ast \right) \right)\times\mathsf{LCP}\left( \left(
X\times I,A\times I\right) ,\left( P,\ast \right) \right) 
:\alpha |_{X\times \left\{ 0\right\} }=f\right\} 
\end{equation*}%
In particular, for the remainder of this subsection, for any Borel $B\subseteq\mathsf{LCP}((X,A),( P,\ast))\times \mathsf{LCP}((X\times I,A\times I),( P,\ast))$, it will be in reference to this $R$ that the sets $B_{\ast}$ and $B_{\Delta}$ are defined.
We begin by proving two lemmas concerning compact pairs $(X,A)$.
\begin{lemma}
\label{Lemma:basis_W}
Suppose that $(X,A) $ is a compact pair and $(P,\ast)$ is a pointed polyhedron. Fix a countable open basis $\mathcal{V}$ for $X$ and let $(K,\ast)$ be a pointed simplicial complex such that $(P,\ast) =(|K|,\ast)$. Then $\mathsf{LCP}((X\times I,A\times
I),(P,\ast))$ has a countable basis consisting of open sets of the form%
\begin{equation}
\label{Equation:basis.c1}
W=\{ \alpha \in \mathsf{LCP}(( X\times I,A\times
I),(P,\ast)) :\forall i,j<\ell \;\alpha( 
\overline{U}_{i}\times L_{j}) \subseteq \mathrm{St}_{\beta
^{s}K}(p( i,j))\}
\end{equation}
where
\begin{itemize}
\item $s,\ell \in \mathbb{N}$,
\item $\beta^{s}K$ is the $s^{\mathrm{th}}$ barycentric subdivision of $K$,
\item $\mathcal{U}=(U_{i})_{i<\ell}$ is a finite
open cover of $X$ consisting of open sets from $\mathcal{V}$,
\item $\mathcal{L}%
=(L_{i}) _{i<\ell}$ is a finite cover of $\left[ 0,1\right] $
consisting of closed intervals $L_{j}=[a_{j},a_{j+1}]$ with rational endpoints
such that $0=a_{0}<a_{1}<\cdots <a_{\ell }=1$, and
\item $p$ is a function $\ell
\times \ell \rightarrow \mathrm{\mathrm{dom}}\left( \beta ^{s}K\right) $.
\end{itemize}  
\end{lemma}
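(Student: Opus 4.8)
The plan is to verify the three defining features of a countable basis in turn: that each such $W$ is open, that there are only countably many of them, and that they are cofinal below every open set, in the sense that every $\alpha_0$ lying in an open set $V$ lies in some $W\subseteq V$. The first two features are routine. Since $X$ is compact each $\overline{U}_i\times L_j$ is a compact subset of $X\times I$, and since $|\beta^s K|=|K|=P$ each open star $\mathrm{St}_{\beta^s K}(p(i,j))$ is an open subset of $P$; hence each of the finitely many conditions cutting out $W$ is subbasic for the compact-open topology, so $W$ is open (intersecting with the subspace $\mathsf{LCP}((X\times I,A\times I),(P,\ast))$ changes nothing). For countability, the data $(s,\ell)$ range over $\mathbb{N}\times\mathbb{N}$, the tuples $(U_i)_{i<\ell}$ over finite sequences from the countable basis $\mathcal{V}$, the partitions $0=a_0<\cdots<a_\ell=1$ over finite rational sequences, and, as $\mathrm{dom}(\beta^s K)$ is countable and $\ell\times\ell$ finite, the functions $p$ over a countable set; so the collection of all $W$ is countable.

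The substance is the basis property. Here I would fix $\alpha_0\in\mathsf{LCP}((X\times I,A\times I),(P,\ast))$ together with a basic compact-open neighbourhood $V=\bigcap_{m<M}\{\alpha:\alpha(C_m)\subseteq O_m\}$ of $\alpha_0$, with each $C_m\subseteq X\times I$ compact, each $O_m\subseteq P$ open, and $\alpha_0(C_m)\subseteq O_m$; and fix a compatible metric $d_P$ on $P$. The compact image $\alpha_0(X\times I)$ meets only finitely many simplices of the locally finite complex $K$, hence lies in a finite subcomplex, and for each $m$ compactness of $K_m:=\alpha_0(C_m)$ and openness of $O_m$ give an $\epsilon_m>0$ whose neighbourhood $N(K_m,\epsilon_m)\subseteq O_m$; set $\epsilon=\min_m\epsilon_m$.

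I would then select the two levels of fineness in a carefully ordered fashion. First choose $s$: since the mesh of the barycentric subdivisions tends to $0$ on the finite subcomplex carrying $\alpha_0(X\times I)$ (cf.\ Section \ref{SS:polyhedra} and \cite{spanier_algebraic_1995}), one may take $s$ so large that every open star $\mathrm{St}_{\beta^s K}(v)$ meeting $\alpha_0(X\times I)$ has $d_P$-diameter $<\epsilon$; let $\delta>0$ be a Lebesgue number for the star cover $\{\mathrm{St}_{\beta^s K}(v)\}$ of the compact set $\alpha_0(X\times I)$. Second, using uniform continuity of $\alpha_0$ on $X\times I$, choose a finite cover $(U_i)_{i<\ell}$ of $X$ by members of $\mathcal{V}$ and a partition into rational intervals $L_j=[a_j,a_{j+1}]$ fine enough that $\mathrm{diam}\,\alpha_0(\overline{U}_i\times L_j)<\delta$ for all $i,j$ (padding so that $\mathcal{U}$ and $\mathcal{L}$ share the common length $\ell$). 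By the choice of $\delta$ each $\alpha_0(\overline{U}_i\times L_j)$ then sits inside a single star $\mathrm{St}_{\beta^s K}(v_{ij})$, and setting $p(i,j)=v_{ij}$ makes $\alpha_0\in W$ by construction. To see $W\subseteq V$, take $\alpha\in W$ and $(x,t)\in C_m$; the boxes cover $X\times I$, so $(x,t)\in\overline{U}_i\times L_j$ for some $i,j$, whence $\alpha(x,t)\in\mathrm{St}_{\beta^s K}(p(i,j))$. As this box meets $C_m$ at $(x,t)$, its assigned star contains $\alpha_0(x,t)\in K_m$ and has diameter $<\epsilon$, so $\mathrm{St}_{\beta^s K}(p(i,j))\subseteq N(K_m,\epsilon)\subseteq O_m$ and thus $\alpha(x,t)\in O_m$; as $(x,t)$ was arbitrary, $\alpha(C_m)\subseteq O_m$ for every $m$, i.e.\ $\alpha\in V$.

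The main obstacle is exactly this two-stage coordination: the stars must be small enough to be trapped inside the $O_m$ (which forces $W\subseteq V$), yet the box cover must then be chosen fine enough \emph{relative to the Lebesgue number of that already-fixed star cover} that $\alpha_0$ sends each box into a single star (which forces $\alpha_0\in W$). Getting the quantifier order right — fix $\epsilon$, then $s$, then $\delta$, then the boxes — is the crux; the remaining points (local finiteness supplying a finite carrier subcomplex, mesh $\to 0$ for barycentric subdivisions, uniform continuity, and equalizing the lengths of $\mathcal{U}$ and $\mathcal{L}$) are standard.
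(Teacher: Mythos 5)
Your proof is correct and follows essentially the same route as the paper's: both reduce to showing that a given $\alpha_0$ lying in an open set $V$ lies in some $W\subseteq V$, choose $s$ so that the relevant open stars have small diameter, extract a Lebesgue number for the star cover of the compact image $\alpha_0(X\times I)$, and then take the box cover fine enough that $\alpha_0$ sends each $\overline{U}_i\times L_j$ into a single star. The only cosmetic difference is that the paper first replaces $V$ by a uniform $\delta$-ball around $\alpha_0$ (using compactness of $X\times I$ to pass from the compact-open to the uniform topology) and then only needs stars of diameter $<\delta$, whereas you work directly with the subbasic compact-open sets and $\epsilon$-fatten each $\alpha_0(C_m)$; the quantifier ordering is identical.
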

\begin{proof}
Fix a nonempty open subset $V$ of $\mathsf{LCP}((X\times
I,A\times I),(P,\ast))$ and $\alpha
_{0}\in V$. We prove that there exists an open set $W$ as above such that $\alpha_{0}\in W\subseteq V$.
Fix compatible metrics $d_{X}$ on $X$ and $d_{P}$ on $P$ and endow $X\times I$ with
the metric
\begin{equation*}
d((x,t),(x^{\prime },t^{\prime }))
=\max \{d_X( x,x^{\prime }) ,|t-t^{\prime
}| \} \text{.}
\end{equation*}

Since $(X,A)$ is a compact pair, there exists a $\delta>0$ such that $V$ contains%
\begin{equation*}
W'=\{ \alpha \in \mathsf{LCP}((X\times I,A\times
I),(P,\ast)):\forall z\in X\times I\;d_{P}(
\alpha(z) ,\alpha _{0}(z)) <\delta\} \text{,}
\end{equation*}%
and there exists a finite subcomplex $L\subseteq K$ such that $\alpha _{0}(X\times I)
\subseteq |L|$.
And since $L$ is a finite
simplicial complex, there exists an $s\in \omega$ such that every $\mathrm{St}_{\beta^{s}L}(v)$ has diameter less than $\delta$.

For every $z\in X\times I$ there exists an open set $E_{z}\subseteq X\times I$ such that $z\in E_{z}$ and $\alpha_{0}(\overline{E}_z)\subseteq \mathrm{St}_{\beta ^{s}L}(v_{z}) $ for some $v_{z}\in \beta^s(L)$. By
compactness, there exists an $\ell\geq 1$ such that every subset of $X\times I$
of diameter at most $1/\ell$ is contained in $E_z$ for some $z\in X\times
I$. Therefore let $\mathcal{U}=(U_i)_{i<\ell}$ be a finite open
cover of $X$ consisting of open sets from $\mathcal{V}$ of diameter less
than $1/\ell$, and set $a_{i}=i/\ell $ for $0\leq i\leq \ell $.
For each $i,j<\ell $ the set $\overline{U}_{i}\times [a_{j},a_{j+1}] $ has diameter at most $1/\ell $ and is contained in $E_{z(i,j)}$ for some $z(i,j) \in X\times I$, hence if we let $p(i,j)=v_{z(i,j)}$ then
\begin{equation*}
\alpha_{0}(\overline{U}_{i}\times [a_j,a_{j+1}]) \subseteq \alpha_0(\overline{E}_{z(i,j)})\subseteq \mathrm{St}_{\beta
^{s}L}(v_{z(i,j)}) \subseteq\mathrm{St}_{\beta
^{s}K}(p(i,j))\text{.}
\end{equation*}
\color{black}
These parameters define a $W$ as in equation \ref{Equation:basis.c1} with $\alpha _{0}\in W\subseteq W'\subseteq
V$.
\end{proof}

\begin{lemma}
\label{Lemma:transform-homotopy}Suppose that $(X,A)$
is a compact pair and $(P,\ast)$ is a pointed polyhedron and that $W$ is an open subset of $\mathsf{LCP}((X\times I,A\times I),(P,\ast))$.
Then the set
\begin{equation*}
R^W=\{g\in \mathsf{LCP}((X,A),(P,\ast)):\exists \alpha \in W\;\alpha |_{X\times \{
0\} }=g\}
\end{equation*}%
is Borel.
\end{lemma}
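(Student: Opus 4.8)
The plan is to prove the stronger statement that $R^W$ is \emph{open} (and hence Borel). Writing $r\colon \mathsf{LCP}((X\times I,A\times I),(P,\ast))\to \mathsf{LCP}((X,A),(P,\ast))$ for the continuous restriction map $\alpha\mapsto \alpha|_{X\times\{0\}}$, the set $R^W=r(W)$ is a priori only analytic, so some work is needed to improve this. Consider the set
\[
O:=\bigcap_{i<\ell}\{\,g\in \mathsf{LCP}((X,A),(P,\ast)):g(\overline{U}_i)\subseteq \mathrm{St}_{\beta^s K}(p(i,0))\,\}.
\]
Since each $\overline{U}_i$ is compact and each open star is open, $O$ is a finite intersection of subbasic compact--open sets and is therefore open. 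The easy inclusion $R^W\subseteq O$ comes for free: if $\alpha\in W$ and $\alpha|_{X\times\{0\}}=g$, then as $0\in L_0$ we have $\overline{U}_i\times\{0\}\subseteq \overline{U}_i\times L_0$, so $g(\overline{U}_i)=\alpha(\overline{U}_i\times\{0\})\subseteq\alpha(\overline{U}_i\times L_0)\subseteq\mathrm{St}_{\beta^s K}(p(i,0))$ for every $i<\ell$.

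The substance lies in the reverse inclusion, which I would establish under the hypothesis $R^W\neq\varnothing$. Fix a witness $g_0\in R^W$ together with some $\alpha_0\in W$ restricting to $g_0$; note $g_0\in O$ by the above. Given an arbitrary $g\in O$, I would construct $\alpha_g\in W$ with $\alpha_g|_{X\times\{0\}}=g$ by surgery on $\alpha_0$ confined to the first time slab. Let $\lambda$ be the open-star homotopy of Lemma \ref{L:OpenStarHomotopy} applied to the complex $\beta^s K$ (so $|\beta^s K|=P$), and recall from its construction that $\lambda(x,y,\cdot)$ passes through the normalized coordinatewise minimum $\mu(x,y)$, whence \emph{any barycentric coordinate positive at both $x$ and $y$ stays positive along the entire path $\lambda(x,y,\cdot)$}. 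Choosing any $\delta\in(0,a_1)$, define
\[
\alpha_g(x,t)=\begin{cases}\lambda\bigl(g(x),\alpha_0(x,t),t/\delta\bigr),& 0\le t\le \delta,\\[0.3ex]\alpha_0(x,t),&\delta\le t\le 1.\end{cases}
\]
Because $\{U_i\}_{i<\ell}$ covers $X$ and $g,g_0\in O$ while $\alpha_0\in W$, for each $x$ there is $i$ with $x\in U_i$, and then $g(x)$ and $\alpha_0(x,t)$ both lie in $\mathrm{St}_{\beta^s K}(p(i,0))$ for $t\le\delta\le a_1$; hence $(g(x),\alpha_0(x,t))$ lies in the domain $D$ of $\lambda$, so $\alpha_g$ is well defined and jointly continuous, with $\alpha_g|_{X\times\{0\}}=g$ and $\alpha_g(A\times I)=\{\ast\}$ (using $\lambda(\ast,\ast,\cdot)=\ast$).

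It remains to check $\alpha_g\in W$, i.e.\ that $\alpha_g(\overline{U}_i\times L_j)\subseteq\mathrm{St}_{\beta^s K}(p(i,j))$ for all $i,j$. For $j\ge 1$, and on the portion $[\delta,a_1]$ of $L_0$, we have $\alpha_g=\alpha_0\in W$, so these hold. On $\overline{U}_i\times[0,\delta]\subseteq\overline{U}_i\times L_0$, for $x\in\overline{U}_i$ the values $g(x)$ and $\alpha_0(x,t)$ both lie in $\mathrm{St}_{\beta^s K}(p(i,0))$, so the positivity property of $\lambda$ keeps the path $\lambda(g(x),\alpha_0(x,t),\cdot)$ inside $\mathrm{St}_{\beta^s K}(p(i,0))$. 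Thus $\alpha_g\in W$ and $g\in R^W$, giving $O\subseteq R^W$ and so $R^W=O$. As $R^W$ is therefore either $O$ or $\varnothing$, it is open, hence Borel.

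The main obstacle I anticipate is the verification that the surgered map $\alpha_g$ lands back in $W$. Two points make this work and must be handled with care: confining the entire deformation to the single slab $L_0$, so that the relevant star constraint $\mathrm{St}_{\beta^s K}(p(i,0))$ does not change while we interpolate; and exploiting the precise coordinatewise-minimum form of $\lambda$ to keep the interpolation inside the \emph{open} stars. This last point is the genuinely delicate one, since the stated properties (1)--(2) of Lemma \ref{L:OpenStarHomotopy} only place $\lambda(x,y,t)$ in the union of closed carriers $|\sigma|\cup|\tau|$; the needed strict positivity of the shared coordinate must be read off from the explicit construction of $\lambda$ in that lemma's proof.
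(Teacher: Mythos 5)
Your proof is correct and follows essentially the same strategy as the paper's: reduce to basic open $W$, identify $R^W$ with the open set of maps satisfying the $t=0$ star constraints (your $O$ is the paper's $S$), and establish the nontrivial inclusion by splicing an open-star interpolation built from the $\lambda$ of Lemma \ref{L:OpenStarHomotopy} onto a fixed $\alpha_0\in W$. The only difference is tactical: where the paper routes the first slab through the canonical nerve map $x\mapsto\sum_{i}\rho_i(x)e_{p(i,0)}$ (and must verify a simplex condition to pass from level-$0$ to level-$1$ stars), you interpolate directly between $g(x)$ and $\alpha_0(x,t)$ on a thin slab $[0,\delta]\subseteq L_0$, which sidesteps the partition of unity entirely; your remark that the required strict positivity of a shared barycentric coordinate along $\lambda$ must be read off from its explicit formula rather than from the stated properties of Lemma \ref{L:OpenStarHomotopy} is exactly right, and is the same property the paper's construction tacitly uses.
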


\begin{proof}
We retain the notation of Lemma \ref{Lemma:basis_W}. It will suffice to
argue the lemma when $W$ is as in equation \ref{Equation:basis.c1}. 
In this case we claim that $R^W$ equals the open set
\begin{equation*}
S=\{g\in \mathsf{LCP}((X,A),(P,\ast)):\forall i<\ell\; g( \overline{U}_i)
\subseteq \mathrm{St}_{\beta ^{s}K}^{P}(p(i,0))\}
\end{equation*}%
where, of course, the sets $U_i$ and function $p$ are those determining $W$.
The relation $R^W\subseteq S$ is clear, since $g(\overline{U}_i) \subseteq \mathrm{St}_{\beta^{s}K}(p(i,0))$ for every $g\in R_W$ and $i<\ell$, simply by the definition of $W$. For the reverse containment, suppose that $%
g_{0}\in \mathsf{LCP}((X,A),(P,\ast))$ satisfies $g_{0}(\overline{U}_i) \subseteq 
\mathrm{St}_{\beta ^{s}K}( p( i,0))$ for every $i<\ell $. Fix a partition of unity $\left( \rho _{i}\right)_{i\in\omega}$
subordinate to $\mathcal{U}$, and consider the function $g_{1/2}:(X,A) \rightarrow (P,\ast)$ defined by
\begin{equation*}
x\mapsto \sum_{i<\ell}\rho _{i}(x) e_{p(i,0)}\text{.}
\end{equation*}
Notice that $g_{1/2}^{-1}\left( \mathrm{St}_{\beta ^{s}K}(p(i,0)) \right) \subseteq U_{i}$ for every $i< \ell$, hence
\begin{equation*}
g_{0}\left( g_{1/2}^{-1}\left( \mathrm{St}_{\beta ^{s}K}( p(
i,0)) \right) \right) \subseteq g_{0}(U_i)
\subseteq \mathrm{St}_{\beta ^{s}K}(p(i,0))
\end{equation*}%
for every $i<\ell$ and we may define a homotopy $h_0:g_{0}\Rightarrow g_{1/2}:(X,A) \rightarrow (P,\ast)$ by
letting $\lambda$ and $D$ be as in Lemma \ref{L:OpenStarHomotopy} and letting
 \begin{equation*}
h_{0}(x,t) = \lambda( g_0(x), \sum_{i<d}\rho_{i}(x) e_{p(i,0)} ,t).
\end{equation*}%
 
  We now claim that if $\{ i_{0},\ldots
,i_{m}\} $ is a simplex in the nerve $N_{\mathcal{U}}$ of the cover $\mathcal{U}$ of $X$, then 
\begin{equation}\label{equation:beta-simplex}
\bigcup_{0\leq k\leq m}\{ p( i_{k},0) ,p( i_{k},1)\} 
\end{equation}%
is a simplex in $\beta^s K$. To see this, fix an $\alpha_0\in W$ and let $g_1=\alpha_0\vert_{X\times \{a_1\}}$, suppose that $\{ i_{0},\ldots
,i_{m}\} \in N_{\mathcal{U}}$, and hence that there exists an $x\in
U_{i_{0}}\cap \cdots \cap U_{i_{m}}$. We then have
\begin{equation*}
\alpha _{0}( x,a_{1}) \in \alpha _{0}( \overline{U}_{i_{k}}\times L_{0}) \cap \alpha _{0}( \overline{U}
_{i_{k}}\times L_{1}) \subseteq \mathrm{St}_{\beta^{s}K}(
p( i_{k},0)) \cap \mathrm{St}_{\beta ^{s}K}(
p( i_{k},1))\text{.}
\end{equation*}
for $0\leq k\leq m$; our claim follows immediately. 
In consequence, we may define a homotopy $
h_{1}:g_{1/2}\Rightarrow g_{1}:( X,A) \rightarrow
(P,\ast) $ by letting $\lambda$ and $D$ be as in Lemma \ref{L:OpenStarHomotopy} and letting
 \begin{equation*}
h_{1}(x,t) = \lambda( \sum_{i<d}\rho _{i}(x)  e_{p(i,0)}, \sum_{i<d}e_{p(i,1)},t).
\end{equation*}
To see that 
$h_{1}$ is well-defined, observe as before that if $x\in X$ and $\{ i_{0},\ldots ,i_{m}\} =\{i<\ell:\rho
_{i}(x) >0\} $, then $x\in U_{i_{0}}\cap
\cdots \cap U_{i_{m}}$ and hence the set described by expression \ref{equation:beta-simplex} above is again a simplex in $\beta^s K$. 

We now define an $\alpha \in W$ such that $\alpha |_{X\times \left\{
0\right\} }=g_{0}$ by setting%
\begin{equation*}
\alpha \left( x,t\right) =\left\{ 
\begin{array}{ll}
h_{0}(x,\frac{2t}{a_{1}}) & 0\leq t\leq \frac{a_{1}}{2}\text{;} \\ 
h_{1}(x,\frac{2t}{a_{1}}-1)) & \frac{a_{1}}{2}\leq t\leq a_{1}\text{;} \\ 
\alpha _{0}\left( x,t\right)  & a_{1}\leq t\leq 1\text{.}%
\end{array}%
\right. 
\end{equation*}
This concludes the proof.
\end{proof}
We now extend this analysis to the locally compact setting.

\begin{lemma}
\label{Lemma:transform-homotopy-locally-compact}If $(X,A)$ is a locally compact pair and $(P,\ast)$
is a pointed polyhedron and $W$ is an open subset of $\mathsf{LCP}((X\times I,A\times I),(P,\ast))$ then the set%
\begin{equation*}
R^W=\{ g\in \mathsf{LCP}((X,A),(P,\ast)):\exists \alpha \in W\;\alpha |_{X\times \{
0\} }=g\} 
\end{equation*}%
is Borel.
\end{lemma}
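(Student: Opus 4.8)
The plan is to reduce to the compact case, Lemma \ref{Lemma:transform-homotopy}, by exploiting the $\sigma$-compactness of $X$ together with Borsuk's Homotopy Extension Theorem (Theorem \ref{Theorem:definable-extension}). Fix an exhaustion $X=\bigcup_k X_k$ of $X$ by compact sets, chosen so that $X_k\subseteq\operatorname{int}(X_{k+1})$, and put $A_k:=A\cap X_k$, so that each $(X_k,A_k)$ is a compact pair. Since $\mathsf{LCP}((X\times I,A\times I),(P,\ast))$ is second countable, we may write $W=\bigcup_m W_m$ as a countable union of basic open sets, each $W_m$ a finite intersection of subbasic sets $\{\alpha:\alpha(K)\subseteq U\}$ with $K\subseteq X\times I$ compact and $U\subseteq P$ open. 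As $R^W=\bigcup_m R^{W_m}$, it suffices to treat the case in which $W$ is itself such a basic open set.

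So suppose $W=\bigcap_{i<r}\{\alpha:\alpha(K_i)\subseteq U_i\}$. Each $K_i$ projects to a compact subset of $X$, so (using $X_k\subseteq\operatorname{int}(X_{k+1})$ and compactness) we may fix a single $k$ with $K_i\subseteq X_k\times I$ for all $i<r$. Let $\mathrm{res}_k\colon\mathsf{LCP}((X\times I,A\times I),(P,\ast))\to\mathsf{LCP}((X_k\times I,A_k\times I),(P,\ast))$ be the continuous restriction map $\alpha\mapsto\alpha|_{X_k\times I}$, and set $W':=\{\beta:\beta(K_i)\subseteq U_i\text{ for all }i<r\}$, an open subset of the target. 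Because every $K_i\subseteq X_k\times I$, we have $\alpha\in W$ iff $\mathrm{res}_k(\alpha)\in W'$; that is, $W=\mathrm{res}_k^{-1}(W')$.

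The heart of the argument is the claim that for every $g\in\mathsf{LCP}((X,A),(P,\ast))$,
\[
g\in R^W \iff g|_{X_k}\in R^{W'},
\]
where $R^{W'}=\{g_0:\exists\,\beta\in W',\ \beta|_{X_k\times\{0\}}=g_0\}$ is the compact analogue for $(X_k,A_k)$. The forward implication is immediate: a witness $\alpha\in W$ with $\alpha(\cdot,0)=g$ restricts to $\mathrm{res}_k(\alpha)\in W'$ with $\mathrm{res}_k(\alpha)(\cdot,0)=g|_{X_k}$. For the converse, given $\beta\in W'$ with $\beta(\cdot,0)=g|_{X_k}$, set $B:=X_k\cup A$, a closed subset of $X$, and define a continuous $\phi\colon B\times I\to P$ by $\phi|_{X_k\times I}=\beta$ and $\phi|_{A\times I}\equiv\ast$; these agree on $A_k\times I$ since $\beta(A_k\times I)\subseteq\ast$, and $\phi(\cdot,0)$ agrees with $g$ on $B$. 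Applying Theorem \ref{Theorem:definable-extension} to the closed subspace $B\subseteq X$, the homotopy $\phi\colon B\times I\to P$, and the map $g\colon X\times\{0\}\to P$ yields an extension $\tilde\alpha\colon X\times I\to P$ with $\tilde\alpha|_{B\times I}=\phi$ and $\tilde\alpha(\cdot,0)=g$. Then $\tilde\alpha(A\times I)\subseteq\ast$ (as $A\subseteq B$) and $\tilde\alpha|_{X_k\times I}=\beta$, whence $\tilde\alpha(K_i)=\beta(K_i)\subseteq U_i$; thus $\tilde\alpha\in W$ witnesses $g\in R^W$.

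With the claim established we conclude: $R^{W'}$ is Borel by Lemma \ref{Lemma:transform-homotopy}, and the restriction map $\rho_k\colon g\mapsto g|_{X_k}$ from $\mathsf{LCP}((X,A),(P,\ast))$ to $\mathsf{LCP}((X_k,A_k),(P,\ast))$ is continuous, so $R^W=\rho_k^{-1}(R^{W'})$ is Borel, as desired. The main obstacle is the converse direction of the displayed claim—arranging the extension so that it simultaneously lies in $W$ and respects the basepoint condition; the key device is to bundle $X_k$ and $A$ into the single closed set $B=X_k\cup A$, so that Borsuk's theorem at once preserves the prescribed values on $X_k\times I$ and the constancy $\ast$ on $A\times I$.
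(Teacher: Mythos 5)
Your proof is correct and follows essentially the same route as the paper's: reduce to a basic open $W$, note that the compact sets $K_i$ defining it live over a compact subset of $X$, invoke the compact-case Lemma \ref{Lemma:transform-homotopy} there, and use the Homotopy Extension Theorem to show that membership in $R^W$ depends only on the restriction to that compact set. The only (harmless) difference is bookkeeping: the paper first collapses $A$ to a basepoint via $X/A$ (or $X_+$) and takes the compact set to be $\{\star\}\cup\mathrm{proj}_X(\bigcup_i K_i)$, whereas you keep the pair and preserve the condition $\alpha(A\times I)\subseteq\{\ast\}$ by applying the extension theorem to the closed set $B=X_k\cup A$.
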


\begin{proof}
Note first that by replacing $X$ with $X/A$ (if $A$ is nonempty) or
with the space $X_{+}$ obtained by adding to $X$ an additional basepoint (if 
$A$ is empty), we may assume that $A$ is a singleton $\{\star\}$.

As above, it will suffice to prove the statement for basic open subsets $W$ subset $\mathsf{LCP}((X,A),(P,\ast))$. Therefore we may assume that there exist compact subsets $K_{1},\ldots ,K_{n}\subseteq
X\times I$ and open subsets $U_{1},\ldots ,U_{n}\subseteq P$ such that
\begin{equation*}
W=\{\alpha \in \mathsf{LCP}((X,\star),(P,\ast)):\forall i<n\; \alpha(K_i) \subseteq U_i\} \text{.}
\end{equation*}
Let
\begin{equation*}
K=\{\star\} \cup \mathrm{proj}_X( K_0\cup \cdots \cup K_{n-1})
\subseteq X
\end{equation*}
and let
\begin{equation*}
W[K]=\{\alpha \in \mathsf{LCP}((K\times I,\{\star\}\times I),(P,\ast)):\forall i<n\; \alpha(K_i) \subseteq U_i\}\text{.} 
\end{equation*}
By Lemma \ref{Lemma:transform-homotopy}, the set 
$$R^{W[K]}=\{g\in \mathsf{LCP}((K,\star),(P,\ast)):\exists\alpha\in W[K]\;\alpha
|_{K\times\{0\} }=g\}$$
is Borel.
By the Homotopy Extension Theorem applied to $K\subseteq X$ and $P$,
\begin{equation*}
R^W=\{g\in \mathsf{LCP}((X,\star),(P,\ast)):g|_K\in R^{W[K]}\} 
\end{equation*}%
is a Borel subset of $\mathsf{LCP}((X,\star),(P,\ast))$. This concludes the proof.
\end{proof}

\begin{lemma}
\label{Lemma:transform-homotopy-locally-compact Borel}If $(X,A)$ is a locally compact pair and $(P,\ast)$
is a pointed polyhedron and $B$ is a Borel subset of $\mathsf{LCP}((X\times I,A\times I),(P,\ast))$, then the subsets $B_{\ast}$ and $B_{\Delta}$ of $\mathsf{LCP}((X,A),(P,\ast))$ are Borel as well.
\end{lemma}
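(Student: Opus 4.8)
The plan is to obtain this lemma as an essentially formal consequence of the two immediately preceding results: Lemma~\ref{Lemma:transform-homotopy-locally-compact} supplies precisely the hypothesis needed to invoke Lemma~\ref{Lemma:transform1}, applied to the fixed relation $R$ of this subsection, with the ``$X$'' and ``$Y$'' of that general framework instantiated as the map space $\mathsf{LCP}((X,A),(P,\ast))$ and the homotopy space $Y:=\mathsf{LCP}((X\times I,A\times I),(P,\ast))$, respectively. First I would verify that $R$ fits the standing hypotheses of the framework preceding Lemma~\ref{Lemma:transform1}, i.e.\ that it is a closed relation with nonempty sections. The restriction (equivalently, precomposition) map $\rho\colon \alpha\mapsto \alpha|_{X\times\{0\}}$ from $Y$ to $\mathsf{LCP}((X,A),(P,\ast))$ is continuous for the compact-open topologies, so $R=\{(f,\alpha):\rho(\alpha)=f\}$ is closed, being the (reflected) graph of $\rho$; and each section $R_f=\rho^{-1}(f)$ is nonempty, since the constant homotopy $\alpha(x,t)=f(x)$ lies in it (as $f(A)\subseteq\{\ast\}$ forces $\alpha(A\times I)\subseteq\{\ast\}$, this $\alpha$ is a genuine map of pairs). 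I would also note that $Y$ is Polish, being a $G_\delta$ subset of $\mathsf{LC}(X\times I,P)$ with $X\times I$ locally compact Polish, and hence possesses a countable basis $\mathcal{V}$.

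Next I would discharge the single hypothesis of Lemma~\ref{Lemma:transform1}, namely that for every $V\in\mathcal{V}$ the set $R^{V}=\{f:V\cap R_{f}\neq\varnothing\}$ is Borel. Unwinding the definitions,
\[
R^{V}=\{f\in\mathsf{LCP}((X,A),(P,\ast)):\exists\,\alpha\in V\ \ \alpha|_{X\times\{0\}}=f\},
\]
which is exactly the set ``$R^{W}$'' of Lemma~\ref{Lemma:transform-homotopy-locally-compact} with $W=V$; since $V$ is open, that lemma asserts precisely that this set is Borel. Thus the hypothesis of Lemma~\ref{Lemma:transform1} is met for the relation $R$.

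Finally, given a Borel $B\subseteq Y$, I would apply Lemma~\ref{Lemma:transform1} to the Borel subset $(\mathsf{LCP}((X,A),(P,\ast))\times B)\cap R$ of $R$; here I use that $R$ is closed, so this intersection is Borel, and that its $f$-section is exactly $R_{f}\cap B$, which matches the convention by which $B_{\ast}$ and $B_{\Delta}$ are defined relative to $R$. Lemma~\ref{Lemma:transform1} then yields that $B_{\ast}$ and $B_{\Delta}$ are Borel subsets of $\mathsf{LCP}((X,A),(P,\ast))$, as claimed. The entire substance of the argument is housed in Lemma~\ref{Lemma:transform-homotopy-locally-compact} (and, behind it, in the compact case Lemma~\ref{Lemma:transform-homotopy} together with the Homotopy Extension Theorem~\ref{Theorem:definable-extension}); once those are in hand, the present statement carries no genuine obstacle beyond carefully aligning the notations of the two lemmas.
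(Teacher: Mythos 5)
Your proposal is correct and follows exactly the paper's route: the paper's proof of this lemma is the one-line instruction to apply Lemma \ref{Lemma:transform1} to Lemma \ref{Lemma:transform-homotopy-locally-compact}, and your write-up simply makes explicit the verifications (closedness of $R$, nonemptiness of sections via the constant homotopy, identification of $R^{V}$ with the set $R^{W}$ of the preceding lemma, and the passage from $B\subseteq Y$ to the Borel subset $B\cap R$ of $R$) that the paper leaves implicit. No gaps.
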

\begin{proof}
Apply Lemma \ref{Lemma:transform1} to Lemma \ref{Lemma:transform-homotopy-locally-compact}.
\end{proof}

\begin{lemma}
\label{Lemma:invariant}Suppose that $(X,A) $ is a
locally compact pair and $P$ is a pointed polyhedron. If $S\subseteq X$ and 
\begin{equation*}
T=\left\{ \alpha \in \mathsf{LCP}\left( \left( X\times I,A\times
I\right) ,\left( P,\ast \right) \right) :\alpha |_{X\times \left\{ 1\right\}
}\in S\right\} 
\end{equation*}%
then the set $T_{\ast }\subseteq \mathsf{LCP}((X,A),(P,\ast))$ is homotopy-invariant.
\end{lemma}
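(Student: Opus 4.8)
The plan is to recognize the fibre $R_{f}=\{\alpha:\alpha|_{X\times\{0\}}=f\}$ as the space of paths issuing from $f$ in the Polish mapping space $M:=\mathsf{LCP}((X,A),(P,\ast))$, and to identify $T_{\ast}$ as the set of those $f$ for which a certain endpoint event is \emph{generic} along such paths; homotopy-invariance then becomes the assertion that this genericity depends only on the path-component of $f$ in $M$. Concretely, since $X$ and hence $X\times I$ is locally compact, the exponential law available in $\mathsf{LC}$ furnishes a homeomorphism $R_{f}\cong P_{f}(M):=\{u\in C(I,M):u(0)=f\}$ via $\alpha\leftrightarrow(t\mapsto\alpha(-,t))$, under which the restriction $\alpha\mapsto\alpha|_{X\times\{1\}}$ becomes the endpoint evaluation $\mathrm{ev}_{1}:P_{f}(M)\to M$. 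Writing the given set as $S\subseteq M$, one has $T\cap R_{f}=\mathrm{ev}_{1}^{-1}(S)$, so that $f\in T_{\ast}$ if and only if $\mathrm{ev}_{1}^{-1}(S)$ is comeagre in $P_{f}(M)$. Note also that a pair homotopy $f\simeq f'$ is, under the same adjunction, precisely a path in $M$ from $f$ to $f'$, so $f$ and $f'$ are homotopic exactly when they lie in a common path-component $[f]$ of $M$.

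First I would show that $\mathrm{ev}_{1}:P_{f}(M)\to M$ is a continuous open surjection onto the path-component $[f]$. Continuity and surjectivity onto $[f]$ are clear. For openness I would use that $M$ is locally contractible: any two sufficiently close maps $(X,A)\to(P,\ast)$ are joined by a short canonical homotopy built from the straightening map $\lambda$ of Lemma \ref{L:OpenStarHomotopy}, exactly as in the proof of Lemma \ref{Lemma:countable-homotopy}. Given a basic open $\mathcal{O}\ni u_{0}$ in $P_{f}(M)$, one may move the right endpoint freely within a neighbourhood of $u_{0}(1)$ in $[f]$ by appending such a short homotopy at the right end, which for small enough perturbations keeps the path inside $\mathcal{O}$; hence $\mathrm{ev}_{1}(\mathcal{O})$ is a neighbourhood of $u_{0}(1)$.

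Next I would invoke a Kuratowski--Ulam principle for open continuous surjections with Baire fibres: for such a map $g:Y\to Z$ and \emph{arbitrary} $A\subseteq Z$, $g^{-1}(A)$ is comeagre in $Y$ if and only if $A$ is comeagre in $Z$. The fibres $\mathrm{ev}_{1}^{-1}(h)=\{u:u(0)=f,\ u(1)=h\}$ are Polish, hence Baire. Applying this to $\mathrm{ev}_{1}:P_{f}(M)\to[f]$ yields that $\mathrm{ev}_{1}^{-1}(S)$ is comeagre in $P_{f}(M)$ if and only if $S\cap[f]$ is comeagre in $[f]$. The right-hand condition depends only on the path-component $[f]$, so membership of $f$ in $T_{\ast}$ is constant on homotopy classes; thus $T_{\ast}$ is a union of homotopy classes, i.e.\ homotopy-invariant.

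The main obstacle is this open-map Kuratowski--Ulam step (together with the openness just discussed). One direction is routine, since preimages of nowhere dense sets under open continuous maps are nowhere dense, whence comeagre sets pull back to comeagre sets. The reverse implication, for an \emph{arbitrary} and possibly non-Baire-measurable $S$, is the delicate point, and I would prove it by a fibrewise argument generalizing the product case: covering a putatively meagre preimage by countably many closed nowhere dense sets, applying the Baire category theorem in each fibre to locate, for every point of a non-meagre base set, a fibre-open piece contained in one of these closed sets, and then using the openness of $\mathrm{ev}_{1}$ to promote such fibre-open pieces to genuine open subsets of $P_{f}(M)$, contradicting nowhere density. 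This route avoids any appeal to the Baire property of $S$, which is essential since $S$ is completely unrestricted here.
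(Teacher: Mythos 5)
Your route is genuinely different from the paper's. You pass through the exponential law to view $R_{f}$ as the path space $P_{f}(M)$ of $M=\mathsf{LCP}((X,A),(P,\ast))$, reduce the lemma to the equivalence ``$\mathrm{ev}_{1}^{-1}(S)$ is comeagre in $P_{f}(M)$ iff $S\cap[f]$ is comeagre in $[f]$,'' and then read off homotopy-invariance because the right-hand side depends only on the path component. The paper never factors through $[f]$: it fixes the space $L$ of homotopies $f\Rightarrow g$, concatenates to get a map $L\times H_{g}\to H_{f}$, and applies Kuratowski--Ulam to the product. Your open-map Kuratowski--Ulam principle for an \emph{arbitrary} $S$ is, as stated, correct: the fibrewise covering argument you sketch does go through, using second countability of $P_{f}(M)$, the Baire category theorem in the fibres, and the Banach category theorem in the base. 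That is not where the problem lies.

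The gap is the openness of $\mathrm{ev}_{1}:P_{f}(M)\to[f]$. Your justification --- that $M$ is locally contractible, so that sufficiently close maps are joined by a short canonical homotopy built from Lemma \ref{L:OpenStarHomotopy} --- is precisely the mechanism of Lemma \ref{Lemma:countable-homotopy}, and it is valid only when $(X,A)$ is (homotopy equivalent to) a compact pair. For a general locally compact $X$, a basic compact-open neighbourhood of a map only constrains it on a compact set $K$; two homotopic maps that are star-close on $K$ need not admit \emph{any} homotopy between them that stays small on $K$, so no canonical short homotopy is available to ``move the right endpoint freely.'' Indeed the paper's own Proposition \ref{prop:topological_char_of_phantoms} shows that the closure of the path component $[\ast]$ is the set of phantom maps, which is strictly larger than $[\ast]$ in the central examples of the paper (e.g.\ $X=S^{3}\setminus\Sigma_{p}$, $P=S^{2}$); hence path components of $M$ are not open, $M$ is not locally path-connected, and the openness of $\mathrm{ev}_{1}$ onto $[f]$ cannot be obtained this way (a secondary issue is that $[f]$ is in general only an analytic subspace of $M$ and need not itself be a Baire space, so ``comeagre in $[f]$'' is delicate). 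Without openness the Kuratowski--Ulam step cannot be run and the asserted characterization of $T_{\ast}$ is unestablished. I would add that the difficulty you have isolated is not an artifact of your reformulation: the paper's own proof also rests on the assertion that the concatenation map $L\times H_{g}\to H_{f}$ is open, even though its image is the closed set $\{\gamma\in H_{f}:\gamma(-,1/2)=g\}$, so the category-transfer between $H_{f}$ and $H_{g}$ is the genuinely delicate point in either approach and deserves a direct argument rather than an appeal to openness into the ambient space.
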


\begin{proof}
Suppose that $f\in T_*$ and $f$ is homotopic to $g$. Let
\begin{equation*}
H_{f}=\left\{ \alpha \in \mathsf{LCP}\left( \left( X\times I,A\times I\right) ,\left( P,\ast \right) \right) :\alpha \left( -,0\right)
=f\right\} \text{,}
\end{equation*}%
\begin{equation*}
H_{g}=\left\{ \alpha \in \mathsf{LCP}\left( \left( X\times I,A\times I\right) ,\left( P,\ast \right) \right) :\alpha \left( -,0\right)
=g\right\} \text{, and }
\end{equation*}%
\begin{equation*}
L=\left\{ \alpha \in \mathsf{LC}\left( \left( X\times I,A\times
I\right) ,\left( P,\ast \right) \right) :\alpha \left( -,0\right) =f\text{
and }\alpha \left( -,1\right) =g\right\} .
\end{equation*}%
Notice that $\left\{ \alpha \in
H_{f}:\alpha \left( -,1\right) \in S\right\} $ is comeager in $H_{f}$, since $f\in T_*$.

For $\left( \alpha ,\beta \right) \in L\times H_{g}$ define $\alpha \ast
\beta \in H_{f}$ by setting%
\begin{equation*}
\left( \alpha \ast \beta \right) \left( t\right) =\left\{ 
\begin{array}{ll}
\alpha \left( 2t\right) & 0\leq t\leq 1/2\text{,} \\ 
\beta \left( 2t-1\right) & 1/2\leq t\leq 1\text{.}%
\end{array}%
\right.
\end{equation*}%
This defines a continuous and open function $L\times H_{g}\rightarrow
H_{f}$. Therefore
\begin{equation*}
\left\{ \left( \alpha ,\beta \right) \in L\times H_{g}:\left( \alpha \ast
\beta \right) |_{X\times \left\{ 1\right\} }\in S\right\} =\left\{ \left(
\alpha ,\beta \right) \in L\times H_{g}:\beta |_{X\times \left\{ 1\right\}
}\in S\right\}
\end{equation*}%
is comeager in $L\times H_{g}$. By the Kuratowski--Ulam theorem \cite[%
Theorem 8.41]{kechris_classical_1995}, this implies that%
\begin{equation*}
\left\{ \beta \in H_{g}:\beta \left( -,1\right) \in S\right\}
\end{equation*}%
is comeager in $H_{g}$, and hence that $g\in T_*$.
\end{proof}
We turn now to this section's main result.
\begin{theorem}
\label{Theorem:homotopy-idealistic}If $(X,A)$ is a locally compact pair and $(P,\ast)$ is a pointed
polyhedron then the relation of homotopy for maps $(X,A)\to(P,\ast)$ is idealistic.
\end{theorem}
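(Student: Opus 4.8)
The plan is to exhibit explicitly a $\sigma$-filter assignment and a Borel function $\zeta$ witnessing idealisticity, per Definition \ref{Definition:idealistic}, working on the Polish space $Y = \mathsf{LCP}((X,A),(P,*))$. The natural candidate for the $\sigma$-filter on a homotopy class $[f]$ is the \emph{category filter}: a subset $S$ of the class $[f]$ should lie in $\mathcal{F}_{[f]}$ precisely when $S$ is comeager in $[f]$, where $[f]$ carries the subspace topology inherited from $Y$. For this to make sense and for $\mathcal{F}_{[f]}$ to be a genuine $\sigma$-filter of nonempty sets, one needs each homotopy class to be a nonempty space in which the Baire category theorem applies and meager-versus-comeager is a well-behaved dichotomy; the homotopy classes are the path-components of $Y$, and these are analytic but perhaps neither open nor closed in general, so some care is required to localize the category notions to each class.

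First I would set up the machinery already assembled in this subsection around the closed relation $R = \{(f,\alpha) : \alpha|_{X\times\{0\}} = f\}$, which encodes ``$\alpha$ is a homotopy starting at $f$.'' The sections $R_f$ are the spaces $H_f$ of homotopies emanating from $f$, and these are nonempty (the constant homotopy exists) and Polish. The central technical inputs are Lemma \ref{Lemma:transform-homotopy-locally-compact Borel}, giving that $B_*$ and $B_\Delta$ are Borel for every Borel $B$, and Lemma \ref{Lemma:invariant}, giving that the relevant comeager-section sets are homotopy-invariant. The idea is to \emph{transfer} the category filter from the fibers $R_f$ of homotopies to the homotopy classes themselves via the endpoint evaluation $\alpha \mapsto \alpha|_{X\times\{1\}}$: a subset $S$ of the class $[f]$ is declared ``large'' exactly when the set of homotopies in $H_f$ landing in $S$ at time $1$ is comeager in $H_f$. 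Concretely I would define, for $S \subseteq [f]$,
\[
S \in \mathcal{F}_{[f]} \iff \{\alpha \in H_f : \alpha|_{X\times\{1\}} \in S\} \text{ is comeager in } H_f,
\]
and then verify the $\sigma$-filter axioms (closure under supersets is immediate; closure under countable intersections follows from the corresponding fact for comeager sets in the Polish space $H_f$). Lemma \ref{Lemma:invariant} is precisely what guarantees that this notion of largeness does not depend on which representative $f$ of the class we chose, so the assignment $[f] \mapsto \mathcal{F}_{[f]}$ is well-defined on classes.

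For the Borel function $\zeta : Y \to Y$ with $f \mathrel{\simeq} \zeta(f)$, I would apply the Borel uniformization of Proposition \ref{Proposition:uniformize1} to the relation $R$ — whose fiberwise-category hypotheses are supplied by Lemma \ref{Lemma:transform-homotopy-locally-compact Borel} — to obtain a Borel map $f \mapsto \alpha_f \in H_f$ selecting a homotopy out of each $f$, and then set $\zeta(f) = \alpha_f|_{X\times\{1\}}$, the endpoint of the selected homotopy; this is Borel and manifestly homotopic to $f$. The remaining requirement is that for each Borel $A \subseteq Y \times Y$ the set $\{f : \mathcal{F}_{[f]} f' \, (\zeta(f), f') \in A\}$ is Borel. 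Unwinding the definitions, this set is exactly of the form $B_*$ for an appropriate Borel $B$ built from $A$ and the Borel data $f \mapsto \alpha_f$ — one pulls $A$ back along the endpoint evaluation to a Borel subset of $R$ — whence its Borelness is delivered directly by Lemma \ref{Lemma:transform-homotopy-locally-compact Borel}.

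The main obstacle, and where the genuine labor of the construction lies, is establishing \emph{homotopy-invariance} of the filter, i.e.\ that $\mathcal{F}_{[f]}$ depends only on the class $[f]$ and not on the representative: this is the content funneled through Lemma \ref{Lemma:invariant}, whose proof rests on a delicate concatenation argument ($\alpha * \beta$) together with the Kuratowski--Ulam theorem to pass comeagerness across the product $L \times H_g$. A secondary subtlety is the mismatch noted in the remark after Definition \ref{Definition:idealistic}: because $\zeta$ is not the identity, the filter condition must be phrased relative to the \emph{selected} endpoint $\zeta(f)$ rather than $f$ itself, and this is exactly the generalization of idealisticity the authors introduced; so I would be careful to check that the two-parameter set $\{f : \mathcal{F}_{[f]} f' \, (\zeta(f), f') \in A\}$ matches the $B_*$ formalism with the correct ``base point'' $\zeta(f)$ in each fiber, which is why Proposition \ref{Proposition:uniformize1} and Lemma \ref{Lemma:invariant} were both stated in a form already adapted to a varying selection $\zeta$.
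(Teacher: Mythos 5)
Your proposal is correct and follows essentially the same route as the paper: the filter on $[f]$ is the pushforward under endpoint evaluation of the category filter on the fiber $R_f$ of homotopies starting at $f$, well-definedness on classes is Lemma \ref{Lemma:invariant}, and Borelness of $E_{\mathcal{F}}$ is exactly Lemma \ref{Lemma:transform-homotopy-locally-compact Borel} applied to the Borel set $B=\{(f,\alpha)\in R:(f,\alpha|_{X\times\{1\}})\in A\}$. The one place you overcomplicate matters is $\zeta$: the paper simply takes $\zeta=\mathrm{id}$ (which trivially is Borel and satisfies $f\simeq\zeta(f)$), so the uniformization of Proposition \ref{Proposition:uniformize1} is not needed here, and the extra generality of Definition \ref{Definition:idealistic} permitting $\zeta\neq\mathrm{id}$ is exploited elsewhere (in Lemma \ref{Lemma:invariance}), not in this proof.
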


\begin{proof}
For any map $f:(X,A)\to(P,\ast)$, let $[f] \in [(X,A),(P,\ast)]$ denote its homotopy class.
We define a $\sigma $-filter $\mathcal{F}_{[f]}$ of subsets of $[f]$ by letting $S\in \mathcal{F}_{[f]}$ if and only if 
\begin{equation*}
\{\alpha\in\mathsf{LC}((X\times I,A\times I),(P,\ast)):\alpha |_{X\times \{1\}}\in
S\} 
\end{equation*}
is comeager in 
\begin{equation*}
R_f=\{\alpha\in\mathsf{LC}(X\times
I,A\times I),(P,\ast)):\alpha |_{X\times\{0\}}=f\} .
\end{equation*}
By Lemma \ref{Lemma:invariant}, the definition of $\mathcal{F}_{[f]}$ does not depend on the choice of representative $f$ of the homotopy class $[f]$.

Suppose now that $E\subseteq\mathsf{LC}((X,A),P,\ast)) \times \mathsf{LC}((X,A),(P,\ast))$ is Borel.
In the notation of Definition \ref{Definition:idealistic}, our task is to show that, for $\zeta=\mathrm{id}$ and the filters $\mathcal{F}_{[f]}$ described above, the set $E_{\mathcal{F}}$ is Borel.
This set unpacks as follows:
\begin{eqnarray*}
E_{\mathcal{F}} &=&\{ f\in \mathsf{LC}(X,A),(P,\ast)):\mathcal{F}_{[f]
}g,(f,g) \in E\}  \\
&=&\{ f\in \mathsf{LC}(X,A),(P,\ast)):\{ \alpha \in R_{f}:(f,\alpha |_{X\times\{1\}})\in E\} \text{ is comeager in }R_{f}\}  \\
&=&B_{\ast }
\end{eqnarray*}
where $B\subseteq R$ is the Borel set
\begin{equation*}
\{(f,\alpha) \in R:(f,\alpha |_{X\times\{
1\}})\in E\} \text{.}
\end{equation*}%
By Lemma \ref{Lemma:transform-homotopy-locally-compact Borel}, $E_{\mathcal{F}}$ is Borel, as desired, showing that the assignment $[f]\mapsto \mathcal{F}_{[f]}$ indeed witnesses that the relation of homotopy for maps $(X,A)\to(P,\ast)$ is idealistic.
\end{proof}
We now deduce, from a sequence of minor variations on the lemmas just recorded, a selection principle which we will want in Section \ref{S:HomotopyClassification}.

For any compact pair $(X,A)$ and pointed polyhedron and $(P,*)$ let $\mathsf{LCP}_{0}((X,A),(P,*))$ denote the subspace of $\mathsf{LCP}((X,A),(P,*))$ consisting of maps which are homotopic to the constant map $\ast:(X,A)\to (P,*)$. By Lemma \ref{Lemma:countable-homotopy}, $\mathsf{LCP}_{0}((X,A),(P,*))$ is open in $\mathsf{LCP}((X,A),(P,*))$. Let $\mathcal{Z}((X,A),(P,*))$ denote the space of nullhomotopies; more precisely, let $\mathcal{Z}((X,A),(P,*))$ be the closed subset of $\mathsf{LCP}((X\times I,A\times I),(P,\ast))$ consisting of those $\alpha$ for which $\alpha |_{X\times\{1\}}=\ast$.
As the proof of the following lemma is essentially identical to that of Lemma \ref{Lemma:basis_W}, we omit it.
\begin{lemma}
\label{Lemma:basis_W.2}
Suppose that $(X,A) $ is a compact pair and $(P,\ast)$ is a pointed polyhedron. Fix a countable open basis $\mathcal{V}$ for $X$ and let $(K,\ast)$ be a pointed simplicial complex such that $(P,\ast) =(|K|,\ast)$. Then $\mathcal{Z}((X,A),(P,*))$ has a countable basis consisting of open sets of the form%
\begin{equation}
\label{Equation:basis.c2}
W=\{ \alpha \in\mathcal{Z}((X,A),(P,*)):\forall i,j<\ell \;\alpha( 
\overline{U}_{i}\times L_{j}) \subseteq \mathrm{St}_{\beta
^{s}K}(p( i,j))\}
\end{equation}
where
\begin{itemize}
\item $s,\ell \in \mathbb{N}$,
\item $\beta^{s}K$ is the $s^{\mathrm{th}}$ barycentric subdivision of $K$,
\item $\mathcal{U}=(U_{i})_{i<\ell}$ is a finite
open cover of $X$ consisting of open sets from $\mathcal{V}$,
\item $\mathcal{L}%
=(L_{i}) _{i<\ell}$ is a finite cover of $\left[ 0,1\right] $
consisting of closed intervals $L_{j}=[a_{j},a_{j+1}]$ with rational endpoints
such that $0=a_{0}<a_{1}<\cdots <a_{\ell }=1$, and
\item $p$ is a function $\ell
\times \ell \rightarrow \mathrm{\mathrm{dom}}\left( \beta ^{s}K\right) $.
\end{itemize}  
\end{lemma}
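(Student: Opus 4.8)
The plan is to deduce the lemma almost immediately from Lemma \ref{Lemma:basis_W}, since the argument is genuinely just a matter of passing to a subspace. First I would observe that $\mathcal{Z}((X,A),(P,*))$ is, by its very definition, a subspace (indeed a closed subset) of the Polish space $\mathsf{LCP}((X\times I,A\times I),(P,\ast))$, namely the locus of those $\alpha$ with $\alpha|_{X\times\{1\}}=\ast$. I would then recall the elementary fact that if $\mathcal{B}$ is a countable basis for a space $Y$ and $Z\subseteq Y$, then $\{B\cap Z:B\in\mathcal{B}\}$ is a countable basis for the subspace topology on $Z$.

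The key step is a direct comparison of the two families of sets. Fix any set $W$ of the form appearing in equation \ref{Equation:basis.c1}, determined by parameters $s,\ell$, a finite cover $\mathcal{U}=(U_i)_{i<\ell}$, a subdivision of $I$ into the intervals $L_j$, and a function $p$. Intersecting $W$ with $\mathcal{Z}((X,A),(P,*))$ yields precisely the set
\[
\{\alpha\in\mathcal{Z}((X,A),(P,*)):\forall i,j<\ell\;\alpha(\overline{U}_i\times L_j)\subseteq\mathrm{St}_{\beta^{s}K}(p(i,j))\},
\]
which is exactly a set of the form \ref{Equation:basis.c2}: the defining conditions are word-for-word identical, and only the ambient space named in the membership clause has changed from $\mathsf{LCP}((X\times I,A\times I),(P,\ast))$ to $\mathcal{Z}((X,A),(P,*))$. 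Conversely, every set of the form \ref{Equation:basis.c2} arises in this way as the trace of a set of the form \ref{Equation:basis.c1}.

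Combining these observations completes the argument: by Lemma \ref{Lemma:basis_W} the sets of the form \ref{Equation:basis.c1} constitute a countable basis for $\mathsf{LCP}((X\times I,A\times I),(P,\ast))$, so their traces on $\mathcal{Z}((X,A),(P,*))$ --- which are exactly the sets of the form \ref{Equation:basis.c2} --- constitute a countable basis for $\mathcal{Z}((X,A),(P,*))$. There is accordingly no genuine obstacle here; the full content of the lemma is already carried by Lemma \ref{Lemma:basis_W}, which is precisely why (as the text indicates) the proof may safely be omitted. Should one instead prefer to rerun the approximation argument of Lemma \ref{Lemma:basis_W} verbatim --- fixing $\alpha_0$ in a given open $V\subseteq\mathcal{Z}((X,A),(P,*))$, using compactness of $(X,A)$ to extract $s$, $\ell$, the cover $\mathcal{U}$, and the map $p$, and thereby producing a set $W$ with $\alpha_0\in W\subseteq V$ --- then the only modification needed is to read every membership clause as restricted to $\mathcal{Z}((X,A),(P,*))$, a change which affects none of the diameter or star-containment estimates.
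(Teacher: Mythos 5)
Your proposal is correct. The paper omits the proof of Lemma \ref{Lemma:basis_W.2} with the remark that it is ``essentially identical'' to that of Lemma \ref{Lemma:basis_W}, i.e., the intended argument is to rerun the approximation/compactness argument with every membership clause restricted to $\mathcal{Z}((X,A),(P,*))$ --- which is exactly your fallback option, and it does go through unchanged since none of the diameter or star-containment estimates involve the condition $\alpha|_{X\times\{1\}}=\ast$. Your primary route is slightly different and cleaner: since $\mathcal{Z}((X,A),(P,*))$ is by definition a (closed) subspace of $\mathsf{LCP}((X\times I,A\times I),(P,\ast))$, and since each set of the form \ref{Equation:basis.c2} is literally the trace on $\mathcal{Z}((X,A),(P,*))$ of the set of the form \ref{Equation:basis.c1} with the same parameters (and conversely), the lemma follows from Lemma \ref{Lemma:basis_W} together with the elementary fact that traces of a basis form a basis for the subspace topology. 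This uses Lemma \ref{Lemma:basis_W} as a black box rather than its proof, which is what it buys you: no re-verification of the approximation step. The only cosmetic point is that some traces may be empty, which is harmless. Either route is fine.
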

We also have the analogue of Lemma \ref{Lemma:transform-homotopy} (and its proof).
\begin{lemma}
\label{Lemma:transform-homotopy.2}Suppose that $(X,A)$
is a compact pair and $(P,\ast)$ is a pointed polyhedron and that $W$ is an open subset of $\mathcal{Z}((X,A),(P,*))$.
Then the set
\begin{equation*}
R^W=\{g\in \mathsf{LCP}_0((X,A),(P,\ast)):\exists \alpha \in W\;\alpha |_{X\times \{
0\} }=g\}
\end{equation*}%
is Borel.
\end{lemma}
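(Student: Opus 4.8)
The plan is to follow the proof of Lemma \ref{Lemma:transform-homotopy} almost verbatim, tracking only the single extra constraint imposed by passing from $\mathsf{LCP}((X\times I,A\times I),(P,\ast))$ to its closed subspace $\mathcal{Z}((X,A),(P,*))$. First I would reduce to the case in which $W$ is one of the basic open sets furnished by Lemma \ref{Lemma:basis_W.2}, i.e.\ $W$ is determined by data $s,\ell,\mathcal{U}=(U_i)_{i<\ell},\mathcal{L},p$ as in equation \ref{Equation:basis.c2}. This reduction is legitimate because any open $W\subseteq\mathcal{Z}((X,A),(P,*))$ is a countable union $\bigcup_n W_n$ of such basic sets, and $R^{\bigcup_n W_n}=\bigcup_n R^{W_n}$; so it suffices to show that each $R^{W_n}$ is Borel. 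The case $W=\varnothing$ is trivial, since then $R^W=\varnothing$.

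For a nonempty basic $W$ I would prove, exactly as in Lemma \ref{Lemma:transform-homotopy}, that $R^W$ coincides with the open set
\[
S=\{g\in\mathsf{LCP}((X,A),(P,\ast)):\forall i<\ell\;g(\overline{U}_i)\subseteq\mathrm{St}_{\beta^s K}(p(i,0))\}.
\]
The inclusion $R^W\subseteq S$ is immediate from the definition of $W$. For $S\subseteq R^W$ I would reuse the construction from Lemma \ref{Lemma:transform-homotopy} unchanged: starting from $g_0\in S$, fix an arbitrary $\alpha_0\in W$, form the intermediate map $g_{1/2}(x)=\sum_{i<\ell}\rho_i(x)e_{p(i,0)}$ via a partition of unity $(\rho_i)$ subordinate to $\mathcal{U}$, build the homotopies $h_0:g_0\Rightarrow g_{1/2}$ and $h_1:g_{1/2}\Rightarrow g_1$ (where $g_1=\alpha_0|_{X\times\{a_1\}}$) using the map $\lambda$ of Lemma \ref{L:OpenStarHomotopy} together with the simplex condition of equation \ref{equation:beta-simplex}, and splice $h_0,h_1$ with $\alpha_0|_{X\times[a_1,1]}$ into a single $\alpha$ satisfying $\alpha|_{X\times\{0\}}=g_0$ and lying in the ambient basic open set of $\mathsf{LCP}((X\times I,A\times I),(P,\ast))$.

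The one point at which the present lemma departs from its predecessor — and the only step requiring any thought — is verifying that the spliced map $\alpha$ still lies in $\mathcal{Z}((X,A),(P,*))$, i.e.\ is a genuine nullhomotopy, so that $\alpha\in W$ and not merely in the surrounding basic open set. This is immediate: by construction $\alpha$ agrees with $\alpha_0$ on $X\times[a_1,1]$, and since $\alpha_0\in W\subseteq\mathcal{Z}((X,A),(P,*))$ we have $\alpha_0|_{X\times\{1\}}=\ast$, whence $\alpha|_{X\times\{1\}}=\ast$ as well; thus $\alpha\in\mathcal{Z}((X,A),(P,*))$ and therefore $\alpha\in W$, giving $g_0\in R^W$. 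Note that this same argument exhibits $\alpha$ as a nullhomotopy of $g_0$, so it incidentally shows $S\subseteq\mathsf{LCP}_0((X,A),(P,\ast))$, in accordance with the constraint $R^W\subseteq\mathsf{LCP}_0((X,A),(P,\ast))$ built into the statement. Since $S$ is open, $R^W$ is open and hence Borel; taking the countable union over the basic sets comprising a general $W$ completes the argument. I expect no genuine obstacle here — the entire content beyond Lemma \ref{Lemma:transform-homotopy} is the one-line observation that splicing against a nullhomotopy again yields a nullhomotopy.
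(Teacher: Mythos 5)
Your proposal is correct and is essentially the paper's own argument: the paper gives no separate proof, simply noting that the lemma and its proof are the analogues of Lemma \ref{Lemma:transform-homotopy}, which is exactly the adaptation you carry out. Your one added observation — that the spliced $\alpha$ inherits $\alpha|_{X\times\{1\}}=\ast$ from $\alpha_0$ and hence lands in $\mathcal{Z}((X,A),(P,*))$, which simultaneously shows $S\subseteq\mathsf{LCP}_0((X,A),(P,\ast))$ — is precisely the point that needs checking, and you check it correctly.
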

\begin{proposition}
\label{Proposition:select-homotopy2}If $(X,A)$ is a compact pair and $(P,*)$ is a pointed polyhedron then there exists a Borel function $\Phi:\mathsf{LCP}_{0}((X,A),(P,*))\to \mathcal{Z}((X,A),(P,*))$, $g\mapsto \Phi(g)$ such that $\Phi(g)$ is a
homotopy $g\Rightarrow \ast:(X,A)\to(P,\ast)$.
\end{proposition}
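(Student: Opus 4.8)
The plan is to realize $\Phi$ as a Borel uniformization of a suitable closed relation, obtained by invoking the selection principle of Proposition~\ref{Proposition:uniformize1}. Write $G := \mathsf{LCP}_{0}((X,A),(P,*))$ and $Z := \mathcal{Z}((X,A),(P,*))$; both are Polish spaces, the former as an open subset of $\mathsf{LCP}((X,A),(P,*))$ (openness by Lemma~\ref{Lemma:countable-homotopy}) and the latter as a closed subset of $\mathsf{LCP}((X\times I,A\times I),(P,*))$. I would then consider the relation
\[
R = \{(g,\alpha)\in G\times Z : \alpha|_{X\times\{0\}} = g\}.
\]
Since the restriction map $\alpha\mapsto\alpha|_{X\times\{0\}}$ is continuous, $R$ is the preimage of a diagonal under a continuous map and hence closed in $G\times Z$. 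Moreover, for each $g\in G$ the section $R_g$ is precisely the set of nullhomotopies $g\Rightarrow *$, which is nonempty by the very definition of $G=\mathsf{LCP}_0$. Thus $R$ is a closed relation with nonempty sections, exactly as in the setup preceding Lemma~\ref{Lemma:transform1}.

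The one hypothesis that remains to be checked before applying Proposition~\ref{Proposition:uniformize1} is that $R^{V}=\{g\in G: V\cap R_g\neq\varnothing\}$ is Borel for every member $V$ of a countable basis of $Z$. Here the substantive work has already been carried out upstream: Lemma~\ref{Lemma:basis_W.2} supplies such a basis consisting of the explicit open sets of the form (\ref{Equation:basis.c2}), and for any such basic $V$, unwinding the definitions gives $R^{V}=\{g\in G : \exists\,\alpha\in V,\ \alpha|_{X\times\{0\}}=g\}$, which is literally the set proved Borel in Lemma~\ref{Lemma:transform-homotopy.2}. This places $R$ within the hypotheses of Lemma~\ref{Lemma:transform1}, and therefore within those of Proposition~\ref{Proposition:uniformize1}.

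With this verified, Proposition~\ref{Proposition:uniformize1} produces a Borel function $\Phi: G\to Z$ with $\Phi(g)\in R_g$ for every $g\in G$. By construction $\Phi(g)\in Z$, so $\Phi(g)|_{X\times\{1\}}=*$, while $\Phi(g)\in R_g$ forces $\Phi(g)|_{X\times\{0\}}=g$; hence $\Phi(g)$ is a homotopy $g\Rightarrow *:(X,A)\to(P,*)$, which is exactly what is claimed.

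The genuine difficulty of this result does not lie in the present argument but in the lemmas it rests upon — chiefly Lemma~\ref{Lemma:transform-homotopy.2}, whose proof (modeled on that of Lemma~\ref{Lemma:transform-homotopy}) must concretely assemble a nullhomotopy out of barycentric subdivisions and subordinate partitions of unity in order to identify $R^{V}$ with an explicitly open set. Granting those lemmas, the only care needed here is bookkeeping: confirming that $R$ is closed with nonempty sections, and that the sets $R^{V}$ demanded by the uniformization hypothesis coincide verbatim with the sets $R^{W}$ of Lemma~\ref{Lemma:transform-homotopy.2}, so that Proposition~\ref{Proposition:uniformize1} applies directly.
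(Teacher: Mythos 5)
Your proposal is correct and matches the paper's argument: the paper proves this proposition precisely by citing Lemma~\ref{Lemma:transform-homotopy.2} together with Proposition~\ref{Proposition:uniformize1}, and your write-up simply makes explicit the bookkeeping (closedness of $R$, nonemptiness of sections, identification of the sets $R^V$ with those of Lemma~\ref{Lemma:transform-homotopy.2}) that the paper leaves as "immediate."
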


\begin{proof}
This is an immediate consequence of Lemma \ref{Lemma:transform-homotopy.2} and
Proposition \ref{Proposition:uniformize1}.
\end{proof}
We apply the following corollary in the proof of Theorem \ref{Theorem:phantom1}.
\begin{corollary}
\label{Corollary:select-homotopy}Let $(Y,B)$ a locally compact pair homotopy equivalent to the compact pair $(X,A)$ and let $(P,\ast)$ be a pointed polyhedron.
Then there exists a Borel function $\Phi _{Y}:\mathsf{LCP}_{0}((Y,B),(P,*))\to \mathcal{Z}((Y,B),(P,*))$, $g\mapsto \Phi_Y(g)$ such that $\Phi_Y(g)$ is a
homotopy $g\Rightarrow \ast:(Y,B)\to(P,\ast)$.
\end{corollary}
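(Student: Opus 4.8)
The plan is to transport the Borel selection of nullhomotopies available on the compact pair $(X,A)$ via Proposition \ref{Proposition:select-homotopy2} across a fixed homotopy equivalence to $(Y,B)$; the point is that Proposition \ref{Proposition:select-homotopy2} applies only when the source is compact, so for the merely locally compact $(Y,B)$ we must route through $(X,A)$. Since $f:(Y,B)\to(X,A)$ is a homotopy equivalence in $\mathsf{LCP}$, I would fix a homotopy inverse $e:(X,A)\to(Y,B)$ together with a pair homotopy $H:(Y\times I,B\times I)\to(Y,B)$ satisfying $H(-,0)=\mathrm{id}_Y$ and $H(-,1)=e\circ f$, and let $\Phi$ be the Borel map of Proposition \ref{Proposition:select-homotopy2} for $(X,A)$.

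First I would note that for $g\in\mathsf{LCP}_0((Y,B),(P,*))$ the composite $g\circ e$ lies in $\mathsf{LCP}_0((X,A),(P,*))$, since $g\simeq *$ forces $g\circ e\simeq *\circ e=*$; thus $\Phi(g\circ e)$ is defined and is a pair homotopy from $g\circ e$ to $*$. I then build $\Phi_Y(g)$ as the concatenation of two pair homotopies. The first is $g\circ H$, which runs from $g$ to $g\circ e\circ f$ and is a pair homotopy because $H(B\times I)\subseteq B$ and $g(B)\subseteq\{*\}$. The second is $\alpha(y,t):=\Phi(g\circ e)(f(y),t)$, which runs from $g\circ e\circ f$ to $*$ and is a pair homotopy because $f(B)\subseteq A$ and $\Phi(g\circ e)(A\times I)\subseteq\{*\}$. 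These agree at the junction, as $(g\circ H)(y,1)=g(e(f(y)))=\Phi(g\circ e)(f(y),0)=\alpha(y,0)$, so
\[
\Phi_Y(g)(y,t):=
\begin{cases}
(g\circ H)(y,2t) & 0\le t\le \tfrac12,\\
\alpha(y,2t-1) & \tfrac12\le t\le 1,
\end{cases}
\]
is a well-defined continuous pair homotopy with $\Phi_Y(g)(-,0)=g$ and $\Phi_Y(g)(-,1)=*$. Hence $\Phi_Y(g)\in\mathcal{Z}((Y,B),(P,*))$ is a nullhomotopy of $g$, exactly as required.

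It remains to verify that $g\mapsto\Phi_Y(g)$ is Borel, which I expect to be the only point requiring care. Because $\mathsf{LCP}$ is a Polish category, composition is continuous, so $g\mapsto g\circ e$ and $g\mapsto g\circ H$ are continuous, as is precomposition of a homotopy by the fixed map $f\times\mathrm{id}_I:(Y\times I,B\times I)\to(X\times I,A\times I)$. Composing the continuous map $g\mapsto g\circ e$ with the Borel map $\Phi$ shows that $g\mapsto\Phi(g\circ e)$, and hence $g\mapsto\alpha$, is Borel; and concatenation of composable pair homotopies is continuous on the relevant subspace, so $\Phi_Y$, being built from these Borel ingredients, is itself Borel. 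The main obstacle is thus not conceptual but rather this bookkeeping of Borel measurability together with the verification that each constituent homotopy respects the pair structure; the homotopy-theoretic content reduces to the transfer $g\mapsto g\circ e$ and to the identity $g\circ e\circ f\simeq g$ supplied by the chosen homotopy equivalence.
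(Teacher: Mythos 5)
Your proposal is correct and follows essentially the same route as the paper's proof: fix a homotopy equivalence and its inverse, pull $g$ back to the compact pair, apply the Borel selection of Proposition \ref{Proposition:select-homotopy2} there, push the resulting nullhomotopy forward along the inverse map, and concatenate with the homotopy from $g$ to the round-trip composite. The only difference is notational (your $e$ and $f$ play the roles of the paper's $h$ and $k$) and that you spell out the Borel-measurability and pair-structure checks that the paper leaves implicit.
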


\begin{proof}
By Proposition \ref{Proposition:select-homotopy2}, there exists a Borel
function $\Phi_X:\mathsf{LCP}_{0}((X,A),(P,*))\to \mathcal{Z}((X,A),(P,*))$, $g\mapsto \Phi_X(g)$ such that $\Phi_X(g)$ is a
homotopy $g\Rightarrow \ast:(X,A)\to(P,\ast)$.
Fix a homotopy equivalence $h:(X,A)
\rightarrow (Y,B)$, with homotopy inverse $k:(Y,B)\to (X,A)$.
Let also $\alpha :\mathrm{id}_{(Y,B)}\Rightarrow h\circ k:(Y,B) \rightarrow (Y,B)$ be a homotopy.

For any $g\in \mathsf{LCP}_{0}((Y,B),(P,*))$, $\Phi _{X}(
g\circ h)$ is a homotopy $g\circ h\Rightarrow \ast :(X,A) \rightarrow (P,\ast)$; hence $\Phi
_{X}(g\circ h) \circ(k\times \mathrm{id}_{I}) $ is
a homotopy $g\circ h\circ k\Rightarrow \ast :(Y,B)
\rightarrow (P,\ast)$. Define then $\Phi _{Y}(g)$
to be the composition of the homotopy $g\circ \alpha :g\Rightarrow g\circ
h\circ k$ and the homotopy $\Phi _{X}( g\circ h) \circ (
k\times \mathrm{id}_{I}) :g\circ h\circ k\Rightarrow \ast$.
\end{proof}

\section{Definable cohomology: the homotopical approach}\label{S:Huber}
We showed in the previous section that $[X,P]$ is idealistic whenever $P$ is a polyhedron. If $P$ carries, in addition, an $H$-group structure, then $[X,P]$ is naturally regarded as a semidefinable group; if the homotopy relation on $\mathsf{LC}(X,P)$ is, moreover, Borel, then by our previous results we will have succeeded in showing that $[X,P]$ is a \emph{definable} group. We show exactly this in Theorem \ref{Theorem:phantom-H-group} below.

We show in the present section that when $P$ is an Eilenberg-MacLane space of type $(G,n)$, we can do even better. We adopt the standard abuse of denoting such a $P$ (which is only well-defined up to homotopy equivalence) by $K(G,n)$. By a classical theorem of Huber's, $[X,K(G,n)]$ is naturally isomorphic to the \v{C}ech cohomology group $\Check{\mathrm{H}}^n(X;G)$, and in this section we show that this isomorphism is in fact definable. This carries several immediate and pleasant consequences, namely: (1) $[X,K(G,n)]$ is essentially (i.e., is definably isomorphic to) a \emph{group with a Polish cover}; (2) up to definable isomorphism, $\Check{\mathrm{H}}_{\mathrm{def}}^n(X;G)$ is homotopy invariant and does not depend on the choice of covering system $\boldsymbol{\mathcal{U}}$; (3) $\Check{\mathrm{H}}_{\mathrm{def}}^n$ is a \emph{contravariant functor} from $\mathsf{LC}$ (or more generally $\mathsf{LCP}$) to $\mathsf{GPC}$. The main ingredient of the proof of the definable version of Huber's theorem is a definable version of the simplicial approximation theorem; see Lemma \ref{L:DefSimplApprox}.

In the remainder of the paper, we will argue our results in whichever of the settings $\mathsf{LC}$, $\mathsf{LC}_{*}$, or $\mathsf{LCP}$ seems most representative or most encompassing, depending on the context, often only sketching their extension to any of the others. We note, looking ahead, that the hazards of such an approach are largely allayed by our results in Section \ref{SS:definable_relation_based_and_unbased}, which help to definably mediate between these settings.

\subsection{Preliminaries: $H$-groups, $H$-cogroups, and $K(G,n)$ spaces}
\label{SS:H-groups}

Before proceeding, we recall some of the basic materials and operations of homotopy theory, in particular those which bear on the group and degree structures of $[-,-]$ and cohomology functors, respectively. The natural and standard context for these operations is the pointed setting (as in \cite{arkowitz_introduction_2011, spanier_algebraic_1995, may_concise_1999, switzer_algebraic_2002}), any of which may be taken as a reference for this subsection); our initial framework, accordingly, will be $\mathsf{LC}_{*}$, although mild generalizations of these structures and operations will arise as we proceed.

The $\mathsf{LC}_*$-analogue of the well-known suspension operation $SX$ on unbased spaces is the \emph{reduced suspension} operation $$\Sigma:\,\mathsf{LC}_*\to\mathsf{LC}_*:\,(X,\ast)\mapsto \frac{X\times I}{X\times\{0\}\cup X\times\{1\}\cup \{*\}\times I},$$
with the $\Sigma$-image of the basepoint as the basepoint of $\Sigma(X,\ast)$. The \emph{sum} or \emph{wedge} $(X\vee Y,\ast)$ of $(X,\star)$ and $(Y,\hexstar)$ in $\mathsf{LC}_*$ simply identifies the basepoints in the spaces' disjoint union. We will occasionally elide notation of basepoints in $\mathsf{LC}_*$; the \emph{smash product} $X\wedge Y$ of two pointed spaces, for example, is the quotient of $X\times Y$ by the canonical copy of $X\vee Y$ therein. Note that $\Sigma X\cong X\wedge S^1$, where $S^1$ is the basepointed sphere of dimension one. 

An \emph{$H$-group} (also called a \emph{grouplike space}) is a group object in the
category $\mathsf{Ho}(\mathsf{LC}_{\ast })$ (although it will occasionally be convenient to make this definition in $\mathsf{Ho}(\mathsf{Top}_{*})$). 
Thus an $H$-group is a pointed locally compact Polish space $X$ endowed with a map $\mu:( X\wedge X,\ast ) \rightarrow ( X,\ast
) $ (\emph{multiplication}) such that:
\begin{enumerate}
\item the maps $\left( X,\ast \right) \rightarrow \left( X,\ast \right) $, $%
x\mapsto \mu( x,\ast) $ and $x\mapsto ( \ast ,x) $ are
homotopic to the identity map $1_{X}$ of $X$ (a \emph{homotopy identity});

\item the maps $\mu\circ \left( \mu\wedge 1_{X}\right) $ and $\mu\circ \left(
1_{X}\wedge \mu\right) $, each taking  $\left( X\wedge X\wedge
X,\ast \right) $ to $(X,*)$, are homotopic (\emph{homotopy associativity});

\item there exists a map $z:\left( X,\ast \right) \rightarrow \left( X,\ast
\right) $ such that the maps $( X,\ast) \rightarrow (
X,\ast) $, $x\mapsto \mu( x,z( x)) $ and $%
x\mapsto \mu( z( x) ,x) $ are nullhomotopic (a \emph{%
homotopy inverse}).
\end{enumerate}
\begin{example}
\label{ex:h-group}
Any locally compact Polish group is an $H$-group, with its neutral element as basepoint. Less trivial examples of $H$-group structures are given by the spaces $\Omega X:=\mathsf{LC}_{*}((S^1,\ast),(X,\star))$ for $X$ in $\mathsf{LC}_{*}$ (note, however, that $\Omega X$ may itself fail to be locally compact);
the multiplication operation is given by concatenation of maps in a manner subsumed (since $S^1=\Sigma S^0$) by Example \ref{Ex:suspensions} below.
\end{example}
The $H$-group $X$ is \emph{abelian} (or \emph{homotopy commutative}) if the maps $\mu$ and $\mu\circ \sigma $, where $%
\sigma :\left( X\wedge X,\ast \right) \rightarrow \left( X,\ast \right) $, $%
\left( x,y\right) \mapsto \left( y,x\right) $ is the ``flip,'' are homotopic. 
If $(X,\ast,\mu)$ is an $H$-group and $(Y,B)$ is
locally compact pair, then $[(Y,B),(X,\ast)]$ is a semidefinable group
with respect to the operation defined by setting $[f]\cdot [g] =[\mu\circ( f\wedge g)]$; here the
identity element of $[(Y,B),(X,\ast)]$ is represented by the constant map, which we also denote by $*$.

Dually, an \emph{$H$-cogroup} is a cogroup in the category $\mathsf{Ho}(\mathsf{LC}_{\ast })$ \cite[Section III.6]{mac_lane_categories_1998}. Explicitly, an $H$-cogroup is a
pointed locally compact Polish space $X$ endowed with a continuous map $\nu
:X\rightarrow X\vee X$ (\emph{comultiplication}) such that:

\begin{enumerate}
\item the maps $\left( X,\ast \right) \rightarrow \left( X,\ast \right) $
given by $\left( \ast \vee 1_{X}\right) \circ \nu $ and $\left( 1_{X}\vee
\ast \right) \circ \nu $ are homotopic to $1_{X}$;

\item the maps $\left( X,\ast \right) \rightarrow \left( X\vee X\vee X,\ast
\right) $ defined by $\left( 1_{X}\vee \nu \right) \circ \nu $ and $\left(
\nu \vee 1_{X}\right) \circ \nu $ are homotopic,

\item there exists a map $\zeta :\left( X,\ast \right) \rightarrow \left(
X,\ast \right) $ such that the maps $\left( X,\ast \right) \rightarrow
\left( X,\ast \right) $ defined by $\left( 1_{X}\vee \zeta \right) \circ \nu 
$ and $\left( \zeta \vee 1_{X}\right) \circ \nu $ are homotopic to $\ast $.
\end{enumerate}

\begin{example}
\label{Ex:suspensions}
Main examples of $H$-cogroups are given by suspensions of spaces.
Writing $\langle x,t\rangle$ for the image of $(x,t)\in X\times I$ in $\Sigma X$, we have a comultiplication operation
\begin{equation*}
\nu (\langle x,t\rangle) =\left\{ 
\begin{array}{ll}
( \langle x,2t\rangle ,\ast) & 0\leq t\leq 1/2 \\ 
( \ast ,\langle x,2t-1\rangle) & 1/2\leq t\leq 1%
\end{array}%
\right.
\end{equation*}%
(for ease of notation, we identify $X\vee X$ with its canonical copy in $X\times X$). 
The homotopy inverse $\zeta :\Sigma X\rightarrow \Sigma X$ is defined by $\langle x,t\rangle \mapsto
\langle x,1-t\rangle$.
\end{example}

The cogroup $\left( X,\ast ,\nu \right) $ is \emph{homotopy commutative} if the maps $%
X\rightarrow X\vee X$ defined by $\nu $ and $\sigma \circ \nu $ are
homotopic.
As above, for an $H$-cogroup $\left( X,\nu \right)$ and a pointed
space $Y$, the operation $[f]\cdot[g] =[(f\vee g)\circ\nu] $ defines a semidefinable group
structure on $[(X,\ast),(Y,\star)]$; again the identity element is represented by the constant map.

A natural question when the first and second arguments of $[(X,\ast),(Y,\star)]$ carry $H$-cogroup and $H$-group structures, respectively, is how the induced group structures on $[(X,\ast),(Y,\star)]$ relate. This question has a pleasant answer even when $(Y,\star)$ is merely an \emph{$H$-space}. Briefly put, an $H$%
-space is a \emph{unital magma }in the homotopy category of pointed spaces 
\cite[Definition 2.2.1]{arkowitz_introduction_2011}. More explicitly, an $H$-space is a pointed locally compact Polish space $X$ endowed with a map $\mu:X\wedge X\rightarrow X$ such that the maps $\left( X,\ast \right)
\rightarrow \left( X,\ast \right) $, $x\mapsto\mu( x,\ast) $ and $%
x\mapsto \mu( \ast ,x) $ are homotopic to $1_{X}$; its ``group operation'' $\mu$, in other words, is free to violate items (2) and (3) of the definition of an $H$-group.
The following lemma appears as \cite[Proposition
2.2.12]{arkowitz_introduction_2011}.

\begin{lemma}
\label{Lemma:H-space}Let $X$ be an $H$-cogroup and let $Y$ be an $H$-space. Then the group operation on $[(X,\ast),(Y,\star)]$ induced by the $H$-cogroup structure on $X$ coincides with the operation on $[(X,\ast),(Y,\star)]$ induced by the $H$-space structure
on $Y$ and is, moreover, commutative.
\end{lemma}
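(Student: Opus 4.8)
The plan is to run the classical Eckmann--Hilton argument. Write $S:=[(X,\ast),(Y,\star)]$ for the underlying set, and denote by $a+_c b$ the operation induced by the $H$-cogroup comultiplication $\nu\colon X\to X\vee X$ (so $a+_c b=[(f,g)\circ\nu]$ when $a=[f],b=[g]$, where $(f,g)\colon X\vee X\to Y$ is the map determined on the two wedge summands by $f$ and $g$), and by $a+_m b$ the operation induced by the multiplication $\mu$ on $Y$ (concretely $a+_m b=[\mu\circ(f,g)\circ\Delta_X]$, with $\Delta_X\colon X\to X\times X$ the diagonal). Both operations carry the class $1:=[\ast]$ of the constant map as a two-sided unit: for $+_c$ this is the counit axiom (1) in the definition of an $H$-cogroup, and for $+_m$ it is the unit axiom in the definition of an $H$-space. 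First I would record these facts and fix this notation.

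The heart of the matter is the \emph{interchange} (middle-four-exchange) law
\[
(a+_c b)+_m(c+_c d)=(a+_m c)+_c(b+_m d),\qquad a,b,c,d\in S.
\]
To establish it I would represent both sides by a single explicit map $X\to Y$ and compare. Choosing representatives $f,g,h,k$ for $a,b,c,d$, each side is the class of a composite that first uses $\nu$ and the diagonals to route $X$ into a fourfold construction, then applies $f,g,h,k$ coordinatewise into $Y$, and finally contracts with $\mu$; the two groupings differ only in the order in which the canonical comparison maps among $X\vee X$, $X\times X$ and among $Y\vee Y$, $Y\times Y$ are inserted. I would make this precise by exhibiting a common ``corner'' map $X\to Y$ factoring through $(X\vee X)\times(X\vee X)$ and checking that the resulting square of natural basepoint-preserving maps commutes up to pointed homotopy. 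This commutation uses precisely the coherence supplied by the $H$-cogroup and $H$-space axioms --- co-associativity together with the unit coherences --- rather than the bare existence of $\nu$ and $\mu$; it is the step where the hypotheses are genuinely consumed.

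Granting the interchange law, the conclusion is purely formal. Since both units equal $1$, decomposing via the units and applying interchange gives
\[
a+_m b=(a+_c 1)+_m(1+_c b)=(a+_m 1)+_c(1+_m b)=a+_c b,
\]
so the two operations agree; and
\[
a+_m b=(1+_c a)+_m(b+_c 1)=(1+_m b)+_c(a+_m 1)=b+_c a=b+_m a,
\]
so the common operation is commutative. Associativity and the remaining group axioms are then inherited from $+_c$, which is already a genuine group operation on $S$ because $X$ is a cogroup object in $\mathsf{Ho}(\mathsf{LC}_\ast)$.

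I expect the interchange law to be the only real obstacle: all of the subtlety lies in tracking basepoints through the natural identifications relating the iterated wedges and products and verifying that each identification is a \emph{pointed} homotopy equivalence, so that the middle-four-exchange survives passage to homotopy classes. As this is the standard Eckmann--Hilton verification, recorded as \cite[Proposition 2.2.12]{arkowitz_introduction_2011}, I would either carry out this diagram chase explicitly or simply cite it, the remaining steps being the formal manipulations above.
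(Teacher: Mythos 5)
Your proposal is correct: it is the standard Eckmann--Hilton argument (units plus the interchange law), which is precisely the content of \cite[Proposition 2.2.12]{arkowitz_introduction_2011} that the paper cites in lieu of a proof, and which you also identify as the source. The only quibble is that the interchange law here holds essentially by naturality of the representatives alone rather than by co-associativity, but this does not affect the argument.
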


We turn now to a central focus of this section, the Eilenberg-MacLane spaces first introduced in \cite{eilenberg_relations_1945}. These are specified up to homotopy equivalence by the following definition:
\begin{definition} For any abelian group $G$ and $n\geq 1$, an \emph{Eilenberg-MacLane space of type $(G,n)$} is a pointed topological space $Y$ satisfying the following condition:
\begin{equation*}
\pi_i(Y,*)=
    \begin{cases}
        \; G & \text{if } i=n \\
        \{0\} & \text{if } i\neq n
    \end{cases}
\end{equation*}
\end{definition}
See this subsection's initial references for multiple approaches to such spaces' construction. More immediately relevant to our concerns are the following three main points:
\begin{itemize}
\item For any $n\geq 1$ and countable abelian group $G$, there exists a (countable, locally finite) \emph{polyhedral} Eilenberg-MacLane space of type $(G,n)$, by any of the aforementioned constructions, together with \cite{milnor_spaces_1959}.
\item For any Eilenberg-MacLane spaces $Y,Z$ of type $(G,n)$ and $(G,n+1)$, respectively, we have $Y\simeq\Omega Z$ (this follows from the uniqueness of Eilenberg-MacLane spaces up to homotopy equivalence, together with the adjointness of the $\Sigma$ and $\Omega$ operations). It is easy to see that homotopy equivalences preserve $H$-group structures, and that the $H$-group operations on any $\Omega Z$ referenced in Example \ref{ex:h-group} are continuous, hence any Eilenberg-MacLane space $Y$ carries a natural $H$-group structure with continuous $H$-group operations.
\item It will be convenient for our purposes to take any discrete group $G$ \emph{itself} to be an Eilenberg-MacLane space of type $(G,0)$. Note that this extends the $\Omega$ equation cited just above; moreover, as noted in \citep{huber_homotopical_1961}, this convention extends the homotopical representation of the \v{C}ech cohomology groups to the degree zero.
\end{itemize}
\subsection{A definable simplicial approximation theorem}
For simplicity, we'll conduct the discussion of the next two subsections in the category $\mathsf{LC}$, touching on its generalization at their conclusion. Note that an $H$-group structure on $P$ will continue in this context to translate to one on $[X,P]$, definably so, as is clear either from inspection or Corollary \ref{cor:based_hspace}.
Therefore fix a locally compact Polish space $X$ and a covering system $\boldsymbol{\mathcal{U}}=\big{(}(X_n),(\mathcal{U}_{\alpha}),(r^{\beta}_{\alpha})\big{)}$ for $X$. Recall that by $N_{\alpha}$ we denote the simplicial complex $\mathrm{Nv}(\mathcal{U}_{\alpha})$. 
Fix also a countable locally finite  simplicial complex $K$ and assume that $\mathrm{dom}(K)=\mathbb{N}$. 

We begin by defining the Polish space $\mathrm{SA}_{\boldsymbol{\mathcal{U}}}(X,K)$ of all {\em simplicial approximations of functions from $X$ to $K$}. First, let $S:=\bigcup_{\alpha\in \mathcal{N}^*} \mathcal{U}_{\alpha}$ be the collection of all open sets contained in any open cover of our covering system. By Lemma \ref{L:combinatorialapproximations} this set is countable, and without loss of generality we may assume that $\emptyset\not\in S$. We endow the set $\mathrm{dom}(K)^S$ of all functions from $S$ to $\mathrm{dom}(K)$ with the product topology, rendering it a Polish space. For every
 $p\in \mathrm{dom}(K)^S$ and every  $\alpha\in \mathcal{N}^{*}$ we denote by $p|_{\alpha}$ the restriction of $p$ to the set $\mathcal{U}_{\alpha}$.
We define the set 
\[\mathrm{SA}_{\boldsymbol{\mathcal{U}}}(X,K)\subseteq \mathcal{N}^{*}\times \mathrm{dom}(K)^S,\]
to consist of all pairs $(\alpha,p)$ for which $p|_{\alpha}$ is a simplicial map from $N_{\alpha}$  to $K$ and $p(U)=0$ if $U\not\in \mathcal{U}_{\alpha}$.
As a closed subset of the Polish space $\mathcal{N}^{*}\times \mathrm{dom}(K)^S$, the space $\mathrm{SA}_{\boldsymbol{\mathcal{U}}}(X,K)$  is Polish. It is also clearly in bijective correspondence with the set  all simplicial maps of the form $N_{\alpha}\to K$ from some complex of the form $N_{\alpha}$.
With reference to the canonical open cover $\{\mathrm{St}_K(k)\colon k\in \mathrm{dom}(K)\}$ of $K$, we now have the following definable version of the \emph{simplicial approximation theorem}.

\begin{lemma}\label{L:DefSimplApprox}
There is a Borel map $f\mapsto (\alpha_f,p_f)$ from $\mathsf{LC}(X,|K|)$ to $\mathrm{SA}_{\boldsymbol{\mathcal{U}}}(X,K)$ so that
\[f(x)\in \mathrm{St}_K(p_f(U)) \text{ for all } x\in U \in \mathcal{U}_{\alpha_f}.\]
\end{lemma}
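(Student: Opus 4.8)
The plan is to produce a Borel version of the classical simplicial approximation theorem by pulling back the canonical star cover of $|K|$ and selecting, Borel-measurably in $f$, an index $\alpha_f$ along which $\mathcal{U}_{\alpha_f}$ refines this pullback. To each $f\in\mathsf{LC}(X,|K|)$ I would associate the open cover $\mathcal{W}_f=\{f^{-1}(\mathrm{St}_K(k)):k\in\mathrm{dom}(K)\}$ of $X$, and for a finite index $s\in(\mathbb{N}^n)^*$ call $s$ \emph{$f$-good} if for every $U\in\mathcal{U}_s$ there is a $k$ with $f(\overline{U})\subseteq\mathrm{St}_K(k)$. After restricting $\boldsymbol{\mathcal{U}}$ to a cofinal subfamily via Lemma \ref{L:combinatorialapproximations}(4), we may assume every $U\in S$ has compact closure, so that each set $\{f:f(\overline{U})\subseteq\mathrm{St}_K(k)\}$ is subbasic open in the compact-open topology; since $\mathcal{U}_s$ is finite, ``$s$ is $f$-good'' then defines an open subset of $\mathsf{LC}(X,|K|)$.

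Two monotonicity properties of goodness drive the selection. First, from $X_n\subseteq X_{n+1}$ together with (L1) one has $\mathcal{U}_s\subseteq\mathcal{U}_{s^{\frown}m}$, so the collection of $f$-good nodes is downward closed under initial segments, i.e. a tree. Second, for $s\leq t$ in $(\mathbb{N}^n)^*$ the refinement map satisfies $V\subseteq r^t_s(V)$, whence $f(\overline{V})\subseteq f(\overline{r^t_s(V)})$; thus goodness is upward closed at each level. Given these, I would define $\alpha_f$ by the greedy recursion $\alpha_f(n):=\min\{m\geq\alpha_f(n-1):(\alpha_f\upharpoonright n)^{\frown}m\text{ is }f\text{-good}\}$, starting from the vacuously good empty node (as $X_0=\emptyset$). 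That this recursion never gets stuck is a Lebesgue-number argument: by the shrinking-diameter clause in the construction of $\boldsymbol{\mathcal{U}}$, the sets newly appearing in $\mathcal{U}_{s^{\frown}m}$ at level $n+1$ have diameter tending to $0$ as $m\to\infty$, so for large $m$ each such set, meeting the compact $X_{n+1}$, has closure inside a single member of $\mathcal{W}_f$, producing a good continuation. Since each condition ``$(\alpha_f\upharpoonright n)^{\frown}m$ is $f$-good'' is open in $f$, the coordinatewise minima are Borel functions of $f$, and hence $f\mapsto\alpha_f\in\mathcal{N}^*$ is Borel.

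With $\alpha_f$ in hand I would set $p_f(U):=\min\{k:f(\overline{U})\subseteq\mathrm{St}_K(k)\}$ for $U\in\mathcal{U}_{\alpha_f}$ and $p_f(U):=0$ otherwise; as ``$U\in\mathcal{U}_{\alpha_f}$'' is Borel in $f$ and each condition on $k$ is open, the map $f\mapsto p_f\in\mathrm{dom}(K)^S$ is Borel. To confirm $(\alpha_f,p_f)\in\mathrm{SA}_{\boldsymbol{\mathcal{U}}}(X,K)$ I would check that $p_f\upharpoonright_{\alpha_f}$ is simplicial: if $U_0\cap\cdots\cap U_n\neq\emptyset$ is a face of $N_{\alpha_f}$, then any $x$ in the intersection satisfies $f(x)\in\bigcap_i\mathrm{St}_K(p_f(U_i))$, so these open stars meet and $\{p_f(U_0),\ldots,p_f(U_n)\}$ spans a simplex of $K$ by the standard open-star criterion. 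The displayed conclusion $f(x)\in\mathrm{St}_K(p_f(U))$ for $x\in U\in\mathcal{U}_{\alpha_f}$ is then immediate from $f(U)\subseteq f(\overline{U})\subseteq\mathrm{St}_K(p_f(U))$.

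The main obstacle is precisely the Borel selection of the index $\alpha_f$: extracting an infinite branch from the tree of $f$-good nodes is a priori only $\boldsymbol{\Pi}^1_1$, not Borel. What rescues the argument is the upward closure of goodness in the refinement order, which guarantees that the greedy (minimal-$m$) branch already works and never needs backtracking, so only non-stuckness must be verified, and that is supplied by the Lebesgue-number/shrinking-diameter estimate. A secondary point demanding care is the openness of the goodness conditions, which is why one passes to the closures $\overline{U}$ and invokes Lemma \ref{L:combinatorialapproximations}(4) to ensure these are compact for all relevant $U$.
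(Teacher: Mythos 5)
Your argument follows essentially the same route as the paper's: both select, for each $f$, a canonical minimal index $\alpha_f$ such that $\mathcal{U}_{\alpha_f}$ refines (with closures) the pullback $\{f^{-1}(\mathrm{St}_K(k))\}$ of the star cover, verify Borelness of $f\mapsto\alpha_f$ by expressing membership of $\alpha_f$ in a basic clopen set through finitely many open and closed conditions on the finite levels, and then take $p_f(U)$ to be the least admissible vertex. Your greedy (leftmost-branch) selection through the tree of $f$-good nodes coincides, given non-stuckness, with the paper's $\leq_{\mathrm{lex}}$-least refining index, and your explicit check that $p_f|_{\alpha_f}$ is simplicial via the open-star criterion is a welcome addition that the paper leaves implicit.

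The one point that needs repair is your non-stuckness argument. You justify the existence of a good one-step continuation by appealing to "the shrinking-diameter clause in the construction of $\boldsymbol{\mathcal{U}}$," but that clause is a feature of the particular covering system built in the existence proposition, not an axiom of Definition \ref{DefCovers2}; the lemma is stated for the arbitrary covering system fixed at the start of the subsection, so this property is not available to you. The correct justification is the extensionality axiom (E1): if $s=\alpha_f\upharpoonright n$ is $f$-good, i.e.\ $\mathcal{W}_f\upharpoonright X_n\preceq\mathcal{U}_s$, then (E1) with $m=n+1$ produces $\beta\in\mathcal{N}^*_s$ with $\mathcal{W}_f\upharpoonright X_{n+1}\preceq\mathcal{U}_\beta\upharpoonright X_{n+1}=\mathcal{U}_{s^\frown\beta(n)}$, and $\beta(n)\geq s(n-1)$ since $\beta$ is non-decreasing; this is exactly the content of Lemma \ref{L:combinatorialapproximations}(3), which is what the paper invokes. (There is a small residual mismatch between refinement and refinement-with-closures here, but the paper's own proof elides the same point, so I will not press it.) With (E1) substituted for the shrinking-diameter appeal, your proof is correct.
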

\begin{proof}
For every $\alpha\in \mathcal{N}^*$ set $\mathcal{U}^{\mathrm{cl}}_{\alpha}:=\{\mathrm{cl}(U)\colon U\in\mathcal{U}_{\alpha}\}$. By Lemma \ref{L:combinatorialapproximations}(4) we may assume without loss of generality that for all $\alpha\in \mathcal{N}^*$, every element of $\mathcal{U}^{\mathrm{cl}}_{\alpha}$ is compact.

As before, for readability we will omit the subscript of $\mathrm{St}_K(k)$. For every $f\in \mathsf{LC}(X,|K|)$ and $k\in \mathrm{dom}(K)$ set $U_{f,k}:= f^{-1}(\mathrm{St}(k))$; for each such $f$ collect these sets into an open cover $\mathcal{U}_f:=(U_{f,k}\colon k\in \mathrm{dom}(K))$ of $X$. By  Lemma \ref{L:combinatorialapproximations}(3), for each $f$, there exists some $\alpha\in\mathcal{N}^*$ so that $\mathcal{U}_f\preceq\mathcal{U}^{\mathrm{cl}}_{\alpha}$. Let $\alpha_f$ be the $\leq_{\mathrm{lex}}$-least such; this specification is well-defined by the fact that any $\leq_{\mathrm{lex}}$-decreasing sequence in $\mathcal{N}^*$ has a limit, together with condition (L1) of Definition \ref{DefCovers2}.
\begin{claim}
The assignment $f\mapsto \alpha_{f}$ is Borel.
\end{claim}
\begin{proof}[Proof of Claim]
Fix $t\in (\mathbb{N}^{<\mathbb{N}})^*$ with $t= \alpha_{f}|n$ for some $n\in \mathbb{N}$. We will show that the set of all $g\in \mathsf{LC}(X,|K|)$ with $\alpha_g \in \mathcal{N}^*_t$ is Borel.

To that end, take $g$ with $\alpha_g \in \mathcal{N}^*_t$ and observe that there are only finitely many $s\in (\mathbb{N}^{n})^*$ with $s<_{\mathrm{lex}} t$. If for some $s<_{\mathrm{lex}} t$ we had  $\mathcal{U}_g\preceq\mathcal{U}^{\mathrm{cl}}_{s}$, then by Definition \ref{DefCovers2} (E1) there would exist some $\beta\in\mathcal{N}^*_s$ with $\mathcal{U}_g\preceq\mathcal{U}^{\mathrm{cl}}_{\beta}$. As $\beta<_{\mathrm{lex}} \alpha_g$, this is a contradiction. Hence the condition $\alpha_g \in \mathcal{N}^*_t$ is equivalent to:
\begin{enumerate}
\item for every $U\in \mathcal{U}_t$ there is some $k\in\mathrm{dom}(K)$ with $g(\mathrm{cl}(U))\subseteq \mathrm{St}(k)$; and
\item for every $s<_{\mathrm{lex}} t$ in $(\mathbb{N}^{n})^*$ there is a $U\in \mathcal{U}_s$ such that $g(\mathrm{cl}(U))\not\subseteq \mathrm{St}(k)$ for all $k\in\mathrm{dom}(K)$.
\end{enumerate}
Condition (1) is open and condition (2) is closed.
\end{proof}

Similarly, for every $f\in \mathsf{LC}(X,|K|)$ let $p_f\colon S\to \mathrm{dom}(K)$ be the map which sends every $U\in \mathcal{U}_{\alpha_f}$ to the least $k\in \mathrm{dom}(K)=\mathbb{N}$ with $f(\mathrm{cl}(U))\subseteq \mathrm{St}(k)$ and with $p_f(U)=0$ for $U\not\in \mathcal{U}_{\alpha_f}$.
\end{proof}

\subsection{The definable version of Huber's theorem}
\label{SS:definable_version_of_Huber}

Let $G$ be a countable abelian group.  As noted above, $K(G,n)$ will denote for us a polyhedral Eilenberg-MacLane space of type $(G,n)$, despite the fact that such a space is only well-defined up to homotopy equivalence. It will also be notationally simplifying to conflate $K(G,n)$ with its underlying simplicial complex in a few places below.
The following is the definable version of Huber's theorem \cite{huber_homotopical_1961} (that $[X,K(G,n)]_{\mathrm{def}}$ is a definable group follows from Theorem \ref{Theorem:homotopy-idealistic} and Theorem \ref{Theorem:phantom-H-group} below).

\begin{theorem}\label{T:DefinableHub}
Let $X$ be a locally compact Polish space. For every $n\geq 0$ the definable group $[X,K(G,n)]_{\mathrm{def}}$ is naturally definably isomorphic to the group with a Polish cover $\Check{\mathrm{H}}_{\mathrm{def}}^{n}(X;G)$.
\end{theorem}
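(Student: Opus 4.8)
The plan is to produce an explicit Borel lift of the \emph{forward} Huber correspondence $[X,K(G,n)]\to\Check{\mathrm{H}}^n(X;G)$ out of the definable simplicial approximation theorem (Lemma \ref{L:DefSimplApprox}), to check that it induces a bijective group homomorphism abstractly coinciding with Huber's isomorphism, and then to invoke the regularity of the category of definable groups to supply the inverse for free. Fix a countable locally finite simplicial complex $K$ with $|K|=K(G,n)$ and a cocycle $z\in\mathrm{Z}^n(K;G)$ representing the fundamental cohomology class $\iota_n\in\mathrm{H}^n(K(G,n);G)$. First I would apply Lemma \ref{L:DefSimplApprox} to obtain the Borel map $f\mapsto(\alpha_f,p_f)$ from $\mathsf{LC}(X,|K|)$ to $\mathrm{SA}_{\boldsymbol{\mathcal{U}}}(X,K)$. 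Each $p_f|_{\alpha_f}$ is a simplicial map $N_{\alpha_f}\to K$, so precomposition with its action on $n$-faces pulls $z$ back to a cocycle $z\circ p_f|_{\alpha_f}\in\mathrm{Z}^n(N_{\alpha_f};G)$, whose congruence class in $C^n_{\mathrm{sem}}(\boldsymbol{\mathcal{U}};G)/\!\sim$ lies in $\mathrm{Z}^n(\boldsymbol{\mathcal{U}};G)$. The resulting assignment $\hat\Theta\colon f\mapsto [\,z\circ p_f|_{\alpha_f}\,]$ is Borel, being the composition of the Borel map of Lemma \ref{L:DefSimplApprox} with the continuous pullback-and-project operation into $\mathrm{Z}^n(\boldsymbol{\mathcal{U}};G)$.

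Composing $\hat\Theta$ with the quotient onto $\Check{\mathrm{H}}_{\mathrm{def}}^n(X;G)$ yields a Borel map, and I would next verify that it factors through the homotopy relation, thereby defining a definable homomorphism $\theta\colon[X,K(G,n)]_{\mathrm{def}}\to\Check{\mathrm{H}}_{\mathrm{def}}^n(X;G)$. By the simplicial-approximation description of the Huber correspondence, the cohomology class of $z\circ p_f|_{\alpha_f}$ is precisely the pullback $f^{*}(\iota_n)$, which depends only on the homotopy class of $f$; tracing this against the explicit formula \eqref{EQ:Cech_psi} for the isomorphism $\psi$ of Theorem \ref{T:Cech=Def} shows that the underlying abstract map of $\theta$ equals $\psi\circ u$, where $u$ is the classical Huber isomorphism. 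In particular $\theta$ is a bijective group homomorphism, and it is natural in $X$ since both $u$ and $\psi$ are.

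To conclude, recall that $[X,K(G,n)]_{\mathrm{def}}$ is a definable group by Theorem \ref{Theorem:homotopy-idealistic} together with Theorem \ref{Theorem:phantom-H-group}, while $\Check{\mathrm{H}}_{\mathrm{def}}^n(X;G)$ is a group with a Polish cover and hence also a definable set. Thus $\theta$ is a definable bijection between definable sets, so by Proposition \ref{Proposition:bij-iso} its inverse $\theta^{-1}$ is definable as well; being a group homomorphism with definable inverse, $\theta$ is an isomorphism of definable groups. This simultaneously establishes that $[X,K(G,n)]_{\mathrm{def}}$ is essentially a group with a Polish cover, and the naturality recorded above transports the evident functoriality of $X\mapsto[X,K(G,n)]_{\mathrm{def}}$ across $\theta$ to the assignment $X\mapsto\Check{\mathrm{H}}_{\mathrm{def}}^n(X;G)$, yielding en route its independence from the choice of covering system (Corollary \ref{CorDefCohIndependentU}).

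The main obstacle, given Lemma \ref{L:DefSimplApprox}, is the matching step of the second paragraph: confirming that the Borel assignment $\hat\Theta$ genuinely realizes the Huber correspondence, so that it descends to a \emph{well-defined}, bijective homomorphism rather than merely some Borel map. This is where the classical homotopy-theoretic content --- that homotopic maps induce cohomologous pullbacks of $\iota_n$ --- must be reconciled with the combinatorial colimit description of $\Check{\mathrm{H}}_{\mathrm{def}}^n$ encoded in \eqref{EQ:Cech_psi}. The passage from bijection to isomorphism, by contrast, is the soft payoff furnished by the general theory of definable groups, and requires no further homotopy theory.
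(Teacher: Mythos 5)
Your proposal is correct and follows essentially the same route as the paper: both fix a cocycle representing the fundamental class of $K(G,n)$ (your $z$ on $K$ is the paper's $\hat{u}\circ\eta$ transported through the nerve of the star cover), pull it back along the Borel simplicial approximation $f\mapsto(\alpha_f,p_f)$ of Lemma \ref{L:DefSimplApprox} to obtain a Borel lift of $\psi\circ J$, match this against the formula \eqref{EQ:Cech_psi} to identify the abstract map with Huber's isomorphism, and then invoke Proposition \ref{Proposition:bij-iso} to obtain definability of the inverse. The step you flag as the main obstacle is handled in the paper exactly as you anticipate, by unwinding the Eilenberg--Steenrod description of $f^*(u)$ in terms of canonical maps subordinate to the cover $\{f^{-1}(\mathrm{St}(v))\}$.
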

\begin{proof}
Let $\Check{\mathrm{H}}^{n}(X;G)$ be the classical \v{C}ech cohomology group as defined in the proof of Theorem \ref{T:Cech=Def}. We start by recalling the definition of the Huber's isomorphism $J\colon [X,K(G,n)]\to \Check{\mathrm{H}}^{n}(X;G)$ between abstract groups. For this we follow the exposition from \cite{Ba68}, which explicitly takes (\ref{EQ:Cech}) as its working  definition for \v{C}ech cohomology.

To define $J$ one begins with a \emph{fundamental cohomology class}  $u\in\Check{\mathrm{H}}^{n}(K(G;n),G)$. While the explicit description of $u$ will have no bearing on our definability considerations below, such a $u$ is essentially provided by the Yoneda's lemma. 
More concretely, note that, since $G$ is abelian and $\pi_k(K(G,n))=0$ for $k<n$, by Hurewicz's Theorem (when $n=0$, one may more directly argue the point) we have  that:
\[  \Check{\mathrm{H}}_{n}(K(G,n); \mathbb{Z})\cong\pi_n(K(G,n))\cong G.\]
By the Universal Coefficient Theorem, we then have the following natural isomorphisms:
 \[\Check{\mathrm{H}}^{n}(K(G,n);G)\cong \mathrm{Hom}( \Check{\mathrm{H}}_{n}(K(G,n);\mathbb{Z});G)\cong\mathrm{Hom}(G,G).\]
The element $u\in\Check{\mathrm{H}}^{n}(K(G,n);G)$ is simply the pullback of the identity map $G\to G$ under these isomorphisms.  

Notice now that by the functoriality of \v{C}ech cohomology, every continuous map $f\colon  X\to K(G,n)$ induces an abstract group homomorphism
\[f^*\colon  \Check{\mathrm{H}}^{n}(K(G,n);G)\to \Check{\mathrm{H}}^{n}(X;G).\]
Huber's isomorphism is defined by simply setting $J([f]_{\simeq}):= f^{*}(u)$, for every $[f]_{\simeq}\in  [X,K(G,n)]$; see \cite{Ba68}. 
Our goal is to show that the abstract group  isomorphism $(\psi \circ J) \colon  [X,K(G,n)]_{\mathrm{def}} \to \Check{\mathrm{H}}_{\mathrm{def}}^{n}(X;G)$, where $\psi$ is defined by (\ref{EQ:Cech_psi}) in the proof of the Theorem \ref{T:Cech=Def}, admits a Borel lift $\mathsf{LC}(X,K(G,n))\to \mathrm{Z}^n(X;G)$.

Let  $\mathcal{V}$ be the ``canonical" open cover $\{\mathrm{St}_{K(G,n)}(v)\colon v\in \mathrm{dom}(K(G,n))\}$ of $K(G,n)$, consisting of all open stars. 
We have a simplicial complex isomorphism $\eta \colon  K(G,n)\to \mathrm{Nv}(\mathcal{V})$ induced by the assignment $v\mapsto \mathrm{St}_{K(G,n)}(v)$.
 Since $\mathcal{V}$ is a good cover, i.e., $V_0\cap\cdots \cap V_k$ is either contractible or empty  for all choices of $k\in \mathbb{N}$ and  $V_i\in \mathcal{V}$, there exists $\hat{u}\in \mathrm{Z}^n(\mathrm{Nv}(\mathcal{V}),G)$ so that $u=[\hat{u} +  \mathrm{B}^{n}(\mathrm{Nv}(\mathcal{V});G) ]_{\Check{\mathrm{H}}}$ where, as in the proof of Theorem \ref{T:Cech=Def}, $[a]_{\Check{\mathrm{H}}}$ is the image of $a\in \mathrm{H}^{n}(\mathrm{Nv}(\mathcal{V});G)$ under the inclusion $\mathrm{H}^{n}(\mathrm{Nv}(\mathcal{V});G)\hookrightarrow \Check{\mathrm{H}}^{n}(K(G,n);G)$; see e.g., \cite[Section IX.9]{eilenberg_foundations_1952}. 
 
 By the definition of the assignment $f\mapsto f^*$, see e.g., \cite[Section IX.4]{eilenberg_foundations_1952},  for every $f\in \mathsf{LC}(X,K(G,n))$, the associated element $f^*(u)\in \Check{\mathrm{H}}^{n}\big(X;G\big)$ is given by $[(\hat{u}\circ s_f) +  \mathrm{B}^{n}(\mathrm{Nv}(\mathcal{U}_f);G) ]_{\Check{\mathrm{H}}}$, where 
$\mathcal{U}_f$ is any open cover of $X$ which refines $\{f^{-1}(\mathrm{St}_{K(G,n)}(v))\colon v\in \mathrm{dom}(K(G,n))\}$ and  $s_f\colon \mathrm{Nv}(\mathcal{U}_f)\to K(G,n)$ is any ``canonical" map, i.e., any (necessarily simplicial) map with $s_f(U)= v \implies U\subseteq f^{-1}(\mathrm{St}_{K(G,n)}(v))$. In particular, if $(\alpha_f,p_f)\in \mathrm{SA}_{\boldsymbol{\mathcal{U}}}(X,K(G,n))$ 
is the pair that is associated to $f\in \mathsf{LC}(X,K(G,n))$  by Lemma \ref{L:DefSimplApprox}, then we have:
\begin{equation*}
f^*(u)=[(\hat{u}\circ \eta \circ p_f) +  \mathrm{B}^{n}(\mathrm{Nv}(\mathcal{U}_{\alpha_f});G) ]_{\Check{\mathrm{H}}}.
\end{equation*}
But then, by the definition of $\psi$  in the proof of the Theorem \ref{T:Cech=Def}, and since $\alpha_f$ trivially satisfies $\mathcal{U}_{\alpha_f}\preceq \mathcal{U}_{\alpha_f}$ and $r^{\alpha_f}_{\alpha_f}:=\mathrm{id}$, we have that:
\begin{equation*}
(\psi\circ J) (f) := (\psi\circ f)^*(u)=[\hat{u}\circ \eta \circ p_f]+\mathrm{B}^{n}(\boldsymbol{\mathcal{U}};G). 
\end{equation*}
But then the map $\mathsf{LC}(X,K(G,n))\to \mathrm{Z}^{n}(X;G)$, given by $f\mapsto [\hat{u}\circ \eta \circ p_f]$, is a lift of $\psi\circ J$. It is also Borel by Lemma \ref{L:DefSimplApprox}, since both $\hat{u}$  and $\eta$ are fixed and independent of $f$. By Proposition \ref{Proposition:bij-iso} it follows that  $\psi\circ J$ is a definable isomorphism.
\end{proof}

The following corollaries are all immediate.
\begin{corollary}\label{CorDefCohIndependentU}
Up to definable isomorphism, the group with a Polish cover  $\Check{\mathrm{H}}_{\mathrm{def}}^{n}(X;G)$ does not depend on the choice of covering system $\boldsymbol{\mathcal{U}}$ for $X$.
\end{corollary}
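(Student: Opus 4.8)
The plan is to deduce the corollary immediately from the definable Huber isomorphism of Theorem \ref{T:DefinableHub}, exploiting the crucial fact that the definable group $[X,K(G,n)]_{\mathrm{def}}$ is manifestly independent of any choice of covering system. Its underlying semidefinable set is the quotient of $\mathsf{LC}(X,K(G,n))$ by the homotopy relation, and neither this Polish space nor this relation refers to a covering system $\boldsymbol{\mathcal{U}}$; the covering system enters only into the construction of $\check{\mathrm{H}}^n_{\mathrm{def}}(X;G)$ itself. So our task is simply to transport the rigidity of $[X,K(G,n)]_{\mathrm{def}}$ across the Huber isomorphism.

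Concretely, I would fix two covering systems $\boldsymbol{\mathcal{U}}_1$ and $\boldsymbol{\mathcal{U}}_2$ for $X$ and write $H_i := \mathrm{Z}^{n}(\boldsymbol{\mathcal{U}}_i;G)/\mathrm{B}^{n}(\boldsymbol{\mathcal{U}}_i;G)$ for the group with a Polish cover obtained from $\boldsymbol{\mathcal{U}}_i$ via Definition \ref{Def:maincoh}. The proof of Theorem \ref{T:DefinableHub} is carried out for an arbitrary fixed covering system, so it yields, for each $i\in\{1,2\}$, a definable isomorphism $\Psi_i := \psi_i \circ J \colon [X,K(G,n)]_{\mathrm{def}} \to H_i$, where $\psi_i$ is the map of equation (\ref{EQ:Cech_psi}) associated to $\boldsymbol{\mathcal{U}}_i$ and $J$ is Huber's abstract isomorphism. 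Crucially, the \emph{domain} $[X,K(G,n)]_{\mathrm{def}}$ is literally the same object in both cases.

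The key step is then to compose: $\Psi_2 \circ \Psi_1^{-1} \colon H_1 \to H_2$ is the desired definable isomorphism. The one point requiring care is that $\Psi_1^{-1}$ is itself definable, i.e.\ admits a Borel lift. This is precisely where I would invoke Proposition \ref{Proposition:bij-iso} (equivalently, the observation recorded for $\mathsf{GPC}$ in the introduction that a bijective morphism automatically has a Borel-liftable inverse): since $\Psi_1$ is a definable bijection between definable groups, it is an isomorphism in $\mathsf{DSet}$, hence in the category of definable groups, so $\Psi_1^{-1}$ is a definable group homomorphism. As a composite of definable group homomorphisms is again one, $\Psi_2 \circ \Psi_1^{-1}$ is a definable group isomorphism, establishing that $H_1$ and $H_2$ are definably isomorphic as groups with a Polish cover.

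There is essentially no obstacle here beyond this invocation of \emph{bijectivity implies isomorphism}; the content rests entirely on Theorem \ref{T:DefinableHub} together with the regularity of the category of definable groups. The only substantive thing to verify is that the Huber isomorphism produced in that theorem's proof depends on $\boldsymbol{\mathcal{U}}$ solely through $\psi_i$ and $H_i$, with the source $[X,K(G,n)]_{\mathrm{def}}$ held genuinely fixed, which is transparent from the construction of $\Psi_i$.
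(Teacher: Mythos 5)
Your proposal is correct and is essentially the paper's own argument: the corollary is stated there as an immediate consequence of Theorem \ref{T:DefinableHub}, whose proof already invokes Proposition \ref{Proposition:bij-iso} to conclude that $\psi\circ J$ is a definable isomorphism, so that composing the isomorphism for one covering system with the inverse of that for another yields the claim. The one point you flag --- that $[X,K(G,n)]_{\mathrm{def}}$ is genuinely independent of $\boldsymbol{\mathcal{U}}$ and that the inverse of a bijective definable morphism is again definable --- is exactly the content the paper relies on.
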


\begin{corollary}\label{CorEssentiallyPolishCover}
The definable group $[X,K(G,n)]$  is essentially --- i.e., is definably isomorphic to --- a group with a Polish cover.
\end{corollary}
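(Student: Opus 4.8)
The plan is to read the statement off directly from the definable version of Huber's theorem, since the substantive work has already been carried out there. First I would recall that, by Definition \ref{Def:maincoh}, the object $\Check{\mathrm{H}}_{\mathrm{def}}^{n}(X;G)=\mathrm{Z}^{n}(\boldsymbol{\mathcal{U}};G)/\mathrm{B}^{n}(\boldsymbol{\mathcal{U}};G)$ is by construction a group with a Polish cover: its numerator $\mathrm{Z}^{n}(\boldsymbol{\mathcal{U}};G)=\mathrm{ker}(\delta^{n})$ is a closed, hence Polish, subgroup of the Polish group $C^{n}(\boldsymbol{\mathcal{U}};G)$, while its denominator $\mathrm{B}^{n}(\boldsymbol{\mathcal{U}};G)=\mathrm{im}(\delta^{n-1})$, being a continuous homomorphic image of a Polish group, is Polishable.

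Next I would note that $[X,K(G,n)]$ qualifies as a definable group in the first place precisely because the homotopy relation on $\mathsf{LC}(X,K(G,n))$ is Borel and idealistic: the idealistic portion is Theorem \ref{Theorem:homotopy-idealistic}, and the Borel portion (together with the verification that multiplication and inversion descend to definable maps on the quotient) is Theorem \ref{Theorem:phantom-H-group}, exactly as recorded in the parenthetical preceding Theorem \ref{T:DefinableHub}. Granting this, Theorem \ref{T:DefinableHub} supplies a \emph{definable} isomorphism $[X,K(G,n)]_{\mathrm{def}}\to\Check{\mathrm{H}}_{\mathrm{def}}^{n}(X;G)$. Since the target is a group with a Polish cover, the definition of \emph{essentially a group with a Polish cover} given at the close of Section \ref{Section:dgroup} is met verbatim, and the corollary follows.

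There is no genuine obstacle remaining at this stage: the entire difficulty has been absorbed into the proof of Theorem \ref{T:DefinableHub}, specifically into producing a Borel lift of the composite abstract isomorphism $\psi\circ J$ by means of the definable simplicial approximation theorem (Lemma \ref{L:DefSimplApprox}), and then invoking Proposition \ref{Proposition:bij-iso} to upgrade this bijective definable homomorphism to a definable isomorphism. The only ``step'' here is to cite these results in sequence, so I would keep the proof to a single sentence or two chaining Definition \ref{Def:maincoh}, Theorem \ref{T:DefinableHub}, and the relevant definition.
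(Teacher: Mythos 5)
Your proposal is correct and follows exactly the paper's route: the corollary is stated as an immediate consequence of Theorem \ref{T:DefinableHub}, since $\Check{\mathrm{H}}_{\mathrm{def}}^{n}(X;G)$ is by construction (Definition \ref{Def:maincoh}) a group with a Polish cover and the definable Huber isomorphism transports this structure to $[X,K(G,n)]$. Your additional care in noting where the definable-group status of $[X,K(G,n)]$ itself comes from (Theorems \ref{Theorem:homotopy-idealistic} and \ref{Theorem:phantom-H-group}) matches the parenthetical the paper places before Theorem \ref{T:DefinableHub}.
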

Altogether, we have the following; homotopy invariance follows, for example, from Lemma \ref{lemma:homotopyequivalenceinduceshomotopyequivalence}.

\begin{corollary}\label{CorFunctoriality}
For any morphism $g:X\to Y$ in $\mathsf{LC}$, write $g^*$ for the function $$[Y,K(G,n)]\to [X,K(G,n)]:[f]\mapsto [f\circ g].$$
The assignments $X\mapsto \Check{\mathrm{H}}_{\mathrm{def}}^{n}(X;G)$ and $g\mapsto g^*$
 determine a contravariant functor $\mathsf{LC}\to\mathsf{GPC}$ which factors through $\mathsf{Ho}(\mathsf{LC})$; in particular, they determine a functor which maps homotopy equivalent spaces to definably isomorphic groups with Polish cover.
\end{corollary}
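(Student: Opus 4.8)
The plan is to import all of the functoriality from the \emph{homotopical} side, where it is transparent, and then to transport it across the definable Huber isomorphism of Theorem~\ref{T:DefinableHub}; this is precisely the maneuver that the combinatorial definition of $\Check{\mathrm{H}}^n_{\mathrm{def}}$ made awkward. First I would check that $X\mapsto[X,K(G,n)]_{\mathrm{def}}$, $g\mapsto g^*$ is a contravariant functor from $\mathsf{LC}$ to the category of definable groups. Each $[X,K(G,n)]_{\mathrm{def}}$ is a definable group by Theorem~\ref{Theorem:homotopy-idealistic} together with Theorem~\ref{Theorem:phantom-H-group}. For a map $g\colon X\to Y$, precomposition $f\mapsto f\circ g$ is a continuous map $\mathsf{LC}(Y,K(G,n))\to\mathsf{LC}(X,K(G,n))$ in the compact-open topology, hence Borel, and it descends to $g^*$ on homotopy classes; thus $g^*$ is a definable function. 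It is a group homomorphism because the operation on $[-,K(G,n)]$ is induced by the fixed $H$-group structure on the target $K(G,n)$, with which precomposition commutes. Since homotopic maps induce the same map on homotopy classes, $g^*=g'^*$ whenever $g\simeq g'$, so the assignment factors through $\mathsf{Ho}(\mathsf{LC})$; the identities $(g\circ h)^*=h^*\circ g^*$ and $\mathrm{id}^*=\mathrm{id}$ are immediate.

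Next I would record that Huber's classical isomorphism $J_X\colon[X,K(G,n)]\to\Check{\mathrm{H}}^n(X;G)$, $[f]\mapsto f^*(u)$, is natural: for $g\colon X\to Y$ one has $J_X(g^*[f])=(f\circ g)^*(u)=g^*f^*(u)=g^*(J_Y[f])$, using the functoriality of classical \v{C}ech cohomology under pullback. Composing with the natural abstract isomorphism $\psi_X\colon\Check{\mathrm{H}}^n(X;G)\to\Check{\mathrm{H}}^n_{\mathrm{def}}(X;G)$ of Theorem~\ref{T:Cech=Def}, which arises from the canonical reindexing of cocycle classes and so also commutes with pullback, yields a natural family of abstract group isomorphisms $\psi_X\circ J_X\colon[X,K(G,n)]\to\Check{\mathrm{H}}^n_{\mathrm{def}}(X;G)$. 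Theorem~\ref{T:DefinableHub} shows that each $\psi_X\circ J_X$ is moreover a \emph{definable} isomorphism, and by construction its target $\Check{\mathrm{H}}^n_{\mathrm{def}}(X;G)$ is a group with a Polish cover (Definition~\ref{Def:maincoh}). Since $\mathsf{GPC}$ is a full subcategory of the category of definable groups, transporting the functor of the previous paragraph along the natural isomorphisms $\psi_X\circ J_X$ produces a contravariant functor with values in $\mathsf{GPC}$ whose action on a morphism $g$ is the conjugate, by these definable isomorphisms, of the homotopical $g^*$ — and naturality identifies this conjugate with the classically pulled-back map on $\Check{\mathrm{H}}^n_{\mathrm{def}}$. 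This is the desired contravariant functor $\mathsf{LC}\to\mathsf{GPC}$; it factors through $\mathsf{Ho}(\mathsf{LC})$ because its homotopical model does, and it sends homotopy equivalences to definable isomorphisms since functors preserve isomorphisms (cf.\ Lemma~\ref{lemma:homotopyequivalenceinduceshomotopyequivalence}).

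The only genuinely non-formal point is the naturality of the full composite $\psi\circ J$, and here the work is entirely classical: $J$ is natural by the displayed computation, and $\psi$ is natural because it is induced by the inclusions of the combinatorial cocycle complexes into $C^\bullet(\boldsymbol{\mathcal{U}};G)$ used in the proof of Theorem~\ref{T:Cech=Def}, which are manifestly compatible with refinement-induced pullbacks. The descriptive-set-theoretic content — that the transported morphisms $g^*$ are \emph{definable} homomorphisms of groups with a Polish cover — is not established directly on the combinatorial side, where a choice of covering systems obstructs it, but is inherited through the definable isomorphisms of Theorem~\ref{T:DefinableHub}, each of which is a genuine isomorphism in $\mathsf{GPC}$ by Proposition~\ref{Proposition:bij-iso}. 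The extension from $\mathsf{LC}$ to $\mathsf{LCP}$ is routine and parallels the remarks opening Section~\ref{S:Huber}.
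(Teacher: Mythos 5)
Your proposal is correct and follows essentially the same route as the paper, which presents this corollary as immediate from the definable Huber isomorphism of Theorem~\ref{T:DefinableHub}: functoriality is read off from the homotopical bracket $[-,K(G,n)]_{\mathrm{def}}$ (where precomposition is continuous, hence Borel, and factors through homotopy) and transported to $\Check{\mathrm{H}}^n_{\mathrm{def}}$ via the natural definable isomorphism, with homotopy invariance via Lemma~\ref{lemma:homotopyequivalenceinduceshomotopyequivalence}. You have merely spelled out the naturality checks that the paper leaves implicit.
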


Turning now to the category $\mathsf{LCP}$ of locally compact pairs, observe that the same map witnessing the isomorphism between $[X,K(G,n)]$ and $\Check{\mathrm{H}}^n_{\mathrm{def}}(X;G)$ --- namely $([f]\mapsto f^*(u))$ --- will witness that $[(X,A),(K(G,n),*)]$ and  $\Check{\mathrm{H}}^n_{\mathrm{def}}(X,A;G)$ are isomorphic as well. Hence the essential content of Theorem \ref{T:DefinableHub}, which is that this map admits a Borel lift, holds in $\mathsf{LCP}$ by a verbatim argument.

\subsection{Remarks on axioms and notation}

We have arrived by two distinct means to definably isomorphic cohomology groups; this is the sort of circumstance in which mathematicians begin to regard an object as \emph{canonical}. One way to make this impulse precise is via axioms, and these are, indeed, a third main way of characterizing the classical \v{C}ech cohomology groups of locally compact Polish spaces.
In \cite[Theorem 8]{petkova_axioms_1973}, for example, Petkova showed that any cohomology theory coinciding with \v{C}ech cohomology on compact metric spaces and satisfying a natural additivity axiom coincides with \v{C}ech cohomology on locally compact metric spaces as well. We note that definable versions of the main ingredients of this argument --- the Five Lemma and Urysohn's Lemma, for example --- already appear in either the present work or (\emph{in nuce}) in its predecessor \cite{BLPI}, so that the work of axiomatizing \emph{definable} cohomology on locally compact Polish spaces reduces essentially to verifications, over the category of metric compacta, of the more classical Eilenberg-Steenrod and cluster axioms in the definable setting, or more precisely that these axioms determine a cohomology theory up to \emph{definable} isomorphism.
Such verifications (along the lines of Theorem \ref{T:definable_LES}) do not appear to us to be either difficult or particularly illuminating, and would detain us too long from the more interesting decompositions and applications of Sections \ref{S:HomotopyClassification} and \ref{Section:telescopes}, respectively; for these reasons, we leave their proper treatment for another occasion.

One might along different but related lines axiomatize the \emph{reduced} definable \v{C}ech cohomology groups (see \cite[Chapter 19]{may_concise_1999}); this brings us to a more mundane consideration, which is the following. Though it has so far seemed valuable to notationally distinguish between classical and definable \v{C}ech cohomology, it will be convenient in what follows to reserve the subscript position for other purposes. And since we will be so primarily concerned with definable \v{C}ech cohomology in what follows, there is little danger of confusion in denoting its (definable) groups by $\mathrm{H}^n$, simply --- and similarly for the brackets $[-,-]$ ---  and this henceforth will be our practice.

\section{A definable homotopy extension theorem}
\label{S:definable_HET}

In this section, we formulate and prove definable versions of three fundamental topological results, namely Urysohn's Lemma, Borsuk's Homotopy Extension Theorem, and the presentation of the unbased homotopy classes of maps from $(X,\star)$ to $(P,*)$ in terms of the action of $\pi_1(P,*)$ on the set of based homotopy classes of maps between them.
Each of these definable results is applied in the proof of the one which follows it, and by facilitating the passage between pointed and unpointed settings, the last of them both simplifies the argument of some of our results and extends their scope, as we have noted already. 
That said, these definable results play a sufficiently minor role in later sections that readers may skip over this one without much loss of continuity; put differently, we have recorded them as much for their general interest and place in the development of the field as for any particular application herein.

\subsection{The definable version of Urysohn's Lemma} 
\label{SS:definable_Urysohn's}
Let $X$ be a locally compact Polish space. Fix a countable basis $%
\mathcal{B}$ of open sets of $X$. Let $F( X) $ be the collection
of closed subsets of $X$. We regard $F( X) $ as a topological
space with respect to the \emph{Fell topology}. This has a basis of sets of
the form%
\begin{equation*}
\left\{ F\in F( X) :F\cap K=\varnothing ,F\cap U_{1}\neq
\varnothing ,\ldots ,F\cap U_{n}\neq \varnothing \right\}
\end{equation*}%
where $K\subseteq X$ is compact, and $U_{1},\ldots ,U_{n}$ are open
subsets of $X$. This topology renders $F(X) $ a Polish space \cite{fell_hausdorff_1962}.

We let $O(X) $ be the collection of open subsets of $X$. We
regard $O(X) $ as a topological space by declaring the bijection $F( X) \rightarrow O(
X) $, $F\mapsto X\backslash F$ to be a homeomorphism. Observe that the function $\mathcal{B}%
^{\omega }\rightarrow O( X) $ which maps a sequence to its union
is Borel, as is the function $O( X) \rightarrow
F( X):\,U\mapsto \overline{U}$.

\begin{lemma}
\label{Lemma:select-interpolate} The set $S=\{(F,O): F\subseteq O\}$ is a Borel subset of $F(X)\times O(X)$, and there exists a Borel function $f:S\to O(X) $ such that $F\subseteq f(F,O)\subseteq\overline{f(F,O)}\subseteq O$ for all $(F,O)\subseteq S$.
\end{lemma}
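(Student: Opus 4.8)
The plan is to prove the two assertions in sequence, first establishing that $S$ is Borel and then uniformizing to obtain the interpolating function $f$. For the Borel claim, I would unpack the condition $F \subseteq O$ in terms of the countable basis $\mathcal{B}$. Working with the homeomorphism $O(X) \cong F(X)$ via complementation, $F \subseteq O$ is equivalent to $F \cap (X \setminus O) = \varnothing$, i.e., $F$ is disjoint from the closed set $X \setminus O$. The cleanest route is to express this via the basis: since $X$ is locally compact and second countable, I can write $O = \bigcup_n B_n$ for basic open sets $B_n$ with compact closure, and the condition $F \subseteq O$ can be tested on compact pieces. Concretely, I would use an exhaustion $(X_m)$ of $X$ by compact sets and observe that $F \subseteq O$ if and only if for every $m$, the compact set $F \cap X_m$ is covered by $O$; each such condition should reduce to a countable combination of the subbasic Fell-topology conditions ("$F \cap K = \varnothing$" and "$F \cap U \neq \varnothing$"), which are Borel (indeed open or closed) in $F(X)$ and, after complementation, in $O(X)$. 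Assembling these with countable unions and intersections yields that $S$ is Borel.

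For the interpolation function, the key topological input is ordinary Urysohn interpolation: whenever $F \subseteq O$ with $F$ closed and $O$ open in a locally compact (hence normal, locally compact Hausdorff) space, there exists an open $V$ with $F \subseteq V \subseteq \overline{V} \subseteq O$, and moreover $V$ can be taken with $\overline{V}$ compact when $F$ is compact, or in general by a local compactness argument. The task is to make the choice of $V$ Borel in the pair $(F,O)$. I would phrase this as a uniformization problem: consider the relation $R \subseteq S \times O(X)$ consisting of triples $(F,O,V)$ with $F \subseteq V \subseteq \overline{V} \subseteq O$. The previous paragraph's analysis shows each defining clause is Borel, so $R$ is a Borel set with nonempty sections over $S$. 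The cleanest approach is then to exhibit an \emph{explicit} Borel selector rather than invoke an abstract uniformization theorem, because explicit constructions using the countable basis tend to be more transparent and self-contained here.

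The explicit construction I would carry out: enumerate $\mathcal{B} = \{B_k : k \in \mathbb{N}\}$. Given $(F,O) \in S$, for each point of $F$ one can find a basic open set whose closure is compact and contained in $O$; the idea is to let $V = \bigcup \{ B_k : \overline{B_k} \subseteq O \text{ and } \overline{B_k} \text{ is compact and } B_k \cap (X \setminus W) = \varnothing \}$ for a suitable auxiliary set, but one must ensure $F \subseteq V$ and $\overline{V} \subseteq O$ simultaneously and \emph{Borel-measurably} in $(F,O)$. A safer variant is to first Borel-select, using the already-noted Borel map $\mathcal{B}^\omega \to O(X)$, a sequence $(B_{k_i})$ with $\overline{B_{k_i}} \subseteq O$ whose union contains $F$ (possible by the Borel disjointness tests above together with the exhaustion $(X_m)$, shrinking covers of the compact sets $F \cap X_m$), and then set $f(F,O) := \bigcup_i B_{k_i}$. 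The condition $\overline{B_{k_i}} \subseteq O$ guarantees, via local finiteness obtained by intersecting with the exhaustion, that $\overline{f(F,O)} \subseteq O$. The main obstacle I anticipate is precisely this last point: controlling $\overline{\bigcup_i B_{k_i}}$, since closures of infinite unions need not be the union of closures. I would address this by arranging the selected sets to be \emph{locally finite}—choosing for each level $m$ of the exhaustion only finitely many basic sets covering $F \cap X_m$ and lying inside $O \cap \mathrm{int}(X_{m+1})$—so that on each compact $X_m$ only finitely many of the $\overline{B_{k_i}}$ meet it, whence $\overline{f(F,O)} \cap X_m$ equals a finite union of closures, all inside $O$. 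Verifying that this leveled, finite-at-each-stage selection can be performed by a Borel function of $(F,O)$ is the crux, and it rests on the Borelness facts established in the first paragraph together with standard Borel selection of finite subcovers of compacta.
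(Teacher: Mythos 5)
Your first assertion (that $S$ is Borel) follows essentially the paper's own route: the paper likewise fixes a compact exhaustion $(X_n)$, shows $F\mapsto F\cap X_n$ is Borel into the space of compacta, proves that $\{(K,O):K\subseteq O\}$ is Borel for $K$ compact by reducing to finite covers by metric balls of controlled radius, and concludes via the equivalence $F\subseteq O\iff F\cap X_n\subseteq O$ for all $n$. Your sketch is vaguer about how ``$F\cap X_m$ is covered by $O$'' becomes a countable Boolean combination of Fell-subbasic conditions, but that is exactly the detail the paper's Claim supplies, and there is no issue here.

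For the interpolating function your route genuinely diverges from the paper's, and as written it has a gap at precisely the step you call the crux. You propose to cover $F\cap X_m$, for each $m$, by finitely many basic sets with compact closure inside $O\cap\mathrm{int}(X_{m+1})$, and assert that this makes the selected family locally finite, so that $\overline{\bigcup_i B_{k_i}}=\bigcup_i\overline{B_{k_i}}\subseteq O$. That leveling does not yield local finiteness: for every $m'\geq m$ the sets chosen at stage $m'$ must cover $F\cap X_{m'}\supseteq F\cap X_m$, so infinitely many of them meet $X_m$, and without local finiteness the closure of the union can escape $O$. (Take $X=\mathbb{R}$, $O=(0,1)$, $F=\{1/2\}$, and $B_{k_m}=(1/4,\,1-2^{-m})$ at stage $m$: each closure lies in $O$, every constraint you impose is satisfied, yet the closure of the union contains $1\notin O$.) The repair is the standard annular one: at stage $m$ cover only $F\cap(X_m\setminus\mathrm{int}(X_{m-1}))$ by finitely many basic sets contained in $O\cap(\mathrm{int}(X_{m+1})\setminus X_{m-2})$; then each $X_j$ meets sets from only finitely many stages, your closure computation is valid, and Borel selection of the least finite subcover at each stage is routine. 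The paper sidesteps local finiteness entirely: it well-orders $\mathcal{B}^2$ in type $\omega$ and greedily selects disjoint basic pairs $(U_k,V_k)$ with $U_k\subseteq O$ and $V_k\cap F=\varnothing$, the $U_k$ progressively exhausting $F$ and the $V_k$ progressively exhausting $P=X\setminus O$ --- the separation device from the proof that second countable regular spaces are normal --- and takes $f(F,O)=\bigcup_k U_k$, using the covering of $P$ by the $V_k$ to keep $P$ away from the closure. Your approach, once repaired, buys a locally finite witness and is arguably more geometric; the paper's buys a construction that never has to track the selected sets against the exhaustion.
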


\begin{proof}
First fix a cofiltration of $X=\bigcup_{n\in\mathbb{N}} X_n$ of $X$ by compact subsets $X_n$ (see Definition \ref{def:cofiltration}); note then that the collection $K(X)$ of compact subsets of $X$ is a Borel subset of $F(X)$, since $F\in F(X)$ is in $K(X)$ if and only if there exists an $n\in\mathbb{N}$ such that $F\subseteq X_n$.
Observe also that for any compact $K\subseteq X$, the function $f_K:F(X)\to K(X):F\mapsto F\cap K$ is Borel.
To see this, fix a compatible metric $d$ on $X$ and for all $r>0$ let $B(K,r)=\{x\in X:\mathrm{inf}\{d(x,y):y\in K\}<r\}$ and observe that $\{F: F\cap K\cap U\neq\varnothing\}=\bigcap_{n\in\mathbb{N}}\{F:F\cap B(K,\frac{1}{n+1})\neq\varnothing\}$.
\begin{claim} The set $S_{c}=\{(K,O): K\subseteq O\}$ is a Borel subset of $K(X)\times O(X)$.
\end{claim}
\begin{proof}[Proof of Claim]
Fix a dense $\{x_n:n\in\mathbb{N}\}\subseteq X$ and, as above, for any $x\in X$ and $r>0$ write $B(x,r)$ for the open ball about $x$ of $d$-radius $r$. Observe that $K\subseteq O$ if and only if there exist an $N$ and $i_0,\dots,i_N$ in $\mathbb{N}$ and positive rational $q_0,\dots,q_N$ such that $K\subseteq \overline{B(x_{i_0},q_0)\cup\dots\cup B(x_{i_N},q_N)}$ and $B(x_{i_0},2q_0)\cup\dots\cup B(x_{i_N},2q_N)\subseteq O$.
\end{proof}
To establish our first assertion, it now suffices to observe that $F\subseteq O$ for $(F,O)\in F(X)\times O(X)$ if and only if $f_{X_n}(F)\subseteq O$ for all $n\in\mathbb{N}$.

For the second assertion, let $\triangleleft$ well-order $\mathcal{B}^2$ in order-type $\omega$ and recursively define $(U_k,V_k)$ to be the $\triangleleft$-least $(U,V)$ such that
\begin{itemize}
\item $U_k\cap V_k=\varnothing$,
\item $U_k\subseteq O$ and $V_k\cap F =\varnothing$,
\item either $F\subseteq\bigcup_{i<k} U_i$ or $U_k\cap(F\backslash\bigcup_{i<k}U_i)\neq\varnothing$, and
\item letting $P=X\backslash O$, either $P\subseteq\bigcup_{i<k} V_i$ or $V_k\cap(P\backslash\bigcup_{i<k}V_i)\neq\varnothing$.
\end{itemize}
This defines for each $k\in\mathbb{N}$ a Borel function $S\to\mathcal{B}^2$ given by $(F,O)\mapsto (U_k,V_k)$.
The map $(F,O)\mapsto\bigcup_{k\in\mathbb{N}}U_k$ is then a Borel function $f:S\to O(X)$ as desired.
\end{proof}

In the setting of $\mathsf{LC}$, Urysohn's Lemma takes the following form: if $X$ is a locally compact Polish
space and $A,B$ are disjoint closed subsets of $X$ then there exists a
continuous function $f:X\rightarrow [ 0,1] $ such that $f[A]=0$ and $f[B]=1$. The following should be regarded as the \emph{definable version} of this statement.

\begin{lemma}
\label{Lemma:definable-Urysohn}Let $X$ be a locally compact Polish space and let $\mathcal{P}$ denote the Borel set of pairs $(A,B) \in
F( X) \times F(X) $ such that $A\cap B=\varnothing $.
For each such pair $(A,B) $ there exists a map $f:X\rightarrow \left[ 0,1\right] $ such that $f[A]=0$ and $f[B]=1$; this map $f$ may moreover be taken to depend in a Borel way on the pairs $(A,B)$, in the sense that there exists a choice function 
\begin{equation*}
\mathcal{P}\rightarrow \mathsf{LC}(X,[0,1]),\;(A,B) \mapsto f
\end{equation*}
witnessing this assertion which is Borel.
\end{lemma}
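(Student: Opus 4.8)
The plan is to give a Borel refinement of the classical proof of Urysohn's Lemma, using the interpolation result of Lemma \ref{Lemma:select-interpolate} as the engine that makes each step choice-free. Recall the classical argument: given disjoint closed sets $A,B$, one sets $O=X\setminus B$ (an open set containing $A$) and then builds a family of open sets $(O_q)$ indexed by the dyadic rationals $q\in[0,1]\cap\mathbb{Z}[1/2]$ satisfying $\overline{O_q}\subseteq O_{q'}$ whenever $q<q'$, with $A\subseteq O_0$ and $O_1=O$; the function $f(x)=\inf\{q: x\in O_q\}$ (with $f(x)=1$ if $x$ lies in no $O_q$) is then continuous, vanishes on $A$, and equals $1$ on $B$. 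The entire task is to carry out the construction of the dyadic ladder $(O_q)$ so that it depends in a Borel way on the pair $(A,B)$, after which continuity, $f[A]=0$, and $f[B]=1$ are the usual routine verifications.

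First I would note that $(A,B)\mapsto X\setminus B\in O(X)$ is Borel, by the homeomorphism $F(X)\cong O(X)$ recorded before Lemma \ref{Lemma:select-interpolate}, and that the set $S=\{(F,O):F\subseteq O\}$ together with the interpolating map $g:S\to O(X)$ satisfying $F\subseteq g(F,O)\subseteq\overline{g(F,O)}\subseteq O$ are both at our disposal. The dyadic ladder is then defined by recursion on the level $n$ of the dyadic expansion. At level $0$ set $O_0:=g(A,\,X\setminus B)$ and $O_1:=X\setminus B$; these satisfy $A\subseteq O_0\subseteq\overline{O_0}\subseteq O_1$ and depend on $(A,B)$ in a Borel way. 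Inductively, for a new dyadic $q=(2k+1)/2^{n+1}$ lying strictly between consecutive previously-defined values $q^-<q<q^+$, set $O_q:=g\bigl(\overline{O_{q^-}},\,O_{q^+}\bigr)$, which is legitimate because $\overline{O_{q^-}}\subseteq O_{q^+}$ by the inductive hypothesis and hence $(\overline{O_{q^-}},O_{q^+})\in S$. Since at each stage only finitely many sets have been produced, and each $O_q$ is obtained by composing the Borel map $U\mapsto\overline{U}$ (also recorded before the lemma) with $g$, the assignment $(A,B)\mapsto(O_q)_q$ is Borel; more precisely, for each fixed $q$ the map $(A,B)\mapsto O_q$ is a finite composition of Borel maps and so is Borel.

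It remains to verify that the resulting $f$ depends on $(A,B)$ in a Borel way as an element of $\mathsf{LC}(X,[0,1])$, which is where the one genuine subtlety lies. The pointwise formula $f(x)=\inf\{q: x\in O_q\}$ makes $(A,B,x)\mapsto f(x)$ jointly Borel, since $x\in O_q$ is a Borel condition in $(A,B,x)$ for each of the countably many $q$, and the infimum of a countable Borel family is Borel. To upgrade this to Borelness of the map $\mathcal{P}\to\mathsf{LC}(X,[0,1])$ into the compact-open topology, I would use that a map into a space of continuous functions is Borel provided its evaluations are jointly Borel; concretely, the compact-open topology on $\mathsf{LC}(X,[0,1])$ is generated by subbasic sets $\{h:h(K)\subseteq U\}$ for $K\subseteq X$ compact and $U\subseteq[0,1]$ open, and the preimage of such a set under $(A,B)\mapsto f$ is Borel because $f(K)\subseteq U$ can be tested on a countable dense subset of $K$ using joint Borelness of evaluation together with continuity of each $f$. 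The main obstacle, then, is precisely this last packaging step --- confirming that joint Borelness of $(A,B,x)\mapsto f(x)$ yields Borelness of the induced selector into the Polish space $\mathsf{LC}(X,[0,1])$ --- and it is handled by the standard fact that for Polish $X$ and $Y$, a function into $\mathsf{LC}(X,Y)\subseteq C(X,Y)$ is Borel iff its associated evaluation map is Borel, which follows from the separability of $X$ and the continuity of the selected functions.
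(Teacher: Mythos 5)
Your proposal is correct and follows essentially the same route as the paper: a Borel recursion building the nested ladder of open sets via the interpolation map of Lemma \ref{Lemma:select-interpolate}, followed by a verification that $(U_q)_q\mapsto f$ is Borel into the compact-open topology. The only (cosmetic) differences are that the paper indexes the ladder by an enumeration of $\mathbb{Q}$ rather than the dyadics and checks Borelness by directly computing the preimages of subbasic sets $\{h:h(K)\subseteq(-\infty,b)\}$ and $\{h:h(K)\subseteq(b,+\infty)\}$, whereas you invoke a general evaluation-map criterion --- which is fine, though your ``test on a countable dense subset of $K$'' step should be supplemented by the observation that $f(K)$ is compact, so that $f(K)\subseteq U$ is equivalent to $f(K)$ being uniformly bounded away from $[0,1]\setminus U$, a closed condition that genuinely can be checked on a dense subset.
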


\begin{proof}
We adopt the notation of the proof of Urysohn's Lemma in \cite[Theorem
33.1]{munkres_topology_1975}. Endow $O( X)^{\mathbb{Q}}$ with the product topology. Let $\mathcal{Z}$ be the Borel subset of $O(
X)^{\mathbb{Q}}$ consisting of families $\left(
U_{p}\right) _{p\in \mathbb{Q}}\in O( X) ^{\mathbb{Q}}$ such that 
$\overline{U}_{p}\subseteq U_{q}$ whenever $p<q$, $U_{p}=\varnothing $ for $%
p<0$, and $U_{q}=X$ for $q>1$.

As in the proof of \cite[Theorem 33.1]{munkres_topology_1975}, for any 
disjoint $A$ and $B$ in $F( X) $, there exists a $\left( U_{p}\right)
_{p\in \mathbb{Q}}\in \mathcal{Z}$ such that $A\subseteq U_{0}$ and $U_{1}=X\backslash B$; letting $f( x) :=\mathrm{\inf }\{p\in \mathbb{Q}:x\in U_{p}\}$ then determines a function $f:X\rightarrow \left[ 0,1\right] $ as desired. The sequence $\left( U_{p}\right) _{p\in \mathbb{Q}}$ is defined by
recursion, with respect to an enumeration $\left( p_{n}\right) _{n\in \omega
}$ of $\mathbb{Q}$, as follows. One lets $U_{1}=X\backslash B$ and then
chooses $U_{0}$ so that 
\begin{equation*}
A\subseteq U_{0}\subseteq \overline{U}_{0}\subseteq U_{1}.
\end{equation*}%
By Lemma \ref{Lemma:select-interpolate}, $U_{0}$ can be chosen in a Borel
fashion. Suppose that $U_{p_m}$ has been defined for $m<n$ (as is vacuously the case if $n=0$), and that $p_n\notin\{0,1\}$. If $p_n<0$ then let $U_{p_n}=\varnothing$ and if $p_n>1$ then let $U_{p_n}=X$. If $p_n\in(0,1)$ then let $p$ and $q$ be the immediate predecessor and immediate
successor, respectively, of $p_n$ in $\{p_m:m<n\} $
with respect to the standard ordering of $\mathbb{Q}$, and let $U_{p_n}$ be such that%
\begin{equation*}
U_{p}\subseteq U_{p_n}\subseteq \overline{U}_{p_n}\subseteq U_{q}\text{.}
\end{equation*}%
Again, by Lemma \ref{Lemma:select-interpolate}, $U_{p_n}$ may be chosen in a Borel fashion.

It remains only to show that the
function $\mathcal{Z}\rightarrow \mathsf{LC}( X,[0,1]) $, $(U_{p}) _{p\in \mathbb{Q}}\mapsto f$ given by%
\begin{equation*}
f(x) :=\mathrm{\inf }\{p\in \mathbb{Q}:x\in U_{p}\}
\end{equation*}%
is Borel. This, though, is immediate: for any $b\in [0,1] $ and compact $%
K\subseteq X$, we have that $f(K) \subseteq( -\infty ,b) $ if and only if there exist $\ell \in \omega $ and 
$p_{0},\ldots ,p_{\ell }\in \mathbb{Q}\cap ( -\infty ,b) $ such
that $K\subseteq U_{p_{0}}\cup \cdots \cup U_{p_{\ell }}$. Similarly,  $f(K) \subseteq ( b,+\infty)$ if and
only if there exists a $q\in \mathbb{Q}$ such that $q>b$ and $[U_{p}\cap K\neq \varnothing\,\Rightarrow p>q]$ for every $p\in 
\mathbb{Q}$.
This concludes the proof.
\end{proof}

\subsection{The definable version of the Homotopy Extension Theorem}
\label{SS:defhomotopyextension}

We recorded the classical framing of Borsuk's Homotopy Extension
Theorem for locally compact Polish spaces as Theorem \ref{Theorem:definable-extension} above. Implicit in its proof is its \emph{definable version}:

\begin{theorem}
\label{Theorem:definable-homotopy-extension}Suppose that $A$ is a closed subspace of a locally compact
Polish space $X$ and that $P$ is a polyhedron. Then for every map $g:\left( A\times I\right) \cup \left(
X\times \left\{ 0\right\} \right) \rightarrow P$, there exists a map $\tilde{%
g}:X\times I\rightarrow P$ which extends $g$. Furthermore, $\tilde{g}$ can be
chosen in a Borel fashion from $g$, in the sense that there exists a choice function 
\begin{equation*}
\mathsf{LC}(( A\times I) \cup ( X\times \left\{
0\right\} ) ,P) \rightarrow \mathsf{LC}( X\times I,P) 
\text{, }g\mapsto \tilde{g}
\end{equation*}%
witnessing this assertion which is Borel.
\end{theorem}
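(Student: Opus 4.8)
The plan is to take the classical proof of Theorem \ref{Theorem:definable-extension} and verify that each of its choices can be performed in a Borel manner as the datum $g$ varies, using the two definable selection principles---Lemma \ref{Lemma:select-interpolate} and the definable Urysohn Lemma \ref{Lemma:definable-Urysohn}---established just above. Throughout, the spaces $Z:=X\times I$ and $Y:=(A\times I)\cup(X\times\{0\})$ are fixed locally compact Polish spaces, with $Y$ closed in $Z$, and after identifying $\mathsf{LC}((A\times I)\cup(X\times\{0\}),P)$ with $\mathsf{LC}(Y,P)$ the task is to produce a Borel operator $\mathsf{LC}(Y,P)\to\mathsf{LC}(Z,P)$, $g\mapsto\tilde g$, satisfying $\tilde g|_Y=g$.

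First I would record a Borel neighborhood extension. Realizing $P=|K|$ as a closed ANR inside $\mathbb{R}^{\mathbb{N}}$, fix once and for all an open set $O\supseteq P$ and a retraction $\pi\colon O\to P$. The Dugundji extension operator---built from a single locally finite partition of unity of $Z\setminus Y$ together with a fixed choice of points of $Y$, data depending only on the fixed pair $Y\subseteq Z$---gives a \emph{continuous} (indeed $\mathbb{R}$-linear) operator $\mathcal{D}\colon\mathsf{LC}(Y,\mathbb{R}^{\mathbb{N}})\to\mathsf{LC}(Z,\mathbb{R}^{\mathbb{N}})$ with $\mathcal{D}(g)|_Y=g$. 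Writing $\hat g:=\mathcal{D}(g)$ and $W:=\hat g^{-1}(O)$, the set $W$ is an open neighborhood of $Y$ in $Z$ depending in a Borel way on $g$, and $\pi\circ\hat g$ restricts on $W$ to an extension of $g$ into $P$.

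Next I would taper back down to $Z$. Consider the fiberwise reach $r(x):=\sup\{s\in[0,1]:\{x\}\times[0,s]\subseteq W\}$, which is lower semicontinuous, takes values in $(0,1]$, equals $1$ on $A$, and (by lower semicontinuity together with $r\le 1$) tends to $1$ as $x\to A$. The goal is a \emph{continuous} $q\colon X\to(0,1]$ with $q|_A=1$ and $\{x\}\times[0,q(x)]\subseteq W$ for every $x$, selected in a Borel fashion from $g$. Granting such a $q$, set $\Theta_q(x,t):=(x,\min(t,q(x)))$ and define $\tilde g:=\pi\circ\hat g\circ\Theta_q$. Then $\Theta_q$ maps $Z$ into $W$, so the composite is defined and continuous; and on $Y$ one checks directly---using that $\hat g|_Y=g$ takes values in $P$, where $\pi=\mathrm{id}$, together with $\min(0,q(x))=0$ and $q|_A=1$---that $\tilde g|_Y=g$. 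Since $g\mapsto\hat g$ is continuous, $g\mapsto q$ is Borel, and composition is continuous in the Polish category $\mathsf{LC}$, the assignment $g\mapsto\tilde g$ is Borel, as required.

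The main obstacle is the Borel-uniform construction of the taper $q$. The difficulty lies in the interplay of two demands: $q$ must be continuous and must satisfy the \emph{exact} boundary condition $q|_A=1$, while remaining a minorant of the merely lower semicontinuous reach $r$ (so that the segments $\{x\}\times[0,q(x)]$ stay inside $W$). No fixed-rate or distance-based formula achieves $q|_A=1$, since $W\setminus(A\times I)$ may approach $A\times I$ arbitrarily closely in the $X$-direction. The remedy is to exploit that $r\to 1$ near $A$ and to assemble $q$ from a sequence of Urysohn functions separating $A$ from the closed sets $\{x:r(x)\le 1-2^{-k}\}$; each such function is selected in a Borel way via Lemma \ref{Lemma:definable-Urysohn} (with ambient space $X$), and their suitably weighted combination yields a continuous minorant of $r$ with the prescribed value $1$ on $A$, depending Borel-measurably on $g$. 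Verifying continuity and the segment condition of the resulting $q$, and the measurability of the maps $g\mapsto r$ and $g\mapsto(\text{the relevant closed sets})$ into the Fell space $F(X)$, is routine but is where the real bookkeeping lies; the conclusion then follows by composing the Borel maps assembled above.
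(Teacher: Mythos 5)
Your overall architecture coincides with the paper's: extend $g$ into an ambient convex space by a Borel (in your case continuous, via Dugundji) operator, pull back a neighborhood retraction of the ANR $P$, and taper $X\times I$ back into the resulting neighborhood $W$ of $(A\times I)\cup(X\times\{0\})$ using the definable Urysohn Lemma \ref{Lemma:definable-Urysohn}. The one place you diverge is the taper, and that is where there is a genuine gap. You insist on a \emph{strictly positive} continuous $q\colon X\to(0,1]$ with $q|_A=1$ and $\{x\}\times[0,q(x)]\subseteq W$, which forces you into the fiberwise reach $r(x)=\sup\{s:\{x\}\times[0,s]\subseteq W\}$. But a continuous minorant $q\le r$ does not yield the segment condition at points where the supremum defining $r$ is not attained: since $W$ is an arbitrary open set of the form $\hat g^{-1}(O)$, there may be $x\notin A$ with $\{x\}\times[0,1)\subseteq W$ but $(x,1)\notin W$, so $r(x)=1$ while $\{x\}\times[0,r(x)]\not\subseteq W$. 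Such an $x$ lies in none of your closed sets $\{r\le 1-2^{-k}\}$, so every Urysohn function in your weighted combination may take the value $1$ there, producing $q(x)=1$ and a composite $\tilde g$ that is undefined (or discontinuous) at $(x,1)$. One can patch this (e.g.\ multiply by a continuous function equal to $1$ exactly on $A$), but as written the "suitably weighted combination" does not deliver what is claimed, and the measurability of $g\mapsto\{r_g\le 1-2^{-k}\}$ into $F(X)$ is also left unaddressed.

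The positivity requirement is in fact unnecessary, and dropping it collapses your construction onto the paper's much shorter argument. If $q(x)=0$ then $\Theta_q(x,t)=(x,0)$ and $\tilde g(x,t)=\pi(\hat g(x,0))=g(x,0)\in P$, which is perfectly acceptable. So one need only consider the set $V=\{x\in X:\{x\}\times I\subseteq W\}$, which is \emph{open} by the tube lemma (compactness of $I$) and contains $A$ because $A\times I\subseteq W$; a single application of Lemma \ref{Lemma:definable-Urysohn} to the disjoint closed sets $A$ and $X\setminus V$ produces, Borel-in-$g$, a continuous $\phi$ with $\phi|_A=1$ and $\phi|_{X\setminus V}=0$, and the taper $(x,t)\mapsto(x,\phi(x)t)$ lands in $(V\times I)\cup(X\times\{0\})\subseteq W\cup(X\times\{0\})$ with no reach analysis needed. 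This is precisely the retraction used in the paper's proof (following Marde\v{s}i\'c--Segal), which your first two steps already set up; your use of the Dugundji operator in place of the cited extension theorem is a clean and slightly stronger way to handle the ambient extension, but the taper should be done the simple way.
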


\begin{proof}
By \cite[Chapter I.3, Theorem 2]{mardesic_shape_1982} we may assume that  $P$
is a closed subspace of a convex subset $D$ of a Banach space $L$. Since $P$ is an absolute neighborhood retract 
\cite[Appendix 1, Theorem 1.11]{mardesic_shape_1982}, there exists a
neighborhood $N$ of $P$ in $D$ and a retraction $s:N\rightarrow P$.

Consider now a map $g:( A\times I) \cup ( X\times \left\{
0\right\} ) \rightarrow P$. By \cite[Chapter I.3, Theorem 2]%
{mardesic_shape_1982}, there exists a map $G:X\times I\rightarrow D$ that
extends $g$. It is plain from the proof of this theorem that $G$ depends in a Borel fashion on $g$, and that one may in a Borel manner choose along with $G$ an open subset $V\supseteq A$ of $X$ such that $G( V\times I) \subseteq N$. By
the definable version of Urysohn's Lemma, one may, in
a Borel fashion, then choose for each $V$ a continuous map $\phi :X\rightarrow I$ such that $%
\phi |_{A}=1$ and $\phi |_{X\setminus V}=0$. As in the proof of \cite[%
Chapter I.3, Lemma 2]{mardesic_shape_1982}, $\phi$ determines an extension of the inclusion map 
\begin{equation*}
\left( A\times I\right) \cup \left( X\times \left\{ 0\right\} \right)
\rightarrow \left( V\times I\right) \cup \left( X\times \left\{ 0\right\}
\right)
\end{equation*}%
to the continuous map%
\begin{equation*}
r:X\times I\rightarrow \left( V\times I\right) \cup \left( X\times \left\{
0\right\} \right) \text{, }\left( x,t\right) \mapsto \left( x,\phi(
x) t\right)\text{.}
\end{equation*}%
To conclude the construction, let $%
\tilde{g}=s\circ G\circ r:X\times I\rightarrow P$.
\end{proof}

\subsection{The definable relation of based and unbased homotopy classes of maps}
\label{SS:definable_relation_based_and_unbased}
The following theorem, which relies on both our definable homotopy extension and homotopy selection theorems, is the definable version of the well-known isomorphism $[(X,\star),(P,\ast)]/\pi_1(P,\ast)\cong [X,P]$ \cite[\S 7.3]{spanier_algebraic_1995}.

\begin{theorem}
\label{theorem:definable_action_of_pi1}
Let $P$ be a path-connected polyhedron with basepoint $\ast$ and let $X$ be an object of $\mathsf{LC}_*$.
There exists a definable right action of the definable group $[(S^1,\hexstar),(P,\ast)]$ on the semidefinable set $[(X,\star),(P,\ast)]$ whose semidefinable set of orbits $[(X,\star),(P,\ast)]/[(S^1,\hexstar),(P,\ast)]$ is definably isomorphic to the semidefinable set $[X,P]$.
\end{theorem}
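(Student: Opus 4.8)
The plan is to construct the action concretely at the level of representatives and then verify that all the resulting maps are Borel and descend to the quotient semidefinable sets. The classical isomorphism proceeds by fixing a based map $f\colon (X,\star)\to(P,\ast)$ and a loop $\omega\colon (S^1,\hexstar)\to(P,\ast)$, and using a nullhomotopy-type construction to ``drag'' the basepoint of $f$ along $\omega$; the homotopy class of the resulting map defines $[f]\cdot[\omega]$, and the orbit of $[f]$ under this action is exactly the unbased homotopy class of $f$. The work here is to show that each ingredient of this construction can be performed Borel-measurably in the parameters, so that the induced maps are definable in the sense of Section~\ref{Section:dset}, and then to invoke the regularity results of that section (chiefly Proposition~\ref{Proposition:bij-iso}) to upgrade the abstract bijection to a definable isomorphism.

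\textbf{Key steps.} First I would realize the action map on representatives. Given $f\in\mathsf{LC}_*((X,\star),(P,\ast))$ and a loop $\omega$, consider the partially-defined map on $(X\times\{0\})\cup(\{\star\}\times I)$ given by $f$ on the first piece and $\omega$ on the second, where we regard $\{\star\}\times I\cong I$ and use $\omega$ as a path from $\ast$ to $\ast$ tracing the loop. By the definable Homotopy Extension Theorem (Theorem~\ref{Theorem:definable-homotopy-extension}, applied with $A=\{\star\}$), this extends to a homotopy $H\colon X\times I\to P$ in a manner depending Borel-measurably on the pair $(f,\omega)$; I then set $f\cdot\omega:=H(-,1)$, which is again based at $\ast$. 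This yields a Borel map $\mathsf{LC}_*((X,\star),(P,\ast))\times\mathsf{LC}_*((S^1,\hexstar),(P,\ast))\to\mathsf{LC}_*((X,\star),(P,\ast))$. Second, I would check the action axioms on the quotient: that $[f]\cdot[\ast]=[f]$, that $([f]\cdot[\omega])\cdot[\omega']=[f]\cdot([\omega]\cdot[\omega'])$, and independence of the extension chosen; these are the standard homotopy-theoretic verifications of \cite[\S7.3]{spanier_algebraic_1995}, and since each construction step admits a Borel lift, the induced operation on $[(X,\star),(P,\ast)]$ is a definable action of the definable group $[(S^1,\hexstar),(P,\ast)]$ (the latter being definable by Theorem~\ref{Theorem:homotopy-idealistic}, as $S^1$ is a polyhedron carrying an $H$-cogroup structure by Example~\ref{Ex:suspensions}). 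Third, I would identify the orbit map with the forgetful map to unbased classes: the inclusion $\mathsf{LC}_*((X,\star),(P,\ast))\hookrightarrow\mathsf{LC}(X,P)$ is continuous, hence descends to a definable map $[(X,\star),(P,\ast)]\to[X,P]$, and two based maps are freely homotopic if and only if they lie in the same orbit. I would produce a Borel section for surjectivity: given an unbased homotopy class, the classical argument shows any unbased map into the path-connected $P$ is freely homotopic to a based one, and the definable selection principle (Corollary~\ref{Corollary:select-homotopy}) together with the path-connectedness of $P$ lets one choose such a based representative Borel-measurably, reversing the drag along a path from $f(\star)$ to $\ast$. Finally, I would conclude that the induced map on orbit sets $[(X,\star),(P,\ast)]/[(S^1,\hexstar),(P,\ast)]\to[X,P]$ is a definable bijection, hence a definable isomorphism of semidefinable sets.

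\textbf{Main obstacle.} The crux is the Borel-definability of the section witnessing surjectivity of the orbit map, i.e.\ choosing for each unbased map a freely-homotopic based representative in a Borel way. The free homotopy must carry $f(\star)$ to $\ast$ along a path in $P$, and while $P$ is path-connected, one needs this path — and the accompanying free homotopy — to be selected Borel-measurably as $f$ varies; this is where I expect to lean most heavily on Corollary~\ref{Corollary:select-homotopy} and the definable Urysohn and extension machinery, possibly fixing once and for all a Borel family of paths from each point of $P$ to $\ast$ (using local contractibility of polyhedra and a partition-of-unity argument) to realize the dragging homotopy. A secondary subtlety is that $\Omega$-type spaces and even $[(X,\star),(P,\ast)]$ need not a priori be groups with a Polish cover, so I must argue purely within $\mathsf{SemiDSet}$ and $\mathsf{DSet}$, invoking Proposition~\ref{Proposition:bij-iso} only after confirming both the action target and the orbit space sit in the appropriate definable category; the well-definedness of the action on orbits, independent of all Borel choices made, is routine homotopy theory once the measurability is in hand.
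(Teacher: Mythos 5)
Your proposal is correct and follows essentially the same route as the paper's proof: the action is built by extending $f\vee\omega$ via the definable Homotopy Extension Theorem, the orbit map is the forgetful map induced by inclusion, and the inverse is obtained by Borel-selecting (via the selection principle of Corollary \ref{Corollary:select-homotopy}/Proposition \ref{Proposition:select-homotopy2} and path-connectedness of $P$) a path from $f(\star)$ to $\ast$ and dragging along it. The one point to keep straight is that since the objects involved are only semidefinable, Proposition \ref{Proposition:bij-iso} is not available and one must exhibit both $\phi$ and its inverse $\psi$ as definable functions explicitly, which your construction of the Borel section already does.
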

\begin{proof}
We describe a Borel function
$$F:\mathsf{LC}_*((X,\star),(P,\ast))\times \mathsf{LC}_*((S^1,\hexstar),(P,\ast))\to\mathsf{LC}_*((X,\star),(P,\ast))$$
which induces the action in question.
$F$ is defined as follows.
Any $(f,\gamma)\in\mathsf{LC}_*((X,\star),(P,\ast))\times \mathsf{LC}_*((S^1,\hexstar),(P,\ast))$ naturally identifies with a map $f\vee\gamma:X\vee I\to P$. By the Homotopy Extension Theorem, this map extends to a homotopy $h^{(f,\gamma)}:X\times I\to P$ for which $h^{(f,\gamma)}(-,0)=f$ and $h^{(f,\gamma)}(\ast,-)=\gamma$ and $h^{(f,\gamma)}(-,1)\in\mathsf{LC}_*((X,\star),(P,\ast))$; moreover, by Theorem \ref{Theorem:definable-homotopy-extension} these extensions may be chosen in a Borel fashion.
Assume that they have been, and let $F(f,g)=h^{(f,\gamma)}(-,1)$ for each $(f,\gamma)\in\mathsf{LC}_*((X,\star),(P,\ast))\times\mathsf{LC}_*((S^1,\hexstar),(P,\ast))$.

Clearly $F$ is Borel. To see that $F$ induces an action as claimed, suppose the pairs $f,g:(X,\star)\to(P,\ast)$ and $\gamma,\delta:(S^1,\hexstar)\to(P,\ast)$ are each homotopic; together with $h^{(f,\gamma)}$ and $h^{(g,\delta)}$, these homotopies determine a map $r$ from
$$\left(X\times I\times\{0\}\,\cup\,(\{\star\}\times I\,\cup\,X\times\{0,1\})\times I\right)\subseteq X\times I\times I$$
to $P$.
More precisely, $r$ is defined by identifying $r\restriction X\times I\times\{0\}$ with $f\Rightarrow g$, $r\restriction \{\star\}\times I\times I$ with $\gamma\Rightarrow\delta$, and $r\restriction X\times\{0\}\times I$ and $r\restriction X\times\{1\}\times I$ with $h^{(f,\gamma)}$ and $h^{(g,\delta)}$, respectively.
By the Homotopy Extension Theorem, this map extends to an $H:X\times I\times I\to P$, and $H(-,-,1)$ is then the desired basepoint-preserving homotopy from $F(f,\gamma)$ to $F(g,\delta)$.
This shows that $F$ induces a well-defined operation at the level of the quotients; that this operation is a right action is then immediate.

To see the isomorphism in question, regard $[(X,\star),(P,\ast)]/[(S^1,\hexstar),(P,\ast)]$ as the semidefinable set $(Y,E)$ wherein $Y=\mathsf{LC}_*((X,\star),(P,\ast))$ and $E$ is the equivalence relation defined by $f\,E\,g$ if and only if there exists an $\alpha:(X,\star)\to(P,\ast)$ such that $[f]\cdot [\alpha]=[g]$. Observe then that the map $\mathsf{LC}_*((X,\star),(P,\ast))\to \mathsf{LC}(X,P):f\mapsto f$ induces a definable function $$\phi:[(X,\star),(P,\ast)]/[(S^1,\hexstar),(P,\ast)]\to [X,P].$$
We describe a definable inverse $\psi$ to $\phi$.
By Proposition \ref{Proposition:select-homotopy2}, we may, in a Borel fashion, choose for each $f\in\mathsf{LC}(X,P)$ a homotopy $\gamma[f]:\{\star\}\times I\to P$ with $\gamma[f](\star,0)=f(\star)$. As above, we may then apply the definable Homotopy Extension Theorem to, in a Borel fashion, extend each $f\vee\gamma[f]$ to an $h^{(f,\gamma[f])}:X\times I\to P$.
The definable function $\psi$ is then that induced by the assignments $f\mapsto h^{(f,\gamma[f])}(-,1)$.
The verification that $\psi$ is well-defined is almost exactly as before: within the framework of this construction, any $f\Rightarrow g:X\to P$ induces maps from ``walls'' of $X\times I\times I$ which, by the Homotopy Extension Theorem, extend to a map $X\times I\times I\to P$ whose restriction to $X\times I\times \{1\}$ defines a homotopy from $h^{(f,\gamma[f])}(-,1)$ to $h^{(g,\gamma[g])}(-,1)$.
Unlike before, this homotopy need not be basepoint-preserving. Its restriction to $\{\star\}\times I$, however, determines an $\alpha:(S^1,\hexstar)\to (P,\ast)$, from which it follows that $[h^{(f,\gamma[f])}(-,1)]\cdot[\alpha]=[h^{(g,\gamma[g])}(-,1)]$, as desired. That $\psi$ is both a right and left inverse of $\phi$ is now immediate from their definitions.
\end{proof}
By an easy corollary, the semidefinable sets $[(X,\star),(P,\ast)]$ and $[X,P]$ may often be definably identified.
\begin{corollary}
\label{cor:based_hspace}
Let $X$ and $P$ be as above. If either (i) $P$ is simply-connected, or (ii) $P$ is an $H$-space with identity element $\ast$, then $[(X,\star),(P,\ast)]$ is definably isomorphic to $[X,P]$.
\end{corollary}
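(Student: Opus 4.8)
The plan is to deduce the corollary directly from Theorem \ref{theorem:definable_action_of_pi1}, which already supplies a definable right action of $[(S^1,\hexstar),(P,\ast)]$ on the semidefinable set $[(X,\star),(P,\ast)]$ together with a definable isomorphism between the orbit set $[(X,\star),(P,\ast)]/[(S^1,\hexstar),(P,\ast)]$ and $[X,P]$. The entire definable content of the statement is thus carried by that theorem, and it remains only to observe that in each of the two cases the action in question is \emph{trivial}, so that the orbit relation $E$ on $Y=\mathsf{LC}_*((X,\star),(P,\ast))$ (under which $f\,E\,g$ exactly when $[g]$ lies in the orbit of $[f]$) collapses to the based homotopy relation itself. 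When this happens, the semidefinable set $[(X,\star),(P,\ast)]/[(S^1,\hexstar),(P,\ast)]$ is, by inspection of its defining data $(Y,E)$, nothing other than $[(X,\star),(P,\ast)]$, and the definable isomorphism furnished by Theorem \ref{theorem:definable_action_of_pi1} becomes the desired definable isomorphism $[(X,\star),(P,\ast)]\cong[X,P]$.

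It therefore suffices to verify triviality of the action in each case, and here I would appeal to classical homotopy theory, since the action is defined purely on the level of homotopy classes. In case (i), simple-connectedness of $P$ means that $[(S^1,\hexstar),(P,\ast)]=\pi_1(P,\ast)$ is the trivial group, so the acting group has a single element and the action is trivial by default. In case (ii), the relevant classical fact is that $H$-spaces are \emph{simple}: the fundamental group of an $H$-space acts trivially on the based homotopy classes of maps into it. I would record a short self-contained verification of this adapted to the action $F$ built in Theorem \ref{theorem:definable_action_of_pi1}: given $f:(X,\star)\to(P,\ast)$ and a loop $\gamma:(S^1,\hexstar)\to(P,\ast)$, the free homotopy $h^{(f,\gamma)}$ sliding the basepoint along $\gamma$ can be corrected by the $H$-multiplication $\mu$ against the reverse loop, so that $F(f,\gamma)$ is based-homotopic to $\mu\circ(f,\ast)$, which is in turn based-homotopic to $f$ by the homotopy-identity axiom for $\mu$. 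This shows $[F(f,\gamma)]=[f]$ for every $\gamma$, i.e.\ that the action is trivial.

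The only genuine subtlety --- and the step I expect to require the most care --- is case (ii): making the ``$H$-spaces are simple'' argument precise for the \emph{specific} representative $F(f,\gamma)$ produced by the Borel extension in Theorem \ref{theorem:definable_action_of_pi1}, rather than for an abstractly chosen representative of the orbit. Since triviality is only needed at the level of homotopy classes, however, any based homotopy from $F(f,\gamma)$ to $f$ suffices, and the classical construction via $\mu$ (cf.\ Lemma \ref{Lemma:H-space} and the homotopy-identity axioms of Section \ref{SS:H-groups}) provides one; no new descriptive set theoretic input is needed. With triviality established in both cases, the identification of $E$ with the based homotopy relation together with the appeal to Theorem \ref{theorem:definable_action_of_pi1} completes the proof.
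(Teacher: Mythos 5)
Your proposal is correct and follows essentially the same route as the paper: both arguments reduce the corollary to the triviality of the action supplied by Theorem \ref{theorem:definable_action_of_pi1}, handling case (i) by the triviality of $\pi_1(P,\ast)$ and case (ii) by the classical simplicity of $H$-spaces realized through the multiplication $\mu$. The paper's only additional touch is to note (citing a standard fact) that the unit homotopies of $\mu$ may be taken basepoint-preserving and then to exhibit the explicit witnessing homotopy $h(x,s)=\mu(f(x),\gamma(s))$, which is the precise form of the correction you sketch.
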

\begin{proof} In both cases the action of $[(S^1,\hexstar),(P,\ast)]$ on $[(X,\star),(P,\ast)]$ is trivial. In case (i), this is because the group is trivial. In case (ii), note first that without loss of generality, in the $H$-space structure on $P$, the homotopies of $\mu(-,\ast)$ and $\mu(\ast,-)$ with the identity may each be taken to be basepoint-preserving (see \cite{hatcher_solution}).
Note next that for any $(f,\gamma)\in\mathsf{LC}_*((X,\star),(P,\ast))\times \mathsf{LC}_*((S^1,\hexstar),(P,\ast))$, the operation $\mu$ on $P$ determines a homotopy $$h:\,X\times I\to P:\,(x,s)\mapsto \mu(f(x),\gamma(s)),$$
where $\gamma$ is viewed as a map $(I,\{0,1\})\to(P,\ast)$.
Since both $h(-,0)$ and $h(-,1)$ are (basepoint-preserving) homotopic to $f$ and $h(\star,-)$ is (basepoint-preserving) homotopic to $\gamma$, we conclude that $[f]\cdot[\gamma]=[f]$.
\end{proof}
\begin{remark} An upshot of our definable version of Huber's Theorem is that the assignments of definable cohomology groups to spaces may be developed into \emph{definable cohomology functors} from either a combinatorial or homotopical perspective. In this direction, we note in passing that a generalization of the preceding arguments constructs the definable connecting homomorphisms $\partial^n:\mathrm{H}^n(A;G)\to\mathrm{H}^{n+1}(X,A;G)$ in the long exact cohomology sequence associated to a locally compact pair $(X,A)$ from the \emph{homotopical} perspective, in counterpoint to Section \ref{SS:combinatorial_cohomology_of_pairs}. The basic ingredients are application of the generalized reduced suspension operation $\bar{\Sigma}$, introduced in Section \ref{SS:NSFW:overexplicit_homotopies} below, to the subspace $A$ (the key $S^1$ term above may be regarded as $\bar{\Sigma}(A)$ for $A=\star$), together with the operation of its adjoint $\Omega$ on $K(G,n)$.
\end{remark}
\section{The homotopy classification of maps and phantom maps}\label{S:HomotopyClassification}

In this section we study the homotopy relation on maps $(X,A) \rightarrow (P,\ast)$ where $(X,A)$ is a locally compact pair and $P$ is a
path-connected pointed polyhedron with distinguished point $\ast$; notice
that this includes the case when $A=\varnothing$, whereupon
the problem reduces to the classification of maps $X\rightarrow P$.
Working, in other words, in the setting of $\mathsf{LCP}$ from a perspective essentially subsuming the cases of $\mathsf{LC}_*$ and $\mathsf{LC}$ affords a certain streamlining of arguments, requiring only some extra care around the operation of suspension; see the remarks early in Section \ref{SS:NSFW:overexplicit_homotopies}.
Below, we will let $\ast$ denote the map $(X,A) \rightarrow (P,\ast)$ which is constantly equal to $\ast $, and say that a map $f:(
X,A) \rightarrow (P,\ast)$ is \emph{nullhomotopic} if
there is a homotopy $h:f\Rightarrow \ast :(X,A)
\rightarrow (P,\ast)$. In this way, we regard $[(X,A),(P,\ast)]$ as a pointed semidefinable
set with distinguished element equal to the homotopy class of $\ast$.
For any cofiltration $(X_n, A_n)_{n\in\mathbb{N}}$ of $(X,A)$ by compact pairs, we may consider also those maps $f$ for which each $f\restriction X_n$ is nullhomotopic; these are the \emph{phantom maps} from $(X,A)$ to $(P,*)$, and they form this section's main focus. Our primary results herein are the following:
\begin{itemize}
\item A series of decompositions of $[(X,A),(P,\ast)]$ in terms of its class of phantom maps (Theorems \ref{Theorem:phantom1} and \ref{Theorem:phantom-H-space}), culminating in the case when $P$ is an $H$-group (Theorem \ref{Theorem:phantom-H-group}), whereupon this decomposition takes the form of a short exact sequence specializing in Proposition \ref{Proposition:asymptotic-cohomology} to a Milnor-type exact sequence of definable cohomology groups.
\item Using this decomposition, we show that $[(X,A),(P,\ast)]$ is a definable group whenever $P$ is an $H$-group, thereby generalizing the results of Section \ref{S:Huber}.
\item Along the way, we prove a topological characterization of the class of phantom maps: they are the closure in $[(X,A),(P,\ast)]$ of $[\{*\}]$; see Proposition \ref{prop:topological_char_of_phantoms}.
\end{itemize}

\subsection{Cofiltrations and $\mathsf{Ind}_\omega(\mathcal{C})$, and $\mathsf{Pro}_\omega(\mathcal{C})$ and $\mathrm{lim}$ and $\mathrm{lim}^1$}\label{Section:pro_and_ind_categories}

Instrumental in the arguments of Section \ref{S:Definable cohomo} was the existence, for any locally compact Polish space $X$, of a sequence of compact subspaces approximating to $X$; we now fix a slight refinement of this notion.

\begin{definition}
\label{def:cofiltration}
A \emph{cofiltration }of a locally compact pair $(X,A)$ is an increasing sequence $(X_i,A_i)
_{i\in \mathbb{N}}$ of pairs of compact subspaces of $X$ such that $X_{i}\subseteq 
\mathrm{int}(X_{i+1})$ and $A_i=X_i\cap A$ for each $i\in \mathbb{N}$, and $X=\bigcup_{i\in\mathbb{N}}X_i$.
\end{definition}

Such a sequence is naturally viewed as a \emph{direct} or \emph{inductive system} $\mathbf{X}=((X_i,A_i),\eta_i)_{i\in\mathbb{N}}$ with each $\eta_i:(X_i,A_i)\to (X_{i+1},A_{i+1})$ an inclusion map. Such a system, of course, also contains the morphisms $\eta_{i,j}:(X_i,A_i)\to (X_j,A_j)$ for any $i\leq j$, but is fully determined by those of the form $\eta_{i,i+1}$, which we will continue to abbreviate as $\eta_i$.
It will be convenient below to view these and other inductive sequences in any given category $\mathcal{C}$ themselves as objects of a category $\mathsf{Ind}_\omega(\mathcal{C})$. To do so we need only to describe the morphisms of the latter; to better motivate this description, let us first observe the following.
\begin{enumerate}
\item[(i)] Given cofiltrations $\mathbf{X}=((X_i,A_i),\eta^X_i)_{i\in\mathbb{N}}$ and $\mathbf{Y}=((Y_i,B_i),\eta^Y_i)_{i\in\mathbb{N}}$ of locally compact pairs $(X,A)$ and $(Y,B)$, respectively, and a continuous function $f:X\to Y$, there exists for each $i\in\mathbb{N}$ a least $g(i)$ with $f[X_i]\subseteq Y_{g(i)}$. In other words, letting $f_i=f\restriction X_i:X_i\to Y_{g(i)}$ for each $i$, any such $f$ induces a family of morphisms $(f_i,g)_{i\in\mathbb{N}}$ satisfying the following property:
\begin{equation}
\label{eq:indmaps}
g(i)\leq g(j)\textnormal{ and }f_j\,\eta^X_{i,j}=\eta^Y_{g(i),g(j)}\,f_i\textnormal{ for all }i\leq j\textnormal{ in }\mathbb{N}.
\end{equation} 
\item[(ii)] Higher choices for each $g(i)$ above wouldn't, for our purposes, make any essential difference; more broadly, we are much more interested in ``cofinal'' relations among cofiltrations or functions between them than in strict ones. More formally, we would prefer \emph{not} to distinguish between families $(f_i,g)_{i\in\mathbb{N}}$ as above which exhibit the following relation:
\begin{equation}
\label{eq:indequiv}
(f_i,g)_{i\in\mathbb{N}}\thicksim (e_i,h)_{i\in\mathbb{N}}\hspace{.3 cm}\textnormal{if}\hspace{.3 cm}g(i)\leq h(i)\textnormal{ and }e_i=\eta^Y_{g(i),h(i)}\,f_i\textnormal{ for all }i\in\mathbb{N}.
\end{equation}
\end{enumerate}
These considerations lead to the following definition.
\begin{definition}
Fix any category $\mathcal{C}$. The category $\mathsf{Ind}_\omega(\mathcal{C})$ has as objects the inductive sequences $\mathbf{X}=(X_i,\eta_i)_{i\in\mathbb{N}}$ in $\mathcal{C}$, or, in other words, the functors from the partial order category $\mathbb{N}$ to $\mathcal{C}$. Its morphisms are the $\thicksim$-equivalence classes of families $(f_i,g)_{i\in\mathbb{N}}$ of functions $f_i:X_i\to Y_{g(i)}$ coupled with a $g:\mathbb{N}\to\mathbb{N}$ which together satisfy equation \ref{eq:indmaps}, where $\thicksim$ is the equivalence relation generated by line \ref{eq:indequiv} above.
\end{definition}
We may then define $\mathsf{Pro}_\omega(\mathcal{C})$ as $\mathsf{Ind}_\omega(\mathcal{C}^\mathrm{op})^{\mathrm{op}}$. In particular, any contravariant functor $\mathcal{C}\to\mathcal{D}$ induces a functor $\mathsf{Ind}_\omega(\mathcal{C})^{\mathrm{op}}\to\mathsf{Pro}_\omega(\mathcal{D})$; the composition of such a functor with the $\mathrm{lim}$ and $\mathrm{lim}^1$ functors is a basic motif in what follows. The latter functors were reviewed in abelian settings in some detail in \cite[\S 5]{BLPI}; here we will require and review their extension in \cite[Section IX.2]{bousfield_homotopy_1972} to inverse sequences $(G^n,\eta^n)_{n\in\mathbb{N}}$ of possibly \emph{nonabelian} groups. Much as above, $\eta^n$ abbreviates $\eta^{n,n+1}:G^{n+1}\to G^n$; for brevity we will also sometimes term such inverse sequences \emph{towers}. To simplify notation, we adopt the sometimes tacit convention that towers and the arguments of $\mathrm{lim}$ and $\mathrm{lim}^1$ are always indexed by $n\in\mathbb{N}$.

Writing $\mathsf{CGrp}$ for the category of countable groups, it is straightforward first of all to see that the inverse limit defines a functor from $\mathsf{Pro}_\omega(\mathsf{CGrp})$ to the category of non-archimedean Polish groups and continuous homomorphisms. Suppose next that $\mathbf{G}=(G^n,\eta^n)$ is a tower in $\mathsf{CGrp}$, and consider the Polish group $C^0(\mathbf{G}):=\prod_{n\in\mathbb{N}}G^n$.
We then have a continuous action of the Polish group $C^0(\mathbf{G})$ on the
Polish \emph{space} $\mathrm{Z}^{1}(\mathbf{G}) =\prod_{n\in
\mathbb{N}}G^n$ defined by $(g \cdot h)
_{n}=g_n\cdot h_n\cdot (\eta^n(g_{n+1}))^{-1}$ for each $n\in
\mathbb{N}$, which we call the \emph{$\mathrm{lim}^{1}$-action of $\mathbf{G}$}. If $\mathrm{B}^{1}(\mathbf{G})$ is the corresponding
orbit equivalence relation on $\mathrm{Z}^{1}(\mathbf{G})$,
defined by setting $h\,\mathrm{B}^{1}(\mathbf{G})\,h^{\prime }$ if and only
if there exists a $g\in C^{0}(\mathbf{G})$ such that $g\cdot
h=h'$, then $\mathrm{lim}^{1}\,\mathbf{G}$ is
the \emph{pointed semidefinable set} $\mathrm{Z}^{1}(\mathbf{G})/\mathrm{%
B}^{1}(\mathbf{G})$, with distinguished point corresponding to the
identity element of $\mathrm{Z}^{1}(\mathbf{G})$ (regarded as a group).

The construction clearly specializes to the more familiar $\mathrm{lim}^1$ of a tower $\mathbf{G}$ of countable \emph{abelian} groups: in this case, $\mathrm{Z}^{1}(\mathbf{G})$ is also an abelian group, and $\mathrm{B}^{1}(\mathbf{G})$
is the coset equivalence relation with respect to a Borel Polishable subgroup
of $\mathrm{Z}^{1}(\mathbf{G})$, hence $\mathrm{%
lim}^{1}\,\mathbf{G}$ is a group with a Polish cover when endowed with the group operation inherited from $\mathrm{Z}^{1}(\mathbf{G})$ --- a fact explored at length in \cite{BLPI}.

\subsection{Weak homotopy}

As indicated, the broad focus of the remainder of this section is the sets or groups of homotopy classes of maps from a locally compact pair $(X,A)$ to a pointed polyhedron $(P,*)$; based maps $(X,x)\to(P,*)$ or unbased maps $X\to P$ appear as special cases, by letting $A=\{x\}$ or $\varnothing$, respectively.

\begin{definition}\label{definition:weaklyhomotopic} Let $f$ and $g$ be maps from a locally compact pair $(X,A)$ to a pointed polyhedron $(P,*)$.
We say that $f$ and $g$ are \emph{weakly homotopic}, writing $f\simeq_{\mathrm{w}} g$, if for every compact subspace $K$ of $X$, the maps $f|_{K},g|_{K}:(K,K\cap
A) \rightarrow ( P,*) $ are homotopic.
\end{definition}

\begin{lemma}\label{Lemma:weak-homotopy} Let $f$ and $g$ be as in Definition \ref{definition:weaklyhomotopic}. The following are equivalent:
\begin{enumerate}
\item $f$ and $g$ are weakly homotopic,

\item for every subspace $(Y,B)$ of $( X,A)$ that
is homotopy equivalent to a compact pair, $f|_Y,g|_Y:(
Y,B) \to (P,*)$ are
homotopic.
\end{enumerate}
\end{lemma}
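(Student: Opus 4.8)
The plan is to prove the two characterizations of weak homotopy equivalent. The implication $(2)\Rightarrow(1)$ is immediate, since every compact subspace $K$ of $X$ is in particular a subspace of $(X,A)$ which is homotopy equivalent to a compact pair (namely itself, with $K\cap A$), so the hypothesis of $(2)$ applied to $(K,K\cap A)$ yields precisely the condition defining $f\simeq_{\mathrm w}g$ in $(1)$. Thus the real content is $(1)\Rightarrow(2)$.

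For $(1)\Rightarrow(2)$, first I would reduce from an arbitrary subspace $(Y,B)$ homotopy equivalent to a compact pair to the genuinely compact case. Suppose $(Y,B)$ is a subspace of $(X,A)$ with $B=Y\cap A$, and fix a homotopy equivalence $\phi:(Y,B)\to(Z,C)$ with homotopy inverse $\psi$, where $(Z,C)$ is a compact pair. By Lemma \ref{lemma:homotopyequivalenceinduceshomotopyequivalence}(1), precomposition $\phi^*$ induces a bijection on homotopy classes, so $f|_Y\simeq g|_Y$ if and only if $f|_Y\circ\psi\simeq g|_Y\circ\psi$ as maps $(Z,C)\to(P,*)$. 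Hence it suffices to transfer the problem to maps on the compact pair $(Z,C)$; but here I expect the main obstacle to surface, because the maps $f|_Y\circ\psi$ and $g|_Y\circ\psi$ are defined on $Z$, whereas the weak homotopy hypothesis $(1)$ only controls $f,g$ directly on compact subspaces of $X$. The cleanest route is therefore to argue the compact case directly and feed it through this reduction.

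So I would isolate the key compact case as follows: when $(Y,B)$ is itself a compact pair sitting inside $(X,A)$, then $Y$ is a compact subspace of $X$, and condition $(1)$ applied to $K=Y$ gives immediately that $f|_Y\simeq g|_Y:(Y,Y\cap A)\to(P,*)$, since $B=Y\cap A$. This settles $(2)$ for compact $(Y,B)$ with no further work. To handle a general $(Y,B)$ homotopy equivalent to a compact pair $(Z,C)$, I combine this with the reduction of the previous paragraph: from $f|_Y\circ\psi\simeq g|_Y\circ\psi$ — which we would establish by viewing these as maps out of the compact $(Z,C)$ and invoking the compact case plus Corollary \ref{Corollary:countable-homotopy}-style control — and then applying Lemma \ref{lemma:homotopyequivalenceinduceshomotopyequivalence}(1) in the form that $s\simeq t$ if and only if $s\circ\phi\simeq t\circ\phi$, we recover $f|_Y\simeq g|_Y$.

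The delicate point, and the one I would treat most carefully, is that the homotopy equivalence $\phi:(Y,B)\to(Z,C)$ need not extend to, or interact simply with, the ambient maps $f,g$ defined on all of $X$; what makes the argument go through is that weak homotopy is an \emph{intrinsic} property of the restrictions $f|_Y,g|_Y$, so once we know these restrictions are weakly homotopic as maps out of $(Y,B)$ (which follows from $(1)$, since compact subspaces of $Y$ are compact subspaces of $X$), we may forget the ambient space entirely and argue within $(Y,B)$. Concretely, I would prove the self-contained statement that if two maps out of a pair $(Y,B)$ that is homotopy equivalent to a compact pair agree on every compact subspace up to homotopy, then they are homotopic: transport along $\phi$ to the compact pair $(Z,C)$, where homotopy of maps into the polyhedron $(P,*)$ is detected on the single compact space $Z$ itself by the compact case, and transport back. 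This reduces the whole implication to the already-established compact case together with the homotopy-invariance bookkeeping of Lemma \ref{lemma:homotopyequivalenceinduceshomotopyequivalence}, and I expect no essential difficulty beyond correctly tracking that $B=Y\cap A$ and $C=\phi(B)$ are respected throughout.
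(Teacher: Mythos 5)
Your overall strategy is the paper's: reduce to the compact pair via the homotopy equivalence and transfer back with Lemma \ref{lemma:homotopyequivalenceinduceshomotopyequivalence}(1), and your $(2)\Rightarrow(1)$ direction is fine. But there is a genuine gap exactly where you flag ``the main obstacle'': you never actually establish $f|_Y\circ\psi\simeq g|_Y\circ\psi$. Your proposed justification --- viewing these as maps out of the compact $(Z,C)$ and ``invoking the compact case plus Corollary \ref{Corollary:countable-homotopy}-style control'' --- does not work as stated. The compact case you proved applies only to \emph{restrictions} of $f$ and $g$ to compact subspaces of $X$, whereas $f|_Y\circ\psi$ and $g|_Y\circ\psi$ are maps out of an abstract compact pair $(Z,C)$ that need not sit inside $X$ at all; and Corollary \ref{Corollary:countable-homotopy} (countability of $[(Z,C),(P,*)]$) says nothing about whether these two particular homotopy classes coincide. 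Mere compactness of $Z$ does not ``detect'' the homotopy; hypothesis (1) has to enter somewhere in the non-compact case, and in your write-up it never does.

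The missing idea --- which is essentially the entirety of the paper's proof of $(1)\Rightarrow(2)$ --- is to apply (1) to the \emph{image} $\psi(Z)$, which \emph{is} a compact subspace of $X$ (and $\psi(C)\subseteq\psi(Z)\cap A$ since $\psi$ is a map of pairs into $(Y,B)$ with $B=Y\cap A$). Hypothesis (1) then yields a pair homotopy $h\colon f|_{\psi(Z)}\Rightarrow g|_{\psi(Z)}\colon(\psi(Z),\psi(Z)\cap A)\to(P,*)$, and $h\circ(\psi\times\mathrm{id}_I)$ is a homotopy $f|_Y\circ\psi\Rightarrow g|_Y\circ\psi\colon(Z,C)\to(P,*)$. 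From there your transfer step via Lemma \ref{lemma:homotopyequivalenceinduceshomotopyequivalence}(1) finishes the argument exactly as you describe. So the gap is a single short step, but it is the step that carries all the content of the implication.
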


\begin{proof}
That (2) implies (1) is obvious; we will show that (1) implies (2). To that end, let $\alpha :(K,L) \rightarrow
(Y,B) $ witness the homotopy equivalence of a compact pair $(K,L)$ with $(Y,B)\subseteq (X,A)$. By assumption, there exists a homotopy $h:f|_{\alpha(K)}\Rightarrow g|_{\alpha(K) }:( \alpha(K) ,\alpha(K) \cap A) \to(
P,*)$.
Thus $\tilde{h}:=h\circ ( \alpha \times \mathrm{id}_{I})
:(K\times I,L\times I) \to ( P,*) $ witnesses that $\tilde{h}( -,0) =f\circ \alpha $
and $\tilde{h}( -,1) =g\circ \alpha $ are homotopic. Since $\alpha :( K,L) \to(Y,B) $ is a homotopy equivalence, this implies that $f|_Y,g|_Y:(Y,B) \to(P,*) $ are homotopic; this concludes the proof.
\end{proof}

Let $[(X,A),(P,*)]_{\mathrm{w}}$ denote the pointed semidefinable set of weak homotopy classes of maps $(X,A) \to (P,*)$.

\begin{lemma}
\label{Lemma:saturate-local}Let $(X,A)$ be a
locally compact pair and let $(P,*)$ be a pointed polyhedron.
The relation $\simeq_{\mathrm{w}}$ of weak homotopy on maps $(X,A)\to(P,*)$ is a closed
equivalence relation with the property that the saturation $[ U] _{\simeq_{\mathrm{w}}}$ of any open subset $U$ of $\mathsf{LCP}((X,A),(P,*))$ is open.

Furthermore, $\simeq_{\mathrm{w}}$ is the closure inside $\mathsf{LCP}((X,A),(P,*))^2$ of
the relation $\simeq$ of homotopy of maps $(X,A)\to(P,*)$.
\end{lemma}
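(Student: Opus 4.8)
The plan is to prove the two assertions of Lemma \ref{Lemma:saturate-local} in turn, leaning on the compact-pair case already established in Lemma \ref{Lemma:countable-homotopy}.

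First I would establish that $\simeq_{\mathrm{w}}$ is an equivalence relation (transitivity being the only nonobvious point, and it follows at once from transitivity of homotopy on each compact $K$) and that it is \emph{closed}. For closedness, fix a cofiltration $(X_n,A_n)_{n\in\mathbb{N}}$ of $(X,A)$ by compact pairs. By Lemma \ref{Lemma:weak-homotopy}, $f\simeq_{\mathrm{w}}g$ if and only if $f|_{X_n}\simeq g|_{X_n}$ for every $n$, since every compact subspace of $X$ is contained in some $X_n$. Thus $\simeq_{\mathrm{w}}$ is the intersection over $n$ of the pullbacks of the homotopy relations on $\mathsf{LCP}((X_n,A_n),(P,*))$ under the restriction maps $f\mapsto f|_{X_n}$. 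Each restriction map $\mathsf{LCP}((X,A),(P,*))\to\mathsf{LCP}((X_n,A_n),(P,*))$ is continuous (for the compact-open topology), and by Lemma \ref{Lemma:countable-homotopy} the homotopy relation on maps out of a compact pair is closed; hence each pullback is closed, and $\simeq_{\mathrm{w}}$, as their countable intersection, is closed.

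Next, for the openness of saturations, I would again use the factorization through the compact stages. The key fact from Lemma \ref{Lemma:countable-homotopy} is that homotopy on $\mathsf{LCP}((X_n,A_n),(P,*))$ is \emph{open}, meaning that the saturation of any open set is open; equivalently the homotopy classes are open. Given an open $U\subseteq\mathsf{LCP}((X,A),(P,*))$, I want to show $[U]_{\simeq_{\mathrm{w}}}$ is open. Here I expect the main obstacle to lie: one must produce, around a given $g\in[U]_{\simeq_{\mathrm{w}}}$ witnessed by some $f\in U$ with $f\simeq_{\mathrm{w}}g$, a basic open neighborhood of $g$ still lying in $[U]_{\simeq_{\mathrm{w}}}$. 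The strategy is to shrink $U$ to a basic open set determined by finitely many compact-open conditions on finitely many compacta, all contained in a single $X_n$; then openness of homotopy at the $n$-th compact stage (Lemma \ref{Lemma:countable-homotopy}), pulled back continuously, yields an open neighborhood of $f$ whose $\simeq_{\mathrm{w}}$-saturation meets $g$. Transporting this through the homotopy equivalence and using that weak homotopy is detected at stage $X_n$ will be the delicate bookkeeping step, requiring care that the witnessing homotopies can be localized to the relevant compact pieces.

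Finally, for the last assertion, I would show that $\simeq_{\mathrm{w}}$ is exactly the closure of $\simeq$ in $\mathsf{LCP}((X,A),(P,*))^2$. One inclusion is easy: homotopic maps are weakly homotopic, so $\simeq\;\subseteq\;\simeq_{\mathrm{w}}$, and since $\simeq_{\mathrm{w}}$ is closed by the first part, the closure of $\simeq$ is contained in $\simeq_{\mathrm{w}}$. For the reverse inclusion, suppose $f\simeq_{\mathrm{w}}g$; I must approximate the pair $(f,g)$ by genuinely homotopic pairs. The idea is that $f$ and $g$ agree up to homotopy on each $X_n$, so one can build, for each $n$, a map $g_n$ that is honestly homotopic to $f$ but that agrees with $g$ on $X_n$ — extending a homotopy $f|_{X_n}\simeq g|_{X_n}$ across $X$ via the Homotopy Extension Theorem (Theorem \ref{Theorem:definable-extension}). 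Then $(f,g_n)\in\;\simeq$ and $g_n\to g$ in the compact-open topology as $n\to\infty$, since $g_n$ and $g$ coincide on the exhausting compacta $X_n$. Thus $(f,g)$ lies in the closure of $\simeq$, completing the argument.
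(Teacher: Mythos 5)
Your treatment of the first and third assertions is correct and matches the paper's: closedness follows, exactly as you say, by writing $\simeq_{\mathrm{w}}$ as the intersection over $n$ of the $(f_n\times f_n)$-preimages of the (closed, by Lemma \ref{Lemma:countable-homotopy}) homotopy relations at the compact stages; and your argument for the closure statement --- extending a homotopy $f|_{X_n}\Rightarrow g|_{X_n}$ across $X$ via the Homotopy Extension Theorem to produce $g_n\simeq f$ with $g_n|_{X_n}=g|_{X_n}$, so that $(f,g_n)\to(f,g)$ --- is precisely the intended one, and in fact more explicit than what the paper records.

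The gap is in the openness of saturations, which is the real content of the lemma, and it sits exactly where you flag ``the delicate bookkeeping step.'' After reducing to a basic open set $U=\{f:f(K)\subseteq W\}$ with $K\subseteq X_n$, the set you actually need to show is open is $[U]_{\simeq_{\mathrm{w}}}$ itself, i.e.\ a neighborhood of each $g\in[U]_{\simeq_{\mathrm{w}}}$ inside $[U]_{\simeq_{\mathrm{w}}}$; producing ``an open neighborhood of $f$ whose saturation meets $g$'' is not the right quantifier order, and there is no homotopy equivalence to transport anything through (the inclusion $X_n\hookrightarrow X$ is not one). The argument that closes the gap is the identity
\begin{equation*}
[U]_{\simeq_{\mathrm{w}}}=\bigl\{g:\exists\, f_0\in\mathsf{LCP}((X_n,A_n),(P,*))\ \text{with}\ f_0(K)\subseteq W\ \text{and}\ g|_{X_n}\simeq f_0\bigr\},
\end{equation*}
whose right-hand side is open by Lemma \ref{Lemma:countable-homotopy} (homotopy classes at the compact stage are open, and restriction is continuous). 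The inclusion $\subseteq$ is immediate, but the inclusion $\supseteq$ is where the Homotopy Extension Theorem enters --- the very tool you invoke in part three but omit here: given such an $f_0$, HET extends it to a map $f':(X,A)\to(P,*)$ that is \emph{genuinely homotopic} to $g$ on all of $X$; then $f'\in U$ and $f'\simeq_{\mathrm{w}}g$, so $g\in[U]_{\simeq_{\mathrm{w}}}$. Without this extension step you only know that $g$ looks like a member of the saturation at the level of $X_n$, which is not enough. Supplying this claim (and its HET-based proof) completes your argument and recovers the paper's.
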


\begin{proof}
Associated to any cofiltration $(X_n,A_n)_{n\in \mathbb{N}}$ of $(X,A)$ are restriction maps \begin{equation*}
f_n:\mathsf{LCP}(( X,A) ,( P,\ast ))\rightarrow \mathsf{LCP}((X_{n},A_n)
,( P,*)).
\end{equation*}
These, clearly, are continuous. The relation of weak homotopy for continuous
maps $(X,A)\to(P,*)$ is the
intersection of the $(f_n\times f_n)$-inverse images of the homotopy relation on $\mathsf{LCP}(( X_n,A_n)
,( P,*))$. It then follows from Lemma \ref{Lemma:countable-homotopy} that $
\simeq_{\mathrm{w}}$ is closed. It is also clear from this characterization (together with the Homotopy Extension Theorem \ref{Theorem:definable-extension} above)
that $\simeq_{\mathrm{w}}$ is the closure of the homotopy relation $\simeq$.

Suppose now that $U\subseteq \mathsf{LCP}((X,A)
,( P,*)) $ is open. We will show that $[U]
_{\simeq_{\mathrm{w}}}$ is open. Since saturation commutes with unions, it will suffice to show this for $U=\{ f\in \mathsf{LCP}(( X,A)
,( P,*)) :f( K) \subseteq W\} $, where $K$ and $W$ are an arbitrary compact subset of $X$ and an open subset of $P$, respectively. Fix an $n\in
\mathbb{N}$ such that $K\subseteq X_n$.

\begin{claim*}
A map $g\in \mathsf{LCP}((X,A) ,( P,*
)) $ belongs to $[U] _{\simeq_{\mathrm{w}}}$
if and only if there exists an $f \in \mathsf{LCP}((
X_n,A_n),( P,*)) $ such
that $f(K)\subseteq W$ and $g|_{X_n}$ and $f$ are
homotopic.
\end{claim*}

\begin{proof}
By the Homotopy Extension Theorem, any $f$ as in the statement of the claim extends to a map $f':(X,A) \rightarrow (P,*) $ which is homotopic to $g$. Since $f'\in U$, then, $g\in [U]_{\simeq_{\mathrm{w}}}$. The converse implication is obvious.
\end{proof}

It is immediate from the claim and Lemma \ref{Lemma:countable-homotopy} that 
$[U] _{\simeq_{\mathrm{w}}}$ is indeed open.
\end{proof}

By Lemma \ref{Lemma:saturate-local}, Lemma \ref{Lemma:selector-idealistic}, and \cite[Theorem 12.16]{kechris_classical_1995}, we now have the following.

\begin{corollary}\label{Corollary:w-definable}
Suppose that $(X,A)$ is a locally compact pair and $P$ is a pointed polyhedron. Then the relation $\simeq_{\mathrm{w}}$
on $\mathsf{LCP}((X,A),( P,*))$ has a Borel selector; there is, in other words, a Borel map $\mathsf{LCP}((X,A) ,( P,*))\to 
\mathsf{LCP}((X,A),( P,*)) 
:f\mapsto s(f)$ such that  $f\simeq
_{\mathrm{w}}s(f)$ for all $f,g\in \mathsf{LCP}((X,A) ,( P,*))$, and $f\simeq
_{\mathrm{w}}g$ if and
only if $s(f)=s(g)$. In particular, $[(X,A),(P,*)] _{\mathrm{w}}$ is a
definable set.
\end{corollary}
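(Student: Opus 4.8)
The plan is to assemble Corollary~\ref{Corollary:w-definable} from the three cited ingredients, checking that their hypotheses are met and extracting the final ``definable set'' conclusion. First I would observe that Lemma~\ref{Lemma:saturate-local} already hands us everything needed to invoke the selection machinery: it tells us that $\simeq_{\mathrm{w}}$ is a \emph{closed} equivalence relation on the Polish space $Y:=\mathsf{LCP}((X,A),(P,*))$ whose saturations of open sets are open. A relation whose open-set saturations are open is precisely one whose quotient map is open; combined with closedness, this is exactly the hypothesis of \cite[Theorem 12.16]{kechris_classical_1995}, which produces a Borel selector for an equivalence relation that is either open or closed and has open saturations of open sets (equivalently, whose associated quotient map is open and whose classes are $G_\delta$ — and here they are even closed).

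Next I would spell out the selector statement. Applying \cite[Theorem 12.16]{kechris_classical_1995} to $(Y,\simeq_{\mathrm{w}})$ yields a Borel map $s:Y\to Y$ with $f\simeq_{\mathrm{w}} s(f)$ for every $f$, and with $f\simeq_{\mathrm{w}} g\iff s(f)=s(g)$; this is the displayed content of the corollary. I would be slightly careful to phrase the universally quantified property over the correct variables (the excerpt writes ``for all $f,g$'', but the reflexive clause $f\simeq_{\mathrm{w}}s(f)$ ranges only over $f$), but this is purely cosmetic and the substance is immediate from the cited theorem.

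Finally I would deduce that $[(X,A),(P,*)]_{\mathrm{w}}=Y/{\simeq_{\mathrm{w}}}$ is a \emph{definable set} in the sense of Section~\ref{Section:dset}, i.e.\ that $\simeq_{\mathrm{w}}$ is Borel and idealistic. Borelness is automatic since $\simeq_{\mathrm{w}}$ is closed (Lemma~\ref{Lemma:saturate-local}). For the idealistic part I would invoke Lemma~\ref{Lemma:selector-idealistic}, which states that any Borel equivalence relation admitting a Borel selector is idealistic; the selector $s$ just produced supplies exactly this. Hence $\simeq_{\mathrm{w}}$ is Borel and idealistic, so $(Y,\simeq_{\mathrm{w}})$ is a definable set, completing the proof.

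The main obstacle, such as it is, is not in any new argument but in correctly matching the form of $\simeq_{\mathrm{w}}$ to the precise hypotheses of \cite[Theorem 12.16]{kechris_classical_1995}: one must confirm that ``closed equivalence relation with open-saturations-of-opens'' is genuinely the configuration that theorem treats (it is typically stated for equivalence relations that are open, or whose quotient map is open), rather than silently assuming it. Since Lemma~\ref{Lemma:saturate-local} furnishes exactly the openness-of-saturations property, and closedness gives Borelness and $G_\delta$ classes, all of the theorem's requirements are in fact present; once that correspondence is verified, the remaining deductions via Lemma~\ref{Lemma:selector-idealistic} are routine.
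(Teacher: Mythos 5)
Your proposal is correct and follows exactly the route the paper takes: the paper's entire proof is the citation of Lemma \ref{Lemma:saturate-local} (closedness of $\simeq_{\mathrm{w}}$ and openness of saturations of open sets), \cite[Theorem 12.16]{kechris_classical_1995} for the Borel selector, and Lemma \ref{Lemma:selector-idealistic} to conclude that the relation is idealistic and hence that the quotient is a definable set. Your additional care in matching the hypotheses of the Kechris theorem is a reasonable elaboration of what the paper leaves implicit.
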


Let $P$ be a polyhedron and let $(X_n,A_n) _{n\in \mathbb{N}}$ be a cofiltration for the locally compact pair $(X,A)$. Together with the natural restriction maps, the countable sets $[(X_n,A_n),(P,*)] $ then assemble into an inverse sequence. Endow these sets with the discrete topology and let
\begin{equation*}
\mathrm{lim}\,[(X_n,A_n)
,( P,*)]
\end{equation*}
denote the Polish space obtained as their limit. Having chosen a cofiltration for each locally compact pair $(X,A)$, we may
regard both $[(X,A),(P,*)] _{\mathrm{w}}$ and $\mathrm{lim}\,[(X_n,A_n),(P,*)]$ as functors to
the category of pointed definable sets, contravariant in the first coordinate (i.e., from $\mathsf{LCP}$) and covariant in the second (i.e., from $\mathsf{P}_*$).
\begin{proposition}
\label{Proposition:weak-homotopy}There is a definable bijection
between the pointed definable sets $$[(X,A),(P,*)]_{\mathrm{w}}\textnormal{ and }\mathrm{lim}\,[(X_n,A_n),(P,*)]$$which is natural in each coordinate of the two bifunctors.
\end{proposition}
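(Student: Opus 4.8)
The plan is to construct the definable bijection explicitly and then verify naturality and definability. First I would define the forward map. Given $f \in \mathsf{LCP}((X,A),(P,*))$, the family $([f|_{X_n}])_{n\in\mathbb{N}}$ of homotopy classes is coherent with respect to the restriction maps $[(X_{n+1},A_{n+1}),(P,*)] \to [(X_n,A_n),(P,*)]$, since $(f|_{X_{n+1}})|_{X_n} = f|_{X_n}$; hence it determines a point of $\mathrm{lim}\,[(X_n,A_n),(P,*)]$. This assignment clearly factors through weak homotopy: by Definition \ref{definition:weaklyhomotopic}, $f \simeq_{\mathrm{w}} g$ precisely when $[f|_K] = [g|_K]$ for every compact $K$, and since every compact subset of $X$ is contained in some $X_n$ (using that $X_n \subseteq \mathrm{int}(X_{n+1})$ and $X = \bigcup_n X_n$), this is equivalent to $[f|_{X_n}] = [g|_{X_n}]$ for all $n$. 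Thus the induced map $\Theta\colon [(X,A),(P,*)]_{\mathrm{w}} \to \mathrm{lim}\,[(X_n,A_n),(P,*)]$ is a well-defined injection. Its lift $\mathsf{LCP}((X,A),(P,*)) \to \mathrm{lim}\,[(X_n,A_n),(P,*)]$, $f \mapsto ([f|_{X_n}])_n$, is Borel because each coordinate map $f \mapsto [f|_{X_n}]$ is the composition of the continuous restriction $f \mapsto f|_{X_n}$ with a Borel selector for the (open, hence Borel, by Lemma \ref{Lemma:countable-homotopy}) homotopy relation on the compact pair $(X_n,A_n)$.

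Next I would establish surjectivity of $\Theta$, which is the crux of the argument and the step I expect to be the main obstacle. Given a coherent sequence $([g_n])_n$ with $g_n\colon (X_n,A_n)\to(P,*)$ and $[g_{n+1}|_{X_n}] = [g_n]$, I need to produce a single $f\colon (X,A)\to(P,*)$ with $[f|_{X_n}] = [g_n]$ for all $n$. This is precisely where the Homotopy Extension Theorem enters. The idea is to inductively modify the representatives: starting from $g_0$, use the homotopy $g_{n+1}|_{X_n} \Rightarrow g_n$ together with Borsuk's Homotopy Extension Theorem (Theorem \ref{Theorem:definable-extension}) to replace $g_{n+1}$ by a homotopic map $g'_{n+1}$ whose restriction to $X_n$ equals $g'_n$ exactly, not merely up to homotopy. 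The maps $g'_n$ then cohere strictly and glue to a continuous $f\colon X\to P$ (continuity is where $X_n \subseteq \mathrm{int}(X_{n+1})$ is essential, since $f$ agrees with a continuous map on a neighborhood of each point), with $f|_{X_n} \simeq g_n$ for each $n$. Care is needed to keep the construction within $\mathsf{LCP}$, i.e. that the pair condition $f(A)\subseteq\{*\}$ is preserved, which follows because each extension is carried out as an extension of maps of pairs.

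Once $\Theta$ is shown to be a bijection, Proposition \ref{Proposition:bij-iso} immediately upgrades it to an isomorphism in the category of definable sets, provided both source and target are genuinely definable sets; this is guaranteed by Corollary \ref{Corollary:w-definable} for $[(X,A),(P,*)]_{\mathrm{w}}$ and by the fact that $\mathrm{lim}\,[(X_n,A_n),(P,*)]$, being a Polish space, is trivially a definable set. Thus $\Theta$ is a definable bijection with definable inverse.

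Finally I would check naturality in each coordinate. For a morphism $\varphi\colon (X,A)\to(Y,B)$ in $\mathsf{LCP}$, the induced restriction structure is compatible: a continuous map sends each $X_n$ into some $Y_{m}$, and the resulting comparison of towers commutes with $\Theta$ up to the $\thicksim$-equivalence built into $\mathsf{Ind}_\omega$/$\mathsf{Pro}_\omega$ (Section \ref{Section:pro_and_ind_categories}), so that contravariant functoriality in the first coordinate is respected. For a morphism $(P,*)\to(P',*)$ of pointed polyhedra, naturality is immediate from the fact that postcomposition commutes with restriction to each $X_n$ and with the formation of homotopy classes. In both cases the verification is a routine diagram chase at the level of the coherent sequences, requiring only that all maps in sight are definable, which holds since they are induced by continuous operations composed with the Borel selectors already in hand.
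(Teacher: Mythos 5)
Your proposal is correct and follows essentially the same route as the paper's proof: the forward map by restriction to the cofiltration, injectivity from the characterization of weak homotopy on the $X_n$, surjectivity by recursively rigidifying representatives with the Homotopy Extension Theorem and gluing, and naturality by compatibility of restriction with composition. The only cosmetic difference is that you make explicit the appeal to Proposition \ref{Proposition:bij-iso} and the Borel selector underlying the lift, both of which the paper leaves implicit.
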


\begin{proof}
Consider for each $n$ the definable function $[(X,A),(P,*)]_{\mathrm{w}}\to [(X_n,A_n),(P,*)]$
given by restriction; these functions together induce a definable
function $\Phi _{(X,A),(P,*)}:[(X,A),(P,*)]_{\mathrm{w}}\to\mathrm{lim}\,[(X_n,A_n),(P,*)]$.

We noted above that two maps $f,f':(X,A) \rightarrow (P,*)$ are weakly homotopic if and only if $f\restriction X_n$ and $f' \restriction X_n$ are
homotopic for every $n\in\mathbb{N}$; it follows immediately that $\Phi_{(X,A),(P,*)}$ is injective.
To see that $\Phi_{(X,A),(P,*)}$ is
surjective, let $([f_n])_{n\in \mathbb{N}}$ be an element of $\mathrm{lim}\,[(X_n,A_n),(P,*)]$. In particular, $f_n:(X_n,A_n) \rightarrow (P,*) $ and $f_{n+1}\restriction X_n$ are homotopic for each $n\in \mathbb{N}$. By the Homotopy
Extension Theorem, we may, without leaving their homotopy classes, recursively modify each of the functions $f_{n}$ so that $f_{n+1}\restriction X_n =f_n$ for each $n\in \mathbb{N}$. If we then define $f:(X,A) \to (P,*)$
by letting $f(x)=f_{n}(x)$
for $x\in X_{n}$ then $\Phi_{(X,A),(P,\ast)}([f]) =([f_{n}])_{n\in \mathbb{N}}$, as desired.

Naturality with respect to maps $g:(P,*)\to(Q,\star)$ in the second coordinate follows from the observation that $g\circ(f\restriction X_n)=(g\circ f)\restriction X_n$ for every $f:(X,A)\to(P,*)$ and $n\in\mathbb{N}$, and naturality with respect to maps $(X,A)\to(Y,B)$ in the first coordinate follows from the same principle.
\end{proof}

\subsection{The classification of phantom maps}
\label{SS:NSFW:overexplicit_homotopies}

We turn now to the classification of \emph{phantom maps }\cite{mcgibbon_phantom_1995,mardesic_elementary_2015,gray_universal_1993,mcgibbon_phantom_1997,mcgibbon_phantom_1994}. Here we simply call phantom maps what in \cite{mcgibbon_phantom_1995} are
called \emph{phantom maps of the second kind}.

\begin{definition}
\label{Definition:phantom}Let $(X,A)$ be a locally
compact pair and let $P$ be a pointed polyhedron. A \emph{phantom map} from $(X,A)$ to $(P,*)$
is a map which is weakly homotopic to the constant map. We let $\mathrm{Ph}%
((X,A),(P,*)) $ denote the collection of phantom maps from $(X,A)$ to $(P,*)$.
\end{definition}

By Lemmas \ref{Lemma:weak-homotopy} and \ref{Lemma:saturate-local}, $\mathrm{Ph}((X,A),(P,*))$
is a homotopy-invariant closed subspace of the space $\mathsf{LCP}((X,A),(P,*))$. More precisely:
\begin{proposition}\label{prop:topological_char_of_phantoms}
$\mathrm{Ph}((X,A),(P,*))$ is the closure in $\mathsf{LCP}((X,A),(P,*))$ of $[\ast]$, the class of maps homotopic to the constant map.
\end{proposition}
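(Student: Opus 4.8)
The plan is to show the two set inclusions that together constitute the claimed equality. Write $\mathrm{cl}([\ast])$ for the closure of the class $[\ast]$ of nullhomotopic maps inside $\mathsf{LCP}((X,A),(P,\ast))$. I would first recall from Lemma \ref{Lemma:saturate-local} that $\simeq_{\mathrm{w}}$ is precisely the closure of the homotopy relation $\simeq$ inside $\mathsf{LCP}((X,A),(P,\ast))^2$, and that $\mathrm{Ph}((X,A),(P,\ast))$ is by Definition \ref{Definition:phantom} exactly the $\simeq_{\mathrm{w}}$-class of the constant map $\ast$.

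For the inclusion $\mathrm{cl}([\ast])\subseteq \mathrm{Ph}((X,A),(P,\ast))$, I would use that $\mathrm{Ph}((X,A),(P,\ast))$ is closed (noted just before the statement, via Lemmas \ref{Lemma:weak-homotopy} and \ref{Lemma:saturate-local}) and contains $[\ast]$, since every nullhomotopic map is trivially weakly homotopic to $\ast$. As the smallest closed set containing $[\ast]$ is its closure, this inclusion is immediate.

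The reverse inclusion $\mathrm{Ph}((X,A),(P,\ast))\subseteq \mathrm{cl}([\ast])$ is the substantive direction and the main obstacle. Here I would take a phantom map $f$, so $f\simeq_{\mathrm{w}}\ast$, and argue that $f$ lies in the closure of $[\ast]$. The natural route is through the characterization of $\simeq_{\mathrm{w}}$ as the closure of $\simeq$: since $(f,\ast)$ lies in $\overline{\simeq}$, I want to produce maps $g$ arbitrarily close to $f$ that are nullhomotopic. Concretely, fix a basic open neighborhood of $f$, determined by finitely many constraints $f(K_i)\subseteq W_i$ with $K_i\subseteq X$ compact and $W_i\subseteq P$ open; letting $K=\bigcup_i K_i$, which sits inside some $X_n$ of a chosen cofiltration, the phantom condition gives that $f\restriction X_n\simeq \ast\restriction X_n$. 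By the Homotopy Extension Theorem \ref{Theorem:definable-extension} (applied to $X_n\subseteq X$ and the constant homotopy on $X_n$), I can modify $f$ only off a neighborhood of $X_n$ to obtain a map $g$ agreeing with $f$ on $K$—hence lying in the prescribed neighborhood—while arranging that $g$ is globally nullhomotopic, by extending the nullhomotopy of $f\restriction X_n$ to all of $X$. This produces a nullhomotopic $g$ in every basic neighborhood of $f$, witnessing $f\in\mathrm{cl}([\ast])$.

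The delicate point, and where I would be most careful, is ensuring the extension step genuinely yields a map that is nullhomotopic on all of $X$ and not merely on $X_n$: the Homotopy Extension Theorem extends a homotopy defined on $(X_n\times I)\cup(X\times\{0\})$, so I must feed it the nullhomotopy $f\restriction X_n\Rightarrow \ast$ together with $f$ itself on $X\times\{0\}$, obtaining $\tilde h:X\times I\to P$ whose time-$1$ slice $g:=\tilde h(-,1)$ satisfies $g\restriction X_n=\ast$ and $g\simeq f$; then $g$ agrees with $f$ on $K\subseteq X_n$ only up to the homotopy, so one should instead keep $g:=\tilde h(-,1)$ and observe $g\in[U]_{\simeq_{\mathrm w}}$ directly, or appeal to Lemma \ref{Lemma:saturate-local}'s statement that $\simeq_{\mathrm{w}}$ is the closure of $\simeq$ to conclude without re-deriving the extension. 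Indeed the cleanest formulation simply invokes that $\mathrm{Ph}((X,A),(P,\ast))$ is the $\simeq_{\mathrm w}$-class of $\ast$, that $\simeq_{\mathrm w}=\overline{\simeq}$, and that the $\simeq_{\mathrm w}$-saturation of the open set $[\ast]$-neighborhoods behaves as in Lemma \ref{Lemma:saturate-local}; I would therefore lean on that lemma to minimize fresh homotopy-extension bookkeeping.
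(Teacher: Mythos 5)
Your overall strategy --- closedness of $\mathrm{Ph}((X,A),(P,*))$ for one inclusion, and a Homotopy Extension Theorem argument producing nullhomotopic maps in every basic neighborhood of a phantom map for the other --- is exactly the paper's, and the first inclusion is handled correctly. The gap is in the execution of the second. As you set it up, you feed the Homotopy Extension Theorem the map $f$ on $X\times\{0\}$ together with a nullhomotopy $f\restriction X_n\Rightarrow\ast$ on $X_n\times I$. The time-one slice $g$ of the resulting extension is then \emph{constant} on $X_n$ and merely homotopic to $f$; since the neighborhood $U$ only guarantees $f(K_i)\subseteq W_i$ and not $\ast\in W_i$, this $g$ need not lie in $U$, and it is not nullhomotopic, so it witnesses nothing. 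You notice this yourself, but neither of your proposed repairs closes the gap: the fact that $\simeq_{\mathrm{w}}$ is the closure of $\simeq$ is a statement about the relation as a subset of $\mathsf{LCP}((X,A),(P,\ast))^2$, and a section of a closure need not equal the closure of the section, so it does not by itself yield the classwise equality $[\ast]_{\simeq_{\mathrm{w}}}=\overline{[\ast]_{\simeq}}$; and observing $g\in[U]_{\simeq_{\mathrm{w}}}$ does not produce a nullhomotopic element of $U$.

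The fix is a one-line reversal of roles: apply the Homotopy Extension Theorem to the constant map $\ast$ on $X\times\{0\}$ together with a homotopy $\ast\Rightarrow f\restriction X_n$ on $X_n\times I$. The time-one slice $g'$ of the extension then satisfies $g'\restriction X_n=f\restriction X_n$, so $g'(K_i)=f(K_i)\subseteq W_i$ and $g'\in U$, while $g'$ is globally homotopic to $\ast$. This is precisely the content of the Claim inside the proof of Lemma \ref{Lemma:saturate-local}, applied with the constant map in place of $g$: since $f\in U$ is weakly homotopic to $\ast$, that Claim yields an element of $U$ homotopic to $\ast$. So your instinct to lean on Lemma \ref{Lemma:saturate-local} is sound, but it is the Claim established in its proof, not the statement of the lemma, that does the work.
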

\begin{proof}
As noted, by Lemma \ref{Lemma:saturate-local}, $\mathrm{Ph}((X,A),(P,*))$ is closed in $\mathsf{LCP}((X,A),(P,*))$ and therefore contains the closure of $[\ast]$. For the reverse inclusion, fix $f\in\mathrm{Ph}((X,A),(P,*))$; applying Lemma \ref{Lemma:weak-homotopy} with $g=\ast$ then implies, together with the Homotopy Extension Theorem, that any basic open neighborhood of $f$ in $\mathsf{LCP}((X,A),(P,*))$ contains some $g'$ homotopic to $g$.
\end{proof}
Thus we may
consider the pointed semidefinable set $[(X,A),(P,*)]_{\infty}$ of homotopy classes of phantom
maps $(X,A)\rightarrow (P,*)$.
Just as for $[-,-]_{\mathrm{w}}$, this defines a functor to the category of pointed semidefinable sets which is
contravariant in the first coordinate and covariant in the second.

Observe now that when $A=\{x\}$ for some single element $x$ of $X$ then we may regard $(X,A)$ as a pointed space, and in particular may apply to it the reduced suspension operation $\Sigma:\mathsf{LC}_{*}\to\mathsf{LC}_{*}$ described in Section \ref{SS:H-groups}.
In fact, in the present context of maps $(X,A)\to(P,*)$, in which $A$ functions as little more than a basepoint, it is reasonable to regard $\Sigma$ as an instance of a \emph{generalized reduced suspension} operation $\bar{\Sigma}:\mathsf{LCP}\to\mathsf{LC}_*$ defined for nonempty $A$ as $$\bar{\Sigma}(X,A)=X\times I/(X\times\{0,1\}\,\cup\,A\times I),$$
or, more concisely,
$$\bar{\Sigma}(X,A)=\Sigma(X/A,\star),$$
where $\star$ is the quotient-image of $A$. Doing so will allow us a more general unified statement and argument of this section's main result.
Note, however, that the second of the above formulations raises questions of interpretation when $A=\varnothing$. We take the standard homotopy theoretic approach of defining $X/\varnothing$ to equal what is sometimes (as in \cite{may_more_2012}) denoted $X_{+}$, namely the union of $X$ with a discrete basepoint $\{\star\}$.\footnote{It is perhaps worth remarking that under this convention, $\bar{\Sigma}(X,\varnothing)=\Sigma X_{+}$ is not homotopy equivalent to $SX$, but rather to $SX\vee S^1$ (cf. \cite[p. 106]{may_concise_1999}), but that this additional $S^1$ factor is, in our contexts, of no computational consequence.} These conventions have the virtue of ensuring that any $\bar{\Sigma}(X,A)$ is an $H$-cogroup ($X\mapsto X_{+}$ is left adjoint to the forgetful functor from pointed to unpointed spaces, and extends to a left adjoint to the inclusion $\iota$ of pointed spaces in pairs of spaces; $\bar{\Sigma}$ is simply $\Sigma\circ\iota$, and is consequently left adjoint to the standard loop-space functor $\Omega$ on $\mathsf{LC}_{*}$).

Now assume, as before, that we have assigned a cofiltration $(X_n,A_n)_{n\in\mathbb{N}}$ to each locally compact pair $(X,A)$. These assignments determine an
inductive sequence of compact $H$-cogroups $(\bar{\Sigma}(X_n,A_n))_{n\in\mathbb{N}}$ and thereby, in turn, a tower 
\begin{equation*}
\left(\left[\bar{\Sigma}(X_n,A_n)
,(P,*)\right]\right)_{n\in\mathbb{N}}
\end{equation*}
of countable groups. Taking 
\begin{equation*}
\mathrm{lim}^1\,[\bar{\Sigma}(X_n,A_n)
,(P,*)]
\end{equation*}
then determines a functor to the category of pointed semidefinable sets, which again is
contravariant in the first coordinate and covariant in the second. Note that any other family of choices of cofiltrations for locally compact pairs $(X,A)$ would yield a definably isomorphic
functor.

We now present a \emph{definable} version of a description of the set of
phantom maps which has appeared in several contexts, all of them pointed. It may be found in \cite[%
Section IX.3, Corollary 3.3]{bousfield_homotopy_1972} in the context of
pointed simplicial sets, and in \cite[Section 2.1, Proposition 2.1.9 and
Corollary 2.1.11]{may_more_2012} in the context of inductive sequences of
pointed spaces and cofibrations, for example; see also \cite[Section 5]{mcgibbon_phantom_1995}, where the fundamental insight is attributed to \cite{steenrod_regular_1940}. Readers may also find the heuristic discussion at \cite[p. 1229]{mcgibbon_phantom_1995} valuable.

\begin{theorem}
\label{Theorem:phantom1} For each locally compact pair $(X,A)$, let $(X_n,A_n)_{n\in\mathbb{N}}$ denote the associated cofiltration described above.
There exists a definable isomorphism of the bifunctors $\mathsf{LCP}^{\mathrm{op}}\times\mathsf{P}_*\to\mathsf{SemiDef}_*$ given by $((X,A),(P,*))\mapsto [(X,A),(P,*)]_\infty$ and $((X,A),(P,*))\mapsto \mathrm{lim}^1\,[\bar{\Sigma}(X_n,A_n)
,(P,*)]$ which is natural in each coordinate.
\end{theorem}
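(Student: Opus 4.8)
The plan is to construct an explicit definable isomorphism at the level of the underlying semidefinable sets, following the classical $\mathrm{lim}^1$-description of phantom maps, and then verify naturality. The key conceptual point is that a phantom map $f\colon(X,A)\to(P,*)$ restricts to a nullhomotopic map on each compact stage $(X_n,A_n)$, and the \emph{choices} of nullhomotopies on consecutive stages, compared against each other, produce exactly the $\mathrm{lim}^1$-data of the tower $\big([\bar{\Sigma}(X_n,A_n),(P,*)]\big)_{n\in\mathbb{N}}$. First I would fix, via Corollary \ref{Corollary:select-homotopy}, a Borel function $\Phi_{X_n}$ selecting for each nullhomotopic $g\colon(X_n,A_n)\to(P,*)$ an explicit nullhomotopy $\Phi_{X_n}(g)\colon g\Rightarrow *$. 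Given a phantom map $f$, each restriction $f_n:=f\restriction X_n$ is nullhomotopic (this uses Lemma \ref{Lemma:weak-homotopy} and the definition of phantom), so $\Phi_{X_n}(f_n)$ is defined; comparing $\Phi_{X_n}(f_n)$ with the restriction of $\Phi_{X_{n+1}}(f_{n+1})$ to $X_n$ glues (along the common endpoints on $X_n\times\{0,1\}\cup A_n\times I$) into a map $\bar{\Sigma}(X_n,A_n)\to P$, i.e., an element $z_n\in[\bar{\Sigma}(X_n,A_n),(P,*)]$. The assignment $f\mapsto (z_n)_{n\in\mathbb{N}}$ lands in $\mathrm{Z}^1$ of the tower and, I claim, descends to the desired map on $\mathrm{lim}^1$; crucially, by the Borel-ness of the selectors $\Phi_{X_n}$ and of restriction, the lift $f\mapsto(z_n)$ is Borel, so the induced map on homotopy classes is definable.

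Next I would check that this is well-defined and bijective on the level of the quotient semidefinable sets. For well-definedness, if $f\simeq f'$ as phantom maps, the homotopy restricts to homotopies $f_n\simeq f'_n$ on each stage, and a gluing/concatenation argument (again invoking the Homotopy Extension Theorem, Theorem \ref{Theorem:definable-extension}) shows that the resulting cocycles $(z_n)$ and $(z'_n)$ differ by a $\mathrm{B}^1$-coboundary, hence represent the same element of $\mathrm{lim}^1$. For surjectivity, given a cocycle $(w_n)$ representing an element of $\mathrm{lim}^1$, I would build a phantom map stage-by-stage: each $w_n$ is a map out of $\bar{\Sigma}(X_n,A_n)$, which I can reinterpret (via the $\Sigma$–$\Omega$ adjunction, or directly as a homotopy between nullhomotopies) as the data needed to extend a nullhomotopic map on $X_n$ to one on $X_{n+1}$ while prescribing the ``discrepancy'' $w_n$; assembling these via the Homotopy Extension Theorem yields an $f$ whose associated cocycle is $(w_n)$. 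Injectivity is the contrapositive of well-definedness: if the cocycles of $f$ and $f'$ agree in $\mathrm{lim}^1$, I must produce a weak homotopy (equivalently, a compatible system of stagewise homotopies) between them, which is exactly what the coboundary witnessing $(z_n)\,\mathrm{B}^1\,(z'_n)$ supplies, after reassembling the homotopies stage by stage.

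Finally I would verify naturality in both coordinates. For a morphism $g\colon(P,*)\to(Q,\star)$ in $\mathsf{P}_*$, postcomposition commutes with restriction to stages and with the gluing construction, so the square commutes on the nose at the level of selected lifts. For a morphism $(X,A)\to(Y,B)$ in $\mathsf{LCP}^{\mathrm{op}}$, the subtlety is that it induces a map of cofiltrations only up to reindexing (as in Section \ref{Section:pro_and_ind_categories}, equation \ref{eq:indmaps}), so the naturality square commutes only up to the $\thicksim$-relation on $\mathsf{Pro}_\omega$; since $\mathrm{lim}^1$ is a functor on $\mathsf{Pro}_\omega(\mathsf{CGrp})$, this cofinality-up-to-reindexing is exactly what is needed, and the induced maps agree. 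That the whole isomorphism is definable then follows from Proposition \ref{Proposition:bij-iso}, once bijectivity and Borel-ness of a lift are established.

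I expect the main obstacle to be the surjectivity and injectivity arguments, specifically the careful bookkeeping of the gluing of nullhomotopies across stages: one must manage the overlaps on $X_n\times\{0,1\}\cup A_n\times I$ so that the glued maps genuinely factor through the quotient $\bar{\Sigma}(X_n,A_n)$ and so that the recursive extensions (built via the Homotopy Extension Theorem) cohere into a globally defined continuous phantom map. The $A=\varnothing$ case requires the $X_{+}$ convention for $\bar{\Sigma}$ flagged in Section \ref{SS:NSFW:overexplicit_homotopies}, and I would note that the extra $S^1$-factor there is, as remarked, of no computational consequence for the $\mathrm{lim}^1$ term. Keeping every step Borel — rather than merely achievable by unconstrained choices — is the recurring technical demand, but it is precisely met by the definable selection principles (Corollary \ref{Corollary:select-homotopy}) and the definable Homotopy Extension Theorem (Theorem \ref{Theorem:definable-homotopy-extension}) already established.
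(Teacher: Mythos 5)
Your overall strategy is the same as the paper's: use Corollary \ref{Corollary:select-homotopy} to Borel-select stagewise nullhomotopies, glue consecutive ones into classes in $[\bar{\Sigma}(X_n,A_n),(P,*)]$, check independence of all choices, and invert by reassembling a cocycle into a phantom map along the telescope coordinates. The construction of $\varphi$, the well-definedness claims, the naturality discussion, and (modulo bookkeeping) the surjectivity argument all track the paper's proof; the paper realizes the inverse $\psi$ by the explicit formula $f(x)=\tilde{D}_n(\langle x,\lambda_n(x)\rangle)$ on $X_n\setminus X_{n-1}$ using bump functions $\lambda_n$ rather than by recursive applications of the Homotopy Extension Theorem, but that is a cosmetic difference.

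There is, however, a genuine gap in your injectivity step. You write that if the cocycles of $f$ and $f'$ agree in $\mathrm{lim}^1$ then you ``must produce a weak homotopy (equivalently, a compatible system of stagewise homotopies) between them.'' That target is vacuous: \emph{every} phantom map is by definition weakly homotopic to the constant map, hence any two phantom maps are already weakly homotopic, so exhibiting a weak homotopy proves nothing. Injectivity requires an honest homotopy $f\Rightarrow f'$ of maps $(X,A)\to(P,*)$, and the passage from a compatible system of stagewise homotopies to a single global homotopy is precisely the nontrivial point --- it is the same $\mathrm{lim}^1$-type obstruction the theorem is measuring. The paper's argument handles this by first normalizing (via the Homotopy Extension Theorem) so that $f$ and $f'$ both restrict to $\ast$ on the union $Y=\bigcup_n\partial X_n$, then using the coboundary $([E_n])_n$ to adjust the chosen nullhomotopies so that $[D_n]=[D_n']$ on the nose, and finally splicing the resulting homotopies $\tilde{h}_n:D_n\Rightarrow D_n'$ into a single homotopy $f\Rightarrow f'$ by the formula $(x,t)\mapsto\tilde{h}_n(\langle x,\lambda_n(x)\rangle,t)$ for $x\in X_n\setminus X_{n-1}$. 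Your phrase ``after reassembling the homotopies stage by stage'' gestures at this, but as written the step conflates weak homotopy with homotopy and would not go through without the boundary normalization and explicit gluing.
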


The following notation will render portions of our proof simpler and more intuitive. For any $0\leq a,b$ in $\mathbb{R}$ and $f:X\times [0,a]\to Y$ and $g:X\times [0,b]\to Y$ with $f\restriction X\times\{a\}=g\restriction X\times\{0\}$, define $f\cdot g:X\times [0,a+b]\to Y$ by
\begin{equation*}
f\cdot g (x,t) = \left\{ 
\begin{array}{ll}
f(x,t) & t\in [0,a]\text{,} \\ 
g(x,t-a) & t\in [a,b]\text{.}%
\end{array}%
\right.
\end{equation*}
Define also $-f:X\times [0,a]\to Y$ by $-f(x,t)=f(x,a-t)$, and observe that $f\cdot -f$ is homotopic to the map $X\times [0,2a]\to Y$ given by $(x,t)\mapsto f(x,0)$. Lastly, for any real $k\geq 0$, define $kf:X\times[0,ka]\to Y$ by $kf(x,t)=f(x,kt)$.
\begin{proof}
As indicated, we begin by describing a definable function $\varphi:[(X,A),(P,*)]_{\infty}\to\mathrm{lim}^1\,[\bar{\Sigma}(X_n,A_n)
,(P,*)]$.
We will then verify that $\varphi $ is indeed an isomorphism in the category of
pointed semidefinable sets. For ease of reading, let $\mathbf{G}$ denote the tower of groups $([\bar{\Sigma}(X_n,A_n)
,(P,*)])_{n\in\mathbb{N}}$ for the duration of the proof.

By Corollary \ref{Corollary:select-homotopy}, one may in a Borel fashion choose for each phantom map $f:(X,A)\to(P,*)$ and $n\in \mathbb{N}$ a homotopy $h_{n}:\ast \Rightarrow f\restriction X_n:(X_n,A_n)\to (P,*)$. Hence $h_{n}:(X_{n}\times
I,A_{n}\times I) \rightarrow (P,*)$ is a
map such that $h_{n}(\,\cdot\,,0) =\ast $, $h_{n}(\,\cdot\,,1)
=f\restriction X_n$, and $h_{n}(x,\,\cdot\,) =\ast $ for each $x\in A_n$. As above, we let $%
(x,t) \mapsto \langle x,t\rangle$ be the quotient map $X_n\times
I\to \bar{\Sigma}(X_n,A_n)$.\footnote{Of course, if $A_n=\varnothing$ then this isn't quite a quotient map, but the discrepancy from one is so small that it seems clearest to continue to speak of such maps as though they are quotients; we will do so.} Define then $D_n:\bar{\Sigma}(X_n,A_n)\rightarrow
(P,*)$ by
\begin{equation}
\label{eq:D_n}
\langle x,t\rangle \mapsto \left\{ 
\begin{array}{ll}
h_{n}( x,2t) & 0\leq t\leq 1/2\text{,} \\ 
h_{n+1}( x,2-2t) & 1/2\leq t\leq 1\text{.}%
\end{array}%
\right.
\end{equation}%
In the notation introduced just above, $D_n$ is the map induced on $\bar{\Sigma}(X_n,A_n)$ by $1/2(h_n\cdot -(h_{n+1}\restriction X_n\times I)):X_n\times I\to P$. Writing $[D_n] \in [\bar{\Sigma}(X_n,A_n)
,(P,*)]$ for the homotopy
class of $D_n$, we then let $\varphi([f])$ be the element of $\mathrm{lim}^1\,\mathbf{G}$ represented by the sequence $([D_n])_{n\in\mathbb{N}}\in \prod_{n\in\mathbb{N}}G^n=\mathrm{Z}^1(\mathbf{G})$. 

In the following three claims, we verify that $\varphi$ does not depend on our choices of homotopies $h_n$, or of representative of $[f]$, and hence that $\varphi: [(X,A),(P,*)]_{\infty}\to\mathrm{lim}^1\,\mathbf{G}$ is a well-defined basepoint-preserving definable function, as desired.

\begin{claim*}
The element $\varphi([f]) $ of $\mathrm{lim}^1\,\mathbf{G}$ does not depend on the
choice of homotopies $h_{n}:\ast \Rightarrow f\restriction X_n:(X,A) \to (P,*)$.
\end{claim*}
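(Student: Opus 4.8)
The plan is to show that two different families of nullhomotopies $(h_n)$ and $(h_n')$ yield representatives $([D_n])$ and $([D_n'])$ lying in the same $\mathrm{B}^1(\mathbf{G})$-orbit of $\mathrm{Z}^1(\mathbf{G})$, so that they determine the same element of $\mathrm{lim}^1\,\mathbf{G}$. By the definition of the $\mathrm{lim}^1$-action recorded in Section \ref{Section:pro_and_ind_categories}, this amounts to producing a sequence $(g_n)\in C^0(\mathbf{G})=\prod_n G^n$ such that $[D_n']=g_n\cdot [D_n]\cdot(\eta^n(g_{n+1}))^{-1}$ for every $n$, where $\eta^n\colon G^{n+1}\to G^n$ is the restriction map $[\bar{\Sigma}(X_{n+1},A_{n+1}),(P,*)]\to[\bar{\Sigma}(X_n,A_n),(P,*)]$.

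The natural candidate for $g_n$ is the homotopy class of the map $E_n\colon\bar{\Sigma}(X_n,A_n)\to(P,*)$ induced by $\tfrac12(h_n\cdot -(h_n'))\colon X_n\times I\to P$, which compares the two chosen nullhomotopies at level $n$; note that since $h_n(\cdot,1)=f\restriction X_n=h_n'(\cdot,1)$, the concatenation $h_n\cdot -(h_n')$ is well-defined and sends $X_n\times\{0,1\}$ and $A_n\times I$ appropriately, so $E_n$ is a genuine element of $G^n$. The key computation is then a homotopy identity in $G^n$: using the concatenation calculus set up just before the proof (associativity of $\cdot$ up to homotopy, the fact that $f\cdot -f$ is nullhomotopic, and reparametrization invariance via the scalar operation $k(-)$), one checks that the loop $D_n'$ is homotopic to $E_n\cdot D_n\cdot -(\eta^n(E_{n+1}))$, where $\eta^n(E_{n+1})$ is represented by $E_{n+1}$ restricted to $X_n$. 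Concretely, $D_n'$ is built from $h_n'$ and $h_{n+1}'$, and inserting the comparison homotopies $-(h_n)\cdot h_n$ (nullhomotopic) and $-(h_{n+1})\cdot h_{n+1}$ lets one reorganize $D_n'$ into the prescribed product; this is the heart of the argument and is the step I expect to require the most care in bookkeeping of domains and reparametrizations.

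I would carry this out by first fixing notation for the two homotopy systems and writing out $D_n$, $D_n'$, and $E_n$ explicitly as maps on $X_n\times I$ in the $f\cdot g$, $-f$, $kf$ notation. Next I would establish the single homotopy $D_n'\simeq E_n\cdot D_n\cdot -(\eta^n(E_{n+1}))$ by exhibiting the intermediate rearrangements, each justified by one of the three standard facts (associativity, cancellation of $f\cdot -f$, reparametrization), and by invoking the Homotopy Extension Theorem \ref{Theorem:definable-extension} where needed to realize these as honest homotopies of maps $\bar{\Sigma}(X_n,A_n)\to(P,*)$ respecting basepoints. Translating this into the group $G^n$ (where concatenation becomes the group operation and $-$ becomes inversion) gives exactly $[D_n']=[E_n]\cdot[D_n]\cdot(\eta^n([E_{n+1}]))^{-1}$, i.e.\ the $\mathrm{lim}^1$-action relation with $g_n=[E_n]$.

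The main obstacle, as noted, is purely the combinatorial/geometric verification of the displayed homotopy: one must track the several unit-interval coordinates and the various halvings and reparametrizations carefully, and confirm at each stage that the maps agree on $X_n\times\{0,1\}\cup A_n\times I$ so that they descend to $\bar{\Sigma}(X_n,A_n)$ and preserve the basepoint. I expect no difficulty from the definability side: the assignment $f\mapsto(E_n)$ is Borel by the same appeal to Corollary \ref{Corollary:select-homotopy} used to choose the $h_n$ and $h_n'$ in the first place, but in fact independence of $\varphi([f])$ from the choices is a statement about a \emph{fixed} element of $\mathrm{lim}^1\,\mathbf{G}$, so once the orbit relation is verified the claim follows immediately and no further Borelness need be checked here. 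The two subsequent claims (independence from the representative of $[f]$, and that $\varphi$ is basepoint-preserving) will follow by closely analogous, and slightly easier, homotopy manipulations.
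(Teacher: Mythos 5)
Your proposal is correct and follows essentially the same route as the paper: the paper also introduces the comparison maps $E_n$ induced on $\bar{\Sigma}(X_n,A_n)$ by the concatenation $\tfrac12(g_n\cdot -h_n)$ of the two nullhomotopy systems, verifies the conjugation identity $[E_n]\cdot[D_n]\cdot[E_{n+1}\restriction\bar{\Sigma}(X_n,A_n)]^{-1}=[C_n]$ by exactly the cancellation-and-reparametrization calculus you describe (writing out the six-fold concatenation and collapsing the $-h\cdot h$ pieces via an explicit homotopy), and concludes that the two cocycles lie in the same $\mathrm{lim}^1$-orbit. The only cosmetic differences are the direction in which you state the orbit relation (immaterial, since the $C^0(\mathbf{G})$-action makes the relation symmetric) and your precautionary appeal to the Homotopy Extension Theorem, which the paper does not need because the homotopies are written down explicitly.
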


\begin{proof}
Let $f:(X,A) \to (P,*)$ be a phantom map and let $g_n,h_n:\ast \Rightarrow
f\restriction X_n:(X_n,A_n)\to (P,*)$ be homotopies for each $n\in \mathbb{N}$. Suppose that $C_n,D_n:\bar{\Sigma}(X_n,A_n)\rightarrow
(P,*)$ are defined as in equation (\ref{eq:D_n})
from $g_n,g_{n+1}$ and $h_n,h_{n+1}$, respectively.
We will show that the corresponding sequences $([C_n])_{n\in\mathbb{N}},([D_n])_{n\in\mathbb{N}}\in Z^1(\mathbf{G}) $
define the same element of $\mathrm{lim}^1\,\mathbf{G}$. To this end, we define maps $E_n: \bar{\Sigma}(X_n,A_n)\rightarrow
(P,*)$ for $n\in \mathbb{N}$ such that $$[E_n]\cdot[D_n]\cdot [E_{n+1}\restriction \bar{\Sigma}(X_n,A_n)]^{-1}=[C_n] $$
for every $n\in \mathbb{N}$; these maps $E_n$ are defined by
\begin{equation*}
\langle x,t\rangle \mapsto \left\{ 
\begin{array}{ll}
g_n(x,2t) & 0\leq t\leq 1/2\text{,} \\ 
h_n(x,2-2t) & 1/2\leq t\leq 1\text{.}%
\end{array}%
\right. \text{.}
\end{equation*}%
Observe that $E_{n}$ is well-defined for essentially the same reason that $D_n$ is, and that in our alternative notation, $E_n$ is the map induced on $\bar{\Sigma}(X_n,A_n)$ by $1/2(g_n\cdot -h_n):X_n\times I\to P$.
In this notation, $[E_n]\cdot[D_n]\cdot [E_{n+1}\restriction \bar{\Sigma}(X_n,A_n)]^{-1}$ is the homotopy class of the map induced on $\bar{\Sigma}(X_n,A_n)$ by $1/6(g_n\cdot -h_n\cdot h_n\cdot -h_{n+1}\cdot h_{n+1}\cdot -g_{n+1})$, where the last three functions are understood to be restricted to $X_n\times I$; it should be at least intuitively clear that this map is homotopic to the function induced on $\bar{\Sigma}(X_n,A_n)$ by $1/2(g_n\cdot -(g_{n+1}\restriction X_n\times I))$ or, in other words, to $C_n$, as claimed.
For the sake of thoroughness, we record explicit homotopies below.

By definition, $[E_n]\cdot[D_n]\cdot [E_{n+1}\restriction \bar{\Sigma}(X_n,A_n)]^{-1}=[\tilde{D}_n]$, where $\tilde{D}_n:\bar{\Sigma}(X_n,A_n) \rightarrow (P,*)$ is defined by%
\begin{equation*}
\langle x,t\rangle \mapsto \left\{ 
\begin{array}{ll}
g_n( x,6t) & 0\leq t\leq 1/6\text{,} \\ 
h_n( x,1-6(t-1/6)) & 1/6\leq t\leq 2/6\text{,} \\ 
h_n(x,6(t-2/6)) & 2/6\leq t\leq 3/6\text{,} \\ 
h_{n+1}(x,1-6(t-3/6)) & 3/6\leq t\leq 4/6
\text{,} \\ 
h_{n+1}(x,6(t-4/6)) & 4/6\leq t\leq 5/6%
\text{,} \\ 
g_{n+1}(x,1-6(t-5/6)) & 5/6\leq t\leq 1\text{.}%
\end{array}%
\right.
\end{equation*}%
Define $\tilde{h}_{n}:\bar{\Sigma}(X_n,A_n)\times I \rightarrow (P,*)$, a homotopy beginning at $\tilde{h}_n(\,\cdot\,,0)=\tilde{D}_n$, by
setting
\begin{equation*}
\tilde{h}_n(\langle x,t\rangle,s) =\left\{ 
\begin{array}{ll}
\tilde{D}_{n}(\langle x,t\rangle) & t\in [0,1/6]\cup [5/6,1]\text{,}
\\ 
h_n(x,1-6(t-1/6)) & 1/6\leq t\leq (2-s)/6\text{,} \\ 
h_n(x,1-6((2-s)/6-1/6)) & (2-s)/6\leq t\leq
(2+s)/6\text{,} \\ 
h_n(x,6(t-2/6)) & (2+s)/6\leq t\leq 3/6\text{,} \\ 
h_{n+1}(x,1-6( t-3/6)) & 3/6\leq t\leq
(4-s)/6\text{,} \\ 
h_{n+1}(x,1-6((4-s)/6-3/6)) & (4-s)/6\leq
t\leq (4+s)/6\text{,} \\ 
h_{n+1}(x,6(t-4/6)) & (4+s)/6\leq t\leq
5/6\text{.}%
\end{array}%
\right.
\end{equation*}%
for all $\langle x,t\rangle \in \bar{\Sigma}(X_n,A_n)$
and $s\in I$. Observe that $\tilde{h}_n$ is a
homotopy from $\tilde{D}_{n}$ to the function $\bar{\Sigma}(X_n,A_n) \rightarrow (P,*)$ given by%
\begin{equation*}
\tilde{h}_n(\langle x,t\rangle,1) =\left\{ 
\begin{array}{ll}
g_n(x,6t) & 0\leq t\leq 1/6\text{,} \\ 
f( x) & 1/6\leq t\leq 5/6\text{,} \\ 
g_{n+1}(x,1-6( t-5/6)) & 5/6\leq t\leq 1\text{.}%
\end{array}%
\right.
\end{equation*}%
$\tilde{h}(\,\cdot\,,1) $ is clearly homotopic to $C_n:\bar{\Sigma}(X_n,A_n) \rightarrow (P,*)$ in turn. Since we have argued $[E_n]\cdot[D_n]\cdot [E_{n+1}\restriction \bar{\Sigma}(X_n,A_n)]^{-1}=[C_n]$ for arbitrary $n\in\mathbb{N}$, we conclude that the sequences $([D_{n}]) _{n\in\mathbb{N}}$ and $([C_{n}]) _{n\in
\mathbb{N}}$ represent the same element of $\mathrm{lim}^1\,\mathbf{G}$, as desired.
\end{proof}

\begin{claim*}
The element $\varphi([f]) $ of $\mathrm{lim}^1\,\mathbf{G}$ does not depend on the choice of representative $f$ of
the homotopy class $[f]$.
\end{claim*}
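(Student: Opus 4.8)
The plan is to show that if $f\simeq f':(X,A)\to(P,\ast)$ are homotopic phantom maps, then the elements $\varphi([f])$ and $\varphi([f'])$ of $\mathrm{lim}^1\,\mathbf{G}$ coincide. The key observation is that this reduces, by the \emph{previous} claim, to a question about a single convenient choice of homotopies. Specifically, since $\varphi([f])$ is independent of the chosen nullhomotopies $h_n$, I am free to build the nullhomotopies for $f'$ out of those for $f$ together with a fixed homotopy from $f$ to $f'$.

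Concretely, fix a homotopy $H:f\Rightarrow f':(X,A)\to(P,\ast)$, so that $H:(X\times I,A\times I)\to(P,\ast)$ with $H(\,\cdot\,,0)=f$ and $H(\,\cdot\,,1)=f'$. Having chosen (in a Borel fashion, via Corollary \ref{Corollary:select-homotopy}) nullhomotopies $h_n:\ast\Rightarrow f\restriction X_n$, I would define nullhomotopies $h'_n:\ast\Rightarrow f'\restriction X_n$ for $f'$ by concatenating $h_n$ with the restriction $H\restriction X_n\times I$; in the product notation introduced before the proof, $h'_n$ is (up to reparametrization) the map induced by $h_n\cdot(H\restriction X_n\times I)$. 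These are legitimate nullhomotopies of $f'\restriction X_n$ relative to $A_n$, since $H$ is a homotopy of pairs. The point is that the resulting maps $D'_n:\bar{\Sigma}(X_n,A_n)\to(P,\ast)$ built from $h'_n,h'_{n+1}$ agree \emph{on the nose}, or at least up to homotopy, with the $D_n$ built from $h_n,h_{n+1}$: when one forms $1/2(h'_n\cdot -(h'_{n+1}\restriction X_n\times I))$, the two copies of $H\restriction X_n\times I$ appear with opposite orientations and cancel up to homotopy, leaving exactly $1/2(h_n\cdot -(h_{n+1}\restriction X_n\times I))$, which is $D_n$. Thus $[D'_n]=[D_n]$ for every $n$, whence the cocycles $([D'_n])_n$ and $([D_n])_n$ are literally equal in $\mathrm{Z}^1(\mathbf{G})$, so certainly represent the same element of $\mathrm{lim}^1\,\mathbf{G}$.

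The one subtlety is the cancellation step: the maps induced on $\bar{\Sigma}(X_n,A_n)$ are only defined up to reparametrization of the $I$-coordinate, and the cancellation of the $H$-segments against their reverses is not an equality but a homotopy of maps $\bar{\Sigma}(X_n,A_n)\to(P,\ast)$. I expect this to be the main (though still routine) obstacle, and I would handle it exactly as in the preceding claim: by writing down an explicit homotopy that slides the two oppositely-oriented copies of $H\restriction X_n\times I$ together and collapses them, fixing the $h_n$ and $h_{n+1}$ segments at the ends. This is entirely parallel to the explicit $\tilde{h}_n$ homotopy used above to prove independence of the choice of the $h_n$, and since we have already established that independence, an even cleaner route is available: invoke the previous claim to compute $\varphi([f'])$ using \emph{any} convenient nullhomotopies, choose the $h'_n$ described above, and observe $[D'_n]=[D_n]$ directly. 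Either way the conclusion is that $\varphi([f'])=\varphi([f])$, so $\varphi$ is well-defined on homotopy classes of phantom maps, completing the claim.
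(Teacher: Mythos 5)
Your proposal is correct and follows essentially the same route as the paper: fix a homotopy $\rho:f\Rightarrow f'$, build the nullhomotopies of $f'\restriction X_n$ by concatenating those of $f\restriction X_n$ with $\rho\restriction X_n\times I$, and observe that in the resulting $D'_n$ the two oppositely-oriented copies of $\rho$ cancel up to homotopy, so that $[D'_n]=[D_n]$ and the cocycles agree on the nose. The paper likewise leaves the explicit cancelling homotopy to the reader, so no gap here.
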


\begin{proof}
Fix a homotopy $\rho
:f\Rightarrow f'$ between phantom maps $f,f':(X,A) \rightarrow (P,*) $. We will show that $\varphi([f])=\varphi([f'])$.  Fix homotopies $g_n:* \Rightarrow
f\restriction X_n:(X_n,A_n) \rightarrow ( P,*)$ for each $n\in\mathbb{N}$.
Observe that these induce homotopies $h_n:\ast \Rightarrow f'\restriction X_n:(X_n,A_n)\rightarrow ( P,*)$ defined by
\begin{equation*}
h_n(x,t) =\left\{ 
\begin{array}{ll}
g_{n}(x,2t) & 0\leq t\leq 1/2,\\ 
\rho( x,2(t-1/2)) & 1/2\leq t\leq 1\text{.}%
\end{array}%
\right.
\end{equation*}%
For $n\in\mathbb{N}$ let $D_n$ denote the map $\bar{\Sigma}(X_n,A_n) \rightarrow (P,*)$ defined as above from $g_n,g_{n+1}$, and let $\tilde{D}_n$ denote the map $\bar{\Sigma}(X_n,A_n) \rightarrow (P,*)$ defined as above from $h_n,h_{n+1}$. In our alternative notation, $D_n$ and $\tilde{D}_n$ are the functions induced on $\bar{\Sigma}(X_n,A_n)$ by $1/4(g_n\cdot\rho\cdot -\rho\cdot -g_{n+1})$ and $1/2(g_n\cdot -g_{n+1})$, respectively, where the constituent functions are all restricted to $X_n\times I$. It should now be clear that by arguments just as above, $D_n$ is homotopic to $\tilde{D}_n$ for each $n\in\mathbb{N}$; the provision of explicit homotopies is left to the interested reader.
\end{proof}

\begin{claim*}
The $\varphi$-image of the class $[*]$ of nullhomotopic functions is
the basepoint of $\mathrm{lim}^1\,\mathbf{G}$.
\end{claim*}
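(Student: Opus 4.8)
The plan is to exploit the two preceding claims, which together establish that $\varphi$ is independent both of the choice of representative within a homotopy class and of the choice of nullhomotopies $h_n$. This freedom reduces the statement to a single direct computation carried out with the most convenient possible data, so that the substance of basepoint-preservation has in fact already been absorbed into the earlier well-definedness claims.

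First I would take the literal constant map $\ast:(X,A)\to(P,\ast)$ as the representative of the class $[\ast]$, and for each $n\in\mathbb{N}$ take the constant homotopy $h_n:(X_n\times I, A_n\times I)\to(P,\ast)$ given by $h_n(x,t)=\ast$ for all $(x,t)$. One checks immediately that these are admissible choices: $h_n(\,\cdot\,,0)=\ast$, $h_n(\,\cdot\,,1)=\ast=\ast\restriction X_n$, and $h_n(x,\,\cdot\,)=\ast$ for every $x\in A_n$, so each $h_n$ is a homotopy $\ast\Rightarrow \ast\restriction X_n$ of exactly the form required in the construction of $\varphi$.

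Feeding these homotopies into the defining formula (\ref{eq:D_n}) then shows that every associated map $D_n:\bar{\Sigma}(X_n,A_n)\to(P,\ast)$ is itself constant at $\ast$, since both branches of (\ref{eq:D_n}) evaluate to $\ast$. Hence $[D_n]$ is the identity element of the group $G^n=[\bar{\Sigma}(X_n,A_n),(P,\ast)]$ for every $n$, so the representing sequence $([D_n])_{n\in\mathbb{N}}$ is precisely the identity element of $\mathrm{Z}^1(\mathbf{G})=\prod_{n\in\mathbb{N}}G^n$. By the definition of $\mathrm{lim}^1\,\mathbf{G}=\mathrm{Z}^1(\mathbf{G})/\mathrm{B}^1(\mathbf{G})$ recorded in Section \ref{Section:pro_and_ind_categories}, this identity element represents exactly the distinguished basepoint of $\mathrm{lim}^1\,\mathbf{G}$. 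Invoking the two preceding claims, $\varphi([\ast])$ coincides with the value computed from this data, and is therefore the basepoint of $\mathrm{lim}^1\,\mathbf{G}$, as desired.

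There is essentially no obstacle in this step: the only points requiring (minimal) care are the verification that the constant homotopies are legitimate choices of the $h_n$ and that the literal constant map represents the class $[\ast]$, both of which are immediate. This completes the verification that $\varphi\colon [(X,A),(P,\ast)]_{\infty}\to\mathrm{lim}^1\,\mathbf{G}$ is a well-defined, basepoint-preserving definable function; what remains for Theorem \ref{Theorem:phantom1} is to exhibit a definable inverse and thereby upgrade $\varphi$ to an isomorphism of pointed semidefinable sets.
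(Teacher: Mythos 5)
Your proposal is correct and follows exactly the paper's own argument: choose the constant map as representative, take each $h_n$ to be the trivial homotopy, observe that every $D_n$ is then the constant map $\ast$, and conclude that the representing sequence is the neutral element of $\mathrm{Z}^1(\mathbf{G})$, hence maps to the basepoint of $\mathrm{lim}^1\,\mathbf{G}$. The appeal to the two preceding claims for independence of choices is also exactly how the paper justifies the reduction to this convenient data.
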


\begin{proof}
When $f=\ast $ we may choose $h_{n}:\ast \Rightarrow
f\restriction X_n:(X_n,A_n) \rightarrow (P,*)$ to be the trivial homotopy. In this case, $D_n:\bar{\Sigma}(X_n,A_n) \rightarrow (P,*)$ is the
constant map $*$. Clearly $(D_n)_{n\in\mathbb{N}}$ is the neutral element of $\mathrm{Z}^1(\mathbf{G}) $, hence $\varphi([f]) $ is the basepoint of $\mathrm{lim}^1\,\mathbf{G}$.
\end{proof}

By the foregoing claims, $\varphi:[(X,A),(P,*)]_\infty\to\mathrm{lim}^1\,\mathbf{G}$ is a well-defined basepoint-preserving definable
function.

\begin{claim*}
The function $\varphi:[(X,A),(P,*)] _{\infty}\rightarrow \mathrm{lim}^1\,\mathbf{G}$ is injective.
\end{claim*}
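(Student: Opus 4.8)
The plan is to show that if $\varphi([f])$ is the basepoint of $\mathrm{lim}^1\,\mathbf{G}$, then $f$ is itself a phantom map homotopic to $\ast$, i.e.\ $[f]=[\ast]$ in $[(X,A),(P,\ast)]_\infty$. Unwinding the definition of $\mathrm{lim}^1$, the hypothesis $\varphi([f])=\ast$ means that the cocycle $([D_n])_{n\in\mathbb{N}}\in\mathrm{Z}^1(\mathbf{G})$ lies in the orbit $\mathrm{B}^1(\mathbf{G})$ of the identity under the $\mathrm{lim}^1$-action; concretely, there is a sequence $([E_n])_{n\in\mathbb{N}}\in C^0(\mathbf{G})=\prod_n[\bar{\Sigma}(X_n,A_n),(P,\ast)]$ with $[E_n]\cdot[D_n]\cdot[E_{n+1}\restriction\bar{\Sigma}(X_n,A_n)]^{-1}=[\ast]$ for every $n$. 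This is exactly the algebraic relation the previous claim (on independence of the choice of homotopies) was built to exploit, so the idea is to run that computation in reverse: the data $([E_n])$ will let me modify the chosen nullhomotopies $h_n:\ast\Rightarrow f\restriction X_n$ into a \emph{coherent} system.

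First I would fix, via Corollary \ref{Corollary:select-homotopy}, homotopies $h_n:\ast\Rightarrow f\restriction X_n$ giving the $D_n$ as in \eqref{eq:D_n}, and pick representatives of the $[E_n]$ as maps $E_n:\bar{\Sigma}(X_n,A_n)\to(P,\ast)$. Each $E_n$ is, in the $\cdot$ and $-$ notation introduced before the proof, induced on $\bar{\Sigma}(X_n,A_n)$ by a map $X_n\times I\to P$ pinching the $A_n$- and endpoint-directions appropriately; I would use it to replace $h_n$ by a new nullhomotopy $h'_n:\ast\Rightarrow f\restriction X_n$ (roughly, precompose/concatenate $h_n$ with the loop-homotopy represented by $E_n$). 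The relation $[E_n]\cdot[D_n]\cdot[E_{n+1}\restriction\bar{\Sigma}(X_n,A_n)]^{-1}=[\ast]$ then says precisely that the new $D'_n$ built from $h'_n,h'_{n+1}$ are nullhomotopic, which in the concatenation calculus means $h'_n$ and $h'_{n+1}\restriction X_n\times I$ are homotopic rel the relevant boundary. By the Homotopy Extension Theorem (Theorem \ref{Theorem:definable-extension}), I can then adjust the $h'_n$ recursively so that $h'_{n+1}\restriction X_n\times I=h'_n$ exactly, patching them into a single homotopy $H:X\times I\to P$ with $H(\,\cdot\,,0)=\ast$, $H(\,\cdot\,,1)=f$, and $H(\,\cdot\,,t)\in\mathsf{LCP}((X,A),(P,\ast))$; this $H$ witnesses $f\simeq\ast$, so $[f]=[\ast]$ and $\varphi$ is injective.

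The main obstacle is the bookkeeping at the glueing stage: the partial nullhomotopies $h'_n$ and $h'_{n+1}\restriction X_n$ agree only up to a homotopy (supplied by the nullhomotopy of $D'_n$), not on the nose, so one cannot directly define $H$ by the formula $H(x,t)=h'_n(x,t)$ for $x\in X_n$. The fix is the standard telescoping/recursive correction: having matched $h'_0,\dots,h'_n$ into a map defined on $X_n\times I$, use the homotopy trivializing $D'_n$ together with the Homotopy Extension Theorem over the inclusion $X_n\hookrightarrow X_{n+1}$ to re-choose $h'_{n+1}$ (within its homotopy class, hence without disturbing the already-verified relations) so that it restricts to $h'_n$ on $X_n\times I$. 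Since $X=\bigcup_n X_n$ with $X_n\subseteq\mathrm{int}(X_{n+1})$, the resulting $H$ is continuous, and it is a homotopy of pairs because every $h'_n$ sends $A_n\times I$ to $\ast$. I expect the verification that these corrections can be carried out \emph{compatibly across all $n$} — i.e.\ that the $(n{+}1)$-st correction does not undo the $n$-th matching — to be the only genuinely delicate point, but it follows the established pattern of the first claim's explicit homotopies, so no essentially new idea is needed.
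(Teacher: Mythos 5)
There is a genuine gap at the very first step: you reduce injectivity to showing that the fiber of $\varphi$ over the basepoint of $\mathrm{lim}^1\,\mathbf{G}$ is trivial. That reduction is only valid for group homomorphisms, and at this stage of the argument $\varphi$ is merely a map of \emph{pointed sets}. The target $\mathrm{lim}^1\,\mathbf{G}$ is the quotient of $\mathrm{Z}^1(\mathbf{G})$ by the orbit equivalence relation of the $\mathrm{lim}^1$-action and carries no group structure when the groups $[\bar{\Sigma}(X_n,A_n),(P,\ast)]$ are nonabelian; likewise $[(X,A),(P,\ast)]_\infty$ is only a pointed set for a general polyhedron $P$ (the group structure, and the fact that $\varphi$ is a homomorphism, only appear later under an $H$-space hypothesis on $P$). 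For a general map of pointed sets, a trivial basepoint fiber says nothing about the other fibers, so your argument as written does not prove injectivity. The paper instead takes two arbitrary phantom maps $f,f'$ with $\varphi([f])=\varphi([f'])$, extracts from that equality maps $E_n$ with $[E_n]\cdot[D_n]\cdot[E_{n+1}\restriction\bar{\Sigma}(X_n,A_n)]^{-1}=[D'_n]$, uses them to arrange $[D_n]=[D'_n]$ for all $n$, and then builds a homotopy $f\Rightarrow f'$ from the homotopies $D_n\Rightarrow D'_n$ by spreading them across the telescope with bump functions $\lambda_n$ (after first normalizing $f$ and $f'$ to be constant on $\bigcup_n\partial X_n$).

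The rest of your outline is essentially the paper's mechanism specialized to $f'=\ast$: using the coboundary data $E_n$ to recalibrate the chosen nullhomotopies $h_n$, and then patching the recalibrated homotopies into a global one via the Homotopy Extension Theorem (your ``make $h'_{n+1}$ restrict to $h'_n$ on the nose'' gluing is a legitimate alternative to the paper's $\lambda_n$-gluing, and the recursive correction you describe does not disturb earlier stages since each $h'_{n+1}$ is only changed within its homotopy class rel endpoints). So the fix is not to find a new technique but to run your construction with a second arbitrary phantom map $f'$ in place of $\ast$ throughout: the relation $[E_n]\cdot[D_n]\cdot[E_{n+1}\restriction\bar{\Sigma}(X_n,A_n)]^{-1}=[D'_n]$ lets you assume $D_n\simeq D'_n$, and the resulting homotopies $\tilde h_n:D_n\Rightarrow D'_n$ glue to a homotopy $f\Rightarrow f'$.
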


\begin{proof}
We will assume that $\varphi([f]) =\varphi([f'])$ for two phantom maps $f,f':(X,A) \rightarrow (P,*) $ and deduce that $f$ and $f'$ are homotopic.

Let $Y\subseteq X$ be the union of the boundaries $\partial X_{n}$ for 
$n\in \mathbb{N}$. Note that $Y$ is a closed subset of $X$, being the union
of a locally finite family of closed subsets of $X$. Furthermore, as $f$
is a phantom map, each $f\restriction \partial X_{n}$ is nullhomotopic, hence $f\restriction Y$ is nullhomotopic as well. Thus by the
Homotopy Extension Theorem, after replacing $f$ with a phantom map homotopic
to $f$, we may assume that $f\restriction Y=\ast $. By the same reasoning, we may assume that $f'\restriction Y=\ast$ as well.

Fix homotopies $h_n:* \Rightarrow
f\restriction X_n:(X_n,A_n) \rightarrow (P,*)$ and $h'_n:* \Rightarrow
f'\restriction X_n:(X_n,A_n) \rightarrow (P,*)$ for each $n\in \mathbb{N}$. These determine functions $
D_{n}:\bar{\Sigma}\to(P,*)$ and $%
D'_{n}:\bar{\Sigma}\to(P,*)$ in the manner described above, defining in turn the values $\varphi([f])$ and $\varphi([f'])$, respectively.
Since $\varphi([f])=\varphi([f'])$, there exist maps $E_{n}: \bar{\Sigma}(X_n,A_n)\rightarrow (P,*)$ such that 
$[E_n]\cdot[D_n]\cdot [E_{n+1}\restriction \bar{\Sigma}(X_n,A_n)]^{-1}=[D'_n]$
for every $n\in \mathbb{N}$. Therefore, by replacing each $h_{n}:\ast \Rightarrow
f\restriction X_n:(X_n,A_n) \rightarrow (P,*)$ with the homotopy defined by%
\begin{equation*}
\left( x,t\right) \mapsto \left\{ 
\begin{array}{ll}
E_{n}(\langle x,2t\rangle) & 0\leq t\leq 1/2 \\ 
h_{n}( 2t-1) & 1/2\leq t\leq 1%
\end{array}%
\right.
\end{equation*}%
we may assume without loss of generality that $[D_n]=[D'_n]$
for every $n\in \mathbb{N}$. Assume in other words that for each $n\in \mathbb{N}$ there exists a homotopy $
\tilde{h}_{n}:D_{n}\Rightarrow D_{n}^{\prime }:\bar{\Sigma}(X_n,A_n) \rightarrow (P,*)$.

Fix for each $n\in
\mathbb{N}$ a continuous function $\lambda _{n}:X\rightarrow \left[ 0,1%
\right] $ such that $\lambda_n\restriction X_{n-1}=0$ and $\lambda
_{n}\restriction X\backslash\mathrm{int}(X_n)=1$, letting $X_{-1}=\varnothing $. Define the phantom map $g:(X,A)\to(P,*)$ by $x\mapsto D_{n}(\langle x,\lambda
_{n}(x)\rangle) $ for $x\in X_n\setminus X_{n-1}$. To see that $g$ is continuous, observe that for $n\in \mathbb{N}$ and $x\in
\partial X_{n-1}$,%
\begin{equation*}
D_{n}(\langle x,\lambda_{n}(x)\rangle) =D_{n}(\langle x,1\rangle) =\ast =D_{n+1}(\langle x,0\rangle) =D_{n+1}(\langle x,\lambda_{n+1}(x)\rangle) \text{.}
\end{equation*}%
We then have a homotopy $f\Rightarrow g:(X,A)\to(P,*)$, defined by setting $(x,t) \mapsto D_{n}(\langle x,(1-t)/2+t\lambda
_{n}(x)\rangle)$ for all $t\in [0,1] $, $x\in X_n\setminus X_{n-1}$, and $n\in
\mathbb{N}$.

Similarly, $f^{\prime }$ is homotopic to the map $g^{\prime
}:(X,A)\rightarrow (P,*)$ defined by $g^{\prime }( x) =D_{n}^{\prime
}(\langle x,\lambda _{n}(x)\rangle)$ for all $x\in X_{n}\setminus
X_{n-1}$ and $n\in\mathbb{N}$. Hence our task reduces to showing that $g$ and $g^{\prime }$ are homotopic. One may define a homotopy $g\Rightarrow g^{\prime }:(X,A)\to(P,*)$ by%
\begin{equation*}
\left( x,t\right) \mapsto \tilde{h}_{n}(\langle x,\lambda _{n}( x)
\rangle,t)
\end{equation*}%
for $x\in X_{n}\setminus X_{n-1}$ and $t\in I$, where $\tilde{h}_{n}$ is the
homotopy $D_{n}\Rightarrow D_{n}^{\prime }:\bar{\Sigma}(
X_n,A_n) \rightarrow (P,*) 
$ described above. This concludes the proof.
\end{proof}

We have established that $\varphi:[(X,A),(P,*)]_\infty\to\mathrm{lim}^1\,\mathbf{G}$ is an injective basepoint-preserving definable
function. In order to conclude the proof that $\varphi $ is an isomorphism in
the category of pointed semidefinable sets, it will suffice to describe a definable function $\psi:\mathrm{lim}^1\,\mathbf{G}\to [(X,A),(P,*)]_\infty$ which is a right inverse of $\varphi $.

Begin by fixing a continuous function $\lambda _{n}:X\rightarrow I$ as above for each $n\in\mathbb{N}$. We define $\psi$ via representatives $([\tilde{D}_n])_{n\in \mathbb{N}}\in \mathrm{Z}^{1}( \mathbf{G}) $ of elements of $\mathrm{lim}^1\,\mathbf{G}$. Let $\psi ([([%
\tilde{D}_{n}])_{n\in\mathbb{N}}])=[f]$, where $f:(X,A)\to(P,*)$ is the phantom map defined by setting%
\begin{equation*}
f(x):=\tilde{D}_{n}(\langle x,\lambda _{n}(x) %
\rangle)
\end{equation*}
for all $x\in X_{n}\backslash X_{n-1}$.
\begin{claim*}
The map $f:(X,A) \rightarrow \left( P,\ast \right) $
is a well-defined phantom map.
\end{claim*}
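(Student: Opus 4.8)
The plan is to verify in turn that $f$ is single-valued, continuous, basepoint-preserving, and phantom. Single-valuedness is immediate: the annuli $X_n\setminus X_{n-1}$ (with $X_{-1}=\varnothing$) are pairwise disjoint and cover $X$, so the defining formula assigns $f$ exactly one value at each point. For the basepoint condition, note that if $x\in A$ lies in the annulus $X_n\setminus X_{n-1}$ then $x\in A_n$, so $\langle x,\lambda_n(x)\rangle$ is the collapsed point of $\bar{\Sigma}(X_n,A_n)$ and $f(x)=\tilde{D}_n(\ast)=\ast$; hence $f(A)\subseteq\{\ast\}$ and $f$ is a morphism $(X,A)\to(P,\ast)$.

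For continuity, first I would record the single computation on which everything hinges: the boundaries $\partial X_n$ are pairwise disjoint (since $X_n\subseteq\mathrm{int}(X_{n+1})$), and on $\partial X_n$ both of the formulas that could apply return $\ast$. Indeed, approaching from inside annulus $n$ we have $\lambda_n=1$ on $\partial X_n$, so $f=\tilde{D}_n(\langle\cdot,1\rangle)=\ast$, while approaching from annulus $n+1$ we have $\lambda_{n+1}=0$, so $f=\tilde{D}_{n+1}(\langle\cdot,0\rangle)=\ast$. With this in hand I would cover $X$ by the open sets $O_n=\mathrm{int}(X_{n+1})\setminus X_{n-1}$; each $O_n$ meets only the two annuli $n$ and $n+1$, each formula $x\mapsto\tilde{D}_k(\langle x,\lambda_k(x)\rangle)$ is continuous on the corresponding closed piece of $O_n$, and the two pieces agree (both equal $\ast$) on their overlap $\partial X_n\cap O_n$; the pasting lemma then yields continuity of $f$ on each $O_n$, and hence on $X$. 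The nesting $X_n\subseteq\mathrm{int}(X_{n+1})$ is exactly what guarantees both that the $O_n$ cover $X$ and that this local description is finite.

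The phantom property is the main obstacle and requires more care. Since any compact $K\subseteq X$ is contained in some $X_m$, it suffices to produce, for each $m$, a nullhomotopy of $f\restriction X_m$ as a map of pairs $(X_m,A_m)\to(P,\ast)$. The naive idea of contracting the annuli one at a time fails: each annulus carries a full suspension-sweep of $\tilde{D}_n$ (from $\ast$ at its inner boundary to $\ast$ at its outer boundary), and pushing the suspension coordinate of one annulus toward either $0$ or $1$ destroys the matching at one of its two seams with the neighbouring annuli, which are governed by the genuinely different maps $\tilde{D}_{n\pm1}$. I would instead introduce a global radial coordinate $r\colon X\to[0,\infty)$, $r(x)=n+\lambda_n(x)$ for $x\in X_n\setminus X_{n-1}$ --- continuous by the same seam computation as above --- together with the map $\Theta(x,\rho)=\tilde{D}_n(\langle x,\rho-n\rangle)$ for $\rho\in[n,n+1]$, so that $f(x)=\Theta(x,r(x))$. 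The crucial point is that $\Theta(x,\rho)$ is well defined and continuous for all $\rho\ge r(x)$: raising the radius only ever increases the index $\lfloor\rho\rfloor$ of the relevant complex, and since the $X_n$ are nested, $x$ remains in the domain $X_{\lfloor\rho\rfloor}$ of the map being applied, while the value at every integer radius is $\ast$, gluing the pieces of $\Theta$ together.

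The desired nullhomotopy of $f\restriction X_m$ is then the ``push-out'' $G(x,s)=\Theta\bigl(x,(1-s)\,r(x)+s\,(m+1)\bigr)$, which slides the whole configuration outward through the free boundary $\partial X_m$: at $s=0$ it is $f\restriction X_m$, at $s=1$ it is the constant map $\tilde{D}_m(\langle\cdot,1\rangle)=\ast$, it fixes the basepoint throughout since $A_m$ is collapsed in every $\bar{\Sigma}(X_k,A_k)$, and it is continuous because $(1-s)\,r(x)+s\,(m+1)$ stays in $[r(x),m+1]$, where $\Theta$ is defined and continuous. This establishes that $f\restriction X_m$ is nullhomotopic for every $m$, and hence that $f$ is a phantom map. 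I expect the only delicate bookkeeping to be the verification that the radial push stays within the nested domains and that $G$ is a homotopy of pairs, both of which reduce to the observations above.
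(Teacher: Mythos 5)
Your proof is correct, and on the well-definedness and continuity half it is essentially the paper's argument: the paper likewise reduces everything to the seam computation $f(x)=\tilde{D}_n(\langle x,1\rangle)=\ast=\tilde{D}_{n+1}(\langle x,0\rangle)$ on $\partial X_n$ (your explicit pasting-lemma cover by the sets $O_n$ just makes that deduction airtight). Where you genuinely diverge is the phantom part. The paper disposes of it in one line, exhibiting the homotopy $h_n(x,t)=\tilde{D}_n(\langle x,t\lambda_n(x)\rangle)$ and asserting that it runs from $\ast$ to $f\restriction X_n$; but at $t=1$ this formula returns $\tilde{D}_n(\langle x,\lambda_n(x)\rangle)$, which agrees with $f$ only on the outermost annulus $X_n\setminus X_{n-1}$ and is constantly $\ast$ on all of $X_{n-1}$ (where $\lambda_n\equiv 0$), so read literally it nullhomotopes the wrong map --- precisely the seam-mismatch phenomenon you flag when you observe that contracting the annuli one at a time fails. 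Your global radial coordinate $r$ and the push-out $G(x,s)=\Theta\bigl(x,(1-s)\,r(x)+s\,(m+1)\bigr)$ repair this: the only points needing care are that $(1-s)\,r(x)+s\,(m+1)$ never drops below $r(x)$ (so $\Theta$ stays defined, by the nesting $x\in X_{\lfloor\rho\rfloor}$) and that $\Theta$ takes the value $\ast$ at every integer radius (so its pieces glue), both of which you verify, and the basepoint condition holds throughout since $A_m$ is collapsed in every $\bar{\Sigma}(X_k,A_k)$. In short, your construction buys a correct, self-contained nullhomotopy of $f\restriction X_m$ at the cost of some bookkeeping, whereas the paper's one-line shortcut is exactly the ``naive'' contraction you rightly reject; your more elaborate homotopy is what is actually needed here.
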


\begin{proof}
Just as above, for all $n\in\mathbb{N}$ and $x\in \partial X_{n}$ we have %
\begin{equation*}
f(x)=D_{n}(\langle x,\lambda_{n}(x)\rangle) =D_{n}(\langle x,1\rangle) =\ast =D_{n+1}(\langle x,0\rangle) =D_{n+1}(\langle x,\lambda_{n+1}(x)\rangle) \text{.}
\end{equation*}
This shows that $f$ is well-defined and continuous. Define for each $n\in\mathbb{N}$ a homotopy $h_{n}:\ast \Rightarrow f\restriction X_n:(X_n,A_n) \rightarrow (P,*)$ by setting
\begin{equation*}
h_{n}( x,t) =\tilde{D}_{n}(\langle x,t\lambda _{n}( x)
\rangle) \text{.}
\end{equation*}%
This shows that $f$ is a phantom map.
\end{proof}

\begin{claim*}
Adopt the notation above and suppose that $[f]=\psi ([([\tilde{D}_{n}])_{n\in \mathbb{N}}])$. Then $\phi( \lbrack f]) =[([\tilde{D}_{n}])_{n\in\mathbb{N}}]$.
\end{claim*}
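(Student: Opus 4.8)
The plan is to verify the final claim, that $\varphi(\psi([([\tilde{D}_n])_{n\in\mathbb{N}}]))=[([\tilde{D}_n])_{n\in\mathbb{N}}]$, essentially by unwinding the two constructions and exhibiting the homotopies that make them cancel. Recall that $\psi$ produces from the cocycle $([\tilde{D}_n])$ the phantom map $f(x)=\tilde{D}_n(\langle x,\lambda_n(x)\rangle)$ on $X_n\setminus X_{n-1}$, together with the canonical nullhomotopies $h_n(x,t)=\tilde{D}_n(\langle x,t\lambda_n(x)\rangle)$ exhibited in the preceding claim. The key observation is that since $\varphi([f])$ is independent of the choice of nullhomotopies $h_n$ (by the first claim of the proof), I may compute $\varphi([f])$ using precisely these $h_n$ rather than arbitrary ones. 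This is the structural move that makes the computation tractable.

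First I would substitute these specific $h_n$ into the defining formula \eqref{eq:D_n} for the maps $D_n:\bar{\Sigma}(X_n,A_n)\to(P,*)$ associated to $\varphi([f])$. That is, I would form
\[
D_n(\langle x,t\rangle)=\begin{cases} h_n(x,2t) & 0\leq t\leq 1/2,\\ h_{n+1}(x,2-2t) & 1/2\leq t\leq 1,\end{cases}
\]
and then insert $h_n(x,s)=\tilde{D}_n(\langle x,s\lambda_n(x)\rangle)$ and the analogous expression for $h_{n+1}$. The resulting map $D_n$ is built from $\tilde{D}_n$ and $\tilde{D}_{n+1}$ via reparametrizations in the suspension coordinate together with the weights $\lambda_n,\lambda_{n+1}$. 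What I then need to show is that the cocycle $([D_n])_{n\in\mathbb{N}}$ so obtained represents the same element of $\mathrm{lim}^1\,\mathbf{G}$ as the original cocycle $([\tilde{D}_n])_{n\in\mathbb{N}}$; that is, the two differ by a coboundary in the sense of the $\mathrm{lim}^1$-action, so there must exist maps $E_n:\bar{\Sigma}(X_n,A_n)\to(P,*)$ with $[E_n]\cdot[\tilde{D}_n]\cdot[E_{n+1}\restriction\bar{\Sigma}(X_n,A_n)]^{-1}=[D_n]$ for every $n$.

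The main obstacle, and the technical heart of the argument, will be producing these coboundary witnesses $E_n$ and the homotopies verifying the cocycle identity. The difficulty is essentially the same bookkeeping that appears in the first claim of the proof: one must track how the suspension-coordinate reparametrizations and the $\lambda_n$-weighting interact across consecutive levels $n$ and $n+1$, and assemble explicit concatenation-and-reparametrization homotopies (using the $f\cdot g$, $-f$, and $kf$ notation introduced before the proof) to show the two descriptions agree up to the $\mathrm{lim}^1$-coboundary relation. In spirit, the natural choice of $E_n$ will record the ``partial'' nullhomotopy interpolating between the $\tilde{D}_n$-description and the $D_n$-description at level $n$, and the verification will be a routine but lengthy subdivision-of-the-interval homotopy of exactly the kind the authors have elsewhere relegated to the interested reader. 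Since $\varphi$ and $\psi$ are already known to be definable and $\varphi$ has been shown injective, establishing this single identity $\varphi\circ\psi=\mathrm{id}$ suffices: it shows $\psi$ is a right inverse of $\varphi$, which combined with injectivity of $\varphi$ upgrades $\varphi$ to a bijection, and hence (by Proposition \ref{Proposition:bij-iso}) to an isomorphism of pointed semidefinable sets, completing the proof of Theorem \ref{Theorem:phantom1}.
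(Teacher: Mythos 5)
Your strategy coincides with the paper's up to and including its key move: compute $\varphi([f])$ using the canonical nullhomotopies $h_n(x,t)=\tilde{D}_n(\langle x,t\lambda_n(x)\rangle)$ exhibited in the previous claim, which is legitimate because the first claim of the proof makes $\varphi([f])$ independent of that choice. Where you diverge is at the decisive final step, and there your proposal both stops short and overshoots. You stop short in that the entire content of the claim is the comparison of $([D_n])_{n\in\mathbb{N}}$ with $([\tilde{D}_n])_{n\in\mathbb{N}}$, and you neither construct the witnesses $E_n$ you announce nor write down the verifying homotopies, deferring them as ``routine but lengthy''; as written, nothing has been proved. You overshoot in that no coboundary witnesses are needed at all. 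Since $\lambda_{n+1}\restriction X_n=0$, for $x\in X_n$ one has $h_{n+1}(x,s)=\tilde{D}_{n+1}(\langle x,s\lambda_{n+1}(x)\rangle)=\ast$, so the second half of $D_n$ is constantly $\ast$ and $D_n(\langle x,t\rangle)$ equals $\tilde{D}_n(\langle x,2t\lambda_n(x)\rangle)$ for $t\leq 1/2$ and $\ast$ for $t\geq 1/2$. This is visibly a reparametrization of $\tilde{D}_n$ in the suspension coordinate, and a single interpolating homotopy (shrinking the constant tail while relaxing the factor $2\lambda_n(x)$ to $1$) gives $[D_n]=[\tilde{D}_n]$ in $[\bar{\Sigma}(X_n,A_n),(P,\ast)]$ for each $n$ separately. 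Hence the two sequences already agree termwise in $\mathrm{Z}^1(\mathbf{G})$ up to homotopy and a fortiori in $\mathrm{lim}^1\,\mathbf{G}$; the $\mathrm{lim}^1$-coboundary machinery you invoke is not needed here (it is needed in the earlier claims, where the nullhomotopies genuinely change). So: same approach as the paper, but you should carry out the last homotopy explicitly and notice that your $E_n$ may all be taken to be the constant map.
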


\begin{proof}
The argument amounts to computing $\varphi ([f]) \in \mathrm{lim}^1\,\mathbf{G}$. Let $h_{n}:\ast \Rightarrow f\restriction X_n:(X_n,A_n) \rightarrow ( P,\ast) $ be the homotopy recorded in the proof of the previous claim, and consider the map $D_{n}:\bar{\Sigma}(X_n,A_n) \rightarrow (P,*)$ defined from $h_{n},h_{n+1}$ as in the definition of $\varphi([f])$.
Thus
\begin{equation*}
D_{n}(\langle x,t\rangle) =\left\{ 
\begin{array}{ll}
h_{n}( x,2t) =\tilde{D}_{n}(\langle x,2t\lambda _{n}(
x)\rangle) & 0\leq t\leq 1/2\text{,} \\ 
h_{n+1}( x,2t-1) =\tilde{D}_{n+1}(\langle x,( 2t-1) \lambda
_{n+1}( x)\rangle)=\ast & 1/2\leq t\leq 1\text{.}%
\end{array}%
\right.
\end{equation*}%
Observe that $D_{n},\tilde{D}_{n}:\bar{\Sigma}(X_n,A_n) \rightarrow (P,\ast)$ are homotopic, as
witnessed by the homotopy%
\begin{equation*}
([x,t],s) \mapsto \left\{ 
\begin{array}{cc}
D_{n}(\langle x,2t(1-s)\lambda_{n}(x)+st\rangle) & 0\leq t\leq
1/2+s/2\text{,} \\ 
\ast & 1/2+s/2\leq t\leq 1\text{.}%
\end{array}%
\right.
\end{equation*}%
Therefore $[ D_{n}] =[\tilde{D}_{n}]$ for every $n\in \mathbb{N}$. This shows that $\varphi([f]) =[([\tilde{D}_{n}])_{n\in\mathbb{N}}]$, as desired.
\end{proof}

By the previous claim, the function $\psi :\mathrm{lim}^1\,\mathbf{G}\rightarrow [(X,A),(P,*)]_{\infty}$ is well-defined and basepoint-preserving
and is a right inverse for $\varphi $. Since $\varphi$ is injective, $\varphi$
and $\psi $ are in fact mutually inverse functions. Since $\varphi$ and $\psi$ are
definable functions, they are mutually inverse isomorphisms in the category
of pointed semidefinable sets.

It is quite clear from our construction and claims above that $\varphi$ defines a transformation of functors which is natural in the polyhedral coordinate. Naturality in the first coordinate follows for the same reasons, coupled with the fact that maps $(X,A)\to(Y,B)$ induce $\mathsf{Ind}_\omega$ maps at the level of the cofiltrations in the manner described in Section \ref{Section:pro_and_ind_categories}.
\end{proof}
\subsection{Phantom maps to $H$-spaces}
We now restrict our analysis to phantom maps from a locally compact pair $(X,A)$ to a polyhedral $H$%
-space $(P,\ast
,m)$.
Recall that such a $P$ is a pointed polyhedron endowed
with a map $m:P\wedge P\rightarrow P$ such that the maps $m( \ast
,-)$ and $m( -,\ast) :P\rightarrow P$ are each homotopic to the
identity; $m$ then induces a binary operation on $[(X,A),(P,\ast)]$ with $[\ast]$ as identity element defined by
\begin{equation*}
[f]\cdot[g]=[m\circ(f\wedge g)]
\end{equation*}%
for any maps $f,g:(X,A) \rightarrow (P,\ast)$.
This renders $[(X,A),(P,\ast)]$ a
\emph{semidefinable unital magma}, i.e., a pointed (semidefinable) set with a (definable) binary operation in which the basepoint serves as the neutral element.

It is clear that $[(X,A),(P,\ast)]_{\infty}$ is a semidefinable unital submagma of $[(X,A),(P,\ast)]$. We now show that $[(X,A),(P,\ast)]_{\infty}$ is, in fact, a definable abelian group, and even a group with a Polish cover.

\begin{theorem}
\label{Theorem:phantom-H-space}Suppose that $(X_n,A_n)_{n\in\mathbb{N}}$ is a cofiltration of a locally compact pair $(X,A)$ and $(P,\ast,m)$ is a polyhedral $H$-space. Then $[(X,A),(P,\ast)]_{\infty }$ is a definable abelian group
naturally isomorphic to $\mathrm{lim}^{1}\,[ 
\bar{\Sigma}(X_n,A_n),(P,*)]$; in particular, it is naturally definably isomorphic to a group with a Polish cover.
\end{theorem}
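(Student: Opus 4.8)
The plan is to promote the definable bijection $\varphi$ furnished by Theorem \ref{Theorem:phantom1} to a \emph{group} isomorphism, after first pinning down the relevant group structures. Since each $\bar{\Sigma}(X_n,A_n)$ is an $H$-cogroup, each term $G^n:=[\bar{\Sigma}(X_n,A_n),(P,\ast)]$ of the tower $\mathbf{G}$ carries the cogroup-induced group structure used in the definition of $\mathrm{lim}^1$ in Section \ref{Section:pro_and_ind_categories}. Because $P$ is moreover an $H$-space, Lemma \ref{Lemma:H-space} applies to show that this structure coincides with the one induced on $G^n$ by the $H$-space multiplication $m$, and that it is abelian. Thus $\mathbf{G}$ is a tower of countable \emph{abelian} groups, so, as recorded in Section \ref{Section:pro_and_ind_categories}, $\mathrm{lim}^1\,\mathbf{G}$ is an abelian group with a Polish cover: its underlying Polish group $\mathrm{Z}^1(\mathbf{G})=\prod_n G^n$ carries the coordinatewise operation, while $\mathrm{B}^1(\mathbf{G})$ is the coset relation of a Borel Polishable subgroup, hence Borel and (as in the Example of Section \ref{Section:er}) idealistic, so that $\mathrm{lim}^1\,\mathbf{G}$ is a definable abelian group.

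Next I would verify that $\varphi$ is a homomorphism from the magma $([(X,A),(P,\ast)]_\infty,\cdot)$ to $\mathrm{lim}^1\,\mathbf{G}$. Fix phantom maps $f,g:(X,A)\to(P,\ast)$ and, for each $n$, homotopies $h^f_n:\ast\Rightarrow f\restriction X_n$ and $h^g_n:\ast\Rightarrow g\restriction X_n$. Since a polyhedron is well-pointed we may, as in the proof of Corollary \ref{cor:based_hspace} (cf.\ \cite{hatcher_solution}), take $m$ to be strictly unital, so that $m(\ast,\ast)=\ast$; then $h^{fg}_n(x,s):=m(h^f_n(x,s),h^g_n(x,s))$ is a homotopy $\ast\Rightarrow (f\cdot g)\restriction X_n$. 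Computing the maps $D^{fg}_n:\bar{\Sigma}(X_n,A_n)\to(P,\ast)$ from these homotopies via equation (\ref{eq:D_n}), one checks directly that $D^{fg}_n=m\circ(D^f_n,D^g_n)$ on the nose; that is, $[D^{fg}_n]$ is precisely the $H$-space product $[D^f_n]\cdot[D^g_n]$ in $G^n$. Because, by the claims inside the proof of Theorem \ref{Theorem:phantom1}, $\varphi$ is independent of the choice of nullhomotopies, these particular homotopies may be used to compute $\varphi([f\cdot g])$, yielding $\varphi([f\cdot g])=([D^f_n]\cdot[D^g_n])_n=\varphi([f])\cdot\varphi([g])$, where the middle product is formed coordinatewise in $\mathrm{Z}^1(\mathbf{G})$ and the outer products are taken in $\mathrm{lim}^1\,\mathbf{G}$.

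It then follows that $[(X,A),(P,\ast)]_\infty$, with its $H$-space operation, is isomorphic as a magma to the abelian group $\mathrm{lim}^1\,\mathbf{G}$; in particular its operation is that of an abelian group. For the definability claims, note that the homotopy relation $E$ on $\mathrm{Ph}((X,A),(P,\ast))$ is idealistic, being the restriction of the idealistic relation of Theorem \ref{Theorem:homotopy-idealistic} to the closed (hence Borel and homotopy-invariant) subspace of phantom maps identified in Proposition \ref{prop:topological_char_of_phantoms}. Since $\varphi$ is an isomorphism in $\mathsf{SemiDSet}$ onto the definable set $\mathrm{lim}^1\,\mathbf{G}$, the transfer lemma of Section \ref{Section:semidset} (resting on Lemma \ref{Lemma:invariance}) shows that $(\mathrm{Ph}((X,A),(P,\ast)),E)$ is itself a definable set; as the magma operation lifts to the continuous (hence Borel) map $(f,g)\mapsto m\circ(f,g)$ and $\varphi$ is now a group isomorphism, $[(X,A),(P,\ast)]_\infty$ is a definable abelian group definably isomorphic to the group with a Polish cover $\mathrm{lim}^1\,\mathbf{G}$. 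Naturality in both coordinates is inherited directly from that of $\varphi$. The main obstacle is the homomorphism verification of the second paragraph: one must ensure that the two a priori distinct group structures on each $G^n$ genuinely coincide (this is exactly the content of Lemma \ref{Lemma:H-space}, and is why the $H$-cogroup structure on $\bar{\Sigma}(X_n,A_n)$ is indispensable) and handle the basepoint bookkeeping carefully enough that the pointwise $m$-products of homotopies are honest nullhomotopies realizing $D^{fg}_n=m\circ(D^f_n,D^g_n)$.
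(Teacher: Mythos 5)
Your proposal is correct and follows essentially the same route as the paper's own proof: one first uses Lemma \ref{Lemma:H-space} to identify the $H$-cogroup- and $H$-space-induced operations on each $[\bar{\Sigma}(X_n,A_n),(P,\ast)]$ (whence $\mathrm{lim}^1$ of the tower is an abelian group with a Polish cover), and then verifies that $\varphi$ is a magma homomorphism by forming the $m$-products of the chosen nullhomotopies and observing that the resulting $D_n$-maps are exactly the $m$-products of the individual ones. Your additional remarks on the definability transfer via Lemma \ref{Lemma:invariance} only make explicit what the paper leaves implicit in its appeal to Theorem \ref{Theorem:phantom1}.
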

The term \emph{natural} here should be understood in the sense more precisely articulated in the statement of Theorem \ref{Theorem:phantom1} and the conclusion of its proof; as is standard, for concision we will omit those sorts of details from now on.
\begin{proof}
Since $P$ is an $H$-space, $[\bar{\Sigma}(X_n,A_n),(P,\ast)]$ is a countable abelian group for every $n\in\mathbb{N}$, by Lemmas \ref{Lemma:H-space} and \ref{Lemma:countable-homotopy}. Therefore $\mathrm{lim}^{1}\,[\bar{\Sigma}(X_n,A_n),(P,\ast)]$ is a group with a Polish cover and, hence, a definable abelian group.

Thus, it suffices to check that the definable basepoint-preserving map 
\begin{equation*}
\varphi :[(X,A),(P,\ast)]_{\infty
}\rightarrow \mathrm{lim}^{1}\,[\bar{\Sigma}(X,A),(P,\ast)]
\end{equation*}%
defined in the proof of Theorem \ref{Theorem:phantom1} is a magma
homomorphism, i.e., that it satisfies
\begin{equation*}
\varphi \left( \lbrack f]\cdot[f^{\prime }]\right) =\phi \left( \lbrack f]\right)
+\varphi \left( \lbrack f^{\prime }]\right)
\end{equation*}%
for $[f],[f^{\prime }]\in [(X,A),(P,\ast)]_{\infty}$. To this end, suppose that $f,f^{\prime }:X\rightarrow P$ are phantom maps, and choose for each  
$n\in \mathbb{N}$ homotopies $h_{n}:\ast \Rightarrow f|_{X_{n}}$ and $%
h_{n}^{\prime }:\ast \Rightarrow f'|_{X_{n}}$; we then have homotopies $m\circ
\left( h_{n}\wedge h_{n}^{\prime }\right) :\ast \Rightarrow m\circ \left(
f\wedge f^{\prime }\right) |_{X_{n}}$ for each $n\in\mathbb{N}$ as well.

By the definition of $\varphi$, 
\begin{equation*}
\varphi \left( \lbrack f]\right) =\left( [D_{n}]\right) _{n\in \mathbb{N}},
\end{equation*}%
\begin{equation*}
\varphi \left( \lbrack f^{\prime }]\right) =\left( [D_{n}^{\prime }]\right)
_{n\in \mathbb{N}},\textnormal{ and }
\end{equation*}%
\begin{equation*}
\varphi \left( \lbrack f]\cdot\left[ f^{\prime }\right] \right) =\varphi \left(
\lbrack m\circ \left( f\wedge f^{\prime }\right) ]\right) =\left(
[E_{n}]\right) _{n\in\mathbb{N}},
\end{equation*}%
where $D_{n},D_{n}^{\prime },E_{n}:\bar{\Sigma}(X_{n},A_n)\rightarrow
(P,\ast )$ are defined for each $n\in\mathbb{N}$ by
\begin{equation*}
D_{n}(\langle x,t\rangle) =\left\{ 
\begin{array}{ll}
h_{n}( x,2t) & 0\leq t\leq 1/2 \\ 
h_{n+1}( x,2-2t) & 1/2\leq t\leq 1%
\end{array}%
\right.
\end{equation*}%
\begin{equation*}
D_{n}^{\prime }( \langle x,t\rangle) =\left\{ 
\begin{array}{ll}
h_{n}^{\prime }( x,2t) & 0\leq t\leq 1/2 \\ 
h_{n+1}^{\prime }\left( x,2-2t)\right) & 1/2\leq t\leq 1%
\end{array}%
\right.
\end{equation*}%
\begin{eqnarray*}
E_{n}(\langle x,t\rangle) &=&\left\{ 
\begin{array}{ll}
\left( m\circ \left( h_{n}\wedge h_{n}^{\prime }\right) \right) (
x,2t) & 0\leq t\leq 1/2 \\ 
\left( m\circ \left( h_{n+1}\wedge h_{n+1}^{\prime }\right) \right) (x,2-2t)) & 1/2\leq t\leq 1%
\end{array}%
\right. \\
&=&\left( m\circ \left( D_{n}\wedge D_{n}^{\prime }\right) \right) (
\langle x,t\rangle) \text{.}
\end{eqnarray*}%
In short, $E_{n}=m\circ \left( D_{n}\wedge D_{n}^{\prime }\right)$. And since, by Lemma \ref{Lemma:H-space}, the operation on $[\bar{\Sigma}(X_{n},A_n),(P,\ast )]$
defined in terms of the $H$-space structure on $P$ coincides with the
operation induced by the $H$-cogroup structure on $\bar{\Sigma}(X_{n},A_n)$,
\begin{equation*}
\left[ E_{n}\right] =\left[ m\circ \left( D_{n}\wedge D_{n}^{\prime }\right) %
\right] =\left[ D_{n}\right] +\left[ D_{n}^{\prime }\right] \text{,}
\end{equation*}%
and hence $\varphi([f]\cdot[f']) =\varphi([f]) +\varphi([f'])$. This concludes the
proof that $\varphi $ is a magma homomorphism.
\end{proof}
\subsection{The homotopy classification of maps to $H$-groups}

We now consider the even more restrictive case in which $( P,\ast,m)$ is a polyhedral $H$-group. To this end, let $(X,A)$ be a locally compact pair and let $(P,\ast)$ be a pointed polyhedron with $H$-group operation $m:P\wedge P\rightarrow P$. In
this case, the $H$-group structure on $P$ renders $[(X,A),(P,\ast)]$ a (not necessarily abelian)
semidefinable group. The group operation on $[(X,A)
,(P,\ast)]$ is defined as before by setting $[f]\cdot[g]=[m\circ (f\wedge g)]$. Similarly, if $\zeta
:( P,\ast) \rightarrow (P,\ast) $ is a map such
that the map $(P,\ast) \rightarrow (P,\ast):\,
x\mapsto m(x,\zeta(x)) $ is homotopic to the
constant map, then the inverse $[f]^{-1}$ of $[f]\in [(X,A),(P,\ast)]$ is given by $[\zeta \circ f]$.

\begin{lemma}
Suppose that $(X,A)$ is a locally compact Polish
space and $(P,\ast,m)$ is a polyhedral $H$-group. Any two
maps $f,g:(X,A) \rightarrow (P,\ast)$ are weakly homotopic if and only if $
[f]\cdot[g]^{-1}\in [(X,A),(P,\ast)]_{\infty}$.
\end{lemma}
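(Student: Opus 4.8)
The plan is to compare the two conditions level-by-level along a cofiltration $(X_n,A_n)_{n\in\mathbb{N}}$ of $(X,A)$, exploiting the fact that on each compact stage the $H$-group structure on $P$ makes the set of homotopy classes an honest group. First I would record the translation of weak homotopy into the language of the cofiltration: by Lemma \ref{Lemma:weak-homotopy}, together with the observation that every compact $K\subseteq X$ is contained in some $X_n$ (the sets $\mathrm{int}(X_n)$ cover $X$, so compactness gives $K\subseteq X_n$ for some $n$), we have $f\simeq_{\mathrm{w}} g$ if and only if $f|_{X_n}$ and $g|_{X_n}$ are homotopic as maps $(X_n,A_n)\to(P,\ast)$ for every $n\in\mathbb{N}$. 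By Corollary \ref{Corollary:countable-homotopy} and the $H$-group structure on $P$, each $[(X_n,A_n),(P,\ast)]$ is a genuine (countable) group under $[u]\cdot[v]=[m\circ(u\wedge v)]$, with inverse $[u]^{-1}=[\zeta\circ u]$, where $\zeta$ is the homotopy inverse of $P$.

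The crux is that restriction is a group homomorphism. For each $n$ let $r_n\colon [(X,A),(P,\ast)]\to[(X_n,A_n),(P,\ast)]$ denote the map $[u]\mapsto[u|_{X_n}]$. Since the constructions $u\wedge v$ and $\zeta\circ u$ are pointwise in the source, restriction commutes with them on the nose: $(m\circ(u\wedge v))|_{X_n}=m\circ(u|_{X_n}\wedge v|_{X_n})$ and $(\zeta\circ u)|_{X_n}=\zeta\circ(u|_{X_n})$. Hence each $r_n$ is a homomorphism of the semidefinable group structures, and in particular $r_n([f]\cdot[g]^{-1})=[f|_{X_n}]\cdot[g|_{X_n}]^{-1}$ for all $n$.

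With these two ingredients the equivalence is a short chain. Writing $\ast$ for the constant map and its class, we have $f\simeq_{\mathrm{w}} g$ iff $[f|_{X_n}]=[g|_{X_n}]$ for all $n$, iff $[f|_{X_n}]\cdot[g|_{X_n}]^{-1}$ is the identity of $[(X_n,A_n),(P,\ast)]$ for all $n$, iff $r_n([f]\cdot[g]^{-1})=[\ast|_{X_n}]$ for all $n$. Choosing the representative $m\circ(f\wedge(\zeta\circ g))$ of $[f]\cdot[g]^{-1}$, this last condition says precisely that $(m\circ(f\wedge(\zeta\circ g)))|_{X_n}$ is nullhomotopic for every $n$, i.e.\ that $m\circ(f\wedge(\zeta\circ g))\simeq_{\mathrm{w}}\ast$, which by Definition \ref{Definition:phantom} is exactly the assertion that $[f]\cdot[g]^{-1}\in[(X,A),(P,\ast)]_{\infty}$. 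The only genuinely delicate point, and the one I would take most care over, is the compatibility of the $H$-group operation with restriction when $A=\varnothing$, where the smash and wedge conventions and the added disjoint basepoint require the same bookkeeping used for $\bar{\Sigma}$ in the proof of Theorem \ref{Theorem:phantom1}; once that is in place the argument is purely formal.
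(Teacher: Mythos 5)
Your proof is correct and follows essentially the same route as the paper's: the paper likewise observes that the restriction maps $p_n:[(X,A),(P,\ast)]\to[(X_n,A_n),(P,\ast)]$ are group homomorphisms and that $[(X,A),(P,\ast)]_\infty$ is the intersection of their kernels, from which the equivalence is immediate. Your additional care about the compact-exhaustion characterization of weak homotopy and the $A=\varnothing$ bookkeeping is sound but not needed beyond what the paper already records.
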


\begin{proof}
Begin by observing that the map $p_{n}:[(X,A),(P,\ast)]\rightarrow [(X_n,A_n),(P,\ast)]$ given by function restriction is a group
homomorphism.
By definition, $[(X,A),(P,\ast)]_{\infty}$ is the intersection of $\{\mathrm{ker}(p _n)\mid n\in\mathbb{N}\}$. Thus $%
f\simeq_w g$ if and only if $p _{n}([f]) =p_{n}([g])$ for every $n\in \mathbb{N}$, if and
only if $p_{n}([f]\cdot[g]^{-1})
=0=[\ast]$ for every $n\in\mathbb{N}$, if and only if $[f]\cdot[g]^{-1}\in [(X,A),(P,\ast )]_{\infty }$.
\end{proof}

Under the present assumptions, $[(X,A),(P,\ast)]$ is, by the foregoing lemmas, a \emph{definable} group.

\begin{theorem}
\label{Theorem:phantom-H-group}Suppose that $(X,A)$
is a locally compact Polish space with cofiltration $(X_n,A_n)_{n\in\mathbb{N}}$ and $(P,\ast,m)$ is a
polyhedral $H$-group. Then:

\begin{enumerate}
\item $[(X,A),(P,\ast)]
_{\infty }$ is a definable abelian group, naturally definably isomorphic to the group with a Polish cover $\mathrm{lim}^{1}\,[\bar{\Sigma}(X_n,A_n),(P,\ast)]$;

\item $[(X,A),(P,\ast)]$
is a definable group;

\item $[(X,A),(P,\ast)]_{\mathrm{w}}$ is a definable group, naturally definably isomorphic to the
pro-countable Polish group $\mathrm{lim}\,[(X_n,A_n),(P,\ast)]$.
\end{enumerate}

Furthermore these groups naturally array in the definable exact sequence of definable groups 
\begin{equation}
\label{Equation:Milnor}
\left\{ \ast \right\} \rightarrow [(X,A),(P,\ast)]_{\infty }\rightarrow [(X,A),(P,\ast)] \rightarrow [(X,A),(P,\ast)]_{\mathrm{w}}\rightarrow
\left\{ \ast \right\} \text{.}
\end{equation}
\end{theorem}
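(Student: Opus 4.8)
The plan is to assemble the theorem from the results already established in this section, with Lemma \ref{Lemma:quotient} doing the essential structural work. Part (1) is immediate: every $H$-group is in particular an $H$-space, so Theorem \ref{Theorem:phantom-H-space} applies verbatim and identifies $[(X,A),(P,\ast)]_{\infty}$ as a definable abelian group, naturally definably isomorphic to the group with a Polish cover $\mathrm{lim}^{1}\,[\bar{\Sigma}(X_n,A_n),(P,\ast)]$. This simultaneously supplies the key input for part (2): the homotopy relation \emph{restricted to phantom maps} is Borel.

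For part (2), write $W=\mathsf{LCP}((X,A),(P,\ast))$ and let $E=\,\simeq$ be the homotopy relation on $W$, so that $W/E=[(X,A),(P,\ast)]$ is a semidefinable group and $V/E=[(X,A),(P,\ast)]_{\infty}$ is a semidefinable subgroup, where $V=\mathrm{Ph}((X,A),(P,\ast))$ is the ($E$-invariant, closed hence Borel) set of phantom maps. Define the auxiliary relation $F$ on $W$ by $f\,F\,g\iff [f]_E\cdot[g]_E^{-1}\in V/E$. By the lemma immediately preceding the theorem, $F$ is precisely the relation $\simeq_{\mathrm{w}}$ of weak homotopy, which by Corollary \ref{Corollary:w-definable} admits a Borel selector. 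Moreover $E|_V$ is Borel, since by part (1) the quotient $V/E$ is a definable group, hence a definable set. Lemma \ref{Lemma:quotient}(1) then yields that $E=\,\simeq$ is Borel. Since $E$ is also idealistic by Theorem \ref{Theorem:homotopy-idealistic}, the pair $(W,E)$ is a definable set; and because the operations $[f]\cdot[g]=[m\circ(f\wedge g)]$ and $[f]^{-1}=[\zeta\circ f]$ lift to the continuous (so Borel) maps $(f,g)\mapsto m\circ(f\wedge g)$ and $f\mapsto\zeta\circ f$ on $W$, it follows that $[(X,A),(P,\ast)]$ is a definable group.

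For part (3), the restriction maps $p_n$ are group homomorphisms, so the definable bijection $\Phi$ of Proposition \ref{Proposition:weak-homotopy}, being induced by these restrictions, is a group homomorphism and hence a definable group isomorphism $[(X,A),(P,\ast)]_{\mathrm{w}}\cong \mathrm{lim}\,[(X_n,A_n),(P,\ast)]$. The target is a closed subgroup of a countable product of countable discrete groups, hence a pro-countable Polish group, which regarded as a group with a Polish cover (with trivial Polishable subgroup) is a definable group; therefore so too is $[(X,A),(P,\ast)]_{\mathrm{w}}$. The exact sequence \ref{Equation:Milnor} is then induced by the inclusion $V\hookrightarrow W$ and the identity on $W$: the first arrow is the inclusion of phantom homotopy classes and the second sends a homotopy class to its weak homotopy class. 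Both lift to Borel maps, so are definable, and both are group homomorphisms — the first since the group law on phantom maps is the restriction of that on all maps, the second since $[(X,A),(P,\ast)]_{\infty}=\bigcap_n\ker p_n$ is a normal subgroup with quotient $[(X,A),(P,\ast)]_{\mathrm{w}}$. Injectivity of the first map, surjectivity of the second, and the identity $\ker(q)=\operatorname{im}(\iota)$ — which is just the assertion that $f\simeq_{\mathrm{w}}\ast$ iff $f$ is phantom — are all immediate from the definitions.

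I expect the heart of the argument, and its only genuine obstacle, to be the Borelness of the homotopy relation in part (2). The idealistic half is handed to us by Theorem \ref{Theorem:homotopy-idealistic}, but Borelness is delicate: neither $\simeq$ nor even its restriction to phantom maps is transparently Borel a priori. What makes the argument succeed is the twofold recognition that (i) via the preceding lemma the auxiliary relation $F$ is exactly weak homotopy, for which a Borel selector is already available (Corollary \ref{Corollary:w-definable}), and (ii) part (1) identifies the phantom subgroup with a group with a Polish cover, rendering $E|_V$ Borel. Lemma \ref{Lemma:quotient} is engineered to leverage precisely this combination — Borelness on the subgroup plus a Borel selector for the quotient relation — to transfer Borelness to the full relation, and it is this transfer step that carries the weight of the proof.
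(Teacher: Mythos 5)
Your proposal is correct and follows essentially the same route as the paper's proof: part (1) as an instance of Theorem \ref{Theorem:phantom-H-space}, part (2) via Lemma \ref{Lemma:quotient} together with Corollary \ref{Corollary:w-definable} and Theorem \ref{Theorem:homotopy-idealistic}, part (3) by observing that the bijection of Proposition \ref{Proposition:weak-homotopy} is a group homomorphism, and the exact sequence from the definitions. You have merely spelled out the details (in particular the identification of the auxiliary relation $F$ with weak homotopy via the lemma preceding the theorem) that the paper's terser argument leaves implicit.
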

\begin{proof}
(1) This just a particular instance of Theorem \ref{Theorem:phantom-H-space}.

(2): Since the group operations on $[(X,A),(P,\ast)]$ are
definable, $[(X,A),(P,\ast)]$ is a semidefinable group. Furthermore, $[(X,A),(P,\ast)]_{\infty}$ is a definable subgroup of $[(X,A),(P,\ast)]$
by (1). The conclusion that $[(X,A),(P,\ast)]$ is a definable
set now follows from Lemma \ref{Lemma:quotient} and Corollary \ref{Corollary:w-definable}.

(3): It suffices to notice that, under our assumptions, the natural definable bijection in
Proposition \ref{Proposition:weak-homotopy} is a group homomorphism.

The last assertion now follows immediately from definitions.
\end{proof}

\subsection{A definable exact sequence decomposition of \v{C}ech cohomology}
\label{ss:def_coh}

By Theorem \ref{T:DefinableHub} (or its version for $\mathsf{LCP}$ pairs), we may identify the \v{C}ech
cohomology groups $\mathrm{H}^{q}(X,A;G)$ with the representable, or homotopical, cohomology groups $[(X,A),(K( G,q)
,\ast)]$. We may then consider its definable subgroup%
\begin{equation*}
\mathrm{H}_{\infty }^{q}(X,A;G) :=[(X,A),(K(G,q) ,\ast)]_{\infty }\text{,}
\end{equation*}%
which we term the \emph{asymptotic cohomology group}. We may also consider the 
\emph{weak cohomology group} 
\begin{equation*}
\mathrm{H}_{\mathrm{w}}^q(X,A;G) =[(X,A),(K(G,q) ,\ast)]_{\mathrm{w}}\text{.}
\end{equation*}%
Also by Theorem \ref{Theorem:phantom-H-group}, we have a natural definable exact sequence%
\begin{equation*}
0\rightarrow \mathrm{H}_{\infty }^{q}( X,A;G) \rightarrow
\mathrm{H}^{q}(X,A;G) \rightarrow \mathrm{H}_{\mathrm{w}}^{q}(X,A;G) \rightarrow 0\text{.}
\end{equation*}%
More precisely, Theorem \ref{Theorem:phantom-H-group} gives us the following, which shows that this exact sequence is naturally
isomorphic to the exact sequence (\ref{Equation:Milnor}) above.

\begin{proposition}
\label{Proposition:asymptotic-cohomology}Suppose that $q$ is a positive integer, $G$ is a
countable discrete abelian group, and $(X,A)$ is a locally
compact pair with cofiltration $(X_n,A_n)_{n\in\mathbb{N}}$. Then:

\begin{enumerate}
\item $\mathrm{H}_{\infty }^{q}(X,A;G) $ is naturally definably
isomorphic to 
\begin{equation*}
\mathrm{lim}^1\,\mathrm{H}^q(\bar{\Sigma}(X_n,A_n),\ast;G)
\cong \mathrm{lim}^1\,\mathrm{H}^{q-1}(X_n,A_n;G) ;
\end{equation*}

\item $\mathrm{H}_{\mathrm{w}}^{q}(X,A;G)$ is naturally
definably isomorphic to the pro-countable abelian group 
\begin{equation*}
\mathrm{lim}\,\mathrm{H}^{q}(X_n,A_n;G).
\end{equation*}
\end{enumerate}
\end{proposition}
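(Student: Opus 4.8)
The plan is to deduce Proposition \ref{Proposition:asymptotic-cohomology} almost entirely from results already in hand, chiefly Theorem \ref{Theorem:phantom-H-group} and Theorem \ref{T:DefinableHub}. The starting observation is that for a countable abelian group $G$ and $q\geq 1$, a polyhedral Eilenberg--MacLane space $(K(G,q),\ast)$ is a polyhedral $H$-group (its $H$-group structure comes from the homotopy equivalence $K(G,q)\simeq\Omega K(G,q+1)$, as recorded in Section \ref{SS:H-groups}). Hence Theorem \ref{Theorem:phantom-H-group} applies verbatim with $(P,\ast)=(K(G,q),\ast)$. Under the identification $\mathrm{H}^q(X,A;G)=[(X,A),(K(G,q),\ast)]$ furnished by Theorem \ref{T:DefinableHub} (in its $\mathsf{LCP}$ form), the definitions $\mathrm{H}^q_\infty(X,A;G):=[(X,A),(K(G,q),\ast)]_\infty$ and $\mathrm{H}^q_{\mathrm w}(X,A;G):=[(X,A),(K(G,q),\ast)]_{\mathrm w}$ make the three displayed groups in the proposition literal translations of the three groups in the exact sequence (\ref{Equation:Milnor}).

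For item (1), I would apply Theorem \ref{Theorem:phantom-H-group}(1), which gives a natural definable isomorphism
\[
\mathrm{H}^q_\infty(X,A;G)\;\cong\;\mathrm{lim}^1\,[\bar{\Sigma}(X_n,A_n),(K(G,q),\ast)].
\]
It then remains only to identify the terms of this tower cohomologically. Using Theorem \ref{T:DefinableHub} again at each level $n$, $[\bar{\Sigma}(X_n,A_n),(K(G,q),\ast)]\cong\mathrm{H}^q(\bar{\Sigma}(X_n,A_n),\ast;G)$, which gives the first isomorphism in (1). The second isomorphism $\mathrm{H}^q(\bar{\Sigma}(X_n,A_n),\ast;G)\cong\mathrm{H}^{q-1}(X_n,A_n;G)$ is the suspension isomorphism of \v{C}ech cohomology; since $\bar{\Sigma}(X_n,A_n)=\Sigma(X_n/A_n)$, this is the standard fact that reduced cohomology of a reduced suspension shifts degree by one, and it holds definably because the isomorphism is induced by a fixed (cofiltration-independent) chain-level or representing-map construction. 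I would note that these identifications are compatible with the bonding maps $\eta^n$ of the respective towers, so that $\mathrm{lim}^1$ of one tower is definably isomorphic to $\mathrm{lim}^1$ of the other; this compatibility is where a small amount of care is needed, but it follows from naturality of the suspension isomorphism with respect to the inclusions $(X_n,A_n)\hookrightarrow(X_{n+1},A_{n+1})$.

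For item (2), I would apply Theorem \ref{Theorem:phantom-H-group}(3), which identifies $[(X,A),(K(G,q),\ast)]_{\mathrm w}$ definably with the pro-countable Polish group $\mathrm{lim}\,[(X_n,A_n),(K(G,q),\ast)]$, and then use the level-wise Huber isomorphism $[(X_n,A_n),(K(G,q),\ast)]\cong\mathrm{H}^q(X_n,A_n;G)$ together with the compatibility of these isomorphisms with the restriction maps to conclude $\mathrm{H}^q_{\mathrm w}(X,A;G)\cong\mathrm{lim}\,\mathrm{H}^q(X_n,A_n;G)$. Finally, the naturality claims are inherited directly from the naturality already established in Theorems \ref{Theorem:phantom1}, \ref{Theorem:phantom-H-space}, and \ref{Theorem:phantom-H-group}, combined with the naturality of Huber's isomorphism asserted in Theorem \ref{T:DefinableHub}.

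The main obstacle, and the only genuinely nonformal point, is verifying the definable suspension isomorphism $\mathrm{H}^q(\bar{\Sigma}(X_n,A_n),\ast;G)\cong\mathrm{H}^{q-1}(X_n,A_n;G)$ together with its compatibility across the tower. Classically this is routine, but here one must check that the comparison is realized by a Borel map at the level of cocycles (or of representing maps into $K(G,\cdot)$ spaces), so that the induced isomorphism of $\mathrm{lim}^1$ terms is definable rather than merely abstract. I expect this to follow from the adjunction $\bar{\Sigma}\dashv\Omega$ together with the $H$-group equivalence $K(G,q)\simeq\Omega K(G,q+1)$, both of which are implemented by fixed continuous maps independent of $(X,A)$ and hence descend to Borel, and therefore definable, maps on the relevant mapping spaces; the rest of the argument is then purely a matter of assembling the already-established definable isomorphisms.
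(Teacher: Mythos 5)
Your proposal is correct and follows essentially the same route as the paper: Theorem \ref{Theorem:phantom-H-group} applied to $P=K(G,q)$, together with the level-wise identification $\mathrm{H}^q(\bar{\Sigma}(X_n,A_n),\ast;G)\cong[(X_n,A_n),\Omega K(G,q)]\cong[(X_n,A_n),K(G,q-1)]\cong\mathrm{H}^{q-1}(X_n,A_n;G)$ via the $\bar{\Sigma}\dashv\Omega$ adjunction, which is exactly the chain the paper records. The only point you overcomplicate is the definability of the suspension isomorphism: since the $(X_n,A_n)$ are compact pairs, all the level-wise groups are countable (Corollary \ref{Corollary:countable-homotopy}), so those isomorphisms are automatically definable and the only thing requiring verification is their compatibility with the bonding maps, which you correctly flag.
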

\begin{proof}
Only the isomorphism in item (1) requires comment; it follows from the sequence of definable  isomorphisms
\begin{align*}
\mathrm{H}^q(\bar{\Sigma}(X_n,A_n),\ast;G) & \cong [(\bar{\Sigma}(X_n,A_n),\ast),(K(G,q),\star)]\cong [(X_n,A_n),\Omega(K(G,q),\star)]\\
& \cong [(X_n,A_n),(K(G,q-1),\star)]\cong\mathrm{H}^{q-1}(X_n,A_n;G)
\end{align*}
of countable groups.
\end{proof}
In \cite{lupini_looking_22}, this work's second author showed that the map taking $(G,N)$ to the closure of $\{0\}$ in $G/N$ is functorial in the category $\mathsf{APC}$ of groups with an abelian Polish cover. More precisely, by the $\alpha=0$ case of \cite[Theorem 6.3]{lupini_looking_22} this is a subfunctor of the identity. Hence by Proposition \ref{prop:topological_char_of_phantoms},
the definable functor $\mathrm{H}^q_{\infty}$ as well as its definable quotient $\mathrm{H}^q/\mathrm{H}^q_{\infty}\cong\mathrm{H}^q_{\mathrm{w}}$ may each be definably recovered from the single definable functor $\mathrm{H}^q$, facts we summarize in the following proposition.
\begin{proposition}\label{Proposition:subfunctor}
For any $q\geq 0$ and countable abelian group $G$, the definable functors $\mathrm{H}^q_\infty(-;G)$ and $\mathrm{H}^q_{\mathrm{w}}(-;G)$ each map $\mathsf{LCP}$ to $\mathsf{APC}$. Each of these, moreover, definably derives from the definable functor $\mathrm{H}^q$, in the sense that the first is its postcomposition with a subfunctor of the identity, and the second is its postcomposition with that subfunctor's cokernel.
\end{proposition}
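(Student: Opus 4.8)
The plan is to reduce the entire statement, for each fixed $(X,A)$, to the single identification that the asymptotic subgroup $\mathrm{H}^q_\infty(X,A;G)$ coincides with the closure $\overline{\{0\}}$ of the identity inside $\mathrm{H}^q(X,A;G)$, realized as a group with an abelian Polish cover $G'/N'$. First I would record that all three functors land in $\mathsf{APC}$. By Theorem \ref{T:DefinableHub} and Corollary \ref{CorEssentiallyPolishCover}, $\mathrm{H}^q(X,A;G)$ is definably isomorphic to $\Check{\mathrm{H}}^q_{\mathrm{def}}(X,A;G)=\mathrm{Z}^q/\mathrm{B}^q$, which is manifestly a group with an \emph{abelian} Polish cover; functoriality $\mathsf{LCP}\to\mathsf{APC}$ is the $\mathsf{LCP}$ version of Corollary \ref{CorFunctoriality}. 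That $\mathrm{H}^q_\infty(-;G)$ and $\mathrm{H}^q_{\mathrm{w}}(-;G)$ take values in $\mathsf{APC}$ (the latter even in genuine Polish groups) follows from Theorem \ref{Theorem:phantom-H-group} together with Proposition \ref{Proposition:asymptotic-cohomology}.

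For the key identification I would prove two inclusions. Write $M\leq G'$ for the preimage of $\mathrm{H}^q_\infty(X,A;G)$ under $G'\to G'/N'$. The inclusion $\overline{N'}/N'\subseteq \mathrm{H}^q_\infty$ follows from automatic continuity: since the target $\mathrm{H}^q_{\mathrm{w}}$ is the Hausdorff Polish group $\mathrm{lim}\,[(X_n,A_n),(K(G,q),\ast)]$ of Theorem \ref{Theorem:phantom-H-group}(3), it has trivial cover, so the definable quotient homomorphism $\mathrm{H}^q\to\mathrm{H}^q_{\mathrm{w}}$ lifts to an honest Borel homomorphism $\Phi\colon G'\to\mathrm{H}^q_{\mathrm{w}}$, which is therefore continuous with $\ker\Phi=M$; thus $M$ is closed and contains $N'$, whence $M\supseteq\overline{N'}$. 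For the reverse inclusion I would invoke Theorem \ref{Theorem:phantom-H-group}(1), which identifies $\mathrm{H}^q_\infty$ with $\mathrm{lim}^1\,\mathbf{G}=\mathrm{Z}^1(\mathbf{G})/\mathrm{B}^1(\mathbf{G})$ for the tower $\mathbf{G}=(G^n,\eta^n)$ with $G^n=[\bar{\Sigma}(X_n,A_n),(K(G,q),\ast)]$. A short computation shows that $\mathrm{B}^1(\mathbf{G})$ is dense in $\mathrm{Z}^1(\mathbf{G})=\prod_n G^n$: any prescription of finitely many coordinates $(z_0,\dots,z_k)$ is realized by the coboundary of the cochain obtained by back-solving $g_{k+1}=e$, $g_n=z_n\cdot\eta^n(g_{n+1})$ downward and padding with the identity above $k$. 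Hence $N'$ is dense in $M$ in the Polish topology of the cover of $\mathrm{H}^q_\infty$, and a fortiori in the subspace topology inherited from $G'$, so $M\subseteq\overline{N'}$. This density is precisely the definable, cover-level incarnation of Proposition \ref{prop:topological_char_of_phantoms}, that $\mathrm{Ph}((X,A),(K(G,q),\ast))$ is the closure of the class of nullhomotopic maps. Combining the inclusions yields $\mathrm{H}^q_\infty(X,A;G)=\overline{N'}/N'$, and then $\mathrm{H}^q_{\mathrm{w}}=\mathrm{H}^q/\mathrm{H}^q_\infty=G'/\overline{N'}$ by exactness of the sequence \eqref{Equation:Milnor}.

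Finally I would globalize. By the $\alpha=0$ case of \cite[Theorem 6.3]{lupini_looking_22}, the assignment $(G',N')\mapsto(\overline{N'},N')$ sending a group with abelian Polish cover to the closure of its identity is a subfunctor $F$ of the identity on $\mathsf{APC}$, with cokernel $(G',N')\mapsto G'/\overline{N'}$. The pointwise equalities just established assert that the natural inclusion $\mathrm{H}^q_\infty\hookrightarrow\mathrm{H}^q$ — natural by the naturality clauses of Theorem \ref{Theorem:phantom-H-group} and Proposition \ref{Proposition:asymptotic-cohomology} — and the inclusion $F\circ\mathrm{H}^q\hookrightarrow\mathrm{H}^q$ pick out the same subobject of $\mathrm{H}^q(X,A;G)$ at every $(X,A)$. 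Two natural subobjects of a functor that agree at each object agree as subfunctors, the comparison isomorphisms being carried by the identity on underlying sets and hence commuting with all induced maps; passing to cokernels gives $\mathrm{H}^q_{\mathrm{w}}\cong(\mathrm{coker}\,F)\circ\mathrm{H}^q$. I expect the main obstacle to be the reverse inclusion $M\subseteq\overline{N'}$, and relatedly the fact that ``closure of the identity'' is invariant under the merely Borel isomorphism realizing $\mathrm{H}^q$ as a group with a Polish cover; the $\mathrm{lim}^1$ density computation is what lets me avoid transporting the mapping-space topology through that isomorphism, which is why I would lean on it rather than on Proposition \ref{prop:topological_char_of_phantoms} directly.
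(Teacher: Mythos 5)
Your proposal is correct in outline and reaches the same structural conclusion as the paper, but by a genuinely longer route for the key identification. The paper's own proof is essentially a two-step citation: Proposition \ref{prop:topological_char_of_phantoms} already identifies the phantom classes with the closure of $[\ast]$, and the $\alpha=0$ case of \cite[Theorem 6.3]{lupini_looking_22} supplies, in one stroke, both the functoriality of $(G,N)\mapsto(\overline{N},N)$ on $\mathsf{APC}$ and the invariance of ``closure of the identity'' under definable isomorphism; composing these gives the proposition immediately. You instead re-derive the pointwise identification $\mathrm{H}^q_\infty=\overline{\{0\}}$ from scratch: the inclusion $\overline{N'}/N'\subseteq\mathrm{H}^q_\infty$ via automatic continuity of the Borel lift $G'\to\mathrm{H}^q_{\mathrm{w}}$ (correct, and a nice observation that the weak cohomology group, being honestly Polish, forces $M$ to be closed), and the reverse inclusion via density of $\mathrm{B}^1(\mathbf{G})$ in $\mathrm{Z}^1(\mathbf{G})=\prod_n G^n$ (the back-solving computation is correct and standard). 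What your route buys is an explicit, self-contained verification of the density statement that the paper delegates to Proposition \ref{prop:topological_char_of_phantoms}; what it costs is length, and one step that is not justified as written.

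That step is the ``a fortiori'': you assert that density of $\mathrm{B}^1(\mathbf{G})$ in $\mathrm{Z}^1(\mathbf{G})$ yields density of $N'$ in $M$ ``in the subspace topology inherited from $G'$.'' But $\mathrm{Z}^1(\mathbf{G})$ and $M$ are related only by a definable isomorphism of the quotients, which lifts to a Borel map of covers that is neither a homeomorphism nor even a homomorphism at the cover level; there is no topology comparison to invoke ``a fortiori.'' So your stated motive — avoiding the transport of topological information through the merely Borel isomorphism — is not actually achieved; the transport has merely been relocated. The correct repair is exactly the invariance you were trying to sidestep: density of $\{0\}$ in a group with abelian Polish cover is a definable-isomorphism invariant (this is \cite[Theorem 6.3]{lupini_looking_22}, or can be seen directly from the characterization that $N$ is dense in $G$ if and only if every definable homomorphism from $G/N$ to a genuine Polish group vanishes, the lift of any such homomorphism being automatically continuous). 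Since you already cite \cite[Theorem 6.3]{lupini_looking_22} for the globalization, this costs nothing; but once it is in hand, your two-inclusion argument collapses back to something very close to the paper's one-line proof. The final globalization paragraph — two natural subobjects agreeing objectwise agree as subfunctors — is fine.
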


\subsection{Hopf's Theorem}
\label{ss:Hopf}

\v{C}ech cohomology is a main tool for the study and classification of maps up to homotopy. This is due to descriptions, like those above, of \v{C}ech cohomology
groups as groups of homotopy classes of maps, whereby homotopy classification problems for maps between spaces may reduce to a corresponding problem for maps to Eilenberg--MacLane spaces.

One of the first such results was Hopf's theorem for maps to spheres \cite[%
Chapter VII, Theorem 11.5]{hu_homotopy_1959}.
We cite it both for a quick application of our machinery and for use in the following section.
Henceforth we will omit notation of the cohomology coefficient group when $G=\mathbb{Z}$.
 Viewing the $n$%
-dimensional sphere $S^{n}$ as a pointed space, we have that $\mathrm{H}^{n}(S^{n},\star) =\mathbb{Z}$. One may choose, as generator of this group, an 
$[\iota]\in [(S^{n},\star),(K(\mathbb{Z},n),\ast)]$ such that $\iota$ is an
inclusion of $(S^{n},\star)$ as a closed subspace of $(K(\mathbb{Z},n),\ast)$; see \cite[Theorem
2.5.14]{arkowitz_introduction_2011}. Any map $f$ from a locally compact pair $(X,A)$ to $(S^n,\star)$ then determines a map $\iota \circ f:(X,A) \rightarrow ( K(\mathbb{Z},n),\ast) $, a determination amounting to a definable function $[(X,A),(S^n,\star)]\rightarrow\mathrm{H}^{n}(X,A)$.

\begin{theorem}[Hopf]
\label{Theorem:Hopf}
Fix $n\geq 1$. For every polyhedral pair $(P,Q)$ satisfying $\mathrm{H}^q(P,Q) =0$ for $q>n$, the definable function $[(P,Q),(S^{n},\star)]\rightarrow \mathrm{H}^n(P,Q) $ is a
bijection.
\end{theorem}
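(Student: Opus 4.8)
The plan is to reduce the statement to the classical Hopf classification theorem by recognizing the stated map as a postcomposition map. By the $\mathsf{LCP}$-version of the definable Huber theorem (Theorem \ref{T:DefinableHub}), the group $\mathrm{H}^n(P,Q)$ is naturally identified with $[(P,Q),(K(\mathbb{Z},n),\ast)]$, and the definable function of the statement is, by construction, the composite of the Huber isomorphism with the map
\[
\iota_*\colon [(P,Q),(S^n,\star)]\to [(P,Q),(K(\mathbb{Z},n),\ast)],\qquad [f]\mapsto[\iota\circ f],
\]
induced by postcomposition with the inclusion $\iota\colon(S^n,\star)\hookrightarrow(K(\mathbb{Z},n),\ast)$ representing the generator of $\mathrm{H}^n(S^n,\star)\cong\mathbb{Z}$. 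Since Huber's map is a bijection, it suffices to prove that $\iota_*$ is a bijection. The key homotopical input is that $\iota$ is an $(n+1)$-equivalence: because $\pi_q(S^n)=0$ for $q<n$ and $\pi_n(S^n)\cong\mathbb{Z}$, one obtains a $K(\mathbb{Z},n)$ from $S^n$ by attaching cells only in dimensions $\geq n+2$, successively killing the higher homotopy groups of $S^n$, and taking $\iota$ to be the resulting inclusion makes $\iota_*$ an isomorphism on $\pi_q$ for $q\leq n$ and an epimorphism on $\pi_{n+1}$; equivalently, the homotopy fiber $F$ of $\iota$ is $n$-connected, so $\pi_q(F)=0$ for all $q\leq n$.

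With this input, the bijectivity of $\iota_*$ on homotopy classes of maps out of the pair $(P,Q)$ is exactly the classical Hopf theorem \cite[Ch.~VII, Thm.~11.5]{hu_homotopy_1959}, which I would simply cite; for a self-contained argument one runs the standard relative obstruction-theoretic procedure, building the lift of a given map through $\iota$ (for surjectivity) and the lifting homotopy between two lifts (for injectivity) over the relative skeleta of $(P,Q)$. The successive obstructions to these constructions are classes in the relative cohomology groups $H^{q+1}(P,Q;\pi_q(F))$, and since $\pi_q(F)=0$ for $q\leq n$, only degrees $q+1\geq n+2$ can contribute. Having shown $\iota_*$ bijective, one concludes that the map of the statement is a bijection; and since it is moreover a definable function, Proposition \ref{Proposition:bij-iso} upgrades this to a definable isomorphism.

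The main obstacle is the passage from the \emph{integral} hypothesis $H^q(P,Q;\mathbb{Z})=0$ for $q>n$ to the vanishing of the \emph{twisted} obstruction groups $H^{q+1}(P,Q;\pi_q(F))$, whose coefficients need not be $\mathbb{Z}$. This is where the hypothesis must be leveraged via the universal coefficient theorem: for a countable, locally finite simplicial pair $(P,Q)$ the relative homology groups are countable, and the vanishing of $\mathrm{Hom}(-,\mathbb{Z})$ and $\mathrm{Ext}(-,\mathbb{Z})$ forced by $H^q(P,Q;\mathbb{Z})=0$ for $q>n$ yields $H_q(P,Q;\mathbb{Z})=0$ for $q>n$ together with $H_n(P,Q;\mathbb{Z})$ free (using, for the latter step, that a countable abelian group with trivial $\mathrm{Ext}$ into $\mathbb{Z}$ is free). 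A second application of the universal coefficient theorem then gives $H^r(P,Q;A)=0$ for every $r>n$ and every coefficient group $A$, so all the relevant obstructions vanish. This homological reduction, rather than the homotopy-theoretic bookkeeping, is the substantive point; everything else is the routine identification of the definable map with the classical Hopf map and the invocation of the cited theorem.
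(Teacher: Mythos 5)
Your proposal is correct and matches the paper's treatment: the paper likewise identifies the map as postcomposition with the inclusion $\iota\colon(S^{n},\star)\hookrightarrow(K(\mathbb{Z},n),\ast)$ followed by the (definable) Huber identification, and then simply cites the classical Hopf classification theorem \cite[Chapter VII, Theorem 11.5]{hu_homotopy_1959} with no further argument. Your obstruction-theoretic sketch and the universal-coefficient reduction from the integral hypothesis to the vanishing of the twisted obstruction groups (via Stein's theorem on countable groups with trivial $\mathrm{Ext}$ into $\mathbb{Z}$) are sound, but they supply detail that the paper leaves entirely to the citation.
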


\begin{corollary}
\label{cor:definable_set}
Under the assumptions of Theorem \ref{Theorem:Hopf}, $[(P,Q),(S^{n},\star)]$ is a pointed definable set.
\end{corollary}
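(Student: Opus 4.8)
The plan is to deduce Corollary \ref{cor:definable_set} directly from Theorem \ref{Theorem:Hopf} together with the general theory of definable sets developed in Section \ref{S:DSets}. The key observation is that Theorem \ref{Theorem:Hopf} already furnishes a \emph{definable bijection} $[(P,Q),(S^n,\star)]\to\mathrm{H}^n(P,Q)$, and by Theorem \ref{T:DefinableHub} (in its $\mathsf{LCP}$ version, as noted at the end of Section \ref{SS:definable_version_of_Huber}) the target $\mathrm{H}^n(P,Q)$ is definably isomorphic to the group with a Polish cover $\check{\mathrm{H}}^n_{\mathrm{def}}(P,Q;\mathbb{Z})$. In particular $\mathrm{H}^n(P,Q)$ is a definable group, hence a definable set, and its distinguished point (the identity, equivalently the class of the constant map) makes it a \emph{pointed} definable set.

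First I would record that $[(P,Q),(S^n,\star)]$ is a priori only a pointed semidefinable set, being the quotient of the Polish space $\mathsf{LCP}((P,Q),(S^n,\star))$ by the analytic homotopy relation, with basepoint the class of the constant map $\star$. The map $[f]\mapsto [\iota\circ f]$ described just before Theorem \ref{Theorem:Hopf} is basepoint-preserving and definable (it lifts to the Borel, indeed continuous, map $f\mapsto \iota\circ f$ at the level of $\mathsf{LCP}((P,Q),(S^n,\star))\to\mathsf{LCP}((P,Q),(K(\mathbb{Z},n),\ast))$). By Theorem \ref{Theorem:Hopf} this map is moreover a bijection onto the definable set $\mathrm{H}^n(P,Q)$. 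Thus $[(P,Q),(S^n,\star)]$ is a semidefinable set that is isomorphic in $\mathsf{SemiDSet}$ to a definable set.

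The decisive step is then an appeal to the lemma stated at the end of Section \ref{Section:semidset}: if a semidefinable set $X/E$ is isomorphic in $\mathsf{SemiDSet}$ to a definable set $Y/F$, then $X/E$ is itself a definable set. To invoke it I must confirm that the definable bijection of Theorem \ref{Theorem:Hopf} is an \emph{isomorphism} in $\mathsf{SemiDSet}$, i.e.\ that its inverse is also definable. Here is precisely where the regularity of $\mathsf{DSet}$ does the work: since $\mathrm{H}^n(P,Q)$ is definable (its underlying equivalence relation is Borel and idealistic) and the Hopf map is a definable bijection \emph{into} it, Proposition \ref{Proposition:reduce-then-embeds2} and Corollary \ref{Corollary:reduce-then-embeds} apply in the form of Proposition \ref{Proposition:bij-iso} to the codomain; more directly, Corollary \ref{Corollary:reduce-then-embeds} guarantees that a surjective Borel reduction from an idealistic $E$ to a Borel $F$ is a classwise Borel isomorphism, and the Hopf map, read as a reduction of the homotopy relation to the Borel relation underlying $\mathrm{H}^n(P,Q)$, is exactly such a surjection once we know its source relation is idealistic.

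The main obstacle, accordingly, is the idealism of the homotopy relation on the \emph{source}: to run Corollary \ref{Corollary:reduce-then-embeds} in the direction I want I need the homotopy relation on $\mathsf{LCP}((P,Q),(S^n,\star))$ to be idealistic. But a polyhedral pair is compact here (since $\mathrm{H}^q(P,Q)=0$ for $q>n$ forces finiteness considerations, and in any case $(P,Q)$ is a polyhedral pair to which Theorem \ref{Theorem:homotopy-idealistic} applies with $(P,Q)$ in the role of the source $(X,A)$ and $(S^n,\star)$ in the role of the target pointed polyhedron), so the homotopy relation on maps $(P,Q)\to(S^n,\star)$ is idealistic by Theorem \ref{Theorem:homotopy-idealistic}. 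With idealism of the source in hand, the bijection of Theorem \ref{Theorem:Hopf} is a surjective Borel reduction from an idealistic relation to a Borel one; by Corollary \ref{Corollary:reduce-then-embeds} it is a classwise Borel isomorphism, so its inverse is definable and the cited lemma yields that $[(P,Q),(S^n,\star)]$ is a definable set. Since the Hopf bijection is basepoint-preserving, it is a pointed definable set, completing the proof.
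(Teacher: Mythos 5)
Your proof is correct and follows essentially the same route as the paper's: idealism of the homotopy relation on maps $(P,Q)\to(S^n,\star)$ comes from Theorem \ref{Theorem:homotopy-idealistic}, and Borelness is pulled back along the Hopf bijection from the Borel homotopy relation on maps into $K(\mathbb{Z},n)$ (Theorem \ref{Theorem:phantom-H-group}); the detour through Corollary \ref{Corollary:reduce-then-embeds} and the $\mathsf{SemiDSet}$ lemma is harmless but unnecessary, since a Borel reduction to a Borel relation already makes the source relation Borel, which together with idealism is exactly the definition of a definable set. One small correction: the hypothesis $\mathrm{H}^q(P,Q)=0$ for $q>n$ does not force $(P,Q)$ to be compact, but as you yourself note this is irrelevant, since Theorem \ref{Theorem:homotopy-idealistic} applies to arbitrary locally compact pairs.
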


\begin{proof}
This amounts to saying that the relation of homotopy for maps $(P,Q) \rightarrow (S^n,\star)$ is Borel and
idealistic. This relation is idealistic by Theorem \ref{Theorem:homotopy-idealistic}. As the relation of homotopy for maps $(P,Q)\rightarrow (K( \mathbb{Z},n),\ast)$ is Borel by Theorem \ref{Theorem:phantom-H-group}, it follows from
Theorem \ref{Theorem:Hopf} that the relation of homotopy for maps $(P,Q)\rightarrow (S^n,\star)$ is Borel as
well.
\end{proof}

\section{The Borsuk-Eilenberg problem and the definable cohomology of mapping telescopes}\label{Section:telescopes}

This section records several sample applications of the definable cohomology functors and decomposition theorems of the preceding pages. We begin by reviewing the core construction in these applications, namely the \emph{mapping telescope} or \emph{homotopy colimit} of a sequence of maps $\langle f_n:X_n\to X_{n+1}\mid n\in\mathbb{N}\rangle$ of topological spaces. The results of this section reconnect with those of \cite{BLPI}, from which, together with Proposition \ref{Proposition:cohomology-telescopes}, it will follow almost immediately that definable \v{C}ech cohomology is, in strong contrast to classical \v{C}ech cohomology, a complete homotopy invariant both of mapping telescopes of $d$-tori and of $d$-spheres. Equally immediate from the machinery we have developed are solutions to the natural generalizations of the Borsuk-Eilenberg problem of classifying the maps from the complement of a canonically embedded $p$-adic solenoid $\Sigma_p\subset S^3$ to a $2$-sphere. This problem played a critical role in the development of multiple areas of mathematics, in historical senses that we briefly pause to review.
We show that definable cohomology is a complete invariant of the homotopy classes of maps to $S^{d+1}$ from mapping telescopes of $d$-spheres. We conclude with an analysis of the problems of classifying those maps up to homotopy, as well as up to homotopy modulo a homotopy equivalence of their domain. Put differently, we study the problem of \emph{equivariant} classification; the affinity of the associated quotients with the \emph{structure sets} of manifold theory should be noted as well \cite[Definition 2.1]{kasilingam_topological_2016}. We describe lower bounds for the Borel complexity degrees of these problems, the existence of an infinite antichain of complexity degrees among them, and show also that this complexity rises with dimension.

In subsections \ref{subsection:colimits} and \ref{subsection:mappingtelescopes} we conduct our review of homotopy colimits in the unbased category $\mathsf{LC}$. This is solely for conceptual clarity; the discussion applies with only superficial modifications to the context of $\mathsf{LC}_*$, as readers may easily verify. Results in which the homotopy bracket figures thereafter will, by Corollary \ref{cor:based_hspace}, admit interpretation in either category, as we will note below.
\subsection{Colimits and homotopy colimits}
\label{subsection:colimits}
First recall the notion of a \emph{colimit of a diagram in a category $\mathcal{C}$}.
\begin{definition}[\cite{margolis_1983}]\label{colimitdef}
Let $\mathcal{J}$ be a small category; as above, write $\mathcal{J}(W,X)$ for the collection of morphisms in $\mathcal{J}$ from $W$ to $X$. By \emph{a diagram of shape $\mathcal{J}$ in a category $\mathcal{C}$} we simply mean a functor $F:\mathcal{J}\to\mathcal{C}$. For any such diagram a collection of morphisms $\{f_W:F(W)\to Y \mid W\in\mathrm{obj}(\mathcal{J})\}$ is \emph{coherent} if $f_W=f_X\circ F(g)$ for all $g\in\mathcal{J}(W,X)$. An object $Y$ of $\mathcal{C}$ is a \emph{colimit} of a diagram $F$ if it admits a coherent collection of morphisms $\{f_W:F(W)\to Y\mid W\in\mathrm{obj}(\mathcal{J})\}$ such that for any coherent collection $\{g_W:F(W)\to Z\mid W\in\mathrm{obj}(\mathcal{J})\}$ of morphisms to any object $Z$ of $\mathcal{C}$ there exists a unique $h:Y\to Z$ such that $g_W=h\circ f_W$ for all $W\in\mathrm{obj}(\mathcal{J})$.
A \emph{weak colimit} of the diagram $F$ is a $Y$ satisfying these same conditions, but without the requirement that all such maps $h:Y\to Z$ be unique.
\end{definition}

In more concrete contexts, colimits admit more concrete descriptions; for example, the colimit of a \emph{pushout diagram} of topological spaces, i.e., of a diagram of the form
\begin{align}\label{pushoutdiagram}
W\xleftarrow{\hspace{.15 cm}f\hspace{.15 cm}} X\xrightarrow{\hspace{.15 cm}g\hspace{.15 cm}} Y
\end{align}
is \begin{align}\label{colimit}(W\sqcup Y)/\sim,\end{align}
where $\sim$ is the equivalence relation generated by $\{f(x)\sim g(x)\mid x\in X\}$. This brings us to a standard motivating example (see \cite{Dugger}): consider the diagrams
\begin{align}\label{psht1} *\longleftarrow S^{n-1}\longrightarrow D^n\end{align}
and
\begin{align}\label{psht2} *\longleftarrow S^{n-1}\longrightarrow *\end{align}
in which, as usual, $*$ denotes the one-point space, and the only nontrivial map is the inclusion into $D^n$ of its boundary $\partial D^n\cong S^{n-1}$. What interests us is the following: each of the corresponding terms of (\ref{psht1}) and (\ref{psht2}) are homotopy equivalent; the colimits of (\ref{psht1}) and (\ref{psht2}), however, are not (the latter are homeomorphic to $S^n$ and $*$, respectively, as the reader may verify). More formally, what these diagrams together show is that colimits in the topological category are not, in general, homotopy invariant; indeed, as the example might suggest, colimits in the category $\mathsf{Ho}(\mathsf{LC})$ may even fail altogether to exist (see \cite[pp. 245-246]{strom}).

What do more generally exist, on the other hand, are \emph{homotopy colimits}; these are particular representatives of \emph{weak} colimits in homotopy categories in the sense of Definition \ref{colimitdef}. We follow \cite{may_more_2012, arkowitz_introduction_2011} in foregoing their rather abstract general definition, focusing instead on their construction in the contexts which are our immediate interest; their idea, in the process, will grow clear. The first of these is \emph{pushout diagrams} of topological spaces, as in (\ref{pushoutdiagram}), (\ref{psht1}), and (\ref{psht2}) above. The \emph{homotopy colimit} $\mathrm{hocolim}\, D$ of any diagram $D$ of spaces of the form (\ref{pushoutdiagram}) is \begin{align}\label{homotopypushout}\big(W\sqcup (X\times [0,1])\sqcup Y\big)/\sim\,,\end{align}
where the equivalence relation $\sim$ is that given by the identifications $f(x)\sim (x,0)$ and $(x,1)\sim g(x)$ for each $x\in X$. Observe that this construction resolves the discord of (\ref{psht1}) and (\ref{psht2}) above, in the sense that the \emph{homotopy} colimits of these two diagrams are, indeed, homotopy equivalent. Just as we would hope, this holds more generally for any two \emph{naturally homotopy equivalent} pushout diagrams of polyhedra (and even for \emph{naturally weakly homotopy equivalent} pushout diagrams of arbitrary topological spaces; see \cite[p. 5]{Dugger}). We record in the lemma below the special instance of this fact which we will need.

Observe first, though, how many of homotopy theory's most fundamental constructions arise in the above manner: the homotopy colimits of the diagrams
$$*\longleftarrow X\longrightarrow *\,,\hspace{1.2 cm} *\longleftarrow X\xrightarrow{\hspace{.15 cm}g\hspace{.15 cm}} Y\,, \hspace{.3 cm}\text{ and }\hspace{.3 cm} X\xleftarrow{\hspace{.15 cm}\mathrm{id}\hspace{.15 cm}} X\xrightarrow{\hspace{.15 cm}g\hspace{.15 cm}} Y$$
are the suspension $SX$ of $X$ and the \emph{mapping cone} $C(g)$ and \emph{mapping cylinder} $M_g$ of the map $g:X\to Y$, respectively. The last of these is the basic building block of the homotopy colimits of towers of topological spaces, the so-called \emph{mapping telescopes} at the center of our applications below. Note also the natural identification of the homotopy colimit of (\ref{pushoutdiagram}) with two mapping cylinders $M_f$ and $M_g$ glued together along their respective copies of $X$. Hence to show that the homotopy type of the homotopy colimit of (\ref{pushoutdiagram}) depends only upon the \emph{homotopy classes} of the maps $f$ and $g$ it suffices to observe the following.
\begin{lemma}
For any two homotopic maps $g,h:X\to Y$, the mapping cylinders $M_g$ and $M_h$ are homotopy equivalent.
\end{lemma}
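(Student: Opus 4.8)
The plan is to exhibit an explicit homotopy equivalence between $M_g$ and $M_h$ by using a homotopy $H:g\Rightarrow h$ to glue or ``twist'' the two cylinders together. Recall that for $f:X\to Y$ the mapping cylinder is $M_f=\big(X\times[0,1]\sqcup Y\big)/\!\sim$, with $(x,1)\sim f(x)$, and that there is a canonical strong deformation retraction $r_f:M_f\to Y$ collapsing the cylinder coordinate, with inclusion $\iota_Y:Y\hookrightarrow M_f$ as a homotopy inverse. The key structural fact I would use is that both $M_g$ and $M_h$ contain $X\times\{0\}$ as a closed subspace and that the pair $(M_f, X\times\{0\})$ has the homotopy extension property: indeed $X\times\{0\}$ is a cofibration in $M_f$ (the inclusion $X\hookrightarrow M_f$ is the standard cofibration replacement of $f$), so Borsuk's Homotopy Extension Theorem (Theorem~\ref{Theorem:definable-extension}) applies, at least once we have reduced to the polyhedral or locally compact setting in which that theorem is stated.

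First I would construct a map $\Phi:M_g\to M_h$ directly from the homotopy $H:X\times[0,1]\to Y$ with $H(-,0)=g$ and $H(-,1)=h$. The idea is to send the point represented by $(x,s)\in X\times[0,1]$ to a point that travels down the cylinder of $M_h$ and then, near the top, follows the homotopy $H$; concretely, reparametrize so that for $s\in[0,\tfrac12]$ one moves along $X\times[0,1]\subseteq M_h$ and for $s\in[\tfrac12,1]$ one inserts the path $t\mapsto H(x,\cdot)$ landing in $Y$. The target point of $X\times\{1\}$ under $g$, namely $g(x)=H(x,0)$, is carried to $h(x)=H(x,1)\in Y\subseteq M_h$, so $\Phi$ is well-defined on the quotient and restricts to the identity on $Y$. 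I would then build $\Psi:M_h\to M_g$ by the symmetric construction using the reversed homotopy $\bar H(x,t):=H(x,1-t)$, and verify that both composites $\Psi\circ\Phi$ and $\Phi\circ\Psi$ restrict to the identity on $Y$ and agree, up to an evident cylinder homotopy, with the identity on the $X\times[0,1]$ part.

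The main obstacle, and where the real content lies, is checking that $\Psi\circ\Phi\simeq\mathrm{id}_{M_g}$ (and dually $\Phi\circ\Psi\simeq\mathrm{id}_{M_h}$). Here the composite traverses $H$ forwards and then backwards, so the relevant homotopy is essentially the standard contraction of a concatenated path $H\cdot\bar H$ to the constant path, carried out fiberwise over $x\in X$ and compatibly with the retraction onto $Y$. I expect the cleanest route is to use the retractions $r_g,r_h$ and inclusions $\iota_Y$: since $r_g$ is a homotopy equivalence with inverse $\iota_Y$, and $\Phi,\Psi$ are maps under $Y$ (i.e.\ commuting with the inclusions of $Y$), one can transport the verification to a statement about the fibers and invoke the homotopy extension property to promote the evident fiberwise homotopies to an honest homotopy on all of $M_g$. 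I would carry out the explicit formula for the contracting homotopy only schematically, as the reparametrizations are routine; the one point demanding care is ensuring continuity across the glued seam $X\times\{1\}\sim$ (image in $Y$), which follows because all the maps involved send that seam into $Y$ consistently with the defining identifications of the mapping cylinders.
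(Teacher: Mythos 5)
The paper leaves this proof to the reader, offering only the heuristic that the $\times I$ factor of the cylinder ``supplies a space within which to realize homotopies between connecting maps''; your construction is precisely a realization of that heuristic, so there is nothing in the paper to conflict with. Two comments are in order, though. First, note that the lemma \emph{as literally stated} is immediate without the hypothesis $g\simeq h$: every mapping cylinder $M_f$ deformation retracts onto $Y$, so $M_g\simeq Y\simeq M_h$ always. The reason the hypothesis matters --- and the reason your more labor-intensive argument is actually the right one --- is that the application (Lemma \ref{Proposition:homotopy-type-telescope}) needs the equivalence $M_g\to M_h$ and the witnessing homotopies to restrict to the identity on the two gluing loci $X\times\{0\}$ and $Y$, so that they assemble along the telescope; your $\Phi$, $\Psi$, and the path-contraction homotopies for $\Psi\circ\Phi$ and $\Phi\circ\Psi$ can all be taken rel $X\times\{0\}\cup Y$, which is exactly what is required. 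It would strengthen the write-up to say this explicitly.

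Second, there is a sign error in your description of $\Phi$. You assert both that $\Phi$ restricts to the identity on $Y$ and that the seam point $(x,1)\sim g(x)$ is ``carried to $h(x)$''; these are incompatible unless $g=h$. The correct formula sends $(x,s)$ to $(x,2s)$ for $s\le 1/2$ and to $H(x,2-2s)$ for $s\ge 1/2$: at $s=1/2$ this lands on $H(x,1)=h(x)$, matching the identification $(x,1)\sim h(x)$ in $M_h$, and at $s=1$ it lands on $H(x,0)=g(x)$, matching the identification $(x,1)\sim g(x)$ in $M_g$, so that $\Phi$ is both well defined and the identity on $Y$. With that repair the argument goes through. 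Finally, the appeal to Borsuk's Homotopy Extension Theorem is dispensable (and, as stated in the paper, would not apply to arbitrary targets $Y$ anyway, since Theorem \ref{Theorem:definable-extension} assumes a polyhedral codomain): the homotopy $\Psi\circ\Phi\simeq\mathrm{id}_{M_g}$ is the standard fiberwise contraction of the concatenation $H\cdot\bar H$ to the constant path, and it can simply be written down by an explicit reparametrization, rel $X\times\{0\}\cup Y$.
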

The proof is, at the referee's suggestion, left to the reader, but it's worth lingering over its essential mechanism: in contrast with colimits, homotopy colimits like (\ref{homotopypushout}) identify spaces only ``up to a homotopy factor'' of $\times I$; put differently, the $\times I$ component in homotopy colimits supplies a space ``within which'' to realize homotopies between various connecting maps. Each of these components also readily collapses, furnishing a canonical map from the homotopy colimit of a diagram to its colimit, as is easily seen in the case of (\ref{homotopypushout}) and (\ref{colimit}), for example. The deformation retract of $M_g$ to its target space $Y$ is a special case. With these recognitions in place, we turn our attention to mapping telescopes.

\subsection{Mapping telescopes}
\label{subsection:mappingtelescopes}
The following construction may first have appeared in Milnor's \cite{milnor_axiomatic_1962} which, not coincidentally, was a seminal first appearance of the $\mathrm{lim}^1$ functor within algebraic topology as well. In general, though, this section's results have the status of folklore.
\begin{definition}
The \emph{mapping telescope} associated to a diagram of the form
\begin{align}\label{indseq}\mathbf{X}:\hspace{.7 cm} X_0\xrightarrow{\eta_0} X_1\xrightarrow{\eta_1}\dots\xrightarrow{\eta_{n-1}}X_n\xrightarrow{\eta_n}\dots\end{align}
is the space
\begin{align}\label{mappingtelescopedefinition}
\Big(\coprod_{n\in\mathbb{N}} X_n\times [n,n+1]\Big)/\sim\,,
\end{align}
where $\sim$ is the equivalence relation generated by the identifications $(x,n+1) \sim ( \eta_n(x),n+1)$ for each $n\in\mathbb{N}$ and $x\in X_n$. This space is the homotopy colimit of the diagram $\mathbf{X}$ and henceforth will accordingly be denoted $\mathrm{hocolim}\,\mathbf{X}$ \cite{may_more_2012}. We will sometimes refer to diagrams of the form (\ref{indseq}) as \emph{towers} or \emph{inductive sequences} below.
\end{definition}
Clearly the space (\ref{mappingtelescopedefinition}) may equivalently be viewed as an assemblage of mapping cylinders in the following way:
\begin{align}\label{mappingtelescopeviacylinders}\mathrm{hocolim}\,\mathbf{X}\cong\big(\coprod_{n\in\mathbb{N}} M_{\eta_n}\big)/\sim\,,\end{align}
where $\sim$ identifies the copy of $X_n$ in $M_{\eta_{n-1}}$ with $X_n\times\{0\}$ in $M_{\eta_n}$, for each $n>0$. From this observation, the following lemma is immediate (what we are proving is simply an aspect of the fact that mapping telescopes are homotopy colimits which we will later invoke).
\begin{lemma}
\label{Proposition:homotopy-type-telescope}If $\left( X_{n}\right)_{n\in
\mathbb{N}}$ is a sequence of topological spaces and $\eta _{n
},\eta'_{n}:X_{n}\rightarrow X_{n+1}$ are
homotopic for each $n\in \mathbb{N}$ then the corresponding mapping telescopes 
$\mathrm{hocolim}\,( X_{n},\eta _n) $ and $\mathrm{hocolim}\,( X_{n},\eta'_n) $ are homotopy equivalent.
\end{lemma}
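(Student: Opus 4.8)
If $(X_n)_{n\in\mathbb{N}}$ is a sequence of topological spaces and $\eta_n, \eta'_n : X_n \to X_{n+1}$ are homotopic for each $n$, then $\mathrm{hocolim}(X_n, \eta_n)$ and $\mathrm{hocolim}(X_n, \eta'_n)$ are homotopy equivalent.

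Let me think about how to prove this.

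**Setup.** The mapping telescope is built from mapping cylinders glued together. I have the lemma just before this one: for homotopic maps $g, h : X \to Y$, the mapping cylinders $M_g$ and $M_h$ are homotopy equivalent. So the natural idea is to assemble these cylinder-level homotopy equivalences into a global one.

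**The key subtlety.** This is the main obstacle I need to worry about. It's NOT enough to just say "each mapping cylinder $M_{\eta_n} \simeq M_{\eta'_n}$, so glue them together." The gluing identifies the top of $M_{\eta_{n-1}}$ (a copy of $X_n$) with the bottom of $M_{\eta_n}$ (another copy of $X_n$). A homotopy equivalence $M_{\eta_n} \to M_{\eta'_n}$ need not restrict to the identity on these $X_n$ copies — in fact the standard proof of the cylinder lemma uses the homotopy $H : \eta_n \simeq \eta'_n$ and the resulting equivalence typically moves the top copy of $X_{n+1}$ around. So I can't naively glue the cylinder equivalences; they won't match up on the overlaps. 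I need to arrange the equivalences to be **identity (or at least compatible) on the bottom face** $X_n \times \{0\}$, and track what happens on the top face.

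**Approach.** Let me think about what structure I actually need. A cleaner route: the top of each cylinder deformation-retracts to $X_{n+1}$, and the cylinder $M_g$ for $g : X_n \to X_{n+1}$ is the double mapping cylinder / I should think of the telescope as a sequence of cofibrations. I think the right framework is the following. For each $n$, the inclusion $X_n \times \{0\} \hookrightarrow M_{\eta_n}$ is a cofibration. I want to build, using the homotopy $H_n : \eta_n \simeq \eta'_n$, a homotopy equivalence $\Phi_n : M_{\eta_n} \to M_{\eta'_n}$ that is the **identity on the bottom face** $X_n \times \{0\}$. This is exactly what the cylinder lemma's proof gives if set up correctly: define $\Phi_n$ to be the identity on $X_n \times [0, 1/2]$ and on $X_n \times [1/2, 1]$ interpolate along $H_n$ to push the top copy of $X_{n+1}$ in $M_{\eta_n}$ over to the top copy in $M_{\eta'_n}$, leaving the bottom fixed. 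The homotopy inverse $\Psi_n$ uses the reverse homotopy $\bar{H}_n$ and is likewise the identity on the bottom face. The homotopies $\Psi_n \circ \Phi_n \simeq \mathrm{id}$ and $\Phi_n \circ \Psi_n \simeq \mathrm{id}$ can — this is the point to check — be taken **rel the bottom face** $X_n \times \{0\}$.

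**Assembling.** Once each $\Phi_n$ fixes the bottom face $X_n \times \{0\}$, the question is whether it's compatible with the gluing on the TOP face, i.e. whether $\Phi_n$ restricted to the top copy of $X_{n+1}$ agrees (after the telescope identification with the bottom of the next cylinder) with $\Phi_{n+1}$ on that bottom copy. But $\Phi_{n+1}$ is the identity on its bottom face $X_{n+1} \times \{0\}$, so I need $\Phi_n$ to be the identity on its top face too. That's the real constraint. So I should instead demand each $\Phi_n$ be the identity on BOTH faces $X_n \times \{0\}$ and $X_{n+1} \times \{1\}$, carrying out the homotopy $H_n$ purely "in the interior" of the cylinder. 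This is achievable: the mapping cylinder has room to realize the homotopy $H_n$ entirely within a middle slab $X_n \times [1/4, 3/4]$ say, fixing collar neighborhoods of both ends. With $\Phi_n$ fixing both faces, the maps glue to a well-defined continuous $\Phi : \mathrm{hocolim}(X_n, \eta_n) \to \mathrm{hocolim}(X_n, \eta'_n)$, and similarly $\Psi$ in the reverse direction.

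**Finishing.** With $\Phi, \Psi$ assembled from face-fixing cylinder equivalences, I get $\Psi \circ \Phi$ equal, on each cylinder block, to a self-map homotopic (rel both faces) to the identity. Because these block homotopies fix the faces, they themselves glue to a global homotopy $\Psi \circ \Phi \simeq \mathrm{id}$ on the whole telescope; symmetrically $\Phi \circ \Psi \simeq \mathrm{id}$. Hence $\Phi$ is a homotopy equivalence, as desired. I expect the main obstacle, and the only genuinely delicate bookkeeping, to be confirming that the cylinder-level equivalences and their inverse-homotopies can simultaneously be arranged to fix both faces — this is where the explicit use of collars in the cylinder and the freedom in reparametrizing $[0,1]$ matter. (Conceptually this is just the statement that mapping telescopes are homotopy colimits and the connecting maps enter only up to homotopy, as remarked in the text; the explicit face-fixing construction is what makes the gluing rigorous.)

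\begin{proof}
By the preceding lemma and the description \eqref{mappingtelescopeviacylinders} of the mapping telescope as a union of mapping cylinders glued along their faces, it suffices to construct, for each $n\in\mathbb{N}$, a homotopy equivalence $\Phi_n\colon M_{\eta_n}\to M_{\eta'_n}$ which restricts to the identity on both face copies $X_n\times\{0\}$ and $X_{n+1}\times\{1\}$, and whose inverse $\Psi_n$ and the homotopies $\Psi_n\circ\Phi_n\simeq\mathrm{id}$, $\Phi_n\circ\Psi_n\simeq\mathrm{id}$ likewise fix both faces.

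Fix a homotopy $H_n\colon\eta_n\Rightarrow\eta'_n\colon X_n\to X_{n+1}$. Recall that $M_{\eta_n}=\big(X_n\times[0,1]\sqcup X_{n+1}\big)/\!\sim$, where $(x,1)\sim\eta_n(x)$. Using the collar $X_n\times[1/4,3/4]$ we realize $H_n$ entirely in the interior: define $\Phi_n$ to be the identity on the sub-cylinder $X_n\times[0,1/4]$ and on the target copy $X_{n+1}$, and on $X_n\times[1/4,3/4]$ to interpolate via $H_n$ so that the class of $(x,3/4)$ is sent into $M_{\eta'_n}$ along the path $t\mapsto H_n(x,t)$, matching $\eta'_n(x)$ at the top. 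Since $\Phi_n$ is the identity near each face, it fixes $X_n\times\{0\}$ and $X_{n+1}\times\{1\}$. Define $\Psi_n$ symmetrically from the reversed homotopy $\overline{H}_n(x,t)=H_n(x,1-t)$. Concatenating $H_n$ with $\overline{H}_n$ yields a nullhomotopic loop in the interior slab, and the standard contraction of this concatenation to the constant path supplies homotopies $\Psi_n\circ\Phi_n\simeq\mathrm{id}_{M_{\eta_n}}$ and $\Phi_n\circ\Psi_n\simeq\mathrm{id}_{M_{\eta'_n}}$; as these contractions are supported in $X_n\times[1/4,3/4]$, they fix both faces throughout.

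Because each $\Phi_n$ fixes both faces, the identification in \eqref{mappingtelescopeviacylinders} of the top face $X_{n+1}\times\{1\}\subseteq M_{\eta_n}$ with the bottom face $X_{n+1}\times\{0\}\subseteq M_{\eta_{n+1}}$ is respected, so the maps $(\Phi_n)_{n\in\mathbb{N}}$ glue to a continuous map
\[
\Phi\colon\mathrm{hocolim}\,(X_n,\eta_n)\longrightarrow\mathrm{hocolim}\,(X_n,\eta'_n),
\]
and likewise the $(\Psi_n)$ glue to a continuous map $\Psi$ in the reverse direction. The face-fixing homotopies $\Psi_n\circ\Phi_n\simeq\mathrm{id}$ agree on overlapping faces and therefore glue to a global homotopy $\Psi\circ\Phi\simeq\mathrm{id}$ on $\mathrm{hocolim}\,(X_n,\eta_n)$; symmetrically $\Phi\circ\Psi\simeq\mathrm{id}$. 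Hence $\Phi$ is a homotopy equivalence, and $\mathrm{hocolim}\,(X_n,\eta_n)$ and $\mathrm{hocolim}\,(X_n,\eta'_n)$ are homotopy equivalent.
\end{proof}
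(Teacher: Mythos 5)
Your proof is correct and follows essentially the same route as the paper's: decompose each telescope into mapping cylinders via (\ref{mappingtelescopeviacylinders}) and glue cylinder-level homotopy equivalences together with their witnessing homotopies. The paper's own proof is a one-line assertion that these equivalences ``are compatible with the identifications''; you correctly identify that this compatibility is the real content and supply the face-fixing arrangement (equivalences and homotopies fixed on both the source face $X_n\times\{0\}$ and the target copy $X_{n+1}$) that makes the gluing rigorous.
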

\begin{proof}
Observe simply that the homotopy equivalences (and the witnessing homotopies) between each $M_{\eta_n}$ and $M_{\eta'_n}$ are compatible with the identifications of (\ref{mappingtelescopeviacylinders}), and consequently assemble to define homotopy equivalences of $\mathrm{hocolim}\,( X_{n},\eta _n) $ and $\mathrm{hocolim}\,( X_{n},\eta'_n) $, as desired.
\end{proof}

\begin{corollary}
If $\mathbf{X}$ is an inductive sequence of polyhedra, then $\mathrm{hocolim}\,\mathbf{X}$ is homotopy equivalent to a
polyhedron.
\end{corollary}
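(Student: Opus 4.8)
The plan is to realize the homotopy colimit of an inductive sequence of polyhedra as, up to homotopy equivalence, another polyhedron by working directly with the mapping-cylinder presentation in equation (\ref{mappingtelescopeviacylinders}) together with the preceding lemma and a simplicial approximation. First I would fix an inductive sequence $\mathbf{X}=(X_n,\eta_n)_{n\in\mathbb{N}}$ in which each $X_n=|K_n|$ is the topological realization of a countable, locally finite simplicial complex $K_n$. The key reduction is that Lemma \ref{Proposition:homotopy-type-telescope} lets us replace each connecting map $\eta_n$ by any homotopic map without changing the homotopy type of $\mathrm{hocolim}\,\mathbf{X}$; so I would replace each $\eta_n\colon |K_n|\to |K_{n+1}|$ by a simplicial map. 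Concretely, by the simplicial approximation theorem one finds, for each $n$, a subdivision $K_n'$ of $K_n$ and a simplicial map $s_n\colon K_n'\to K_{n+1}$ whose realization $|s_n|$ is homotopic to $\eta_n$. (Since each $X_n$ is only locally finite rather than compact, I would invoke the locally finite version of simplicial approximation, applied cofiltration-block by cofiltration-block over the exhaustion of $|K_n|$ by compacta, to produce the requisite subdivision; this is the one place requiring mild care.)

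Next I would assemble these simplicial connecting maps into a single simplicial object. The mapping cylinder $M_{s_n}$ of a simplicial map $s_n\colon K_n'\to K_{n+1}$ is itself (the realization of) a simplicial complex: one triangulates the prism $|K_n'|\times I$ compatibly with the product structure and with $s_n$ on the top face, a standard construction yielding a locally finite complex since $K_n'$ and $K_{n+1}$ are locally finite. Then, exactly as in (\ref{mappingtelescopeviacylinders}), I would glue the simplicial mapping cylinders $M_{s_n}$ along the common copies of $|K_{n+1}|$, identifying the top face of $M_{s_n}$ with the bottom face of $M_{s_{n+1}}$. Because these identifications are along full subcomplexes (the realizations $|K_{n+1}|$) and because each gluing involves only finitely many complexes meeting any given compactum, the union is again a countable, locally finite simplicial complex $L$ whose realization $|L|$ is homeomorphic to the mapping telescope $\mathrm{hocolim}\,(X_n,|s_n|)$.

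Finally, I would combine the two halves: by Lemma \ref{Proposition:homotopy-type-telescope}, $\mathrm{hocolim}\,(X_n,\eta_n)$ is homotopy equivalent to $\mathrm{hocolim}\,(X_n,|s_n|)=|L|$, which is a polyhedron by construction. This gives the corollary. The main obstacle I anticipate is bookkeeping around local finiteness: I must ensure both that the simplicial approximations $s_n$ can be chosen over the non-compact $|K_n|$ and that the infinite assemblage of triangulated mapping cylinders remains locally finite so that $|L|$ is genuinely a polyhedron in the sense fixed in Section \ref{SS:polyhedra}. Local finiteness of $L$ follows from the observation that a point of $|L|$ lying in the image of $M_{s_n}$ has a neighborhood meeting only $M_{s_{n-1}}$, $M_{s_n}$, and $M_{s_{n+1}}$, each of which is locally finite; hence only finitely many simplices are encountered. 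None of these steps requires a genuinely new idea beyond those already in play; the simplicial approximation theorem and Lemma \ref{Proposition:homotopy-type-telescope} do the essential work, and the remainder is a verification that the standard simplicial mapping-cylinder construction globalizes in the locally finite setting.
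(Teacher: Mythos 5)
Your proposal is correct and follows exactly the paper's own (one-line) argument: apply simplicial approximation to the bonding maps, invoke Lemma \ref{Proposition:homotopy-type-telescope} to replace them by their simplicial approximations without changing the homotopy type of the telescope, and observe that mapping cylinders of simplicial maps (and hence their locally finite assemblage) are polyhedra. The extra bookkeeping you supply on local finiteness is a correct elaboration of details the paper leaves implicit.
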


\begin{proof}
This is immediate from Proposition \ref%
{Proposition:homotopy-type-telescope}, the Simplicial Approximation
Theorem, and the fact that mapping cylinders of simplicial maps are polyhedra.
\end{proof}

We will require two more basic facts before proceeding. For the first, recall the previous section's reference to a canonical map $\mathrm{hocolim}\,\mathbf{X}\to\mathrm{colim}\,\mathbf{X}$. Under the conditions of the following lemma, this is a homotopy equivalence.
\begin{lemma}
\label{Lemma:inductive-homotopy}The homotopy colimit of an inductive sequence $\mathbf{X}$ of embeddings of compact Polish spaces is homotopy equivalent to $\mathrm{colim}\,\mathbf{X}$.
\end{lemma}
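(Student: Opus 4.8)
The plan is to build a homotopy equivalence between $\mathrm{hocolim}\,\mathbf{X}$ and $\mathrm{colim}\,\mathbf{X}$ by constructing the natural ``telescope collapse'' map in one direction and showing it is a homotopy equivalence. First I would set notation: write $\mathbf{X}=(X_n,\eta_n)_{n\in\mathbb{N}}$ with each $\eta_n:X_n\hookrightarrow X_{n+1}$ an embedding of compact Polish spaces, so that $\mathrm{colim}\,\mathbf{X}$ is the increasing union $X=\bigcup_n X_n$ (with the colimit topology), which is a locally compact Polish space and in fact admits $(X_n)_{n\in\mathbb{N}}$ as a cofiltration in the sense of Definition \ref{def:cofiltration}, after passing to closures if necessary. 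The telescope $T:=\mathrm{hocolim}\,\mathbf{X}$ is the quotient of $\coprod_n X_n\times[n,n+1]$ described in (\ref{mappingtelescopedefinition}); collapsing each interval factor determines the canonical map $c:T\to X$ sending the class of $(x,t)$ with $x\in X_n$ to the image of $x$ in $X$. This $c$ is continuous and surjective, and I would record that it restricts, on the sub-telescope over the first $n$ stages, to a map onto $X_n$.

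The core of the argument is to produce a homotopy inverse to $c$ and the two witnessing homotopies. In the direction $X\to T$, one cannot in general split the embeddings, so instead I would exhibit a deformation of $T$ onto the ``cap'' $X_0\times\{0\}$-to-$X_\infty$ structure; more precisely, the standard mechanism is to deformation retract each truncated telescope $T_n:=$ (the portion of $T$ lying over $[0,n+1]$) onto its terminal copy $X_{n+1}\times\{n+1\}$, exactly as a mapping cylinder $M_{\eta_n}$ deformation retracts onto its target, compatibly with the gluings of (\ref{mappingtelescopeviacylinders}). These retractions $r_n:T_n\times I\to T_n$ are coherent in $n$ because the retraction of $M_{\eta_n}$ onto $X_{n+1}$ restricts on $X_n\times\{0\}\subseteq M_{\eta_n}$ to a path traversing the cylinder, and these fit together along the identifications in (\ref{mappingtelescopeviacylinders}). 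Since every compact subset of $T$ lies in some $T_n$ (using compactness of the $X_n$ and local finiteness of the telescope's cell structure over $\mathbb{N}$), these truncated retractions assemble into a well-defined global deformation exhibiting the inclusion $X\cong\bigcup_n X_{n+1}\times\{n+1\}\hookrightarrow T$ as a homotopy inverse to $c$; I would verify continuity of the assembled homotopy via the colimit (weak) topology, checking that its restriction to each compact piece is continuous.

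The main obstacle, and the step demanding the most care, is precisely this assembly: showing that the stagewise deformation retractions glue to a \emph{globally continuous} homotopy $T\times I\to T$ and a globally continuous inverse map $X\to T$. The difficulty is that the naive retraction ``all the way to the end'' has no terminal stage, so one must reparametrize the retraction of $T_n$ to occur within a time-window shrinking appropriately with $n$ (for instance arranging the $n$-th mapping cylinder to be retracted during the parameter interval corresponding to its position), and then argue that the resulting map is continuous on each compact $T_n$ and hence on all of $T$ by the colimit topology. I would lean on the compactness of each $X_n$ to guarantee that each compact subset of $T$ meets only finitely many cylinders, so that the infinite concatenation of retractions is locally a finite concatenation and therefore continuous. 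Once continuity of both the composite $c\circ\iota\simeq\mathrm{id}_X$ and $\iota\circ c\simeq\mathrm{id}_T$ is established, the lemma follows; the homotopy $\iota\circ c\simeq\mathrm{id}_T$ is the total deformation just described, while $c\circ\iota=\mathrm{id}_X$ holds on the nose since $c$ restricted to $\bigcup_n X_{n+1}\times\{n+1\}$ is just the colimit inclusion.
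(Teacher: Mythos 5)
There is a genuine gap, and it sits exactly at the step you flag as delicate. Your proposed homotopy inverse is the ``inclusion $X\cong\bigcup_n X_{n+1}\times\{n+1\}\hookrightarrow T$,'' but this subspace of $T$ is not a copy of $X$: the slices $X_{n+1}\times\{n+1\}$ sit at distinct integer heights and are pairwise disjoint in $T$, and each point $x\in X_1$ reappears in \emph{every} slice $X_{n+1}\times\{n+1\}$ with $n\geq 0$. So $c$ restricted to this union is far from injective, and the claim ``$c\circ\iota=\mathrm{id}_X$ on the nose'' has no content because $\iota$ is not defined. The natural repair --- send $x\in X_{n+1}\setminus X_n$ to $\langle x,n+1\rangle$ --- is discontinuous across each $\partial X_n$. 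The same obstruction defeats the assembly of the stagewise retractions: the deformation retraction of $T_n$ onto $X_{n+1}\times\{n+1\}$ and that of $T_{n+1}$ onto $X_{n+2}\times\{n+2\}$ have disjoint targets and do not agree on $T_n$, and no reparametrization of the time variable changes the fact that the time-one map would have to land in a ``terminal slice'' that does not exist. Any continuous section of $c$ must have the form $x\mapsto\langle x,\mu(x)\rangle$ for a continuous $\mu:X\to[0,\infty)$ with $\mu(x)\geq n$ whenever $x\notin X_{n-1}$, and constructing such a $\mu$ is precisely the missing ingredient.

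This is what the paper does. After passing to a subsequence so that $X_n\subseteq\mathrm{int}(X_{n+1})$ (or noting the sequence stabilizes), it chooses continuous $\lambda_n:X_n\to[n,n+1]$ with $\lambda_n[X_{n-1}]=\{n\}$ and $\lambda_n[\partial X_n]=\{n+1\}$, glues these into a continuous $\lambda:X\to[0,\infty)$, identifies $T$ with the region $B=\{(x,t):x\in X_n,\ t\geq n\}\subseteq X\times[0,\infty)$ and $X$ with the graph $L$ of $\lambda$ inside $B$, and then deformation retracts $B$ onto $L$ by the single straight-line homotopy in the second coordinate (which stays in $B$ since both $t$ and $\lambda(x)$ are $\geq n$ on $X_n\setminus X_{n-1}$). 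This replaces your infinite concatenation of cylinder retractions by one homotopy and replaces the horizontal slices by a continuous ``staircase'' cross-section. I would rework your argument around the construction of such a $\lambda$; the rest of your outline (the collapse map $c$, checking continuity on compacta) then goes through.
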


\begin{proof}
Let $Y=\mathrm{colim}\,\mathbf{X}$. For each $n\in \mathbb{N} $ we
have a canonical embedding $\psi _n:X_{n}\rightarrow Y$
with range a compact $Y_{n}\subseteq Y$. By passing to a subsequence, we may assume that $%
( Y_{n}) _{n\in \mathbb{N}}$ is a sequence of compact subsets of $Y$
such that $Y_{n}\subseteq \mathrm{int}( Y_{n+1}) $ for every $%
n\in \mathbb{N} $, and $Y$ is the union of $\{ Y_{n}\mid n\in \mathbb{N} \} $ (if we cannot, then $(X_n)_{n\in\mathbb{N}}$ eventually stabilizes, in which case the conclusion of the lemma is clear). Let also $Y_{-1}=\varnothing $. For each $n\in \mathbb{N}$, fix
a continuous function $\lambda _{n}:Y_{n}\rightarrow [ n,n+1] $
such that $\lambda _{n}[Y_{n-1}]\subseteq\{n\}$ and $\lambda _{n}[\partial
Y_{n}]=\{n+1\}$. Define the function $\lambda :Y\rightarrow [
0,\infty )$ by setting $\lambda( x) =\lambda _n( x) 
$ for all $n\in \mathbb{N}$ and $x\in Y_{n}\setminus Y_{n-1}$, and let 
\begin{equation*}
L=\left\{ \left( x,\lambda( x) \right) \in Y\times \lbrack
0,\infty )\mid x\in Y\right\}
\end{equation*}%
and%
\begin{equation*}
B=\left\{ \left( x,t\right) \in Y\times \lbrack 0,\infty )\mid x\in Y_{n},\,t\geq
n\right\}.
\end{equation*}%
Notice that $L$ is a subset of $B$ and that:

\begin{itemize}
\item $L$ is a deformation retract of $B$, via the map $\left( x,t\right)
\mapsto ( x,\lambda(x)) $, and

\item $Y$ is homeomorphic to $L$, via the map $Y\rightarrow L$, $x\mapsto
\left( x,\lambda \left( x\right) \right) $, and

\item $\mathrm{hocolim}\,\mathbf{X}$ is homeomorphic to $B$,
via the map $\mathrm{hocolim}\,\mathbf{X}\rightarrow B$, $%
\langle x,t\rangle\mapsto ( \psi _{n}( x) ,t) $ for all $n\in \mathbb{N} $, 
$n\leq t\leq n+1$, and $x\in X_{n}$.
\end{itemize}
This concludes the proof.
\end{proof}
The last of the mapping telescope facts we'll collect was tacitly invoked in the ``passage to a subsequence'' step of the above argument; namely, it is the following:
\begin{proposition}
The mapping telescope construction
determines a functor from the category $\mathsf{Ind}_{\omega }( 
\mathsf{C}) $ of inductive sequences of compact Polish spaces to the homotopy
category $\mathsf{Ho}(\mathsf{LC}) $ of locally compact Polish spaces.
\end{proposition}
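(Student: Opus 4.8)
The plan is to verify the two functor axioms—preservation of identities and of composition—using the morphism description of $\mathsf{Ind}_\omega(\mathsf{C})$ from Section \ref{Section:pro_and_ind_categories}, together with the homotopy invariance already established in Lemma \ref{Proposition:homotopy-type-telescope}. The mapping telescope construction $\mathrm{hocolim}$ is already defined on objects; what remains is to define it on morphisms and check functoriality up to the $\thicksim$-equivalence built into $\mathsf{Ind}_\omega$.

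First I would recall that a morphism $\mathbf{X}\to\mathbf{Y}$ in $\mathsf{Ind}_\omega(\mathsf{C})$ is a $\thicksim$-class of a family $(f_i,g)_{i\in\mathbb{N}}$ with $f_i:X_i\to Y_{g(i)}$ satisfying the coherence condition (\ref{eq:indmaps}). Given such data, I would define an induced map $\mathrm{hocolim}\,(f_i,g):\mathrm{hocolim}\,\mathbf{X}\to\mathrm{hocolim}\,\mathbf{Y}$ by sending a point $\langle x,t\rangle$ with $x\in X_i$ and $t\in[i,i+1]$ to the image of $(f_i(x),\cdot)$ in $\mathrm{hocolim}\,\mathbf{Y}$, after reparametrizing the interval to slide from level $g(i)$ to level $g(i+1)$ along the mapping-cylinder coordinate; the compatibility (\ref{eq:indmaps}) is exactly what guarantees that this assignment respects the identifications $\sim$ in (\ref{mappingtelescopedefinition}) and hence is continuous and well-defined. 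Identities $(\mathrm{id}_{X_i},\mathrm{id})_{i\in\mathbb{N}}$ visibly induce maps homotopic to the identity on $\mathrm{hocolim}\,\mathbf{X}$, and the composition of two such families induces a map homotopic to the composition of their induced maps—the homotopy absorbing the slack in the interval coordinates, exactly the ``$\times I$ factor'' mechanism emphasized in Section \ref{subsection:colimits}.

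The crux—and the step I expect to require the most care—is showing that the assignment descends to $\thicksim$-classes, i.e.\ that if $(f_i,g)_{i\in\mathbb{N}}\thicksim(e_i,h)_{i\in\mathbb{N}}$ then the two induced maps are \emph{homotopic}, so that they agree as morphisms in $\mathsf{Ho}(\mathsf{LC})$. Since $\thicksim$ is generated by the elementary relation (\ref{eq:indequiv}), it suffices to treat a single such step, where $g(i)\le h(i)$ and $e_i=\eta^Y_{g(i),h(i)}\,f_i$. Here the two prescriptions differ only in which level of $\mathbf{Y}$ they land $f_i(x)$ at, and since the levels $Y_{g(i)}$ and $Y_{h(i)}$ are connected inside $\mathrm{hocolim}\,\mathbf{Y}$ by the mapping-cylinder coordinate running through the intermediate $\eta^Y$ maps, one builds an explicit homotopy by interpolating the interval parameter; this is precisely the situation Lemma \ref{Proposition:homotopy-type-telescope} is tailored to handle, and I would either invoke it directly or adapt its proof.

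Finally, I would note that this is why passing to the homotopy category $\mathsf{Ho}(\mathsf{LC})$ in the target is essential: the construction is only functorial up to homotopy, not on the nose, since distinct representatives of a single $\mathsf{Ind}_\omega$-morphism yield only homotopic (not equal) maps of telescopes. Having verified that identities go to homotopy-identities and that composition is preserved up to homotopy, and that these assignments are well-defined on $\thicksim$-classes, we conclude that $\mathbf{X}\mapsto\mathrm{hocolim}\,\mathbf{X}$ together with $(f_i,g)\mapsto[\mathrm{hocolim}\,(f_i,g)]$ defines a functor $\mathsf{Ind}_\omega(\mathsf{C})\to\mathsf{Ho}(\mathsf{LC})$, as claimed.
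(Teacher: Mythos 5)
Your proposal is correct and follows essentially the same route as the paper: define $\mathrm{hocolim}$ on a representative $(f_i,g)$ by linearly reparametrizing each interval $[i,i+1]$ onto $[g(i),g(i+1)]$ and pushing forward along the bonding maps of $\mathbf{Y}$, then reduce well-definedness on $\thicksim$-classes to a single generating step of the relation and interpolate the interval parameter to produce the required homotopy. The only cosmetic difference is that the paper writes out the homotopy formula explicitly rather than appealing to Lemma \ref{Proposition:homotopy-type-telescope} (which, as you suspect, addresses a slightly different situation and would need adaptation rather than direct invocation), and it leaves the verification of identities and composition implicit where you rightly flag that composition holds only up to homotopy.
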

See again Section \ref{Section:pro_and_ind_categories} for the definition of $\mathsf{Ind}_{\omega }( 
\mathsf{C}) $.
\begin{proof}
Suppose
that $\mathbf{X}=\left( X_{n }\right) _{n\in \mathbb{N}}$ and 
$\mathbf{Y}=\left( Y_{n}\right) _{n\in \mathbb{N}}$ are
inductive sequences of compact spaces and $( \ell _{k},f_{k})_{k\in \mathbb{N}}$ represents a morphism from $%
\mathbf{X}$ to $\mathbf{Y}$. We define the corresponding map $f=\mathrm{hocolim}\left( \ell _{k},f_{k}\right)
:\mathrm{hocolim}\,\mathbf{X}\rightarrow \mathrm{hocolim}\,\mathbf{Y}$ as follows. We denote the constituent maps $Y_k\to Y_\ell$  of $\mathbf{Y}$ by $\eta_{(k,\ell)}$ and let $d_{k}=\ell _{k+1}-\ell_{k}$ for $k\in \mathbb{N}$. For $k\in \mathbb{N}$ and $x\in X_{k}$ and $s\in [ 0,1]$ let
\begin{equation*}
f(\langle x,k+s\rangle) =\langle\eta _{(\ell _{k},\ell _{k}+d_{k}s)}f_{k}( x) ,\ell _{k}+d_{k}s\rangle\text{,}
\end{equation*}%
where we write $\langle\eta _{\left(\ell,t \right) }( y) ,t\rangle$ for $\langle\eta _{\left(\ell,n \right) }( y) ,t\rangle$ when $t$ is in $[n,n+1] $ and $y$ is in $Y_{n}$. It is easy to see that this gives
a well-defined continuous function $\mathrm{hocolim}\,\mathbf{X}\rightarrow\mathrm{hocolim}\,\mathbf{Y}$.

We now show that if $\left( \ell _{k},f_{ k}\right) _{k\in
\mathbb{N}}$ and $(\ell _{k}^{\prime },f_{k }^{\prime })_{k\in
\mathbb{N}}$ represent the same morphism from $\mathbf{X}$ to $\mathbf{Y%
}$, then $f=\mathrm{hocolim}\,( \ell _{k},f_{k}) $ and $f'=\mathrm{hocolim}\,( \ell' _{k},f'_{k}) $ are homotopic maps $\mathrm{hocolim}\,\mathbf{X}\rightarrow\mathrm{hocolim}\,\mathbf{Y}$. By definition, under these assumptions there exists an increasing sequence $\left( \ell
_{k}^{\prime \prime }\right) _{k\in \mathbb{N}}$ such that $\max \{\ell _{k},\ell _{k}^{\prime }\}\leq \ell _{k}^{\prime
\prime }$ and $\eta _{(\ell _{k},\ell''_{k})}f_{k}=\eta _{\left( \ell _{k}^{\prime },\ell'' _{k}\right) }f_{k}^{\prime }$ for every $k\in \mathbb{N}$. Hence by the transitivity of the homotopy relation it suffices to consider the case when $\ell _{k}\leq \ell _{k}^{\prime }$
and $f_{k}^{\prime }=\eta _{\left( \ell _{k},\ell'
_{k}\right) }f_{k}$ for every $k\in \mathbb{N}$.  Again let $d_{k}=\ell_{k+1}-\ell _{k}$ and $d_{k}^{\prime }=\ell _{k+1}^{\prime
}-\ell_{k}^{\prime }$ for all $k\in\mathbb{N}$. For all such $k$ and $%
x\in X_{k}$ and $s\in \left[ 0,1\right] $, we then have%
\begin{equation*}
f(\langle x,k+s\rangle) =\langle\eta _{(\ell _{k},\ell _{k}+d_{k}s)}f_{k}( x) ,\ell _{k}+d_{k}s\rangle 
\end{equation*}
and
\begin{equation*}
f^{\prime }(\langle x,k+s\rangle) =\langle\eta _{(\ell' _{k},\ell' _{k}+d'_{k}s)}f'_{k}( x) ,\ell' _{k}+d'_{k}s\rangle \text{.}
\end{equation*}
We may then define a homotopy $h:f\Rightarrow f^{\prime }$ by setting%
\begin{equation*}
h(\langle x,k+s\rangle,t) =\langle\eta _{(\ell_k,( 1-t)( \ell
_{k}+d_{k}s) +t( \ell _{k}^{\prime }+d_k's))}f_{k}(x) ,( 1-t)( \ell
_{k}+d_{k}s) +t( \ell _{k}^{\prime }+d_k's))\rangle\text{.}
\end{equation*}%
This shows that the homotopy class of $\mathrm{hocolim}\,( \ell _{k},f_{k}) $ does not depend on the choice of representative $(\ell_k, f_f)$ of a
morphism from $\mathbf{X}$ to $\mathbf{Y}$. Thus, given an $\mathsf{Ind}_\omega$-morphism $%
[(\ell_k,f_k)]$ from $\mathbf{X}$ to $\mathbf{Y}$
we may let $\mathrm{hocolim}\,[( \ell _{k},f_{k})]$ be the homotopy class of $\mathrm{hocolim}\,( \ell _{k},f_{k})$, thereby defining a functor from the category of
inductive sequences of compact Polish spaces to the homotopy category of
locally compact Polish spaces, as desired.
\end{proof}
\subsection{The cohomology of mapping telescopes}

Suppose now that $\mathbf{X}=( X_{n})
_{n\in \mathbb{N}}$ is an inductive sequence of compact spaces. Then $\mathrm{hocolim}\,\mathbf{X}$ has a canonical cofiltration 
$(\tilde{X}_{n})_{n\in \mathbb{N}}$ obtained by letting $\tilde{X}_{n}$ be the
set%
\begin{equation*}
\{\langle x,t\rangle\in \mathrm{hocolim}\,\mathbf{X}:0\leq t\leq n\}%
\text{.}
\end{equation*}%
It is easy to see that $\tilde{X}_{n}$ is a compact subset of $\mathrm{hocolim}\,\mathbf{X}$ that is homotopy equivalent
to $X_{n}$ (via a sequence of mapping cylinder deformation retractions of the sort alluded to at the end of Section \ref{subsection:colimits}). Furthermore, the inductive sequence $(\tilde{X}_{n})_{n\in
\mathbb{N}}$, with inclusions as bonding maps, is naturally isomorphic in $%
\mathsf{Ind}_{\omega }( \mathsf{Ho}( \mathsf{C})) $
to $\mathbf{X}$. Thus as a particular instance of Proposition \ref%
{Proposition:asymptotic-cohomology} we have the following.

\begin{proposition}
\label{Proposition:cohomology-telescopes}Suppose that $q$ is a positive integer and $G$ is a
countable discrete group and $\mathbf{X}=( X_{n},\eta_n)_{n\in \mathbb{N}}$ is an inductive sequence of compact spaces. Then:

\begin{enumerate}
\item $\mathrm{H}_{\infty }^{q}(\mathrm{hocolim}\,\mathbf{X} ;G)$ is naturally definably isomorphic to $
\mathrm{lim}^{1}\,\mathrm{H}^{q}( S( X_n) ,\,\ast\,;G) \cong \mathrm{lim}^{1}\,\mathrm{H}^{q-1}( X_n;G) $;

\item $\mathrm{H}_{\mathrm{w}}^q(\mathrm{hocolim}\,\mathbf{X} ;G)$ is naturally definably isomorphic to the pro-countable abelian group $\mathrm{lim}\,\mathrm{H}^{q}(X_n;G) $.
\end{enumerate}
\end{proposition}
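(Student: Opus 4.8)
The plan is to derive this proposition as a direct specialization of Proposition \ref{Proposition:asymptotic-cohomology}, using the canonical cofiltration of $\mathrm{hocolim}\,\mathbf{X}$ described in the paragraph immediately preceding the statement. The essential observation has already been recorded there: for an inductive sequence $\mathbf{X}=(X_n,\eta_n)_{n\in\mathbb{N}}$ of compact spaces, setting $\tilde{X}_n=\{\langle x,t\rangle\in\mathrm{hocolim}\,\mathbf{X}:0\leq t\leq n\}$ yields a cofiltration of the locally compact Polish space $\mathrm{hocolim}\,\mathbf{X}$, and the inductive sequence $(\tilde{X}_n)_{n\in\mathbb{N}}$ is naturally isomorphic in $\mathsf{Ind}_\omega(\mathsf{Ho}(\mathsf{C}))$ to $\mathbf{X}$ itself, via the mapping-cylinder deformation retractions $\tilde{X}_n\simeq X_n$ compatible with the bonding maps. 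So the first step is simply to invoke Proposition \ref{Proposition:asymptotic-cohomology} with this particular $(X,A)=(\mathrm{hocolim}\,\mathbf{X},\varnothing)$ and its cofiltration $(\tilde{X}_n,\varnothing)_{n\in\mathbb{N}}$.

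Concretely, Proposition \ref{Proposition:asymptotic-cohomology}(1) gives a natural definable isomorphism $\mathrm{H}_{\infty}^q(\mathrm{hocolim}\,\mathbf{X};G)\cong\mathrm{lim}^1\,\mathrm{H}^{q-1}(\tilde{X}_n;G)$, and part (2) gives $\mathrm{H}_{\mathrm{w}}^q(\mathrm{hocolim}\,\mathbf{X};G)\cong\mathrm{lim}\,\mathrm{H}^q(\tilde{X}_n;G)$. The second step is then to replace the tower $(\mathrm{H}^{\bullet}(\tilde{X}_n;G))_{n\in\mathbb{N}}$ by the isomorphic tower $(\mathrm{H}^{\bullet}(X_n;G))_{n\in\mathbb{N}}$. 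This is where the homotopy invariance of definable \v{C}ech cohomology established in Corollary \ref{CorFunctoriality} does the work: each deformation retraction $\tilde{X}_n\simeq X_n$ induces a definable isomorphism $\mathrm{H}^{\bullet}(X_n;G)\cong\mathrm{H}^{\bullet}(\tilde{X}_n;G)$, and since the natural isomorphism of $\mathsf{Ind}_\omega(\mathsf{Ho}(\mathsf{C}))$-objects means these isomorphisms commute (up to the relevant equivalence) with the bonding maps of the two towers, we obtain an isomorphism of the resulting pro-objects in $\mathsf{Pro}_\omega(\mathsf{CGrp})$. Applying the $\mathrm{lim}$ and $\mathrm{lim}^1$ functors — which are functorial on $\mathsf{Pro}_\omega(\mathsf{CGrp})$, as recalled in Section \ref{Section:pro_and_ind_categories} — then transports the two isomorphisms of part (1) and (2) onto the towers $\mathrm{H}^{q-1}(X_n;G)$ and $\mathrm{H}^q(X_n;G)$ respectively, yielding the claimed natural definable isomorphisms.

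The only substantive point requiring care is the suspension identity $\mathrm{H}^q_\infty(\mathrm{hocolim}\,\mathbf{X};G)\cong\mathrm{lim}^1\,\mathrm{H}^q(S(X_n),\ast;G)\cong\mathrm{lim}^1\,\mathrm{H}^{q-1}(X_n;G)$, which carries the extra factor $S(X_n)$ relative to the base statement in Proposition \ref{Proposition:asymptotic-cohomology}; here I would note that the cofiltration of the unbased homotopy colimit interacts with the generalized reduced suspension $\bar{\Sigma}(\tilde{X}_n,\varnothing)=\Sigma(\tilde{X}_n)_+$ exactly as in the proof of Proposition \ref{Proposition:asymptotic-cohomology}(1), and the reduced cohomology of $\Sigma(\tilde{X}_n)_+$ coincides with that of $S(X_n)$ up to the computationally inconsequential $S^1$ wedge factor flagged in the footnote of Section \ref{SS:NSFW:overexplicit_homotopies}. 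Thus the suspension-degree shift $\mathrm{H}^q(S(X_n);G)\cong\mathrm{H}^{q-1}(X_n;G)$ reduces to the same chain of adjunction and Eilenberg-MacLane identities used in the proof of Proposition \ref{Proposition:asymptotic-cohomology}.

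\textbf{The main obstacle is bookkeeping rather than conceptual}: one must verify that the natural isomorphism $(\tilde{X}_n)_{n\in\mathbb{N}}\cong\mathbf{X}$ in $\mathsf{Ind}_\omega(\mathsf{Ho}(\mathsf{C}))$ is genuinely a morphism in the $\mathsf{Ind}_\omega$ sense — i.e. that the homotopy equivalences $\tilde{X}_n\simeq X_n$ are compatible with bonding maps up to the $\thicksim$-equivalence of Section \ref{Section:pro_and_ind_categories}, not merely levelwise homotopy equivalences — so that contravariant application of $\mathrm{H}^\bullet$ followed by $\mathrm{lim}$ and $\mathrm{lim}^1$ is legitimate. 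Since the cofiltration inclusions $\tilde{X}_n\hookrightarrow\tilde{X}_{n+1}$ correspond under the deformation retractions precisely to the original bonding maps $\eta_n:X_n\to X_{n+1}$, this compatibility is immediate, and the proof is complete once these identifications are assembled. I would keep the written argument brief, essentially a one-paragraph citation of Proposition \ref{Proposition:asymptotic-cohomology} together with Corollary \ref{CorFunctoriality} and the naturality remark above.
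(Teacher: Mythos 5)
Your proposal is correct and follows essentially the same route as the paper, which likewise presents this proposition as a direct specialization of Proposition \ref{Proposition:asymptotic-cohomology} to the canonical cofiltration $(\tilde{X}_n)$ of $\mathrm{hocolim}\,\mathbf{X}$, using the natural isomorphism of $(\tilde{X}_n)$ with $\mathbf{X}$ in $\mathsf{Ind}_\omega(\mathsf{Ho}(\mathsf{C}))$ to pass to the towers $\mathrm{H}^{\bullet}(X_n;G)$. The paper in fact records only the two-sentence observation preceding the statement and omits the bookkeeping you spell out.
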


\subsection{Mapping telescopes of tori}
\label{ss:hocolims_of_tori}
Call an inductive sequence $\mathbf{X}=\left(
X_{n},\eta_n\right) _{n\in \mathbb{N}}$ of $d$-dimensional tori $X_n=\mathbb{T}^{d}$ \emph{nontrivial} if its bonding maps 
$\eta _n:X_{n}\rightarrow X_{n+1}$ are each of nonzero
degree. In this section, we show that definable \v{C}ech cohomology is a complete invariant for homotopy colimits of such sequences.
\begin{theorem}
\label{Theorem:definable-Cech-cohomology-telescopes-tori} The definable \v{C}ech cohomology groups completely classify homotopy colimits of nontrivial towers of $d$-tori up to homotopy equivalence, for all $d\geq 1$. In fact, the mapping telescopes associated to any two such towers are homotopy equivalent if and only if they have \emph{definably} isomorphic weak and asymptotic \v{C}ech cohomology groups.
\end{theorem}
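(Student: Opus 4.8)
The plan is to reduce the homotopy classification of mapping telescopes of nontrivial $d$-torus towers to the classification of the associated $\mathsf{Pro}_\omega(\mathsf{CGrp})$ towers of cohomology groups, and to show that the definable invariants recorded in Proposition \ref{Proposition:cohomology-telescopes} capture exactly this data. Let $\mathbf{X}=(X_n,\eta_n)$ and $\mathbf{Y}=(Y_n,\theta_n)$ be two nontrivial towers of $d$-tori. The forward (easy) direction is that homotopy equivalent telescopes have definably isomorphic weak and asymptotic cohomology groups; this is immediate from the homotopy-functoriality of $\mathrm{H}^q_{\mathrm{w}}$ and $\mathrm{H}^q_\infty$ established in Corollary \ref{CorFunctoriality} and Proposition \ref{Proposition:subfunctor}. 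The substance is the converse. First I would recall that $X_n=\mathbb{T}^d$ is a $K(\mathbb{Z}^d,1)$-like space, aspherical with $\pi_1=\mathbb{Z}^d$, so that homotopy classes of maps $\mathbb{T}^d\to\mathbb{T}^d$ are classified by the induced map on $\mathrm{H}^1(-;\mathbb{Z})\cong\mathbb{Z}^d$, i.e., by an integer matrix; nontriviality means each bonding matrix has nonzero determinant. Thus up to homotopy the tower $\mathbf{X}$ is faithfully encoded by the tower of abelian groups $\mathrm{H}^1(X_n;\mathbb{Z})$ with the transpose-induced maps, and since $\mathrm{H}^q(\mathbb{T}^d;\mathbb{Z})\cong\mathbb{Z}^{\binom{d}{q}}$ is the $q$-th exterior power, all higher cohomology towers are determined functorially by the degree-one tower.

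The key middle step is a recognition theorem: by Lemma \ref{Proposition:homotopy-type-telescope} and the preceding functoriality results, the homotopy type of $\mathrm{hocolim}\,\mathbf{X}$ depends only on the $\mathsf{Ind}_\omega(\mathsf{Ho}(\mathsf{C}))$-isomorphism class of $\mathbf{X}$, which (for aspherical $X_n$) is determined by the $\mathsf{Pro}_\omega(\mathsf{CGrp})$-isomorphism class of the tower $\mathbf{H}^1(\mathbf{X}):=(\mathrm{H}^1(X_n;\mathbb{Z}),\eta_n^*)$. So I would aim to prove that $\mathrm{hocolim}\,\mathbf{X}\simeq\mathrm{hocolim}\,\mathbf{Y}$ if and only if $\mathbf{H}^1(\mathbf{X})$ and $\mathbf{H}^1(\mathbf{Y})$ are pro-isomorphic. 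The ``only if'' half of this will require a converse to the construction: given a $\mathsf{Pro}_\omega$-isomorphism of the degree-one towers, one lifts it (using that $\mathrm{Hom}(\mathbb{Z}^d,\mathbb{Z}^d)$ realizes as $[\mathbb{T}^d,\mathbb{T}^d]$) to an $\mathsf{Ind}_\omega(\mathsf{Ho}(\mathsf{C}))$-isomorphism, then applies $\mathrm{hocolim}$ functorially. Here I expect to invoke the nontriviality hypothesis essentially, to guarantee the relevant maps are injective and that no collapse occurs that would lose information.

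The final and decisive step is to show that the pair $(\mathrm{lim}\,\mathbf{H}^1(\mathbf{X}),\,\mathrm{lim}^1\,\mathbf{H}^1(\mathbf{X}))$, as a \emph{definable} object, is a complete invariant of the pro-isomorphism class of $\mathbf{H}^1(\mathbf{X})$. This is exactly where the Ulam stability/rigidity results of \cite{BLPI} enter, and this is the main obstacle: it is \emph{not} true classically that $(\mathrm{lim},\mathrm{lim}^1)$ determines a tower of finitely generated free abelian groups up to pro-isomorphism, but it becomes true once these invariants are taken with their definable (group-with-a-Polish-cover) structure. I would cite the relevant classification of such towers by their definable $\mathrm{lim}^1$ from \cite{BLPI} — the statement that definable $\mathrm{lim}^1$ is a complete invariant of towers of finitely generated abelian groups up to pro-isomorphism — and combine it with the observation that $\mathrm{lim}$ recovers the pro-isomorphism invariant data not already seen by $\mathrm{lim}^1$ (the torsion-free rank and the ``limiting'' part). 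Assembling these: the definable weak cohomology $\mathrm{H}^q_{\mathrm{w}}\cong\mathrm{lim}\,\mathrm{H}^q(X_n)$ and definable asymptotic cohomology $\mathrm{H}^q_\infty\cong\mathrm{lim}^1\,\mathrm{H}^{q-1}(X_n)$ of Proposition \ref{Proposition:cohomology-telescopes}, ranging over all $q$, recover the full definable $(\mathrm{lim},\mathrm{lim}^1)$ data of $\mathbf{H}^1(\mathbf{X})$ (via the exterior-power identification), hence determine the pro-isomorphism class of $\mathbf{H}^1(\mathbf{X})$, hence the homotopy type of $\mathrm{hocolim}\,\mathbf{X}$. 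The contrast with classical cohomology — which remembers only $\mathrm{lim}$ and the abstract-group structure of $\mathrm{lim}^1$ — is precisely what the rigidity results of \cite{BLPI} exploit to produce the promised uncountable families of homotopy-inequivalent telescopes with isomorphic classical cohomology.
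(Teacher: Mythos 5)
Your proposal is correct and follows essentially the same route as the paper: reduce to towers of homomorphisms (the paper cites Scheffer, you use asphericity, which amounts to the same matrix description), observe that the homotopy type of the telescope is governed by the (pro-/ind-)isomorphism class of the degree-one cohomology tower — equivalently, by the rank-$d$ torsion-free group $\Lambda=\mathrm{colim}\,\Lambda_n$ — and then invoke the rigidity results of \cite{BLPI} to show the definable $(\mathrm{lim},\mathrm{lim}^1)$ pair, i.e.\ $\mathrm{H}^d_{\mathrm{w}}\cong\mathrm{Hom}(\Lambda,\mathbb{Z})$ and $\mathrm{H}^{d+1}_\infty\cong\mathrm{Ext}(\Lambda,\mathbb{Z})$, is a complete invariant via the splitting $\Lambda=\Lambda_\infty\oplus\Lambda_{\mathrm{w}}$. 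The only cosmetic difference is that the paper dualizes through the definable Jensen theorem to state the key rigidity input as ``definable $\mathrm{Ext}(-,\mathbb{Z})$ classifies finite-rank torsion-free groups with no finitely generated summand'' rather than as a statement about towers, and it has no need of your exterior-power remark since only degrees $d$ and $d+1$ are used.
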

In contrast, there exist uncountable families of homotopy inequivalent mapping telescopes of $d$-tori whose weak and asymptotic \v{C}ech cohomology --- and, moreover, classical \v{C}ech cohomology --- groups are isomorphic, as we show in Theorem \ref{Theorem:pairwisehomotopyinequivalent} below.

We will precede the proof of Theorem \ref{Theorem:definable-Cech-cohomology-telescopes-tori} with a few observations. Most immediately, observe that by Proposition \ref{Proposition:subfunctor}, the first assertion of Theorem \ref{Theorem:definable-Cech-cohomology-telescopes-tori} follows from its second; it is the latter which we will prove below.

Next, recall the notion of \emph{mapping degree}:
\begin{definition}
Fix closed, connected, oriented $d$-dimensional manifolds $M$ and $N$; the \emph{degree} $\mathrm{deg}(f)$ of a continuous function $f:M\to N$ is the integer $k$ for which $f^*:\mathrm{H}^d(N)\to \mathrm{H}^d(M)$ is multiplication by $k$.\footnote{It is more common to define mapping degree in terms of homology groups, but it is easily seen, e.g., via the Universal Coefficient Theorem for cohomology that our definition is equivalent.}
\end{definition}

The value of restricting our attention to telescopes of maps of nonzero degree will grow clearer momentarily; we return to the question of its necessity in a concluding remark. We may in fact, without any loss of generality, restrict our attention yet further: by \cite[Corollary 2]{scheffer_maps_1972}, any map $\mathbb{T}^d\to\mathbb{T}^d$ is homotopic to a group homomorphism; hence by Lemma \ref{Proposition:homotopy-type-telescope} it will suffice to argue Theorem \ref{Theorem:definable-Cech-cohomology-telescopes-tori} for nontrivial inductive sequences $(X_n,\eta_n)$ in which all bonding maps are homomorphisms. For ease of reference, we term the full subcategory of $\mathsf{Ind}_\omega(\mathsf{C})$ consisting of such sequences \emph{the category of monomorphic inductive sequences of $d$-tori}. The prefix ``mono'' references firstly the injectivity of the induced $d^{\mathrm{th}}$ cohomology maps but will apply as well in the first and second of the related contexts which we now describe.

Let $\Lambda $ be a torsion-free rank $d$ abelian group. A \emph{cofiltration of $\Lambda $} is an increasing sequence $\left( \Lambda _{n}\right) _{n\in
\mathbb{N} }$ of finitely-generated rank $d$ free abelian subgroups of $\Lambda$ such
that $\Lambda=\bigcup_{n\in\mathbb{N}} \Lambda _{n}$. A cofiltration $\left( \Lambda _{n}\right) _{n\in \mathbb{N}}$ gives rise to an inverse sequence $%
\left( \mathrm{Hom}( \Lambda _{n},\mathbb{Z}) \right) _{n\in
\mathbb{N}}$ of finitely generated rank $d$ free abelian groups. By Pontryagin duality, this tower in turn induces a monomorphic inductive sequence $%
\mathbf{X}_{\Lambda }:=\left( \mathrm{Hom}( \Lambda _{n},\mathbb{Z}) ^{\ast }\right) _{n\in \mathbb{N}}$ of $d$-tori. In this way, a choice of cofiltration for each torsion-free rank $d$ abelian group determines a
functor $\Lambda \mapsto \mathbf{X}_{\Lambda }$ from the category of
rank $d$ torsion-free abelian groups to the category of monomorphic inductive sequences
of $d$-tori. As shown in \cite{BLPI}, the group invariant $\mathrm{Ext}(C,A)$ for
countable abelian groups $A$ and $C$ first defined in \cite{eilenberg_group_1942} admits a canonical 
\emph{definable }abelian group structure. Using Proposition \ref%
{Proposition:cohomology-telescopes}, we may formulate the definable cohomology of $
\mathbf{X}_{\Lambda}$ in terms of this definable $\textrm{Ext}$. 

We therefore pause to recall this functor's essentials from \cite[\S 7]{BLPI}.
By an \emph{extension $\mathcal{E}$ of a countable abelian group $C$ by a countable abelian group $A$} we mean any short exact sequence 
\begin{center}
\begin{tikzcd}[ampersand replacement=\&]
  0 \arrow[r]  \& A \arrow[r] 
\& 
E \arrow[r] 
\& 
C \arrow[r] 
\& 0
\end{tikzcd}
\end{center}
of abelian groups.
The extensions $\mathcal{E}$ and $\mathcal{E}'$ are isomorphic if there is a group isomorphism $E\to E'$ which makes the following diagram commute.
\begin{center}
\begin{tikzcd}[ampersand replacement=\&]
 \& 
\& E \arrow[dr] \arrow[dd, , "\cong ", dotted] 
\& 
\& \\
0 \arrow[r]  \& A \arrow[ur] \arrow[dr]
\& 
\& 
C \arrow[r] 
\& 0 \\
\& 
\& E' \arrow[ur]  
\& 
\& \end{tikzcd}
\end{center}

We denote by $\mathrm{Ext}(C,A)$ the collection of all isomorphism classes of extensions of $C$ by $A$. As is well-known, this collection admits a natural abelian group structure, and $\mathrm{Ext}$ is more generally the first derived functor of the bifunctor $\mathrm{Hom}:\mathsf{Ab}^{\mathrm{op}}\times\mathsf{Ab}\to\mathsf{Ab}$. 
The definable versions of $\mathrm{Hom}$ and $\mathrm{Ext}$ are each bifunctors from the category of countable abelian groups to the category $\mathsf{APC}$ of groups with an abelian Polish cover: $A$ and $C$ being discrete, the compact-open topology (or equivalently the product topology) renders $\mathrm{Hom}(C,A)$ itself a Polish abelian group.
And $\mathrm{Ext}(C,A)$, in a definition tracing to \cite{eilenberg_group_1942}, is the group with a Polish cover $\mathrm{Z}(C,A) /\mathrm{B}(C,A)$, where
\begin{itemize}
\item $\mathrm{Z}(C,A)$ is the closed subgroup of the abelian Polish group $A^{C\times C}$, consisting of all cocycles on $C$ with coefficients in $A$.  Here  $A^{C\times C}$ is, as above, endowed with the compact-open topology; by a \emph{cocycle on $C$ with coefficients in $A$} we mean any function $g:C\times C\rightarrow A$ such that for all $x,y,z \in C$ we have: 
\begin{enumerate}
\item $g( x,0) =g( 0,y) =0$;
\item $g( x,y) +g( x+y,z) =g( x,y+z)+g( y,z)$;
\item $g( x,y) =g( y,x)$, for all $x,y \in C$.
\end{enumerate}
\item  $\delta$ is the continuous group homomorphism $\mathrm{Hom}(C,A)\to \mathrm{Z}(C,A)$ given by:
\[\delta(g)(x,y):= g(x)+g(y)-g(x+y).\]
\item $\mathrm{B}(C,A)$ is the Polishable Borel subgroup $\delta[\mathrm{Hom}(C,A)]$ of  $\mathrm{Z}(C,A)$.
\end{itemize}
Returning to our cohomology computations, we have the following.
\begin{proposition}
\label{Proposition:cohomology-telescope-group}
Let $\Lambda $ be a rank $d$
torsion-free abelian group. Then:

\begin{enumerate}
\item $\mathrm{H}^{d+1}(\mathrm{hocolim}\, \mathbf{X}_{\Lambda }) =\mathrm{H}_{\infty
}^{d+1}(\mathrm{hocolim}\, \mathbf{X}_{\Lambda }) $ is naturally definably
isomorphic to $\mathrm{Ext}( \Lambda ,\mathbb{Z}) $;

\item $\mathrm{H}_{\mathrm{w}}^{d}(\mathrm{hocolim}\, \mathbf{X}_{\Lambda }) $ is
naturally definably isomorphic to $\mathrm{Hom}( \Lambda ,\mathbb{Z}) $;

\item $\mathrm{H}^{k}( \mathrm{hocolim}\, \mathbf{X}_{\Lambda }) =0$ for $k> d+1$.
\end{enumerate}
\end{proposition}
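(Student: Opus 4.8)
The plan is to deduce all three items directly from Proposition \ref{Proposition:cohomology-telescopes}, which already expresses the weak and asymptotic \v{C}ech cohomology of $\mathrm{hocolim}\,\mathbf{X}_{\Lambda}$ as, respectively, the $\mathrm{lim}$ and $\mathrm{lim}^{1}$ of the inverse system formed by the cohomology groups of the constituent tori $X_{n}=\mathrm{Hom}(\Lambda_{n},\mathbb{Z})^{\ast}$ together with their bonding maps $\eta_{n}^{\ast}$. The whole argument thus splits into two tasks: first, computing this tower of torus cohomology groups and its bonding maps; and second, identifying the resulting $\mathrm{lim}$ and $\mathrm{lim}^{1}$ terms \emph{definably} with $\mathrm{Hom}(\Lambda,\mathbb{Z})$ and $\mathrm{Ext}(\Lambda,\mathbb{Z})$.

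For the first task I would record the cohomology ring of each torus. Since $X_{n}$ is the Pontryagin dual of the finitely generated free abelian group $\mathrm{Hom}(\Lambda_{n},\mathbb{Z})$, double duality identifies $\pi_{1}(X_{n})$ canonically with $\Lambda_{n}$, so that $\mathrm{H}^{1}(X_{n})\cong\mathrm{Hom}(\Lambda_{n},\mathbb{Z})$ and, via the exterior-algebra structure of $\mathrm{H}^{\ast}$ of a torus, $\mathrm{H}^{k}(X_{n})\cong\wedge^{k}\mathrm{Hom}(\Lambda_{n},\mathbb{Z})$; in particular $\mathrm{H}^{k}(X_{n})=0$ for $k>d$. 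Under these identifications the bonding map $\eta_{n}^{\ast}$ in degree one is exactly the transpose of the inclusion $\Lambda_{n}\hookrightarrow\Lambda_{n+1}$, i.e.\ the restriction homomorphism $\mathrm{Hom}(\Lambda_{n+1},\mathbb{Z})\to\mathrm{Hom}(\Lambda_{n},\mathbb{Z})$, and in higher degrees it is the corresponding exterior power. Item (3), along with the fact that the top nonvanishing group carries no weak part (so that $\mathrm{H}^{d+1}=\mathrm{H}^{d+1}_{\infty}$), then follows immediately from Proposition \ref{Proposition:cohomology-telescopes}: a tower that vanishes above degree $d$ contributes nothing to $\mathrm{H}^{k}_{\mathrm{w}}$ for $k>d$, nor to $\mathrm{H}^{k}_{\infty}$ for $k>d+1$.

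The core of the proof is the second task, the definable evaluation of the surviving limits. Because $\Lambda=\mathrm{colim}_{n}\Lambda_{n}$ and $\mathrm{Hom}(-,\mathbb{Z})$ converts this colimit into the inverse system $(\mathrm{Hom}(\Lambda_{n},\mathbb{Z}))_{n}$, continuity of $\mathrm{Hom}(-,\mathbb{Z})$ gives $\mathrm{lim}_{n}\,\mathrm{Hom}(\Lambda_{n},\mathbb{Z})\cong\mathrm{Hom}(\Lambda,\mathbb{Z})$, furnishing the weak-cohomology identification of item (2). For the asymptotic group I would invoke the Milnor-type six-term sequence for $\mathrm{Ext}$ of a direct limit, which, since each $\Lambda_{n}$ is free and hence $\mathrm{Ext}(\Lambda_{n},\mathbb{Z})=0$, collapses to $\mathrm{lim}^{1}_{n}\,\mathrm{Hom}(\Lambda_{n},\mathbb{Z})\cong\mathrm{Ext}(\Lambda,\mathbb{Z})$; this is precisely the presentation of $\mathrm{Ext}$ recalled from \cite{BLPI} immediately before the proposition, yielding item (1).

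The hard part will not be these abstract isomorphisms but their \emph{definability and naturality}: I must check that each is an isomorphism in $\mathsf{GPC}$ (equivalently $\mathsf{APC}$) rather than merely of abstract groups, and that it is natural in $\Lambda$. For the $\mathrm{lim}$ term this is routine, as both sides carry their canonical inverse-limit (product) Polish topologies and the comparison map is visibly continuous with continuous inverse. For the $\mathrm{lim}^{1}$ term the definable content is the delicate point; here I would appeal to the definable theory of $\mathrm{lim}^{1}$ and of $\mathrm{Ext}$ developed in \cite{BLPI}, where the comparison $\mathrm{lim}^{1}_{n}\,\mathrm{Hom}(\Lambda_{n},\mathbb{Z})\cong\mathrm{Ext}(\Lambda,\mathbb{Z})$ is established as a definable isomorphism of groups with a Polish cover, and combine it with the naturality already asserted in Proposition \ref{Proposition:cohomology-telescopes} within the $\mathsf{Ind}_{\omega}/\mathsf{Pro}_{\omega}$ framework of Section \ref{Section:pro_and_ind_categories}. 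Chaining these natural definable isomorphisms then delivers the full statement, naturality in $\Lambda$ included.
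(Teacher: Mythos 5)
Your overall strategy is the same as the paper's: feed Proposition \ref{Proposition:cohomology-telescopes} the tower of torus cohomology groups, then identify the resulting $\mathrm{lim}$ and $\mathrm{lim}^{1}$ terms with $\mathrm{Hom}(\Lambda,\mathbb{Z})$ and $\mathrm{Ext}(\Lambda,\mathbb{Z})$ via continuity of $\mathrm{Hom}(-,\mathbb{Z})$ and the collapse of the $\mathrm{lim}$--$\mathrm{lim}^{1}$ sequence for $\mathrm{Ext}$ over a cofiltration by free groups (what the paper calls the definable version of Jensen's theorem, \cite[Theorem 7.4]{BLPI}). Item (3), the vanishing of the weak part in degree $d+1$, and the closing remarks on definability and naturality all match the paper's treatment.

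There is, however, a genuine gap at the one step that actually carries the computation. You record $\mathrm{H}^{k}(X_{n})\cong\wedge^{k}\mathrm{Hom}(\Lambda_{n},\mathbb{Z})$, so that in the relevant degree $k=d$ your tower is $\bigl(\wedge^{d}\mathrm{Hom}(\Lambda_{n},\mathbb{Z})\bigr)_{n}$ --- a tower of rank-one groups whose bonding maps are given by determinants --- and yet you then evaluate $\mathrm{lim}$ and $\mathrm{lim}^{1}$ over the tower $\bigl(\mathrm{Hom}(\Lambda_{n},\mathbb{Z})\bigr)_{n}$ of rank-$d$ groups with the restriction maps. These two towers coincide only when $d=1$; for $d>1$ the passage from one to the other is exactly what needs an argument, and as written your proposal supplies no bridge, so the two halves of your computation are in tension with one another. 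The paper avoids the exterior-power description altogether: it asserts directly that $\mathrm{H}^{d}\bigl(\mathrm{Hom}(\Lambda_{n},\mathbb{Z})^{\ast}\bigr)$ is naturally isomorphic to the countable group $\mathrm{Hom}(\Lambda_{n},\mathbb{Z})$, citing \cite[Theorem 8.83]{hofmann_structure_2013} on the cohomology of compact abelian groups, and then applies \cite[Theorem 7.4]{BLPI} to that tower. To repair your argument you must either invoke that identification in place of (not alongside) the exterior-algebra formula, or else explain why the degree-$d$ exterior-power tower nonetheless computes $\mathrm{Hom}(\Lambda,\mathbb{Z})$ and $\mathrm{Ext}(\Lambda,\mathbb{Z})$ as stated.
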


\begin{proof}
(1) By Proposition \ref{Proposition:cohomology-telescopes}, $%
\mathrm{H}_{\infty }^{d+1}(\mathrm{hocolim}\, \mathbf{X}_{\Lambda })$
is naturally definably isomorphic to $\mathrm{lim}^{1}\,\mathrm{H}^{d}( \mathrm{Hom}( \Lambda _{n},\mathbb{Z})
^{\ast }) $. Furthermore, $\mathrm{H}^{d}( \mathrm{Hom}( \Lambda _{n},%
\mathbb{Z}) ^{\ast }) $ is naturally isomorphic to the countable group $\mathrm{Hom}( \Lambda _{n},\mathbb{Z}) $ \cite[Theorem 8.83]{hofmann_structure_2013}. Hence by the definable version of Jensen's Theorem \cite[Theorem 7.4]{BLPI},
we have natural definable isomorphisms 
\begin{eqnarray*}
\mathrm{lim}^{1}\,\mathrm{H}^{d}( \mathrm{Hom}(\Lambda _{n},\mathbb{Z}) ^{\ast }) &\cong &%
\mathrm{lim}^{1}\,\mathrm{Hom}(\Lambda
_{n},\mathbb{Z}) \\
&\cong &\mathrm{Ext}(\mathrm{colim}\,\Lambda _{n},\mathbb{Z)}
\\
&\cong &\mathrm{Ext}(\Lambda,\mathbb{Z}) \text{.}
\end{eqnarray*}%
This is definably isomorphic to the entirety of $\mathrm{H}^{d+1}(\mathrm{hocolim}\,\mathbf{X}_{\Lambda })$ by Proposition \ref{Proposition:asymptotic-cohomology}, together with the observation that $\mathrm{H}^{d+1}( \mathrm{Hom}(\Lambda _{n},\mathbb{Z})
^{\ast })=0$ for every $n\in \mathbb{N}$ implies that
\begin{equation*}
\mathrm{H}_{\mathrm{w}}^{d+1}(\mathrm{hocolim}\,\mathbf{X}_{\Lambda
})\cong\mathrm{lim}\,\mathrm{H}^{d}( \mathrm{Hom}(\Lambda_{n},\mathbb{Z}) ^{\ast})=0\text{.}
\end{equation*}

(2) By Proposition \ref{Proposition:cohomology-telescopes}, $\mathrm{H}_{\mathrm{w}}^{d}(\mathrm{hocolim}\,\mathbf{X}_{\Lambda})$
is naturally definably isomorphic to 
\begin{equation*}
\mathrm{lim}\,\mathrm{H}^{d}( \mathrm{Hom}(
\Lambda_{n},\mathbb{Z})^{\ast}) \cong \mathrm{lim}\,\mathrm{Hom}( \Lambda _{n},\mathbb{Z}) \cong 
\mathrm{Hom}(\mathrm{colim}\,\Lambda_{n},\mathbb{Z})%
\cong \mathrm{Hom}(\Lambda ,\mathbb{Z})\text{.}
\end{equation*}

(3) This is an immediate consequence of Proposition \ref%
{Proposition:cohomology-telescopes}, upon observation that $\mathrm{H}^{k}( 
\mathrm{Hom}( \Lambda _{n},\mathbb{Z}) ^{\ast }) =0$ for $%
k>d$.
\end{proof}

Observe that the functor $\Lambda \mapsto \mathbf{X}_{\Lambda
} $ described above, from the category of countable torsion-free rank $d$ abelian groups to the category of monomorphic inductive
sequences of $d$-tori, is fully faithful. 
To see this, observe that a choice of cofibrations $(\Lambda_n)$ and $(\Lambda'_n)$ of $\Lambda$ and $\Lambda'$, respectively, induces a bijection $$\mathrm{Hom}_{\mathsf{Ab}}(\Lambda,\Lambda')\cong\mathrm{Hom}_{\mathsf{Ind}_\omega(\mathsf{Ab})}((\Lambda_n),(\Lambda'_n)),$$
and that the sequences of matrices $(M_n:\Lambda_n\to\Lambda'_n)$ representing maps in the latter correspond precisely to those representing maps in $\mathrm{Hom}_{\mathsf{Ind}_\omega(\mathsf{C})}((\mathbb{T}(\Lambda_n)),(\mathbb{T}(\Lambda'_n)))$, where $(\mathbb{T}(\Lambda_n))$ and $(\mathbb{T}(\Lambda'_n))$ denote the sequences of tori comprising $\mathbf{X}_\Lambda$ and $\mathbf{X}_{\Lambda'}$, respectively.
It is also easy to see that
each monomorphic inductive sequence of $d$-tori is isomorphic in \textsf{Ind}$_{\omega }( \mathsf{C}) $ to $\mathbf{X}_{\Lambda }$ for
some torsion-free rank $d$ abelian group $\Lambda $. Hence the functor $%
\Lambda \mapsto \mathbf{X}_{\Lambda }$ is an \emph{equivalence of
categories }from the category of countable torsion-free rank $d$ abelian groups to the
category of monomorphic inductive sequences of $d$-tori.
Thus in order to establish Theorem \ref{Theorem:definable-Cech-cohomology-telescopes-tori}, it suffices to prove the following.

\begin{theorem}
\label{Theorem:classify-colimit-group}The asymptotic and weak definable
cohomology groups of $\mathrm{hocolim}\,\mathbf{X}%
_{\Lambda }$ together provide a complete invariant for a finite rank torsion-free abelian
group $\Lambda $ up to isomorphism.
\end{theorem}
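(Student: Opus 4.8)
The plan is to reduce the statement, via Proposition~\ref{Proposition:cohomology-telescope-group}, to a rigidity statement about the definable $\mathrm{Ext}$ functor, and then to draw on the descriptive-set-theoretic rigidity of \cite{BLPI}. Concretely, for a torsion-free abelian group $\Lambda$ of rank $d$, Proposition~\ref{Proposition:cohomology-telescope-group} identifies the only possibly-nonzero definable cohomology groups of $\mathrm{hocolim}\,\mathbf{X}_{\Lambda}$ as the weak group $\mathrm{H}^{d}_{\mathrm{w}}$, which is naturally definably isomorphic to the countable group $\mathrm{Hom}(\Lambda,\mathbb{Z})$, and the asymptotic group $\mathrm{H}^{d+1}=\mathrm{H}^{d+1}_{\infty}$, which is naturally definably isomorphic to the group with a Polish cover $\mathrm{Ext}(\Lambda,\mathbb{Z})$; the integer $d$ is itself recovered as the top degree carrying a nonzero group. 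Hence the theorem reduces to the assertion that the pair consisting of the countable group $\mathrm{Hom}(\Lambda,\mathbb{Z})$ and the group-with-a-Polish-cover $\mathrm{Ext}(\Lambda,\mathbb{Z})$, taken up to definable isomorphism, is a complete isomorphism invariant of $\Lambda$. The forward implication is immediate from the functoriality of $\mathrm{Hom}(-,\mathbb{Z})$ and of the definable $\mathrm{Ext}(-,\mathbb{Z})$ from \cite{BLPI}; the whole content lies in the converse.

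For the converse I would first unwind the structure of the definable group $\mathrm{Ext}(\Lambda,\mathbb{Z})$. Applying $\mathrm{Hom}(\Lambda,-)$ to $0\to\mathbb{Z}\to\mathbb{Q}\to\mathbb{Q}/\mathbb{Z}\to 0$ and using the divisibility of $\mathbb{Q}$ yields the four-term exact sequence
\[
0\longrightarrow \mathrm{Hom}(\Lambda,\mathbb{Z})\longrightarrow \mathrm{Hom}(\Lambda,\mathbb{Q})\longrightarrow \widehat{\Lambda}\longrightarrow \mathrm{Ext}(\Lambda,\mathbb{Z})\longrightarrow 0,
\]
where $\widehat{\Lambda}=\mathrm{Hom}(\Lambda,\mathbb{Q}/\mathbb{Z})$ is the compact (Polish) Pontryagin dual of $\Lambda$ and $\mathrm{Hom}(\Lambda,\mathbb{Q})\cong\mathbb{Q}^{d}$. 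This exhibits $\mathrm{Ext}(\Lambda,\mathbb{Z})$ as the group with a Polish cover $\widehat{\Lambda}/N_{\Lambda}$, with $\widehat{\Lambda}$ compact and $N_{\Lambda}$ the image of $\mathbb{Q}^{d}$, a countable dense divisible subgroup (its density being exactly the vanishing $\mathrm{H}^{d+1}_{\mathrm{w}}=0$ predicted by $\mathrm{H}^{d+1}=\mathrm{H}^{d+1}_{\infty}$). Since $\Lambda$ is recovered from $\widehat{\Lambda}$ by Pontryagin duality, it suffices to show that the compact cover $\widehat{\Lambda}$, together with the datum $\mathrm{Hom}(\Lambda,\mathbb{Z})=\ker(\mathbb{Q}^{d}\to\widehat{\Lambda})$, is determined up to topological isomorphism by the definable isomorphism class of $\widehat{\Lambda}/N_{\Lambda}$.

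This last recovery is the crux, and is where the rigidity of \cite{BLPI} enters. The plan is to show that a definable isomorphism $\widehat{\Lambda}/N_{\Lambda}\to\widehat{\Lambda'}/N_{\Lambda'}$ is implemented by a topological isomorphism $\widehat{\Lambda}\to\widehat{\Lambda'}$ carrying $N_{\Lambda}$ onto $N_{\Lambda'}$: a Borel lift of the isomorphism, fed into the Ulam-stability and automatic-continuity phenomena of \cite{BLPI} together with the symmetry of Borel-liftability recorded in \cite[Remark 3.3]{BLPI}, should force the lift to agree off a meager set with a continuous homomorphism of the compact covers, which one then checks is an isomorphism respecting the dense divisible subgroups. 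The residual ambiguity --- that $\widehat{\Lambda}/N_{\Lambda}$ determines $\widehat{\Lambda}$ only once the size and embedding of the divisible subgroup $N_{\Lambda}$, equivalently of $\mathrm{Hom}(\Lambda,\mathbb{Z})\hookrightarrow\mathbb{Q}^{d}$, is fixed --- is precisely what the weak invariant $\mathrm{Hom}(\Lambda,\mathbb{Z})$ is present to pin down, which is why both invariants are required. I expect this rigidity step to be the main obstacle, both because it is where a merely Borel isomorphism must be upgraded to a topological one and because it is where the two invariants must be forced to interact exactly; once it is in hand, Pontryagin duality closes the argument.
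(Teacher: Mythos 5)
Your reduction is the same as the paper's: both pass through Proposition~\ref{Proposition:cohomology-telescope-group} to identify the two invariants with $\mathrm{Hom}(\Lambda,\mathbb{Z})$ and the definable group $\mathrm{Ext}(\Lambda,\mathbb{Z})$, recover the free part from the former, and reduce the theorem to the assertion that definable $\mathrm{Ext}(-,\mathbb{Z})$ is a complete invariant on the remaining class of groups. The difference is in how that last assertion is handled, and this is where your proposal has a genuine gap. The paper simply decomposes $\Lambda=\Lambda_\infty\oplus\Lambda_{\mathrm{w}}$ with $\Lambda_{\mathrm{w}}$ free and $\Lambda_\infty$ without finitely generated summand, observes $\mathrm{Ext}(\Lambda,\mathbb{Z})\cong\mathrm{Ext}(\Lambda_\infty,\mathbb{Z})$ and $\mathrm{Hom}(\Lambda,\mathbb{Z})\cong\mathrm{Hom}(\Lambda_{\mathrm{w}},\mathbb{Z})$, and then cites \cite[Corollary 7.6]{BLPI}, which states exactly that definable $\mathrm{Ext}(-,\mathbb{Z})$ is a complete invariant for finite rank torsion-free groups with no finitely generated summand. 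You instead propose to re-derive this rigidity from scratch, and the derivation is never carried out: the crux --- that a Borel lift of a definable isomorphism $\widehat{\Lambda}/N_\Lambda\to\widehat{\Lambda'}/N_{\Lambda'}$ can be upgraded to a topological isomorphism of the compact covers --- is only asserted to ``should follow'' from the Ulam stability machinery of \cite{BLPI}. That upgrade is a substantial theorem, not a routine consequence, and you flag it yourself as ``the main obstacle''; as written, the proof is not complete at precisely the point where all the content lies.

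There is also a correctness issue in the intermediate claim. As literally stated, the assertion that a definable isomorphism of the $\mathrm{Ext}$ groups is implemented by a topological isomorphism $\widehat{\Lambda}\to\widehat{\Lambda'}$ is false whenever free summands are present: $\mathrm{Ext}(\mathbb{Z}[1/p],\mathbb{Z})$ and $\mathrm{Ext}(\mathbb{Z}\oplus\mathbb{Z}[1/p],\mathbb{Z})$ are definably isomorphic, while the duals $\Sigma_p$ and $\mathbb{T}\times\Sigma_p$ are not isomorphic, so $\widehat{\Lambda}$ is not recoverable from the definable isomorphism class of $\mathrm{Ext}(\Lambda,\mathbb{Z})$ alone. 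Your ``residual ambiguity'' sentence gestures at this but does not resolve it; the clean fix is the paper's: first split off the maximal free summand (whose rank is read off from $\mathrm{Hom}(\Lambda,\mathbb{Z})$), and only then invoke rigidity of $\mathrm{Ext}(-,\mathbb{Z})$ on the class of groups with no finitely generated summand, where the statement is true and is precisely \cite[Corollary 7.6]{BLPI}. With that restructuring and with the citation in place of the sketched rigidity argument, your outline becomes the paper's proof.
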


\begin{proof}
A finite rank torsion-free abelian group $\Lambda $ uniquely decomposes
as $\Lambda _{\infty }\oplus \Lambda _{\mathrm{w}}$ where $\Lambda _{\mathrm{%
w}}$ is finitely generated and $\Lambda _{\infty }$ has no
finitely generated summand, hence it will suffice to show that the asymptotic and weak definable cohomology groups of $\mathrm{hocolim}\,\mathbf{X}%
_{\Lambda }$ provide complete invariants for such groups $\Lambda _{\infty}$ and $\Lambda _{\mathrm{w}}$, respectively.

 By Proposition \ref%
{Proposition:cohomology-telescope-group}, we have definable isomorphisms
\begin{equation*}
\mathrm{H}^{d+1}(\mathrm{hocolim}\,\mathbf{X}_{\Lambda })\cong 
\mathrm{Ext}( \Lambda ,\mathbb{Z}) \cong \mathrm{Ext}(
\Lambda _{\infty },\mathbb{Z})
\end{equation*}%
and%
\begin{equation*}
\mathrm{H}^{d}(\mathrm{hocolim}\,\mathbf{X}_{\Lambda })\cong 
\mathrm{Hom}( \Lambda ,\mathbb{Z}) \cong \mathrm{Hom}(
\Lambda _{\mathrm{w}},\mathbb{Z}) \text{.}
\end{equation*}%
By \cite[Corollary 7.6]{BLPI}, the definable group $\mathrm{Ext}\left( -,\mathbb{Z}\right) $
is a complete invariant for torsion-free finite rank groups with no
finitely generated summands. The complementary fact, that $\mathrm{Hom}\left( -,\mathbb{Z}%
\right) $ is a complete invariant for finitely generated
torsion-free abelian groups, is trivial.
\end{proof}

In the setting of finite rank torsion-free abelian groups with no
finitely generated direct summand, the proof of Theorem \ref%
{Theorem:classify-colimit-group} allows for a strengthening of its conclusion as follows:

\begin{theorem}
\label{Theorem:classify-fully-faithful}The map $\Lambda \mapsto \mathrm{H}^{d+1}(\mathrm{hocolim}\,\mathbf{X}_{\Lambda })$ is a fully
faithful functor from the category of rank $d$ torsion-free abelian groups
with no finitely generated direct summand to the category of groups with a Polish cover.
\end{theorem}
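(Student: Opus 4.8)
The plan is to reduce the statement to the full faithfulness of the contravariant functor $\mathrm{Ext}(-,\mathbb{Z})$ on the relevant subcategory, and then to isolate the morphism-level rigidity furnished by \cite{BLPI} as the crux. Write $\mathsf{A}_d$ for the category of rank $d$ torsion-free abelian groups with no finitely generated direct summand. First I would observe that for any $\Lambda$ in $\mathsf{A}_d$ one has $\mathrm{Hom}(\Lambda,\mathbb{Z})=0$: a nonzero homomorphism $\Lambda\to\mathbb{Z}$ would have image $m\mathbb{Z}\cong\mathbb{Z}$, and since $\mathbb{Z}$ is projective the resulting surjection $\Lambda\to m\mathbb{Z}$ splits, exhibiting a free rank-one direct summand, contrary to assumption. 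By Proposition \ref{Proposition:cohomology-telescope-group} this means $\mathrm{H}^{d+1}(\mathrm{hocolim}\,\mathbf{X}_\Lambda)=\mathrm{H}^{d+1}_{\infty}(\mathrm{hocolim}\,\mathbf{X}_\Lambda)$ is naturally definably isomorphic to the group with a Polish cover $\mathrm{Ext}(\Lambda,\mathbb{Z})$, the isomorphism being natural in $\Lambda$. Since a functor naturally isomorphic to a fully faithful functor is itself fully faithful, it suffices to prove that $\mathrm{Ext}(-,\mathbb{Z})\colon \mathsf{A}_d\to\mathsf{GPC}$ is fully faithful.

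Next I would make the comparison map explicit. For $\Lambda,\Lambda'$ in $\mathsf{A}_d$ the functoriality of $\mathrm{Ext}$ supplies a map
\[
\Theta_{\Lambda,\Lambda'}\colon \mathrm{Hom}_{\mathsf{Ab}}(\Lambda,\Lambda')\longrightarrow \mathrm{Hom}_{\mathsf{GPC}}\!\big(\mathrm{Ext}(\Lambda',\mathbb{Z}),\mathrm{Ext}(\Lambda,\mathbb{Z})\big),\qquad h\mapsto \mathrm{Ext}(h,\mathbb{Z}),
\]
and the content of the theorem is exactly that each $\Theta_{\Lambda,\Lambda'}$ is a bijection. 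To analyze it I would pass to the tower picture used in the proof of Proposition \ref{Proposition:cohomology-telescope-group}: fixing cofiltrations $\Lambda=\bigcup_n\Lambda_n$ and $\Lambda'=\bigcup_n\Lambda'_n$ by finitely generated rank-$d$ subgroups, the dual towers $\mathbf{T}_\Lambda=(\mathrm{Hom}(\Lambda_n,\mathbb{Z}))_n$ and $\mathbf{T}_{\Lambda'}$ are towers of finitely generated free abelian groups, and the definable version of Jensen's theorem \cite[Theorem 7.4]{BLPI} yields natural definable isomorphisms $\mathrm{Ext}(\Lambda,\mathbb{Z})\cong\mathrm{lim}^1\,\mathbf{T}_\Lambda$ and likewise for $\Lambda'$. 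On the algebraic side, the duality $\mathrm{Hom}(-,\mathbb{Z})$ for finitely generated free groups, together with the bijection between maps of groups and maps of their cofiltrations, identifies $\mathrm{Hom}_{\mathsf{Ab}}(\Lambda,\Lambda')$ with the pro-morphisms $\mathrm{Hom}_{\mathsf{Pro}_\omega(\mathsf{Ab})}(\mathbf{T}_{\Lambda'},\mathbf{T}_\Lambda)$, compatibly with $\Theta$.

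Thus $\Theta_{\Lambda,\Lambda'}$ is identified with the map sending a pro-morphism of dual towers to the induced definable homomorphism of their $\mathrm{lim}^1$ groups, and the theorem reduces to the assertion that $\mathrm{lim}^1$ is fully faithful from the category of such dual towers to $\mathsf{GPC}$. Here the hypothesis that $\Lambda,\Lambda'$ have no finitely generated direct summand is essential and enters through $\mathrm{lim}\,\mathbf{T}_\Lambda=\mathrm{Hom}(\Lambda,\mathbb{Z})=0$, which removes the ``$\mathrm{lim}$ part'' and makes the correspondence between pro-morphisms and definable homomorphisms unobstructed. This final step --- that every definable homomorphism $\mathrm{lim}^1\,\mathbf{T}_{\Lambda'}\to\mathrm{lim}^1\,\mathbf{T}_\Lambda$ is induced by a pro-morphism $\mathbf{T}_{\Lambda'}\to\mathbf{T}_\Lambda$, uniquely so --- is the genuine obstacle, and is precisely the morphism-level strengthening of \cite[Corollary 7.6]{BLPI}. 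I would obtain it from the same Ulam-stability and rigidity results for $\mathrm{lim}^1$ of towers of finitely generated abelian groups that underlie that corollary: surjectivity of $\Theta$ (fullness) is the statement that definable homomorphisms are non-archimedeanly rigid, hence coded by genuine tower maps, while injectivity (faithfulness) follows because a pro-morphism inducing the zero definable homomorphism must already be pro-null, which by the above duality forces the originating $h\in\mathrm{Hom}_{\mathsf{Ab}}(\Lambda,\Lambda')$ to vanish. Combining these with the object-level complete-invariant content already invoked in the proof of Theorem \ref{Theorem:classify-colimit-group} completes the argument.
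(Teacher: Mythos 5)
Your proposal is correct and takes essentially the paper's route: the paper's entire argument for this theorem is to combine the natural definable isomorphism $\mathrm{H}^{d+1}(\mathrm{hocolim}\,\mathbf{X}_{\Lambda})\cong\mathrm{Ext}(\Lambda,\mathbb{Z})$ of Proposition \ref{Proposition:cohomology-telescope-group} with the morphism-level rigidity of $\mathrm{Ext}(-,\mathbb{Z})$ supplied by the Ulam-stability results of \cite{BLPI}, exactly as you do. Your further unpacking through dual towers and $\mathrm{lim}^{1}$, and your explicit identification of the morphism-level strengthening of \cite[Corollary 7.6]{BLPI} as the one genuinely deferred ingredient, accurately reflects what the paper leaves tacit.
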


\begin{remark} To see that the assumption that bonding maps are of nonzero degree is needed for our arguments, let $X_n=Y_n=\mathbb{T}\times\mathbb{T}$ for all $n\in\omega$; let each $\eta_n:X_n\to X_{n+1}$ be the projection to the first factor, and let each $Y_n\to Y_{n+1}$ be constant. Letting $X$ and $Y$ denote $\mathrm{hocolim}\,(X_n,\eta_n)$ and $\mathrm{hocolim}\,(Y_n,\beta_n)$, respectively, we then have that $X\not\simeq Y$ but
$$\mathrm{H}_\infty^{3}(X)\cong \mathrm{H}_\infty^{3}(Y)\cong \mathrm{H}_{\mathrm{w}}^{2}(X)\cong \mathrm{H}_{\mathrm{w}}^{2}(Y)\cong 0.$$
Here, then, the reasoning of Theorems \ref{Proposition:cohomology-telescope-group} and \ref{Theorem:classify-colimit-group} no longer applies. Note that it remains quite plausible that definable cohomology classifies mapping telescopes of tori even without assumptions on maps' degrees; the example simply suggests that ascertaining this would require a deeper analysis of the cohomology groups than is in the spirit of the present work.
\end{remark}
As indicated, the sensitivities of definable cohomology recorded above contrast strongly with those of classical cohomology.
\begin{theorem}
\label{Theorem:pairwisehomotopyinequivalent}
There exist size-continuum families of pairwise homotopy inequivalent mapping telescopes of monomorphic inductive sequences of tori whose classical \v{C}ech cohomology groups are, viewed as graded abelian groups, all isomorphic.
\end{theorem}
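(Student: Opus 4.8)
The plan is to exhibit continuum-many rank-$d$ torsion-free abelian groups $\Lambda$ with no finitely generated direct summand whose $\mathrm{Ext}(\Lambda,\mathbb{Z})$ groups are pairwise non-isomorphic as \emph{definable} groups but whose underlying abstract cohomology is fixed. By Theorem \ref{Theorem:classify-colimit-group} (and the equivalence of categories $\Lambda\mapsto\mathbf{X}_\Lambda$), the mapping telescopes $\mathrm{hocolim}\,\mathbf{X}_\Lambda$ are then pairwise homotopy inequivalent, while Proposition \ref{Proposition:cohomology-telescope-group} computes their classical cohomology entirely in terms of the \emph{abstract} groups $\mathrm{Ext}(\Lambda,\mathbb{Z})$ and $\mathrm{Hom}(\Lambda,\mathbb{Z})$. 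So the task reduces to a purely algebraic one: produce a size-continuum family of rank-$d$ torsion-free abelian groups $\Lambda$ whose telescopes are pairwise homotopy inequivalent but whose graded classical \v{C}ech cohomology is constant across the family.

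First I would reduce to the rank-one case $d=1$, where $\Lambda$ is (isomorphic to) a subgroup of $\mathbb{Q}$ and is classified up to isomorphism by its \emph{type}, i.e.\ its Baer--Steinitz sequence $(k_p)_{p\text{ prime}}\in(\mathbb{N}\cup\{\infty\})^{\mathbb{P}}$ modulo finite differences. The key point is to choose the family so that the abstract groups $\mathrm{Hom}(\Lambda,\mathbb{Z})$ and $\mathrm{Ext}(\Lambda,\mathbb{Z})$ do not vary. For a rank-one $\Lambda$ with no finitely generated summand (so $\Lambda$ is not finitely generated), $\mathrm{Hom}(\Lambda,\mathbb{Z})=0$, which kills the weak cohomology $\mathrm{H}^d_{\mathrm w}$ uniformly; thus only $\mathrm{Ext}(\Lambda,\mathbb{Z})$, sitting in degree $d+1$, can obstruct. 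The classical \v{C}ech cohomology groups record only the \emph{abstract} isomorphism type of $\mathrm{Ext}(\Lambda,\mathbb{Z})$, so I would select a continuum-sized family of types all of whose $\mathrm{Ext}(\Lambda,\mathbb{Z})$ are abstractly isomorphic (for instance, all abstractly isomorphic to a fixed divisible group such as $\widehat{\mathbb{Z}}/\mathbb{Z}$ or a suitable $\mathbb{Q}$-vector space summand thereof). The standard computation $\mathrm{Ext}(\Lambda,\mathbb{Z})\cong(\prod_p\mathbb{Z}_p^{\,?}\,)/\mathbb{Z}$-type quotients makes it routine to arrange constant abstract type while freely varying the finer definable structure.

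The forward-looking heart of the argument, and its main obstacle, is then to verify that these $\mathrm{Ext}(\Lambda,\mathbb{Z})$ are pairwise non-isomorphic \emph{as groups with a Polish cover}, despite being abstractly isomorphic. Here I would invoke the rigidity results of \cite{BLPI} quoted in the excerpt: by \cite[Corollary 7.6]{BLPI} the definable group $\mathrm{Ext}(-,\mathbb{Z})$ is a complete invariant for rank-$d$ torsion-free groups with no finitely generated summand. Consequently, distinct isomorphism types $\Lambda\not\cong\Lambda'$ force $\mathrm{Ext}(\Lambda,\mathbb{Z})\not\cong\mathrm{Ext}(\Lambda',\mathbb{Z})$ \emph{definably}, and hence by Theorem \ref{Theorem:classify-colimit-group} force $\mathrm{hocolim}\,\mathbf{X}_\Lambda\not\simeq\mathrm{hocolim}\,\mathbf{X}_{\Lambda'}$; this is exactly the leverage that the definable theory provides over the classical one. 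The only thing left to check is the existence of continuum-many pairwise non-isomorphic rank-one $\Lambda$ of fixed abstract $\mathrm{Ext}$-type, which is immediate since there are $2^{\aleph_0}$ distinct types modulo finite differences while the abstract isomorphism type of $\mathrm{Ext}(\Lambda,\mathbb{Z})$ is a far coarser invariant (it forgets the full $p$-adic data and retains only, e.g., the rank of a divisible quotient).

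For general $d$ I would take direct sums: set $\Lambda=\Lambda_0\oplus\mathbb{Z}^{d-1}$ with $\Lambda_0$ ranging over the rank-one family above; the finitely generated summand contributes only fixed factors to the graded cohomology and does not affect the non-isomorphism detected in top degree. Assembling these observations yields the theorem. I expect the genuinely delicate verification to be the constancy of the \emph{abstract} graded cohomology across the family — this is where one must be careful that varying the $p$-adic type genuinely leaves the abstract isomorphism type of $\mathrm{Ext}(\Lambda,\mathbb{Z})$ unchanged — whereas the pairwise definable non-isomorphism is handed to us directly by the completeness of the definable $\mathrm{Ext}$ invariant from \cite{BLPI}.
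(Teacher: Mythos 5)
Your outline is essentially the paper's argument: both reduce the theorem to exhibiting continuum many pairwise non-isomorphic torsion-free groups $\Lambda$ of fixed rank whose abstract $\mathrm{Ext}(\Lambda,\mathbb{Z})$ and $\mathrm{Hom}(\Lambda,\mathbb{Z})$ are constant, both compute the telescopes' graded classical cohomology via Proposition \ref{Proposition:cohomology-telescope-group}, and both derive homotopy inequivalence from the definable rigidity of $\mathrm{Ext}(-,\mathbb{Z})$ imported from \cite{BLPI}. The paper simply instantiates the family concretely as $\Lambda=\mathbb{Z}[\frac{1}{\mathcal{P}^{\boldsymbol{m}}}]$ for $\boldsymbol{m}\in\mathbb{N}^{\mathcal{P}}$ --- crucially, with every exponent $m_{p}$ finite and positive --- and cites \cite[Corollary 7.9]{BLPI} (restated as Proposition \ref{Prop:CorollaryRecall}), which delivers both halves of the algebraic input at once: these $\mathrm{Ext}$ groups are pairwise abstractly isomorphic, and definably isomorphic exactly when $\boldsymbol{m}=^{*}\boldsymbol{n}$. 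Your alternative of deducing definable non-isomorphism from the completeness of the definable $\mathrm{Ext}$ invariant (Theorem \ref{Theorem:classify-colimit-group}) is equally valid.

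The one place where your sketch goes genuinely astray is the step you yourself flag as delicate, the constancy of the \emph{abstract} $\mathrm{Ext}$. That invariant is not as coarse as your parenthetical suggests: for a rank-one, non-free $\Lambda$, $\mathrm{Ext}(\Lambda,\mathbb{Z})$ is a divisible group whose torsion subgroup is the direct sum of one copy of $\mathbb{Z}(p^{\infty})$ for precisely those primes $p$ at which $\Lambda$ is \emph{not} $p$-divisible, so the abstract type remembers the set of primes inverted to infinite order. In particular $\mathrm{Ext}(\mathbb{Z}[1/p],\mathbb{Z})$ and $\mathrm{Ext}(\mathbb{Z}[1/q],\mathbb{Z})$ are \emph{not} abstractly isomorphic for $p\neq q$, and your proposed common target $\widehat{\mathbb{Z}}/\mathbb{Z}\cong\mathrm{Ext}(\mathbb{Q},\mathbb{Z})\cong\mathbb{Q}^{(2^{\aleph_{0}})}$ is torsion-free and is realized by no rank-one group other than $\mathbb{Q}$ itself, so a family built around it (or around varying the set of fully inverted primes) collapses to a single point. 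The repair is exactly the paper's restriction: keep every entry of the type finite, so that the torsion subgroup is $\mathbb{Q}/\mathbb{Z}$ for every member and the common abstract group is $\mathbb{Q}^{(2^{\aleph_{0}})}\oplus\mathbb{Q}/\mathbb{Z}$; there remain continuum many such types modulo finite differences, and with this adjustment the rest of your argument goes through as written.
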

\begin{proof}
Recall the following notations from \cite[p. 28]{BLPI}: for every
sequence $\boldsymbol{m}=\left( m_{p}\right) _{p\in \mathcal{P}}\in \mathbb{N%
}^{\mathcal{P}}$, where $\mathbb{N}$ is the set of strictly positive
integers, define $\mathbb{Z}[\frac{1}{\mathcal{P}^{\boldsymbol{m}}}]$ to be
the set of rational numbers of the form $a/b$ where $a\in \mathbb{Z}$, $b\in 
\mathbb{N}$, and for every $p\in \mathcal{P}$ and $k\in \mathbb{N}$, if $%
p^{k}$ divides $b$ then $k\leq m_{p}$. Write $\boldsymbol{m}=^{\ast }%
\boldsymbol{n}$ if and only if $\{ p\in \mathcal{P}%
:m_{p}\neq m_{p}^{\prime }\} $ is finite.
The following appears as Corollary 7.9 in \cite{BLPI}.
\begin{proposition}
\label{Prop:CorollaryRecall}
Fix $d\geq 1$. For every $%
\boldsymbol{m},\boldsymbol{n}\in \mathbb{N}^{\mathcal{P}}$, $%
\mathrm{Ext}(\mathbb{Z}[\frac{1}{\mathcal{P}^{\boldsymbol{m}}}]^{d},\mathbb{Z%
})$ and $\mathrm{Ext}(\mathbb{Z}[\frac{1}{\mathcal{P}^{\boldsymbol{n}}}]^{d},%
\mathbb{Z})$ are isomorphic as discrete groups, and are Borel isomorphic if
and only if $\boldsymbol{m}=^{\ast }\boldsymbol{n}$.
In particular, the collection
\begin{equation*}
\left\{ \mathrm{Ext}(\mathbb{Z}\Big[\frac{1}{\mathcal{P}^{\boldsymbol{m}}}\Big]^{d},%
\mathbb{Z}):\boldsymbol{m}\in \mathbb{N}^{\mathcal{P}}\right\}
\end{equation*}%
contains a continuum of groups with a Polish cover that are pairwise
isomorphic as discrete groups but not definably isomorphic.
\end{proposition}
Though this result carries implications contrasting with Proposition \ref{Proposition:cohomology-telescope-group} and Theorem \ref{Theorem:classify-colimit-group} for all $d\geq 1$, for simplicity we focus on the case of $d=1$. Choosing $\boldsymbol{m}\in\mathbb{N}^{\mathcal{P}}$ and letting $\Lambda=\mathbb{Z}[\frac{1}{\mathcal{P}^{\boldsymbol{m}}}]$, we may represent $\mathrm{hocolim}\, \mathbf{X}_{\Lambda}$ as the mapping telescope $\mathrm{tel}(\boldsymbol{m})$ of the sequence of maps $\eta_n:\mathbb{T}\to\mathbb{T}$ $(n\in\mathbb{N})$ defined by $z\mapsto z^{p^{m_p}}$, where $p$ is the $n^{\mathrm{th}}$ prime number.
It is straightforward to verify that $\mathrm{H}^0(\mathrm{tel}(\boldsymbol{m}))=\mathrm{H}^1(\mathrm{tel}(\boldsymbol{m}))=\mathbb{Z}$; it then follows from Proposition \ref{Proposition:cohomology-telescope-group} that for any family $M$ of sequences $\boldsymbol{m}$ witnessing Proposition \ref{Prop:CorollaryRecall}, the family $\{\mathrm{tel}(\boldsymbol{m})\mid \boldsymbol{m}\in M\}$ is a witness to Theorem \ref{Theorem:pairwisehomotopyinequivalent}.
\end{proof}

\subsection{Mapping telescopes of spheres}

Similarly, definable cohomology classifies mapping telescopes of spheres up
to homotopy equivalence.

\begin{theorem}
\label{Theorem:definable-Cech-cohomology2} The definable \v{C}ech cohomology groups completely classify homotopy colimits of nontrivial inductive sequences of $d$-spheres up to homotopy equivalence, for all $d\geq 1$. In fact, the mapping telescopes associated to any two such inductive sequences are homotopy equivalent if and only if they have \emph{definably} isomorphic weak and asymptotic \v{C}ech cohomology groups.
\end{theorem}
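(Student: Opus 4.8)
The plan is to mirror the proof strategy of Theorem \ref{Theorem:definable-Cech-cohomology-telescopes-tori} for tori, adapting each step to the category of spheres. As with the tori case, the first assertion follows from the second via Proposition \ref{Proposition:subfunctor}, so I would focus on proving that two mapping telescopes of nontrivial inductive sequences of $d$-spheres are homotopy equivalent if and only if they have definably isomorphic weak and asymptotic \v{C}ech cohomology groups. The backward implication (that definably isomorphic invariants force homotopy equivalence) is the substantive direction; the forward implication is immediate from the functoriality of definable cohomology established in Corollary \ref{CorFunctoriality} together with Proposition \ref{Proposition:cohomology-telescopes}.

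First I would reduce to a tractable algebraic category. A nontrivial inductive sequence $\mathbf{X}=(X_n,\eta_n)$ of $d$-spheres $S^d$ has bonding maps of nonzero degree, and by Hopf's Theorem (Theorem \ref{Theorem:Hopf}) each such $\eta_n:S^d\to S^d$ is determined up to homotopy by its degree $\deg(\eta_n)\in\mathbb{Z}\setminus\{0\}$, since $[S^d,S^d]\cong \mathrm{H}^d(S^d)\cong\mathbb{Z}$. By Lemma \ref{Proposition:homotopy-type-telescope}, $\mathrm{hocolim}\,\mathbf{X}$ depends only on these homotopy classes, so the homotopy type of the telescope is governed entirely by the sequence of integers $(\deg(\eta_n))$. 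This data is exactly an inverse sequence of copies of $\mathbb{Z}$ with multiplication-by-$\deg(\eta_n)$ bonding maps, whose colimit is a rank-one torsion-free abelian group $\Lambda$. As in the tori discussion, I would argue that the assignment $\Lambda\mapsto\mathbf{X}_\Lambda$ gives an equivalence between the category of monomorphic inductive sequences of $d$-spheres and the category of rank-one torsion-free abelian groups, using that $[S^d,S^d]\cong\mathbb{Z}$ is multiplicative with respect to the telescope's bonding maps.

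Next I would compute the definable cohomology groups of $\mathrm{hocolim}\,\mathbf{X}_\Lambda$ via Proposition \ref{Proposition:cohomology-telescopes}. Here $\mathrm{H}^q(S^d)$ is $\mathbb{Z}$ for $q\in\{0,d\}$ and $0$ otherwise, so applying the proposition, $\mathrm{H}^{d+1}_{\infty}(\mathrm{hocolim}\,\mathbf{X}_\Lambda)\cong\mathrm{lim}^1\,\mathrm{H}^d(S^d)\cong\mathrm{lim}^1\,(\mathbb{Z},\times\deg(\eta_n))$, and by the definable Jensen theorem cited from \cite[Theorem 7.4]{BLPI} this is definably isomorphic to $\mathrm{Ext}(\Lambda,\mathbb{Z})$; similarly $\mathrm{H}^d_{\mathrm{w}}(\mathrm{hocolim}\,\mathbf{X}_\Lambda)\cong\mathrm{lim}\,(\mathbb{Z},\times\deg(\eta_n))\cong\mathrm{Hom}(\Lambda,\mathbb{Z})$. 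This is the exact structural analogue of Proposition \ref{Proposition:cohomology-telescope-group}, only now with $\Lambda$ of rank one rather than the dual computation feeding rank $d$.

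Finally I would invoke the rigidity results of \cite{BLPI} exactly as in Theorem \ref{Theorem:classify-colimit-group}. A rank-one torsion-free abelian group $\Lambda$ decomposes as $\Lambda_\infty\oplus\Lambda_{\mathrm{w}}$, and by \cite[Corollary 7.6]{BLPI} the definable group $\mathrm{Ext}(-,\mathbb{Z})$ is a complete invariant for torsion-free finite rank groups with no finitely generated summand, while $\mathrm{Hom}(-,\mathbb{Z})$ trivially classifies the finitely generated part. Hence the pair of invariants $(\mathrm{H}^{d+1}_\infty,\mathrm{H}^d_{\mathrm{w}})$ recovers $\Lambda$ up to isomorphism, and through the categorical equivalence, recovers the telescope up to homotopy equivalence. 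The main obstacle I anticipate is not in the rigidity input—which is imported wholesale—but in pinning down the categorical equivalence for spheres with full precision: one must check that nonzero-degree maps $S^d\to S^d$ compose multiplicatively on degrees and that this matches the $\mathsf{Ind}_\omega$-morphism structure, so that $\Lambda\mapsto\mathbf{X}_\Lambda$ is genuinely fully faithful and essentially surjective. This is where Hopf's Theorem and Lemma \ref{Proposition:homotopy-type-telescope} must be applied with care, since, unlike the tori case where \cite[Corollary 2]{scheffer_maps_1972} linearizes maps, here one relies on the one-dimensionality of $[S^d,S^d]$ to keep the relevant morphism sets as scalar integers.
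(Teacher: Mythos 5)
Your proposal is correct and follows essentially the same route as the paper: reduce via Hopf's Theorem and Lemma \ref{Proposition:homotopy-type-telescope} to standard models indexed by rank-one torsion-free abelian groups $\Lambda$ (the paper realizes these as iterated suspensions $S^{d-1}(\mathbf{X}_\Lambda)$ of the torus sequences, which is the same reduction you describe via degree sequences), compute $\mathrm{H}^{d+1}_{\infty}\cong\mathrm{Ext}(\Lambda,\mathbb{Z})$ and $\mathrm{H}^{d}_{\mathrm{w}}\cong\mathrm{Hom}(\Lambda,\mathbb{Z})$ exactly as in Proposition \ref{Proposition:cohomology-telescope-group}, and conclude by the rigidity of definable $\mathrm{Ext}$ from \cite{BLPI}. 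The only blemish is the phrase ``inverse sequence of copies of $\mathbb{Z}$ \ldots{} whose colimit is $\Lambda$'' --- the degree data forms an \emph{inductive} sequence with colimit $\Lambda$, whose $\mathbb{Z}$-dual inverse sequence is what feeds $\mathrm{lim}$ and $\mathrm{lim}^1$ --- but your subsequent computation makes clear you intend the right thing.
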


Just as for Theorem \ref{Theorem:definable-Cech-cohomology-telescopes-tori}, Theorem \ref{Theorem:definable-Cech-cohomology2} contrasts with the fact that there exist uncountable families of pairwise homotopy inequivalent mapping telescopes of sequences of spheres whose classical \v{C}ech cohomology groups, viewed as graded abelian groups, are all isomorphic; the handiest examples of such are the families $\{\mathrm{tel}(\boldsymbol{m})\mid \boldsymbol{m}\in M\}$ concluding the proof of Theorem \ref{Theorem:pairwisehomotopyinequivalent} above.

As in that proof, for each (cofiltered) rank $1$ torsion-free abelian group $\Lambda $, let $\mathbf{X}_{\Lambda }$ denote the
corresponding inductive sequence $\left( X_{n }\right) _{n\in \mathbb{N}}$ of $1$-dimensional tori, which
we regard as pointed $1$-spheres. Let $S^{d-1}( \mathbf{X}_{\Lambda
}) =\left( S^{d-1}( X_{n}\right) ) _{n\in
\mathbb{N}}$, where $S^{k}$ for $k\geq 0$ denotes the $k$-fold iterated reduced suspension of a pointed space, defined by setting $S^{k+1}( X)=S(S^{k}(X)) $ for every $k\in \mathbb{N}$; evidently, $S^{d-1}( \mathbf{X}_{\Lambda }) $ is an inductive sequence of 
$d$-spheres. Conversely, by Hopf's Theorem \ref{Theorem:Hopf}, since $\tilde{H}^{d}(
S^{d}) $ is isomorphic to $\mathbb{Z}$, every inductive sequence of $d$%
-spheres and maps of nonzero degree is isomorphic in $\mathsf{Ind}_\omega( 
\mathsf{Ho}( \mathsf{C})) $ to an inductive sequence of
this form. Thus, by Proposition \ref{Proposition:homotopy-type-telescope},
it suffices to show that definable cohomology is a complete invariant for $\mathrm{hocolim}\,S^{d-1}(\mathbf{X}_{\Lambda
}) $ where $\Lambda $ is a rank $1$ torsion-free abelian group. This
is immediate from the following proposition, whose proof is the same as the proof of Proposition \ref{Proposition:cohomology-telescope-group}.

\begin{proposition}
Let $\Lambda $ be a rank $1$ torsion-free abelian group. Then:

\begin{enumerate}
\item $\mathrm{H}_{\infty }^{d+1}(S^{d-1}(\mathbf{X}_{\Lambda}))$ is
naturally definably isomorphic to $\mathrm{Ext}(\Lambda ,\mathbb{Z}) $;

\item $\mathrm{H}_{\mathrm{w}}^{d}(S^{d-1}(\mathbf{X}_{\Lambda}))$ is
naturally definably isomorphic to $\mathrm{Hom}(\Lambda ,\mathbb{Z})$.
\end{enumerate}
\end{proposition}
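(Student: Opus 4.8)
The final statement is the Proposition computing the definable cohomology of $\mathrm{hocolim}\,S^{d-1}(\mathbf{X}_{\Lambda})$ for a rank $1$ torsion-free abelian group $\Lambda$, and the text explicitly says its proof is ``the same as the proof of Proposition \ref{Proposition:cohomology-telescope-group}.'' So the plan is to transport the torus computation through the iterated reduced suspension, using the central observation that $S^{d-1}(\mathbf{X}_{\Lambda})$ is an inductive sequence of $d$-spheres whose $n^{\mathrm{th}}$ term is $S^{d-1}(X_n)$ for $X_n=\mathrm{Hom}(\Lambda_n,\mathbb{Z})^\ast$ a $1$-torus.

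First I would invoke Proposition \ref{Proposition:cohomology-telescopes}, which is the specialization to mapping telescopes of our Milnor-type decomposition (Proposition \ref{Proposition:asymptotic-cohomology}). Applied to the inductive sequence $S^{d-1}(\mathbf{X}_\Lambda)=(S^{d-1}(X_n),\eta_n)_{n\in\mathbb{N}}$ of compact spaces, it gives natural definable isomorphisms
\[
\mathrm{H}_{\infty}^{d+1}(\mathrm{hocolim}\,S^{d-1}(\mathbf{X}_\Lambda))\cong\mathrm{lim}^1\,\mathrm{H}^{d}(S^{d-1}(X_n))\quad\text{and}\quad\mathrm{H}_{\mathrm{w}}^{d}(\mathrm{hocolim}\,S^{d-1}(\mathbf{X}_\Lambda))\cong\mathrm{lim}\,\mathrm{H}^{d}(S^{d-1}(X_n)).
\]
The key reduction is then the suspension isomorphism: since $S^{d-1}$ is the $(d-1)$-fold iterated reduced suspension, the (reduced) cohomology satisfies $\mathrm{H}^{d}(S^{d-1}(X_n))\cong\mathrm{H}^{1}(X_n)$ naturally in $X_n$, and this natural isomorphism descends to towers, so that $\mathrm{lim}^1$ and $\mathrm{lim}$ of the two towers agree definably. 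I would cite the suspension isomorphism for definable cohomology (the degree-shifting $\Sigma$--$\Omega$ adjunction computation appearing in the proof of Proposition \ref{Proposition:asymptotic-cohomology}(1), where $\mathrm{H}^q(\bar\Sigma(X,A))\cong\mathrm{H}^{q-1}(X)$), applied iteratively $d-1$ times. Since $X_n$ is a $1$-torus, $\mathrm{H}^{1}(X_n)\cong\mathrm{Hom}(\Lambda_n,\mathbb{Z})$, exactly as in the torus case, and these definable isomorphisms are natural in $n$.

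From here the computation is verbatim that of Proposition \ref{Proposition:cohomology-telescope-group}: for part (1), $\mathrm{lim}^1\,\mathrm{Hom}(\Lambda_n,\mathbb{Z})\cong\mathrm{Ext}(\mathrm{colim}\,\Lambda_n,\mathbb{Z})\cong\mathrm{Ext}(\Lambda,\mathbb{Z})$ by the definable version of Jensen's theorem (\cite[Theorem 7.4]{BLPI}), together with the vanishing of the weak term forcing $\mathrm{H}^{d+1}=\mathrm{H}^{d+1}_\infty$; for part (2), $\mathrm{lim}\,\mathrm{Hom}(\Lambda_n,\mathbb{Z})\cong\mathrm{Hom}(\mathrm{colim}\,\Lambda_n,\mathbb{Z})\cong\mathrm{Hom}(\Lambda,\mathbb{Z})$ since $\mathrm{Hom}(-,\mathbb{Z})$ carries colimits to limits. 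Naturality in $\Lambda$ is inherited from the naturality in Proposition \ref{Proposition:cohomology-telescopes} and the functoriality of $\Lambda\mapsto\mathbf{X}_\Lambda$.

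The main obstacle, and the only place warranting genuine care, is the definable naturality of the iterated suspension isomorphism $\mathrm{H}^{d}(S^{d-1}(X_n))\cong\mathrm{H}^{1}(X_n)$ at the level of towers: I must ensure that the isomorphism is not merely an abstract-group isomorphism but a definable one, natural in $n$, so that it induces definable isomorphisms of the associated $\mathrm{lim}$ and $\mathrm{lim}^1$ (the latter being a group with a Polish cover). This is exactly the content of the $\Sigma$--$\Omega$ adjunction argument used in Proposition \ref{Proposition:asymptotic-cohomology}(1), where each reduced suspension on the source corresponds to a loop-space on $K(\mathbb{Z},n)$ and shifts the Eilenberg--MacLane degree, and each such correspondence is realized by a Borel lift; iterating $d-1$ times and appealing to the functoriality established in Corollary \ref{CorFunctoriality} keeps everything within the category of definable groups. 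Granting this, the remaining steps are the routine limit computations already performed for tori, so I would simply note that the proof proceeds identically.
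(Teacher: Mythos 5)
Your proposal is correct and follows the same route the paper intends: the paper itself states that the proof is identical to that of the torus case (Proposition on the cohomology of $\mathrm{hocolim}\,\mathbf{X}_\Lambda$), namely applying the Milnor-type decomposition for telescopes, identifying the tower $\mathrm{H}^d(S^{d-1}(X_n))$ with $\mathrm{Hom}(\Lambda_n,\mathbb{Z})$ via the (iterated) suspension isomorphism, and concluding with the definable Jensen theorem and the continuity of $\mathrm{Hom}(-,\mathbb{Z})$. Your worry about definable naturality of the suspension isomorphism is harmless but essentially moot, since the terms of the relevant towers are countable discrete groups, so an isomorphism of towers automatically induces definable isomorphisms of the associated $\mathrm{lim}$ and $\mathrm{lim}^1$.
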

\subsection{The Borsuk--Eilenberg classification problem}
\label{ss:Borsuk_Eilenberg}

A $d$-dimensional \emph{solenoid }is an indecomposable continuum and, in particular, a compact Polish space which is
homeomorphic to the Pontryagin dual of an infinitely generated rank $d$ torsion-free abelian group. One-dimensional solenoids played a prominent
role in the work of Smale and Williams in the context of dynamical systems,
as they provided the first examples of \emph{uniformly hyperbolic }(or Axiom
A) \emph{attractors }that are \emph{strange}; see \cite%
{smale_differentiable_1967,ruelle_what_2006,williams_expanding_1974}. In
these contexts, a $1$-dimensional solenoid arising as a uniformly hyperbolic
attractor of a dynamical system is termed a \emph{Smale (solenoid) attractor}
or \emph{Smale--Williams (solenoid) attractor}. For any $p\geq 2$ (not
necessarily prime), let $\Sigma _{p}$ be the $p$-adic solenoid, i.e., the
Pontryagin dual of $\mathbb{Z}[1/p]$. An explicit construction of an
orientation-preserving diffeomorphism $h_{p}$ of $S^{3}$ with $\Sigma _{p}$
as attractor is also given in \cite{hubbard_henon_1994,hubbard_linked_2001};
we now recall some definitions from that work.

Let $\mathbb{T}\times D^{2}$ be the solid torus, viewing $\mathbb{T}$ as the unit circle in the complex plane, and let $\phi :\mathbb{T}%
\rightarrow \mathbb{T}$ be the map $\zeta \mapsto \zeta ^{p}$. The \emph{canonical
unbraided solenoidal mapping $\mathbb{T}\times D^{2}\rightarrow \mathbb{T}%
\times D^{2}$ of degree $p$} is the mapping%
\begin{equation*}
e_{\phi }:\left( \zeta ,z\right) \mapsto \left( \zeta ^{p},\zeta
+\varepsilon \zeta ^{1-p}\right)
\end{equation*}%
where $\varepsilon $ is a fixed positive real number chosen to be 
small enough that $e_{\phi }$ is injective. An intrinsic characterization
of $e_{\phi }$ up to conjugacy is given in \cite[Theorem 3.11]{hubbard_henon_1994}. It is shown in \cite[Section 4]{hubbard_henon_1994}
that there exists an orientation-preserving diffeomorphism $h$ of $S^{3}$
and a smooth embedding $j^{+}:\mathbb{T}\times D^{2}\rightarrow S^{3}$ such
that $h$ lifts $e_{\phi }$ through $j^{+}$, in the sense that $h\circ
j^{+}=j^{+}\circ e_{\phi }$. Setting $T^{+}:=j^{+}( \mathbb{T}\times
D^{2}) $ and 
\begin{equation*}
T_{n}^{+}:=( h^{n}\circ j^{+}) ( \mathbb{T}\times
D^{2}) =( j\circ e_{\phi }^{n}) ( \mathbb{T}\times
D^{2})
\end{equation*}%
one has that 
\begin{equation*}
\Sigma _{p}( h) :=\bigcap_{n\in \omega }T_{n}^{+}
\end{equation*}%
is an attractor for $h$ homeomorphic to $\Sigma _{p}$. Furthermore, if one
lets $T^{-}$ be the closure of the complement of $T^{+}$, then one has that $%
h^{-1}|_{T^{-}}$ is conjugate to $e_{\phi }$, meaning that there exists a
diffeomorphism $j^{-}:\mathbb{T}\times D^{2}\rightarrow T^{-}$ such that $%
h^{-1}\circ j^{-}=e_{\phi }$. Hence%
\begin{equation*}
S^{3}\setminus \Sigma _{p}( h) =\bigcup_{n\in \omega
}h^{-1}( T^{-})
\end{equation*}%
is homeomorphic to%
\begin{equation*}
\mathrm{colim}\,( \mathbb{T}\times D^{2},e_{\phi }).
\end{equation*}%
This colimit, in turn, is homotopy equivalent to the mapping telescope 
$\mathrm{hocolim}\left( \mathbb{T},\phi \right) $, by Lemma %
\ref{Lemma:inductive-homotopy}.

The complement $S^{3}\setminus \Sigma _{p}( h) $ of $\Sigma
_{p}( h) $ is called a (one-dimensional) \emph{solenoid complement%
}. The problem of classifying the maps $S^{3}\setminus \Sigma _{p}(
h) \rightarrow S^{2}$ up to homotopy was posed by Borsuk and Eilenberg
in \cite{borsuk_uber_1936}, and we conclude this subsection with a brief review of this question's rather striking history. More immediately, though, we have the following.

\begin{theorem}
\label{Theorem:BE}Fix $p\geq 2$. Then $[S^{3}\setminus \Sigma _{p}(
h) ,S^{2}]=[ S^{3}\setminus \Sigma _{p}( h) ,S^{2}] _{\infty }$ is a definable set, and there is a basepoint-preserving
definable bijection between $[S^{3}\setminus \Sigma _{p}(h)
,S^{2}]$ and $\mathrm{Ext}( \mathbb{Z}[1/p],\mathbb{Z}) $.
\end{theorem}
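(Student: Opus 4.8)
The plan is to reduce the classification of maps $S^{3}\setminus\Sigma_p(h)\to S^2$ to the computation of the definable cohomology of a mapping telescope of circles, which has already been carried out in Proposition \ref{Proposition:cohomology-telescope-group} and expressed in terms of definable $\mathrm{Ext}$. First I would record the geometric identification, established in the surrounding discussion, that $S^{3}\setminus\Sigma_p(h)$ is homotopy equivalent to $\mathrm{hocolim}\,(\mathbb{T},\phi)$, where $\phi:\mathbb{T}\to\mathbb{T}$ is the degree-$p$ map $\zeta\mapsto\zeta^p$; this is precisely the mapping telescope $\mathbf{X}_\Lambda$ for the rank $1$ torsion-free group $\Lambda=\mathbb{Z}[1/p]$. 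Since $[-,S^2]$ is a homotopy invariant, it suffices to classify maps $\mathrm{hocolim}\,(\mathbb{T},\phi)\to S^2$ up to homotopy. Note that $\Lambda=\mathbb{Z}[1/p]$ has no finitely generated summand, so by Proposition \ref{Proposition:cohomology-telescope-group} we will have $\mathrm{H}^2_{\mathrm{w}}(\mathrm{hocolim}\,\mathbf{X}_\Lambda)\cong\mathrm{Hom}(\mathbb{Z}[1/p],\mathbb{Z})=0$, so that $\mathrm{H}^2=\mathrm{H}^2_\infty$; this is the source of the equality $[X,S^2]=[X,S^2]_\infty$ asserted in the statement.

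The second and central step is to invoke Hopf's Theorem. The telescope $X=\mathrm{hocolim}\,\mathbf{X}_\Lambda$ is homotopy equivalent to a $2$-dimensional polyhedron (by the corollary to Proposition \ref{Proposition:homotopy-type-telescope}, mapping telescopes of circles are homotopy equivalent to polyhedra, here of dimension $2$), and by part (3) of Proposition \ref{Proposition:cohomology-telescope-group} we have $\mathrm{H}^q(X)=0$ for $q>2$. Thus $X$ (after passing to a homotopy-equivalent polyhedral pair with empty subcomplex) satisfies the hypotheses of Theorem \ref{Theorem:Hopf} with $n=2$, yielding a definable bijection $[X,S^2]\to\mathrm{H}^2(X)$; by Corollary \ref{cor:definable_set}, $[X,S^2]$ is in particular a definable set. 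Composing with the definable isomorphism $\mathrm{H}^2(X)=\mathrm{H}^{2}(\mathrm{hocolim}\,\mathbf{X}_\Lambda)\cong\mathrm{Ext}(\mathbb{Z}[1/p],\mathbb{Z})$ furnished by Proposition \ref{Proposition:cohomology-telescope-group}(1) gives the desired basepoint-preserving definable bijection $[S^{3}\setminus\Sigma_p(h),S^2]\to\mathrm{Ext}(\mathbb{Z}[1/p],\mathbb{Z})$. Definability and basepoint-preservation are inherited at each stage: the homotopy equivalence induces a definable isomorphism by Corollary \ref{CorFunctoriality} (via Lemma \ref{lemma:homotopyequivalenceinduceshomotopyequivalence}), the Hopf map is definable by Theorem \ref{Theorem:Hopf}, and all three maps send the class of the constant map to the distinguished (zero) element.

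The main obstacle I anticipate is not any single deep computation — each ingredient is already in place — but rather the careful verification that Hopf's theorem genuinely applies, which requires passing from the locally compact space $S^{3}\setminus\Sigma_p(h)$ to a \emph{compact} polyhedral pair to which Theorem \ref{Theorem:Hopf} is stated. Here one must use that $S^{3}\setminus\Sigma_p(h)$ is homotopy equivalent to $\mathrm{hocolim}\,(\mathbb{T},\phi)$ and exploit the phantom-map decomposition: because $\mathrm{H}^2_{\mathrm{w}}$ vanishes, the entire group $[X,S^2]$ consists of (homotopy classes of) phantom maps, so the classification is governed asymptotically by the compact stages $\widetilde{X}_n\simeq\mathbb{T}$, on each of which Hopf's theorem is unproblematic. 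Threading the definability bookkeeping through this limiting argument — ensuring that the Borel structure matching $[X,S^2]_\infty$ with $\mathrm{lim}^1$ of the tower, and hence with $\mathrm{Ext}(\mathbb{Z}[1/p],\mathbb{Z})$, is exactly the one produced by Theorem \ref{Theorem:phantom1} and Proposition \ref{Proposition:asymptotic-cohomology} — is where I would spend the most care, though all the required machinery has been developed in the preceding sections.
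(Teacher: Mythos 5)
Your proposal is correct and follows essentially the same route as the paper: identify $S^{3}\setminus\Sigma_{p}(h)$ up to homotopy with $\mathrm{hocolim}\,\mathbf{X}_{\mathbb{Z}[1/p]}$, apply Hopf's Theorem \ref{Theorem:Hopf} via the vanishing in Proposition \ref{Proposition:cohomology-telescope-group}(3), and compose with the definable isomorphism $\mathrm{H}^{2}\cong\mathrm{Ext}(\mathbb{Z}[1/p],\mathbb{Z})$ of Proposition \ref{Proposition:cohomology-telescope-group}(1), with the equality $[X,S^{2}]=[X,S^{2}]_{\infty}$ coming from the vanishing of the weak cohomology exactly as the paper argues via Theorem \ref{Theorem:phantom1}. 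The one caveat you raise --- needing to pass to a \emph{compact} polyhedral pair before invoking Hopf --- is not actually an obstacle, since Theorem \ref{Theorem:Hopf} is stated for arbitrary (locally finite, countable) polyhedral pairs and the telescope is homotopy equivalent to such a polyhedron.
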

\begin{proof}
By the foregoing discussion, $S^{3}\setminus \Sigma_{p}(h)$ is homotopy equivalent to $\mathbf{X}_\Lambda$ for $\Lambda=\mathbb{Z}[1/p]$.
By the $d=1$ instance of Proposition \ref{Proposition:cohomology-telescope-group}(3), Hopf's Theorem \ref{Theorem:Hopf} applies, giving a definable bijection of $[S^{3}\setminus \Sigma _{p}(
h) ,S^{2}]$ and $\mathrm{H}^2(S^{3}\setminus \Sigma _{p}(h))$, and the latter, by Proposition \ref{Proposition:cohomology-telescope-group}(1), is definably isomorphic to $\mathrm{Ext}( \mathbb{Z}[1/p],\mathbb{Z})$.
That the composite bijection is basepoint-preserving is clear, and just as in Corollary \ref{cor:definable_set}, $[S^{3}\setminus \Sigma _{p}(
h) ,S^{2}]$ is then a definable set.
Proposition \ref{Proposition:cohomology-telescope-group}(1) also gives the first definable bijection in the series
$$[S^{3}\setminus \Sigma _{p}(
h) ,S^{2}]\cong \mathrm{H}_\infty^2(S^{3}\setminus \Sigma _{p}(h))\cong\mathrm{lim}^1\,\mathrm{H}^2(S(\mathbb{T}))\cong [S^{3}\setminus \Sigma _{p}(
h) ,S^{2}]_\infty.$$
Proposition \ref{Proposition:cohomology-telescopes} supplies the second, and the last follows again from Hopf's Theorem \ref{Theorem:Hopf}, together with Theorem \ref{Theorem:phantom1}.
\end{proof}
As $S^{3}\setminus \Sigma _{p}(h) $ is homotopy equivalent to
the mapping telescope $\mathrm{hocolim}\,( \mathbb{T},\phi) $, one may consider the following generalization of Theorem %
\ref{Theorem:BE} to arbitrary homotopy colimits of $d$-tori. This theorem follows just as above from Hopf's Theorem \ref{Theorem:Hopf} and Propositions \ref%
{Proposition:cohomology-telescopes} and \ref{Proposition:cohomology-telescope-group}.

\begin{theorem}
\label{Theorem:BE+}Fix $d\geq 1$, and let $\Lambda $ be a torsion-free rank $%
d$ abelian group. Then there are basepoint-preserving definable bijections between $[\mathrm{hocolim}\,\mathbf{X}_{\Lambda },S^{d+1}]_{\infty }$
and $\mathrm{Ext}( \Lambda ,\mathbb{Z}) $, and between $[\mathrm{hocolim}\,\mathbf{X}_{\Lambda },S^{d+1}]_{\mathrm{w%
}}$ and $\mathrm{Hom}( \Lambda ,\mathbb{Z}) $.
\end{theorem}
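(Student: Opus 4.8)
The plan is to assemble Theorem \ref{Theorem:BE+} almost entirely from results already established, in direct analogy with the proof of Theorem \ref{Theorem:BE}. First I would observe that the two asserted bijections are logically independent, corresponding respectively to the asymptotic and weak cohomology groups, and that each follows from a short chain of definable isomorphisms whose links are individually supplied by earlier propositions. The key structural fact making this work is that $\mathrm{hocolim}\,\mathbf{X}_{\Lambda}$ is (homotopy equivalent to) a mapping telescope of compact spaces, so that the cofiltration machinery of Section \ref{Section:pro_and_ind_categories} and the telescope computations of Proposition \ref{Proposition:cohomology-telescope-group} apply verbatim.

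For the asymptotic bijection, I would argue as follows. By Proposition \ref{Proposition:cohomology-telescope-group}(3), $\mathrm{H}^{k}(\mathrm{hocolim}\,\mathbf{X}_{\Lambda})=0$ for all $k>d+1$, so the pair $(\mathrm{hocolim}\,\mathbf{X}_{\Lambda},\ast)$ satisfies the vanishing hypothesis of Hopf's Theorem \ref{Theorem:Hopf} in the relevant degree $n=d+1$. Hopf's Theorem then yields a basepoint-preserving definable bijection $[\mathrm{hocolim}\,\mathbf{X}_{\Lambda},S^{d+1}]\cong \mathrm{H}^{d+1}(\mathrm{hocolim}\,\mathbf{X}_{\Lambda})$, and by Proposition \ref{Proposition:cohomology-telescope-group}(1) the right-hand side equals its asymptotic part $\mathrm{H}^{d+1}_{\infty}$ and is definably isomorphic to $\mathrm{Ext}(\Lambda,\mathbb{Z})$. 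To land precisely in $[\mathrm{hocolim}\,\mathbf{X}_{\Lambda},S^{d+1}]_{\infty}$ rather than in $[\mathrm{hocolim}\,\mathbf{X}_{\Lambda},S^{d+1}]$, I would note that the equality $\mathrm{H}^{d+1}=\mathrm{H}^{d+1}_{\infty}$ of Proposition \ref{Proposition:cohomology-telescope-group}(1) forces the full bracket to coincide with its phantom subset, exactly as recorded for $p$-adic solenoid complements in the displayed chain at the end of the proof of Theorem \ref{Theorem:BE}; Theorem \ref{Theorem:phantom1} identifies this phantom set with the relevant $\mathrm{lim}^1$ term, closing the loop.

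For the weak bijection I would proceed dually: Proposition \ref{Proposition:cohomology-telescope-group}(2) gives a natural definable isomorphism $\mathrm{H}^{d}_{\mathrm{w}}(\mathrm{hocolim}\,\mathbf{X}_{\Lambda})\cong \mathrm{Hom}(\Lambda,\mathbb{Z})$, and Hopf's Theorem applied degreewise to the compact stages of the cofiltration, together with Proposition \ref{Proposition:weak-homotopy} identifying $[-,-]_{\mathrm{w}}$ with $\mathrm{lim}\,[(X_n,A_n),(P,\ast)]$, transports this to a basepoint-preserving definable bijection between $[\mathrm{hocolim}\,\mathbf{X}_{\Lambda},S^{d+1}]_{\mathrm{w}}$ and $\mathrm{Hom}(\Lambda,\mathbb{Z})$. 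Naturality and the basepoint-preserving property are inherited at each link since every cited isomorphism is itself natural and basepoint-preserving.

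The main obstacle is not difficulty but bookkeeping: one must verify that Hopf's Theorem genuinely applies in the target degree $d+1$ (which is where the nonzero-degree hypothesis on the bonding maps enters, via the vanishing in Proposition \ref{Proposition:cohomology-telescope-group}(3)), and that the various definable bijections are compatible enough to compose — in particular that the Hopf bijection, which is a priori only defined on the full bracket $[-,S^{d+1}]$, restricts and corestricts correctly to the $\infty$ and $\mathrm{w}$ summands. The cleanest way to discharge this is simply to point to the already-verified $p$-adic instance: the proof of Theorem \ref{Theorem:BE} carries out precisely this composition for $d=1$ and $\Lambda=\mathbb{Z}[1/p]$, and nothing in that argument used anything beyond Propositions \ref{Proposition:cohomology-telescopes} and \ref{Proposition:cohomology-telescope-group}, Hopf's Theorem \ref{Theorem:Hopf}, and Theorem \ref{Theorem:phantom1}, all of which hold for arbitrary finite rank $d$ and arbitrary torsion-free rank $d$ group $\Lambda$. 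Hence the general statement follows \emph{mutatis mutandis}, and I would present the proof as exactly that observation.
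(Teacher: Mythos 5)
Your treatment of the first bijection is correct and is essentially the paper's own argument: the paper disposes of Theorem \ref{Theorem:BE+} with the remark that it ``follows just as above,'' i.e., by rerunning the proof of Theorem \ref{Theorem:BE} with $d$ in place of $1$ --- Proposition \ref{Proposition:cohomology-telescope-group}(3) licenses Hopf's Theorem \ref{Theorem:Hopf} in degree $d+1$, Proposition \ref{Proposition:cohomology-telescope-group}(1) identifies $\mathrm{H}^{d+1}=\mathrm{H}^{d+1}_{\infty}$ with $\mathrm{Ext}(\Lambda,\mathbb{Z})$, and Theorem \ref{Theorem:phantom1} together with Proposition \ref{Proposition:cohomology-telescopes} shows the full bracket coincides with its phantom part. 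Nothing to add there.

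Your argument for the second bijection, however, contains a genuine degree mismatch. The chain you describe --- Hopf's Theorem applied degreewise to the compact stages $X_n\simeq\mathbb{T}^d$, followed by Proposition \ref{Proposition:weak-homotopy} --- identifies $[\mathrm{hocolim}\,\mathbf{X}_{\Lambda},S^{d+1}]_{\mathrm{w}}$ with $\mathrm{lim}\,[X_n,S^{d+1}]\cong\mathrm{lim}\,\mathrm{H}^{d+1}(\mathbb{T}^d)$, and $\mathrm{H}^{d+1}(\mathbb{T}^d)=0$: every map from the $d$-dimensional complex $\mathbb{T}^d$ to the $d$-connected space $S^{d+1}$ is nullhomotopic, so this weak bracket is a single point. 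The isomorphism $\mathrm{H}^{d}_{\mathrm{w}}(\mathrm{hocolim}\,\mathbf{X}_{\Lambda})\cong\mathrm{Hom}(\Lambda,\mathbb{Z})$ of Proposition \ref{Proposition:cohomology-telescope-group}(2) lives one degree lower and cannot simply be ``transported'' to the $(d+1)$-sphere bracket; the bracket it does compute, via degree-$d$ Hopf on the stages, is $[\mathrm{hocolim}\,\mathbf{X}_{\Lambda},S^{d}]_{\mathrm{w}}\cong\mathrm{lim}\,\mathrm{H}^{d}(\mathbb{T}^d)\cong\mathrm{Hom}(\Lambda,\mathbb{Z})$. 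So your chain proves the weak clause with $S^{d}$ as target rather than $S^{d+1}$; for the target $S^{d+1}$ as displayed, the claimed bijection with $\mathrm{Hom}(\Lambda,\mathbb{Z})$ can hold only when $\mathrm{Hom}(\Lambda,\mathbb{Z})=0$, i.e., when $\Lambda$ has no free direct summand (as in the Borsuk--Eilenberg case $\Lambda=\mathbb{Z}[1/p]$, where both sides are trivial). You should either carry out the argument with the corrected target sphere or state explicitly where the vanishing of $\mathrm{Hom}(\Lambda,\mathbb{Z})$ is being used; the phrase ``transports this to'' is where the proof, as written, breaks.
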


Turning now to the question's history, we quote from Eilenberg's memoirs of his work with Karol Borsuk: \begin{quote} The main problem concerning us was the following: given a solenoid $\Sigma$ in $S^3$, how big is the set $S$ of homotopy classes of maps $f: S^3\setminus \Sigma \rightarrow S^{2}$? Our algebraic equipment was so poor that we could not tackle the problem in the whole generality even though all the tools needed were in our paper. In 1938, using the newly developed ``obstruction theory,'' I established that the set $S$ in question is equipotent to [the second \v{C}ech cohomology group of $S^3\setminus \Sigma$]. \cite{eilenbergonborsuk}
\end{quote}
This motivated Steenrod \cite{steenrod_regular_1940} to introduce a homology theory dual to \v{C}ech cohomology; this theory is now known as \emph{Steenrod homology}.  Steenrod's duality principle (a form of Alexander duality) entailed that $\tilde{\mathrm{H}}_{0}\left( \Sigma \right) \cong
\mathrm{H}^{2}(S^{3}\setminus \Sigma)$. Steenrod then computed the group $\tilde{\mathrm{H}}_{0}(\Sigma)$ and showed that it --- and hence the set of homotopy classes of maps $f: S^3\setminus \Sigma \rightarrow S^{2}$ --- is uncountable. Eilenberg continues:
\begin{quote}
When Saunders MacLane lectured in 1940 at the University of Michigan on group extensions one of the groups appearing on the blackboard was exactly the group calculated by Steenrod. I recognized it and spoke about it to MacLane. The result was the joint paper ``Group extensions and homology,'' Ann. of Math., 43, 1942. This was the birth of Homological Algebra.
\end{quote}
This joint paper, which introduced the functors $\mathrm{Hom}$ and $\mathrm{Ext}$, is often cited as the beginning of category theory as well: a central concern of the work is canonical  or so-called ``natural homomorphisms'' between groups --- a notion category theory was in part developed to make precise \cite{weibelhistory, eilmaclane45}.\footnote{To respond to a query of the referee: this paper also established the Universal Coefficient Theorem \cite[\S 35]{eilmaclane45} from which, together with the aforementioned Steenrod duality, the \emph{abstract} isomorphism $\mathrm{H}^2(S^3\backslash\Sigma_p)\cong\mathrm{Ext}(\mathbb{Z}[1/p],\mathbb{Z})$ is readily deduced.}

The affinity of that concern with our own concern for \emph{definable} homomorphisms should be clear. The novelty of this subsection's theorems consists in both the generalization to higher dimensions (Theorem \ref{Theorem:BE+}), and in their formulation within the category of \textsf{DSet}. This latter point allows both for finer characterizations (in the sense of Borel complexity) of the sets in question and an analysis of their orbits under automorphism actions, the subjects of the following subsection.
\subsection{Actions and Borel complexity}
Let us begin by recalling that a definable set $X/E$ is:

\begin{itemize}
\item \emph{smooth }if and only if and only if there is an injective
definable function $X/E \rightarrow Y$ where $Y$ is a Polish
space;

\item \emph{essentially hyperfinite} if and only if there is an injective
definable function $X/E \rightarrow Y/F$ where $Y$
is a Polish space and $F$ is the orbit equivalence relation associated with
a Borel action of $\mathbb{Z}$ on $Y$;

\item \emph{essentially treeable }if and only if there exists an injective
definable function $X/E \rightarrow Y/F$ where $Y$
is a Polish space, and $F$ is the orbit equivalence relation associated with
a Borel action of a free group on $Y$.
\end{itemize}

\medskip

Consider next the definable sets $\mathrm{Ext}(\Lambda,\mathbb{Z})$ featuring in Theorem \ref{Theorem:BE+} above; more generally, consider $\mathrm{Ext}(\Lambda,\mathbb{Z})$ for any countable torsion-free group $\Lambda$. Since $\Lambda $ has no
finite subgroup, $\mathrm{Ext}(\Lambda ,\mathbb{Z})$ equals, in modern notation, $\mathrm{PExt}(\Lambda ,\mathbb{Z})$, or in other words the group which Eilenberg and MacLane denote $\mathrm{Ext}_f(
\Lambda,\mathbb{Z})$ and identify as the closure of $\{0\} $ in $\mathrm{Ext}(\Lambda ,\mathbb{Z})$ in \cite{eilenberg_group_1942}; more succinctly, $\{
0\} $ is dense in $\mathrm{Ext}( \Lambda,\mathbb{Z})$ (here $\{0\}$ is a convenient locution for $N$ in the presentation $G/N$ of $\mathrm{Ext}(\Lambda,\mathbb{Z})$ as a group with a Polish cover).
From this, the first item of the following theorem is almost immediate.
\begin{theorem}
\label{thm:smooth!}
Let $\Lambda$ be a countable torsion-free abelian group.
\begin{itemize}
\item The equivalence relation of isomorphism of extensions of $\Lambda $ by 
$\mathbb{Z}$ is smooth if and only if $\{0\} $ is closed in $%
\mathrm{Ext}(\Lambda ,\mathbb{Z})$ if and only if $\mathrm{Ext}(\Lambda,\mathbb{Z})=0$ if and only if $\Lambda $ is free
abelian.
\item The equivalence relation of isomorphism of extensions of $%
\Lambda $ by $\mathbb{Z}$ is essentially hyperfinite if and only if $\{0\} $ is $\boldsymbol{\Sigma }_{2}^{0}$ in $\mathrm{Ext}( \Lambda
,\mathbb{Z})$ if and only if $\Lambda =\Lambda _{\infty
}\oplus \Lambda _{\mathrm{free}}$ where $\Lambda _{\mathrm{free}}$ is free
and $\Lambda _{\infty }$ is finite-rank.
\end{itemize}
In particular, for any prime $p\geq 2$, the problem
of classifying extensions of $\mathbb{Z}[1/p]$ by $\mathbb{Z}$ is essentially hyperfinite
and not smooth, and so, in consequence, is the problem of classifying maps $S^{3}\setminus \Sigma _{p}(
h) \rightarrow S^{2}$ up to homotopy.
\end{theorem}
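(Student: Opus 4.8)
The plan is to establish the two bulleted dichotomies for $\mathrm{Ext}(\Lambda,\mathbb{Z})$ as a group with a Polish cover, and then transport the second one across the definable bijection of Theorem \ref{Theorem:BE+} (or Theorem \ref{Theorem:BE}) to obtain the concluding statement about homotopy classes of maps $S^3\setminus\Sigma_p(h)\to S^2$. The guiding principle throughout is that Borel complexity of the coset equivalence relation $\mathcal{R}(G/N)$ on a group with a Polish cover $G/N$ is governed by the \emph{descriptive complexity of the subgroup $N$ inside $G$}, since $\mathrm{Ext}(\Lambda,\mathbb{Z})=\mathrm{Z}(\Lambda,\mathbb{Z})/\mathrm{B}(\Lambda,\mathbb{Z})$ with $\mathrm{B}(\Lambda,\mathbb{Z})$ the Polishable Borel subgroup recalled in Section \ref{ss:hocolims_of_tori}. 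Here $N=\mathrm{B}(\Lambda,\mathbb{Z})$ plays the role of $\{0\}$ in the quotient, so that ``$\{0\}$ closed/$\boldsymbol{\Sigma}^0_2$'' literally means ``$\mathrm{B}$ closed/$\boldsymbol{\Sigma}^0_2$ in $\mathrm{Z}$.''

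\emph{First}, I would dispatch the smoothness dichotomy. By the standard fact (attributable to the general theory of groups with a Polish cover, and traceable to \cite{BLPI}) that the coset relation $\mathcal{R}(G/N)$ is smooth if and only if $N$ is closed in $G$, smoothness of the isomorphism relation on extensions is equivalent to $\mathrm{B}(\Lambda,\mathbb{Z})$ being closed in $\mathrm{Z}(\Lambda,\mathbb{Z})$. Now $\mathrm{B}$ is always dense in the closure of $\{0\}$ within $\mathrm{Ext}$, and $\mathrm{Ext}(\Lambda,\mathbb{Z})=\mathrm{PExt}(\Lambda,\mathbb{Z})$ for torsion-free $\Lambda$ (as noted in the text preceding the theorem), so $\{0\}$ is dense in $\mathrm{Ext}(\Lambda,\mathbb{Z})$; hence $\{0\}$ is closed precisely when $\mathrm{Ext}(\Lambda,\mathbb{Z})=0$, which by the standard homological criterion holds precisely when $\Lambda$ is free abelian. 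This chain of equivalences is essentially algebraic once the descriptive fact is invoked.

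\emph{Second}, and this is where the main work lies, I would prove the essential-hyperfiniteness dichotomy. The key input is the structure theory of $\mathrm{Ext}(\Lambda,\mathbb{Z})$ developed in \cite{BLPI}: one wants the statement that $\mathcal{R}(\mathrm{Ext}(\Lambda,\mathbb{Z}))$ is essentially hyperfinite iff $N=\mathrm{B}$ is $\boldsymbol{\Sigma}^0_2$ in $\mathrm{Z}$, together with the computation identifying when this holds in terms of the decomposition $\Lambda=\Lambda_\infty\oplus\Lambda_{\mathrm{free}}$. The forward direction of the complexity characterization should follow from the general principle that for Polishable $N\leq G$, the coset relation is essentially hyperfinite (indeed reduces to a $\mathbb{Z}$-action) exactly when $N$ is $\boldsymbol{\Sigma}^0_2$ (an $F_\sigma$ subgroup), this being the boundary between $E_0$-type and more complex behavior in the hierarchy of Polishable subgroups. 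The identification of \emph{which} $\Lambda$ yield a $\boldsymbol{\Sigma}^0_2$ coboundary subgroup—namely those of the form $\Lambda_\infty\oplus\Lambda_{\mathrm{free}}$ with $\Lambda_\infty$ of finite rank—I expect to follow from the finite-rank analysis of $\mathrm{Ext}(-,\mathbb{Z})$ in \cite{BLPI}, reducing to the finite-rank summand since free summands contribute trivially.

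\emph{Finally}, the particular case is immediate: for $\Lambda=\mathbb{Z}[1/p]$ with $p$ prime, $\Lambda$ has rank $1$ and no free summand, so it falls under the second dichotomy (essentially hyperfinite) but is not free abelian, hence not smooth; applying the definable bijection $[S^3\setminus\Sigma_p(h),S^2]\cong\mathrm{Ext}(\mathbb{Z}[1/p],\mathbb{Z})$ of Theorem \ref{Theorem:BE}, and the invariance of both smoothness and essential hyperfiniteness under definable bijection (which preserves Borel reducibility class by Proposition \ref{Proposition:bij-iso}), transports these conclusions to the homotopy classification problem. The main obstacle will be pinning down the precise descriptive-set-theoretic characterization of essential hyperfiniteness in terms of the $\boldsymbol{\Sigma}^0_2$ complexity of $N$ and marrying it cleanly to the finite-rank computation from \cite{BLPI}; the smoothness half and the final transport step are comparatively routine.
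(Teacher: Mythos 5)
Your treatment of the first bullet and of the final transport step is essentially the paper's: density of $\{0\}$ in $\mathrm{Ext}(\Lambda,\mathbb{Z})$ (via $\mathrm{Ext}=\mathrm{PExt}$ for torsion-free $\Lambda$), Stein's theorem for $\mathrm{Ext}(\Lambda,\mathbb{Z})=0$ iff $\Lambda$ free, and Theorem \ref{Theorem:BE} together with the invariance of smoothness and essential hyperfiniteness under definable bijection. The paper does not invoke ``smooth iff $N$ closed'' as a black box but derives the needed direction from generic ergodicity of the translation action plus Pettis's Lemma; since the fact you cite is indeed true for Polishable subgroups, this is only a cosmetic difference.

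The gap is in the second bullet, which rests on two inputs you leave as expectations. The first --- that for a Polishable subgroup $N$ the coset relation is essentially hyperfinite exactly when $N$ is $\boldsymbol{\Sigma}^0_2$ --- is a genuine theorem, not a formal ``boundary in the hierarchy'': the paper uses only the ``only if'' half (citing \cite[Proposition 4.14]{lupini_looking_22}) and sidesteps the ``if'' half entirely by a direct computation. The second input is where your argument actually breaks: the implication ``$\{0\}$ is $\boldsymbol{\Sigma}^0_2$ in $\mathrm{Ext}(\Lambda,\mathbb{Z})$ $\Rightarrow$ $\Lambda=\Lambda_\infty\oplus\Lambda_{\mathrm{free}}$ with $\Lambda_\infty$ of finite rank'' concerns arbitrary countable torsion-free $\Lambda$, including infinite-rank groups admitting no such decomposition (e.g.\ $\bigoplus_{n\in\mathbb{N}}\mathbb{Z}[1/p]$), and cannot be obtained by ``reducing to the finite-rank summand,'' since that reduction presupposes the decomposition you are trying to establish; the paper cites \cite[Proposition 6.4]{lupini2022classification} for precisely this step, and the finite-rank analysis of \cite{BLPI} does not cover it. Conversely, for the direction you do sketch (the decomposition implies essential hyperfiniteness), the substantive content of the paper's proof is the exact-sequence computation exhibiting $\mathrm{Ext}(\Lambda,\mathbb{Z})$, for finite-rank $\Lambda$, as the Polish group $\mathrm{Hom}(\Lambda/E,\mathbb{Q}/\mathbb{Z})$ modulo a countable subgroup, whence hyperfiniteness follows from \cite[Theorem 12.5.7]{gao_invariant_2009} and \cite[Theorem 6.1]{Ding_Gao_2017}; some such computation is needed and is absent from your sketch.
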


\begin{proof}
The third of the first item's bi-implications is due to \cite{Stein_1951}; the second simply restates our above observation that $\{0\}$ is dense in $\mathrm{Ext}(\Lambda,\mathbb{Z})$. For the left-to-right portion of its first bi-implication, again recast that observation as \emph{the orbit equivalence relation given by $\{0\}$ is generically ergodic} \cite[Prop.\ 6.1.9]{gao_invariant_2009} and apply the contrapositive of \cite[Prop.\ 6.1.10]{gao_invariant_2009} to conclude that $\{0\}$ is comeager and, hence, by Pettis's Lemma \cite[Thm.\ 2.3.2]{gao_invariant_2009}, is closed, as claimed. The right-to-left portion of its first bi-implication is trivial.

Let us turn now to the theorem's second item, from which our last assertion directly follows by an application of Theorem \ref{Theorem:BE}.
Note that since $\mathrm{Ext}(-,\mathbb{Z})$ commutes with sums, in arguing the second item's right-to-left implications we may suppose at the outset that $\Lambda $ is
finite-rank. Next, let $E\subseteq \Lambda $ be a free abelian group
such that $\Lambda /E$ is torsion.
To see that $\{ 0\} $ is $\boldsymbol{\Sigma }_{2}^{0}$ in $\mathrm{Ext}(\Lambda ,\mathbb{Z})$, consider the tail of the long definable exact
sequence relating $\mathrm{Hom}$ and $\mathrm{Ext}$:%
\begin{equation*}
\mathrm{Hom}( E,\mathbb{Z}) \rightarrow \mathrm{Ext}(
\Lambda /E,\mathbb{Z}) \rightarrow \mathrm{Ext}( \Lambda ,\mathbb{%
Z}) \rightarrow \mathrm{Ext}( E,\mathbb{Z}) =0\text{,}
\end{equation*}
whereby
\begin{equation*}
\mathrm{Ext}( \Lambda ,\mathbb{Z}) \cong \frac{\mathrm{Ext}(
\Lambda /E,\mathbb{Z}) }{\mathrm{ran}( \mathrm{Hom}( E,%
\mathbb{Z}) \rightarrow \mathrm{Ext}( \Lambda /E,\mathbb{Z}%
))},
\end{equation*}
where $\mathrm{ran}(f)$ denotes the range, of course, of a function $f$. Considering next the fragment
\begin{equation*}
0=\mathrm{Hom}(\Lambda /E,\mathbb{Q)}\rightarrow \mathrm{Hom}(\Lambda /E,%
\mathbb{Q}/\mathbb{Z})\rightarrow \mathrm{Ext}( \Lambda /E,\mathbb{Z}) \rightarrow \mathrm{Ext}(\Lambda /E,\mathbb{Q)}=0
\end{equation*}
of the long definable exact sequence associating to $\mathbb{Z}\to\mathbb{Q}\to\mathbb{Q}/\mathbb{Z}$, we see that $\mathrm{Ext}(\Lambda /E,
\mathbb{Z})$ is definably isomorphic to the Polish group $\mathrm{Hom}(\Lambda /E,%
\mathbb{Q}/\mathbb{Z})$.
Hence the complexity of $\{ 0\} $ in $\mathrm{Ext}(\Lambda ,
\mathbb{Z})$ equals the complexity of $\mathrm{ran}( \mathrm{Hom}(E,\mathbb{Z}) \rightarrow \mathrm{Ext}(\Lambda /E,\mathbb{Z}))$ in $\mathrm{Ext}(\Lambda /E,\mathbb{Z})$ (for the preservation of this complexity by definable isomorphisms, see \cite[Proposition 4.12]{lupini_looking_22}).
Since $E$ is finite-rank, $\mathrm{Hom}(E,\mathbb{Z})$ is
countable, hence $\mathrm{ran}(\mathrm{Hom}( E,\mathbb{Z}) \rightarrow \mathrm{Ext}(\Lambda /E,\mathbb{Z}))$
is also countable, and consequently $\boldsymbol{\Sigma }_{2}^{0}$ in $\mathrm{Ext}(\Lambda /E,\mathbb{Z})$. From this we conclude that the equivalence relation of isomorphism of extensions of $%
\Lambda $ by $\mathbb{Z}$ is essentially hyperfinite by \cite[Theorem 12.5.7]{gao_invariant_2009} and \cite[Theorem 6.1]{Ding_Gao_2017}.

The second item's left-to-right implications are as follows: the essential hyperfiniteness of the relation implies that $\{0\}$ is $\boldsymbol{\Sigma }_{2}^{0}$ in $\mathrm{Ext}( \Lambda
,\mathbb{Z})$ by \cite[Proposition 4.14]{lupini_looking_22}, and this in turn implies that $\Lambda$ is a sum of a finite-rank abelian group and a free one by \cite[Proposition 6.4]{lupini2022classification}.
\end{proof}
We turn now to an analysis of actions on definable sets. Suppose that $\Gamma $ is a group and $X/E$ is a
semidefinable set. A \emph{definable left action} $\Gamma
\curvearrowright X/E$ of $\Gamma $ on $X\left/ E\right. $ is a
function $\Gamma \times X\left/ E\right. \rightarrow X/E$, $(\gamma ,x) \mapsto \gamma \cdot x$ such that \begin{itemize}
\item for every $\gamma
\in \Gamma $ the map $f_{\gamma }:X/E \rightarrow X/E$, $x\mapsto \gamma \cdot x$ is definable, and
\item the assignment $%
\gamma \mapsto f_{\gamma }$ is a group homomorphism from $\Gamma $ to the
group of definable bijections of $X$.
\end{itemize}   The orbit space of the action is the
semidefinable set $(X/E)/\Gamma =X/F$ where $F$ is the equivalence relation on $X$ defined by setting $%
xFy$ if and only if there exists $\gamma \in \Gamma $ such that $\left(
\gamma \cdot x\right) E\,y$. A definable morphism of actions from $\Gamma
\curvearrowright X/E$ to $\Gamma ^{\prime }\curvearrowright
X^{\prime }/E^{\prime }$ is a pair $\left( \varphi ,f\right) $
such that $\varphi :\Gamma \rightarrow \Gamma ^{\prime }$ is a group
homomorphism and $f:X/E\rightarrow X^{\prime }/
E^{\prime }$ is a definable function satisfying $\varphi(
\gamma) \cdot f(x) =f( \gamma \cdot x) $ for all $\gamma \in \Gamma $ and $x\in X/E$. A \emph{definable right action}
is simply a definable left action of its opposite group.

Consider the group $\mathcal{E}(X) $ of \textsf{Ho}$( 
\mathsf{LC})$-automorphisms of a locally compact Polish space $X$. If $P$ is a polyhedron, then $\mathcal{E}(X) $ possesses a canonical right action on the space $[X,P] $. If
we let $\mathcal{K}_{P}(X)$ be the kernel of this action then we
obtain a \emph{faithful} right action $[ X,P] \curvearrowleft 
\mathcal{E}(X) / \mathcal{K}_{P}(X)$.
Similarly, if $\mathcal{G}$ is a definable group, and $\mathrm{Aut}( 
\mathcal{G}) $ is the group of \emph{definable} group automorphisms of 
$\mathcal{G}$, then we have a faithful left action $\mathrm{Aut}( \mathcal{G}) \curvearrowright \mathcal{G}$.

Consider now the case in which $d$ is a positive integer, $X$ is a polyhedron with $H^{q}(X) =0$ for $q>d+1$, and $P=S^{d+1}$. In this case, by Hopf's theorem
we have a natural definable bijection $f:[ X,S^{d+1}] \rightarrow
\mathrm{H}^{d+1}(X) $. By the functoriality of definable \v{C}ech cohomology,
we have also a group anti-homomorphism $\varphi:\mathcal{E}(X)/ \mathcal{K}_{S^{d+1}}(X)\rightarrow \mathrm{Aut}(\mathrm{H}^{d+1}(X)) $ such that $(\varphi,f)$ is a morphism of actions from $[ X,S^{d+1}]
\curvearrowleft \mathcal{E}(X)/ \mathcal{K}_{S^{d+1}}(X)$ to $\mathrm{Aut}(\mathrm{H}^{d+1}(X)) \curvearrowright \mathrm{H}^{d+1}(X)$.

In particular, when $X=\mathrm{hocolim}\,\mathbf{X}
_{\Lambda }$ for some a rank $d$ torsion-free abelian group $\Lambda $
with no finitely-generated summands, then by Theorem \ref{Theorem:Hopf},
Theorem \ref{Theorem:classify-fully-faithful}, and Proposition \ref%
{Proposition:cohomology-telescope-group} we have the following (see, e.g., \cite[p. 15]{kerr_ergodic_2016} for the notion of a conjugacy of actions).

\begin{theorem}
\label{Theorem:action}Suppose that $\Lambda $ is a rank $d$ torsion-free
abelian group with no finitely-generated direct summand. Let $X_{\Lambda }:=\mathrm{hocolim}\,\mathbf{X}
_{\Lambda }$. Then the
natural definable bijection 
\begin{equation*}
f:[X_{\Lambda },S^{d+1}]\rightarrow \mathrm{H}^{d+1}(X_{\Lambda })\cong \mathrm{Ext}%
( \Lambda ,\mathbb{Z})
\end{equation*}%
and the group anti-homomorphism 
\begin{equation*}
\varphi :\mathcal{E}( X_{\Lambda }) \left/ \mathcal{K}%
_{S^{d+1}}( X_{\Lambda }) \right. \rightarrow \mathrm{\mathrm{Aut}%
}( \mathrm{H}^{d+1}(X_{\Lambda })) \cong \mathrm{Aut}( 
\mathrm{Ext}( \Lambda ,\mathbb{Z})) \cong \mathrm{Aut}( \Lambda)
\end{equation*}
establish a conjugacy between the actions $\mathcal{E}(X_{\Lambda}) \left/ \mathcal{K}_{S^{d+1}}( X_{\Lambda }) \right.
\curvearrowright [X_{\Lambda },S^{d+1}]$  and the action $\mathrm{Aut}( \Lambda ) \curvearrowright \mathrm{Ext}(
\Lambda ,\mathbb{Z}) $.
\end{theorem}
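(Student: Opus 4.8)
The plan is to assemble Theorem \ref{Theorem:action} essentially as a formal consequence of results already in hand, so the proof will be short and organizational rather than computational. The goal is to exhibit a pair $(\varphi, f)$ that is simultaneously a morphism of actions and a bijection on both coordinates, whence it constitutes a conjugacy. First I would fix the bijection: by Proposition \ref{Proposition:cohomology-telescope-group}(3) we have $\mathrm{H}^q(X_\Lambda)=0$ for $q>d+1$, so Hopf's Theorem \ref{Theorem:Hopf} applies to give the natural definable bijection $f\colon [X_\Lambda, S^{d+1}]\to \mathrm{H}^{d+1}(X_\Lambda)$, and Proposition \ref{Proposition:cohomology-telescope-group}(1) identifies the target definably with $\mathrm{Ext}(\Lambda,\mathbb{Z})$. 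This disposes of the horizontal maps in the square of actions.

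Next I would construct the group-level map. The functoriality of definable \v{C}ech cohomology (Corollary \ref{CorFunctoriality}) sends each $\mathsf{Ho}(\mathsf{LC})$-automorphism of $X_\Lambda$ to a definable automorphism of $\mathrm{H}^{d+1}(X_\Lambda)$ contravariantly, inducing a group anti-homomorphism $\mathcal{E}(X_\Lambda)\to \mathrm{Aut}(\mathrm{H}^{d+1}(X_\Lambda))$. The kernel of the canonical right action on $[X_\Lambda,S^{d+1}]$ is precisely $\mathcal{K}_{S^{d+1}}(X_\Lambda)$, and since $f$ is an isomorphism intertwining these actions, this kernel coincides with the kernel of the anti-homomorphism above; thus $\varphi$ descends to an injective anti-homomorphism on the quotient $\mathcal{E}(X_\Lambda)/\mathcal{K}_{S^{d+1}}(X_\Lambda)$. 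The naturality clause of Hopf's Theorem (the same clause invoked to define $f$) is exactly what guarantees the intertwining identity $\varphi(\gamma)\cdot f(x)=f(x\cdot\gamma)$, making $(\varphi,f)$ a morphism of actions. The identifications $\mathrm{Aut}(\mathrm{H}^{d+1}(X_\Lambda))\cong \mathrm{Aut}(\mathrm{Ext}(\Lambda,\mathbb{Z}))\cong \mathrm{Aut}(\Lambda)$ then follow from the full faithfulness of $\Lambda\mapsto \mathrm{H}^{d+1}(\mathrm{hocolim}\,\mathbf{X}_\Lambda)$ established in Theorem \ref{Theorem:classify-fully-faithful}, which yields a group isomorphism on automorphism groups.

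The one point requiring genuine care — and what I expect to be the main obstacle — is verifying that $\varphi$ is not merely injective but surjective onto $\mathrm{Aut}(\Lambda)$, so that the morphism of actions is in fact a conjugacy (an isomorphism on both the acting group and the space). Injectivity is automatic from faithfulness of the action, but surjectivity requires that \emph{every} definable automorphism of $\mathrm{Ext}(\Lambda,\mathbb{Z})$, equivalently every element of $\mathrm{Aut}(\Lambda)$, be realized by an actual $\mathsf{Ho}(\mathsf{LC})$-automorphism of $X_\Lambda$. Here I would invoke the full faithfulness of the functor $\Lambda\mapsto \mathbf{X}_\Lambda$ from rank-$d$ torsion-free abelian groups to monomorphic inductive sequences of $d$-tori, together with the fact (already noted in the text preceding Theorem \ref{Theorem:classify-colimit-group}) that $\mathrm{hocolim}$ is a functor on $\mathsf{Ind}_\omega(\mathsf{C})$: an automorphism of $\Lambda$ pulls back through full faithfulness to an $\mathsf{Ind}_\omega$-automorphism of $\mathbf{X}_\Lambda$, whose image under $\mathrm{hocolim}$ is the desired self-homotopy-equivalence of $X_\Lambda$. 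Chasing the two functorial identifications to confirm that this realizes the prescribed automorphism of $\mathrm{Ext}(\Lambda,\mathbb{Z})$ — i.e., that the composite $\mathrm{Aut}(\Lambda)\to \mathcal{E}(X_\Lambda)/\mathcal{K}_{S^{d+1}}(X_\Lambda)\to \mathrm{Aut}(\Lambda)$ is the identity — is the crux, and I would handle it by appealing to the naturality of all the isomorphisms in Proposition \ref{Proposition:cohomology-telescope-group} rather than by any explicit cocycle computation.
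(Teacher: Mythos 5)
Your proposal is correct and follows essentially the same route as the paper, which states the theorem as an immediate consequence of Hopf's Theorem \ref{Theorem:Hopf}, Theorem \ref{Theorem:classify-fully-faithful}, and Proposition \ref{Proposition:cohomology-telescope-group} without further elaboration. Your additional care over surjectivity of $\varphi$ onto $\mathrm{Aut}(\Lambda)$ — realized via the full faithfulness of $\Lambda\mapsto\mathbf{X}_{\Lambda}$ and the functoriality of $\mathrm{hocolim}$ — is exactly the content the paper's preceding discussion supplies implicitly.
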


\begin{corollary}
\label{Corollary:action}Suppose that $\Lambda $ is a rank $d$ torsion-free
abelian group with no finitely-generated direct summand. Set $X_{\Lambda }:=\mathrm{hocolim}\,\mathbf{X}
_{\Lambda }$. Then the orbit
space $[X_{\Lambda },S^{d+1}]/\mathcal{E}(
X_{\Lambda })$ is a definable set, and there is a definable
bijection between%
\begin{equation*}
\left[ X_{\Lambda },S^{d+1}\right]/ \mathcal{E}( X_{\Lambda
})
\end{equation*}%
and%
\begin{equation*}
\mathrm{Ext}( \Lambda ,\mathbb{Z}) \left/ \mathrm{\mathrm{Aut}}%
( \Lambda ) \right. \text{.}
\end{equation*}
\end{corollary}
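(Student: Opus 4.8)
The plan is to deduce Corollary \ref{Corollary:action} as an essentially formal consequence of the conjugacy of actions established in Theorem \ref{Theorem:action}. That theorem already provides a definable bijection $f:[X_\Lambda,S^{d+1}]\to\mathrm{Ext}(\Lambda,\mathbb{Z})$ and a group anti-homomorphism $\varphi:\mathcal{E}(X_\Lambda)/\mathcal{K}_{S^{d+1}}(X_\Lambda)\to\mathrm{Aut}(\Lambda)$ such that $(\varphi,f)$ is a conjugacy between the two actions in question. The corollary is simply the assertion that passing to orbit spaces transports this conjugacy into a definable bijection of the quotients, together with the observation that this quotient is again a \emph{definable} (not merely semidefinable) set.

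First I would unwind the definitions. The orbit space $[X_\Lambda,S^{d+1}]/\mathcal{E}(X_\Lambda)$ is, by the definition of the orbit space of a definable action given just above, the semidefinable set $Y/F$ where $Y$ is the Polish space underlying $[X_\Lambda,S^{d+1}]$ and $F$ is the equivalence relation identifying two maps whenever some element of $\mathcal{E}(X_\Lambda)$ carries one to the other up to homotopy. Since $\mathcal{K}_{S^{d+1}}(X_\Lambda)$ is by construction the kernel of the action, the orbit relation is unchanged upon passing to the quotient group $\mathcal{E}(X_\Lambda)/\mathcal{K}_{S^{d+1}}(X_\Lambda)$; the orbits of $\mathcal{E}(X_\Lambda)\curvearrowright[X_\Lambda,S^{d+1}]$ coincide with those of the faithful action. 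The conjugacy $(\varphi,f)$ then says precisely that $f$ carries each $\mathcal{E}(X_\Lambda)$-orbit onto an $\mathrm{Aut}(\Lambda)$-orbit (this is where I use $\varphi(\gamma)\cdot f(x)=f(\gamma\cdot x)$, reading the anti-homomorphism in the appropriate order), and conversely that preimages under $f$ of $\mathrm{Aut}(\Lambda)$-orbits are $\mathcal{E}(X_\Lambda)$-orbits, because $f$ is a bijection and $\varphi$ surjects onto $\mathrm{Aut}(\Lambda)$ in light of the isomorphisms $\mathcal{E}(X_\Lambda)/\mathcal{K}_{S^{d+1}}(X_\Lambda)\cong\mathrm{Aut}(\mathrm{Ext}(\Lambda,\mathbb{Z}))\cong\mathrm{Aut}(\Lambda)$ recorded in Theorem \ref{Theorem:classify-fully-faithful} and Theorem \ref{Theorem:action}. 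Consequently $f$ descends to a well-defined bijection $\bar f$ of orbit spaces.

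Next I would check that $\bar f$ is \emph{definable}, i.e. lifts to a Borel map at the level of the underlying Polish spaces. Here I simply note that a Borel lift $\hat f$ of $f$ (which exists since $f$ is a definable function) is automatically a lift of $\bar f$ with respect to the coarser orbit relations, because the orbit relation downstairs is $f$-saturation of the orbit relation upstairs; thus $\hat f$ witnesses definability of $\bar f$. Finally, to conclude that the orbit space is a definable set and not merely semidefinable, I would invoke the regularity results of Section \ref{S:DSets}: the definable bijection $\bar f$ identifies $[X_\Lambda,S^{d+1}]/\mathcal{E}(X_\Lambda)$ with $\mathrm{Ext}(\Lambda,\mathbb{Z})/\mathrm{Aut}(\Lambda)$, and the latter is a definable set, being the orbit space of a definable (indeed Borel) action of the countable group $\mathrm{Aut}(\Lambda)$ on the definable group $\mathrm{Ext}(\Lambda,\mathbb{Z})$, whose Borelness and idealisticness follow from Lemma \ref{Lemma:cokernel} and the closure properties of Borel-idealistic relations under countable group actions together with Lemma \ref{Lemma:invariance}. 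Then by the lemma preceding Lemma \ref{Lemma:quotient}, a semidefinable set isomorphic in $\mathsf{SemiDSet}$ to a definable set is itself definable, so $[X_\Lambda,S^{d+1}]/\mathcal{E}(X_\Lambda)$ is a definable set.

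The main obstacle, I expect, is the verification that $\mathrm{Ext}(\Lambda,\mathbb{Z})/\mathrm{Aut}(\Lambda)$ is a genuine definable set: one must confirm that the induced orbit equivalence relation is Borel and idealistic. Borelness is unproblematic since $\mathrm{Aut}(\Lambda)$ is countable and acts by definable (hence Borel-liftable) automorphisms, so the orbit relation is a countable union of graphs of Borel functions; idealisticness then follows from Lemma \ref{Lemma:cokernel} and Lemma \ref{Lemma:invariance}, the key point being that a countable-group quotient of a Borel-idealistic equivalence relation remains Borel-idealistic. Everything else is bookkeeping: the real content has already been delivered by Theorem \ref{Theorem:action}, and the corollary is the routine passage from a conjugacy of actions to a bijection of orbit spaces within the category $\mathsf{DSet}$.
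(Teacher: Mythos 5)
Your overall strategy is exactly the paper's: the corollary is stated there without proof, as an immediate consequence of the conjugacy of actions in Theorem \ref{Theorem:action}, and your unwinding of how the conjugacy descends to a definable bijection of orbit spaces (using the faithfulness of the quotient action, the surjectivity of $\varphi$ via Theorem \ref{Theorem:classify-fully-faithful}, and the fact that a Borel lift of $f$ also lifts $\bar f$) is the intended argument and is correct.

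The one step whose justification does not hold up is your claim that the idealisticity of the orbit relation of $\mathrm{Aut}(\Lambda)\curvearrowright\mathrm{Ext}(\Lambda,\mathbb{Z})$ ``follows from Lemma \ref{Lemma:cokernel} and Lemma \ref{Lemma:invariance}, the key point being that a countable-group quotient of a Borel-idealistic equivalence relation remains Borel-idealistic.'' Neither lemma says this: Lemma \ref{Lemma:cokernel} concerns collapsing a single invariant Borel set to a point, and Lemma \ref{Lemma:invariance} transfers idealisticity along classwise Borel embeddings; no result in Section \ref{S:DSets} gives closure of idealistic relations under coarsenings whose classes are countable unions of classes, and such a closure principle is not available in general (one cannot simply amalgamate the $\sigma$-filters on the constituent classes). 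The correct route is to observe that the relevant relation on $\mathrm{Z}(\Lambda,\mathbb{Z})$ is itself the orbit equivalence relation of a Borel (indeed continuous) action of a Polish group, namely $\mathrm{Hom}(\Lambda,\mathbb{Z})\rtimes\mathrm{Aut}(\Lambda)$ acting by $(h,\gamma)\cdot z=\gamma^{*}z+\delta(h)$, where $\gamma^{*}$ is precomposition of cocycles by $\gamma$ (a genuine continuous action preserving $\mathrm{B}(\Lambda,\mathbb{Z})$, since $\gamma^{*}\delta(h)=\delta(h\circ\gamma)$); idealisticity is then the standard fact about orbit equivalence relations of Polish group actions invoked elsewhere in the paper (cf.\ the example following Definition \ref{Definition:idealistic}), and Borelness follows as you say from the countability of $\mathrm{Aut}(\Lambda)$ together with the Borelness of $\mathrm{B}(\Lambda,\mathbb{Z})$. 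With that substitution the rest of your argument, including the final appeal to the lemma that a semidefinable set isomorphic in $\mathsf{SemiDSet}$ to a definable set is definable, goes through.
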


The next corollary follows immediately from Corollary \ref%
{Corollary:action} and \cite[Theorem 1.3]{BLPI}. 

\begin{corollary}
\label{Corollary:complexity}Suppose that $n,m\geq 2$, and $p,q$ are prime
numbers. Consider the groups $\Lambda =\mathbb{Z}[1/p]^{n}$ and $\Gamma =%
\mathbb{Z}[1/q]^{m}$. Then:

\begin{enumerate}
\item $[X_{\Lambda },S^{n+1}]/\mathcal{E}(
X_{\Lambda })$ is not smooth;

\item $[X_{\Lambda },S^{n+1}]/\mathcal{E}(
X_{\Lambda })$ is essentially hyperfinite if and only if $n=1$%
;

\item $[X_{\Lambda },S^{n+1}]/\mathcal{E}(
X_{\Lambda })$ is not essentially treeable if $n>1$;

\item if $m>n$ then there is no injective definable function from $[X_{\Gamma},S^{m+1}]/\mathcal{E}(X_{\Gamma })$ to $[X_{\Lambda },S^{n+1}]/\mathcal{E}(
X_{\Lambda })$;

\item if $m,n\geq 3$ and $p,q$ are distinct, then there is no injective
definable function from $[X_{\Gamma},S^{m+1}]/\mathcal{E}(X_{\Gamma })$ to $[X_{\Lambda },S^{n+1}]/\mathcal{E}(
X_{\Lambda })$.
\end{enumerate}
\end{corollary}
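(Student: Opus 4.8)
The plan is to transport every complexity assertion across the definable bijection furnished by Corollary \ref{Corollary:action}, thereby reducing each of (1)--(5) to the corresponding dichotomy for the actions $\mathrm{Aut}(\mathbb{Z}[1/p]^{n})\curvearrowright\mathrm{Ext}(\mathbb{Z}[1/p]^{n},\mathbb{Z})$ recorded in \cite[Theorem 1.3]{BLPI}. First I would observe that, since $n,m\geq 2$ and $p,q$ are prime, the groups $\Lambda=\mathbb{Z}[1/p]^{n}$ and $\Gamma=\mathbb{Z}[1/q]^{m}$ are torsion-free of ranks $n$ and $m$ respectively and have no finitely generated direct summand (as $\mathbb{Z}[1/p]$ and $\mathbb{Z}[1/q]$ are not finitely generated). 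Hence Corollary \ref{Corollary:action} applies and supplies definable bijections
\[
[X_{\Lambda},S^{n+1}]/\mathcal{E}(X_{\Lambda})\longleftrightarrow\mathrm{Ext}(\Lambda,\mathbb{Z})/\mathrm{Aut}(\Lambda),\qquad
[X_{\Gamma},S^{m+1}]/\mathcal{E}(X_{\Gamma})\longleftrightarrow\mathrm{Ext}(\Gamma,\mathbb{Z})/\mathrm{Aut}(\Gamma).
\]
Because the orbit spaces on the left are definable sets (again by Corollary \ref{Corollary:action}), Proposition \ref{Proposition:bij-iso} upgrades each of these definable bijections to an isomorphism in $\mathsf{DSet}$; in particular each admits a definable inverse.

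The next step is to note that every notion appearing in the statement---smoothness, essential hyperfiniteness, essential treeability, and the (non-)existence of an injective definable function between two orbit spaces---is defined purely in terms of injective definable functions, and is therefore invariant under the definable isomorphisms just produced. Concretely, if $A$ and $B$ are definably isomorphic definable sets and $A$ admits an injective definable function into some target $Y/F$ of the requisite form, then precomposing with the definable inverse $B\to A$ yields an injective definable function $B\to Y/F$; this settles items (1)--(3). The same bookkeeping handles (4) and (5): a hypothetical injective definable function between the two orbit spaces would, after pre- and post-composition with the relevant definable isomorphisms, produce an injective definable function between the corresponding $\mathrm{Ext}$ orbit spaces, contradicting \cite[Theorem 1.3]{BLPI}.

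With these two reductions in place, each of (1)--(5) becomes a verbatim instance of the complexity dichotomies for $\mathrm{Ext}(\mathbb{Z}[1/p]^{n},\mathbb{Z})/\mathrm{Aut}(\mathbb{Z}[1/p]^{n})$ established in \cite[Theorem 1.3]{BLPI}: (1)--(3) transcribe non-smoothness, the hyperfiniteness dichotomy (essentially hyperfinite exactly in rank $1$), and the failure of treeability in rank $>1$; (4) transcribes non-reducibility across distinct ranks; and (5) transcribes non-reducibility for distinct primes in rank $\geq 3$. I expect the only genuinely delicate points---and hence the main obstacle---to be the verifications that Corollary \ref{Corollary:action} is applicable to each group in play (that they are of the stated rank with no finitely generated summand) and that \cite[Theorem 1.3]{BLPI} is phrased at the level of the \emph{orbit} equivalence relations of these automorphism actions rather than merely for the underlying groups $\mathrm{Ext}(\mathbb{Z}[1/p]^{n},\mathbb{Z})$ themselves; once these are confirmed, the corollary is immediate.
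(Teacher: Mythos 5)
Your proposal is correct and follows exactly the paper's route: the paper derives this corollary immediately from Corollary \ref{Corollary:action} together with \cite[Theorem 1.3]{BLPI}, which is precisely the reduction you carry out (your additional care in checking that $\mathbb{Z}[1/p]^n$ has no finitely generated summand and that the complexity notions transport across definable bijections is exactly the implicit content of the paper's ``follows immediately''). No gap.
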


\section{Conclusion}
This work opens onto further tasks and questions in a number of directions, and of various degrees of concreteness; we close with a few of the most conspicuous among them.

\begin{question}
Is our definition of an idealistic equivalence relation in Definition \ref{Definition:idealistic} equivalent to the classical one?
\end{question}

\begin{question}
Does the category $\mathsf{DSet}$ possess arbitrary countable products?
\end{question}

\begin{question}
Is every abelian group with a Polish cover definably isomorphic to a group with an abelian Polish cover?
\end{question}

\begin{question}
\label{Ques:two}
Is every definable group essentially a group with a Polish cover?
\end{question}
Since an affirmative answer would imply one for \emph{Is the category of definable abelian groups an exact category?}, one may regard the latter as a weaker version of Question \ref{Ques:two} (note, though, that multiple notions of \emph{exact category} circulate; those of Quillen and of \cite{Iversen_cohomology} both seem interesting here. See \cite{Buhler_exact}).
\begin{question}
Which of the major generalized homology and cohomology theories --- e.g., topological $K$-theory, cobordism, or stable homotopy \cite{switzer_algebraic_2002} --- lift to functors to the category of definable groups?
\end{question}
This question condenses several. Its primary background, of course, is our work on the homotopical representation of \v{C}ech cohomology, and it in part asks how far in the \emph{Brown representability} \cite{brown_cohomology_62} direction this work may be extended; it is arguably also a question of extending our analysis to categories of spectra. We may take it more generally to stand for the further development, in the spirit of the present work, of any of contemporary algebraic topology's extraordinary array of computational resources, including, perhaps most immediately, homotopy groups and the ring structure of cohomology theories, which we simply lacked the space to treat herein; we should further note in this connection the second author's \cite{lupini_k_2021,lupini_definable_2021,lupini_looking_22}.

Any such developments should tend, as in the present work, to shed light on the complexity of a number of classification problems, but we are at least as interested in the possibilities of their more \emph{direct} application to the fields of algebraic or geometric topology. As this work's authors have shown, for example, the existence of definable homology functors very readily implies the topological rigidity of solenoids, and while this fact may also be argued by classical means, it seems also to underscore the prospect of others which may not be. 

\begin{question}
Do there exist classes of topological spaces for which the rigidities of definable (co)homological functors carry implications not accessible by classical means?
\end{question}
Not unrelated is the following line of inquiry, best phrased as a task:
\begin{task}
Characterize those locally compact Polish spaces whose definable cohomology groups are all of the form $(G,N)$ with $N$ countable, or locally profinite, or procountable, respectively.
\end{task}

Returning to the Borel complexity framework, in recent joint work \cite{allison_dynamical_21}, this work's third author exhibited a dynamical obstruction to classification by actions of TSI Polish groups, roughly paralleling Hjorth's \emph{turbulence} obstruction to classification by countable structures. Recall that a Polish group is \emph{TSI} if it admits a two-sided invariant metric. As abelian groups are  TSI, the dynamical condition from  \cite{allison_dynamical_21} can also serve as an obstruction to
\emph{classification by cohomological invariants}. 

\begin{question}
Does there exist a dynamical obstruction to classification by cohomological invariants sharper than the obstruction by TSI-groups appearing in \cite{allison_dynamical_21}?
\end{question}
\begin{question}
Can Corollary \ref{Corollary:complexity}(5) be extended to the cases when one or both of the variables $m$ and $n$ are $2$?
\end{question}
One last question has as partial background the first author's independence results in homology and cohomology computations. As shown in the joint work \cite{CohOrd}, for example, both the vanishing and the nonvanishing of the first \v{C}ech cohomology group of the locally compact Hausdorff space $\omega_2$ are consistent relative to large cardinals; similar independence phenomena arise for strong homology even within the category $\mathsf{LC}$, even in the absence of large cardinal hypotheses (see \cite{mardesic_strong_1988, bergfalk_simultaneously_2021}). On the other hand, the homotopy-bracket representation of \v{C}ech cohomology on the category $\mathsf{LC}$, as well as our analysis of its complexity, suggests that $\check{\mathrm{H}}^{\bullet}$ may, on that category, be immune to independence phenomena of this sort, a possibility we evoke in the following question:
\begin{question}
Are the values of the \v{C}ech cohomology groups of a locally compact Polish space $X$ in some suitable sense forcing absolute?
\end{question}
A more precise framing of any such prospect should, of course, be taken to be part of the question.
\bibliographystyle{amsplain}

\bibliography{blp2bibreallynow1}

\end{document}